\begin{document}

\chapter{FULLERENES, POLYTOPES AND TORIC TOPOLOGY}

\markboth{V.M. Buchstaber, N.Yu. Erokhovets}{Fullerenes, Polytopes and Toric
Topology} 

\author{Victor M. Buchstaber}

\address{Steklov Mathematical Institute of Russian Academy of Sciences\\
Gubkina str. 8, 119991, Moscow, Russia\\
Department of Geometry and Topology\\
Faculty of Mechanics and Mathematics\\
Lomonosov Moscow State University\\
Leninskie Gory 1, 119991, Moscow, Russia\\
e-mail: buchstab@mi.ras.ru}

\author{Nikolay Yu. Erokhovets}

\address{Department of Geometry and Topology\\
Faculty of Mechanics and Mathematics\\
Lomonosov Moscow State University\\
Leninskie Gory 1, 119991, Moscow, Russia\\
e-mail: erochovetsn@hotmail.com}

\begin{abstract}
The lectures are devoted to a remarkable class of $3$-dimensional polytopes, which are mathematical models of the important object of quantum physics, quantum chemistry and nanotechnology -- fullerenes. The main goal is to show how results of toric topology help to build  combinatorial invariants of fullerenes. Main notions are introduced during the lectures. The lecture notes are addressed to a wide audience.
\end{abstract}

\vspace*{12pt}

\chaptercontents  %type this to generate out the chapter content text

\section*{Introduction}
\addcontentsline{toc}{section}{Introduction}

These lecture notes are devoted to results on crossroads of the classical polytope theory, toric topology, and mathematical theory of fullerenes.  Toric topology  is a new area of mathematics that emerged at the end of the 1990th on the border of equivariant topology, algebraic and symplectic geometry, combinatorics, and commutative algebra. Mathematical theory of fullerenes is a new area of mathematics focused on problems formulated on the base of outstanding achievements of quantum physics, quantum chemistry and nanotechnology. 

The text is based on the lectures delivered by the first author on the Young Topologist Seminar during the program on Combinatorial and Toric Homotopy (1-31 August 2015) organized jointly by the Institute for Mathematical Sciences and the Department of Mathematics of National University of Singapore. 

The lectures are oriented to a wide auditorium. We give all necessary notions and constructions. For key results, including new results, we either give a full prove, or a sketch of a proof with an appropriate reference. These results are oriented for the applications to the combinatorial study and classification of fullerenes.  

\subsection*{Lecture guide}
\begin{itemlist}
\item One of the main objects of the toric topology is the moment-angle
    functor~$P\to\mathcal{Z}_P$.
\item It assigns to each simple $n$-polytope $P$ with $m$ facets an $(n+m)$-dimensional moment-angle complex  $\mathcal{Z}_P$ with an action of a compact torus $T^m$, whose orbit space $\mathcal{Z}_P/T^m$ can be identified with $P$.
\item The space $\mathcal{Z}_P$ has the structure of a smooth manifold with a smooth action of $T^m$.
\item A mathematical fullerene is a three dimensional convex simple
    polytope with all $2$-faces being pentagons and hexagons.
\item In this case the number $p_5$ of pentagons is $12$.
\item The number $p_6$ of hexagons can be arbitrary except for $1$.
\item Two combinatorially nonequivalent fullerenes with the same number
    $p_6$ are called combinatorial isomers. The number of
    combinatorial isomers of fullerenes grows fast as a function of
    $p_6$.
\item  At that moment the problem of classification of fullerenes is
    well-known and is vital due to the applications in chemistry,
    physics, biology and nanotechnology.
\item Our main goal is to apply methods of toric topology to build
    combinatorial invariants distinguishing isomers.
\item Thanks to the toric topology, we can assign to each fullerene $P$
    its moment-angle manifold $\mathcal{Z}_P$.
\item The cohomology ring $H^*(\mathcal{Z}_P)$ is a combinatorial
    invariant of the fullerene $P$.
\item We shall focus upon results on the rings $H^*(\mathcal{Z}_P)$ and
    their applications based on geometric interpretation of cohomology
    classes and their products.
\item The multigrading in the ring $H^*(\mathcal{Z}_P)$, coming from the
    construction of $\mathcal{Z}_P$, and the multigraded Poincare duality
    play an important role here.
\item There exist $7$ truncation operations on simple $3$-polytopes such that any fullerene is combinatorially equivalent to a polytope obtained from the dodecahedron by a sequence of these operations.    
\end{itemlist}

\newpage
\section{Lecture 1. Basic notions}
\subsection{Convex polytopes}
\begin{definition}
A {\em convex polytope}\index{polytope} $P$ is a {\em bounded} set of the form
$$
P=\{\boldsymbol{x}\in \mathbb R^n\colon \boldsymbol{a}_i\boldsymbol{x}+b_i\geqslant 0, i=1,\dots,m\}, 
$$
where $\boldsymbol{a}_i\in\mathbb R^n$, $b_i\in\mathbb R$, and $\boldsymbol{x}\boldsymbol{y}=x_1y_1+\dots+x_ny_n$ is the standard scalar product in $\mathbb R^n$. Let this representation be {\em irredundant}, that is a deletion
of any inequality changes the set. Then each hyperplane
$\mathcal{H}_i=\{\boldsymbol{x}\in\mathbb R^n\colon
\boldsymbol{a}_i\boldsymbol{x}+b_i=0\}$ defines a {\em facet} $F_i=P\cap
\mathcal{H}_i$. Denote by $\mathcal{F}_P=\{F_1,\dots,F_m\}$ the ordered set of facets
of $P$.  For a subset $S\subset \mathcal{F}_P$ denote $|S|=\bigcup_{i\in S}F_i$. We have $|\mathcal{F}_P|=\partial P$ is the boundary of $P$.

A {\em face}\index{face}\index{polytope!face} is a subset of a polytope that is an intersection of facets. Two
convex polytopes $P$ and $Q$ are {\em combinatorially equivalent} ($P\simeq Q$) if there is an
inclusion-preserving bijection between their sets of faces. A {\em combinatorial polytope}\index{combinatorial polytope}\index{polytope!combinatorial polytope} is an equivalence class of combinatorially equivalent convex polytopes. In most cases we  consider combinatorial polytopes and write $P=Q$ instead of $P\simeq Q$.
\end{definition}
\begin{example} An $n$-simplex\index{simplex} $\Delta^n$ in $\mathbb R^n$ is the convex hull of $n+1$ affinely
independent points. Let $\{\boldsymbol{e}_1,\dots,\boldsymbol{e}_n\}$ be the
standard basis in $\mathbb R^n$. The $n$-simplex ${\rm
conv}\{\mathbf{0},\boldsymbol{e}_1,\dots,\boldsymbol{e}_n\}$ is called {\em
standard}. It is defined in $\mathbb R^n$ by $n+1$ inequalities:
$$
x_i\geqslant 0\text{ for } i=1,\dots,n,\quad\text{ and }-x_1-\dots-x_n+1\geqslant 0.
$$
The {\em standard $n$-cube}\index{cube} $I^n$ is defined in $\mathbb R^n$ by $2n$
inequalities
$$
x_i\geqslant 0,\quad -x_i+1\geqslant 0,\quad\text{for }i=1,\dots,n.
$$
\end{example}
\begin{definition}
An {\em orientation}\index{polytope!orientation}  of a  combinatorial convex $3$-polytope is a choice of the cyclic order of vertices of each facet such that for any two facets with a common edge the orders of vertices induced from facets to this edge are opposite.
A combinatorial convex $3$-polytope with given orientation is called {\em oriented}\index{oriented polytope}. 
\end{definition}

\noindent {\bf Exercise:} 
\begin{itemize}
\item Any geometrical realization of a combinatorial $3$-polytope $P$ in $\mathbb R^3$ with standard orientation induces an orientation of $P$.
\item Any combinatorial $3$-polytope has exactly two orientations.
\item Define an oriented combinatorial convex $n$-polytope.
\end{itemize}

\begin{definition}
A polytope is called  {\em combinatorially chiral}\index{chiral polytope}\index{polytope!chiral polytope} if any it's combinatorial equivalence to itself preserves the orientation. 
\end{definition}
Simplex $\Delta^3$ and cube $I^3$ are not combinatorially chiral.

\noindent {\bf Exercise:} Give an example of a combinatorially chiral  $3$-polytope.

There is a classical notion of a {\em (geometrically) chiral} polytope (connected with the right-hand and the left-hand rules).
\begin{definition}
A convex $3$-polytope $P\subset\mathbb R^3 $ is called {\em  (geometrically) chiral}\index{polytope!geometrically chiral} if there is no orientation preserving isometry of $\mathbb R^3$ that maps $P$ to its mirror image.
\end{definition}
\begin{proposition}\label{Chiral}
A combinatorially chiral polytope is geometrically chiral, while a geometrically chiral polytope can be not combinatorially chiral. 
\end{proposition}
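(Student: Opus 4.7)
The plan is to prove the forward implication by carefully tracking how isometries of $\mathbb R^3$ act on the combinatorial orientations induced by geometric realizations, and to handle the second half by perturbing a symmetric polytope.

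For the forward direction, the main tool is the Exercise above: a realization $P\subset\mathbb R^3$ of a combinatorial $3$-polytope induces a combinatorial orientation $\omega_P$ coming from the standard orientation of $\mathbb R^3$ (essentially, the cyclic order of vertices on each facet is the one that, together with the outward normal, gives the standard frame). First I would verify the orientation-tracking rule: if $\varphi$ is an isometry of $\mathbb R^3$ sending a realization $P_1$ to a realization $P_2$ of the same combinatorial polytope, then the induced combinatorial equivalence sends $\omega_{P_1}$ to $\omega_{P_2}$ when $\varphi$ preserves the orientation of $\mathbb R^3$, and to $-\omega_{P_2}$ otherwise. This is a direct check from the definition of $\omega_P$ via outward normals.

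With that in hand, assume $P$ is combinatorially chiral, and suppose for contradiction that some orientation-preserving isometry $\varphi$ satisfies $\varphi(P)=\bar P:=\sigma(P)$, where $\sigma$ is a reflection. Then $\sigma^{-1}\circ\varphi$ is an orientation-reversing isometry of $\mathbb R^3$ that maps $P$ to itself, so by the tracking rule it induces a combinatorial self-equivalence of $P$ sending $\omega_P$ to $-\omega_P$. That contradicts combinatorial chirality, completing the first half.

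For the second half, I would exhibit a polytope $P$ whose combinatorial type admits an orientation-reversing self-equivalence (hence is not combinatorially chiral) but whose specific realization admits no orientation-preserving isometry to its mirror image. The natural candidate is a small generic perturbation of the cube $I^3$, pushing each of the eight vertices slightly in distinct generic directions so that the result remains convex and combinatorially the cube. Combinatorially, $P$ is still $I^3$, whose symmetry group contains reflections, so $P$ is not combinatorially chiral. The main technical step is to argue that for a sufficiently generic perturbation the full isometry group of $P\cup\bar P$ is trivial, so in particular no orientation-preserving isometry of $\mathbb R^3$ carries $P$ to $\bar P$. The obstacle is making ``generic'' precise: I would show that the locus in the realization space of $I^3$ on which some nontrivial isometry of $\mathbb R^3$ sends the polytope to its mirror image is cut out by finitely many nontrivial polynomial equations (equalities among distances and volumes), hence is a proper algebraic subset, and then pick a rational point off it explicitly to certify the example.
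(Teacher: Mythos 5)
Your forward direction matches the paper's: both arguments derive, from a hypothetical orientation-preserving isometry $\varphi$ with $\varphi(P)=\sigma(P)$, an orientation-reversing combinatorial self-equivalence of $P$; you merely make this explicit by post-composing with $\sigma^{-1}$ to get the orientation-reversing self-isometry $\sigma^{-1}\varphi$ of $P$. For the second half you take a generic perturbation of $I^3$ where the paper instead realizes $\Delta^3$ so that all planar angles of all facets are pairwise distinct. The paper's example is more economical: such a simplex admits no nontrivial self-isometry, so it is geometrically chiral, while any odd permutation of its four vertices already furnishes an orientation-reversing combinatorial self-equivalence. Your genericity route works in principle, but it trades a one-line concrete example for a realization-space argument.

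There is, however, a genuine error in how you formalize the genericity claim. The isometry group of $P\cup\bar P$ is never trivial, because the reflection $\sigma$ itself always stabilizes $P\cup\bar P$ (it swaps $P$ and $\bar P$). For the same reason, the locus ``on which some nontrivial isometry of $\mathbb R^3$ sends the polytope to its mirror image'' is the entire realization space, since the fixed $\sigma$ always does. What you actually need is that no \emph{orientation-preserving} isometry takes $P$ to $\sigma(P)$. Since the set of all isometries from $P$ to $\sigma(P)$ is the coset $\sigma\cdot\mathrm{Iso}(P)$, this is equivalent to $\mathrm{Iso}(P)$ containing no orientation-reversing element; in particular it suffices that $\mathrm{Iso}(P)=\{\mathrm{id}\}$, which is the statement your polynomial-equations argument can legitimately establish for a generic perturbation. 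With that correction the plan goes through, but as written the two genericity claims fail.
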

\begin{proof}
The orientation-preserving isometry of $\mathbb R^3$ that maps $P$ to its mirror image defines the combinatorial
 equivalence that changes the orientation. On the other hand, the simplex $\Delta^3$ realized with all angles of all facets different can not be mapped to itself by an isometry of $\mathbb R^3$ different from the identity. Hence it is chiral. The odd permutation of vertices defines the combinatorial equivalence that changes the orientation; hence $\Delta^3$ is not combinatorially chiral.
\end{proof} 
\subsection{Schlegel diagrams}
Schlegel diagrams were introduced by Victor Schlegel (1843 - 1905) in 1886.
\begin{definition} A Schlegel diagram\index{Schlegel diagram}\index{polytope!Schlegel diagram} of a convex polytope
$P$ in $\mathbb R^3$ is a {\em projection} of $P$ from $\mathbb R^3$ into $\mathbb R^2$
through a point beyond one of its facets.
\end{definition}

The resulting entity is a subdivision of the projection of this facet that is
{\em combinatorial invariant} of the original polytope. It is clear
that a Schlegel diagram depends on the choice of the facet.

\noindent{\bf{Exercise:}} Describe the Schlegel diagram of the cube and the octahedron.
\begin{figure}
\begin{center}
\begin{tabular}{cc}
\includegraphics[scale=0.25]{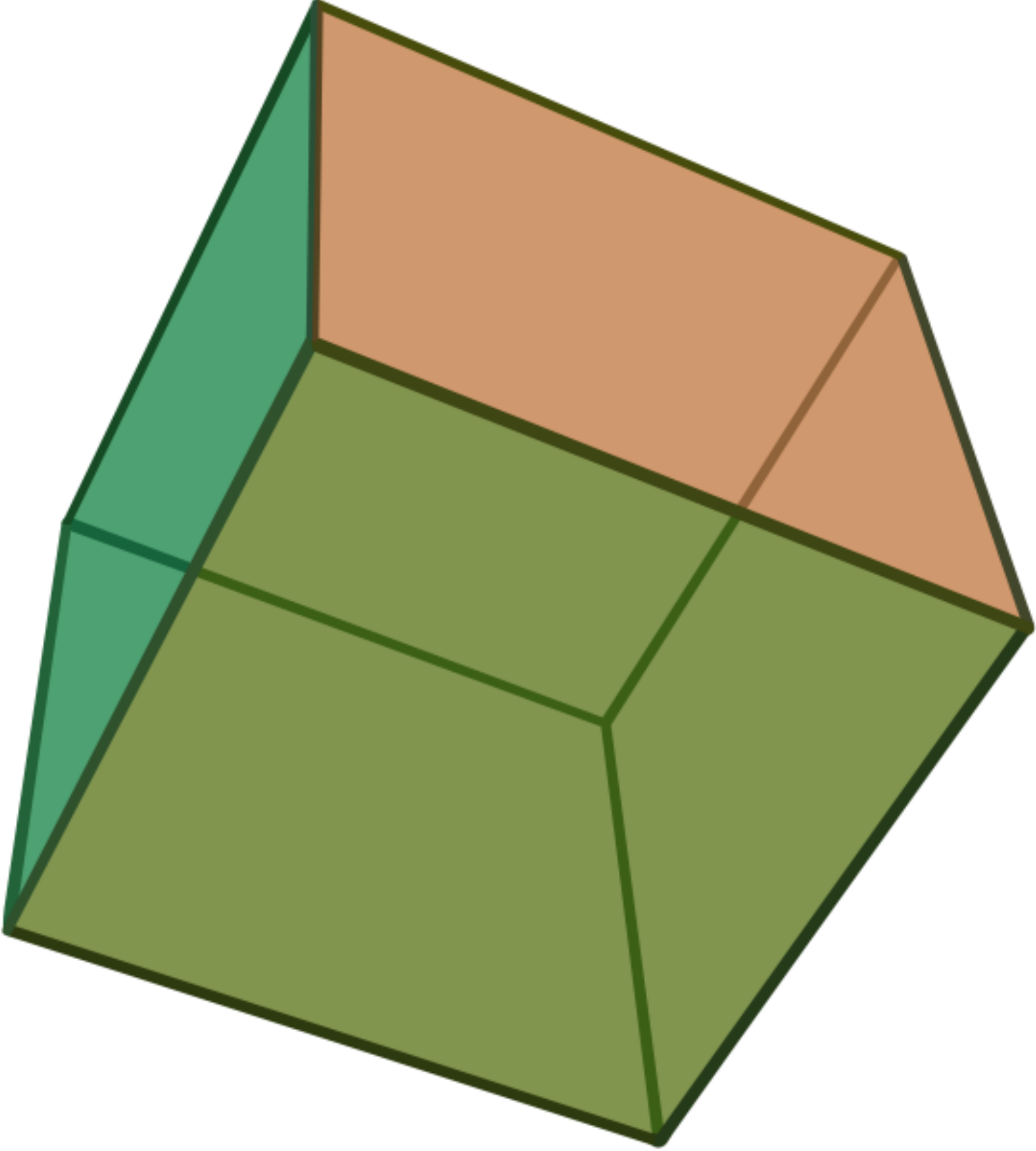}&\includegraphics[scale=0.25]{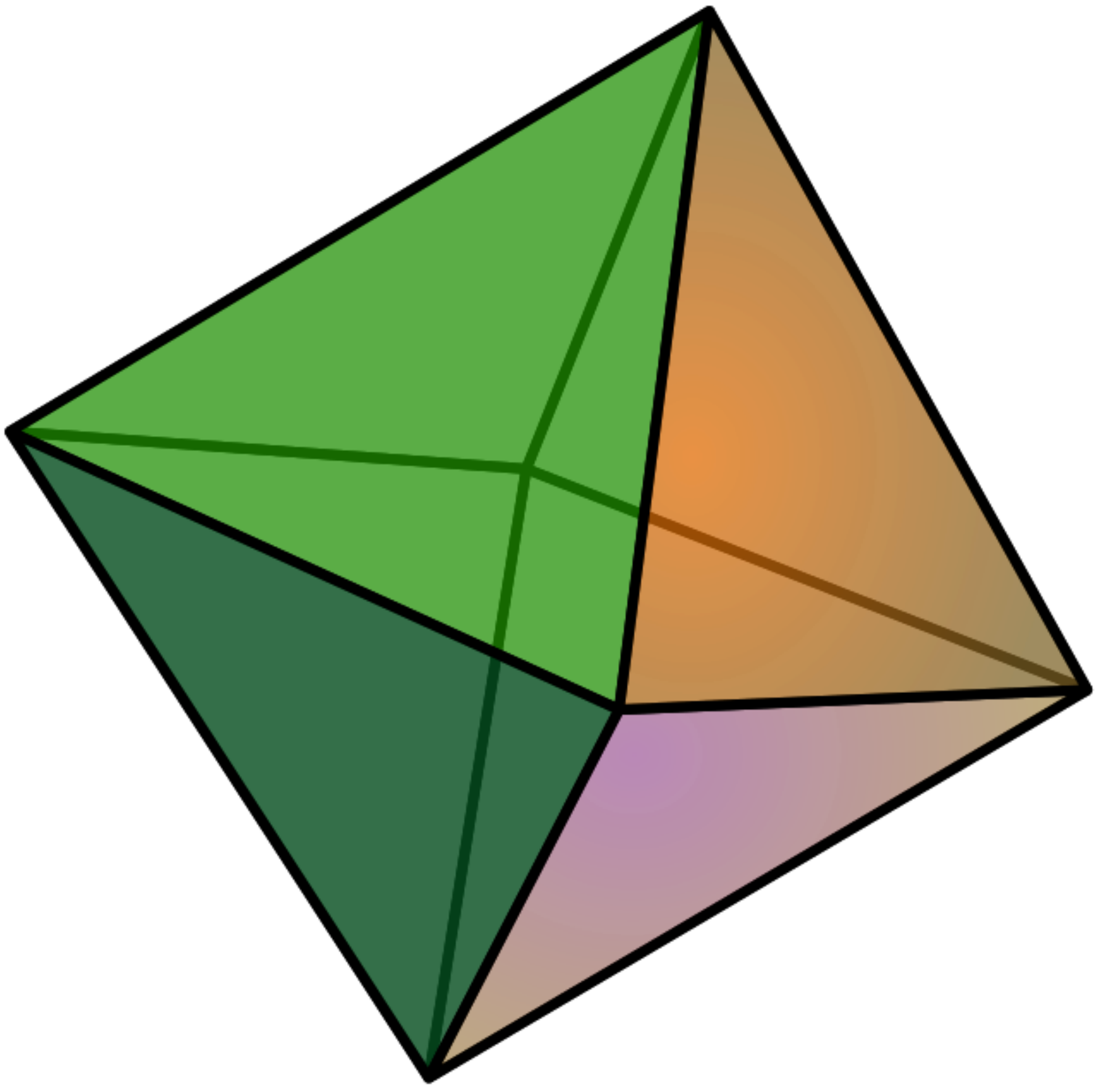}\\
\end{tabular}
\caption{Cube and octahedron (www.wikipedia.org)}
\end{center}
\end{figure}

\begin{example}
\begin{figure}
\begin{center}
\begin{tabular}{cc}
\includegraphics[height=3cm]{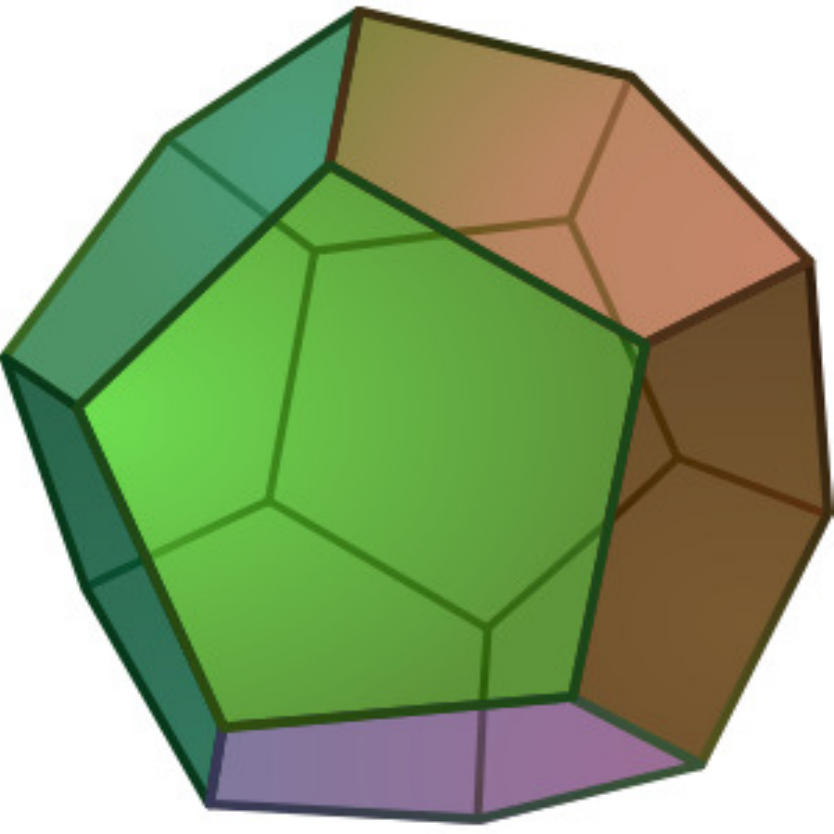}&\includegraphics[height=3cm]{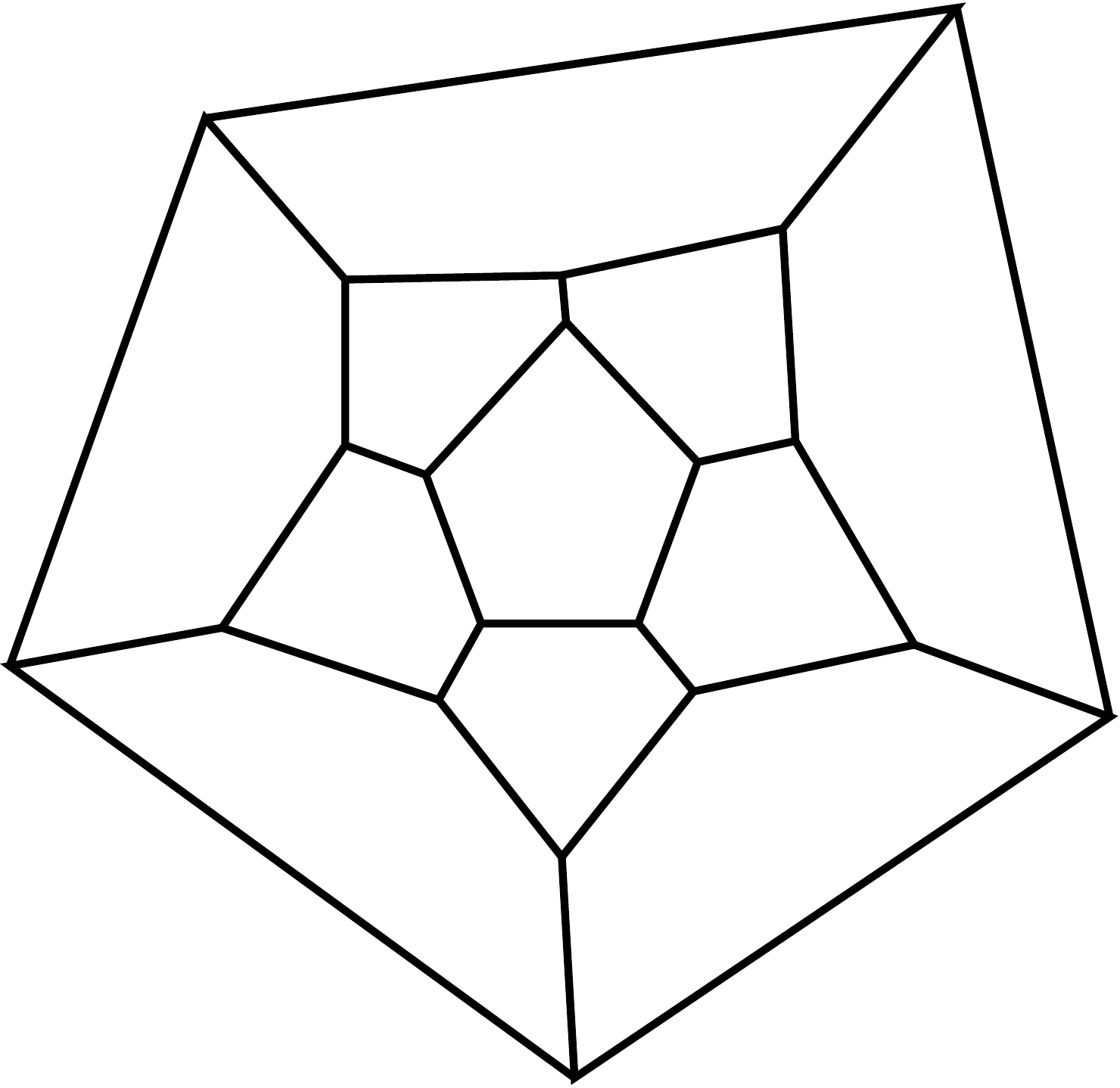}\\
\end{tabular}
\caption{Dodecahedron and its Schlegel digram  (www.wikipedia.org)}
\end{center}
\end{figure}
\end{example}
\subsection{Euler's formula}
Let $f_0$, $f_1$, and $f_2$ be numbers of vertices, edges, and $2$-faces of a
$3$-polytope. Leonard Euler (1707-1783) proved the following fundamental relation\index{Euler's formula}: 
{\bf\Huge
$$
f_0-f_1+f_2=2
$$
}

By a {\em fragment}\index{fragment on a polytope} we mean a subset $W\subset P$ that is a union of faces of
$P$. Define an {\em Euler characteristics} of $W$ by
$$
\chi(W)=f_0(W)-f_1(W)+f_2(W).
$$
If $W_1$ and $W_2$ are fragments, then $W_1\cup W_2$ and $W_1\cap W_2$ are
fragments.

\noindent{\bf Exercise:} Proof the {\em inclusion-exclusion formula}
$$
\chi(W_1\cup W_2)=\chi(W_1)+\chi(W_2)-\chi(W_1\cap W_2),
$$

\subsection{Platonic solids}

\begin{definition}
A {\em regular polytope (Platonic solid)} \cite{C73}\index{Platonic solid}\index{polytope!Platonic solid}\index{polytope!regular} is a convex $3$-polytope with all facets being congruent regular polygons that are assembled in the
same way around each vertex.
\end{definition}
There are only $5$ Platonic solids, see Fig. \ref{Pls}.
\begin{figure}
\begin{center}
\begin{tabular}{ccc}
\begin{tabular}{c}
\includegraphics[scale=0.12]{Cube.pdf}\\
Cube\\
$(8,12,6)$\\
\\
\includegraphics[scale=0.2]{Dod.pdf}\\
Dodecahedron\\
$(20,30,12)$
\end{tabular}
&
\begin{tabular}{c}\\
\includegraphics[scale=0.15]{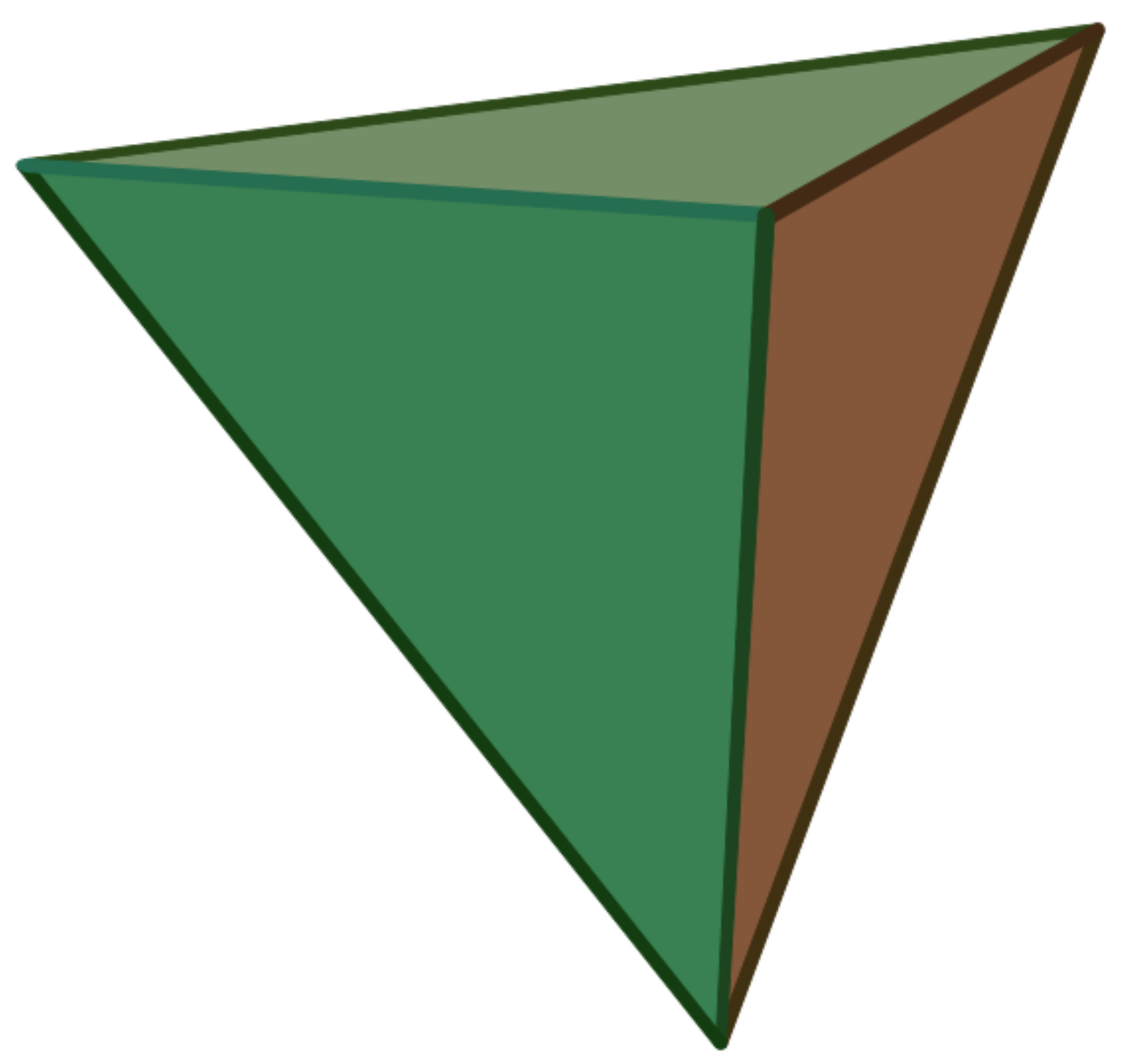}\\
Tetrahedron\\
$(4,6,4)$\\
\end{tabular}
&
\begin{tabular}{c}
\includegraphics[scale=0.12]{Oct.pdf}\\
Octahedron\\
$(6,12,8)$\\
\\
\includegraphics[scale=0.15]{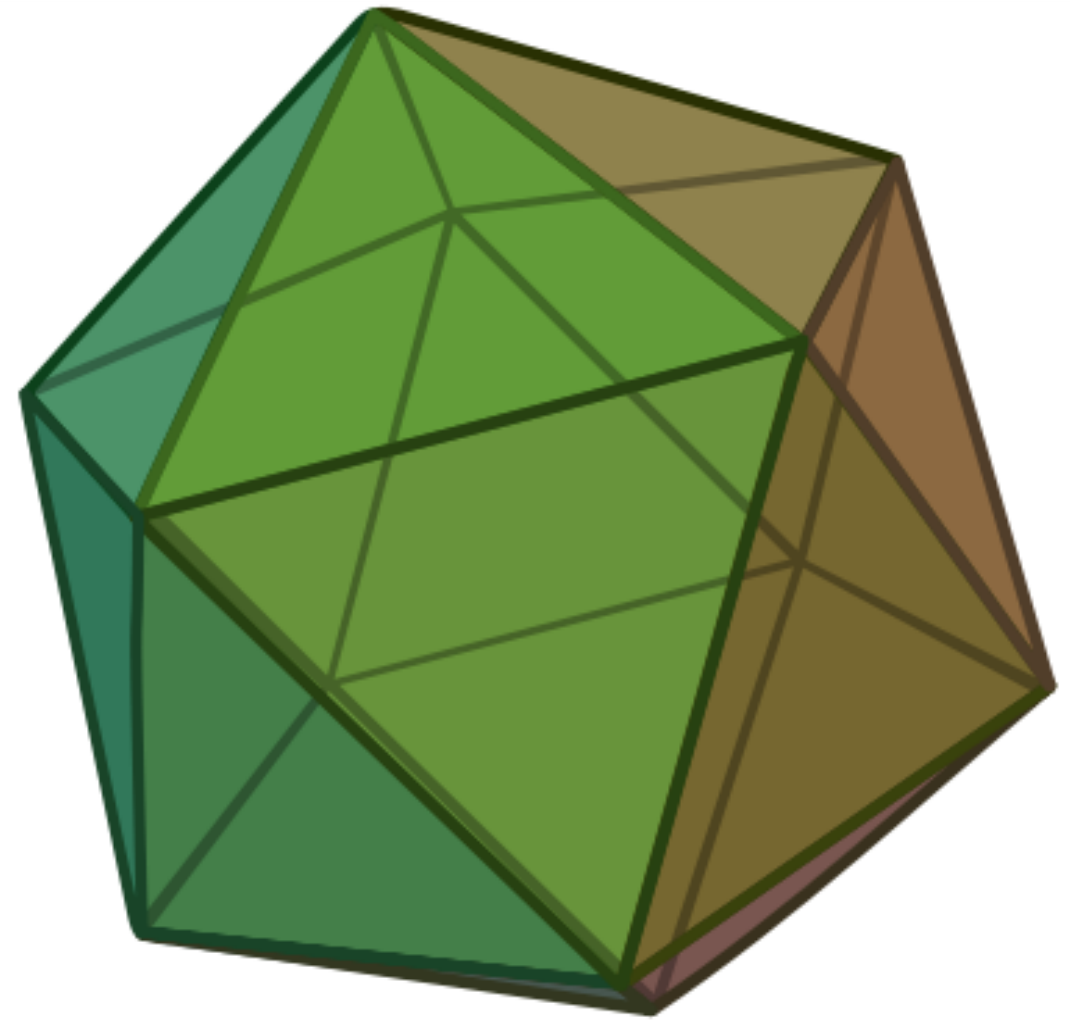}\\
Icosahedron\\
$(12,30,20)$\\
\end{tabular}
\end{tabular}
\end{center}
\caption{Platonic solids with $f$-vectors $(f_0,f_1,f_2)$ (www.wikipedia.org)}\label{Pls}
\end{figure}
\index{$f$-vector}\index{polytope!$f$-vector}All Platonic solids are vertex-, edge-, and facet-transitive. They are not combinatorially chiral.
\subsection{Archimedean solids}
\begin{definition}
An {\em Archimedean solid}\index{Archimedean solid}\index{polytope!Archimedean solid} \cite{C73} is a convex $3$-polytope with all facets -- regular polygons of {\em two or more types}, such that for any pair
of vertices there is a  symmetry of the polytope that moves one vertex to
another.
\end{definition}
There are only $13$ solids with this properties: $10$ with facets of two
types, and $3$ with facets of three types.  On the following figures we
present Archimedean solids. For any polytope we give vectors $(f_0,f_1,f_2)$
and $(k_1,\dots,k_p; q)$, where $q$ is the valency of any vertex and a tuple
$(k_1,\dots k_p)$ show which $k$-gons are present.
\begin{figure}
\begin{center}
\begin{tabular}{ccc}
\includegraphics[scale=0.5]{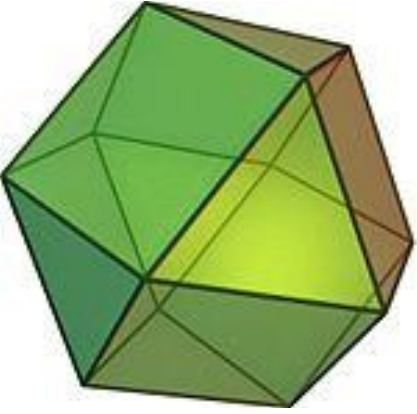}&\hspace{0.5cm}&
\includegraphics[scale=0.5]{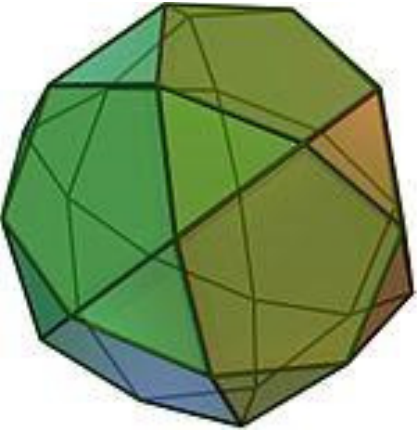}\\
Cuboctahedron&&Icosidodecahedron\\
$(12,24,14), (3,4;4)$&&$(30,60,32),(3,5;4)$
\end{tabular}
\end{center}
\hspace{-0.18cm}\begin{tabular}{ccc}
\includegraphics[scale=0.5]{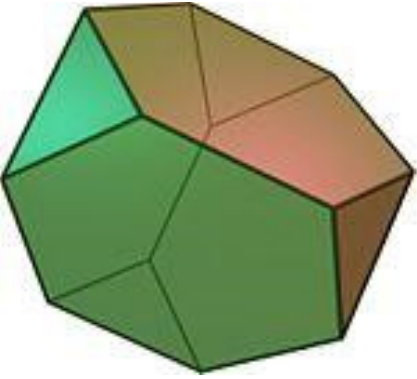}&
\includegraphics[scale=0.5]{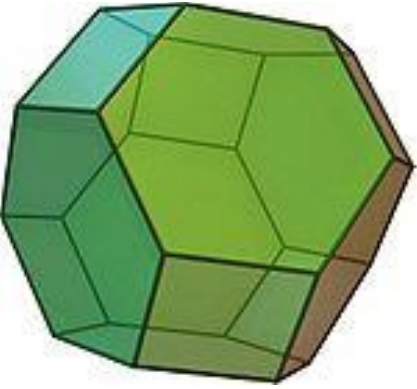}&
\includegraphics[scale=0.5]{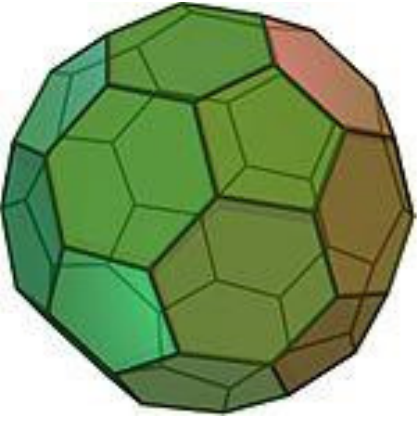}\\
\hspace{-1cm}Truncated tetrahedron&Truncated octahedron&{\bf Truncated icosahedron}\\
$(12,18,8),(3,6;3)$&$(24,36,14),(4,6;3)$&$(60,90,32),(5,6;3)$
\end{tabular}

\begin{center}
\begin{tabular}{ccc}
\includegraphics[scale=0.5]{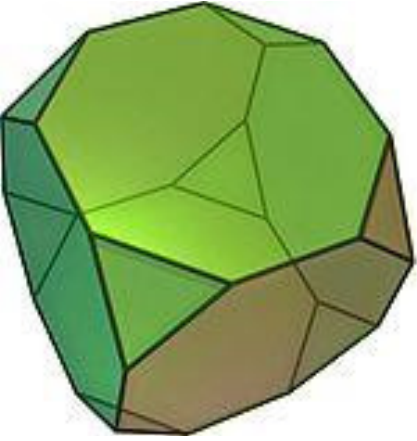}&\hspace{2cm}&
\includegraphics[scale=0.5]{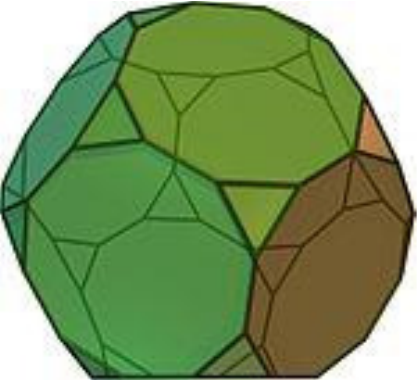}\\
Truncated cube&\hspace{2cm}&Truncated dodecahedron\\
$(24,36,14),(3,8;3)$&\hspace{2cm}&$(60,90,32),(3,10;3)$
\end{tabular}
\begin{tabular}{ccc}
\includegraphics[scale=0.5]{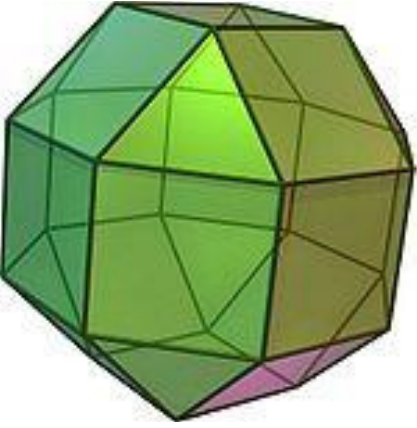}&\hspace{1cm}&
\includegraphics[scale=0.5]{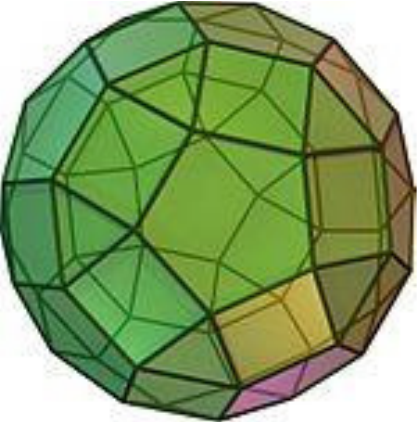}\\
Rhombicuboctahedron&&Rhombicosidodecahedron\\
$(24,48,26),(3,4;4)$&&$(60,120,62),(3,4,5;4)$
\end{tabular}
\end{center}

\begin{center}
\begin{tabular}{ccc}
\includegraphics[scale=0.5]{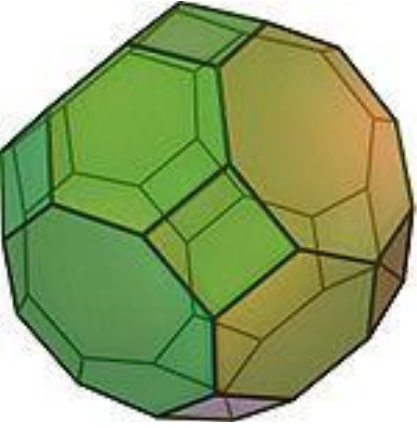}&\hspace{1cm}&
\includegraphics[scale=0.5]{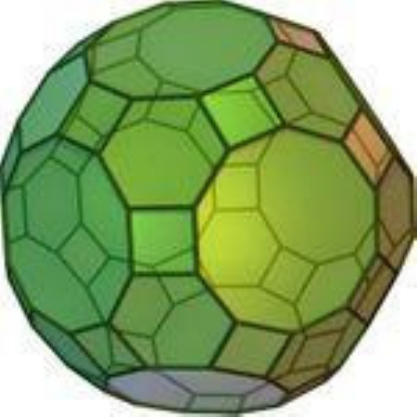}\\
Truncated&&Truncated\\
cuboctahedron&&icosidodecahedron\\
$(48,72,26),(4,6,8;3)$&&$(120,180,62),(4,6,10;3)$
\end{tabular}
\begin{tabular}{cc}
\includegraphics[scale=0.5]{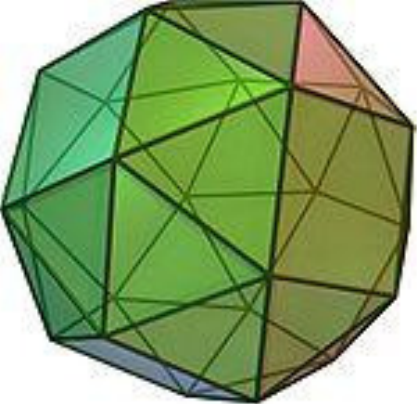}&
\includegraphics[scale=0.5]{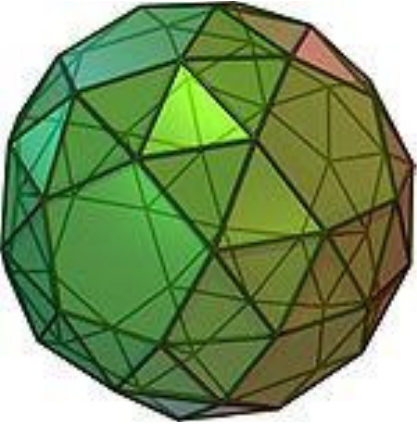}\\
Snub cube&Snub dodecahedron\\
$(24,60,38),(3,4;5)$&$(60,150,92),(3,5;5)$
\end{tabular}
\end{center}
\caption{Archimedean solids with $f$-vectors and facet-vertex types (www.wikipedia.org)}
\end{figure}
Snub cube and snub dodecahedron are combinatorially chiral, while other $11$ Archimedean solids are not combinatorially chiral.
\begin{figure}[h]
\begin{center}
\begin{tabular}{cc}
\includegraphics[height=3cm]{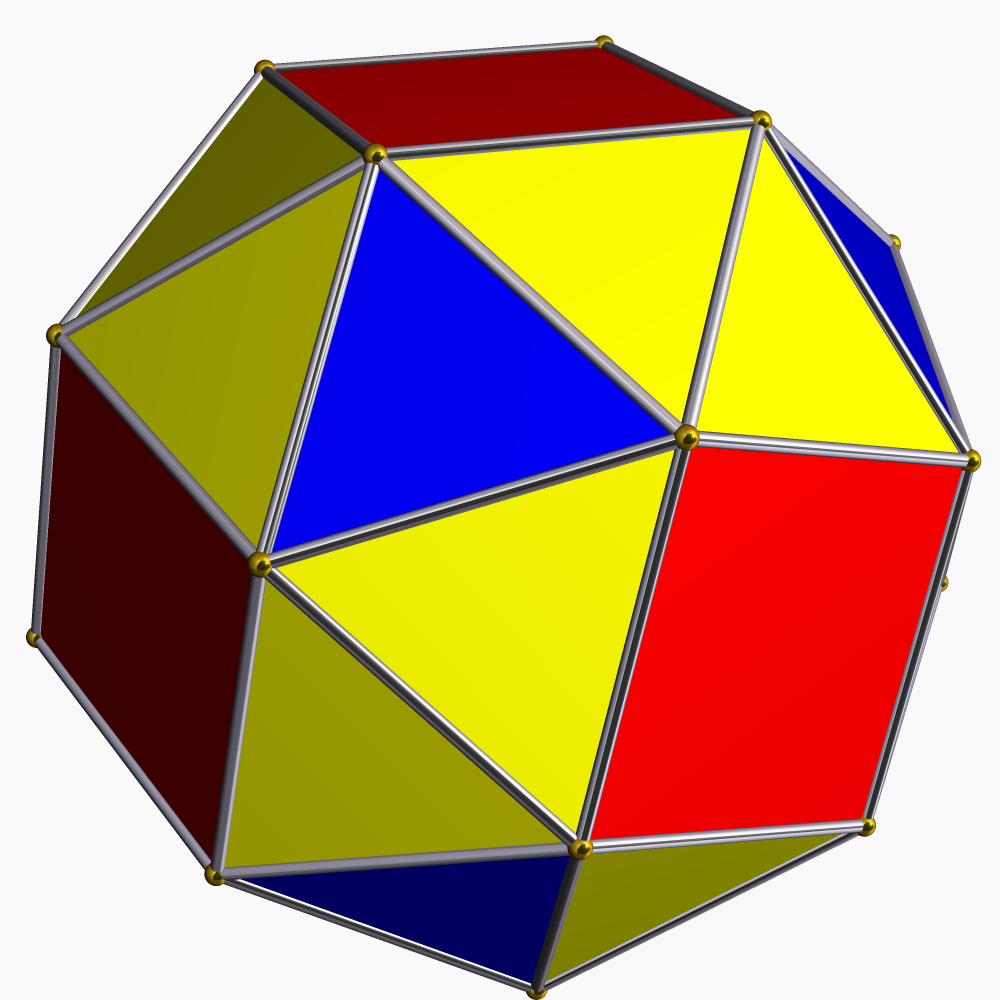}&
\includegraphics[height=3cm]{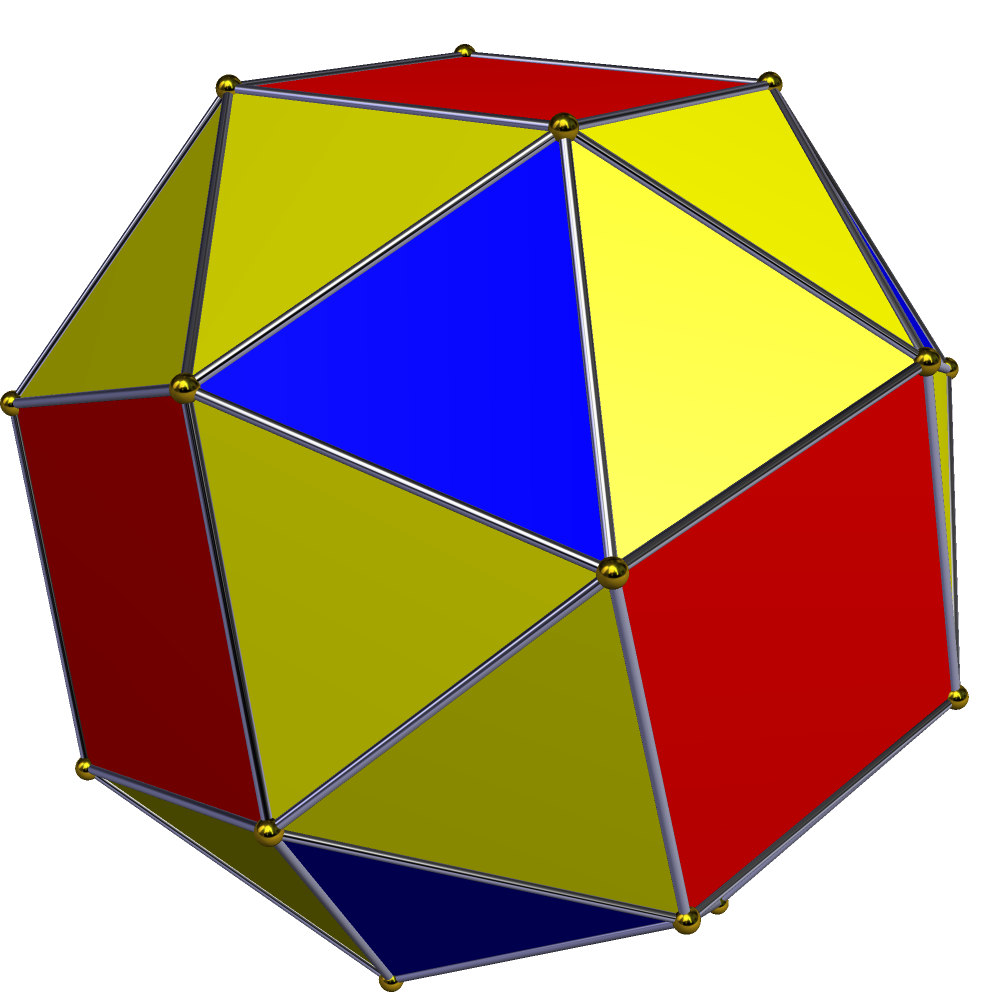}\\
\end{tabular}
\end{center}
\caption{
Left and right snub cube. Fix the orientations induced from ambient space. There is no combinatorial equivalence preserving this orientation. (www.wikipedia.org)
}
\end{figure}
\subsection{Simple polytopes}

An $n$-polytope is {\em simple}\index{simple polytope}\index{polytope!simple} if any its vertex is contained in exactly $n$
facets.
\begin{figure}[h]
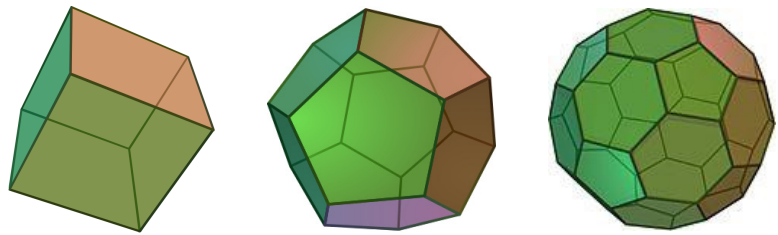

\begin{center}
\includegraphics[height=3cm]{Cube.pdf}\qquad\includegraphics[height=3cm]{Dod.pdf}\qquad\includegraphics[height=3cm]{Arh-5.pdf}
\end{center}
\caption{Simple polytopes: cube, dodecahedron and truncated icosahedron  (www.wikipedia.org)}
\end{figure}

\begin{example}
\begin{itemlist}
\item $3$ of $5$ Platonic solids are simple.
\item $7$ of $13$ Archimedean solids are simple.
\end{itemlist}
\end{example}
\noindent{\bf Exercise:}
\begin{itemlist}
\item A simple $n$-polytope with all $2$-faces triangles is combinatorially
    equivalent to the $n$-simplex.
\item A simple $n$-polytope with all $2$-faces quadrangles is combinatorially
    equivalent to the $n$-cube.
\item A simple $3$-polytope with all $2$-faces pentagons is combinatorially
    equivalent to the dodecahedron.
\end{itemlist}
\subsection{Realization of $f$-vector}
\begin{theorem}\cite{S06} (Ernst Steinitz (1871-1928))
An integer vector $(f_0,f_1,f_2)$ is a face vector of a\index{$f$-vector}\index{polytope!$f$-vector}
{\bf three-dimensional} polytope if and only if
$$
f_0-f_1+f_2=2, \quad f_2 \leqslant 2f_0-4, \quad f_0 \leqslant 2f_2-4.
$$
\end{theorem}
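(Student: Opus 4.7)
The plan is to prove necessity by a handshake-style counting argument, and sufficiency by an explicit construction that starts from three small base polytopes and iterates two local surgeries.

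For necessity, the first identity is Euler's formula, already stated above. For the inequality $f_2\leq 2f_0-4$, I would observe that every $2$-face has at least three bounding edges while every edge lies on exactly two $2$-faces, so $2f_1\geq 3f_2$; combined with $f_1=f_0+f_2-2$ this yields $f_2\leq 2f_0-4$. Dually, every vertex of a convex $3$-polytope is incident to at least three edges (otherwise it would sit in the relative interior of an edge), so $2f_1\geq 3f_0$, and Euler gives $f_0\leq 2f_2-4$.

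For sufficiency, I would introduce two local operations on $3$-polytopes: \textbf{(T)} truncation of a $3$-valent vertex, which changes $(f_0,f_1,f_2)$ by $(+2,+3,+1)$; and \textbf{(S)} stacking a tetrahedron onto a triangular facet, with effect $(+1,+3,+2)$. Each operation creates new $3$-valent vertices and new triangular facets, so once at least one of each is present, both can be iterated indefinitely. Applying $a$ copies of (T) and $b$ copies of (S) to the pyramid $\mathrm{Pyr}_n$ over an $n$-gon (with $f$-vector $(n+1,2n,n+1)$) produces $(n+1+2a+b,\,2n+3a+3b,\,n+1+a+2b)$. The three pyramids $\mathrm{Pyr}_3,\mathrm{Pyr}_4,\mathrm{Pyr}_5$ (tetrahedron, square pyramid, pentagonal pyramid) realize the three residues of $f_0+f_2\pmod 3$, namely $8\equiv 2$, $10\equiv 1$, $12\equiv 0$ respectively. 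Given a target $f$-vector satisfying the three inequalities, the plan is to pick the base $\mathrm{Pyr}_n$ matching the residue of $f_0+f_2\pmod 3$ and solve $a=(2f_0-f_2-n-1)/3$, $b=(2f_2-f_0-n-1)/3$ for non-negative integers.

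The point requiring care is showing that $(a,b)$ is always non-negative integral after this choice of $n$. The only potentially tight cases occur on the boundaries $2f_0-f_2=4$ and $2f_2-f_0=4$; a short computation shows that $2f_0-f_2=4$ forces $f_0+f_2=3f_0-4\equiv 2\pmod 3$, so such cases fall into the residue class of the tetrahedron, for which the constraints $a,b\geq 0$ coincide exactly with the hypothesized inequalities. For the other two residue classes one automatically enjoys strict slack $2f_0-f_2\geq 5$ or $\geq 6$, making the construction uniform. The genuine obstacle, and the step I expect to require the most care, is verifying that (T) and (S) can be realized as surgeries producing bona fide \emph{convex} $3$-polytopes at every stage; for this I would either give an explicit geometric realization (perturbing a truncating hyperplane near the chosen vertex, or placing a stacked apex just above the chosen facet) or invoke Steinitz's general representation theorem identifying combinatorial $3$-polytopes with $3$-connected planar graphs.
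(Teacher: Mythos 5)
The paper does not prove this theorem; it simply cites Steinitz (1906), so there is no in-text proof to compare against. Your proposal reconstructs the classical argument (going back to Steinitz and presented, e.g., in Gr\"unbaum), and I have checked that it is correct.

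A few confirming remarks. For necessity, the double counts $2f_1\geq 3f_2$ (each facet has $\geq 3$ edges) and $2f_1\geq 3f_0$ (each vertex has degree $\geq 3$), combined with Euler's relation $f_1=f_0+f_2-2$, give exactly the two inequalities; this part is airtight. For sufficiency, your bookkeeping is right: from $f_0=n+1+2a+b$ and $f_2=n+1+a+2b$ one gets $2f_0-f_2=(n+1)+3a$ and $2f_2-f_0=(n+1)+3b$, so $a,b$ are determined and integral precisely when $2f_0-f_2\equiv 2f_2-f_0\equiv n+1\pmod 3$, which is forced by the invariance of $f_0+f_2\bmod 3$ under (T) and (S). The residue check is also correct: $\mathrm{Pyr}_3,\mathrm{Pyr}_4,\mathrm{Pyr}_5$ give $f_0+f_2\equiv 2,1,0\pmod 3$ respectively, and because $2f_0-f_2\geq 4$ combined with the congruence $2f_0-f_2\equiv n+1\pmod 3$ forces $2f_0-f_2\geq n+1$ (likewise for $b$), non-negativity of $a,b$ follows automatically from the stated inequalities with no further casework. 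You also correctly note that (T) leaves a new triangular facet and three new $3$-valent vertices, while (S) leaves a new $3$-valent apex and three new triangular facets, so starting from a pyramid (which has both) one can apply the operations in any order as many times as needed.

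The only point that genuinely needs stating explicitly is the one you flagged: each step must produce a \emph{convex} polytope. Both of your suggested fixes work. Geometrically, the truncating hyperplane can be chosen to separate only the chosen vertex from the rest of the polytope, and the stacked apex can be placed in the open ``beyond'' region of the chosen facet (points that see exactly that facet and no other); in each case convexity is immediate. Alternatively, one may work purely combinatorially with $3$-connected planar graphs and appeal to the Steinitz representation theorem (Theorem~\ref{S-Theorem} in these notes), since both surgeries visibly preserve $3$-connectivity and planarity. Either route closes the argument, and I see no gap.
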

\begin{corollary}
$$
f_2+4\leqslant 2f_0\leqslant 4f_2-8
$$
\end{corollary}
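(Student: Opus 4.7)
The plan is to derive the corollary directly from the two Steinitz inequalities in the theorem, without even needing Euler's relation. The statement to prove is a combined chain of two inequalities:
$$f_2 + 4 \leqslant 2f_0 \leqslant 4f_2 - 8,$$
so I would simply establish the left and right inequalities separately and then concatenate them.

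For the left-hand inequality, I would start from the Steinitz bound $f_2 \leqslant 2f_0 - 4$ and add $4$ to both sides, obtaining $f_2 + 4 \leqslant 2f_0$. For the right-hand inequality, I would start from the symmetric Steinitz bound $f_0 \leqslant 2f_2 - 4$ and multiply both sides by $2$, obtaining $2f_0 \leqslant 4f_2 - 8$. Concatenating gives the claim.

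Since both constituent inequalities are furnished verbatim by the theorem, there is no genuine obstacle here: the only content is the observation that multiplying the second Steinitz bound by $2$ matches the normalization $2f_0$ used in the left bound, so that the two estimates fit together into a single chain sandwiching $2f_0$ between a linear function of $f_2$ from below and another from above. It is worth noting that the corollary could equivalently be recast, via Euler's formula $f_0 - f_1 + f_2 = 2$, as bounds on $f_1$ in terms of $f_0$ or $f_2$, but the cleanest presentation is the direct two-line derivation just described.
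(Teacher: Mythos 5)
Your proof is correct and matches the intended derivation: the paper leaves the corollary unproved because it is exactly this two-line manipulation of the two Steinitz inequalities (add $4$ to one, double the other, and chain). Nothing further is needed.
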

Well-known $g$-theorem \cite{S80,BL81} gives the criterion when an integer
vector $(f_0,\dots,f_{n-1})$ is an $f$-vector of a simple $n$-polytope (see
also \cite{Bu-Pa15}).

For general polytopes the are only partial results about $f$-vectors.

\noindent {\bf Classical problem:} For {\em four-dimensional} polytopes the
conditions characterizing the face vector $(f_0,f_1,f_2,f_3)$ are still {\em
not known} \cite{Z02}.

\subsection{Dual polytopes}
For an $n$-polytope $P\subset\mathbb R^n$ with $\mathbf{0}\in{\rm int}\, P$
the {\em dual polytope}\index{polytope!dual} $P^*$ is
$$
P^*=\{\boldsymbol{y}\in\mathbb (R^n)^*\colon \boldsymbol{y}\boldsymbol{x}+1\geqslant 0\}
$$
\begin{itemlist}
\item $i$-faces of $P^*$ are in an inclusion reversing bijection with
    $(n-i-1)$-faces of $P$.

\item $(P^*)^*=P$.
\end{itemlist}
An $n$-polytope is {\em simplicial}\index{polytope!simplicial} if any its facet is a simplex.

\begin{lemma}
A polytope dual to a simple polytope is simplicial.\\
A polytope dual to a simplicial polytope is simple.
\end{lemma}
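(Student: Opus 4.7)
The plan is to derive both statements directly from the inclusion-reversing bijection between $i$-faces of $P$ and $(n-i-1)$-faces of $P^*$ listed in the previous bulletted item, combined with the characterisation of an $(n-1)$-simplex as an $(n-1)$-polytope with exactly $n$ vertices.

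First I would prove that $P^*$ is simplicial when $P$ is simple. Let $G$ be a facet of $P^*$. By the bijection, $G$ corresponds to a vertex $v$ of $P$, and the vertices of $G$ correspond to those facets of $P$ that contain $v$. Simplicity of $P$ says exactly $n$ facets meet at $v$, so $G$ is an $(n-1)$-dimensional convex polytope with exactly $n$ vertices. Since any $(n-1)$-dimensional convex polytope has at least $n$ vertices, with equality characterising the simplex (its vertices must be affinely independent, otherwise they would span a space of dimension less than $n-1$), $G$ is an $(n-1)$-simplex. Hence every facet of $P^*$ is a simplex.

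For the reverse implication I would apply the same correspondence with the roles of vertices and facets swapped. A vertex $w$ of $P^*$ corresponds under duality to a facet $F$ of $P$, and the facets of $P^*$ containing $w$ are in bijection (via inclusion reversal) with the vertices of $F$. If $P$ is simplicial, then $F$ is an $(n-1)$-simplex with exactly $n$ vertices, so $w$ belongs to exactly $n$ facets of $P^*$. Thus $P^*$ is simple. Equivalently, one can deduce the second statement from the first by invoking $(P^*)^*=P$ from the preceding bulleted item: a simplicial $Q$ is the dual of the simple polytope $Q^*$ precisely when $Q^*$ is simple, which is a tautology once the first statement is known.

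The main obstacle, and really the only nontrivial ingredient, is the elementary geometric fact that an $(n-1)$-polytope with exactly $n$ vertices is a simplex; I would record this explicitly (via affine independence of the vertices) so that the two implications become short bookkeeping arguments on the face correspondence.
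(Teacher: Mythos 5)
The paper states this lemma without proof, so there is no argument of record to compare against; your proposal correctly fills the gap using the standard approach via the inclusion-reversing face bijection. Both directions are sound, and you are right to isolate as the only nontrivial geometric ingredient the fact that an $(n-1)$-dimensional polytope with exactly $n$ vertices must be a simplex (since $n$ points spanning an $(n-1)$-dimensional affine space are affinely independent).
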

\begin{lemma}
Let a polytope $P^n$, $n\geqslant 2$, be simple and simplicial. Then either $n=2$, or $P^n$ is
combinatorially equivalent to a simplex $\Delta^n$, $n>2$.
\end{lemma}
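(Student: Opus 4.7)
The plan is to reduce the statement to the exercise stated just before the subsection on Realization of $f$-vector, which asserts that a simple $n$-polytope with all $2$-faces triangles is combinatorially equivalent to $\Delta^n$. So the strategy has two parts: dispose of the $n=2$ case, and for $n\geqslant 3$ verify the hypothesis of that exercise.

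First I would handle $n=2$: every convex $2$-polytope is a polygon, each facet is an edge (hence a $1$-simplex), and each vertex lies on exactly two edges, so every $2$-polytope is simultaneously simple and simplicial. This gives the first alternative in the lemma without further work.

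Next, suppose $n\geqslant 3$ and $P^n$ is both simple and simplicial. Take any $2$-face $G$ of $P^n$. Since $n\geqslant 3$, $G$ is a proper face of some facet $F$ of $P^n$, and by the simplicial hypothesis $F$ is an $(n-1)$-simplex. Every face of a simplex is itself a simplex, so $G$ is a $2$-simplex, i.e.\ a triangle. Hence all $2$-faces of $P^n$ are triangles, and the quoted exercise (a simple $n$-polytope with all $2$-faces triangles is combinatorially equivalent to $\Delta^n$) yields $P^n\simeq\Delta^n$.

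There is really no hard step here once one grants the earlier exercise; the only mildly subtle point is making sure that for $n\geqslant 3$ every $2$-face indeed lies in some facet, so that the simpliciality of facets can be transferred to $2$-faces. That follows from the standard fact that faces of a convex polytope are intersections of facets, so any $2$-face is contained in (in fact equal to an intersection of) facets, and in particular is a face of some facet. If one wished to avoid invoking the exercise as a black box, the natural route would be dual: the dual $(P^n)^*$ is also simple and simplicial (by the duality lemma just above), and a short induction on $n$ using vertex figures, or equivalently the observation that every edge of a simple simplicial polytope for $n\geqslant 3$ forces adjacent facets to share all but one vertex, collapses $P^n$ to a simplex.
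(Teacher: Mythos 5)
The paper states this lemma without proof, so there is no argument in the text to compare against; your reduction to the preceding exercise (a simple $n$-polytope with all $2$-faces triangles is $\simeq\Delta^n$) is the natural one and is correct. The $n=2$ dichotomy is handled properly, and the key observation --- that in a simplicial polytope with $n\geqslant 3$ every $2$-face lies in a facet, which is an $(n-1)$-simplex, hence every $2$-face is a triangle --- is sound. One small wording slip: for $n=3$ a $2$-face \emph{is} a facet rather than a proper face of one, so you should say ``is a face of some facet'' (allowing equality) rather than ``proper face''; the conclusion that $G$ is a $2$-simplex is unaffected. The dual route you sketch at the end (induction via vertex figures) would work too and would make the lemma self-contained, at the cost of not reusing the exercise the authors evidently placed here for exactly this purpose.
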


\begin{example}
Among $5$ Platonic solids the tetrahedron is self-dual, the cube is dual to
the octahedron, and the dodecahedron is dual to the icosahedron.

There are no simplicial polytope among Archimedean solids. Polytopes dual to
Archimedean solids are called {\em Catalan solids}, since they where first
described by E.C. Catalan (1814-1894). For example, the polytope dual to
truncated icosahedron is called {\em pentakis dodecahedron}.
\begin{figure}
\begin{center}
\includegraphics[height=3cm]{Arh-5.pdf}\qquad\includegraphics[height=3cm]{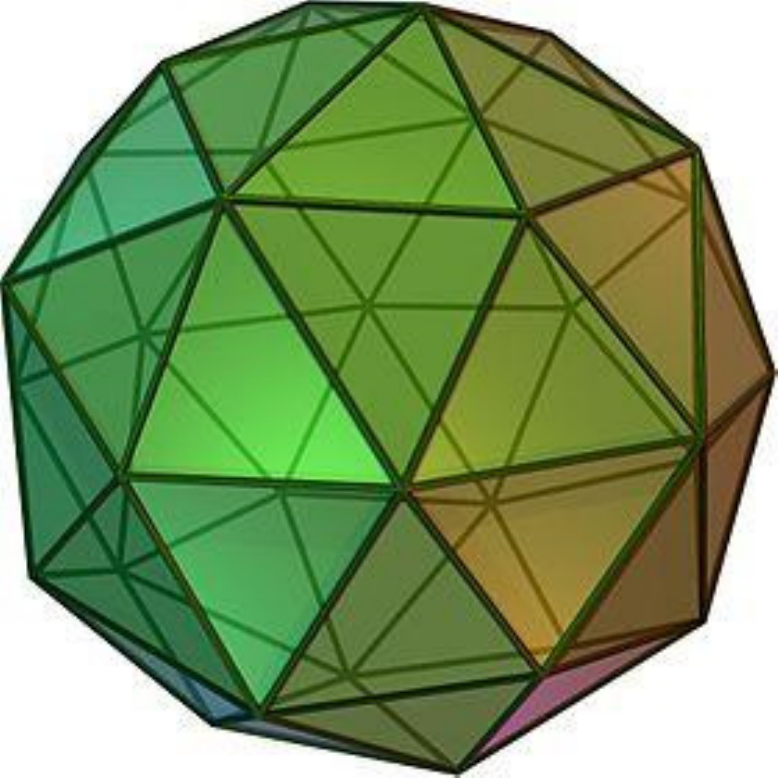}
\end{center}
\caption{Truncated icosahedron and pentakis dodecahedron (www.wikipedia.org)}\label{TI}
\end{figure}
\end{example}

\begin{figure}
\begin{center}
\includegraphics[scale=0.35]{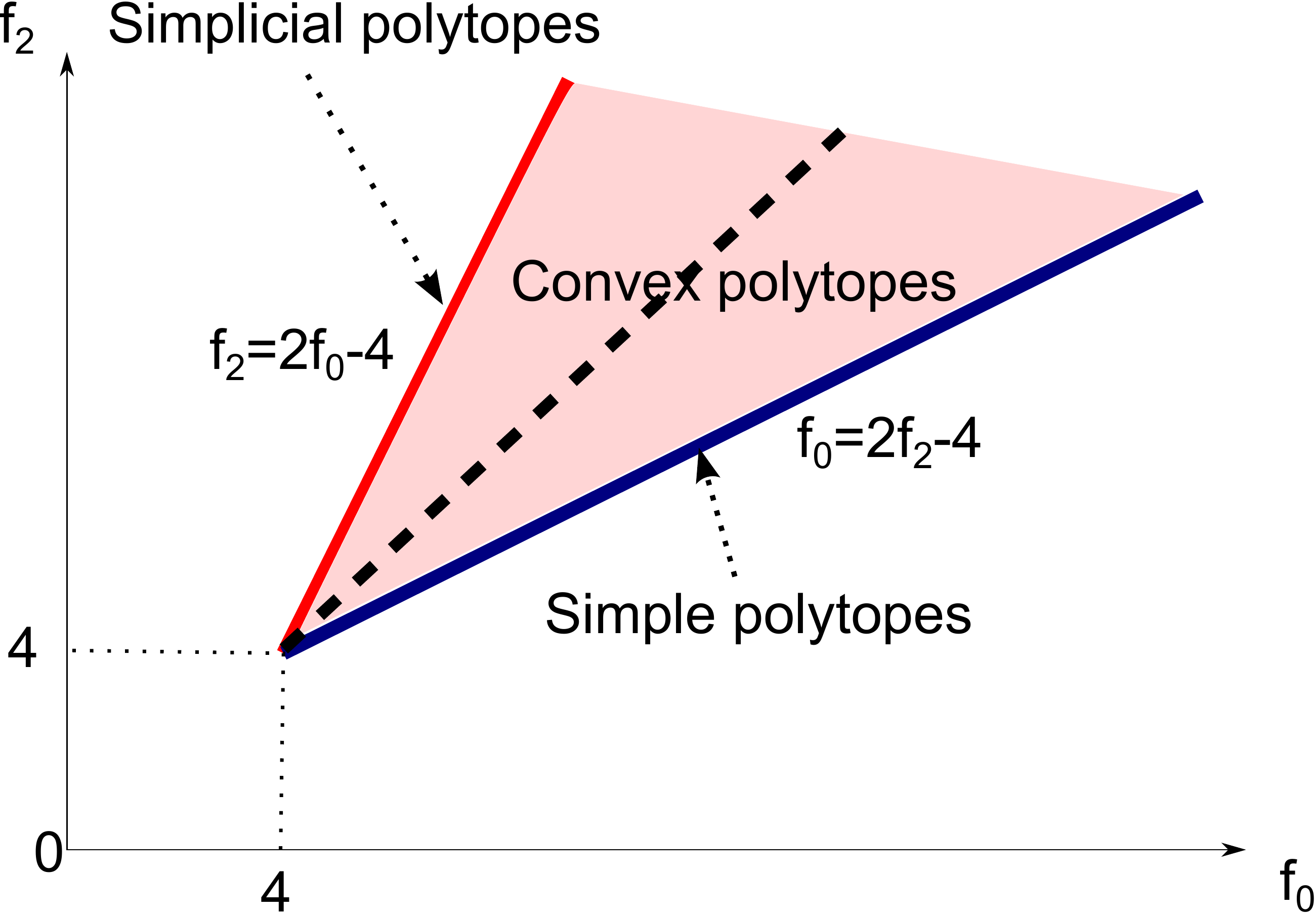}
\end{center}
\caption{By Steinitz's theorem and Euler's formula integer points inside the cone are in the one-to-one correspondence with $f$-vectors of convex $3$-polytopes
}\label{Plot}
\end{figure}

On Fig. \ref{Plot} the point $(4,4)$ corresponds to the tetrahedron. The bottom ray corresponds
to simple polytopes, the upper ray -- to simplicial. For $k\geqslant 3$
self-dual pyramids over $k$-gons give points on the diagonal.
\subsection{$k$-belts}
\begin{definition}
Let $P$ be a simple convex $3$-polytope. A {\em thick path}\index{thick path} is a sequence of facets $(F_{i_1},\dots,F_{i_k})$ with $F_{i_j}\cap F_{i_{j+1}}\ne\varnothing$ for $j=1,\dots,k-1$. A {\em $k$-loop}\index{$k$-loop} is a cyclic
sequence $(F_{i_1},\dots,F_{i_k})$ of facets,  such that $F_{i_1}\cap F_{i_2}$, $\dots$,
$F_{i_{k-1}}\cap F_{i_k}$, $F_{i_k}\cap F_{i_1}$ are edges. A $k$-loop is called {\em simple}\index{$k$-loop!simple $k$-loop}, if facets $(F_{i_1},\dots, F_{i_k})$ are pairwise different. 
\end{definition}
\begin{example}
Any vertex of $P$ is surrounded by a simple $3$-loop. Any edge is surrounded by a
simple $4$-loop. Any $k$-gonal facet is surrounded by a simple $k$-loop.
\end{example}
\begin{definition}
A {\em $k$-belt} \index{$k$-belt} is a $k$-loop,  such that $F_{i_1}\cap\dots\cap F_{i_k}=\varnothing$
and $F_{i_p}\cap F_{i_q}\ne\varnothing$ if and only if
$\{p,q\}\in\{\{1,2\},\dots,\{k-~1,k\},\{k,1\}\}$.
\end{definition}
\begin{figure}
\begin{center}
\includegraphics[height=5cm]{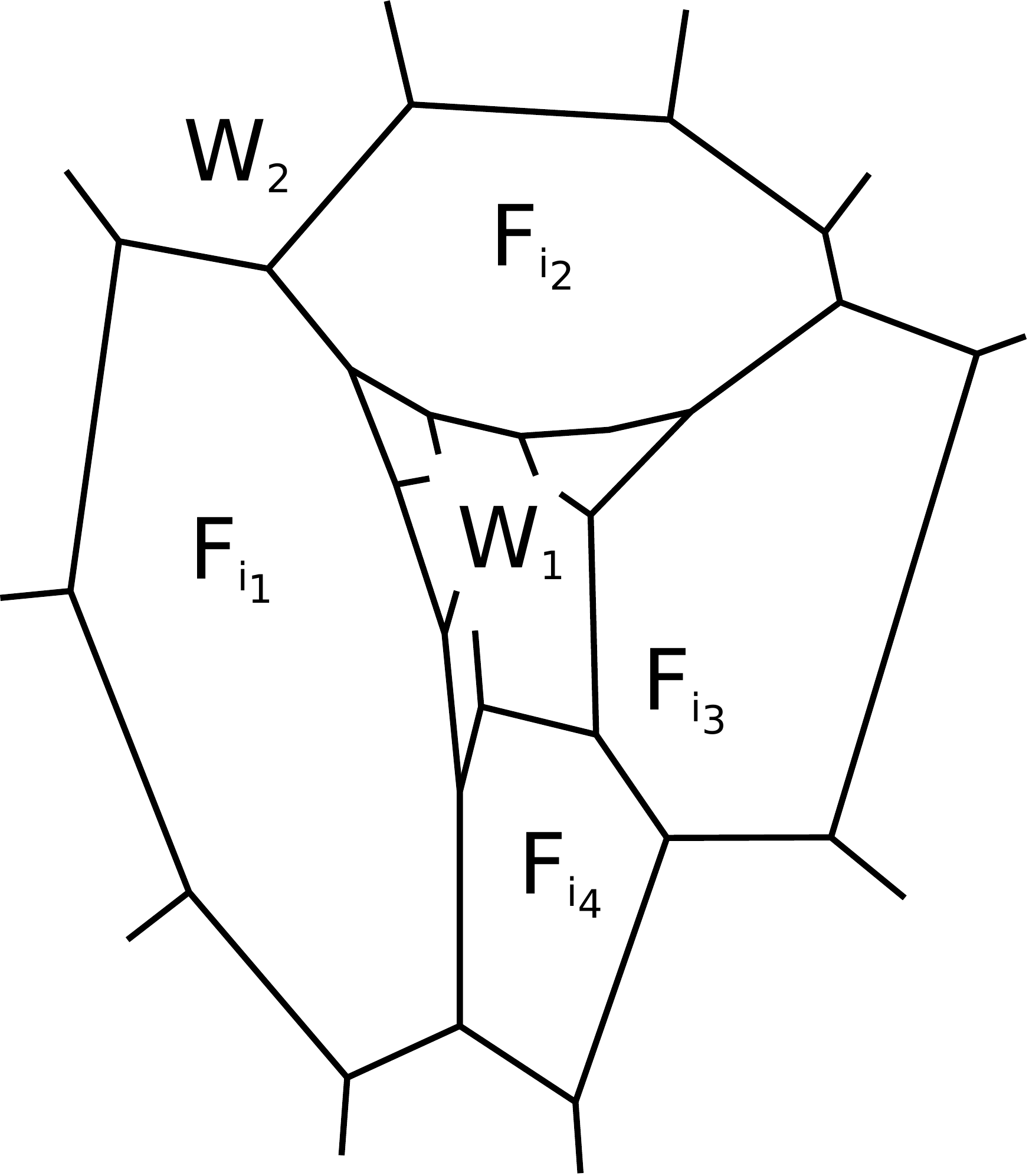}
\end{center}
\caption{$4$-belt of a simple $3$-polytope}
\end{figure}

\subsection{Simple paths and cycles} By $G(P)$ we denote a vertex-edge graph of a simple $3$-polytope $P$\index{graph!of a polytope}\index{polytope!graph}. We call it \\
the {\em graph of a polytope}. Let $G$ be a graph.
\begin{definition}
\begin{itemlist}
\item An {\em edge path}\index{edge path} is a sequence of vertices $(v_1,\dots,v_k)$,
    $k>1$ such that $v_i$ and $v_{i+1}$ are connected by some edge $E_i$
    for all $i<k$.
\item  An edge path is {\em simple} \index{edge path!simple}if it passes any vertex of $G$ at
    most once.
\item A {\em cycle} \index{cycle}is a simple edge path, such that $v_k=v_1$, where
    $k>2$. We denote a cycle by $(v_1,\dots, v_{k-1})$.
\end{itemlist}
\end{definition}
A cycle $(v_1,\dots,v_k)$ in the graph of a simplicial $3$-polytope $P$ is {\em dual}\index{cycle!dual to a $k$-belt} to a
$k$-belt in a simple $3$-polytope $P^*$ if all it's vertices do not lie in
the same face, and $v_i$ and $v_j$, are connected by an edge if and only if
$\{i,j\}\in\{\{1,2\},\dots,\{k-1,k\},\{k,1\}\}$.
\begin{figure}
\begin{center}
\includegraphics[scale=0.4]{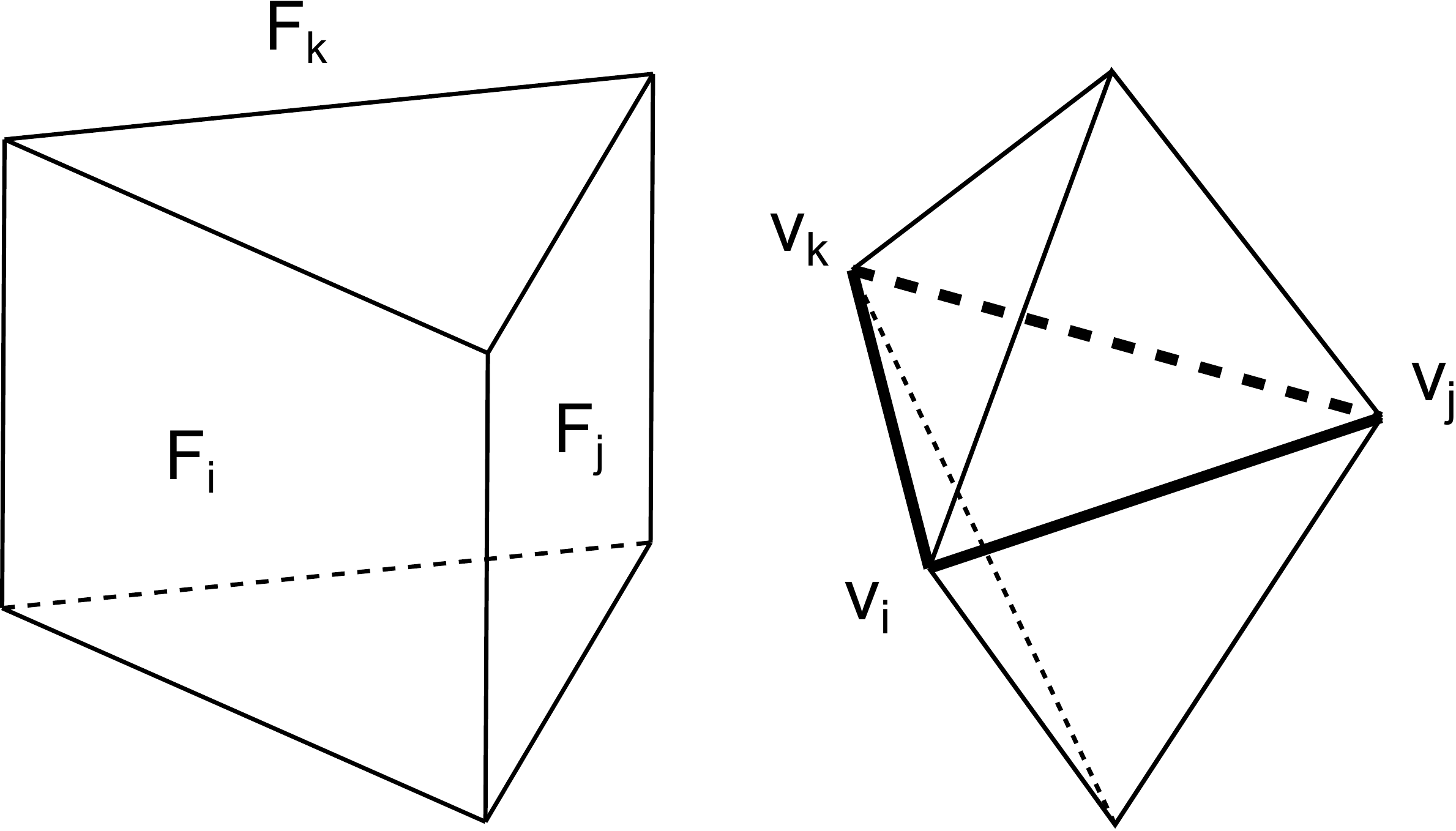}
\end{center}
\caption{$(F_i,F_j,F_k)$ is a $3$-belt\quad\quad $(v_i,v_j,v_k)$ is a
cycle dual to the $3$-belt}
\end{figure}
\begin{definition}
A {\em zigzag path} on a simple $3$-polytope is an edge path with no $3$ successive edges lying in the same facet. 
\end{definition}
Starting with one edge an choosing the second edge having with it a common vertex, we obtain a unique way to construct a zigzag.
\begin{definition}
A {\em zigzag cycle} on a simple $3$-polytope is a cycle with no $3$ successive edges lying in the same facet.
\end{definition}
\begin{figure}
\begin{center}
\includegraphics[height=3cm]{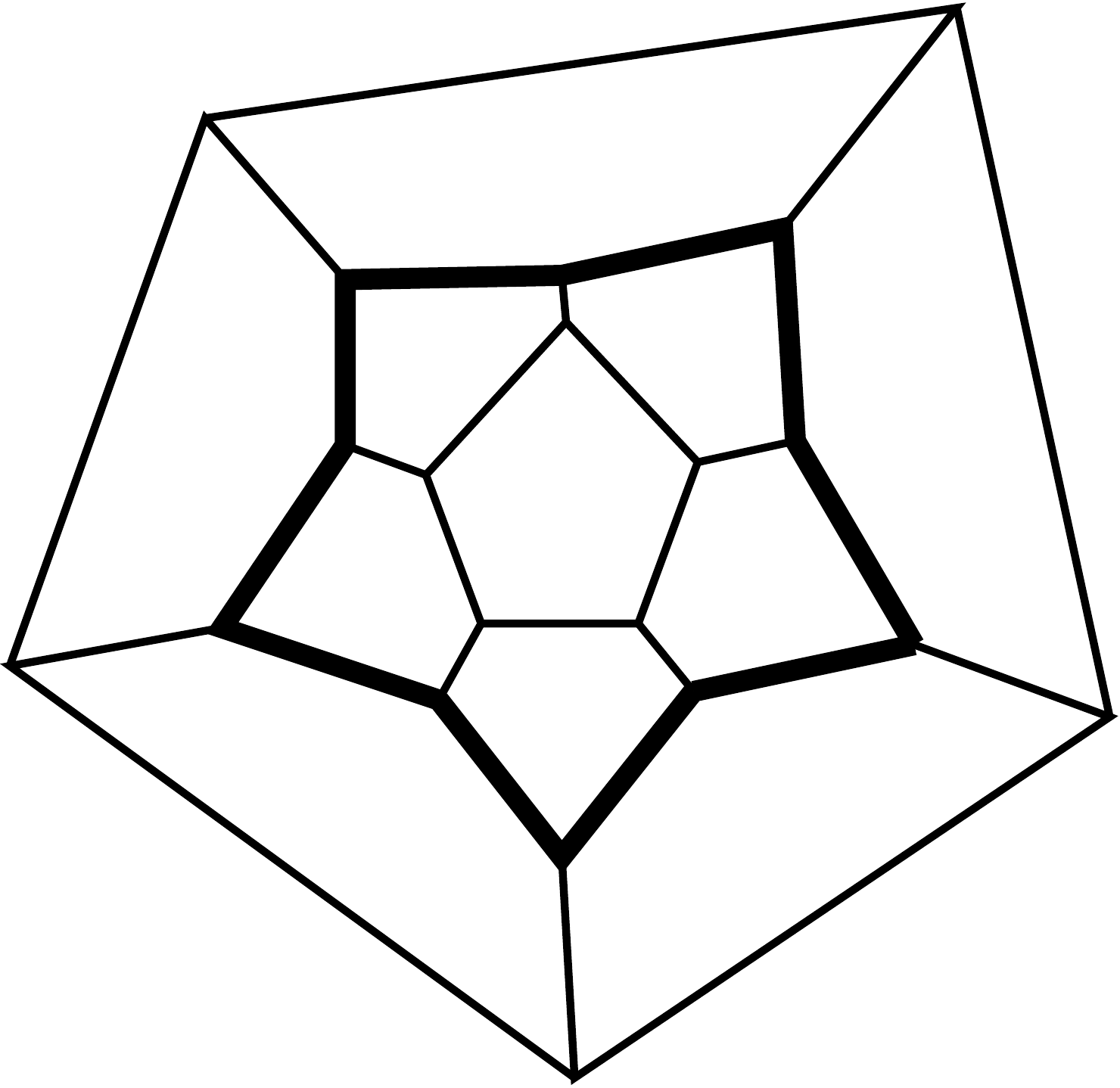}\\
\end{center}
\caption{A zigzag cycle on the Schlegel diagram of the dodecahedron}\label{zzf}
\end{figure}
\subsection{The Steinitz theorem}
\begin{definition}
A graph $G$ is called {\em simple}\index{graph!simple} if it has no loops and multiple edges. A connected graph $G$ is called $3$-connected\index{graph!$3$-connected}, if it has at least $6$ edges and deletion of any one or two vertices with all incident edges leaves $G$ connected.
\end{definition}
\begin{theorem} \label{S-Theorem}(The Steinitz theorem, see \cite{Z07})\\
\index{Steinitz theorem}\index{theorem!Steinitz}A graph $G$ is a graph of a $3$-polytope if and only if it is simple, planar and $3$-connected.
\end{theorem}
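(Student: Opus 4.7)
The plan is to prove both directions, with the hard content concentrated in the converse. For the direct implication, I would show that the graph $G(P)$ of a convex $3$-polytope $P\subset \mathbb R^3$ is automatically simple (distinct vertices of a convex polytope are joined by at most one straight segment, and no segment joins a vertex to itself), planar (its Schlegel diagram, already introduced in the excerpt, provides an explicit embedding in $\mathbb R^2$ as the $1$-skeleton of a planar subdivision), and $3$-connected (the edge count is at least $6$ because $f_0\geqslant 4$ and each vertex has valency at least $3$ in any $3$-polytope; the connectivity after removal of one or two vertices $v_1,v_2$ follows from a standard Balinski-style argument: choose a linear functional $\ell$ on $\mathbb R^3$ whose restriction to the vertex set is injective and which is non-constant on the affine hull of $\{v_1,v_2\}$; then from every remaining vertex one can walk monotonically along edges to reach either the $\ell$-maximum or the $\ell$-minimum of $P$, so the remaining graph is connected).

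For the converse, which is the real content of the theorem, my plan is to argue by induction on the number of edges using a set of reduction moves preserving the class of simple, planar, $3$-connected graphs. Concretely, I would use the classical lemma that any such graph $G$ distinct from $K_4$ admits either an \emph{edge contraction} $e\mapsto G/e$ or an \emph{edge deletion} $e\mapsto G\setminus e$ (more precisely a $\Delta$--$Y$ or $Y$--$\Delta$ exchange of Steinitz) that produces a strictly smaller graph still in the same class. Running these reductions backwards from the base graph $K_4$, which is the graph of the tetrahedron $\Delta^3$, one rebuilds $G$ by a sequence of inverse moves, and the entire induction reduces to showing that each inverse move is \emph{geometrically realizable}: given a convex $3$-polytope with graph $G'$, the move produces a convex $3$-polytope with graph $G$. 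The inverse of an edge contraction corresponds to splitting a vertex into two, realized geometrically by replacing the vertex by a small transverse cut; the inverse of an edge deletion (stacking or $Y$--$\Delta$) is realized by truncating a vertex, essentially the operation $v\mapsto \Delta(v)$ which will be used throughout the paper for fullerene constructions. Both operations can be performed while preserving convexity by perturbing the supporting hyperplanes slightly.

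The principal obstacle is the realizability step: one must show that at each stage the perturbation producing the new supporting hyperplane keeps the resulting set a bounded convex polytope with the prescribed combinatorial type. The cleanest way I know to organize this is via the equivalent approach of \emph{Tutte embeddings} plus \emph{Maxwell--Cremona lifting}: pick a facial $2$-face $F$ of the planar embedding of $G$, embed the remaining vertices in the interior of $F$ as the equilibrium position of a rubber-band network (Tutte's theorem guarantees the embedding is planar and that every bounded face is strictly convex, here using $3$-connectivity), and then lift this plane embedding to $\mathbb R^3$ via the Maxwell--Cremona correspondence between equilibrium stresses and piecewise-linear convex surfaces. The lifted polyhedral surface, capped by $F$, is the boundary of the sought convex $3$-polytope, and its graph is $G$ by construction.

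Either route concludes the proof; I would present the Steinitz reduction explicitly and cite the Tutte--Maxwell--Cremona lifting as the conceptual alternative, noting that the subtle combinatorial point in both approaches is the same: $3$-connectivity is exactly the hypothesis needed to prevent the induction (respectively the Tutte embedding) from degenerating, and it is also the only restriction beyond planarity because any planar graph admits a unique embedding in the sphere up to homeomorphism once it is $3$-connected (Whitney's theorem), so the face lattice reconstructed from the graph is well defined.
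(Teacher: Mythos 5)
The paper does not actually prove Steinitz's theorem: it is stated with a citation to Ziegler's \emph{Lectures on Polytopes} and used thereafter as a black box (e.g.\ in Lemma~\ref{3C-lemma}, Lemma~\ref{belt-cut-lemma}, and the proof of Lemma~\ref{cor-3-s}). So there is no in-paper proof to compare against, and your job was essentially to reproduce the argument from the literature. Your sketch correctly identifies the two classical routes to the hard implication --- the Steinitz local-reduction scheme ($\Delta$--$Y$ / $Y$--$\Delta$ exchanges with base case $K_4=G(\Delta^3)$, each inverse move realized geometrically by a small transverse cut or a vertex truncation) and the Tutte-embedding plus Maxwell--Cremona lifting route --- and it also correctly flags Whitney's uniqueness theorem as the reason the face structure recovered from a $3$-connected planar graph is well defined. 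At the level of a sketch this is accurate, and the emphasis on $3$-connectivity as exactly the hypothesis that prevents degeneration of the Tutte embedding (and of the induction) is the right conceptual summary.

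There is, however, one genuine flaw in the easy direction. Your Balinski-style argument for $3$-connectivity chooses a linear functional $\ell$ that is injective on the vertex set and non-constant on $\operatorname{aff}\{v_1,v_2\}$; the latter forces $\ell(v_1)\neq\ell(v_2)$, and nothing stops $v_1$ from being the global $\ell$-maximum and $v_2$ the global $\ell$-minimum of $P$. In that case, walking monotonically upward or downward from any surviving vertex terminates precisely at a deleted vertex, and the argument gives no connecting path. Balinski's actual proof goes the other way: one picks a \emph{third} surviving vertex $w$, passes a plane $H$ through $v_1,v_2,w$, and takes $\ell$ with $H=\ell^{-1}(0)$. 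Then $w$ is connected to both the set of vertices with $\ell>0$ and the set with $\ell<0$; the $\ell$-maximum over the positive side (when nonempty) has strictly positive $\ell$ and hence cannot be $v_1$ or $v_2$, and likewise on the negative side; and if all vertices lie on one closed side of $H$, then $H\cap P$ is a proper face so the surviving part of its graph is itself connected. You should replace your choice of $\ell$ by this one (drop the injectivity-on-vertices requirement, impose $\ell(v_1)=\ell(v_2)=\ell(w)=0$ instead) or simply cite Balinski's theorem, which is what the textbook treatments do.
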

\begin{remark}\label{GSU}
Moreover, the cycles in $G$ corresponding to facets are exactly chordless simple edge cycles $C$ with $G\setminus C$ disconnected; hence the combinatorics of the embedding $G\subset S^2$ is uniquely defined. 
\end{remark} 
We will need the following version of the Jordan curve theorem\index{Jordan curve theorem}\index{theorem!Jordan curve}. It can be proved rather directly similarly to the piecewise-linear version of this theorem on the plane.
\begin{theorem}\label{JtheoremS}
Let $\gamma$  be a simple piecewise-linear (in respect to some homeomorphism $S^2\simeq \partial P$ for a $3$-polytope $P$) closed curve on the sphere $S^2$. Then
\begin{enumerate}
\item $S^2\setminus\gamma$ consists of  two connected components
    $\mathcal{C}_1$ and~ $\mathcal{C}_2$.
\item Closure $\overline{\mathcal{C}_\alpha}$ is homeomorphic to a disk for each $\alpha=1,2$.
\end{enumerate}
\end{theorem}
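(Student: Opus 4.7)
The plan is to reduce the claim to a combinatorial statement on a polytopal cell structure, following the standard piecewise-linear Jordan curve theorem on the plane. First I would refine the face structure of $\partial P\simeq S^2$ so that every vertex of $\gamma$ is a vertex of the refinement and every linear segment of $\gamma$ is an edge; this is possible since $\gamma$ is PL with respect to the polytope structure. The resulting polytopal decomposition $K$ has $V$ vertices, $E$ edges, $F$ two-faces with $V-E+F=2$ by Euler's formula, and $\gamma$ appears as a simple cycle of length $n$ in the $1$-skeleton of $K$.

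For part (1), consider the dual graph $G^{*}$ whose vertices are the $2$-faces of $K$ and whose edges join pairs of $2$-faces sharing a $1$-edge; components of $S^{2}\setminus\gamma$ correspond bijectively to components of the subgraph $G^{*}_{\gamma}$ obtained by deleting the edges dual to edges of $\gamma$. I define a parity function $p$ on $2$-faces by fixing a base face $f_{0}$ and setting $p(f)$ equal to the number of $\gamma$-edges crossed by an arbitrary $G^{*}$-path from $f_{0}$ to $f$, modulo~$2$. The key lemma is that $p$ is well-defined, i.e.\ that every closed walk in $G^{*}$ crosses $\gamma$ an even number of times. This reduces to checking the statement on the elementary closed walks around each vertex $v$ of $K$: such a walk crosses $\gamma$ either $0$ times (if $v\notin\gamma$) or exactly $2$ times (if $v\in\gamma$, by simplicity of $\gamma$ at $v$). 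That every closed walk in $G^{*}$ decomposes as a $\mathbb{Z}/2$-sum of these elementary loops uses only the simple connectedness of $S^{2}$, which is a direct consequence of Euler's formula for $K$. Since $p$ takes both values (on the two faces of any edge of $\gamma$) and since two faces with equal parity are connected in $G^{*}_{\gamma}$ (a $G^{*}$-path from one to the other can be deformed along $\gamma$ to cancel pairs of crossings), there are exactly two components.

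For part (2), I would argue by induction on $n$. In the inductive step I locate an \emph{ear}: a $2$-face $f$ of $K$ on the $\alpha$-side such that $\partial f\cap\gamma$ is a single subpath $\sigma$ of $\gamma$; then replacing $\sigma$ by the complementary arc of $\partial f$ produces a simpler simple PL curve $\gamma'$ with fewer edges, and $\overline{\mathcal{C}_{\alpha}}$ is obtained from $\overline{\mathcal{C}_{\alpha}'}$ by attaching the disk $f$ along the arc $\sigma$, which preserves the disk property. If no such ear is available on the $\alpha$-side, there must be a $2$-face $f$ for which $\partial f\cap\gamma$ has at least two components; one can then use a segment of $\partial f$ to split $\gamma$ into two strictly shorter simple closed curves, apply induction to each, and glue the resulting disks along the splitting arc. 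The base case, e.g.\ $n=3$ with $\gamma$ the boundary of a single triangular face, is direct.

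The main obstacle is the well-definedness of the parity function in part (1) — equivalently, verifying the mod-$2$ simple connectivity of $S^{2}$ in purely combinatorial form — since this is precisely the point at which the sphere hypothesis enters, and without it the statement is false (on the torus, for instance). The inductive argument for part (2) also depends on the existence of an ear or of a valid splitting arc in every stage, and its verification itself rests on the component-count from part (1).
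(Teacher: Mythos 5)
The paper itself supplies no proof of this theorem; it only remarks that it ``can be proved rather directly similarly to the piecewise-linear version of this theorem on the plane,'' so your argument is filling an omission rather than being checked against a written proof. Your part (1) is a sound combinatorial rendering of the standard PL parity argument, and you correctly isolate the mod-$2$ simple connectivity of $S^2$ (via the claim that the elementary vertex loops generate the cycle space of the dual graph $G^*$, which follows from Euler's formula together with connectivity of $G^*$) as the one place where the sphere hypothesis is actually used.

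Part (2), however, has a genuine flaw: the induction measure $n$ (the number of edges of $\gamma$) does not decrease under the ear move. If the ear $f$ is a $k$-gon and $\sigma=\partial f\cap\gamma$ has $s$ edges, then $\gamma'$ has $n-s+(k-s)=n+k-2s$ edges, which exceeds $n$ whenever $k>2s$; already a triangular $f$ meeting $\gamma$ in a single edge produces a curve of length $n+1$. Your stated base case ($n=3$, $\gamma$ bounding a single triangular face) is likewise not the general minimal instance under this measure, since a length-$3$ cycle in the $1$-skeleton of $K$ can bound a region containing many $2$-faces. The repair is to induct instead on the number of $2$-faces of $K$ contained in $\mathcal{C}_\alpha$: the ear move removes exactly one such face (and $\gamma'$ stays in the $1$-skeleton of $K$, so no further refinement is needed), the splitting move partitions the faces of $\mathcal{C}_\alpha$ (after subdividing $f$ along the splitting arc) into two strictly smaller collections, and the base case becomes $\mathcal{C}_\alpha$ consisting of a single $2$-face, which is a disk by construction. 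With that change of measure the induction closes and the rest of your argument stands.
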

We will also need the following result.\index{graph!$3$-connected}
\begin{lemma}\label{3C-lemma}
Let $G\subset S^2$ be a finite simple graph with at least $6$ edges.  Then $G$ is $3$-connected if and only if all connected components of $S^2\setminus G$ are bounded by simple edge cycles and closures of any two areas (<<facets>>) either do not intersect, or intersect by a single common vertex, or intersect by a single common edge. 
\end{lemma}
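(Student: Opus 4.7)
The strategy is to prove both implications by contrapositive using the piecewise-linear Jordan curve theorem on $S^2$ (Theorem~\ref{JtheoremS}), which lets us translate topological separations of $S^2$ by simple closed curves into combinatorial vertex-cuts of $G$. At each vertex $v\in G$, the crucial local datum is the cyclic order of incident edges coming from the embedding $G\subset S^2$.

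For the ``only if'' direction, assume $G$ is $3$-connected and pick a connected component $\mathcal{C}$ of $S^2\setminus G$. First I would show that its boundary walk is a simple edge cycle. If not, some vertex $v$ (of degree $\geq 3$ by $3$-connectivity) is revisited, so $\mathcal{C}$ accumulates at $v$ in at least two non-consecutive angular sectors of the cyclic edge-order at $v$. Joining these two sectors by a piecewise-linear arc inside $\mathcal{C}$ and closing it through $v$ yields a simple closed curve meeting $G$ only at $v$; by Theorem~\ref{JtheoremS} it separates $S^2$ into two closed disks, each of which must contain a vertex of $G\setminus\{v\}$ (because the sectors are non-consecutive and the intervening edges lead to other vertices). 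Hence $v$ is a cut vertex, contradicting $3$-connectivity. A repeated edge in the walk is handled by the same idea. For the intersection condition, assume two facets $\overline{\mathcal{C}_1},\overline{\mathcal{C}_2}$ meet outside the allowed patterns; I would then extract two points $u,v$ in $\overline{\mathcal{C}_1}\cap\overline{\mathcal{C}_2}$ whose coexistence cannot be explained by a single shared vertex or edge, connect them by arcs inside $\mathcal{C}_1$ and $\mathcal{C}_2$ to form a simple closed curve meeting $G$ only at $\{u,v\}$, and apply Jordan's theorem to conclude that $\{u,v\}$ is a $2$-vertex cut, again contradicting $3$-connectivity.

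For the ``if'' direction, assume the facet conditions hold but $G$ is not $3$-connected. If $G$ is disconnected, at least one face has disconnected boundary and so cannot be bounded by a single simple cycle. If $v$ is a cut vertex with $G\setminus\{v\}$ splitting into components $G_1,G_2$, the cyclic edge-order at $v$ contains two consecutive edges $e,e'$ leading into different $G_i$; the boundary walk of the face wedged between $e$ and $e'$ must then revisit $v$ before it can cross into the second component, violating simplicity. For a $2$-vertex cut $\{u,v\}$ with components $G_1,G_2$, I would produce $u$-$v$ paths through each $G_i\cup\{u,v\}$ and use the cyclic orders at $u$ and $v$ to locate two faces whose closures meet at both $u$ and $v$ without sharing an edge, contradicting the intersection hypothesis.

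The main obstacle is the Jordan-curve bookkeeping: in every sub-case one must explicitly construct a simple closed curve that meets $G$ precisely at the designated vertices, and then confirm that both complementary disks contain further vertices of $G$. The standing assumption of at least $6$ edges enters here, by ruling out small degenerate configurations in which an apparent $2$-cut could hide inside a single edge or a doubled edge and thereby short-circuit the separation argument.
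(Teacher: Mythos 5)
Your proposal is sound, but it handles the $(\Leftarrow)$ direction (facet conditions imply $3$-connectivity) in a genuinely different way from the paper. You argue by contrapositive: assume the facet conditions and a cut set of size $\leq 2$, then exhibit a face whose boundary walk revisits a cut vertex, or two facets whose closures meet at $\{u,v\}$ without sharing the edge $uv$. The paper, by contrast, proves this direction \emph{directly and constructively}: given $u_1,u_2$ to delete, it builds around each $u_\alpha$ a simple edge cycle $\eta_\alpha$ by stringing together paths in the facets incident to $u_\alpha$ (the facet conditions are used precisely to verify $\eta_\alpha$ is simple), and then literally reroutes an arbitrary $v_1$--$v_2$ path around those cycles. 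Your version buys symmetry --- both implications become Jordan-curve-flavored separation arguments --- while the paper's version produces an explicit detour path, which is arguably a cleaner ``positive'' statement. In the $(\Rightarrow)$ direction (3-connectivity implies the facet conditions) you and the paper use essentially the same argument: revisiting a vertex or sharing two non-adjacent boundary points yields a piecewise-linear simple closed curve meeting $G$ only in one or two vertices, and Theorem~\ref{JtheoremS} then exhibits the small cut.

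Two places in your sketch need more care than the one-line descriptions suggest. First, in your $2$-cut sub-case of $(\Leftarrow)$: you must rule out that the two faces you locate are the two faces adjacent to an actual edge $uv$ (in which case they share that edge and the conditions are \emph{not} violated). The resolution is that if the separating curve through $u,v$ were to pass through the two faces bordering $uv$, it would have the edge $uv$ and two empty lunes on one side and all of $G\setminus\{u,v\}$ on the other, so it would not separate the components of $G\setminus\{u,v\}$ --- a contradiction --- but this has to be said. Second, your remark on the role of the ``at least $6$ edges'' hypothesis is vaguer than the paper's: the paper invokes it only once, in the $(\Rightarrow)$ direction, to dispose of the degenerate configuration where two facets are bounded by a common $3$-cycle (forcing $G$ to have exactly $3$ edges). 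Your $(\Leftarrow)$ direction does not need it, since ``$3$-connected'' already incorporates the edge count by definition, so the hypothesis only does work on the other side.
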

\begin{proof}
Let $G$ satisfy the condition of the lemma. We will prove that $G$ is $3$-connected.
Let $v_1\ne v_2$, $u_1,u_2\notin\{v_1,v_2\}$, be vertices of $G$, perhaps $u_1=u_2$. We need to prove that  there is an edge-path from $v_1$ to $v_2$ in $G\setminus\{u_1,u_2\}$. Since $G$ is connected, there is an edge-path $\gamma$ connecting $v_1$ and $v_2$. Consider the vertex $u_\alpha$, $\alpha\in\{1,2\}$, and all facets $F_{i_1},\dots, F_{i_p}$ containing it. From the hypothesis of  the lemma $p\geqslant 3$. Since the graph is embedded to the sphere, after relabeling we obtain a simple $p$-loop $(F_{i_1},\dots, F_{i_p})$. For $j=\in\{1,\dots,p\}$ denote by $w_j$ the end of the edge $F_{i_j}\cap F_{i_{j+1}}$ different from $u_\alpha$,  where $F_{i_{p+1}}=F_{i_1}$.  Let $g_j$ be the simple edge-path connecting $w_{j-1}$ and $w_j$ in $F_{i_j}\setminus u_\alpha$ (See Fig. \ref{star-w}).
\begin{figure}
\begin{center}
\includegraphics[height=4cm]{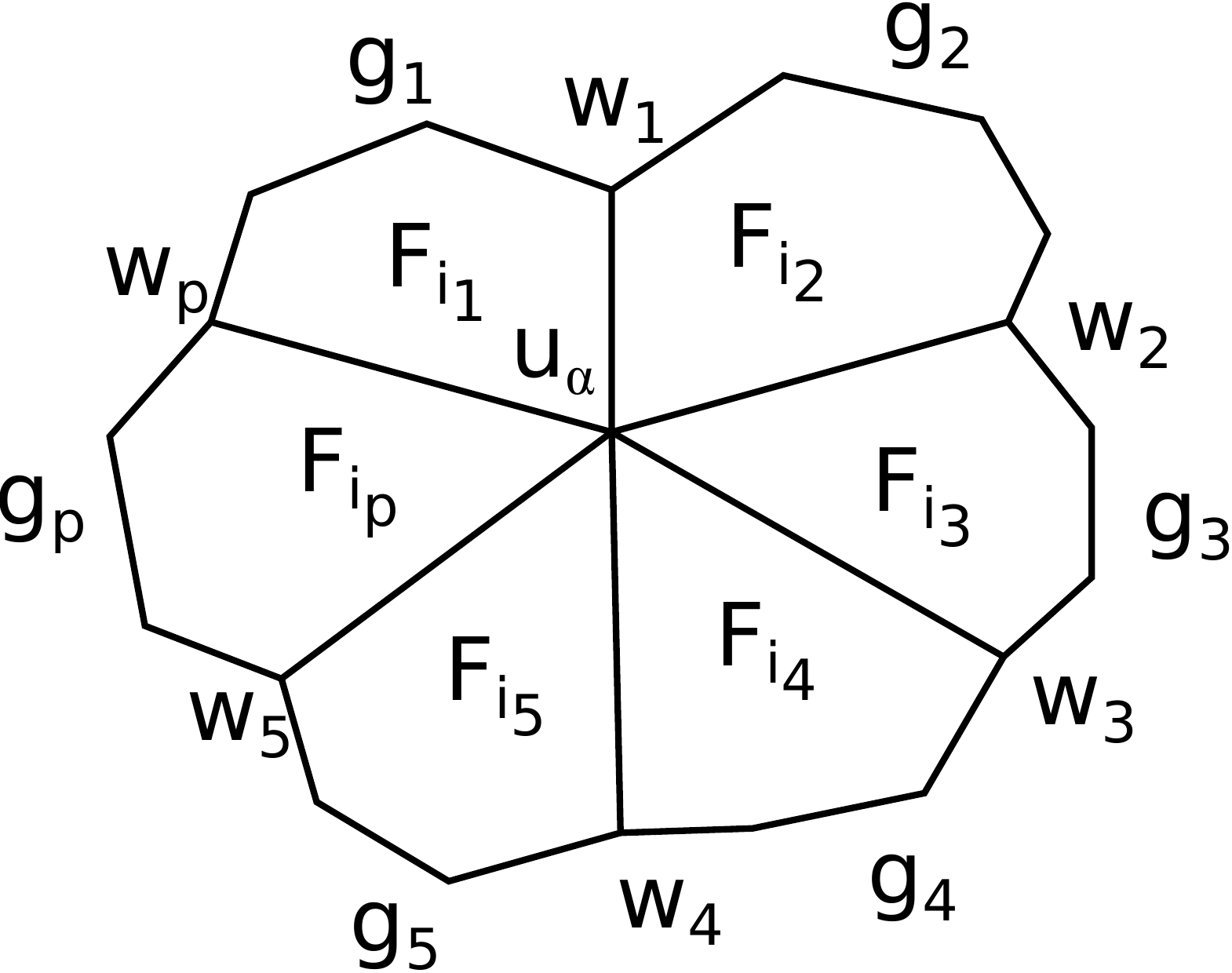}
\end{center}
\caption{Star of the vertex $u_\alpha$}\label{star-w}
\end{figure}
Then $\eta_\alpha=(g_1,g_2,\dots, g_p)$ is a simple edge-cycle. Indeed, if $g_s$ and $g_t$ have common vertex, then this vertex belongs to $F_{i_s}\cap F_{i_t}$ together with $u_\alpha$; hence it is connected with $u_\alpha$ by an edge; therefore $\{s,t\}=\{k,k+1\mod p\}$ for some $k$, and the vertex is $w_k$. If $u_1$ and $u_2$ are different and are connected by an edge $E$, then $E=F_{i_s}\cap F_{i_t}$ for some $s,t\in\{1,\dots,p\}$, $s-t=\pm1 \mod p$, and we can change $\gamma$ not to contain $E$ substituting  the simple edge-path in $F_{i_s}\setminus E$ for $E$. Now for the new path $\gamma_1$ consider all times it passes $u_\alpha$. We can remove all the fragments $(w_i,u_\alpha, w_j)$  and substitute the simple edge path in $\eta_\alpha\setminus u_\beta$ connecting $w_i$ and $w_j$ for each fragment $(w_i,u_\alpha,w_j)$. The same can be done for $u_\beta$, $\{\alpha,\beta\}=\{1,2\}$. Thus we obtain the edge-path $\gamma_2$ connecting $v_1$ and $v_2$ in $G\setminus\{u_1,u_2\}$.

Now let $G$ be $3$-connected. Consider the connected component $D$ of $S^2\setminus G$ and it's boundary $\partial D$. If there is a hanging vertex $v\in\partial D$ of $G$, then deletion of the other end of the edge containing $v$ makes the graph disconnected. Hence any vertex $v\in\partial D$ of $G$ gas valency at least $2$, and $D$ is surrounded by an edge-cycle $\eta$. If $\eta$ is not simple, then there is a vertex $v\in\eta$ passed several times.  Then the area $D$ appears several times when we walk around the vertex $v$. Since $D$ is connected, there is a simple piecewise-linear (in respect to some homeomorphism $S^2\simeq \partial P$ for a $3$-polytope $P$) closed curve $\eta$ in the closure $\overline{D}$ of $D$ with the only point $v$ on the boundary. Walking round $v$, we pass edges in both connected component of $S^2\setminus\eta$; hence the deletion of $v$ divides $G$ into several connected components. Thus the cycle $\eta$ is simple. Let facets $F_1=\overline{D_1}$, $F_2=\overline{D_2}$  have two common vertices $v_1$, $v_2$. Consider piecewise linear simple curves $\eta_1\subset F_1$, $\eta_2\subset F_2$, with ends $v_1$ and $v_2$ and all other points lying in $D_1$ and $D_2$ respectively. Then $\eta_1\cup \eta_2$ is a simple piecewise-linear closed curve; hence it separates the sphere $S^2$ into two connected components. If $v_1$ and $v_2$ are not adjacent in $F_1$ or $F_2$, then both connected components contain vertices of $G$; hence deletion of $v_1$ and $v_2$ makes the graph disconnected. Thus any two common vertices are adjacent in both facets. Moreover, since there are no multiple edges, the corresponding edges belong to both facets. Then either both facets are surrounded by a common $3$-cycle, and in this case $G$ has only $3$ edges, or any two facets either do not intersect, or intersect by a common vertex, or intersect by a common edge. This finishes the proof.
\end{proof}
Let $\mathcal{L}_k=(F_{i_1},\dots, F_{i_k})$ be a simple $k$-loop for $k\geqslant 3$. Consider midpoints $w_j$ of edges $F_{i_j}\cap F_{i_{j+1}}$, $F_{i_{k+1}}=F_{i_1}$ and segments $E_j$ connecting $w_j$ and $w_{j+1}$ in $F_{j+1}$.  Then $(E_1,\dots,E_k)$ is a simple piecewise-linear curve $\eta$ on $\partial P$. It separates $\partial P\simeq S^2$ into two areas homeomorphic to discs $D_1$ and $D_2$ with $\partial D_1=\partial D_2=\eta$.  Consider two graphs $G_1$ and $G_2$ obtained from the graph $G(P)$ of $P$ by addition of vertices $\{w_j\}_{j=1}^k$ and edges $\{E_j\}_{j=1}^k$, and deletion of all vertices and edges with interior points inside $D_1$ or $D_2$ respectively.  
\begin{lemma}\label{belt-cut-lemma}
There exist simple polytopes $P_1$ and $P_2$ with graphs $G_1=G(P_1)$ and $G_2=G(P_2)$. 
\end{lemma}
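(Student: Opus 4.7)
The plan is to apply the Steinitz theorem (Theorem~\ref{S-Theorem}) to $G_1$ and, symmetrically, to $G_2$: I will verify that $G_1$ is simple, planar, and $3$-connected, and that every vertex has valency $3$, so that the resulting polytope $P_1$ is automatically simple. Planarity is immediate since $G_1$ sits inside the closed disk $\overline{D_1}\subset S^2$. Simplicity (no loops, no multiple edges) is also clear: the new segments $E_j$ connect distinct new vertices $w_j,w_{j+1}$ that were not vertices of $G(P)$ at all, so no parallel or looped edges can be produced, while any old loops or multi-edges would have to be inherited from the simple graph $G(P)$.

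Next I would check $3$-valency at every vertex of $G_1$. Any vertex of $G(P)$ lying strictly inside $D_1$ keeps all three incident edges since the construction does not touch the interior of $D_1$. An endpoint of $F_{i_j}\cap F_{i_{j+1}}$ on the $D_1$-side loses the edge to its other endpoint (which is deleted with the $D_2$-side) but gains the half-edge to $w_j$, so its valency is again $3$. Each new vertex $w_j$ is incident to exactly three edges: the segments $E_{j-1}\subset F_{i_j}$, $E_j\subset F_{i_{j+1}}$, and the $D_1$-half of $F_{i_j}\cap F_{i_{j+1}}$.

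The main obstacle is $3$-connectedness, which I plan to establish via Lemma~\ref{3C-lemma}. For this I would first enumerate the connected components of $S^2\setminus G_1$: (a) each non-loop facet of $P$ contained in $\overline{D_1}$; (b) the $D_1$-half of each loop facet $F_{i_j}$, bounded by the $D_1$-portion of $\partial F_{i_j}$ together with $E_{j-1}$ and $E_j$; and (c) one new face, namely $S^2\setminus\overline{D_1}$, bounded by the simple $k$-cycle $(E_1,\dots,E_k)$. Each is bounded by a simple edge-cycle. The pairwise intersection condition of Lemma~\ref{3C-lemma} then breaks into cases: intersections among type-(a) faces and between type-(a) and type-(b) faces are inherited from $P$, whose graph is $3$-connected and thus satisfies Lemma~\ref{3C-lemma}; two type-(b) faces share at most what the parent loop facets shared in $P$, restricted to the $D_1$-side, which is empty, a single vertex, or a single (half-)edge; and the type-(c) face meets each type-(b) face in exactly one edge $E_j$ and meets no type-(a) face. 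Finally, the $6$-edge hypothesis of Lemma~\ref{3C-lemma} is always met: even in the smallest case, when $\mathcal{L}_k$ is a $3$-loop surrounding a single vertex, $G_1$ is the tetrahedral graph with exactly $6$ edges, and in all other cases $\overline{D_1}$ carries additional structure. Steinitz then delivers a $3$-polytope $P_1$ with $G(P_1)=G_1$, which is simple by the $3$-valency check; the same argument produces $P_2$.
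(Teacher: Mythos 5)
Your proof takes essentially the same route as the paper: both verify that each $G_\alpha$ is planar and simple, enumerate the faces of $G_\alpha$ exactly as you do (inherited facets, halves of the loop facets, and the one new face bounded by $\eta$), check the pairwise-intersection conditions of Lemma~\ref{3C-lemma}, and then invoke the Steinitz theorem. The only differences are cosmetic: you carry out the explicit $3$-valency check that the paper dismisses as ``simple by construction,'' and your labeling of $D_1$ versus $D_2$ is swapped relative to the paper's convention (per the construction, $G_1$ lives in $\overline{D_2}$, not $\overline{D_1}$), which does not affect the argument.
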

\begin{proof}
The proof is similar for both graphs; hence we consider the graph $G_1$. It has at least $6$ edges, is connected and planar. Now it is sufficient to prove that the hypothesis of Lemma \ref{3C-lemma} is valid. For this we see each facet of $G_1$ is either a facet of $P$, or it is a part of a facet $F_{i_j}$ for some $j$, or it is bounded by the cycle $\eta$. In particular, all facets are bounded by simple edge-cycles. If the facets $F_i$ and $F_j$ are both of the first two types they either do not intersect or intersect by common edge as it is in $P$. If $F$ is the facet bounded by $\eta$, then it intersects only facets $(F_{i_1},\dots,F_{i_k})$, and each intersection is an edge $F\cap F_{i_j}$, $j=1,\dots,k$.  
\end{proof}
\begin{definition}
We will call polytopes $P_1$ and $P_2$ {\em loop-cuts}\index{loop-cut} (or, more precisely, {\em $\mathcal{L}_k$-cuts}) of $P$.
\end{definition}

\newpage
\section{Lecture 2. Combinatorics of simple polytopes}
\subsection{Flag polytopes}
\begin{definition}
A simple polytope is called {\em flag}\index{flag polytope}\index{polytope!flag} if any set of pairwise intersecting
facets $F_{i_1},\dots,F_{i_k}$: $F_{i_s}\cap F_{i_t}\ne\varnothing$,
$s,t=1,\dots,k$, has a nonempty intersection $F_{i_1}\cap\dots\cap
F_{i_k}\ne\varnothing$. \end{definition}
\begin{figure}
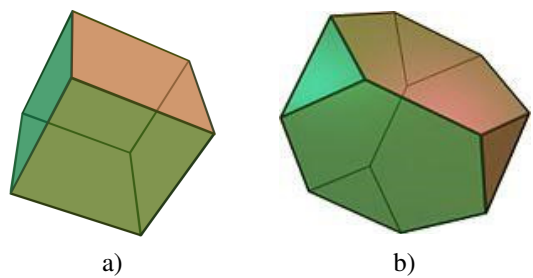

\begin{center}
\begin{tabular}{ccc}
\includegraphics[height=3cm]{Cube.pdf}&\hspace{3cm}&\includegraphics[height=3cm]{Arh-3.pdf}\\
a) && b)
\end{tabular}
\end{center}
\caption{a) flag polytope; b) non-flag polytope  (www.wikipedia.org)}
\end{figure}
\begin{example}
$n$-simplex $\Delta^n$ is not a flag polytope for $n\geqslant 2$.
\end{example}
\begin{proposition}\label{nflag}Simple $3$-polytope $P$ is {\bf not flag} if and only if either
    $P=\Delta^3$, or $P$ contains a  $3$-belt.
\end{proposition}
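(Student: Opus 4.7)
The plan is to prove both implications. The ``if'' direction is direct: in $\Delta^3$ the four facets pairwise meet along edges while their common intersection is empty, so $\Delta^3$ is not flag; and in any 3-belt $(F_{i_1},F_{i_2},F_{i_3})$ the three facets pairwise intersect in edges with $F_{i_1}\cap F_{i_2}\cap F_{i_3}=\varnothing$, so a polytope containing a 3-belt is not flag.

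For the converse I would choose a minimal witness of non-flagness: a family $F_{i_1},\dots,F_{i_k}$ of pairwise intersecting facets with empty common intersection and $k$ smallest possible, so $k\geq 3$. Two elementary facts about simple 3-polytopes will be used throughout: (a) at every vertex exactly three facets meet and any two of them share the edge opposite the third, so any two intersecting facets of $P$ share an edge; and (b) at most three facets of $P$ share a common point. Fact (b) immediately rules out $k\geq 5$: minimality forces every $(k-1)$-subfamily to have nonempty intersection, but for $k-1\geq 4$ this would place four facets at one point. Hence $k\in\{3,4\}$.

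If $k=3$, then by (a) each pairwise intersection is an edge, the triple is a simple 3-loop, and the empty triple intersection makes it a 3-belt. The remaining case $k=4$ I claim forces $P=\Delta^3$. By minimality every triple of the four facets has nonempty intersection, hence by (b) meets at a unique vertex $v_{abc}$, and these four vertices are pairwise distinct since any coincidence would place four facets at one vertex. Each edge $F_{i_a}\cap F_{i_b}$ then contains both $v_{abc}$ and $v_{abd}$ with $\{c,d\}=\{1,2,3,4\}\setminus\{a,b\}$, which, having only two endpoints, are exactly its endpoints. Consequently each $F_{i_j}$ is a triangle with vertex set $\{v_{abc}:j\notin\{a,b,c\}\}$, and the union $\bigcup_j F_{i_j}$ realises the combinatorial 2-sphere $\partial\Delta^3$. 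As a closed piecewise-linear surface embedded in the connected surface $\partial P\cong S^2$, it must coincide with $\partial P$, giving $P=\Delta^3$.

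The main obstacle I anticipate is this final reconstruction in the $k=4$ case, specifically the argument that the four triangles cover all of $\partial P$. The key input is that every edge of every $F_{i_j}$ has the form $F_{i_j}\cap F_{i_l}$ for another $l$ in the family, so no additional facet of $P$ can be glued along the configuration, and connectedness of $\partial P$ forces the configuration to be all of $\partial P$.
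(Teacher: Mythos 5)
Your proof is correct. The paper states Proposition~\ref{nflag} without any proof, so there is nothing in the text to compare your argument against; your self-contained argument fills that gap. The structure is sound: the ``if'' direction is immediate from the definitions; for the converse, taking a minimal witness $F_{i_1},\dots,F_{i_k}$ of non-flagness and using that at most three facets of a simple $3$-polytope can meet (applied to a $(k-1)$-subfamily, which must have nonempty intersection by minimality) correctly pins down $k\in\{3,4\}$; $k=3$ yields a $3$-belt because in a simple $3$-polytope two intersecting facets always share an edge and the cyclic condition in the definition of a $3$-belt is vacuous; and your reconstruction in the $k=4$ case --- each of the four triple intersections is a distinct vertex, each pairwise intersection is an edge with those vertices as its two endpoints, hence each $F_{i_j}$ is a triangle whose boundary is a closed $3$-cycle contained in $\partial F_{i_j}$, forcing equality --- together with the open-and-closed argument in the connected surface $\partial P\cong S^2$, correctly gives $P\simeq\Delta^3$. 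One small slip: in the sentence ``each $F_{i_j}$ is a triangle with vertex set $\{v_{abc}:j\notin\{a,b,c\}\}$'' the condition should read $j\in\{a,b,c\}$, since $v_{abc}=F_{i_a}\cap F_{i_b}\cap F_{i_c}$ lies on $F_{i_j}$ exactly when $j$ is one of $a,b,c$; the rest of the argument makes clear this is only a typo.
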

\begin{corollary}
Simple $3$-polytope $P\ne\Delta^3$ is flag if and only any $3$-loop
corresponds to a vertex.
\end{corollary}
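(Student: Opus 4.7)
The plan is to prove both implications by analyzing a minimal collection of pairwise intersecting facets with empty common intersection. The backward direction is immediate: for $P=\Delta^3$ the four triangular facets pairwise share edges but have empty total intersection, so $P$ is not flag; for a $3$-belt $(F_{i_1},F_{i_2},F_{i_3})$, the three facets pairwise intersect along edges but have empty triple intersection by the very definition of a belt.

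For the forward direction, assume $P$ is not flag and pick a pairwise intersecting collection $\{F_{i_1},\dots,F_{i_k}\}$ with empty common intersection and $k$ minimal; necessarily $k\geq 3$. I first verify that $k\leq 4$: if $k\geq 5$, minimality forces every proper subcollection (in particular every subcollection of size $4$) to have nonempty intersection, producing a point in four distinct facets, which contradicts simplicity since exactly three facets meet at each vertex. If $k=3$, then (using that in a simple $3$-polytope any two intersecting facets meet in a single edge) the three facets form a $3$-loop with empty triple intersection, i.e., a $3$-belt, and we are done.

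The main case, and the main obstacle, is $k=4$ together with the auxiliary assumption that $P$ has no $3$-belt (toward the alternative conclusion $P=\Delta^3$). Under this assumption every triple among our four facets is pairwise intersecting with \emph{nonempty} triple intersection, so by simplicity each triple meets at a unique vertex $v_{pqr}$ whose star consists of exactly those three facets; this yields four distinct vertices $v_{123},v_{124},v_{134},v_{234}$. For a fixed facet $F_{i_j}$, each of its three edges $F_{i_j}\cap F_{i_s}$ ($s\neq j$) has endpoints precisely at the two vertices $v_{pqr}$ containing both $F_{i_j}$ and $F_{i_s}$, so these three edges form a $3$-cycle in $\partial F_{i_j}$; since $\partial F_{i_j}$ is a single polygonal cycle, this forces $F_{i_j}$ to be a triangle. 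The four triangles, six edges and four vertices thus assemble into a closed subsurface of $\partial P\simeq S^2$ with Euler characteristic $2$, which must exhaust $\partial P$, whence $P=\Delta^3$.
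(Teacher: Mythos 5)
Your proof is correct, and it genuinely adds content: the paper places this corollary immediately after the proposition ``$P$ is not flag iff $P=\Delta^3$ or $P$ has a $3$-belt'' (Proposition~\ref{nflag}) and treats it as a direct restatement, giving no argument for either statement. Your minimal-counterexample approach proves both from scratch: take a pairwise-intersecting family of facets with empty total intersection of minimal size $k$; rule out $k\geq 5$ because a nonempty intersection of four facets is impossible in a simple $3$-polytope; observe that $k=3$ yields exactly a $3$-belt; and show that $k=4$ forces all four facets to be triangles assembling into a closed subcomplex of $\partial P\simeq S^2$ of Euler characteristic $2$, hence $P=\Delta^3$, which the hypothesis excludes. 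Two cosmetic points. First, your labels ``forward'' and ``backward'' are swapped relative to the statement as written: showing that a $3$-belt destroys flagness is the contrapositive of ``flag $\Rightarrow$ every $3$-loop corresponds to a vertex,'' i.e.\ the forward implication. Second, in the $k=4$ case the auxiliary ``no $3$-belt'' hypothesis is redundant, since minimality of $k$ already guarantees that every triple among the four facets has nonempty intersection and hence, by simplicity, meets in a vertex. Neither remark affects the correctness of the argument.
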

\begin{proposition}\label{facet-belt} Simple $3$-polytope $P$ is flag if and only if any facet 
is surrounded by a $k$-belt, where $k$ is the number of it's edges.
\end{proposition}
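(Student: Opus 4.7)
The plan is to verify both implications using Proposition \ref{nflag} (which characterizes non-flag simple $3$-polytopes), the fact that every facet of a simple $3$-polytope is surrounded by a simple $k$-loop (given in the example just after the definition of loops), and the local combinatorics forced by simplicity (each vertex lies in exactly three facets).

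\textbf{Forward direction.} Suppose $P$ is flag, and let $F$ be a facet with $k$ edges, surrounded by the simple $k$-loop $(F_{i_1},\dots,F_{i_k})$. I need to check the two belt conditions. For the ``only if'' part of the pairwise condition, suppose $F_{i_p}\cap F_{i_q}\neq\varnothing$ for some non-cyclically-adjacent pair. Then $F,F_{i_p},F_{i_q}$ pairwise intersect, so by the flag property there is a vertex $v\in F\cap F_{i_p}\cap F_{i_q}$. Since $P$ is simple, $v$ lies in exactly three facets, so $v$ is an endpoint of the edge $F\cap F_{i_p}$ whose third facet must be one of the two cyclic neighbors $F_{i_{p\pm 1}}$, forcing $q=p\pm 1\bmod k$, a contradiction. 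For the empty total intersection, suppose $v\in F_{i_1}\cap\dots\cap F_{i_k}$. Simplicity gives $k=3$, so $F$ is a triangle and $v\notin F$. Tracing edges $F_{i_j}\cap F_{i_{j+1}}$ from $v$ to the vertices of $F$ shows the vertex set of $P$ consists of the three vertices of $F$ together with $v$, so $P=\Delta^3$. But $\Delta^3$ is not flag, contradicting our assumption. Hence the surrounding loop is a $k$-belt.

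\textbf{Converse.} Suppose $P$ is not flag. By Proposition \ref{nflag}, either $P=\Delta^3$ or $P$ contains a $3$-belt. If $P=\Delta^3$, then every facet is triangular and its three neighbors share the opposite vertex, so the triple intersection is non-empty and the surrounding $3$-loop fails to be a $3$-belt. Now assume $P$ contains a $3$-belt $\mathcal{L}=(F_a,F_b,F_c)$. I claim $F_a$ is not surrounded by a belt. Indeed, $F_b$ and $F_c$ both appear as neighbors of $F_a$ in its surrounding loop. If they were cyclically adjacent in this loop, the edges $F_a\cap F_b$ and $F_a\cap F_c$ would share a vertex lying in $F_a\cap F_b\cap F_c$, contradicting the belt condition on $\mathcal{L}$. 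Hence $F_b,F_c$ occupy non-adjacent positions in $F_a$'s surrounding loop, yet $F_b\cap F_c\neq\varnothing$ by the loop condition on $\mathcal{L}$, so the surrounding loop of $F_a$ violates the second belt condition.

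\textbf{Main obstacle.} The only delicate step is the converse in the case of a $3$-belt, since one must locate a facet whose surrounding loop is explicitly not a belt. The key observation that dissolves the difficulty is that every facet of the offending $3$-belt itself serves as such a counterexample: the very condition $F_a\cap F_b\cap F_c=\varnothing$ that makes $\mathcal{L}$ a $3$-belt also forces $F_b$ and $F_c$ into non-adjacent slots of $F_a$'s cyclic neighbor-list, while their non-empty mutual intersection then obstructs $F_a$'s loop from being a belt. Everything else is a routine application of simplicity (three facets per vertex) and the definitions.
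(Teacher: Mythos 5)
Your proof is correct, and it takes essentially the same route as the paper: both reduce to Proposition \ref{nflag} together with the observation (which the paper states without proof) that a facet fails to be surrounded by a belt precisely when it belongs to a $3$-belt or $P=\Delta^3$. You simply make explicit the verification of the two belt conditions that the paper leaves to the reader, using that at each vertex exactly three facets meet so that two facets of a simple $3$-polytope meeting at all meet in a unique edge.
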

\begin{proof}
A simplex is not flag and has no $3$-belts.

By Proposition \ref{nflag} a simple $3$-polytope $P\not\simeq\Delta^3$ is not flag if and only if it has a $3$-belt. The facet $F\subset P$ is not surrounded by a belt if and only if it belongs to a $3$-belt.   
\end{proof}
\begin{corollary} For any flag simple $3$-polytope $P$ we have $p_3=0$.
\end{corollary}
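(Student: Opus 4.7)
The plan is to argue by contradiction, combining the two immediately preceding statements (Propositions~\ref{nflag} and~\ref{facet-belt}) in a direct way. The key observation is that $p_3$ counts triangular $2$-faces, and a triangular facet has exactly three edges, so its surrounding simple loop has length three. Since flagness forces every such surrounding loop to be a \emph{belt}, a triangular facet would manufacture exactly the forbidden configuration (a $3$-belt), while a flag polytope is characterized by the absence of $3$-belts.

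Concretely, I would proceed as follows. First, observe that if $P$ is a flag simple $3$-polytope then $P\ne\Delta^3$, since the simplex was already noted not to be flag (and this is also recorded in Proposition~\ref{nflag}). Next, suppose toward a contradiction that $p_3>0$, and pick any triangular facet $F\subset P$. Apply Proposition~\ref{facet-belt}: because $P$ is flag and $F$ has exactly three edges, $F$ is surrounded by a $3$-belt, i.e.\ the three facets meeting $F$ along its edges form a $3$-belt of $P$. Finally, invoke Proposition~\ref{nflag} in the contrapositive direction: since $P\ne\Delta^3$ and $P$ contains a $3$-belt, $P$ cannot be flag, contradicting the hypothesis. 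Hence $p_3=0$.

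There is essentially no hard step here; the statement is a clean consequence of the two previous propositions once one unpacks what ``surrounded by a $k$-belt'' means when $k=3$. The only thing to be mindful of is the bookkeeping around the simplex: Proposition~\ref{nflag} characterizes non-flag simple $3$-polytopes as being either $\Delta^3$ or containing a $3$-belt, so one must explicitly note that the flag hypothesis excludes $\Delta^3$ before deriving the contradiction from the $3$-belt produced by the triangular facet.
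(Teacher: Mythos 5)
Your argument is correct and is exactly the intended route: a triangular facet of a flag polytope would, by Proposition~\ref{facet-belt}, be surrounded by a $3$-belt, and then Proposition~\ref{nflag} (together with the fact that $\Delta^3$ is not flag) yields the contradiction. The paper gives no explicit proof for this corollary, but your derivation is the natural one-line consequence of the two preceding propositions that the authors clearly have in mind.

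One minor remark: the clause ``since $P\ne\Delta^3$ and $P$ contains a $3$-belt'' is slightly redundant --- Proposition~\ref{nflag} is stated as a disjunction, so the presence of a $3$-belt by itself already forces $P$ to be non-flag, without needing to separately exclude the simplex. The exclusion of $\Delta^3$ matters in the other direction (when passing from ``not flag'' to ``has a $3$-belt''), which is not the direction you use. Still, pointing it out does no harm.
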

Later (see Lecture \ref{ConFul}) we will need the following result.
\begin{proposition}\label{34belts}
A flag $3$-polytope $P$ has no $4$-belts if and only if any pair of adjacent facets is surrounded by a belt.
\end{proposition}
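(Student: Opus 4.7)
The plan is to identify, for an adjacent pair of facets $F_i$ and $F_j$ sharing edge $E = v_1 v_2$ in the flag $3$-polytope $P$, the natural cyclic sequence $\mathcal{L}$ of facets that touch $F_i \cup F_j$ along its boundary, and check when $\mathcal{L}$ is a belt. Let $G_\alpha$ denote the third facet at $v_\alpha$ for $\alpha = 1,2$; let $H_1, \ldots, H_{k_j-3}$ be the side neighbors of $F_j$ (those sharing an edge of $F_j$ not incident to $v_1$ or $v_2$) listed in their natural order around $F_j$, and similarly $K_1, \ldots, K_{k_i-3}$ for $F_i$. Then
\[
\mathcal{L} = (G_1, H_1, \ldots, H_{k_j-3}, G_2, K_1, \ldots, K_{k_i-3})
\]
is a $k$-loop with $k = k_i + k_j - 4$. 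Since a flag polytope has no triangles ($p_3 = 0$ by the Corollary to Proposition \ref{facet-belt}), both $k_i, k_j \geqslant 4$, so $k \geqslant 4$ and the total intersection of the members of $\mathcal{L}$ is automatically empty.

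For the forward direction (assume no $4$-belts), I check the belt property by verifying that every non-consecutive pair in $\mathcal{L}$ is disjoint. Most cases copy the argument of Proposition \ref{facet-belt}: if two non-consecutive side neighbors of $F_j$ met in an edge, flagness applied to $F_j$ plus those two facets would force a common vertex of $F_j$ at which the two sides are adjacent, contradicting non-consecutiveness in $\mathcal{L}$; the same argument handles $H_a$ versus $G_\alpha$, and symmetrically the side neighbors of $F_i$. For $G_1 \cap G_2$, a common edge together with flagness of the triple $\{F_i, G_1, G_2\}$ would give a vertex of $F_i$ at which $G_1$ and $G_2$ both appear; but in the cyclic order around $F_i$ they are separated by $F_j$ on one side and by at least one $K$ on the other (since $k_i \geqslant 4$), so they cannot share a vertex of $F_i$. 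The delicate case is $H_a \cap K_b$: if this is a common edge, then $(F_i, F_j, H_a, K_b)$ is a $4$-belt. All four consecutive pairs meet in edges. For the non-consecutive pair $\{F_i, H_a\}$, the edge $F_j \cap H_a$ avoids $v_1$ and $v_2$, so $F_i \cap F_j \cap H_a \subseteq E \cap (F_j \cap H_a) = \varnothing$; flagness of $P$ then forces $F_i \cap H_a = \varnothing$, and symmetrically $F_j \cap K_b = \varnothing$. This produces a $4$-belt, contradicting our assumption.

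For the backward direction, suppose every adjacent pair is surrounded by a belt and let $(A, B, C, D)$ be a $4$-belt. Apply the hypothesis to the adjacent pair $F_i = A$, $F_j = B$. Since $A \cap C = \varnothing$, the facet $C$ cannot be $G_1$ or $G_2$ (both of which touch $A$), so $C$ appears in $\mathcal{L}$ as some $H_a$; symmetrically $D$ is some $K_b$. But $H_a$ and $K_b$ occupy non-consecutive positions of $\mathcal{L}$, while $C \cap D$ is an edge (consecutive pair of the $4$-belt), contradicting the belt property of $\mathcal{L}$. The main obstacle is the last case of the forward direction, in which the $4$-belt $(F_i, F_j, H_a, K_b)$ has to be assembled and its non-consecutive disjointness conditions ruled out by flagness; the other cases are direct adaptations of the arguments underlying Proposition \ref{facet-belt}.
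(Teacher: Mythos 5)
Your proof takes essentially the same route as the paper's: construct the loop $\mathcal{L}$ bordering $(F_i,F_j)$, show that two non-consecutive members of $\mathcal{L}$ sharing an edge would force a $3$- or $4$-belt, and run the converse by noting that a $4$-belt $(A,B,C,D)$ places $C$ and $D$ at non-consecutive positions of the loop around $(A,B)$. Your identification of $H_a\cap K_b$ as the one genuinely new case beyond Proposition~\ref{facet-belt}, and its reduction to a $4$-belt via flagness, matches the paper's ``one adjacent to $F_i$, the other to $F_j$'' case.

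There is one small gap. The belt definition requires that non-consecutive entries of $\mathcal{L}$ be \emph{disjoint}, and your case analysis (``met in an edge'', ``if this is a common edge'') silently assumes the two facets are distinct; you also need to rule out a facet appearing at two non-consecutive positions of $\mathcal{L}$. The paper disposes of this first: a repeated facet, being a side-neighbor of $F_i$ at one occurrence and of $F_j$ at the other, makes $(F_i,F_j,F_{i_a})$ a $3$-belt, contradicting flagness. In your notation the cases close quickly --- two $H$'s cannot coincide since a facet meets $F_j$ in a single face, $H_a\ne G_\alpha$ since $H_a$ avoids $v_\alpha$, $H_a=K_b$ contradicts the identity $F_i\cap H_a=\varnothing$ that you already derive from flagness, and $G_1=G_2$ would put both $v_1$ and $v_2$ in $F_i\cap F_j\cap G_1$, impossible in a simple $3$-polytope --- but the check belongs in the proof so that the verification of the belt condition is complete.
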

\begin{proof}
The pair $(F_i,F_j)$ of adjacent facets is a $2$-loop and is surrounded by a simple edge-cycle. Let $\mathcal{L}=(F_{i_1}, \dots, F_{i_k})$ be the $k$-loop that borders it. If $\mathcal{L}$ is not simple, then $F_{i_a}=F_{i_b}$ for $a\ne b$. Then $F_{i_a}$ and $F_{i_b}$ are not adjacent to the same facet $F_i$ or $F_j$. Let $F_{i_a}$  be adjacent to $F_i$, and $F_{i_b}$ to $F_j$. Then $(F_i,F_j,F_{i_a})$ is a $3$-belt. A contradiction. Hence $\mathcal{L}$ is a simple loop. If it is not a belt, then $F_{i_a}\cap F_{i_b}\ne\varnothing$ for non-successive facets $F_{i_a}$ and $F_{i_b}$. From Proposition \ref{facet-belt} we obtain that $F_{i_a}$ and $F_{i_b}$ are not adjacent to the same facet $F_i$ or $F_j$. Let $F_{i_a}$  be adjacent to $F_i$, and $F_{i_b}$ to $F_j$. Then $(F_{i_a}, F_i,F_j,F_{i_b})$ is a $4$-belt. On the other hand, if there is a $4$-belt $(F_i,F_j,F_k,F_l)$, then facets $F_k$ and $F_l$ belong to the loop surrounding the pair $(F_i,F_j)$. Since $F_i\cap F_k=\varnothing=F_j\cap F_l$, they are not successive facets of this loop; hence the loop is not a belt. This finishes the proof. 
\end{proof}
In the combinatorial study of fullerenes the following version of the Jordan
curve theorem gives the important tool. It follows from the Theorem \ref{JtheoremS}.
\begin{theorem}\label{Jtheorem}
Let $\gamma$  be a simple edge-cycle on a simple $3$-polytope $P$. Then
\begin{enumerate}
\item $\partial P\setminus\gamma$ consists of  two connected components
    $\mathcal{C}_1$ and~ $\mathcal{C}_2$.
    \item Let $\mathcal{D}_\alpha=\{F_j\in\mathcal{F}_P \colon {\rm int}\,
        F_j\subset \mathcal{C}_{\alpha}\}$,
        $\alpha=1,2$. Then $\mathcal{D}_1\sqcup
        \mathcal{D}_2=\mathcal{F}_P$.
\item The closure $\overline{\mathcal{C}_\alpha}$ is homeomorphic to a disk.
    We have $\overline{\mathcal{C}_\alpha}=|\mathcal{D}_\alpha|$.
\end{enumerate}
\end{theorem}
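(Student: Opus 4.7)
The plan is to deduce this directly from the piecewise-linear Jordan curve theorem on $S^2$ (Theorem \ref{JtheoremS}) applied to the boundary sphere $\partial P$. First, I would observe that $\gamma$, being a simple edge-cycle, is a simple closed curve on $\partial P$ that is piecewise-linear with respect to a homeomorphism $S^2 \simeq \partial P$: its linear pieces are precisely the edges comprising $\gamma$. Hence parts (1) and the first claim of (3) (that $\overline{\mathcal{C}_\alpha}$ is a disk) follow verbatim from Theorem \ref{JtheoremS}.

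For part (2), the key point is that $\gamma$ is contained in the $1$-skeleton of $P$, so for every facet $F_j$ the interior $\operatorname{int} F_j$ is disjoint from $\gamma$. Since $\operatorname{int} F_j$ is connected and $\mathcal{C}_1, \mathcal{C}_2$ are the connected components of $\partial P \setminus \gamma$, exactly one of them contains $\operatorname{int} F_j$. This yields a partition of $\mathcal{F}_P$ into $\mathcal{D}_1 \sqcup \mathcal{D}_2$.

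For the identification $\overline{\mathcal{C}_\alpha} = |\mathcal{D}_\alpha|$, the inclusion $|\mathcal{D}_\alpha| \subset \overline{\mathcal{C}_\alpha}$ is immediate: if $F_j \in \mathcal{D}_\alpha$, then $F_j = \overline{\operatorname{int} F_j} \subset \overline{\mathcal{C}_\alpha}$. The reverse inclusion requires a local analysis at points $x \in \overline{\mathcal{C}_\alpha}$ lying on the $1$-skeleton. For $x$ in the relative interior of an edge $E \not\subset \gamma$, a small neighborhood of $x$ in $\partial P$ splits by $E$ into two half-disks lying in the interiors of the two facets containing $E$; both meet $\mathcal{C}_\alpha$ near $x$, so both facets lie in $\mathcal{D}_\alpha$ and $x \in |\mathcal{D}_\alpha|$. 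For a vertex $v \notin \gamma$ the same argument applied to the three facets meeting at $v$ (recall $P$ is simple) shows $v \in |\mathcal{D}_\alpha|$. Finally, for $x \in \gamma$, the edge of $\gamma$ through $x$ is shared by two facets whose interiors lie on opposite sides of $\gamma$, hence one of them belongs to $\mathcal{D}_\alpha$ and contains $x$.

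The main obstacle is the last local analysis on the $1$-skeleton, since one must verify carefully that no edge or vertex on the $\mathcal{C}_\alpha$-side of $\gamma$ is inadvertently shared with a facet in $\mathcal{D}_\beta$; this is handled by the fact that $\partial P$ is a topological manifold at every point and that facet interiors are open, so the component of $\partial P \setminus \gamma$ containing $x$ forces both local facets at $x$ into the same class $\mathcal{D}_\alpha$.
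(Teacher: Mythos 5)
Your proposal is correct and follows the same route the paper takes: the paper gives no explicit proof for Theorem~\ref{Jtheorem} beyond the one-line remark that it follows from the piecewise-linear Jordan curve theorem (Theorem~\ref{JtheoremS}), which is exactly your starting point. Your local analysis of edges and vertices filling in part (2) and the identification $\overline{\mathcal{C}_\alpha}=|\mathcal{D}_\alpha|$ is sound (with the small clarification that at a vertex $v\in\gamma$ there are two edges of $\gamma$ through $v$ and a third edge not in $\gamma$, so $\gamma$ locally separates the three sectors into a $1{+}2$ split, which is what makes the argument work there).
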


\begin{corollary}
If we remove the $3$-belt from the surface of a simple $3$-polytope, we obtain two parts
$W_1$ and $W_2$, homeomorphic to disks.
\end{corollary}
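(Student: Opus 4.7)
The plan is to recognize the three-belt $B := F_{i_1}\cup F_{i_2}\cup F_{i_3}\subset\partial P$ as an embedded annulus whose two boundary circles are simple edge-cycles of $\partial P$, and then to apply the polytopal Jordan curve theorem (Theorem~\ref{Jtheorem}) to each of them.

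First I would identify $\partial B$ combinatorially. Since $P$ is simple, every vertex lies in exactly three facets, while the belt condition $F_{i_1}\cap F_{i_2}\cap F_{i_3}=\varnothing$ prevents any vertex from lying in all three belt facets. In each belt polygon $F_{i_j}$ the two shared edges $F_{i_j}\cap F_{i_{j-1}}$ and $F_{i_j}\cap F_{i_{j+1}}$ therefore share no common vertex, so $\partial F_{i_j}$ minus these two edges splits into two disjoint edge-paths $\alpha_j^1, \alpha_j^2$ lying on opposite sides of $F_{i_j}$. Concatenating $\alpha_1^\alpha, \alpha_2^\alpha, \alpha_3^\alpha$ through the endpoints of the shared edges yields two edge-cycles $\gamma_1, \gamma_2\subset\partial P$. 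Each interior vertex of some $\alpha_j^\alpha$ belongs to $F_{i_j}$ and two non-belt facets (by simplicity), while each glue-point vertex lies in exactly two consecutive belt facets; combined with the belt condition, this shows that each $\gamma_\alpha$ is simple and that $\gamma_1\cap\gamma_2=\varnothing$.

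Next, I apply Theorem~\ref{Jtheorem} to $\gamma_1$: the sphere $\partial P$ splits into two open components whose closures are disks, and since $B\setminus\gamma_1=\mathrm{int}(B)\cup\gamma_2$ is connected, it lies entirely in just one of them; the other is an open disk $W_1$ disjoint from $B$. Symmetrically, applying Theorem~\ref{Jtheorem} to $\gamma_2$ produces an open disk $W_2\subset\partial P\setminus B$. Since $W_1$ is bounded by $\gamma_1$ and $W_2$ by the disjoint cycle $\gamma_2$, the two open disks are disjoint, and an Euler-characteristic count (using $\chi(\partial P)=2$ and $\chi(B)=0$ for the closed annulus $B$) forces $\partial P\setminus B=W_1\sqcup W_2$.

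The main obstacle is the first step: a careful combinatorial verification that $\partial B$ consists of two disjoint simple edge-cycles. This genuinely uses both the simplicity of $P$ and the full belt condition that distinct belt facets intersect only along the three cyclic edges. Once the annulus picture is established, the conclusion becomes an immediate double application of the polytopal Jordan curve theorem.
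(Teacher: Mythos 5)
Your proposal is correct and takes essentially the same approach as the paper: the corollary is stated as an immediate consequence of Theorem~\ref{Jtheorem}, and the paper's more general Lemma~\ref{belt-lemma} (that a $k$-belt is a cylinder with boundary two disjoint simple edge-cycles bounding two disks) is dispatched with the single sentence ``The proof is straightforward using Theorem~\ref{Jtheorem}.'' Your careful verification that the belt is an annulus with two disjoint simple boundary cycles --- using that the empty triple intersection prevents the two belt edges of $F_{i_j}$ from sharing a vertex --- is precisely the combinatorial content the paper leaves implicit.
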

\begin{figure}
\begin{center}
\includegraphics[scale=0.2]{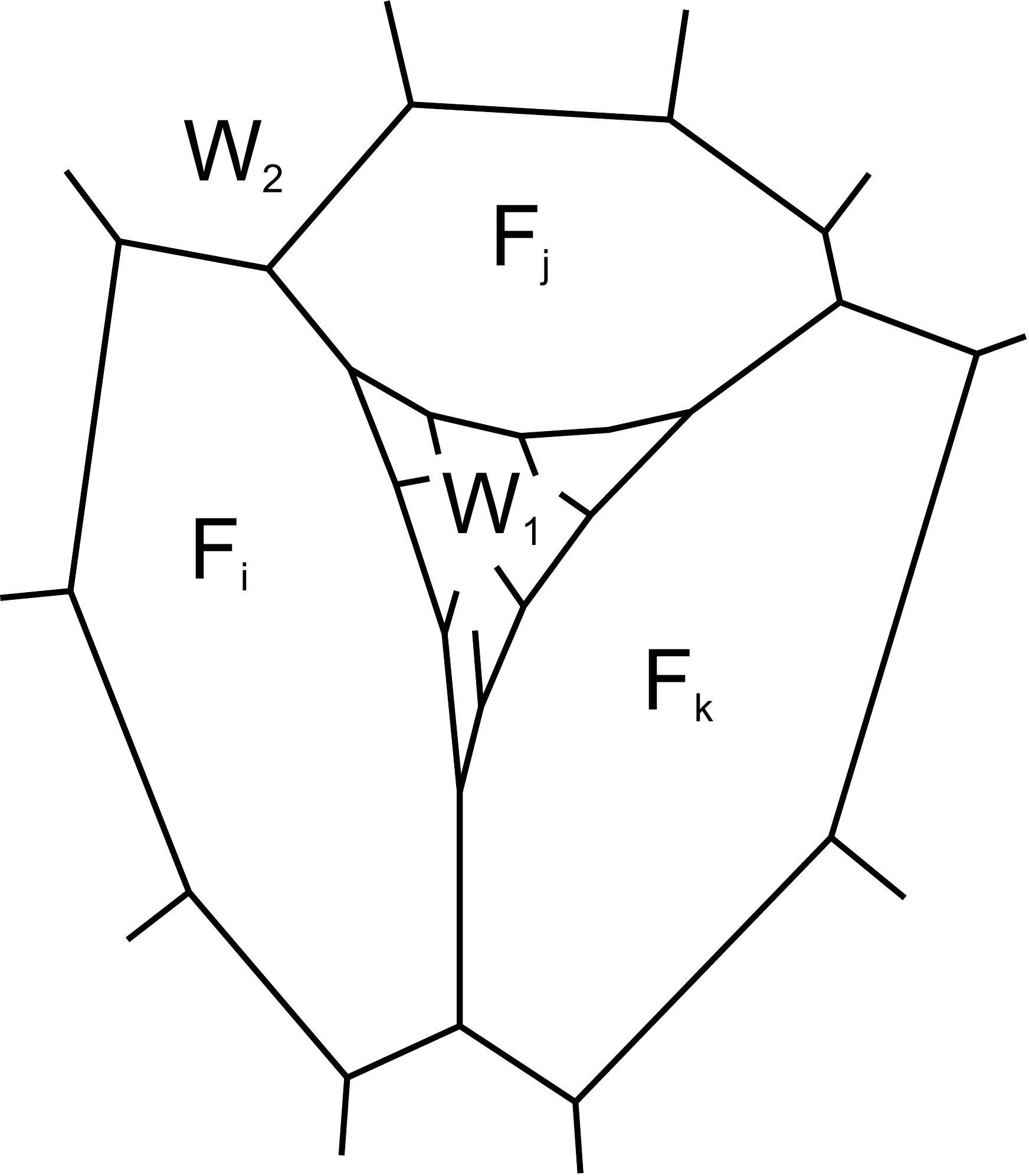}
\end{center}
\caption{$3$-belt on the surface of a simple $3$-polytope}
\end{figure}
\begin{proposition}\label{flagm6} Let $P$ be a flag simple $3$-polytope. Then $m\geqslant 6$, and $m=6$ if and only if $P$ is combinatorially equivalent to the cube $I^3$.
\end{proposition}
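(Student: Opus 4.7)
The plan is to combine three facts already available in the excerpt: (i) Euler's formula with the simple condition, (ii) the corollary to Proposition \ref{nflag} asserting $p_3=0$ for flag simple $3$-polytopes, and (iii) the exercise identifying a simple $3$-polytope with all quadrangular facets as the cube.

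First I would set up the standard numerical relations. Since $P$ is simple, every vertex has valency $3$, so $3f_0 = 2f_1$. Combined with Euler's $f_0 - f_1 + f_2 = 2$, this yields $f_0 = 2f_2 - 4$ and $f_1 = 3f_2 - 6$. Let $p_k$ denote the number of $k$-gonal facets. Counting edge-facet incidences gives $2f_1 = \sum_{k \geq 3} k\, p_k$.

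Second, I would invoke flagness. Since $P$ is flag and $P \neq \Delta^3$ (as $\Delta^3$ is not flag), the corollary to Proposition \ref{nflag} gives $p_3 = 0$. Hence every facet has at least $4$ edges, so
\[
2f_1 \;=\; \sum_{k \geq 4} k\, p_k \;\geq\; 4\sum_{k \geq 4} p_k \;=\; 4 f_2.
\]
Substituting $f_1 = 3f_2 - 6$ produces $6 f_2 - 12 \geq 4 f_2$, i.e.\ $m = f_2 \geq 6$, which is the first claim.

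Third, I would analyse the equality case. If $m = 6$ then the inequality $\sum k p_k \geq 4 f_2$ must be tight, forcing $p_k = 0$ for all $k \geq 5$; every facet is a quadrangle. By the exercise preceding the Steinitz realization theorem (``a simple $n$-polytope with all $2$-faces quadrangles is combinatorially equivalent to the $n$-cube''), this gives $P \simeq I^3$. The converse is immediate: $I^3$ has $6$ facets, and any pairwise-intersecting family of facets of $I^3$ avoids picking two opposite facets, so it consists of at most three mutually adjacent facets sharing a common vertex, hence $I^3$ is flag. I do not anticipate a genuine obstacle here; the only subtle point is remembering that the simple condition turns Euler's formula into an equality $f_1 = 3f_2 - 6$ that, combined with $p_3 = 0$, is strong enough to pin down both the inequality and its equality case.
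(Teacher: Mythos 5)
Your proof is correct, but it proceeds by a genuinely different route than the paper's. The paper picks an arbitrary facet $F_1$ and invokes Proposition \ref{facet-belt} to get the $k$-belt $\mathcal{B}$ surrounding it with $k\geqslant 4$, then counts $m\geqslant 1+k+1\geqslant 6$ because the component of $\partial P\setminus\mathcal{B}$ not containing $\operatorname{int}F_1$ must hold at least one more facet; equality forces $k=4$ and exactly one extra facet, whose four triple intersections with consecutive belt facets are then checked directly to exhibit the combinatorics of $I^3$. You instead run the $f$-vector computation: $f_1=3f_2-6$ from Euler's formula plus simplicity, $p_3=0$ from flagness, hence $2f_1=\sum_{k\geqslant 4}k\,p_k\geqslant 4f_2$, giving $f_2\geqslant 6$ and, in the equality case, all facets quadrangular, so $P\simeq I^3$ by the stated exercise. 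Both are sound. Your approach is shorter, more elementary, and avoids the belt machinery of Lemma \ref{belt-lemma} entirely, but it outsources the identification of the all-quadrangle polytope to an exercise the paper states without proof; the paper's argument is self-contained at that step since it explicitly produces the vertex lattice of the cube. One small slip: $p_3=0$ is recorded in the paper as a corollary of Proposition \ref{facet-belt}, not of Proposition \ref{nflag}, though the fact is of course equally available either way.
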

\begin{proof}
Take a facet $F_1$. By Proposition \ref{facet-belt} it is surrounded by a $k$-belt $\mathcal{B}=(F_{i_1},\dots,F_{i_k})$, $k\geqslant 4$. Since there is at least one facet in the connected component  $W_\alpha$ of $\partial P\setminus \mathcal{B}$, ${\rm int\,}F_1\notin W_\alpha$, we obtain $m\geqslant 2+k\geqslant 6$. If $m=6$, then $k=4$, $F_1$ is a quadrangle, and  $W_\alpha={\rm int\,}F_j$ for some facet $F_j$ Then $F_j\cap F_{i_1}\cap F_{i_2}$, $F_j\cap F_{i_2}\cap F_{i_3}$, $F_j\cap F_{i_3}\cap F_{i_4}$, $F_j\cap F_{i_4}\cap F_{i_1}$ are vertices, and $P$ is combinatorially equivalent to $I^3$.  
\end{proof}
\begin{lemma}\label{loop-cut-flag}
Let $P$ be a flag polytope, $\mathcal{L}_k$ be a simple $k$-loop, and $P_1$ and $P_2$ be $\mathcal{L}_k$-cuts of $P$. Then the following conditions are equivalent:
\begin{enumerate}
\item both polytopes $P_1$ and $P_2$ are flag;
\item $\mathcal{L}_k$ is a $k$-belt.
\end{enumerate}
\end{lemma}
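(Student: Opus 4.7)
The plan is to use Proposition \ref{nflag}, which reduces flagness to the absence of $3$-belts (plus the exclusion of $\Delta^3$). Both directions then become statements about how $3$-belts can appear or fail to appear in the $\mathcal{L}_k$-cuts, and the ambient curve $\eta$ gives us strong control: its interior intersects $\partial P$ only through the interiors of the loop facets, and the points $w_j$ are midpoints of edges of $P$, so in particular $\eta$ contains no vertex of $P$.

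For the direction $\mathcal{L}_k$ being a belt $\Rightarrow$ $P_1$ (and, symmetrically, $P_2$) flag, I would argue by contradiction. First I would rule out $P_1\simeq\Delta^3$: the new facet $F$ is a $k$-gon, so this could happen only for $k=3$ with no non-loop facet on the $\overline{\mathcal{C}_2}$ side, which would force the three loop facets to meet at a single $\mathcal{C}_2$-vertex and contradict the belt condition $F_{i_1}\cap F_{i_2}\cap F_{i_3}=\varnothing$. Next, assume a $3$-belt $(G_1,G_2,G_3)$ in $P_1$. If $F\notin\{G_i\}$, lift each $G_i$ to the corresponding facet $H_i$ of $P$; pairwise intersections survive, so by flagness of $P$ the $H_i$ meet at a vertex $v$. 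If at least one $H_i$ lies in $\mathcal{D}_2$ then $v\in\mathcal{C}_2$ (since $\eta$ avoids vertices), so $v$ survives in $P_1$ and kills the belt condition; if all three $H_i$ are loop facets, the belt hypothesis forbids non-consecutive intersections, forcing $k=3$ and $\{H_1,H_2,H_3\}=\{F_{i_1},F_{i_2},F_{i_3}\}$, but then again the belt hypothesis gives empty triple intersection, a contradiction. If $F\in\{G_i\}$, say $G_3=F$, then $G_1,G_2$ must be trimmed loop facets $F'_{i_a},F'_{i_b}$; consecutive $a,b$ force $w_a\in G_1\cap G_2\cap G_3$, and non-consecutive $a,b$ force $G_1\cap G_2=\varnothing$ by the belt hypothesis—either way a contradiction.

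For the contrapositive of the other direction, assume $\mathcal{L}_k$ is not a belt. Either (a) $F_{i_1}\cap\dots\cap F_{i_k}\ne\varnothing$, which in a simple $3$-polytope forces $k=3$ with a common vertex $v$; then the $\mathcal{C}_\alpha$ containing $v$ encloses only that vertex (three triangular wedges of the loop facets), so the corresponding cut is $\Delta^3$ and not flag. Or (b) some non-consecutive $F_{i_a},F_{i_b}$ share an edge $E$ of $P$; since $E$ is not a loop edge (an edge of a simple polytope lies in exactly two facets) and $\eta$ meets non-loop edges nowhere, $E$ lies entirely in one closure $\overline{\mathcal{C}_\alpha}$, and hence survives in the opposite cut. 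In that cut, $F$, $F'_{i_a}$, $F'_{i_b}$ are three distinct facets pairwise sharing edges ($E_{a-1}$, $E_{b-1}$, and $E$); the triple intersection is empty because it would have to be a vertex lying simultaneously in the midpoint set $\{w_{a-1},w_a,w_{b-1},w_b\}$ and in the vertex set $\{\text{endpoints of }E\}$, which are disjoint. Hence we obtain an honest $3$-belt.

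The main obstacle I expect is the careful bookkeeping of the interface between the original combinatorics of $P$ and the new combinatorics of $P_1,P_2$: one must consistently track which edges/vertices of $P$ survive in each cut, verify that edges $E$ of $P$ joining non-consecutive loop facets cannot cross $\eta$, and use the fact that $\eta$ avoids vertices of $P$ to transfer triple intersections across the cut without losing them. Once these geometric facts about $\eta$ are in hand, the case analysis via Proposition \ref{nflag} goes through cleanly.
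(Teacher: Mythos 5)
Your proof is correct and follows essentially the same route as the paper's: both reduce flagness of the cuts to the absence of $3$-belts, classify $3$-loops in $P_\alpha$ according to whether they involve the new facet $F$, and exploit the fact that $\eta$ avoids vertices of $P$ and only crosses the midpoints $w_j$ of consecutive-loop edges so that surviving pairwise intersections force a surviving triple-intersection vertex unless $F$ is used together with a non-consecutive loop chord. The paper merely packages the two implications into a single case analysis of $3$-loops rather than treating the directions separately; the substance is identical.
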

\begin{proof}
Since $P$ has no $3$-belts, for $k=3$ the loop $\mathcal{L}_k$ surrounds a vertex; hence one of the polytopes $P_1$ and $P_2$ is a simplex, and it is not flag. Let $k\geqslant 4$. Then $P_1$ and $P_2$ are not simplices. There are three types of facets in $P$: lying only in $P_1$, lying only in $P_2$, and lying in $\mathcal{L}_k$. Let $\mathcal{B}_3=(F_i,F_j,F_k)$ be a $3$-loop in $P_\alpha$, $\alpha\in\{1,2\}$. Let $F_i, F_j,F_k$ correspond to facets of $P$. Since intersecting facets in $P_\alpha$ also intersect in $P$, $(F_i,F_j,F_k)$ is also a $3$-loop in $P$, and $F_i\cap F_j\cap F_k\in P$ is a vertex. Since $F_i\cap F_j\ne\varnothing$ in $P_\alpha$, either the corresponding edge of $P$ lies in $P_\alpha$, or it intersects the new facet, and $F_i$ and $F_j$ are consequent facets of $\mathcal{L}_k$. Since $k\geqslant 4$, at least one edge of $F_i\cap F_j$, $F_j\cap F_k$, and $F_k\cap F_i$ of $P$ lies in $P_\alpha$; hence $F_i\cap F_j\cap F_k\in P_\alpha$, and $\mathcal{B}_3$ is not a $3$-belt in $P_{\alpha}$. If one of the facets, say $F_i$, is a new facet of $P_\alpha$, then $F_j,F_k\in\mathcal{L}_k$, since $F_i\cap F_j,F_i\cap F_k\ne\varnothing$. Consider the edge $F_j\cap F_k$ of $P$. It intersects $F_i$ in $P_\alpha$ if and only if $F_j$ and $F_k$ are consequent facets in $\mathcal{L}_k$. Thus if $\mathcal{B}_3$ is a $3$-belt, then $\mathcal{L}_k$ is not a $k$-belt, and vice versa, if $\mathcal{L}_k$ is not a $k$-belt, then $F_j\cap F_k\ne\varnothing$ for some non-consequent facets of $\mathcal{L}_k$, and the corresponding $3$-loop $\mathcal{B}_3$ is a $3$-belt in the polytope $P_1$ or $P_2$ containing $F_j\cap F_k$.  This finishes the proof.
\end{proof}

\subsection{Non-flag $3$-polytopes as connected sums}
The existence of a $3$-belt is equivalent to the fact that $P$ is
combinatorially equivalent to a {\em connected sum}\index{connected sum of simple polytopes}\index{polytope!connected sum} $P=Q_1\#_{v_1,v_2}Q_2$ of
two simple $3$-polytopes $Q_1$ and $Q_2$ along vertices $v_1$ and $v_2$.
\begin{figure}
\begin{center}
\includegraphics[scale=0.4]{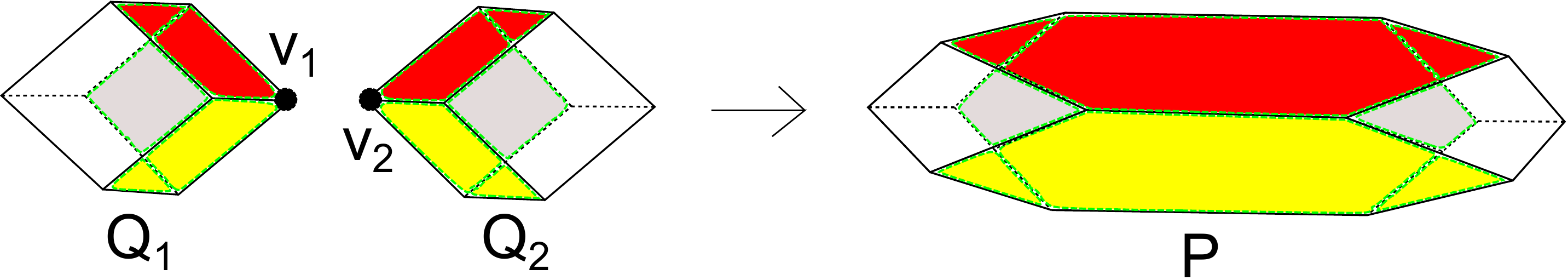}
\end{center}
\caption{Connected sum of two simple polytopes along vertices}
\end{figure}
The part $W_i$ appears if we remove from the surface of the polytope $Q_i$
the facets containing the vertex $v_i$, $i=1,2$.

\subsection{Consequence of Euler's formula for simple $3$-polytopes}
Let $p_k$ be a number of $k$-gonal facets of a $3$-polytope.\index{$p$-vector}\index{polytope!$p$-vector}
\begin{theorem}(See \cite{Gb03}) \label{pkth}For any {\bf simple} $3$-polytope~$P$
\begin{equation}\label{formulapk}
3p_3+2p_4+p_5=12+\sum\limits_{k\geqslant 7}(k-6)p_k,
\end{equation}
\end{theorem}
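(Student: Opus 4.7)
The plan is to rewrite the three face-numbers $f_0,f_1,f_2$ in terms of the vector $(p_3,p_4,p_5,\dots)$, plug into Euler's formula, and simplify.

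First I would express $f_2$ as $f_2=\sum_{k\geqslant 3}p_k$, since every $2$-face is a $k$-gon for some $k\geqslant 3$. Next, counting incidences of edges and facets: each edge lies in exactly two facets, and a $k$-gonal facet contributes $k$ edge-incidences, so $2f_1=\sum_{k\geqslant 3}k\,p_k$. Now I would use the defining property of a simple $3$-polytope: each vertex lies in exactly three facets, and hence in exactly three edges, so counting vertex-edge incidences gives $3f_0=2f_1$, i.e.\ $f_0=\tfrac{1}{3}\sum_{k\geqslant 3}k\,p_k$.

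Substituting these into Euler's relation $f_0-f_1+f_2=2$ and multiplying through by $6$, I would obtain
\[
2\sum_{k\geqslant 3}k\,p_k-3\sum_{k\geqslant 3}k\,p_k+6\sum_{k\geqslant 3}p_k=12,
\]
which simplifies to $\sum_{k\geqslant 3}(6-k)\,p_k=12$. Isolating the terms with $k\leqslant 5$ (note that the $k=6$ term drops out automatically) yields
\[
3p_3+2p_4+p_5=12+\sum_{k\geqslant 7}(k-6)\,p_k,
\]
which is the claimed formula.

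There is no real obstacle here; the only step requiring a little care is the use of simplicity, which is what converts the vertex count $f_0$ into a purely facet-based expression via $3f_0=2f_1$. Everything else is bookkeeping with Euler's formula.
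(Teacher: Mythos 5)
Your proof is correct and follows essentially the same route as the paper's: both use the incidence counts $3f_0=2f_1$ (from simplicity) and $2f_1=\sum_k k\,p_k$, together with $f_2=\sum_k p_k$ and Euler's formula, differing only in the order of substitution.
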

\begin{proof}
The number of pairs (edge, vertex of this edge) is equal, on the one hand, to $2f_1$ and, 
on the other hand (since the polytope is simple), to  $3f_0$. Then $f_0=\frac{2f_1}{3}$,
 and from the Euler formula we obtain
$2f_1=6f_2-12$. Counting the pairs (facet, edge of this facet), we have
$$
\sum\limits_{k\geqslant 3}kp_k=2f_1=6\left(\sum\limits_{k\geqslant 3}p_k\right)-12,
$$ 
which implies formula (\ref{formulapk}).
\end{proof}
\begin{corollary}
There is no simple polytope $P$ with all facets hexagons.  Moreover, if
$p_k=0$ for $k\ne 5,6$, then $p_5=12$.
\end{corollary}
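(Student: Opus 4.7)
The plan is to simply substitute the hypotheses into formula (\ref{formulapk}) from Theorem \ref{pkth}, which we are allowed to assume.

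First I would address the claim that no simple polytope has all facets hexagonal. If such a polytope existed, then $p_k=0$ for all $k\ne 6$, so in particular $p_3=p_4=p_5=0$ and $p_k=0$ for $k\geqslant 7$. Plugging into (\ref{formulapk}) gives
\[
0 = 3p_3+2p_4+p_5 = 12 + \sum_{k\geqslant 7}(k-6)p_k = 12,
\]
a contradiction. So no such polytope can exist.

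For the second statement, I would assume $p_k=0$ for $k\ne 5,6$. Then $p_3=p_4=0$ and $p_k=0$ for all $k\geqslant 7$, so the right-hand sum in (\ref{formulapk}) vanishes and the left-hand side reduces to $p_5$. Thus (\ref{formulapk}) becomes $p_5 = 12$, as required.

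There is no real obstacle here — the whole content of the corollary is packaged in Theorem \ref{pkth}, and the corollary is simply the two most striking specializations of that identity. The only thing worth emphasizing in the write-up is that the formula forces the pentagon count to be controlled purely by the presence (or absence) of facets with $\geqslant 7$ sides, which is precisely what makes the fullerene condition $p_5=12$ automatic.
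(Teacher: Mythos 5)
Your proof is correct and is precisely the intended argument: both statements are immediate specializations of the identity $3p_3+2p_4+p_5 = 12 + \sum_{k\geqslant 7}(k-6)p_k$ from Theorem \ref{pkth}, obtained by setting the relevant $p_k$ to zero. The paper leaves the verification implicit, and your substitution fills it in exactly as expected.
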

\noindent {\bf Exercise:} The $f$-vector of a simple polytope is expressed in terms
of the $p$-vector by the following formulas:
$$
f_0=2\left(f_2-2\right)\quad f_1=3\left(f_2-2\right)\quad f_2=\sum_kp_k
$$

\subsection{Realization theorems}

\begin{definition}
An integer sequence $(p_k|k\geqslant 3)$ is called $3$-{\em realizable} is there is a simple $3$-polytope
$P$ with $p_k(P)=p_k$. 
\end{definition}
\begin{theorem}(Victor Eberhard \cite{Eb1891}, see \cite{Gb03})\index{theorem!Eberhard's}
For a sequence $(p_k|3\leqslant k\ne 6)$  there exists $p_6$ such that the sequence $(p_k|k\geqslant 3)$
is $3$-realizable  if and only if it satisfies formula (\ref{formulapk}) .
\end{theorem}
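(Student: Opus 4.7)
The plan has two directions. \emph{Necessity} is immediate from Theorem \ref{pkth}: any simple $3$-polytope satisfies (\ref{formulapk}), and since $p_6$ does not appear in that identity, dropping the $p_6$ entry from its $p$-vector preserves the relation. So the substance of the theorem is the \emph{sufficiency} direction.

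Given a sequence $(p_k \mid 3 \leqslant k \ne 6)$ satisfying (\ref{formulapk}), the goal is to construct a simple $3$-polytope $P$ with $p_k(P) = p_k$ for all $k \ne 6$ and some nonnegative $p_6(P)$. First I would reduce to a combinatorial problem on the $2$-sphere: by Theorem \ref{S-Theorem} together with Lemma \ref{3C-lemma}, it suffices to produce a simple $3$-valent planar graph $G \subset S^2$ whose bounded face regions consist of exactly $p_k$ $k$-gons for each $k \ne 6$, plus some number of hexagons, and such that any two closed face regions meet in at most one common edge or one common vertex.

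For the construction itself I would follow Eberhard's classical \emph{patching} method. For each prescribed non-hexagonal face, one builds a disk on $S^2$ whose interior contains that face surrounded by concentric rings of hexagons, with boundary a simple edge-cycle of a controlled combinatorial type. These patches are then assembled on $S^2$ by matching their boundaries and inserting additional rings of hexagons between them where necessary. The identity (\ref{formulapk}) rewrites as $\sum_{k}(6-k)\, p_k = 12$, equating the total angular defect of the non-hexagonal faces to the curvature of $S^2$; this combinatorial Gauss--Bonnet type relation is exactly what makes it possible to close the patches into a topological sphere after inserting an appropriate total number of hexagons, and the resulting graph then satisfies the hypotheses of Lemma \ref{3C-lemma}, so the Steinitz theorem promotes it to a simple polytope.

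The main obstacle is the gluing step: the boundary cycles of the patches must match in length and type, and the assembled graph must remain $3$-connected with the required intersection pattern between face closures. Eberhard's device for overcoming this is a case analysis on the parities and sizes of the non-hexagonal faces together with a doubling trick (realizing two copies of the required counts simultaneously, if needed, to create enough slack for boundary matching). The trade-off is that the number $p_6$ of hexagons cannot be controlled precisely: the theorem guarantees only that \emph{some} value of $p_6$ works. Characterizing which values of $p_6$ are realizable for a given $(p_k \mid k \ne 6)$ is a significantly harder problem that requires different tools.
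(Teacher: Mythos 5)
The paper does not prove this theorem; it is cited as classical, with references to Eberhard's 1891 paper and Gr\"unbaum's book~\cite{Gb03}. So there is no proof in the text to compare against. Taking your proposal on its own merits:

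Your treatment of necessity is correct and complete: any simple $3$-polytope satisfies formula~(\ref{formulapk}) by Theorem~\ref{pkth}, and since the coefficient of $p_6$ in that relation is $6-6=0$, the relation on the restricted sequence $(p_k\mid 3\leqslant k\neq 6)$ is equivalent to the relation on the full sequence.

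For sufficiency, what you give is an honest high-level road map for Eberhard's classical approach rather than a proof, and you label it as such. The reduction to constructing a $3$-valent planar graph on $S^2$ and invoking the Steinitz theorem via Lemma~\ref{3C-lemma} is the right framing, and the observation that~(\ref{formulapk}) is equivalent to $\sum_{k\geqslant 3}(6-k)p_k=12$, a discrete Gauss--Bonnet constraint, correctly explains why a closure on $S^2$ is even possible. But the entire difficulty of Eberhard's theorem lives in the step you compress into two sentences: building the hexagonal patches around the prescribed non-hexagonal faces and gluing them so that the result is a $3$-connected graph with the correct face-incidence structure. In particular, the device you call a \emph{doubling trick} is not a recognizable description of Eberhard's actual mechanism; Gr\"unbaum's treatment proceeds instead by a reduction to normalized sequences, followed by an explicit case-by-case construction of building blocks and an inductive gluing argument, and the combinatorics of the boundary-matching step is genuinely intricate. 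As written, the sufficiency half is an accurate pointer to the literature but would need to be expanded substantially (essentially reproducing Gr\"unbaum \S 13.3 or an equivalent) before it could be called a proof. One further small point worth recording: the theorem is existential in $p_6$, and a corollary later in the paper (via the graph-truncation Proposition~\ref{pgflag}) shows that in fact infinitely many values of $p_6$ work, so the statement you aim at is weaker than what one ultimately gets once the $p_6$-operations are available.
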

There arise a natural question.

\noindent {\bf Problem:}  For a given sequence $(p_k|3\leqslant k\ne 6)$ find all $p_6$ such
that the sequence $(p_k|k\geqslant 3)$ is $3$-realizable.\\
{\bf Notation:} When we write a finite sequence $(p_3,p_4,\dots, p_k)$ we mean that $p_l=0$ for $l>k$.
\begin{example}(see \cite{Gb03})
Sequences $(0,6,0,p_6)$ and $(0,0,12,p_6)$ are $3$-realizable if and only if $p_6\ne 1$. 
The sequence $(4,0,0,p_6)$ is $3$-realizable if and only if $p_6$ is an even integer different from $2$.
The sequence $(3,1,1,p_6)$  is $3$-realizable if and only if $p_6$ is an odd integer greater than $1$. 
\end{example}
Let us mention also the following results.
\begin{theorem}
For a given sequence $(p_k|3\leqslant k\ne 6)$ satisfying formula
(\ref{formulapk})
\begin{itemlist}
\item there exists $p_6\leqslant 3\left(\sum\limits_{k\ne 6}p_k\right)$
    such that the sequence $(p_k|k\geqslant 3)$ is $3$-realizable \cite{F74};
\item if $p_3=p_4=0$ then any sequence $(p_k|k\geqslant 3,
    p_6\geqslant 8)$ is $3$-realizable \cite{Gb68}.
\end{itemlist}
\end{theorem}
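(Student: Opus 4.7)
The plan is to prove both parts by combining Eberhard's constructive proof of the classical realizability theorem with local ``hexagon-insertion'' surgeries that alter $p_6$ while preserving all other $p_k$. Throughout, I would work with planar $3$-connected graphs via the Steinitz theorem (Theorem \ref{S-Theorem}) and exploit Lemma \ref{belt-cut-lemma} to cut and reassemble polytopes along simple loops.

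The fundamental surgery is the following: given a simple $k$-belt $\mathcal{B}$ in a simple $3$-polytope $P$, apply Lemma \ref{belt-cut-lemma} to obtain $\mathcal{L}_k$-cuts $P_1, P_2$; then glue them back along a ``hexagonal sleeve'', i.e.\ a cylinder tessellated by a prismatic band of $k$ hexagons, to obtain a new simple $3$-polytope $P'$ with $p_j(P') = p_j(P)$ for $j \ne 6$ and $p_6(P') = p_6(P) + k$. Variants (bands of width $w \ge 1$ or a twisted gluing) produce increments of the form $wk$ for arbitrary positive integers $w$. In the flag case every facet is surrounded by a belt (Proposition \ref{facet-belt}), so plenty of belts of varying sizes are available as input to the surgery.

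For part (i), Fisher's bound, I would start from the explicit construction underlying Eberhard's theorem: arrange the prescribed non-hexagonal faces as disjoint ``patches'' on the sphere and stitch them together by bands of hexagons. The number of hexagons is controlled by the total perimeter of the patches; relation (\ref{formulapk}) forces the total perimeter to be $O\bigl(\sum_{k\ne 6} k\, p_k\bigr)$, and a careful choice of pairing (a dual matching on the patch-adjacency graph, organised as a tree on the sphere) bounds the number of hexagonal connectors by at most $3\sum_{k\ne 6} p_k$. The coefficient $3$ reflects that each non-hexagonal patch contributes to at most three connectors under a tree-like assembly.

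For part (ii) of Gr\"unbaum--Motzkin, the hypothesis $p_3 = p_4 = 0$ together with (\ref{formulapk}) forces $p_5 \ge 12$, so pentagons are abundant. I would first invoke part (i) (or directly Eberhard) for a base realization with some $p_6^0$; then use the surgery above to add increments of $5$ (inserting a hexagonal sleeve along the $5$-belt surrounding a pentagon) and of $6$ (along a $6$-belt, present once $p_6 \ge 1$), together with a local ``edge-rotation'' move that exchanges a pentagon--heptagon adjacency for a hexagon--hexagon adjacency and thereby tunes $p_6$ by $\pm 1$ without affecting other $p_k$ when $p_7 \ne 0$, or within a $\{5,6\}$-only patch equivalently realised by a fullerene-type move. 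Since $\gcd(5,6) = 1$, iteration eventually reaches every large $p_6$. The main obstacle is the fine-tuning near the threshold: one must verify by hand that the small cases $p_6 \in \{8,9,\dots,13\}$ can be realised for every admissible $(p_k \mid k \ge 5,\, k \ne 6)$, and must check that the lower bound $8$ is sharp --- this last point presumably requires an Euler-style obstruction ruling out $p_6 \in \{1,\dots,7\}$ for some specific non-hexagonal data, which is the subtlest step of the argument.
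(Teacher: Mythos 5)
The central surgery you rely on does not work as described. Lemma~\ref{belt-cut-lemma} cuts $\partial P$ along the curve $\eta$ that passes through \emph{midpoints} of the belt edges, hence through the \emph{interiors} of the belt facets $F_{i_1},\dots,F_{i_k}$. Consequently each belt facet is split, and in each $\mathcal{L}_k$-cut $P_\alpha$ the facet $F_{i_j}$ is replaced by a strictly smaller polygon (with $\alpha_j+3$ or $\beta_j+3$ sides, in the notation of Lemma~\ref{belt-lemma}), together with one new $k$-gonal facet. Gluing $P_1$ and $P_2$ back along a prismatic band of $k$ hexagons therefore does \emph{not} reproduce the original belt facets: the facets adjacent to the inserted band retain their chopped-down side counts, so $p_j(P')\ne p_j(P)$ for various $j\ne 6$. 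What you actually need is a cut along an \emph{edge cycle} $\gamma$ of even length $2k$ with an alternating vertex pattern (as in the zigzag cycle on the dodecahedron used to build the $D_k$ in Theorem~\ref{5belts-theorem}), so that all facets on both sides survive intact and a belt of $k$ hexagons, each contributing two edges to each boundary circle, can be inserted without raising any vertex degree above $3$. The existence of such a cycle around a prescribed facet or belt in a general simple $3$-polytope is far from automatic and is itself the crux of the matter; invoking Lemma~\ref{belt-cut-lemma} sidesteps rather than solves it.

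Two further gaps. First, the asserted local ``edge-rotation'' that changes $p_6$ by $\pm 1$ while fixing every other $p_k$ is never specified; all the genuinely local moves available (vertex-, edge-, $(s,k)$-truncations, and their inverses) change the side counts of \emph{several} neighboring facets, so producing a clean $\pm1$ on $p_6$ alone needs a concrete patch-replacement argument, not just an appeal to $\gcd(5,6)=1$. Second, for part (ii) your plan is to ``check by hand the small cases $p_6\in\{8,\dots,13\}$'', but this is not a finite verification: the non-hexagonal data $(p_k\mid k\geqslant 5,\,k\ne 6)$ ranges over infinitely many admissible sequences, and the claim must hold for each of them at every $p_6\geqslant 8$. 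A uniform construction is required, not a base-case enumeration. Finally, a small logical point: the theorem asserts realizability for every $p_6\geqslant 8$; it does \emph{not} assert that $p_6\leqslant 7$ is ever unrealizable (e.g.\ $p_6=0$ with $p_5=12$ is realized by the dodecahedron), so the ``sharpness of $8$'' you propose to establish is not part of what is being claimed and would in fact be false in general. Note also that the paper itself does not prove this theorem; it cites Fisher~\cite{F74} and Gr\"unbaum~\cite{Gb68}, whose arguments are considerably more delicate patch-assembly constructions than the belt-surgery sketch given here.
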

There are operations on simple $3$-polytopes  
that do not effect $p_k$ except for $p_6$. We call them {\em $p_6$-operations}\index{$p_6$-operation}.
As we will see later they are  important for applications.

\noindent {\bf Operation I:} \label{Op1}\index{first iterative procedure}The operation affects all edges of the polytope $P$. We present a fragment on Fig. \ref{O1F}.
\begin{figure}
\includegraphics[height=5cm]{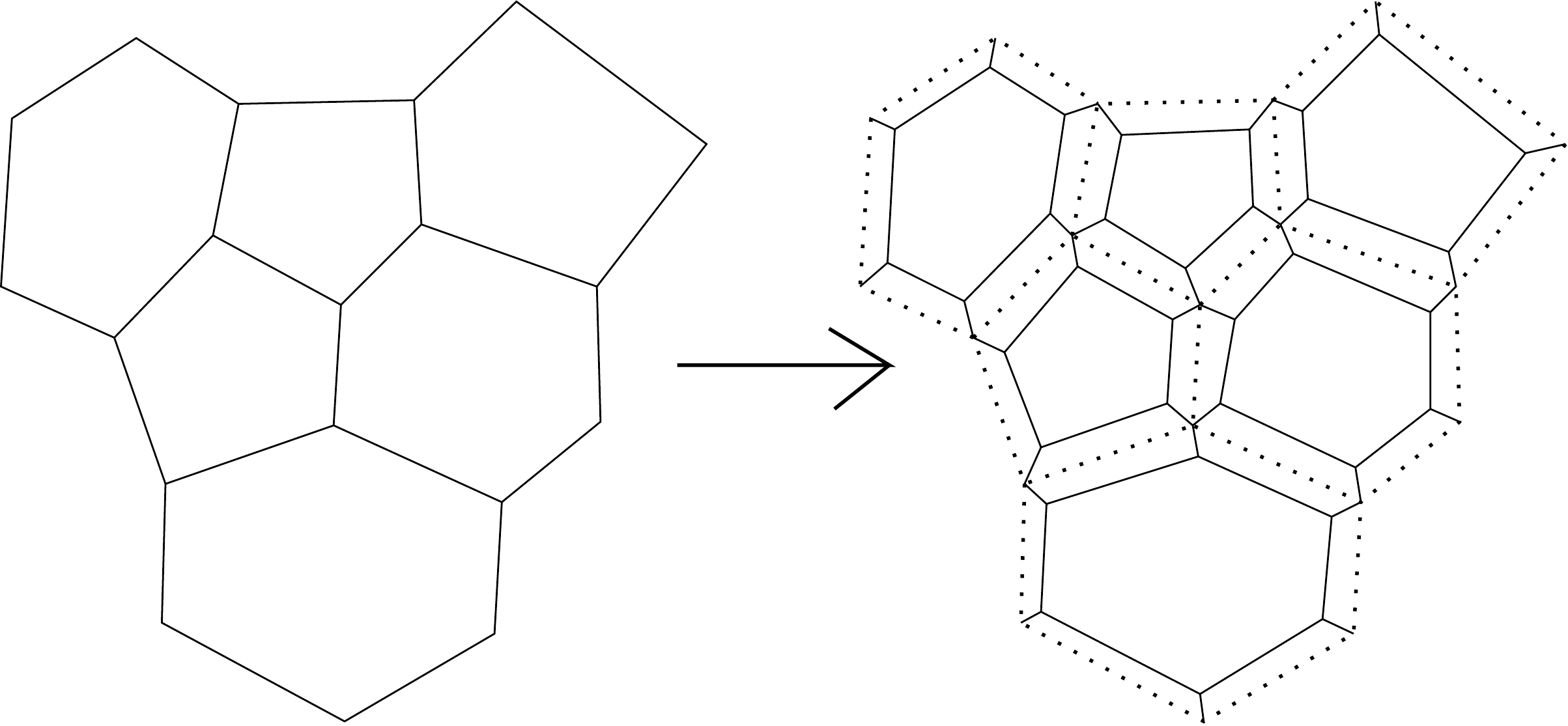}
\caption{Operation I}\label{O1F}
\end{figure}
On the right picture the initial polytope $P$ is drawn by dotted lines, while the resulting polytope -- by  solid lines. We have
$$
p_k(P')=\begin{cases}p_k(P),&k\ne 6;\\
p_6(P)+f_1(P),&k=6.
\end{cases}
$$

\noindent {\bf Operation II:} The operation\index{second iterative procedure} affects all edges of the polytope $P$. We present a fragment on Fig. \ref{O2F}.
\begin{figure}
\includegraphics[height=5cm]{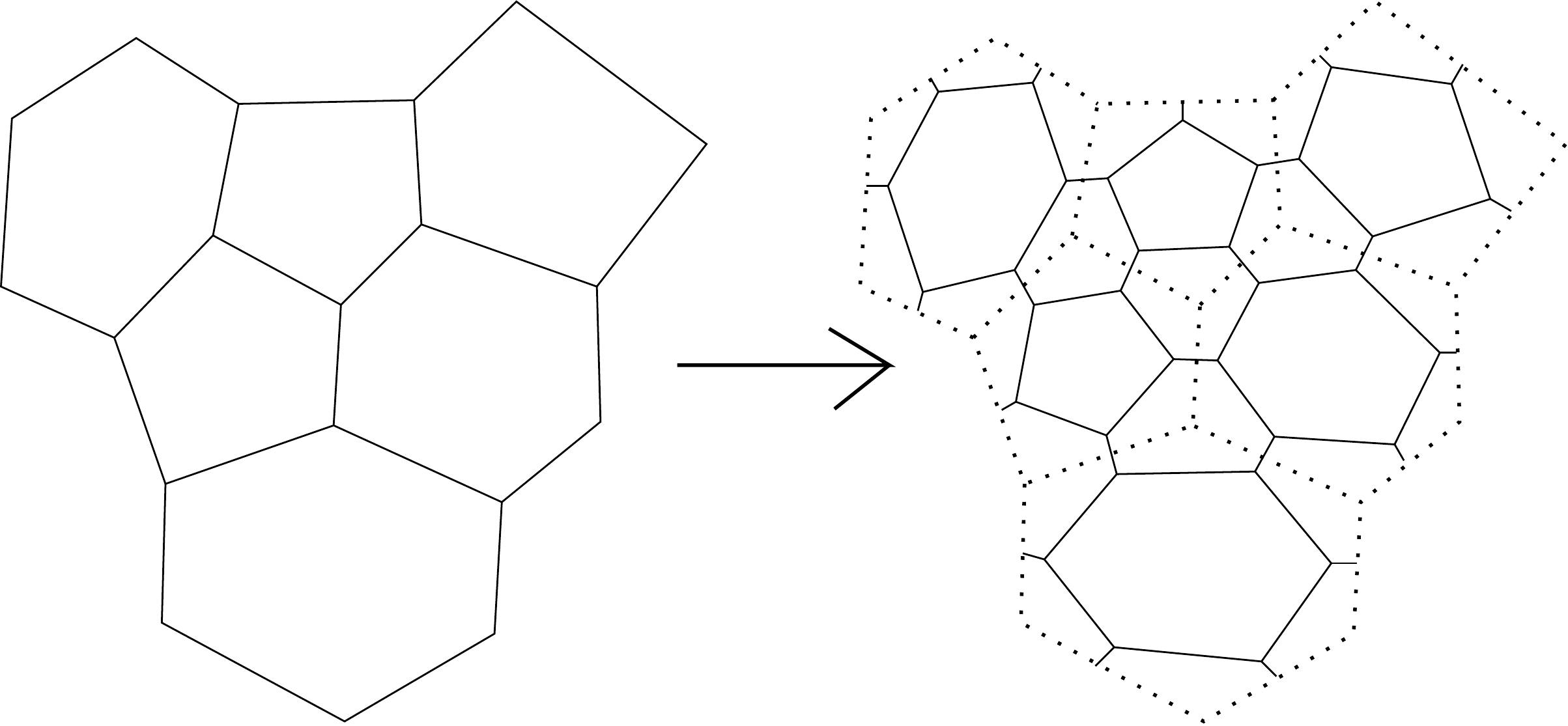}
\caption{Operation II}\label{O2F}
\end{figure}
On the right picture the initial polytope $P$ is drawn by dotted lines, while the resulting polytope -- by solid lines.
We have
$$
p_k(P')=\begin{cases}p_k(P),&k\ne 6;\\
p_6(P)+f_0(P),&k=6.
\end{cases}
$$

Operation I and Operation II are called iterative procedures\index{iterative procedures}\index{polytope!iterative procedures} (see \cite{LMR06}), since arbitrary compositions of them are well defined. 

\noindent {\bf Exercise:} Operation I and Operation II commute; therefore they define an action of the semigroup $\mathbb Z_{\geqslant 0}\times \mathbb Z_{\geqslant 0}$ on the set of all combinatorial simple $3$-polytopes, where $\mathbb Z_{\geqslant 0}$ is the additive semigroup of nonnegative integers.

\subsection{Graph-truncation of simple $3$-polytopes}
Consider a subgraph $\Gamma\subset G(P)$ without isolated vertices. For each edge 
$$
E_{i,j}=F_i\cap F_j=P\cap \{\boldsymbol{x}\in\mathbb R^3\colon
(\boldsymbol{a}_i+\boldsymbol{a}_j)\boldsymbol{x}+(b_i+b_j)=0\}
$$ 
consider the halfspace 
$$\mathcal{H}_{ij,\varepsilon}^+=\{\boldsymbol{x}\in\mathbb R^3\colon
(\boldsymbol{a}_i+\boldsymbol{a}_j)\boldsymbol{x}+(b_i+b_j)\geqslant
\varepsilon\}.$$
Set
$$
P_{\Gamma,\varepsilon}=P\cap\bigcap\limits_{E_{i,j}\in \Gamma} \mathcal{H}^+_{ij,\varepsilon}
$$
\noindent {\bf Exercise:} For small enough values of $\varepsilon$  the combinatorial type of
$P_{\Gamma,\varepsilon}$ does not depend on $\varepsilon$. 

\begin{definition} 
We will denote by $P_{\Gamma}$ the  combinatorial type of $P_{\Gamma,\varepsilon}$ for small enough values of $\varepsilon$  and call it a {\em $\Gamma$-truncation}\index{$\Gamma$-truncation} of $P$. When it is clear what is $\Gamma$ we call $P_{\Gamma}$ simply \emph{graph-truncation} \index{graph-truncation} of $P$.
\end{definition}

\begin{example} For $\Gamma=G(P)$ the polytope $P'=P_{\Gamma}$ is obtained from
$P$ by a $p_6$-operation I defined above. 
\end{example}
\begin{proposition}\label{pgflag}
Let  $P$ be a simple polytope with $p_3=0$. Then the polytope $P_{G(P)}$ is flag.
\end{proposition}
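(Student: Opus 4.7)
The plan is to verify the proposition by showing that $P' := P_{G(P)}$ contains no $3$-belt, whence Proposition~\ref{nflag} applies (clearly $P'$ has many more than four facets and so is not a simplex). The first step is to pin down the facets of $P'$ and their adjacencies: for each facet $F_i$ of $P$ there is a shrunk copy $F_i'$ of the same combinatorial degree, and for each edge $E_{ij}$ of $P$ the cut by $\mathcal{H}_{ij,\varepsilon}^+$ contributes a new hexagonal facet $H_{ij}$. For small enough $\varepsilon$ the adjacencies are: (a)~two shrunk facets $F_i',F_j'$ are never adjacent (the hexagon $H_{ij}$ now separates them if they were adjacent in $P$, and otherwise they were already vertex-disjoint); (b)~$F_i'$ and $H_{ij}$ share an edge iff $E_{ij}$ is an edge of $F_i$; (c)~$H_{ij}$ and $H_{kl}$ share an edge iff $E_{ij}$ and $E_{kl}$ share a common vertex of $P$. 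In addition, at every vertex $v$ of $P$ with incident edges $e_1,e_2,e_3$, the three cutting planes for these edges meet in a common ``central'' vertex of $P'$ that lies simultaneously on $H_{e_1}$, $H_{e_2}$ and $H_{e_3}$.

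Armed with this dictionary, I would classify a hypothetical $3$-belt $(A,B,C)$ of $P'$ according to how many of its facets are new hexagons. The cases FFF and FFH are excluded at once by~(a). In type FHH with $A = F'$ and $B = H_e$, $C = H_{e'}$, the pairwise adjacencies force $e$ and $e'$ to be two edges of $F$ meeting at some common vertex $v$; then the vertex of $P'$ cut out by the plane of $F_i$ together with the cutting planes for $e$ and $e'$ lies in $F' \cap H_e \cap H_{e'}$, so $(A,B,C)$ is a $3$-loop around that vertex rather than a $3$-belt.

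The main obstacle is the HHH case $A = H_{e_1}$, $B = H_{e_2}$, $C = H_{e_3}$, in which (c)~requires $e_1,e_2,e_3$ to pairwise share vertices in $P$. If all three meet at a common vertex $v$, the central vertex of $P'$ at $v$ sits in $H_{e_1}\cap H_{e_2}\cap H_{e_3}$ and we are done; the difficult subcase is when they pairwise share three \emph{distinct} vertices $a,b,c$, making $(e_1,e_2,e_3)$ into a $3$-cycle in $G(P)$. This is precisely the configuration that $p_3(P)=0$ must forbid. To eliminate it I would, at each of $a,b,c$, single out the unique facet $B_a, B_b, B_c$ of $P$ containing the two cycle-edges incident there; each cycle-edge has exactly two incident facets, and reading these off from its two endpoints identifies them with two of the $B_\bullet$'s. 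If any two of $B_a, B_b, B_c$ coincide then that common facet contains all three edges of the cycle and is therefore the triangle $abc$, contradicting $p_3(P)=0$. Otherwise $B_a, B_b, B_c$ are pairwise distinct; but then the three facets of $P$ meeting at each of $a,b,c$ turn out to be the same triple $\{B_a, B_b, B_c\}$, and in a simple $3$-polytope a triple of facets determines at most one vertex, forcing $a=b=c$ and contradicting distinctness. Hence the HHH case is also impossible, so $P'$ has no $3$-belt and is therefore flag.
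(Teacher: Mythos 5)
Your proof is correct and complete. The paper leaves this proposition as an exercise, so there is no paper proof to compare against, but your route --- reducing flagness to the absence of $3$-belts via Proposition~\ref{nflag} (after noting $P_{G(P)}\ne\Delta^3$), writing out the adjacency dictionary of $P_{G(P)}$, and running a case analysis on how many facets of a hypothetical $3$-belt are new hexagons --- is the natural one. The key geometric fact that at each vertex $v=F_a\cap F_b\cap F_c$ of $P$ the three edge-cutting hyperplanes concur at a single new vertex of $P'$ can be read off from the system $\ell_a+\ell_b=\ell_b+\ell_c=\ell_c+\ell_a=\varepsilon$, which forces $\ell_a=\ell_b=\ell_c=\varepsilon/2$; likewise in the FHH case $\ell_i=0$, $\ell_i+\ell_j=\varepsilon$, $\ell_i+\ell_k=\varepsilon$ has a unique solution, giving a vertex of $F_i'\cap H_{e}\cap H_{e'}$. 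These kill the common-vertex subcases. The hypothesis $p_3(P)=0$ enters, as it must, precisely in the distinct-vertices HHH subcase: your observation that the three facets meeting at each of $a,b,c$ coincide as the triple $\{B_a,B_b,B_c\}$, forcing $a=b=c$ unless some $B_\bullet$'s coincide and produce a triangle, is the correct way to extract the contradiction. (And one can see the hypothesis is sharp: if $P$ has a triangle $F$, the three hexagons surrounding the shrunk $F'$ form a genuine $3$-belt, so $P_{G(P)}$ is then not flag.)
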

We leave the proof as an exercise.
\begin{corollary}
For a given sequence $(p_k|3\leqslant k\ne 6)$ satisfying formula
(\ref{formulapk}) there are infinitely many values of $p_6$ such that the
sequence $(p_k| k\geqslant 3)$ is $3$-realizable.
\end{corollary}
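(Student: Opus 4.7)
The plan is to bootstrap Eberhard's theorem using the $p_6$-operations defined just above. Eberhard's theorem supplies at least one value $p_6^{(0)}$ for which $(p_k\mid k\geqslant 3)$ is realised by some simple $3$-polytope $P^{(0)}$. I would then iterate Operation I, setting $P^{(n+1)}$ to be the result of applying Operation I to $P^{(n)}$ (equivalently, the graph-truncation $(P^{(n)})_{G(P^{(n)})}$). By the formula for Operation I stated in Section 2, every $P^{(n)}$ has the same values of $p_k$ as $P^{(0)}$ for $k\ne 6$, while
$$
p_6(P^{(n+1)})=p_6(P^{(n)})+f_1(P^{(n)}).
$$

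The key step is to see that this recursion produces infinitely many pairwise distinct values of $p_6$. For any simple $3$-polytope $Q$ one has $f_1(Q)\geqslant 6>0$: a simple $3$-polytope has $f_0\geqslant 4$ facets-at-vertex counting, and the Euler computation in the proof of Theorem \ref{pkth} gives $f_1=\tfrac{3}{2}f_0\geqslant 6$. Hence the sequence $p_6(P^{(n)})$ is strictly increasing in $n$, so $\{p_6(P^{(n)})\mid n\geqslant 0\}$ is an infinite set of values for which the prescribed sequence $(p_k\mid k\geqslant 3)$ is $3$-realizable.

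The only point that needs a little care, and what I expect to be the main obstacle in a fully rigorous write-up, is the verification that Operation I is genuinely well-defined on an arbitrary simple $3$-polytope and that it affects $p_k$ exactly as advertised. Both facts are built into the definition of the graph-truncation: for $\varepsilon$ small enough the combinatorial type $P_{G(P),\varepsilon}$ stabilises, and the effect on $p_k$ can be read off from the local picture at each edge (Fig.~\ref{O1F}). In particular no flagness hypothesis on $P^{(0)}$ is required; one could equally well use Operation II, or an arbitrary composition of Operations I and II (which commute by the earlier exercise), and replace $f_1(P^{(n)})$ by $f_0(P^{(n)})$ throughout, arriving at the same conclusion.
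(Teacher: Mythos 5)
Your proposal is correct and matches the route the paper intends: start from a realizing polytope guaranteed by Eberhard's theorem and iterate Operation I (the graph-truncation $P\mapsto P_{G(P)}$), which fixes $p_k$ for $k\ne6$ and strictly increases $p_6$ by $f_1(P)\geqslant 6>0$. Your side remark that no flagness hypothesis is needed, and that Operation II (or any composition of the two) works equally well, is also accurate and reflects that the preceding Proposition \ref{pgflag} (flagness of $P_{G(P)}$ when $p_3=0$) is used for the flag analog of Eberhard's theorem rather than for this corollary.
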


\subsection{Analog of Eberhard's theorem for flag polytopes}
\begin{theorem}\cite{Bu-Er15} For every sequence $(p_k|3\leqslant k\ne 6, p_3=0)$\index{theorem!analog of Eberhard's theorem for flag polytopes} of nonnegative integers
satisfying formula (\ref{formulapk}) there exists a value of $p_6$ such that
there is a {\bf flag simple} $3$-polytope $P^3$ with  $p_k=p_k(P^3)$
for all $k\geqslant 3$.
\end{theorem}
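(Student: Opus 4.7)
The plan is to reduce to Eberhard's theorem and then flag-ify the resulting polytope using the graph-truncation $p_6$-operation, invoking Proposition \ref{pgflag}.

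First, I would apply Eberhard's theorem to the given sequence $(p_k \mid 3 \leqslant k \ne 6,\ p_3 = 0)$. Since by hypothesis this sequence satisfies formula (\ref{formulapk}), Eberhard's theorem produces some nonnegative integer $p_6^{(0)}$ and a simple $3$-polytope $Q$ with $p_k(Q) = p_k$ for every $k \ne 6$ and $p_6(Q) = p_6^{(0)}$. The polytope $Q$ need not be flag; it may contain $3$-belts and fail the flag condition in other ways.

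Second, I would pass to the graph-truncation $P^3 := Q_{G(Q)}$, which coincides with applying Operation~I to $Q$. By the displayed effect of Operation~I on the $p$-vector we have
\[
p_k(P^3) \;=\; p_k(Q) \;=\; p_k \quad \text{for all } k \ne 6,
\]
while $p_6(P^3) = p_6(Q) + f_1(Q)$. In particular $P^3$ still satisfies $p_3(P^3) = 0$ and realizes all the prescribed numbers $p_k$ for $k \ne 6$, so it only remains to verify flagness.

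Third, since $p_3(Q) = 0$ by construction, Proposition \ref{pgflag} applies directly and gives that $P^3 = Q_{G(Q)}$ is flag. Setting $p_6 := p_6^{(0)} + f_1(Q)$ produces the desired value, and $P^3$ is the required flag simple $3$-polytope. The only potentially delicate point is Proposition \ref{pgflag} itself — showing that truncating every edge kills all $3$-belts — but that proposition is stated as already available, so in the present framework the result follows essentially by composition of two standard tools: Eberhard's existence result supplies a realization with arbitrary (and uncontrolled) $p_6$, and the graph-truncation is the flagging operation that preserves the rest of the $p$-vector.
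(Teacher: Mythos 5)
Your proposal is correct and follows exactly the paper's own argument: apply Eberhard's theorem to realize the sequence by some simple $3$-polytope $Q$, then pass to the graph-truncation $Q_{G(Q)}$ (Operation~I), which preserves $p_k$ for $k\ne 6$ and is flag by Proposition~\ref{pgflag}. No differences in strategy or in the lemmas invoked.
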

\begin{proof}
For a given sequence $(p_k|3\leqslant k\ne 6, p_3=0)$ satisfying formula
(\ref{formulapk}) by Eberhard's theorem there exists a simple polytope $P$
with $p_k=p_k(P)$, $k\ne 6$. Then the polytope $P'=P_{G(P)}$ is flag by
Proposition \ref{pgflag}. We have $p_k(P')=p_k(P)$, $k\ne 6$, and
$p_6(P')=p_6(P)+f_1(P)$.
\end{proof}

\newpage
\section{Lecture 3. Combinatorial fullerenes}
\subsection{Fullerenes}
A {\em fullerene}\index{fullerene}\index{polytope!fullerene} is a molecule of carbon that is topologically sphere and
any atom belongs to exactly three carbon rings, which are pentagons or
hexagons.
\begin{figure}
\begin{center}
\begin{tabular}{cc}
\includegraphics[height=6cm]{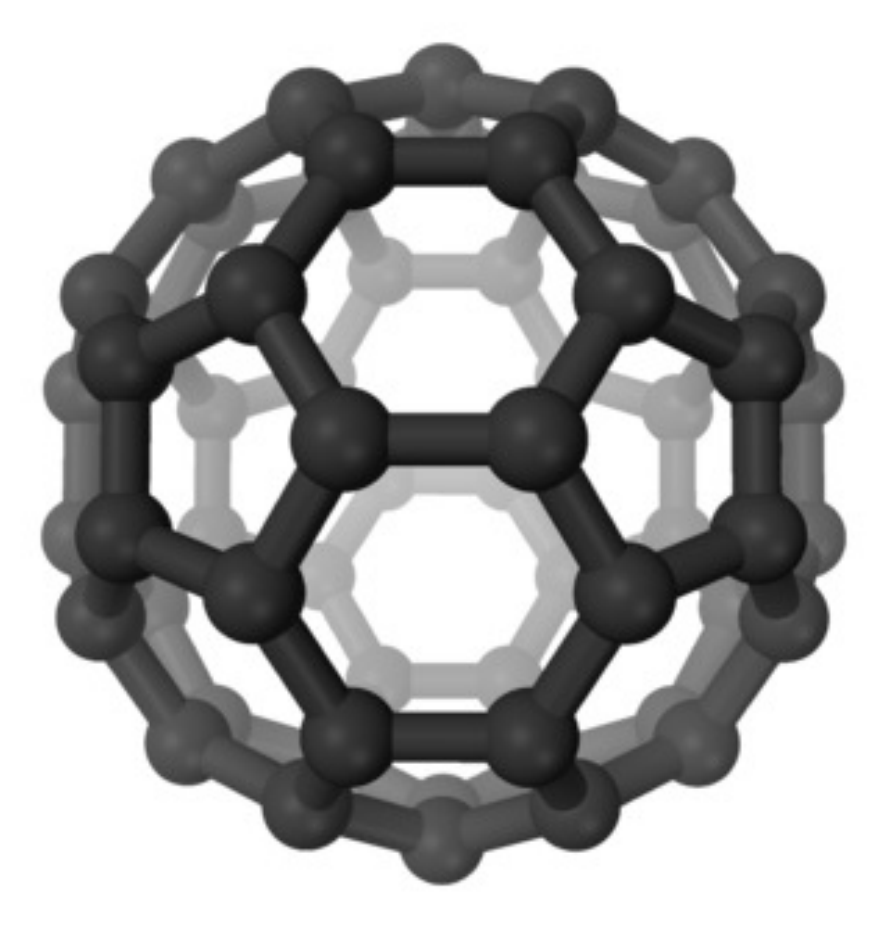}&\includegraphics[height=6cm]{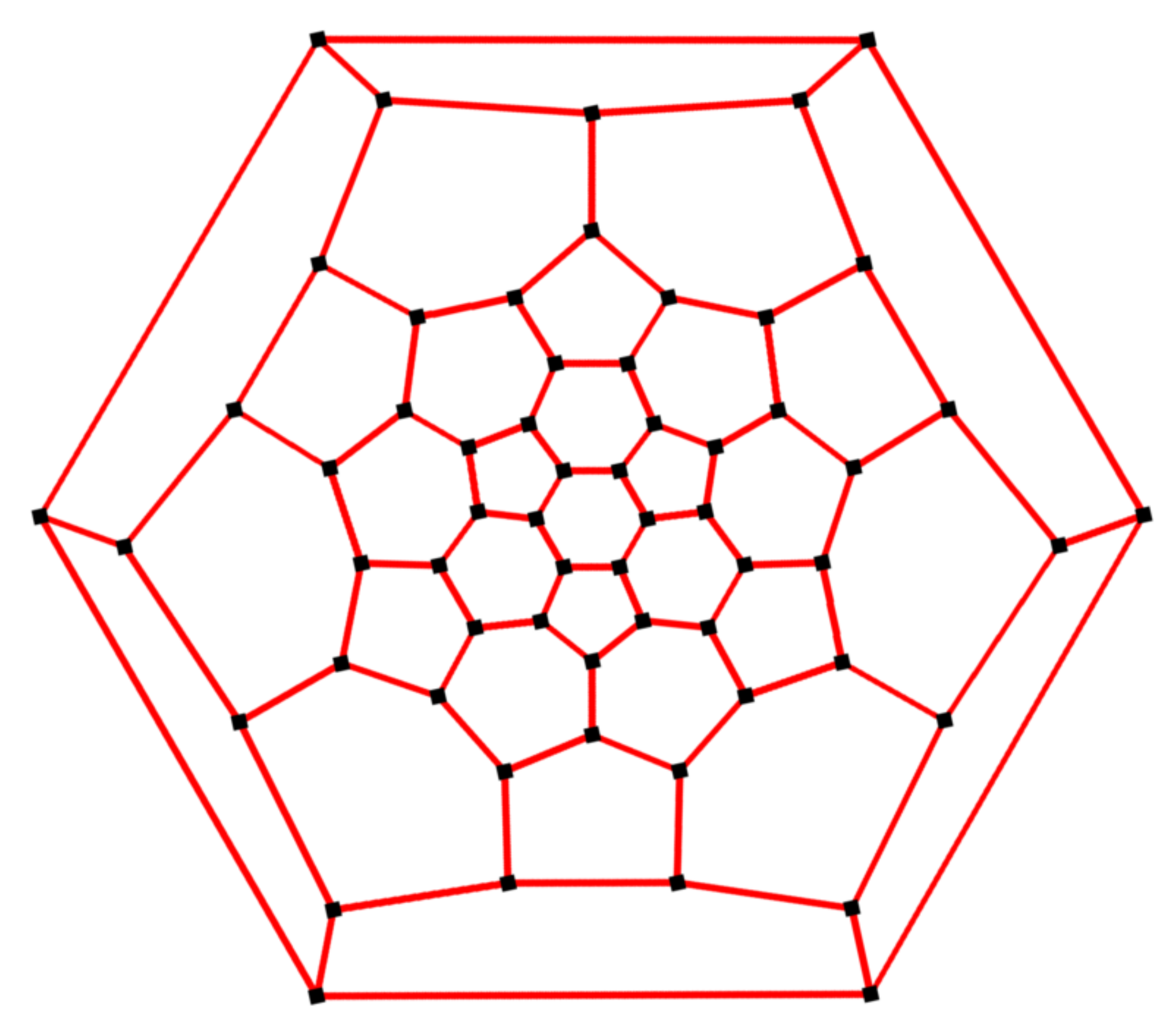}\\
Buckminsterfullerene $C_{60}$&Schlegel diagram\\
$(f_0,f_1,f_2)=(60,90,32)$&\\
$(p_5,p_6)=(12,20)$&
\end{tabular}
\end{center}
\caption{Buckminsterfullerene and it's Schlegel diagram (www.wikipedia.org)}
\end{figure}

The first fullerene $C_{60}$ was generated by chemists-theorists  Robert
Curl, Harold Kroto, and Richard Smalley in 1985 (Nobel Prize in chemistry 1996,
\cite{C96,K96,S96}). They called it {\em Buckminsterfullerene}.\index{Buckminsterfullerene}\index{fullerene!Buckminsterfullerene}

\begin{figure}
\begin{tabular}{cl}
\hspace{-0.4cm}\begin{tabular}{c}
\includegraphics[scale=0.25]{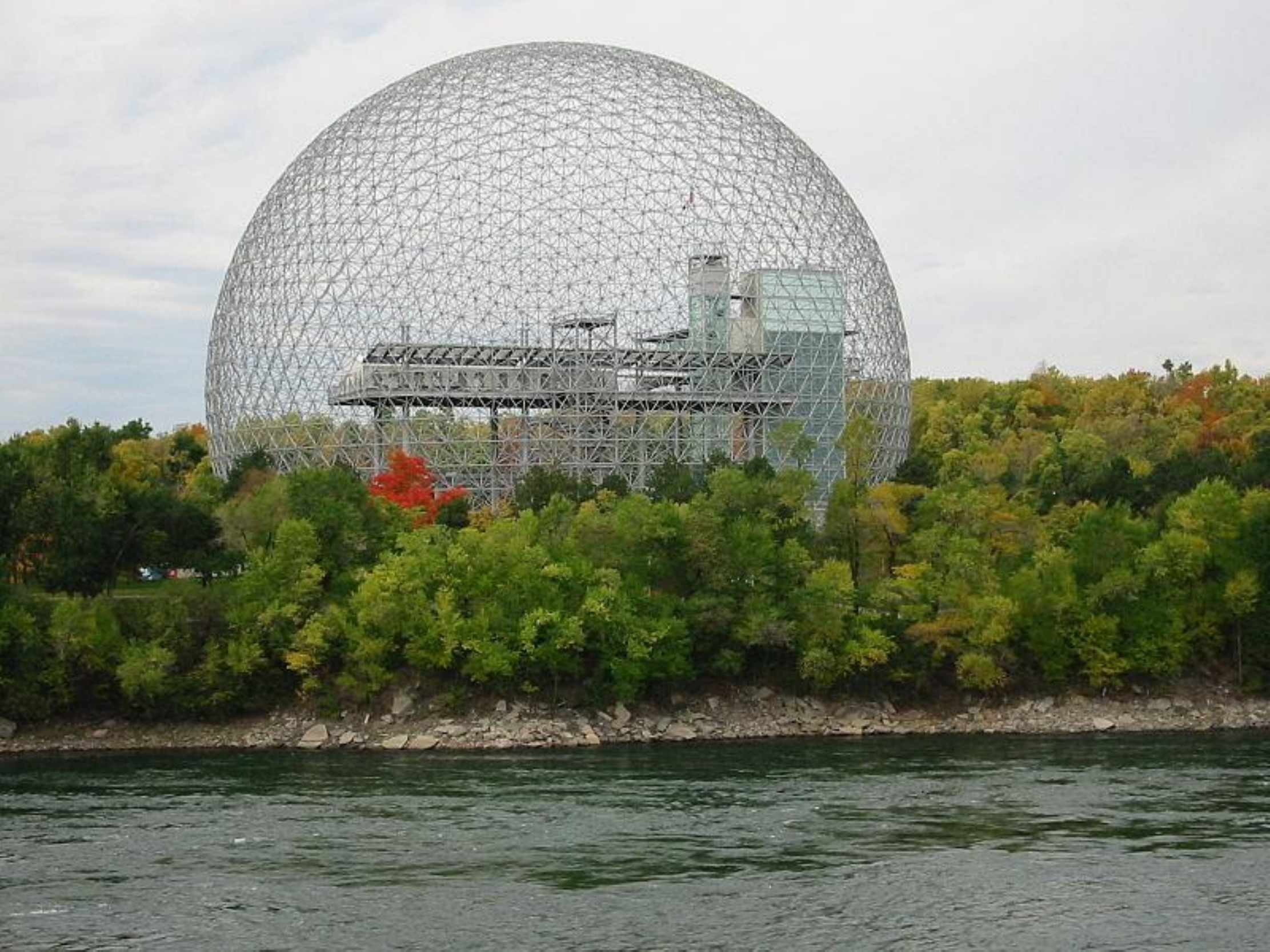}
\end{tabular}&
\hspace{-0.4cm}\begin{tabular}{l}
Fullerenes were named after\\
Richard Buckminster Fuller\\
(1895-1983) -- a famous american\\
architect, systems theorist, author,\\
designer and inventor. In 1954 he\\
patented an architectural construction\\ 
in the form of polytopal spheres\\
for roofing large areas.

\vspace{0.2cm}   \\
They are also called {\em buckyballs}.
\end{tabular}
\end{tabular}
\caption{Fuller's Biosphere, USA Pavillion on Expo-67 (Montreal, Canada) (www.wikipedia.org)}
\end{figure}
\begin{definition}
A {\em combinatorial fullerene}\index{combinatorial fullerene} is a simple $3$-polytope with all facets
pentagons and hexagons. 
\end{definition}
To be short by a fullerene below we mean a combinatorial fullerene.
\begin{figure}
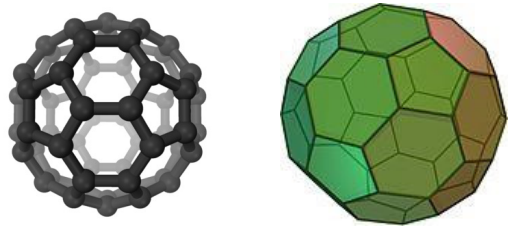

\begin{center}
\begin{tabular}{ccc}
\includegraphics[height=3cm]{C60.pdf}&\hspace{1cm}&\includegraphics[height=3cm]{Arh-5.pdf}
\end{tabular}
\end{center}
\caption{Fullerene $C_{60}$ and truncated icosahedron (www.wikipedia.org)}
\end{figure}
For any fullerene  $p_5=12$, and expression of the $f$-vector in terms of the $p$-vector obtains the form 
$$
f_0=2(10+p_6),\quad f_1=3(10+p_6),\quad f_2=(10+p_6)+2
$$

\begin{remark}
Since the combinatorially chiral polytope is geometrically chiral (see Proposition \ref{Chiral}), the following problem is important for applications in the physical theory of fullerenes:\\ 

\noindent{\bf Problem:} To find an algorithm to decide if the given fullerene is combinatorially chiral.
\end{remark}
\subsection{Icosahedral fullerenes}
Operations I and II (see page \pageref{Op1}) transform fullerenes into fullerenes. The first procedure increases $f_0$ in $4$ times,
the second -- in $3$ times.

Applying operation I to the dodecahedron we obtain fullerene $ C_{80}$ with $p_6 = 30$.
In total there are $31 924$ fullerenes with $p_6 = 30$.
\begin{figure}
\begin{center}
\begin{tabular}{cc}
\includegraphics[height=3cm]{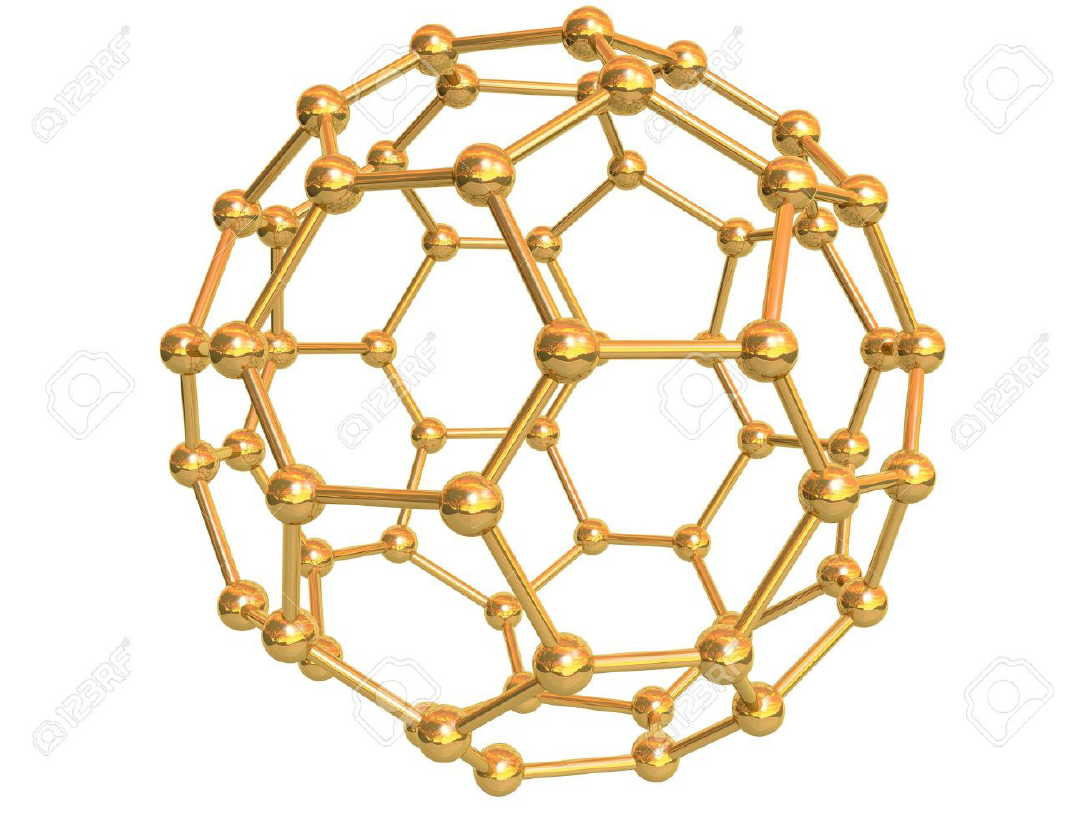}&\includegraphics[height=3cm]{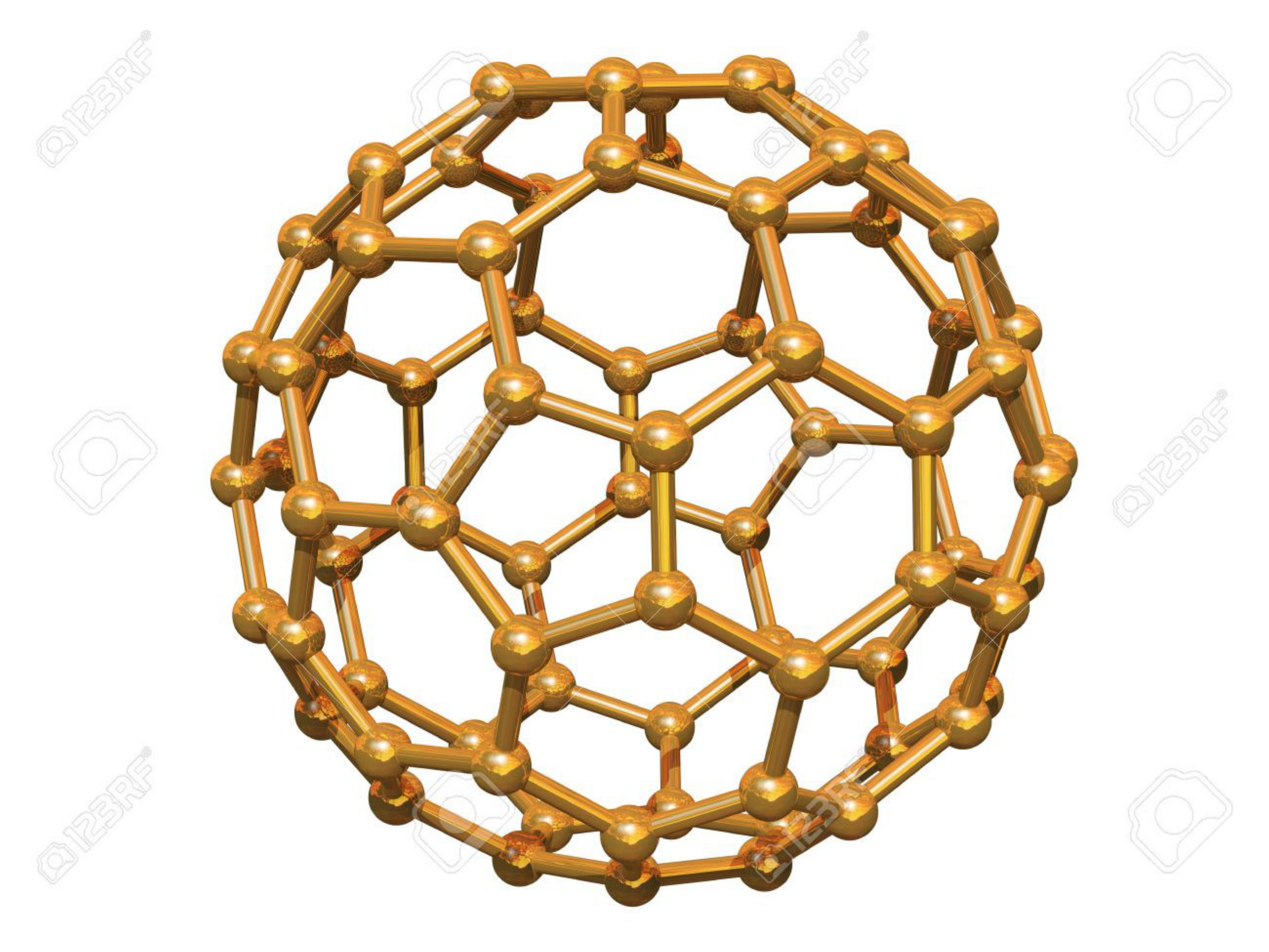}\\
$C_{60}$&$C_{80}$
\end{tabular}
\end{center}
\caption{Icosahedral fullerenes $C_{60}$ and $C_{80}$ (http://previews.123rf.com)}
\end{figure}
Applying operation II  to the dodecahedron we obtain the Buckminsterfullerene\index{Buckminsterfullerene} $C_{60}$ with $p_6 = 20$. In total there are $1812$ fullerenes with $p_6 = 20$.

\begin{definition}
Fullerene with a (combinatorial) group of symmetry of the icosahedron is
called an {\em icosahedral fullerene}\index{fullerene!icosahedral}.
\end{definition}

The construction implies that starting from the dodecahedron any combination
of the first and the second iterative procedures gives an icosahedral
fullerene.\\
{\bf Exercise:} Proof that the opposite is also true.

Denote operation $1$ by $T_1$ and operation $2$ by $T_2$. Theses operations define the action of the semigroup $\mathbb Z^2_{\geqslant 0}$ on the set of combinatorial fullerenes. 
\begin{proposition}
The operations $T_1$ and $T_2$ change  the number of hexagons of the fullerene $P$ by the following rule:
$$
p_6(T_1P)=30+4p_6(P);\quad p_6(T_2P)=20+3p_6(P).
$$
\end{proposition}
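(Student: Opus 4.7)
The plan is a direct substitution: combine the effect of each $p_6$-operation on the $p$-vector (already recorded earlier in the text) with the expression of the $f$-vector of a fullerene in terms of $p_6$, which follows from $p_5=12$ and the exercise after Theorem~\ref{pkth}.

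First I would recall the formulas stated in the description of Operation I and Operation II: for any simple $3$-polytope $P$ and its image $P'$ under operation $i\in\{1,2\}$, one has $p_k(P')=p_k(P)$ for every $k\ne 6$, while $p_6(P')=p_6(P)+f_1(P)$ in the case $i=1$ and $p_6(P')=p_6(P)+f_0(P)$ in the case $i=2$. In particular, since $P$ fullerene means $p_5(P)=12$ and $p_k(P)=0$ for $k\notin\{5,6\}$, the same is true of $T_1P$ and $T_2P$, so both remain fullerenes and the expressions on the left-hand side make sense.

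Next I would specialize the $f$-vector to a fullerene. From $f_2(P)=\sum_{k}p_k(P)=12+p_6(P)$ and the formulas $f_0=2(f_2-2)$, $f_1=3(f_2-2)$ stated in the exercise following Theorem~\ref{pkth}, I get $f_0(P)=2(10+p_6(P))$ and $f_1(P)=3(10+p_6(P))$.

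Substituting into the two displayed identities for $p_6(T_iP)$ immediately yields
\[
p_6(T_1P)=p_6(P)+3(10+p_6(P))=30+4p_6(P),
\]
\[
p_6(T_2P)=p_6(P)+2(10+p_6(P))=20+3p_6(P),
\]
which is the claim. There is no real obstacle here: the statement is a one-line arithmetic consequence of two facts already established in the text, so the only care needed is to verify that $T_iP$ is again a fullerene (ensuring the $f$-vector formulas apply to $P$, which is what we need, not to $T_iP$).
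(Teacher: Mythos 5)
Your proof is correct and takes the natural route the paper intends (the paper leaves this as an exercise): you combine the recorded effect of Operations I and II on $p_6$ with the expression of $f_0$ and $f_1$ in terms of $p_6$ for a fullerene, and the arithmetic checks out.
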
 
The proof we leave as an exercise.
\begin{corollary} The $f$-vector of a fullerene is changed by the following rule:
$$
T_1(f_0,f_1,f_2)=(4f_0,4f_1,f_2+f_1);\quad T_2(f_0,f_1,f_2)=(3f_0,3f_1,f_2+f_0).
$$
\end{corollary}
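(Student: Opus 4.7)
The plan is to reduce the corollary to the immediately preceding proposition by substituting into the standard $f$-vector formulas for fullerenes given just above the two operations. Since $p_5=12$ for every fullerene, the full $f$-vector is determined by $p_6$ via
$$
f_0=2(10+p_6),\qquad f_1=3(10+p_6),\qquad f_2=(10+p_6)+2,
$$
and these relations apply to both $P$ and its image under $T_i$, because $T_1P$ and $T_2P$ are again fullerenes.

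First I would handle $T_1$. By the proposition, $p_6(T_1P)=30+4p_6(P)$, so $10+p_6(T_1P)=40+4p_6(P)=4(10+p_6(P))$. Plugging this into the formulas for $f_0$ and $f_1$ immediately gives $f_0(T_1P)=4f_0(P)$ and $f_1(T_1P)=4f_1(P)$. For the last coordinate I would compute $f_2(T_1P)=(10+p_6(T_1P))+2=42+4p_6(P)$ and verify by direct expansion that this equals $f_2(P)+f_1(P)=(12+p_6(P))+(30+3p_6(P))$.

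Next I would do the analogous computation for $T_2$. From $p_6(T_2P)=20+3p_6(P)$ we get $10+p_6(T_2P)=3(10+p_6(P))$, whence $f_0(T_2P)=3f_0(P)$ and $f_1(T_2P)=3f_1(P)$. The facet count follows similarly: $f_2(T_2P)=32+3p_6(P)$, while $f_2(P)+f_0(P)=(12+p_6(P))+(20+2p_6(P))$ gives the same value.

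There is essentially no obstacle here beyond bookkeeping; the only thing worth flagging is that the substitution requires $T_iP$ to itself be a fullerene, so that the $p_5=12$ reduction applies on both sides. This is noted at the start of the subsection (``Operations I and II transform fullerenes into fullerenes''), so once that observation is invoked, the corollary is a one-line consequence of the proposition.
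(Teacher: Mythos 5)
Your proof is correct and is the natural argument: you substitute the $p_6$ transformation rules from the preceding proposition into the fullerene $f$-vector formulas $f_0=2(10+p_6)$, $f_1=3(10+p_6)$, $f_2=(10+p_6)+2$, and the arithmetic checks out for both $T_1$ and $T_2$. The paper leaves the corollary unproved, but this is exactly the intended derivation, and you correctly flag that it hinges on $T_iP$ again being a fullerene so the $p_5=12$ reduction applies on both sides.
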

\subsection{Cyclic $k$-edge cuts}
\begin{definition} Let $\Gamma$ be a graph. A {\em cyclic $k$-edge cut}\index{cyclic $k$-edge cut} is a set $E$ of $k$ edges of
$\Gamma$,  such that $\Gamma\setminus E$ consists of two connected
component each containing a cycle, and for any subset $E'\subsetneq E$ the
graph $\Gamma\setminus E'$ is connected.
\end{definition}
For any $k$-belt $(F_1,\dots, F_k)$ of the simple $3$-polytope $P$ the set of
edges $\{F_1\cap F_2,\dots, F_{k-1}\cap F_k, F_k\cap F_1\}$ is a cyclic
$k$-edge cut of the graph $G(P)$. For $k=3$ any cyclic $k$-edge cut in $G(P)$
is obtained from a $3$-belt in this way. For larger $k$ not any cyclic $k$-edge cut is obtained from a $k$-belt.

In the paper \cite{D98} it was proved that for any fullerene $P$ the graph
$G(P)$ has no cyclic $3$-edge cuts. In \cite{D03} it was proved that $G(P)$
has no cyclic $4$-edge cuts. In \cite{KM07} and \cite{KS08} cyclic $5$-edge
cuts were classified. In \cite{KS08} cyclic $6$-edge cuts were classified. In
\cite{KKLS10} degenerated cyclic $7$-edge cuts and fullerenes with
non-degenerated cyclic $7$-edge cuts were classified, where a cyclic $k$-edge
cut is called degenerated, if one of the connected components has less than
$6$ pentagonal facets, otherwise it is called non-degenerated.

\subsection{Fullerenes as flag polytopes}
Let $\gamma$ be a simple edge-cycle on a simple $3$-polytope. We say that $\gamma$ {\em borders} a $k$-loop $\mathcal{L}$
if $\mathcal{L}$ is a set of facets that appear when we walk along $\gamma$ in one of the components  $\mathcal{C}_{\alpha}$. We say that an $l_1$-loop $\mathcal{L}_1=(F_{i_1}, \dots, F_{i_{l_1}})$  {\em borders} an $l_2$-loop $\mathcal{L}_2=(F_{j_1},\dots, F_{j_{l_2}})$ \linebreak (along $\gamma$), if they border the same edge-cycle $\gamma$. If $l_2=1$, then we say that $\mathcal{L}_1$ {\em surrounds} $F_{j_1}$.

Let  $\gamma$ have $a^1_p$ successive edges corresponding to 
$F_{i_p}\in\mathcal{L}_1$, and $a^2_q$ successive edges corresponding to
$F_{j_q}\in\mathcal{L}_2$.

\begin{lemma}\label{b-lemma}
Let a loop $\mathcal{L}_1$ border a loop	 $\mathcal{L}_2$ along  $\gamma$. Then one of the following holds:
\begin{enumerate}
\item $\mathcal{L}_\alpha$ is a $1$-loop, and $\mathcal{L}_\beta$ is a
    $a^\alpha_1$-loop, for $\{\alpha,\beta\}=\{1,2\}$;
\item $l_1,l_2\geqslant 2$,
    $l_1+l_2=l_\gamma=\sum_{r=1}^{l_1}a^1_r=\sum_{r=1}^{l_2}a^2_r$.
\end{enumerate}
\end{lemma}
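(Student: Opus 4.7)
The plan is to analyse the combinatorics at each vertex of $\gamma$ and then count transitions between consecutive facets along $\gamma$ on each side.

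First I would fix notation. The cycle $\gamma$ has, say, $l_\gamma$ edges and $l_\gamma$ vertices. Each edge of $\gamma$ borders exactly one facet from $\mathcal{L}_1$ and one facet from $\mathcal{L}_2$ (this is what it means for both loops to border $\gamma$), so by definition of the $a^\alpha_r$ we get $\sum_{r=1}^{l_1} a^1_r = l_\gamma = \sum_{r=1}^{l_2} a^2_r$; this takes care of the equality in case~(2) and the relation $l_\gamma = a^1_1$ in case~(1).

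The main step is the local analysis at a vertex $v \in \gamma$. Since $P$ is simple, exactly three facets $F,G,H$ and three edges meet at $v$. Two of those edges lie on $\gamma$ and one does not. If the missing edge is, say, $F\cap H$, then the two edges of $\gamma$ at $v$ are $F \cap G$ and $G \cap H$, so $G$ sits on one side $\mathcal{C}_\alpha$ of $\gamma$ (contributing two consecutive edges of $\gamma$ at $v$) while $F$ and $H$ sit on the other side and meet at $v$ (contributing a ``transition'' between two consecutive facets of the loop bordering $\gamma$ in $\mathcal{C}_\beta$). Hence at every vertex of $\gamma$, exactly one side is in ``single facet'' position and the other side has a transition between two consecutive facets of the corresponding loop.

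Now I split into the two cases. If $\mathcal{L}_\alpha$ is a $1$-loop, then all edges of $\gamma$ lie on one facet $F$, so $\gamma \subseteq \partial F$; since $\partial F$ is a simple cycle, $\gamma = \partial F$ and $a^\alpha_1 = l_\gamma$. At every vertex of $\gamma$ the ``single facet'' side must be $\mathcal{C}_\alpha$, so on the other side every vertex is a transition, forcing $a^\beta_q = 1$ for every $q$ and $l_\beta = l_\gamma = a^\alpha_1$; this is case~(1). Otherwise $l_1, l_2 \geq 2$, so each side has transitions; the number of transitions on side $\alpha$ is exactly $l_\alpha$ (one per pair of consecutive facets of the cyclic loop $\mathcal{L}_\alpha$). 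Since each vertex of $\gamma$ produces a transition on exactly one side, summing gives $l_1 + l_2 = l_\gamma$, which combined with the edge-count above yields case~(2).

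The main obstacle is justifying the local picture at each vertex cleanly — in particular, arguing that in the $l_\alpha = 1$ case $\gamma$ really is the entire boundary of a single facet (not a proper subcycle of it), and that in the general case a vertex of $\gamma$ cannot simultaneously be a transition vertex for both loops. Both facts follow from simplicity of $P$ (only three edges at $v$) together with the Jordan-type separation (Theorem~\ref{Jtheorem}) that tells us which facets of $\mathcal{F}_P$ lie in each of $\mathcal{C}_1,\mathcal{C}_2$; once that is set up carefully, the counting is immediate.
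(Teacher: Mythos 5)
Your proposal is correct and follows essentially the same route as the paper's proof: the paper's key observation that ``successive edges of $\gamma$ belong to the same facet in $\mathcal{L}_\alpha$ if and only if they belong to successive facets in $\mathcal{L}_\beta$'' is exactly your vertex-level dichotomy (one side is in ``single facet'' position, the other has a ``transition''), and the counting $l_1+l_2=l_\gamma=\sum_r a^\alpha_r$ is the same in both. Your write-up is a bit more explicit about why simplicity of $P$ forces the dichotomy at each vertex, but the argument is not materially different.
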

\begin{proof}
If $l_2=1$, then $\gamma$ is a boundary of the facet $F_{j_1}$, successive edges of $\gamma$ 
belong to different facets in $\mathcal{L}_1$, and $l_1=a^2_1$. Similar argument works for $l_1=1$.

Let $l_1,l_2\geqslant 2$. Any edge of  $\gamma$ is an intersection of a facet from $\mathcal{L}_1$ with a facet from $\mathcal{L}_2$. Successive edges of $\gamma$ belong to the same facet in $\mathcal{L}_\alpha$ if and only if they belong to successive facets in $\mathcal{L}_\beta$, $\{\alpha, \beta\}=\{1,2\}$;
therefore\linebreak
$l_\alpha=\sum_{r=1}^{l_\beta}(a^\beta_r-1)=\sum_{r=1}^{l_\beta}a_r^\beta-l_\beta$.
We have $l_\gamma=\sum_{r=1}^{l_\beta}a_r^\beta=l_1+l_2$.
\end{proof}
\begin{lemma}\label{belt-lemma}
Let $\mathcal{B}=(F_{i_1},\dots,F_{i_k})$ be a $k$-belt. Then
\begin{enumerate}
\item  $|\mathcal{B}|=F_{i_1}\cup\dots\cup F_{i_k}$ is homeomorphic to a cylinder;
\item $\partial|\mathcal{B}|$ consists of two simple edge-cycles $\gamma_1$ and
    $\gamma_2$.
\item $\partial P\setminus |\mathcal{B}|$ consists of two connected components
    $\mathcal{P}_1$ and $\mathcal{P}_2$.
\item  Let $\mathcal{W}_\alpha=\{F_j\in\mathcal{F}_P \colon {\rm int\, } F_j\subset \mathcal{P}_{\alpha}\}\subset
    \mathcal{F}_P$, $\alpha=1,2$. \\ 
    Then $\mathcal{W}_1\sqcup \mathcal{W}_2\sqcup \mathcal{B}=\mathcal{F}_P$.
\item $\overline{\mathcal{P_\alpha}}=|\mathcal{W}_\alpha|$ is homeomorphic to a disk, $\alpha=1,2$.
\item $\partial \mathcal{P}_\alpha=\partial \overline{\mathcal{P}_\alpha}=\gamma_\alpha$, $\alpha=1,2$.
\end{enumerate}
\end{lemma}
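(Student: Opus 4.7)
The plan is to first establish that $|\mathcal{B}|$ is combinatorially a closed cylinder (giving parts 1 and 2), and then derive parts 3--6 from Theorem \ref{Jtheorem} applied to the two resulting boundary cycles.

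For parts 1 and 2, write $E_j = F_{i_j}\cap F_{i_{j+1}}$ with cyclic indexing. The key observation is that for each $j$, the two belt edges $E_{j-1}$ and $E_j$ of $F_{i_j}$ are vertex-disjoint: a shared vertex would belong to $F_{i_{j-1}}\cap F_{i_{j+1}}$, contradicting the belt condition (the indices $j-1$ and $j+1$ are non-consecutive modulo $k$ for $k\geqslant 3$; in the case $k=3$ one invokes instead the triple-intersection condition $F_{i_1}\cap F_{i_2}\cap F_{i_3}=\varnothing$). Consequently $\partial F_{i_j}$ decomposes into the two belt edges and two complementary edge-arcs $\beta_j^+$ and $\beta_j^-$, each containing at least one edge (in particular, no belt facet is a triangle). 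Concatenating the arcs $\beta_j^+$ cyclically along their shared endpoints on the $E_j$'s produces an edge-cycle $\gamma_1$, and analogously the $\beta_j^-$ produce $\gamma_2$. I will then verify $\gamma_1$ and $\gamma_2$ are simple and disjoint: a repeated vertex on $\gamma_1$ would, since $P$ is simple (every vertex lying in exactly three facets), have to belong to two non-consecutive belt facets, contradicting the belt property; a vertex common to $\gamma_1$ and $\gamma_2$ is excluded by the same mechanism. From this combinatorial description an explicit homeomorphism $|\mathcal{B}|\cong S^1\times[0,1]$ is immediate by parametrizing each $F_{i_j}$ as a rectangle with opposite sides $E_{j-1},E_j$ and gluing.

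For parts 3--6, I apply Theorem \ref{Jtheorem} to the simple edge-cycle $\gamma_1$. Since $|\mathcal{B}|\setminus\gamma_1$ is connected (a half-open cylinder still containing $\gamma_2$), it lies in exactly one of the two components of $\partial P\setminus\gamma_1$; call the other $\mathcal{P}_1$. Then $\overline{\mathcal{P}_1}$ is a disk with boundary $\gamma_1$, disjoint from $|\mathcal{B}|$. The analogous application to $\gamma_2$ gives $\mathcal{P}_2$. To see $\mathcal{P}_1\sqcup\mathcal{P}_2=\partial P\setminus|\mathcal{B}|$, note that $S^2\cong\partial P$ minus an embedded closed annulus has exactly two components, and both $\mathcal{P}_1,\mathcal{P}_2$ are such components, so they exhaust $\partial P\setminus|\mathcal{B}|$. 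Part 4 is then immediate: any non-belt facet has connected interior disjoint from $|\mathcal{B}|$ and hence lies in a single $\mathcal{P}_\alpha$. For part 5, the equality $\overline{\mathcal{P}_\alpha}=|\mathcal{W}_\alpha|$ follows from two inclusions: every closed facet in $\mathcal{W}_\alpha$ is contained in $\overline{\mathcal{P}_\alpha}$ (its interior is in $\mathcal{P}_\alpha$), and every point of $\overline{\mathcal{P}_\alpha}$ is a limit of interior points of facets from $\mathcal{W}_\alpha$, which by finiteness forces it to lie in such a closed facet. The disk structure is inherited from Theorem \ref{Jtheorem}. Part 6 is then a direct consequence, since $\partial\overline{\mathcal{P}_\alpha}=\gamma_\alpha$ by construction.

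The main obstacle is the combinatorial verification in the first two steps --- carefully confirming that the cycles $\gamma_1$ and $\gamma_2$ really are simple and disjoint. This requires handling the vertex-valence bookkeeping uniformly across all $k\geqslant 3$, with subtly different flavors in the case $k=3$ (where every pair of belt facets is consecutive, and the empty triple-intersection carries the weight) versus $k\geqslant 4$ (where the pairwise non-intersection of non-consecutive facets is the active constraint). Once the cylinder structure is in place, parts 3--6 are a routine unpacking of Theorem \ref{Jtheorem}.
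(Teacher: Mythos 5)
Your proposal is correct and fills in what the paper leaves as "straightforward using Theorem~\ref{Jtheorem}," so there is no alternative route in the paper to compare against; your strategy (establish the cylinder/annulus structure of $|\mathcal{B}|$ by direct combinatorics, then apply Theorem~\ref{Jtheorem} to each of the two boundary cycles) is the natural and, as far as the text indicates, the intended one.

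Two points worth tightening in the combinatorial stage. First, when you stitch the arcs $\beta_j^+$ into a single cycle $\gamma_1$ (and $\beta_j^-$ into $\gamma_2$), you need a consistent choice of which side is "$+$" across all $j$: the three edges at an endpoint of $E_j$ are $E_j$ itself, one edge on $F_{i_j}$, and one edge on $F_{i_{j+1}}$, and you must pick the labels so $\beta_j^+$ and $\beta_{j+1}^+$ share that endpoint. This does work, but the cleanest way to see that the stitching closes up into exactly two circles rather than one is not to chase labels: observe that $|\mathcal{B}|$ is a compact surface with boundary embedded in $S^2$ (hence orientable, genus $0$), deformation retracting onto a circle, so $\chi(|\mathcal{B}|) = 0 = 2 - 2g - b$ forces $b = 2$ boundary components. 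That simultaneously dispatches the (unmentioned) possibility of a Möbius band arising from the gluing. Second, your disjointness argument for $\gamma_1$ and $\gamma_2$ should handle the case where a common vertex $v$ lies in the interior of some $\beta_j^+$ and on some $\beta_l^-$: since $v$ lies in exactly three facets of $P$, and as you argue $v$ lies in at most one belt facet when it is interior to an arc, the two arcs through $v$ would have to lie on the same $F_{i_j}$, but $\beta_j^+$ and $\beta_j^-$ are disjoint by construction. With these small additions the combinatorial verification is airtight, and parts 3--6 follow exactly as you describe.
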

The proof is straightforward using Theorem \ref{Jtheorem}.

Let a facet $F_{i_j}\in\mathcal{B}$ has $\alpha_j$ edges in $\gamma_1$ and
$\beta_j$ edges in $\gamma_2$. If $F_{i_j}$ is an $m_{i_j}$-gon, then $\alpha_j+\beta_j=m_{i_j}-2$.

\begin{lemma}\label{loop-lemma}
Let  $P$ be a simple $3$-polytope with $p_3=0$, $p_k=0$, $k\geqslant 8$,
$p_7\leqslant 1$, and let $\mathcal{B}_k$ be a $k$-belt, $k\geqslant 3$, consisting of $b_i$ $i$-gons,
$4\leqslant i\leqslant 7$. Then one of the following holds:
\begin{enumerate}
\item $\mathcal{B}_k$ surrounds two $k$-gonal facets $F_s: \{F_s\}=\mathcal{W}_1$, and
    $F_t: \{F_t\}=\mathcal{W}_2$,\\ and all facets of $\mathcal{B}_k$ are quadrangles;
\item $\mathcal{B}_k$ surrounds a $k$-gonal facet $F_s: \{F_s\}=\mathcal{W}_\alpha$, and borders
    an $l_\beta$-loop $\mathcal{L}_\beta\subset \mathcal{W}_\beta$, $\{\alpha,\beta\}=\{1,2\}$,
    $l_\beta=b_5+2b_6+3b_7\geqslant 2$;
\item $\mathcal{B}_k$ borders an $l_1$-loop $\mathcal{L}_1\subset \mathcal{W}_1$ and an $l_2$-loop
    $\mathcal{L}_2\subset \mathcal{W}_2$, where
    \begin{enumerate}
    \item  $l_1=\sum_{j=1}^k(\alpha_j-1)\geqslant 2$, $l_2=\sum_{j=1}^k(\beta_j-1)\geqslant 2$;
    \item $l_1+l_2=2k-2b_4-b_5+b_7\leqslant 2k+1$.
    \item $\min\{l_1,l_2\}\leqslant
        k-b_4-\lceil\frac{b_5-b_7}{2}\rceil\leqslant k$.
    \item If $b_7=0$, $l_1, l_2\geqslant k$, then $l_1=l_2=k$,
        $b_4=b_5=0$, $b_6=k$.
    \end{enumerate}
\end{enumerate}
\end{lemma}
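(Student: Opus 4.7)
The strategy is to apply Lemma \ref{belt-lemma} to obtain the boundary cycles $\gamma_1, \gamma_2$ and then invoke Lemma \ref{b-lemma} separately on each side to identify which of the three conclusions holds.

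First, by Lemma \ref{belt-lemma}, the belt $\mathcal{B}_k$ separates $\partial P$ into two disks $\overline{\mathcal{P}_\alpha} = |\mathcal{W}_\alpha|$ with boundaries $\gamma_\alpha$, and each facet $F_{i_j}\in\mathcal{B}_k$ contributes $\alpha_j$ consecutive edges to $\gamma_1$ and $\beta_j$ consecutive edges to $\gamma_2$, with $\alpha_j + \beta_j = m_{i_j} - 2$. I would first observe that $\alpha_j, \beta_j \geqslant 1$: otherwise the two edges $F_{i_{j-1}}\cap F_{i_j}$ and $F_{i_j}\cap F_{i_{j+1}}$ would be adjacent in $F_{i_j}$, producing a vertex common to $F_{i_{j-1}}, F_{i_j}, F_{i_{j+1}}$, which contradicts the belt property (either by yielding $F_{i_1}\cap\dots\cap F_{i_k}\neq\varnothing$ when $k=3$, or by forcing a non-consecutive pair to intersect when $k\geqslant 4$).

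Next, on each side $\alpha$, apply Lemma \ref{b-lemma} to the pair $(\mathcal{B}_k, \mathcal{L}_\alpha)$ bordering $\gamma_\alpha$. If $\mathcal{L}_\alpha$ is a $1$-loop $\{F_s\}$, then $\gamma_\alpha = \partial F_s$, so $F_s$ is a $k$-gon and each of the contributions $n^\alpha_j = 1$; otherwise $l_\alpha \geqslant 2$ and $l_\alpha + k = \sum_j n^\alpha_j$. The three cases of the lemma correspond to having both $\mathcal{L}_\alpha$ be $1$-loops (Case~1), exactly one (Case~2), or neither (Case~3). In Case~1, $\alpha_j = \beta_j = 1$ forces $m_{i_j} = 4$. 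In Case~2 with $\mathcal{W}_1 = \{F_s\}$, we get $\alpha_j = 1$, $\beta_j = m_{i_j} - 3$, and $l_2 = \sum_j(m_{i_j} - 3) - k$ simplifies to $b_5 + 2b_6 + 3b_7$ using $\sum_j m_{i_j} = 4b_4 + 5b_5 + 6b_6 + 7b_7$ and $k = b_4+b_5+b_6+b_7$. Case~3 yields $l_\alpha = \sum_j(n^\alpha_j - 1)$, which is (3a); summing gives $l_1 + l_2 = \sum_j m_{i_j} - 4k = 2k - 2b_4 - b_5 + b_7$, which is (3b).

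For (3c), I would use $\min\{l_1, l_2\} \leqslant \lfloor (l_1+l_2)/2 \rfloor$ and verify that the right-hand side $k - b_4 - \lceil (b_5 - b_7)/2 \rceil$ equals exactly this floor; the required parity match $l_1+l_2 \equiv b_5-b_7 \pmod 2$ is immediate from (3b). The bound $\min\{l_1,l_2\} \leqslant k$ then uses the hypothesis $p_7 \leqslant 1$ to guarantee $b_5 - b_7 \geqslant -1$, whence $\lceil(b_5-b_7)/2\rceil \geqslant 0$. Finally, (3d) follows directly: if $b_7=0$ and $l_1, l_2 \geqslant k$, then $2k \leqslant l_1+l_2 = 2k - 2b_4 - b_5$, forcing $b_4 = b_5 = 0$, and hence $b_6 = k$ and $l_1 = l_2 = k$. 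The main subtlety is the consistent bookkeeping of $\alpha_j, \beta_j$ against $m_{i_j}$ simultaneously on both boundary cycles; once that is set up correctly, the entire statement reduces to elementary linear arithmetic in the $b_i$.
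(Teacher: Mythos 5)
Your proposal is correct and follows essentially the same route as the paper: apply Lemma \ref{belt-lemma} to get the two boundary cycles, apply Lemma \ref{b-lemma} on each side, split into the three cases according to how many sides are $1$-loops, and reduce (3b)--(3d) to arithmetic in the $b_i$. The one thing you add that the paper leaves implicit is the explicit check that $\alpha_j,\beta_j\geqslant 1$ (so that $\alpha_j=\beta_j=1$ forces $m_{i_j}=4$ in Case~1), which is a welcome piece of rigor but does not change the argument.
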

\begin{proof}
Walking round $\gamma_\alpha$ in $\mathcal{P}_\alpha$ we obtain an $l_\alpha$-loop $\mathcal{L}_\alpha\subset
\mathcal{W}_{\alpha}$.

If $\mathcal{B}_k$ surrounds two $k$-gons $F_s:\{F_s\}=\mathcal{W}_1$, and $F_t:\{F_t\}=\mathcal{W}_2$,
then all facets in $\mathcal{B}_k$ are quadrangles.

If $\mathcal{B}_k$ surrounds a $k$-gon $F_s:\{F_s\}=\mathcal{W}_\alpha$ and borders an
$l_\beta$-loop $\mathcal{L}_\beta\subset W_\beta$, $l_\beta\geqslant 2$, then from Lemma
\ref{b-lemma} we have
$$
l_\beta=\sum\limits_{j=1}^k(m_{i_j}-3)-k=\sum\limits_{j=1}^k(m_{i_j}-3-1)=\sum\limits_{j=4}^7jb_j-4\sum\limits_{j=4}^7b_j=b_5+2b_6+3b_7.
$$

If $\mathcal{B}_k$ borders an $l_1$-loop $\mathcal{L}_1$ and an $l_2$-loop $\mathcal{L}_2$,
$l_1,l_2\geqslant 2$, then (a) follows from Lemma~\ref{b-lemma}.
\begin{multline*}
l_1+l_2=\sum\limits_{j=1}^k(\alpha_j+\beta_j-2)=\sum\limits_{i=1}^k(m_{i_j}-4)=\\ 
\sum\limits_{j=4}^7jb_j-4\sum\limits_{j=4}^7b_j=b_5+2b_6+3b_7=2k-2b_4-b_5+b_7.
\end{multline*}
We have $\min\{l_1,l_2\}\leqslant \left[\frac{l_1+l_2}{2}\right]=
k-b_4-\lceil\frac{b_5-b_7}{2}\rceil\leqslant k$, since $b_7\leqslant 1$.

If $b_7=0$ and $l_1,l_2\geqslant k$, then from (3b) we have $l_1=l_2=k$,
$b_4=b_5=0$, $b_6=k$.
\end{proof}

\begin{lemma}\label{23lemma}
Let an $l_1$-loop $\mathcal{L}_1=(F_{i_1},\dots,F_{i_{l_1}})$ border an $l_2$-loop $\mathcal{L}_2$, $l_2\geqslant 2$. 
\begin{enumerate}
\item If $l_1=2$, then $l_2=m_{i_1}+m_{i_2}-4$;
\item If $l_1=3$ and $\mathcal{L}_1$ is not a $3$-belt, then $F_{i_1}\cap F_{i_2}\cap F_{i_3}$ is a vertex, and\linebreak
$l_2=m_{i_1}+m_{i_2}+m_{i_3}-9$.
\end{enumerate}
\end{lemma}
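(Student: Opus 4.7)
My plan is to reduce both assertions to Lemma \ref{b-lemma}, which states that whenever $l_1, l_2 \geqslant 2$, the two loops and their common bordering curve $\gamma$ satisfy $l_1 + l_2 = l_\gamma$, with $l_\gamma$ the number of edges of $\gamma$. Each part then becomes a routine count of how many edges of $\gamma$ each facet of $\mathcal{L}_1$ contributes.

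For part (1), I will use that any two adjacent facets of a simple $3$-polytope meet in exactly one edge, which follows from the $3$-connectedness of its graph (Lemma \ref{3C-lemma}). Then the boundary $\gamma$ of $F_{i_1} \cup F_{i_2}$ consists of the remaining $m_{i_1} - 1$ edges of $F_{i_1}$ concatenated with the remaining $m_{i_2} - 1$ edges of $F_{i_2}$, giving $l_\gamma = m_{i_1} + m_{i_2} - 2$. Substituting into $l_1 + l_2 = l_\gamma$ yields the claimed formula.

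For part (2), the only step that requires real argument is showing that ``not a $3$-belt'' forces the triple intersection $F_{i_1} \cap F_{i_2} \cap F_{i_3}$ to be a vertex. By the definition of a $k$-belt, a $3$-loop fails to be a belt iff either this triple intersection is nonempty, or some cyclically non-adjacent pair of its facets intersects; but in a $3$-loop every pair of facets is already cyclically adjacent, so only the first alternative can occur, and simplicity of $P$ forces the nonempty triple intersection to be a single vertex $v$. Having identified $v$, each $F_{i_j}$ contributes $m_{i_j} - 2$ edges to $\gamma$, since the two edges of $F_{i_j}$ incident to $v$ are exactly the edges it shares with its two neighbours in $\mathcal{L}_1$. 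Summing yields $l_\gamma = m_{i_1} + m_{i_2} + m_{i_3} - 6$, and Lemma \ref{b-lemma} finishes the computation.

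The main conceptual point, and the only one beyond an arithmetic check, is the vertex identification in part (2). The potential obstacles one might worry about -- two facets sharing more than one edge, or a degenerate configuration where the three facets share a whole edge rather than a vertex -- are excluded respectively by Lemma \ref{3C-lemma} and by the simplicity hypothesis, so the edge counts are unambiguous.
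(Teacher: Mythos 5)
Your proof is correct and takes essentially the same route the paper intends: the paper labels this lemma as "straightforward from Lemma \ref{b-lemma}," and your argument simply carries out that reduction, computing $l_\gamma$ by counting edges of each facet not shared with its neighbours in the loop and then applying $l_1 + l_2 = l_\gamma$. Both the auxiliary facts you invoke (that two facets of a simple $3$-polytope share at most one edge, and that a $3$-loop that is not a $3$-belt must have its triple intersection equal to a vertex) are correctly identified and justified.
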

The proof is straightforward from Lemma \ref{b-lemma}.
\begin{theorem}\label{3belts-theorem}
Let $P$ be simple $3$-polytope with $p_3=0$, $p_4\leqslant 2$, $p_7\leqslant
1$, and $p_k=0$, $k\geqslant 8$. Then it has no $3$-belts. In particular, it
is a flag polytope.
\end{theorem}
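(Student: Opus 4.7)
The plan is to assume a $3$-belt $\mathcal{B}_3=(F_{i_1},F_{i_2},F_{i_3})$ exists in $P$ and to iteratively descend into nested $3$-belts until a counting contradiction on heptagons is reached. By Lemma \ref{loop-lemma}, since $p_3=0$ forbids cases~(1)--(2) (which demand triangular facets), we are in case~(3): $\mathcal{B}_3$ borders a loop $\mathcal{L}_1$ of length $l_1$ and a loop $\mathcal{L}_2$ of length $l_2$, with $l_1+l_2=6-2b_4-b_5+b_7$, where $b_k$ is the number of $k$-gons in $\mathcal{B}_3$. First I would sharpen this to $l_\alpha\geqslant 3$: if $l_\alpha=2$ then Lemma \ref{23lemma}(1) yields $m_{j_1}+m_{j_2}=7$, impossible because $m_{j_i}\geqslant 4$ (no triangles). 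Using $b_7\leqslant 1$ and $b_4,b_5\geqslant 0$, the bounds $6\leqslant l_1+l_2\leqslant 7$ leave only three profiles for $\mathcal{B}_3$: (A) three hexagons with $l_1=l_2=3$; (B) two hexagons and the unique heptagon with $\{l_1,l_2\}=\{3,4\}$; (C) one pentagon, one hexagon, and one heptagon with $l_1=l_2=3$.

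Next I would show that whenever $l_\alpha=3$ the loop $\mathcal{L}_\alpha$ is itself a $3$-belt. Otherwise Lemma \ref{23lemma}(2) applies and $m_{j_1}+m_{j_2}+m_{j_3}=3+9=12$; with each $m_{j_i}\in\{4,5,6,7\}$ this forces $m_{j_1}=m_{j_2}=m_{j_3}=4$, producing three squares inside $\mathcal{L}_\alpha$ alone and contradicting $p_4\leqslant 2$. The same dichotomy now iterates: for any $3$-belt $\mathcal{L}$ whose outer bordering loop has length $3$, the inner bordering loop $\mathcal{L}'$ has length $3-2b_4^{\mathcal{L}}-b_5^{\mathcal{L}}+b_7^{\mathcal{L}}\in\{3,4\}$ by the same formula of Lemma \ref{loop-lemma}; length $3$ forces $\mathcal{L}'$ to be another $3$-belt, while length $4$ forces $(b_4^{\mathcal{L}},b_5^{\mathcal{L}},b_7^{\mathcal{L}})=(0,0,1)$. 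Starting from $\mathcal{B}_3$, this builds on each side with $l_\alpha=3$ a nested chain $\mathcal{L}_\alpha^{(1)},\mathcal{L}_\alpha^{(2)},\ldots$ of pairwise disjoint $3$-belts, necessarily finite because $P$ has finitely many facets.

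The conclusion comes from a heptagon count. Each chain terminates at some innermost belt $\mathcal{L}_\alpha^{(N_\alpha)}$, and by the inductive dichotomy its inner loop either has length $3$ but is not a $3$-belt (forcing three squares in $\mathcal{L}_\alpha^{(N_\alpha+1)}$ and contradicting $p_4\leqslant 2$), or has length $4$, in which case the unique heptagon of $P$ must sit in $\mathcal{L}_\alpha^{(N_\alpha)}$. Since $p_7\leqslant 1$, a heptagon can terminate at most one chain in all of $P$. In cases (B) and (C) the heptagon already lies in $\mathcal{B}_3$, so no chain-belt contains one and every chain terminates via the three-squares contradiction; in case (A) the single available heptagon terminates at most one of the two chains, so the other terminates via the three-squares contradiction. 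In every scenario we reach a contradiction, and $P$ has no $3$-belts. The principal obstacle is to verify the inductive dichotomy at every depth, which reduces to re-applying Lemmas \ref{loop-lemma} and \ref{23lemma} to each new $3$-belt against its outer ancestor (which by construction is again a $3$-belt of length $3$).
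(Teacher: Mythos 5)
Your proof is correct and follows essentially the same strategy as the paper: assume a $3$-belt exists, use Lemma \ref{loop-lemma} and Lemma \ref{23lemma} to force each bordering $3$-loop to be a $3$-belt, and derive a contradiction from an infinite nested chain of $3$-belts inside a finite polytope. The paper streamlines the case analysis by choosing a heptagon-free side $\mathcal{W}_\alpha$ with $l_\alpha=3$ at the outset (which always exists since $p_7\leqslant 1$), so that every belt in the descent has $b_7=0$ and the chain trivially cannot terminate; your version arrives at the same conclusion by instead enumerating the composition of $\mathcal{B}_3$ and tracking which chain the lone heptagon could possibly terminate.
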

\begin{proof}
Let $P$ has a $3$-belt $\mathcal{B}_3$. Since $p_3=0$, by Lemma \ref{loop-lemma} it borders an
$l_1$-loop $\mathcal{L}_1$ and $l_2$-loop $\mathcal{L}_2$, where $l_1,l_2\geqslant 2$,
$l_1+l_2\leqslant 7$. By Lemma \ref{23lemma} (1) we have
$l_1,l_2\geqslant 3$; hence $\min\{l_1,l_2\}=3$. If $\mathcal{B}_3$ contains a heptagon,
then $\mathcal{W}_1,\mathcal{W}_2$ contain no heptagons. If $\mathcal{B}_3$ contains no heptagons, then from Lemma \ref{loop-lemma} (3d)  $l_1=l_2=3$, and one of the sets $\mathcal{W}_1$ and $\mathcal{W}_2$, say $\mathcal{W}_\alpha$, contains no heptagons. In both cases we obtain a set $\mathcal{W}_\alpha$ without heptagons and a
$3$-loop $\mathcal{L}_\alpha\subset \mathcal{W}_\alpha$. Then $\mathcal{L}_\alpha$ is a $3$-belt, else by Lemma
\ref{23lemma} (2)  the belt $\mathcal{B}_3$ should have at least $4+4+5-9=4$ facets.
Considering the other boundary component of $\mathcal{L}_\alpha$ we obtain again a
$3$-belt there. Thus we obtain an infinite series of different $3$-belts
inside $|\mathcal{W}_\alpha|$. A contradiction.
\end{proof}
\begin{corollary}\label{3belts-ful}\index{fullerene!flagness}
Any fullerene is a flag polytope.
\end{corollary}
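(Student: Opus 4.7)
The plan is to deduce this corollary as a direct specialization of Theorem \ref{3belts-theorem}, since fullerenes satisfy its hypotheses in a particularly clean way. Recall that a (combinatorial) fullerene is a simple $3$-polytope whose facets are only pentagons and hexagons, so $p_k = 0$ for every $k \ne 5, 6$. In particular $p_3 = 0$, $p_4 = 0 \leqslant 2$, $p_7 = 0 \leqslant 1$, and $p_k = 0$ for all $k \geqslant 8$. Hence Theorem \ref{3belts-theorem} applies and tells us that a fullerene has no $3$-belts.

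Once we know there are no $3$-belts, flagness follows from Proposition \ref{nflag}, which states that a simple $3$-polytope fails to be flag if and only if it is either the simplex $\Delta^3$ or contains a $3$-belt. A fullerene is clearly not the simplex (its facets are pentagons and hexagons, not triangles; equivalently, by the relation $f_2 = p_5 + p_6 = 12 + p_6 \geqslant 12$ on simple fullerene polytopes from Theorem \ref{pkth}, it has far more than $4$ facets). Together with the absence of $3$-belts, this forces the fullerene to be flag.

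Since all the work is carried out by Theorem \ref{3belts-theorem}, there is essentially no obstacle left in the corollary itself; the only thing to be careful about is to check the numeric hypotheses and to invoke Proposition \ref{nflag} to translate ``no $3$-belts'' into flagness. The nontrivial content — bounding the possible loops that can border a hypothetical $3$-belt and deriving an infinite descent of $3$-belts — is already contained in the proof of Theorem \ref{3belts-theorem} via Lemmas \ref{b-lemma}, \ref{loop-lemma} and \ref{23lemma}, so no additional argument is needed here.
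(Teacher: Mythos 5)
Your proposal is correct and follows exactly the route the paper intends: the corollary is a direct specialization of Theorem~\ref{3belts-theorem}, whose hypotheses $p_3=0$, $p_4\leqslant 2$, $p_7\leqslant 1$, $p_k=0$ for $k\geqslant 8$ are trivially met by a fullerene ($p_k=0$ for $k\ne 5,6$). Your additional invocation of Proposition~\ref{nflag} to pass from ``no $3$-belts'' to flagness is harmless but redundant, since the theorem already asserts ``In particular, it is a flag polytope.''
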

This result follows directly from the results of paper \cite{D98} about cyclic $k$-edge cuts of fullerenes. We present a different approach from \cite{Bu-Er15,BE15b} based on the notion of a $k$-belt.

\begin{corollary}\label{Cor3belts}
Let $P$ be a fullerene. Then any $3$-loop surrounds a vertex.
\end{corollary}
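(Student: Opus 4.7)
The plan is to deduce this immediately from the flagness of fullerenes (Corollary \ref{3belts-ful}) together with the dichotomy established by Proposition \ref{nflag}. Since any fullerene has only pentagons and hexagons, it is certainly not combinatorially equivalent to $\Delta^3$, so only the second alternative in Proposition \ref{nflag} is relevant: a non-simplex simple $3$-polytope is flag if and only if every $3$-loop corresponds to (i.e., surrounds) a vertex.

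More concretely, I would first recall that for a $3$-loop $(F_{i_1},F_{i_2},F_{i_3})$ the facets are pairwise intersecting along edges, so there are exactly two possibilities compatible with the Jordan-curve description from Theorem~\ref{Jtheorem}: either $F_{i_1}\cap F_{i_2}\cap F_{i_3}$ is a single vertex (and the loop surrounds that vertex), or this intersection is empty (and the loop is a $3$-belt). By Corollary~\ref{3belts-ful} a fullerene $P$ is flag, so by definition of flagness the pairwise-intersecting triple $\{F_{i_1},F_{i_2},F_{i_3}\}$ must satisfy $F_{i_1}\cap F_{i_2}\cap F_{i_3}\neq\varnothing$, ruling out the $3$-belt case.

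Therefore the only remaining possibility is that the $3$-loop surrounds the unique vertex $F_{i_1}\cap F_{i_2}\cap F_{i_3}$, which is exactly the claim. There is no real obstacle: the work has already been done in establishing flagness via Theorem~\ref{3belts-theorem} (applied with $p_3=p_4=p_7=0$, $p_k=0$ for $k\geqslant 8$), so this corollary is simply a translation of flagness into the language of $3$-loops. The only thing worth pointing out explicitly is why $P\not\simeq\Delta^3$, which is immediate from $p_5(P)=12$.
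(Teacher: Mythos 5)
Your proof is correct and follows essentially the same route the paper intends: fullerenes are flag (Corollary~\ref{3belts-ful}), they are not $\Delta^3$ because $p_5=12$, and for a $3$-loop the only two possibilities are that the triple intersection is a vertex or is empty (the latter being exactly the definition of a $3$-belt for $k=3$); flagness rules out the $3$-belt case. This is precisely the content of the unnamed corollary following Proposition~\ref{nflag} (``Simple $3$-polytope $P\ne\Delta^3$ is flag if and only if any $3$-loop corresponds to a vertex''), which you effectively reprove; citing that corollary directly would shorten the argument, and the appeal to Theorem~\ref{Jtheorem} is unnecessary since the dichotomy on $F_{i_1}\cap F_{i_2}\cap F_{i_3}$ (vertex or empty, given three pairwise-intersecting distinct facets in a simple $3$-polytope) is immediate.
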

In what follows we will implicitly use the fact that for any flag polytope, in particular satisfying conditions of Theorem \ref{3belts-theorem}, if facets $F_i$, $F_j$, $F_k$ pairwise intersect, then $F_i\cap F_j\cap F_k$ is a vertex.

\subsection{$4$-belts and $5$-belts of fullerenes}

\begin{lemma}\label{4lemma}
Let $P$ be a flag $3$-polytope, and let a $4$-loop $\mathcal{L}_1=(F_{i_1},F_{i_2},F_{i_3},F_{i_4})$ border an $l_2$-loop $\mathcal{L}_2$, $l_2\geqslant 2$, where index $j$ of $i_j$ lies in $\mathbb Z_4=\mathbb Z/(4)$. Then one of the following holds:
\begin{enumerate}
\item $\mathcal{L}_1$ is a {\bf $4$-belt} (Fig. \ref{4loop} a);
\item $\mathcal{L}_1$ is a {\bf simple loop} consisting of facets surrounding an edge (Fig. \ref{4loop} b),
and\\
$l_2=m_{i_1}+m_{i_2}+m_{i_3}+m_{i_4}-14$;
\item $\mathcal{L}_1$ is {\bf not a simple loop}: $F_{i_j}=F_{i_{j+2}}$ for some $j$,  $F_{i_{j-1}}\cap F_{i_{j+1}}=\varnothing$ (Fig. \ref{4loop} c), and $l_2=m_{i_{j-1}}+m_{i_j}+m_{i_{j+1}}-8$.
\end{enumerate}
\end{lemma}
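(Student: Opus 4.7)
The plan is to analyze $\mathcal{L}_1$ by two successive case splits: first whether its four facets $F_{i_1},\ldots,F_{i_4}$ are pairwise distinct, and then whether any non-consecutive pair among them intersects. The flag hypothesis will supply a triple-intersection vertex whenever three facets meet pairwise, and the constraint that only three facets meet at any vertex of a simple $3$-polytope will rule out every configuration outside the three listed cases.

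For the simple-loop branch, I will first test the two possible chords $F_{i_1}\cap F_{i_3}$ and $F_{i_2}\cap F_{i_4}$: if both are empty, $\mathcal{L}_1$ is a $4$-belt by definition, which is case~(1). Otherwise, after relabeling we may assume $F_{i_1}\cap F_{i_3}\ne\varnothing$, and since two intersecting facets of a simple polytope share an edge, this intersection is an edge $e$. Applying the flag property to the pairwise-intersecting triples $\{F_{i_1},F_{i_2},F_{i_3}\}$ and $\{F_{i_1},F_{i_3},F_{i_4}\}$ yields two vertices, both lying on $e$ and carrying distinct third facets, hence equal to the two endpoints of $e$. Thus the four facets form the closed star of $e$, giving case~(2). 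To rule out $F_{i_2}\cap F_{i_4}\ne\varnothing$ in this subcase, I would observe that all four facets would then pairwise intersect, and the flag property would force them to share a common point -- impossible in a simple $3$-polytope. The formula $l_2=m_{i_1}+m_{i_2}+m_{i_3}+m_{i_4}-14$ then follows from Lemma~\ref{b-lemma}: the star of $e$ is a disk with exactly five interior edges (the four loop-edges together with $e$), so $l_\gamma=\sum_j m_{i_j}-10$ and $l_2=l_\gamma-4$.

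For the non-simple branch, consecutive facets in a loop must be distinct (they share an edge), so the only possible repetition has the form $F_{i_j}=F_{i_{j+2}}$; without loss of generality $j=1$. I will first eliminate the doubly-degenerate case $F_{i_2}=F_{i_4}$ by noting that this would force four $\gamma$-transitions between $F_{i_1}$ and $F_{i_2}$ through the single edge $F_{i_1}\cap F_{i_2}$, which has only two endpoints. To establish $F_{i_2}\cap F_{i_4}=\varnothing$, I will argue by contradiction: suppose otherwise, so that flag produces a vertex $v=F_{i_1}\cap F_{i_2}\cap F_{i_4}$. Since $F_{i_2}$ appears only once in $\mathcal{L}_1$, its unique arc on $\gamma$ has two transition vertices at which the transitioning pair is $\{F_{i_1},F_{i_2}\}$, and the transition edges both coincide with the unique shared edge $F_{i_1}\cap F_{i_2}$; hence both endpoints of that edge lie on $\gamma$. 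Since $v$ is a vertex in the edge $F_{i_1}\cap F_{i_2}$, it must be one of these endpoints, so $v\in\gamma$. But at any vertex of $\gamma$ at most two of the three meeting facets can lie on the $\mathcal{C}_1$ side, whereas $F_{i_1},F_{i_2},F_{i_4}\in\mathcal{L}_1$ all do -- a contradiction. The formula $l_2=m_{i_{j-1}}+m_{i_j}+m_{i_{j+1}}-8$ then follows from the analogous edge count on $|\mathcal{L}_1|=F_{i_1}\cup F_{i_2}\cup F_{i_4}$, whose only interior edges are $F_{i_1}\cap F_{i_2}$ and $F_{i_1}\cap F_{i_4}$.

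The main obstacle will be the disjointness claim in case~(3). In case~(2) the nonexistence of a common vertex of all four facets is immediate from the simple-polytope vertex constraint, but in case~(3) one must locate the hypothetical triple-intersection vertex precisely on $\gamma$ in order to invoke the bipartition of the three facets at each $\gamma$-vertex into $\mathcal{C}_1$-side and $\mathcal{C}_2$-side facets. The pivotal combinatorial step -- identifying the two transition vertices of $F_{i_2}$'s arc on $\gamma$ with the two endpoints of the unique shared edge $F_{i_1}\cap F_{i_2}$ -- is what links the flag property to the local structure of $\gamma$.
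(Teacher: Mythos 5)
Your proof is correct and follows the same three-way case decomposition as the paper's, but it establishes the two disjointness claims by different arguments. In case~(2), you explicitly rule out both chords $F_{i_1}\cap F_{i_3}$ and $F_{i_2}\cap F_{i_4}$ being nonempty by noting that four pairwise-intersecting facets would, by flagness, share a common point of codimension~$4$; the paper leaves this implicit. The genuine divergence is in case~(3). To show $F_{i_{j-1}}\cap F_{i_{j+1}}=\varnothing$ the paper uses a topological separation argument: the twice-occurring facet $F_{i_j}$ meets $\gamma$ in two disjoint arcs and hence separates the disk $\mathcal C_\alpha$, placing $F_{i_{j-1}}$ and $F_{i_{j+1}}$ in different components. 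You instead assume $F_{i_{j-1}}\cap F_{i_{j+1}}\ne\varnothing$, invoke flagness to get a vertex $v=F_{i_{j-1}}\cap F_{i_j}\cap F_{i_{j+1}}$, observe that both endpoints of the edge $F_{i_j}\cap F_{i_{j+1}}$ are the transition vertices flanking the $\gamma$-arc of $F_{i_{j+1}}$ (so $v\in\gamma$), and then contradict the fact that at any vertex of $\gamma$ exactly one of the three incident facets lies on the $\mathcal L_2$-side. Both routes are valid; the paper's is shorter and does not actually need flagness for this step, while yours is more explicitly combinatorial. Your elimination of the doubly-degenerate case $F_{i_1}=F_{i_3}$, $F_{i_2}=F_{i_4}$ (four transitions at endpoints of a single shared edge, which has only two) and the interior-edge counts giving the two $l_2$-formulas are correct and agree with the paper's computation via Lemma~\ref{b-lemma}.
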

\begin{figure}
\begin{center}
\includegraphics[height=4cm]{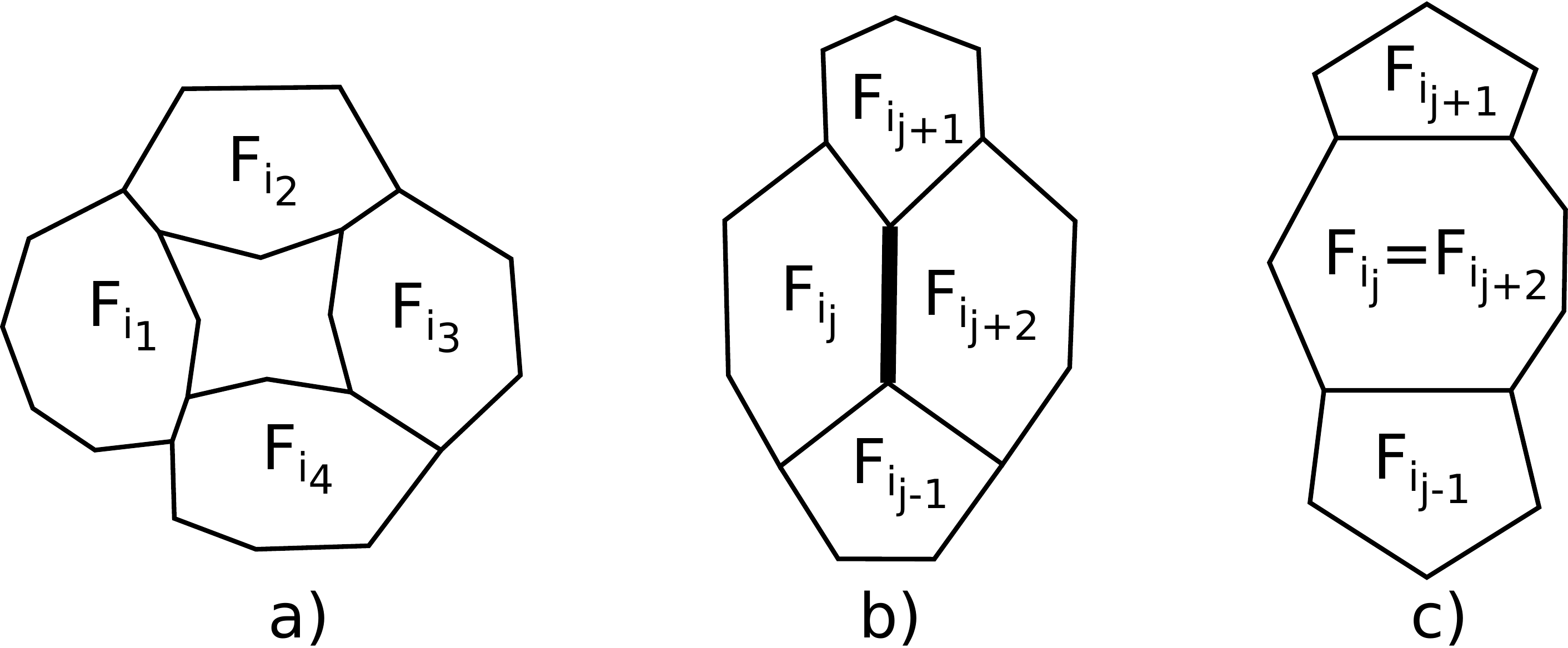}
\caption{Possibilities for a $4$-loop $\mathcal{L}_1$}\label{4loop}
\end{center}
\end{figure}
\begin{proof}
Let $\mathcal{L}_1$ be not a $4$-belt. If $\mathcal{L}_1$ is simple, then $F_{i_j}\cap F_{i_{j+2}}\ne\varnothing$ for some $j$. Then $F_{i_j}\cap F_{j_{j+1}}\cap F_{i_{j+2}}$ and $F_{i_j}\cap F_{j_{j-1}}\cap F_{i_{j+2}}$ are vertices, $\mathcal{L}_1$ surrounds the edge $F_{i_j}\cap F_{i_{j+2}}$, and  by Lemma \ref{b-lemma} we have  $l_2=(m_{i_j}-3)+(m_{i_{j+1}}-2)+(m_{i_{j+2}}-3)+(m_{i_{j-2}}-2)-4=m_{i_1}+m_{i_2}+m_{i_3}+m_{i_4}-14$.  If $\mathcal{L}_1$ is not simple, then $F_{i_j}=F_{i_{j+2}}$ for some $j$. The successive facets of $\mathcal{L}_1$ are different by definition. Let $\mathcal{L}_1$ and $\mathcal{L}_2$ border the edge cycle $\gamma$ and $\mathcal{L}_1\subset \mathcal{D}_{\alpha}$ in notations of Theorem \ref{Jtheorem}. Since $F_{i_j}$ intersects $\gamma $ by two paths,  ${\rm int }\,F_{i_{j-1}}$ and ${\rm int}\,F_{i_{j+1}}$ lie in different connected components of $\mathcal{C}_\alpha\setminus{\rm int}\, F_{i_j}$; hence $F_{i_{j-1}}\cap F_{i_{j+1}}=\varnothing$. By Lemma \ref{b-lemma} we have $l_2=(m_{i_{j-1}}-1)+(m_{i_j}-2)+(m_{i_{j+1}}-1)-4=m_{j_{j-1}}+m_{i_j}+m_{i_{j+1}}-8$.   
\end{proof}

\begin{theorem}\label{4belts-theorem}
Let $P$ be a simple polytope with all facets pentagons and hexagons with at
most one exceptional facet $F$ being a quadrangle or a heptagon.
\begin{enumerate}
\item If $P$ has no quadrangles, then $P$ has no $4$-belts.
\item If $P$ has a quadrangle $F$, then there is exactly one $4$-belt. It
    surrounds $F$.
\end{enumerate}
\end{theorem}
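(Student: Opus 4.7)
\emph{Plan.} By Theorem~\ref{3belts-theorem}, $P$ is flag. Suppose $\mathcal{B}_4=(F_{i_1},F_{i_2},F_{i_3},F_{i_4})$ is a 4-belt and apply Lemma~\ref{loop-lemma} with $k=4$. Case~(1) demands two quadrangles and is excluded by $p_4\leqslant 1$. In Case~(2) the belt surrounds a quadrangle and is therefore the unique cyclic sequence of its four adjacent facets; this is impossible in part~(1) and gives the unique 4-belt in part~(2). Both halves of the theorem thus reduce to ruling out Case~(3).

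In Case~(3) the belt borders loops $\mathcal{L}_1,\mathcal{L}_2$ with $l_\alpha\geqslant 2$. By Lemma~\ref{23lemma}, $l_\alpha=2$ would force $m_{s_1}+m_{s_2}=8$, while $l_\alpha=3$ (with flagness excluding the 3-belt alternative) would force $m_{s_1}+m_{s_2}+m_{s_3}=13$; under $m\in\{4,5,6,7\}$ and $p_4+p_7\leqslant 1$ neither equation has a solution, so $l_1,l_2\geqslant 4$. Combined with Lemma~\ref{loop-lemma}(3b) and $b_4+b_7\leqslant 1$ this leaves three possible compositions of $\mathcal{B}_4$: (A) $b_6=4$, $l_1=l_2=4$; (B) $b_6=3$, $b_7=1$, $(l_1,l_2)=(4,5)$; (C) $b_5=1$, $b_6=2$, $b_7=1$, $l_1=l_2=4$. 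In every sub-case some side has $l_\alpha=4$, and because $P$ carries at most one exceptional facet and in (B)--(C) the heptagon already sits inside $\mathcal{B}_4$, I may choose $\mathcal{W}_\alpha$ so that all its facets lie in $\{5,6\}$.

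On that side $\mathcal{L}_\alpha$ is a 4-loop of pentagons and hexagons. Lemma~\ref{4lemma} offers three alternatives; its non-belt cases (b) and (c) would require $m_{s_1}+\cdots+m_{s_4}=18$ or $m_{s_{j-1}}+m_{s_j}+m_{s_{j+1}}=12$ respectively, and neither sum is realizable over $\{5,6\}$. Hence $\mathcal{L}_\alpha$ is itself a 4-belt; I apply Lemma~\ref{loop-lemma} to it. Its outer side contains $\mathcal{B}_4$ and is not a singleton; its inner side lies in the quadrangle-free region, so a singleton there (which by Case~(2) would have to be a 4-gon) is also impossible. Cases~(1) and~(2) are therefore again excluded, Case~(3) recurs, and the same arithmetic forces the new inner 4-loop to consist of four hexagons and to be a further 4-belt.

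Iterating produces 4-belts strictly nested inside the finite quadrangle-free and heptagon-free region $\mathcal{W}_\alpha$, four facets being consumed at every step, while Cases~(1) and~(2) remain forever unavailable. The inner region is exhausted in finitely many steps, contradicting the forced recurrence of Case~(3). This eliminates Case~(3) and completes the proof. The main obstacle is guaranteeing at every step that the newly produced inner 4-loop is genuinely a 4-belt: it is precisely the arithmetic elimination of the non-belt alternatives of Lemma~\ref{4lemma} that makes the iteration well-defined.
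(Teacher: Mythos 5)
Your proof is correct and follows the same iterative machinery as the paper (Lemma~\ref{loop-lemma}, Lemma~\ref{23lemma}, Lemma~\ref{4lemma}, iterate inward and derive a contradiction from finiteness), but it makes a small organizational change that streamlines the ending. The paper chooses $\mathcal{W}_\alpha$ to be merely \emph{heptagon-free}: it may still contain the quadrangle, and therefore the inward iteration can terminate by meeting a quadrangle, which forces the paper into an extra step of restarting the iteration on the other side $\mathcal{W}_\beta$ and finding a second quadrangle that cannot exist. You instead observe, via the explicit enumeration (A)--(C) extracted from Lemma~\ref{loop-lemma}(3b) together with $l_1,l_2\geqslant 4$, that a $4$-belt in Case~(3) can never contain the quadrangle, so one can always pick $\mathcal{W}_\alpha$ to be \emph{entirely} free of exceptional facets while still having $l_\alpha=4$. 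With that choice, Case~(2) of Lemma~\ref{loop-lemma} is permanently unavailable on the inner side, the iteration can never stop by meeting a quadrangle, and finiteness alone gives the contradiction with no need for the symmetric argument on $\mathcal{W}_\beta$. This is a genuine simplification of the last paragraph of the paper's proof, purchased at the cost of the slightly more detailed arithmetic enumeration (A)--(C). Two small points worth tightening: for part~(2) you implicitly use that a quadrangle is surrounded by a $4$-belt, which follows from flagness via Proposition~\ref{facet-belt} and deserves an explicit citation; and the phrase ``the same arithmetic forces the new inner 4-loop to consist of four hexagons'' compresses two separate steps (first Lemma~\ref{4lemma} shows the inner $4$-loop is a $4$-belt; then at the \emph{next} application of Lemma~\ref{loop-lemma}(3b) the equality $l'_1+l'_2=8-b_5$ with both $l'$ at least $4$ forces $b_5=0$), so the hexagonality conclusion applies to the current belt, not yet to the freshly produced inner loop.
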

\begin{proof}
By Theorem \ref{3belts-theorem} the polytope $P$ is flag.

By Lemma \ref{facet-belt} a quadrangular facet is surrounded by a $4$-belt.

Let $\mathcal{B}_4$ be a $4$-belt that does not surround a quadrangular facet. By
Lemma \ref{loop-lemma} it borders an $l_1$-loop $\mathcal{L}_1$ and $l_2$-loop
$\mathcal{L}_2$, where $l_1,l_2\geqslant 2$, and $l_1+l_2\leqslant 9$. We have
$l_1,l_2\geqslant 3$, since by Lemma \ref{23lemma} (1) a $2$-loop borders a $k$-loop with $k\geqslant
4+5-4=5$. We have $l_1,l_2\geqslant 4$ by Theorem \ref{3belts-theorem} and
Lemma \ref{23lemma} (2), since a $3$-loop that is not a $3$-belt borders a
$k$-loop with $k\geqslant 4+5+5-9=5$. Also $\min\{l_1,l_2\}=4$.  If $\mathcal{B}_4$
contains a heptagon, then $\mathcal{W}_1,\mathcal{W}_2$ contain no heptagons. If $\mathcal{B}_4$
contains no heptagons, then $l_1=l_2=4$ by Lemma \ref{loop-lemma} (3d), and one of the sets $\mathcal{W}_1$ and
$\mathcal{W}_2$, say $\mathcal{W}_\alpha$, contains no heptagons. In both cases we obtain a set
$\mathcal{W}_\alpha$ without heptagons and a $4$-loop $\mathcal{L}_\alpha\subset \mathcal{W}_\alpha$.
Then $\mathcal{L}_\alpha$ is a $4$-belt, else by Lemma \ref{4lemma} the belt $\mathcal{B}_4$
should have at least $4+5+5+5-14=5$ or $4+5+5-8=6$ facets. Applying the same argument to
$\mathcal{L}_\alpha$ instead of $\mathcal{B}_k$, we have that either $\mathcal{L}_\alpha$ surrounds on the
opposite side a quadrangle, or it borders a $4$-belt and consists of
hexagons. In the first case by Lemma \ref{loop-lemma} (2) the $4$-belt $\mathcal{L}_\alpha$ consists of
pentagons. Thus we can move inside $\mathcal{W}_\alpha$ until we finish with a quadrangle.
If $P$ has no quadrangles, then we obtain a contradiction. If $P$ has a
quadrangle $F$, then it has no heptagons; therefore moving inside $\mathcal{W}_\beta$ we
should meet some other quadrangle. A contradiction.
\end{proof}
\begin{corollary}\label{4-belt-ful}\index{fullerene!absence of $4$-belts}
Fullerenes have no $4$-belts.
\end{corollary}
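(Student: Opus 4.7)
The plan is to derive the corollary as an immediate application of Theorem \ref{4belts-theorem}. Recall that a fullerene is by definition a simple $3$-polytope whose facets are all pentagons or hexagons, so in particular $p_4 = 0$ and $p_7 = 0$, and certainly $p_k = 0$ for $k \geqslant 8$ and $p_3 = 0$.

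Thus a fullerene trivially satisfies the hypothesis of Theorem \ref{4belts-theorem}: its facets are pentagons and hexagons with no exceptional facet at all (the "at most one" allowance is not used). Since there is no quadrangular facet, we are in case (1) of that theorem, which directly asserts that $P$ has no $4$-belts. This is exactly the statement of Corollary \ref{4-belt-ful}.

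No additional work is needed beyond invoking the theorem, so there is no substantive obstacle. If one wanted to emphasize self-containedness, one could also record that flagness of $P$ (from Corollary \ref{3belts-ful}) is already part of the argument of Theorem \ref{4belts-theorem} and is therefore not an extra hypothesis to verify.
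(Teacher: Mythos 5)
Your proof is correct and matches the paper's intent exactly: Corollary \ref{4-belt-ful} is stated immediately after Theorem \ref{4belts-theorem} precisely as the specialization to the case where there is no exceptional facet, landing in case (1). Nothing further is needed.
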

This result follows directly from \cite{D03}. Above we prove more general Theorems \ref{3belts-theorem} and \ref{4belts-theorem}, since we will need them in Lecture 9.
\begin{corollary}
Let $P$ be a fullerene. Then any simple $4$-loop surrounds an edge.
\end{corollary}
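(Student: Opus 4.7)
The strategy is to combine Corollary \ref{4-belt-ful} with the classification of 4-loops in a flag polytope given by Lemma \ref{4lemma}. By Corollary \ref{3belts-ful} the fullerene $P$ is flag, so Lemma \ref{4lemma} is available for any 4-loop in $P$ that borders an $l_2$-loop with $l_2 \geqslant 2$. Given a simple 4-loop $\mathcal{L}_1 = (F_{i_1}, F_{i_2}, F_{i_3}, F_{i_4})$, the plan is first to check this mild hypothesis, and then to read off the result from the three cases of Lemma \ref{4lemma}.

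To verify the hypothesis, I would use Theorem \ref{Jtheorem} applied to the edge-cycle $\gamma$ that $\mathcal{L}_1$ borders, and then invoke Lemma \ref{b-lemma}: the loop $\mathcal{L}_2$ read on the opposite side of $\gamma$ is either a 1-loop $\{F\}$ or satisfies $l_2 \geqslant 2$. In the former case all edges of $\gamma$ would be edges of $F$, forcing $F$ to be a quadrangle; but fullerenes have only pentagons and hexagons, so this is impossible. Hence $l_2 \geqslant 2$ and Lemma \ref{4lemma} applies.

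Lemma \ref{4lemma} now presents three mutually exclusive possibilities for $\mathcal{L}_1$: (a) it is a 4-belt; (b) it is a simple loop surrounding an edge; (c) it is not simple. Case (a) is excluded by Corollary \ref{4-belt-ful}, which states that fullerenes have no 4-belts. Case (c) is excluded by the hypothesis that $\mathcal{L}_1$ is simple. The only remaining possibility is (b), which is exactly the desired conclusion. The only mildly delicate point is the check that $l_2 \geqslant 2$; everything else is a direct appeal to results already established in the excerpt.
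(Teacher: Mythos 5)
Your proof is correct. The paper does not in fact give an explicit proof of this corollary (it is stated bare after Corollary \ref{4-belt-ful}), but the route you take is sound, and the $l_2\geqslant 2$ check is carried out properly: if the bordering loop $\mathcal{L}_2$ were a $1$-loop $\{F\}$, then by Lemma \ref{b-lemma} the edge-cycle $\gamma$ would be $\partial F$ and $l_1 = a^2_1$ would be the number of edges of $F$, forcing $F$ to be a quadrangle, which a fullerene does not have.

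One comparison worth noting: your argument packages the case analysis inside Lemma \ref{4lemma}, which requires you to first produce the bordering edge-cycle $\gamma$ and verify $l_2\geqslant 2$. The corresponding Corollary for $5$-loops in the paper is proved by a more direct argument that sidesteps this: a simple $4$-loop $(F_{i_1},F_{i_2},F_{i_3},F_{i_4})$ in a fullerene is not a $4$-belt (Corollary \ref{4-belt-ful}), hence some non-successive facets intersect, say $F_{i_1}\cap F_{i_3}\ne\varnothing$; by flagness (Corollary \ref{3belts-ful}) the triples $(F_{i_1},F_{i_2},F_{i_3})$ and $(F_{i_1},F_{i_4},F_{i_3})$ each meet in a vertex, and these are the two endpoints of the edge $F_{i_1}\cap F_{i_3}$, so the loop surrounds that edge. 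This avoids invoking Lemma \ref{b-lemma} and Lemma \ref{4lemma} entirely. Both versions are correct; the direct one is slightly shorter and closer in spirit to the paper's treatment of the $5$-loop case.
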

Now consider $5$-belts of fullerenes. Describe a special family of fullerenes.
\begin{figure}
\begin{center} 
\begin{tabular}{cc}
\includegraphics[height=3cm]{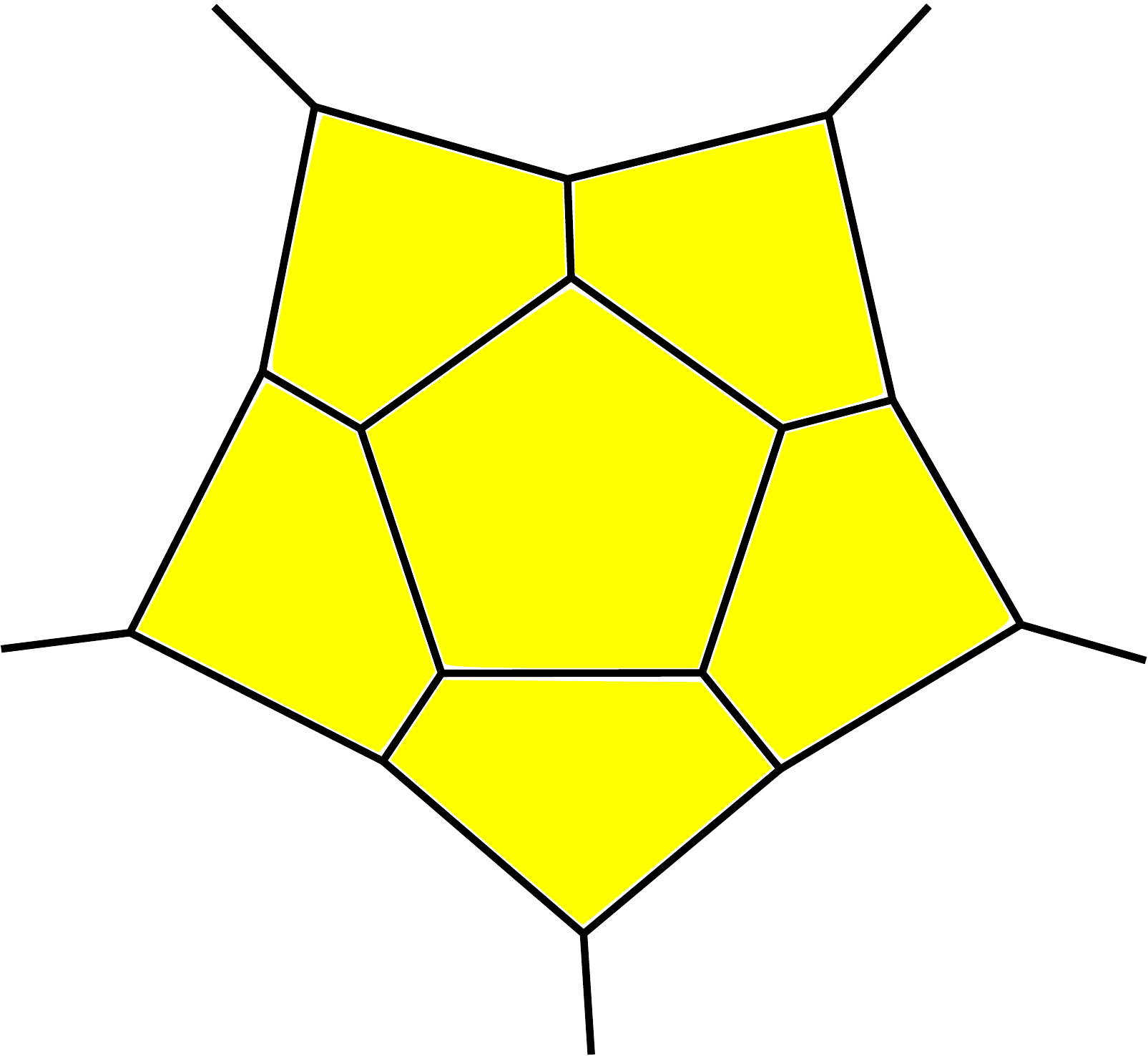}&\includegraphics[height=3cm]{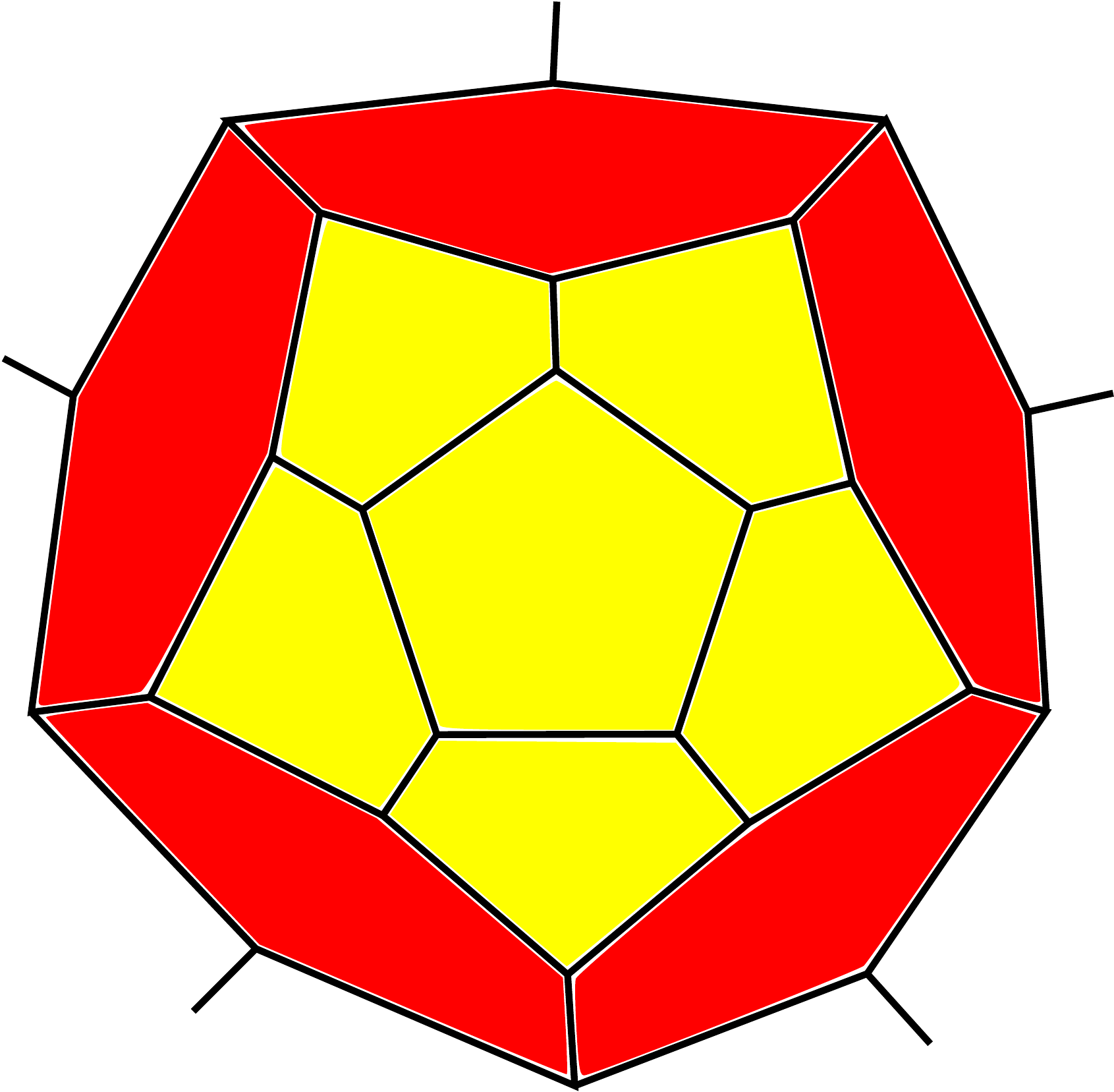}\\
cap&the first $5$-belt\\
a)&b)
\end{tabular}
\end{center}
\caption{Construction of fullerenes $D_k$}\label{dkfig}
\end{figure}

\noindent{\bf Construction (Series of polytopes $D_k$):} Denote by $D_0$ the dodecahedron. If we cut it's surface along the zigzag cycle (Fig. \ref{zzf}), we obtain two caps on Fig. \ref{dkfig}a). Insert $k$  successive $5$-belts of hexagons with hexagons intersecting neighbors by opposite edges to obtain the combinatorial description of $D_k$. \linebreak
We have $p_6(D_k)=5k$,  $f_0(D_k)=20+10k$, $k\geqslant0$.

Geometrical realization of the polytope $D_k$ can be obtained from the geometrical realization of $D_{k-1}$ by the the following sequence of edge- and two-edges truncations, represented on Fig. \ref{OP1}.
\begin{figure}
\begin{center} 
\includegraphics[height=1.8cm]{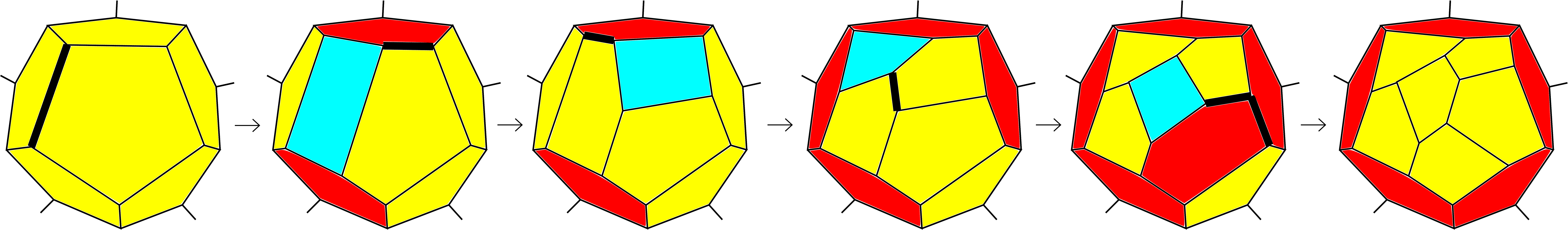}
\end{center}
\caption{Geometrical construction of a $5$-belt of hexagons}\label{OP1}
\end{figure}

The polytopes $D_k$ for $k\geqslant 1$ are exatly nanotubes of type $(5,0)$\index{$(5,0)$-nanotubes} \cite{KM07,KS08,KKLS10}.
\begin{figure}
\begin{center} 
\begin{tabular}{cc}
\includegraphics[height=3cm]{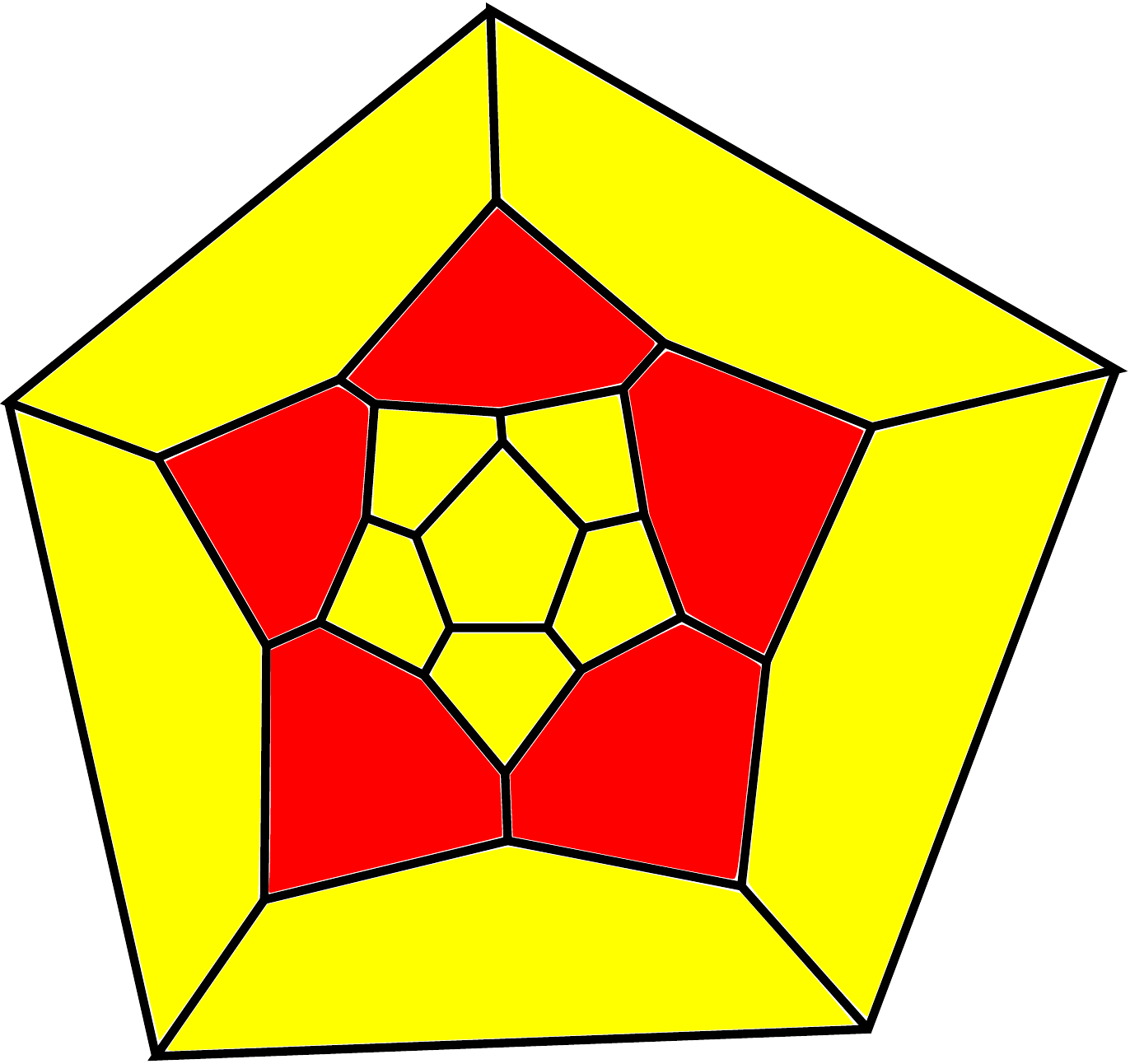}&\includegraphics[height=3cm]{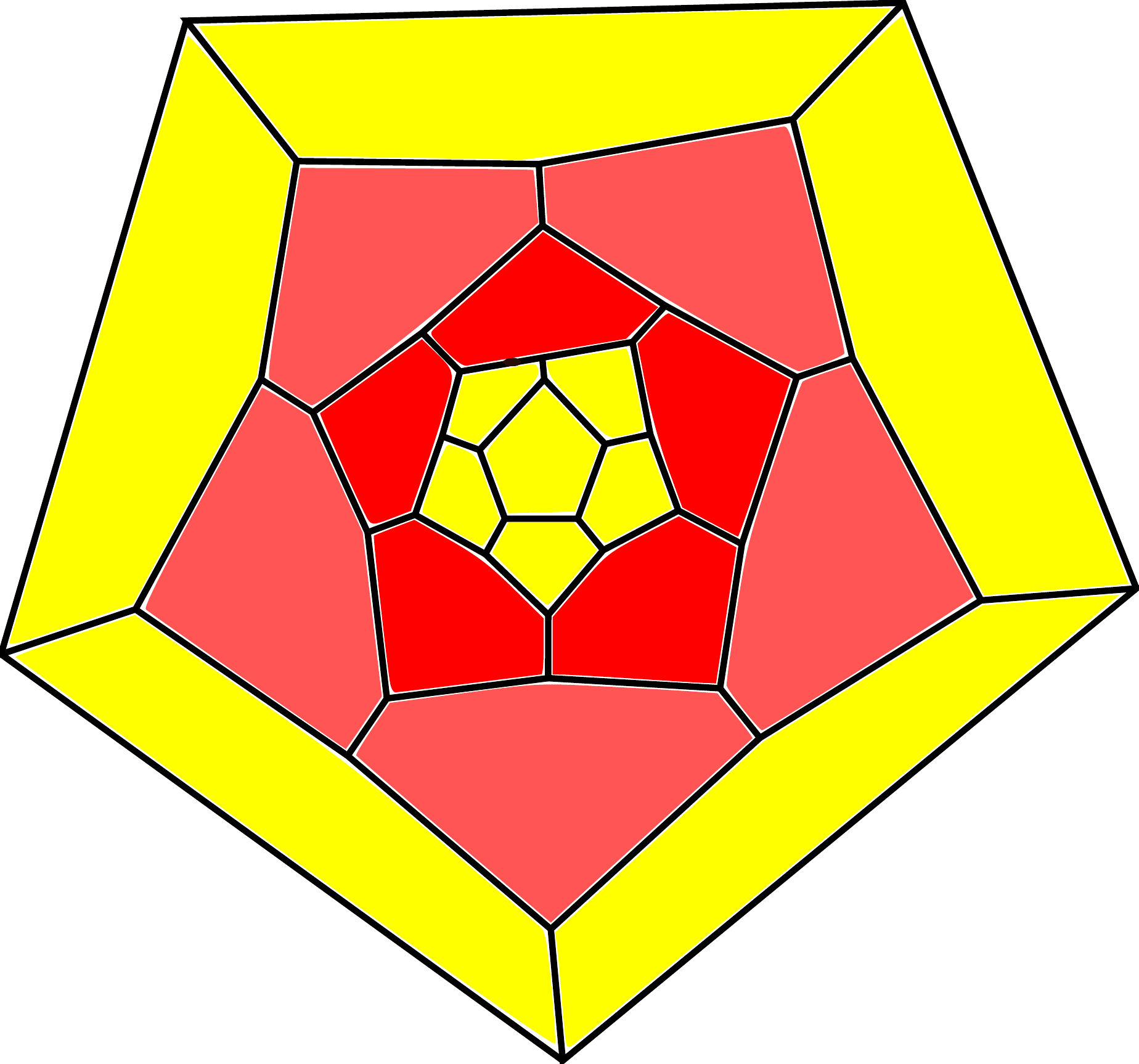}\\
\end{tabular}
\end{center}
\caption{Fullerenes $D_1$ and $D_2$}
\end{figure}
\begin{lemma}\label{5lemma}
Let $P$ be a flag $3$-polytope without $4$-belts, and let a $5$-loop\linebreak  $\mathcal{L}_1=(F_{i_1},F_{i_2},F_{i_3},F_{i_4},F_{i_5})$ border an $l_2$-loop $\mathcal{L}_2$, $l_2\geqslant 2$, where index $j$ of $i_j$ lies in $\mathbb Z_5=\mathbb Z/(5)$. Then one of the following holds:
\begin{enumerate}
\item $\mathcal{L}_1$ is a {\bf $5$-belt} (Fig. \ref{5loop}a);
\item $\mathcal{L}_1$ is a {\bf simple loop} consisting of facets surrounding two adjacent edges\linebreak (Fig. \ref{5loop}b), and $l_2=m_{i_1}+m_{i_2}+m_{i_3}+m_{i_4}+m_{i_5}-19\geqslant 6$;
\item $\mathcal{L}_1$ is {\bf not a simple loop}: $F_{i_j}=F_{i_{j+2}}$ for some $j$, $F_{i_{j-2}}\cap F_{i_{j-1}}\cap F_{i_j}$ is a vertex,  $F_{i_{j+1}}$ does not intersect $F_{i_{j-2}}$ and $F_{i_{j-1}}$ (Fig. \ref{5loop}c), and\\ $l_2=m_{i_{j-2}}+m_{i_{j-1}}+m_{i_j}+m_{i_{j+1}}-13\geqslant 7$. 
\end{enumerate}
\end{lemma}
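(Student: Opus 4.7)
The plan is to mirror the case-split proof of Lemma \ref{4lemma}, but with one extra round of analysis to absorb the no-$4$-belts hypothesis. Throughout I exploit the flagness consequence (recorded after Corollary \ref{Cor3belts}) that any three pairwise-intersecting facets share a common vertex, so any pairwise intersection between facets promotes to an actual edge.

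Assume $\mathcal{L}_1$ is not a $5$-belt. There are two scenarios: (A) $\mathcal{L}_1$ is simple but some non-successive facets intersect, or (B) $\mathcal{L}_1$ is not simple. In case (A), by cyclic symmetry in $\mathbb Z_5$, pick $j$ with $F_{i_j}\cap F_{i_{j+2}}\ne\varnothing$. Flagness promotes this to an edge and gives the vertex $F_{i_j}\cap F_{i_{j+1}}\cap F_{i_{j+2}}$. Delete the middle facet from the loop to obtain a simple $4$-loop $(F_{i_j},F_{i_{j+2}},F_{i_{j+3}},F_{i_{j+4}})$. Since $P$ has no $4$-belts, Lemma \ref{4lemma} forces this $4$-loop to surround an edge; hence a second non-successive intersection must occur in $\mathcal{L}_1$. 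A short flagness bookkeeping (checking which distance-$2$ pairs in $\mathbb Z_5$ can coexist without producing a $3$-belt, forbidden by Corollary \ref{3belts-ful} applied via the flag hypothesis, or a $4$-belt) forces the two non-successive intersections to share a facet, giving exactly the two-adjacent-edges configuration of Figure \ref{5loop}(b). Applying Lemma \ref{b-lemma} with $l_1=5$: each facet contributes $m_{i_k}-c_k$ $\gamma$-edges, where $c_k=3$ at a corner facet adjacent to both surrounded edges and $c_k=2$ otherwise, summing to $l_\gamma=\sum m_{i_k}-14$ and therefore $l_2=l_\gamma-l_1=\sum m_{i_k}-19$.

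In case (B), the minimal possible repetition is $F_{i_j}=F_{i_{j+2}}$ for some $j$. As in the proof of Lemma \ref{4lemma}(3), $\gamma$ crosses this repeated facet in two disjoint arcs, placing $\mathrm{int}\,F_{i_{j-1}}$ and $\mathrm{int}\,F_{i_{j+1}}$ in opposite components of $\mathcal{C}_\alpha\setminus\mathrm{int}\,F_{i_j}$; hence $F_{i_{j-1}}\cap F_{i_{j+1}}=\varnothing$. The loop closure at position $j+2$ yields $F_{i_{j-2}}\cap F_{i_j}\ne\varnothing$, and flagness converts the triple $(F_{i_{j-2}},F_{i_{j-1}},F_{i_j})$ into a common vertex. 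The no-$4$-belts hypothesis, applied to the $4$-loops one obtains by splicing $F_{i_{j+1}}$ into this triple, rules out both $F_{i_{j+1}}\cap F_{i_{j-1}}$ and $F_{i_{j+1}}\cap F_{i_{j-2}}$ being non-empty, matching Figure \ref{5loop}(c). Lemma \ref{b-lemma} then produces $l_\gamma=(m_{i_{j-2}}-2)+(m_{i_{j-1}}-2)+(m_{i_j}-3)+(m_{i_{j+1}}-1)$ with the double visit of the repeated facet accounted for, giving $l_2=m_{i_{j-2}}+m_{i_{j-1}}+m_{i_j}+m_{i_{j+1}}-13$.

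The main obstacle is case (A): proving that the two non-successive intersections must share a facet, so the enclosed region is two \emph{adjacent} edges rather than two disjoint edges or something more exotic. This is precisely the step where the no-$4$-belts hypothesis must be invoked non-trivially, since every candidate configuration which would yield disjoint surrounded edges produces a parasitic $4$-loop that Lemma \ref{4lemma} forces to be either a belt (ruled out by hypothesis) or another edge-surrounding loop, eventually confining the geometry to Figure \ref{5loop}(b). The arithmetic of Lemma \ref{b-lemma} is routine once the picture is pinned down.
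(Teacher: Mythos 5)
Your overall strategy matches the paper's: split into the simple and non-simple cases, use Lemma \ref{b-lemma} for the edge counting, and invoke flagness plus the no-$4$-belts hypothesis to pin down the configuration. The arithmetic you report (totals $\sum m_{i_k}-14$ and $\sum m_{i_k}-8$ for $l_\gamma$, hence $l_2 = \sum m - 19$ and $\sum m - 13$) is correct. However there are two issues.

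The serious one is in case (B). You establish $F_{i_{j-1}}\cap F_{i_{j+1}}=\varnothing$ by the topological separation argument, but then claim the no-$4$-belts hypothesis rules out $F_{i_{j+1}}\cap F_{i_{j-2}}\ne\varnothing$ via the spliced $4$-loops. This does not work: any $4$-loop built from $\{F_{i_{j-2}},F_{i_{j-1}},F_{i_j},F_{i_{j+1}}\}$ fails to be a $4$-belt for the unrelated reason that $F_{i_{j-2}}\cap F_{i_j}\ne\varnothing$ (it is the loop edge $F_{i_{j+2}}\cap F_{i_{j+3}}$). The belt condition is already violated by that intersection, so adding a hypothetical edge $F_{i_{j+1}}\cap F_{i_{j-2}}$ produces no new belt and hence no contradiction; also $\{F_{i_j},F_{i_{j-2}},F_{i_{j+1}}\}$ would pairwise intersect and thus share a vertex by flagness, so no $3$-belt arises either. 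The paper closes this case by extending your own topological step: removing $\mathrm{int}\,F_{i_j}$ (which touches $\gamma$ in two arcs) disconnects $\mathcal{C}_\alpha$, and \emph{both} $\mathrm{int}\,F_{i_{j-2}}$ and $\mathrm{int}\,F_{i_{j-1}}$ lie in the component opposite $\mathrm{int}\,F_{i_{j+1}}$, giving $F_{i_{j-2}}\cap F_{i_{j+1}}=\varnothing=F_{i_{j-1}}\cap F_{i_{j+1}}$ in one stroke. You should replace the no-$4$-belts splice by that observation.

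Minor points: the $c_k$ distribution you describe in case (A) ($c_k=3$ at one corner facet, $c_k=2$ elsewhere) sums to $11$, not $14$; the correct values are $c=4$ for the facet adjacent to both surrounded edges, $c=3$ for each of the two facets adjacent to the middle vertex, and $c=2$ for the remaining two, giving the $14$ you actually use. Also, for ruling out $3$-belts you should cite Proposition \ref{nflag} (flag $\Rightarrow$ no $3$-belts) rather than Corollary \ref{3belts-ful}, which is the specialization to fullerenes and is not the hypothesis of this lemma.
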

\begin{figure}[h]
\begin{center}
\includegraphics[height=3cm]{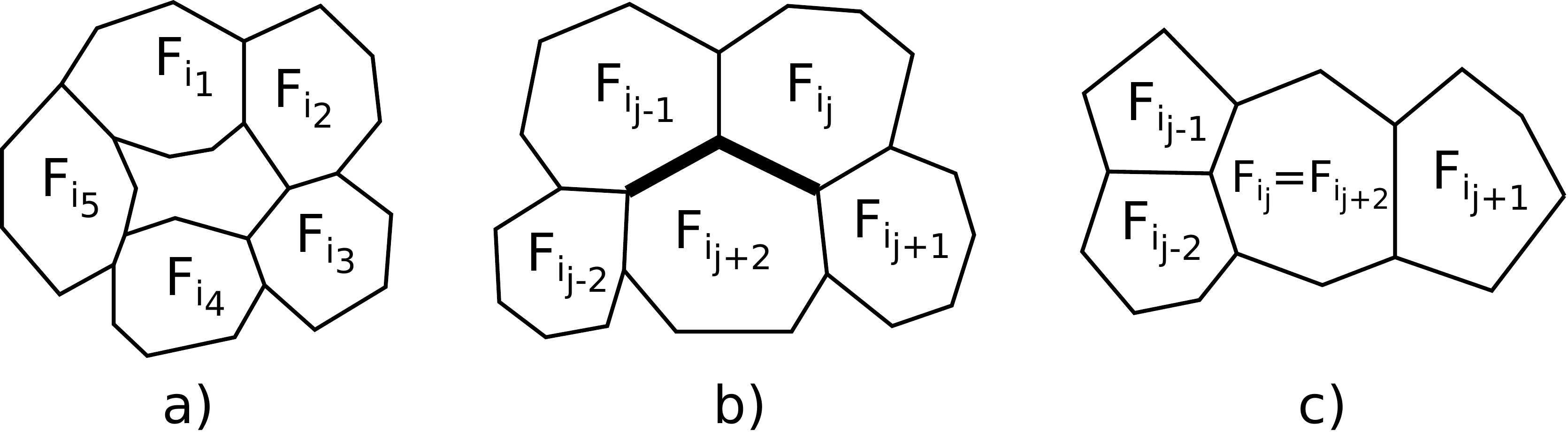}
\end{center}
\caption{Possibilities for a $5$-loop $\mathcal{L}_1$}\label{5loop}
\end{figure}

\begin{proof}
Let $\mathcal{L}_1$ be not a $5$-belt. If $\mathcal{L}_1$ is simple, then two non-successive facets $F_{i_j}$ and $F_{i_{j+2}}$ intersect. Then $F_{i_j}\cap F_{i_{j+1}}\cap F_{i_{j+2}}$ is a vertex. By Theorem \ref{4belts-theorem} the $4$-loop $(F_{i_{j-2}},F_{i_{j-1}},F_{i_j},F_{i_{j+2}})$ is not a $4$-belt; hence either $F_{i_{j-2}}\cap F_{i_j}\ne\varnothing$, or $F_{i_{j-1}}\cap F_{i_{j+2}}\ne\varnothing$. Up to relabeling in the inverse order, we can assume that  $F_{i_{j-1}}\cap F_{i_{j+2}}\ne\varnothing$. Then $F_{i_{j-1}}\cap F_{i_j}\cap F_{i_{j+2}}$ and $F_{i_{j-2}}\cap F_{i_{j-1}}\cap F_{i_{j+2}}$ are vertices.  Thus $\mathcal{L}_1$ surrounds the adjacent edges $F_{i_{j-1}}\cap F_{i_{j+2}}$ and $F_{i_j}\cap F_{i_{j+2}}$.  By Lemma \ref{b-lemma} we have  $l_2=(m_{i_{j-2}}-2)+(m_{i_{j-1}}-3)+(m_{i_j}-3)+(m_{i_{j+1}}-2)+(m_{i_{j+2}}-4)-5=m_{i_1}+m_{i_2}+m_{i_3}+m_{i_4}+m_{i_5}-19\geqslant 6$. The last inequality holds, since flag $3$-polytope without $4$-belts has no triangles and quadrangles. If $\mathcal{L}_1$ is not simple, then $F_{i_j}=F_{i_{j+2}}$ for some $j$. The successive facets of $\mathcal{L}_1$ are different by definition. Let $\mathcal{L}_1$ and $\mathcal{L}_2$ border the edge cycle $\gamma$ and $\mathcal{L}_1\subset \mathcal{D}_{\alpha}$ in notations of Theorem \ref{Jtheorem}. Since $F_{i_j}$ intersects $\gamma $ by two paths,  ${\rm int }\,F_{i_{j-2}}\cup{\rm int}\,F_{i_{j-1}}$ and ${\rm int}\,F_{i_{j+1}}$ lie in different connected components of $\mathcal{C}_\alpha\setminus{\rm int}\, F_{i_j}$; hence $F_{i_{j-2}}\cap F_{i_{j+1}}=\varnothing=F_{i_{j-1}}\cap F_{i_{j+1}}$. Since $P$ is flag, $F_{i_{j-2}}\cap F_{i_{j-1}}\cap F_{i_j}$ is a vertex, thus we obtain the configuration on Fig. \ref{5loop}c. By Lemma \ref{b-lemma} we have $l_2=(m_{i_{j-2}}-2)+(m_{i_{j-1}}-2)+(m_{i_j}-3)+(m_{i_{j+1}}-1)-5=m_{i_{j-2}}+m_{j_{j-1}}+m_{i_j}+m_{i_{j+1}}-13\geqslant 7$.   
\end{proof}
The next result follows directly from \cite{KS08} or  \cite{KM07}. We develop the approach from \cite{BE15b} based on the notion of a $k$-belt.
\begin{theorem}\label{5belts-theorem}\index{fullerene!$5$-belts} Let $P$ be a fullerene. Then the following statements hold.\\
{\bf I.} Any pentagonal facet is surrounded by a $5$-belt.
There are $12$ belts of this type.\\
{\bf II.} If there is a $5$-belt not surrounding a pentagon, then
\begin{enumerate}
\item it consists only of hexagons;
\item the fullerene is combinatorially equivalent to the polytope $D_k$, $k\geqslant 1$.
\item the number of $5$-belts is $12+k$.
\end{enumerate}
\end{theorem}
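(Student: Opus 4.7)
The setup uses two key corollaries: any fullerene $P$ is flag (Corollary \ref{3belts-ful}) and has no $4$-belts (Corollary \ref{4-belt-ful}). For Part I, Proposition \ref{facet-belt} applied to each pentagonal facet yields a surrounding $5$-belt; since each such belt is uniquely determined by the enclosed pentagon (Lemma \ref{loop-lemma} case (1) is impossible in the absence of quadrangles) and $p_5=12$, we obtain exactly $12$ such belts.

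For Part II, let $\mathcal{B}_5$ be a $5$-belt that does not surround a pentagon. Since $P$ has no quadrangles or heptagons, cases (1) and (2) of Lemma \ref{loop-lemma} are excluded, so case (3) applies: $\mathcal{B}_5$ borders an $l_1$-loop $\mathcal{L}_1\subset\mathcal{W}_1$ and an $l_2$-loop $\mathcal{L}_2\subset\mathcal{W}_2$ with $l_1+l_2=10-b_5$. The central computation rules out $l_\alpha\in\{2,3,4\}$: for $l_\alpha=2$, Lemma \ref{23lemma}(1) gives $l_\beta=m_1+m_2-4\geqslant 6$; for $l_\alpha=3$, Corollary \ref{3belts-ful} forces $\mathcal{L}_\alpha$ to surround a vertex, and Lemma \ref{23lemma}(2) yields $l_\beta\geqslant 6$; for $l_\alpha=4$, Corollary \ref{4-belt-ful} excludes a $4$-belt and both remaining cases of Lemma \ref{4lemma} give $l_\beta\geqslant 6$. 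Hence $l_1,l_2\geqslant 5$, so Lemma \ref{loop-lemma}(3d) forces $l_1=l_2=5$, $b_5=0$, and $\mathcal{B}_5$ consists of five hexagons, proving II(1).

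For II(2), apply Lemma \ref{5lemma} to each $5$-loop $\mathcal{L}_\alpha$: the non-belt alternatives demand $\sum_{i=1}^{5}m_i=24$ or $\sum_{i=1}^{4}m_i=18$, both impossible with $m_i\in\{5,6\}$, so each $\mathcal{L}_\alpha$ is itself a $5$-belt. Reapplying Lemma \ref{loop-lemma} to $\mathcal{L}_\alpha$: case (2), combined with $b_5+2b_6=5$ and $b_5+b_6=5$, forces $b_5=5$, yielding a $5$-belt of pentagons around a central pentagon, i.e.\ the standard six-pentagon dodecahedral cap; case (3) repeats the argument above, producing another hexagonal $5$-belt on the opposite side. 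Finiteness of $P$ forces termination at pentagonal caps on both ends, and the resulting structure (two dodecahedral caps joined by $k\geqslant 1$ intermediate hexagonal $5$-belts) is exactly $D_k$. For II(3), every $5$-belt is either pentagon-surrounding (contributing the $12$) or hexagonal (by II(1)); the propagation argument places each hexagonal belt uniquely in the chain, giving the total $12+k$. The main obstacle is the final uniqueness: in identifying the polytope combinatorially with $D_k$ and in showing no oblique hexagonal $5$-belt exists in $D_k$, one must carefully argue that the successive alignment of hexagonal belts along the boundary cycles $\gamma_\alpha$ is rigidly determined, so that the hexagonal tube admits no transverse $5$-cycle other than the $k$ inserted belts.
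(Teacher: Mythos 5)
Your proposal is correct and follows essentially the same route as the paper: Part I via Proposition~\ref{facet-belt} and the flagness corollary, Part II via the chain of bounds $l_\alpha\geqslant 3,4,5$ coming from Lemma~\ref{23lemma}, Corollaries~\ref{3belts-ful} and~\ref{4-belt-ful}, and Lemma~\ref{4lemma}, then Lemma~\ref{loop-lemma}(3d) to force a hexagonal belt, Lemma~\ref{5lemma} to force the bordering loops to be $5$-belts, and a propagation/termination argument landing on the dodecahedral caps and the polytope $D_k$. The gap you flag at the end --- that the hexagonal tube is combinatorially rigid, so each hexagon has the $(2,2,2,2,2)$ edge distribution on both boundary cycles and no oblique $5$-belt can appear --- is precisely the point the paper also treats tersely (it asserts the $(2,2,2,2,2)$ pattern without derivation); your explicit acknowledgment of it as the step needing care is the right diagnostic, and it can be closed by the observation that on a common $10$-edge boundary between two $5$-belts the facet-switches on the two sides must strictly alternate, forcing two edges per facet.
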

\begin{proof}
(1) Follows from Proposition \ref{facet-belt} and Corollary \ref{3belts-ful}.

(2) Let the $5$-belt $\mathcal{B}_5$ do not surround a pentagon. By Lemma
\ref{loop-lemma} it borders an $l_1$-loop $\mathcal{L}_1\subset \mathcal{W}_1$ and an
$l_2$-loop $\mathcal{L}_2\subset \mathcal{W}_2$, $l_1,l_2\geqslant 2$, $l_1+l_2\leqslant 10$. By Lemma~\ref{23lemma}~(1) we have $l_1,l_2\geqslant 3$.  From Corollary \ref{3belts-ful} and Lemma~\ref{23lemma}~(2) we obtain
$l_1,l_2\geqslant 4$. From Corollary \ref{4-belt-ful} and Lemma \ref{4lemma} we obtain
$l_1,l_2\geqslant 5$. Then $l_1=l_2=5$ and all facets in $\mathcal{B}_5$ are hexagons by Lemma \ref{loop-lemma} (3d). From Lemma \ref{5lemma} we obtain that $\mathcal{L}_1$ and $\mathcal{L}_2$ are $5$-belts. Moving inside $\mathcal{W}_1$ we obtain a series of hexagonal $5$-belts, and this series can
stop only if the last $5$-belt $\mathcal{B}_l$ surrounds a pentagon. Since $\mathcal{B}_l$ borders a $5$-belt,  Lemma \ref{loop-lemma} (2) implies that $\mathcal{B}_l$ consists of pentagons, which have $(2,2,2,2,2)$ edges on the common boundary with a
$5$-belt. We obtain the fragment on Fig. \ref{dkfig}a). Moving from this fragment backward we obtain a series of hexagonal $5$-belts including $\mathcal{B}_5$ with facets having $(2,2,2,2,2)$ edges on both boundaries. This series can finish only with fragment on Fig. \ref{dkfig}a) again. Thus any belt not surrounding a pentagon belongs to this series and the number of $5$-belts is equal to $12+k$.
\end{proof}
\begin{theorem}\label{frag-theorem}
A fullerene $P$ is combinatorially equivalent to a polytope $D_k$ for some $k\geqslant 0$ if and only if it contains the fragment on Fig. \ref{dkfig}a).
\end{theorem}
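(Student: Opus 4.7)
The plan is to treat the two directions separately, with the ``only if'' being immediate from the construction of $D_k$ and the substance lying in the converse.

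For the ``only if'' direction, one simply notes that $D_0$ is the dodecahedron, in which every pentagon is trivially surrounded by pentagons, while for $k \geq 1$ the polytope $D_k$ is assembled by inserting $k$ hexagonal $5$-belts between two copies of the cap of Fig.~\ref{dkfig}a); the caps themselves are preserved by this operation, so the fragment appears in every $D_k$.

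For the ``if'' direction, assume $P$ is a fullerene containing the fragment, so that some pentagon $F_0$ is surrounded by a $5$-belt $\mathcal{B}_5^{(0)} = (F_1,\dots,F_5)$ (the belt exists and is unique by Theorem \ref{5belts-theorem}(I)) whose facets are all pentagons. Apply Lemma \ref{loop-lemma} to $\mathcal{B}_5^{(0)}$: the inner side is the single pentagon $F_0$, so we are in case~(2), and with $b_5 = 5$, $b_6 = b_7 = 0$ the outer side is bordered by a $5$-loop $\mathcal{L}^{(1)}$. Since $P$ is flag (Corollary \ref{3belts-ful}) and has no $4$-belts (Corollary \ref{4-belt-ful}), Lemma \ref{5lemma} forces any non-belt $5$-loop to border a loop of length at least $6$; but $\mathcal{L}^{(1)}$ borders $\mathcal{B}_5^{(0)}$, which has length $5$. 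Hence $\mathcal{L}^{(1)}$ is itself a $5$-belt.

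Now apply Lemma \ref{loop-lemma} to $\mathcal{L}^{(1)}$. Case~(1) is excluded because fullerenes have no quadrangles. In case~(2), $\mathcal{L}^{(1)}$ surrounds a single pentagon $F'$ on its outer side, and matching the cap-side length $5 = b_5 + 2b_6$ against $b_5 + b_6 = 5$ forces $b_5 = 5$; then $P$ has exactly $12$ pentagonal facets and no hexagons, which makes $P$ the dodecahedron $D_0$ by the exercise characterizing simple $3$-polytopes all of whose $2$-faces are pentagons. In case~(3), $\mathcal{L}^{(1)}$ surrounds no single pentagon on either side (the cap side contains six facets), so Theorem \ref{5belts-theorem}(II) applies directly and yields $P \simeq D_k$ for some $k \geq 1$.

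The main subtlety is the step asserting that the $5$-loop appearing immediately outside the cap is an honest $5$-belt rather than a degenerate $5$-loop as in Lemma \ref{5lemma}; once that length-mismatch observation is in place, the rest of the argument is a direct reading of the case analysis already present in Theorem \ref{5belts-theorem}(II).
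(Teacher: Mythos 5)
Your proof is correct and follows essentially the same route as the paper's: establish that the $5$ pentagons around the central pentagon form a $5$-belt, use Lemma~\ref{loop-lemma}(2) to see it borders a $5$-loop $\mathcal{L}^{(1)}$ on the outside, invoke Lemma~\ref{5lemma} (together with flagness and absence of $4$-belts) to upgrade $\mathcal{L}^{(1)}$ to a $5$-belt, and then split on whether $\mathcal{L}^{(1)}$ surrounds a pentagon (giving $D_0$) or not (giving $D_k$, $k\geqslant 1$, via Theorem~\ref{5belts-theorem}(II)). You spell out the case analysis and the $b_5=5$ computation a little more explicitly than the paper does, but the argument is the same.
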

\begin{proof}
By Proposition \ref{facet-belt} the outer $5$-loop of the fragment on Fig. \ref{dkfig}a)
is a $5$-belt. By the outer boundary component it borders a $5$-loop $\mathcal{L}$. By
Lemma \ref{5lemma} it is a $5$-belt. If this belt surrounds a pentagon, then we obtain a combinatorial dodecahedron (case $k=0$). If not, then $P$ is combinatorially equivalent to $D_k$, $k\geqslant 1$, by Theorem \ref{5belts-theorem}.
\end{proof}
\begin{corollary} Any simple $5$-loop of a fullerene
\begin{enumerate}
\item either surrounds a pentagon;
\item or is a hexagonal $5$-belt of a fullerene $D_k$, $k\geqslant 1$;
\item or surrounds a pair of adjacent edges (Fig. \ref{5loop}b).
\end{enumerate}
\end{corollary}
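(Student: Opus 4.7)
The plan is to chain together Lemma \ref{5lemma}, the flag/no-$4$-belt structure of fullerenes, and Theorem \ref{5belts-theorem}, observing that a simple $5$-loop necessarily falls into case~(1) or case~(2) of Lemma \ref{5lemma}, and that case~(1) further splits via Theorem \ref{5belts-theorem}.

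First I would set up the hypotheses of Lemma \ref{5lemma}. Let $P$ be a fullerene and $\mathcal{L}_1 = (F_{i_1},\dots,F_{i_5})$ a simple $5$-loop. By Corollary \ref{3belts-ful}, $P$ is flag, and by Corollary \ref{4-belt-ful}, $P$ has no $4$-belts; in particular $P$ has no triangular or quadrangular facets. Pick any simple edge-cycle $\gamma$ bordering $\mathcal{L}_1$ on one side (using Theorem \ref{Jtheorem} applied to the piecewise-linear curve running through the midpoints of the edges $F_{i_j}\cap F_{i_{j+1}}$). On the opposite side, $\gamma$ borders some $l_2$-loop $\mathcal{L}_2$ with $l_2 \geq 1$; if $l_2 = 1$, then $\gamma$ is the boundary of the single facet it surrounds, so $\mathcal{L}_1$ surrounds that facet. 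Since each facet of a fullerene is a pentagon or hexagon, a simple $5$-loop can only surround a pentagon, placing us in case~(1) of the corollary. Otherwise $l_2 \geq 2$, so Lemma \ref{5lemma} applies.

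Lemma \ref{5lemma} offers three alternatives. Case~(3) of that lemma is immediately excluded because $\mathcal{L}_1$ is assumed simple. Case~(2) of Lemma \ref{5lemma} says $\mathcal{L}_1$ surrounds two adjacent edges, which is exactly case~(3) of the corollary. So the only remaining possibility is that $\mathcal{L}_1$ is a $5$-belt of $P$.

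Finally, to dispose of the $5$-belt case, I invoke Theorem \ref{5belts-theorem}. If the $5$-belt $\mathcal{L}_1$ surrounds a pentagonal facet, we are in case~(1) of the corollary. Otherwise, Theorem \ref{5belts-theorem}(II) forces $P$ to be combinatorially equivalent to $D_k$ for some $k \geq 1$ and the belt to consist entirely of hexagons, which is exactly case~(2) of the corollary. This exhausts all possibilities, so the proof is complete. There is no real obstacle here: the whole argument is a bookkeeping exercise, and the substance sits in the preceding lemmas. The one small subtlety to handle cleanly is the $l_2 = 1$ degenerate case of the bordering loop, which I treat separately above before quoting Lemma \ref{5lemma}.
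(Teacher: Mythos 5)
Your proof is correct and follows essentially the same route as the paper: both hinge on the dichotomy "$\mathcal{L}_1$ is a $5$-belt (then apply Theorem~\ref{5belts-theorem}) or not (then it surrounds two adjacent edges)." The only stylistic difference is that you route the non-belt case through Lemma~\ref{5lemma} (after handling the degenerate $l_2=1$ case, which the paper subsumes under the $5$-belt branch via Theorem~\ref{5belts-theorem}(I)), whereas the paper reproduces the relevant combinatorial argument inline — using the absence of $4$-belts to force two further non-successive intersections — which is exactly the content of Lemma~\ref{5lemma}(2).
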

\begin{proof}
Let $\mathcal{L}=(F_{i_1},F_{i_2},F_{i_3},F_{i_4},F_{i_5})$ be a simple $5$-loop, where index $j$ of $i_j$ lies in $\mathbb Z_5=\mathbb Z/(5)$. If $\mathcal{L}$ is a $5$-belt, then by Theorem \ref{5belts-theorem}, we obtain cases (1) or (2). Otherwise some non-successive facets intersect: $F_{i_j}\cap F_{i_{j+2}}\ne\varnothing$ for some $j$. Then $F_{i_j}\cap F_{i_{j+1}}\cap F_{i_{j+2}}$ is a vertex. Since a fullerene has no $4$-belts in a simple $4$-loop $(F_{i_{j-2}},F_{i_{j-1}},F_{i_j},F_{i_{j+2}})$ either $F_{i_{j-2}}\cap F_{i_j}\ne\varnothing$, or $F_{i_{j-1}}\cap F_{i_{j+2}}\ne\varnothing$. Up to relabeling in the inverse order, we can assume that  $F_{i_{j-1}}\cap F_{i_{j+2}}\ne\varnothing$. Then $F_{i_{j-1}}\cap F_{i_j}\cap F_{i_{j+2}}$ and $F_{i_{j-2}}\cap F_{i_{j-1}}\cap F_{i_{j+2}}$ are vertices.  Thus $\mathcal{L}_1$ surrounds the adjacent edges $F_{i_{j-1}}\cap F_{i_{j+2}}$ and $F_{i_j}\cap F_{i_{j+2}}$.
\end{proof}
\newpage

\section{Lecture 4. Moment-angle complexes and moment-angle manifolds}
We discuss main notions, constructions and results of toric topology. Details can be found in the monograph \cite{Bu-Pa15}, which we will follow.
\subsection{Toric topology}
Nowadays toric topology\index{toric topology} is a large research area. Below we discuss applications of toric topology to the mathematical theory of fullerenes based on the following correspondence. 
\begin{center}
\begin{tabular}{ccc}
\multicolumn{3}{c}{\bf Canonical correspondence}\\[.2cm]
Simple polytope $P$&&moment-angle manifold $\mathcal{Z}_P$\\
number of facets $=m$&$\longrightarrow$&canonical $T^m$-action on $\mathcal{Z}_P$\\
$\dim P=n$&&$\dim\mathcal{Z}_P=m+n$\\[.2cm]
Characteristic function &&Quasitoric manifold\\
$\{F_1,\dots, F_m\}\to\mathbb Z^n$&$\longrightarrow$&$M^{2n}=\mathcal{Z}_P/T^{m-n}$
\end{tabular}
\end{center}

Algebraic-topological invariants of moment-angle manifolds $\mathcal{Z}_P$ give combinatorial invariants of polytopes $P$. As an application we obtain  combinatorial invariants of mathematical fullerenes.

%-------------------------------------------------------------------------------------

\subsection{Moment-angle complex of a simple polytope}\label{moment-angle}
Set
$$
D^2=\{z \in \mathbb{C}; |z| \leq 1\},\qquad S^1=\{z \in D^2, |z|=1\}.
$$

The multiplication of complex numbers gives the canonical action of the circle $S^1$ on the disk $D^2$ which orbit space
is the interval $\mathbb{I} = [0,1]$. 

We have the canonical projection
$$
\pi:(D^2, S^1)\rightarrow(\mathbb{I}, 1) : z \rightarrow |z|^2.
$$ 

By definition a {\em multigraded polydisk}\index{multigraded polydisk}\index{polydisk} is
$\mathbb{D}^{2m} = D^2_1 \times \ldots \times D^2_m$.

Define the {\em standard torus} $\mathbb{T}^m=S^1_1 \times \ldots \times S^1_m$.

\begin{proposition}
There is a canonical action of the torus $\mathbb{T}^m$ on the polydisk $\mathbb{D}^{2m}$ with the orbit space 
$$
\mathbb{D}^{2m}/\mathbb T^m\simeq\mathbb{I}^m = \mathbb{I}^1_1 \times \ldots \times \mathbb{I}^1_m.
$$
\end{proposition}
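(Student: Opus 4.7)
The plan is to build everything coordinate-wise from the one-dimensional case $(D^2,S^1,\mathbb{I},\pi)$ and then verify that products of quotients equal quotients of products, which in the compact setting is automatic.

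First I would define the action explicitly: for $\mathbf{t}=(t_1,\dots,t_m)\in\mathbb{T}^m$ and $\mathbf{z}=(z_1,\dots,z_m)\in\mathbb{D}^{2m}$, set
\[
\mathbf{t}\cdot\mathbf{z}=(t_1z_1,\dots,t_mz_m),
\]
where each factor is the standard $S^1_i$-action on $D^2_i\subset\mathbb{C}$ by complex multiplication. Continuity, associativity and the identity law follow coordinate-wise from the one-dimensional action; each $D^2_i$ is preserved because $|t_iz_i|=|z_i|\le 1$.

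Next I would describe the orbits. Since the coordinates are acted on independently, the orbit through $\mathbf{z}$ is the product of the orbits $S^1_i\cdot z_i\subset D^2_i$, each of which projects under $\pi\colon D^2\to\mathbb{I}$, $z\mapsto|z|^2$, onto the single point $|z_i|^2$. Hence the map
\[
\pi^m=\pi\times\cdots\times\pi\colon\mathbb{D}^{2m}\longrightarrow\mathbb{I}^m,\qquad\mathbf{z}\longmapsto(|z_1|^2,\dots,|z_m|^2),
\]
is constant on $\mathbb{T}^m$-orbits and factors through a continuous map $\bar{\pi}\colon\mathbb{D}^{2m}/\mathbb{T}^m\to\mathbb{I}^m$.

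Then I would check that $\bar{\pi}$ is a bijection. Surjectivity: given $(x_1,\dots,x_m)\in\mathbb{I}^m$, the point $(\sqrt{x_1},\dots,\sqrt{x_m})$ maps to it. Injectivity: if $\pi^m(\mathbf{z})=\pi^m(\mathbf{z}')$, then $|z_i|=|z_i'|$ for each $i$, so there exist $t_i\in S^1_i$ with $t_iz_i=z_i'$, giving $\mathbf{t}\cdot\mathbf{z}=\mathbf{z}'$. Finally, $\bar{\pi}$ is a homeomorphism because $\mathbb{D}^{2m}/\mathbb{T}^m$ is compact (as a continuous image of the compact space $\mathbb{D}^{2m}$) and $\mathbb{I}^m$ is Hausdorff.

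The only step requiring any thought is the identification of orbits as products of one-dimensional orbits, but since the action is literally the product action on a product space, this is immediate; there is no genuine obstacle here and the proposition really reduces to the one-dimensional statement $D^2/S^1\cong\mathbb{I}$ applied $m$ times.
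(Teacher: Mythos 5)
The paper states this proposition without proof, treating it as an immediate consequence of the one-dimensional fact $D^2/S^1\simeq\mathbb{I}$ via $\pi(z)=|z|^2$ stated just before it. Your argument spells out exactly the intended reduction — coordinate-wise action, orbits as products of one-dimensional orbits, factorization of $\pi^m$, and the standard continuous-bijection-from-compact-to-Hausdorff argument — and is correct and complete.
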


Consider a simple polytope $P$. Let $\{F_1,\dots, F_m\}$ be the set of facets and $\{v_1,\dots, v_{f_0}\}$ --
the set of vertices. We have the face lattice $L(P)$ of $P$.

\noindent{\bf Construction (moment-angle complex of a simple polytope \cite{BP98,Bu-Pa15}):}
For $P={\rm pt}$ set $\mathcal{Z}_P={\rm pt}=\{0\}=\mathbb D^0$. Let $\dim P>0$. For any face $F \in L(P)$ set
\begin{gather*}
\mathcal{Z}_{P, F}=\{(z_1,\dots,z_m)\in \mathbb{D}^{2m}\colon z_i\in D^2_i\text{ if }F\subset F_i, z_i\in S^1_i\text{ if }F\not\subset F_i\};\\
\mathbb{I}_{P,F}=\{(y_1,\dots,y_m)\in \mathbb{I}^{m}\colon y_i\in \mathbb{I}^1_i\text{ if }F\subset F_i, y_i=1\text{ if }F\not\subset F_i\}.
\end{gather*}
\begin{proposition}
\begin{enumerate} 
\item $\mathcal{Z}_{P,F}\simeq \mathbb D^{2k}\times \mathbb T^{m-k}$, $\mathbb I_{P,F}\simeq \mathbb I^k$, where $k=n-\dim F$.
\item $\mathcal{Z}_{P,P}=\mathbb T^m$, $\mathcal{Z}_{P,\varnothing}=\mathbb D^{2m}$.
\item If $G_1\subset G_2$, then $\mathcal{Z}_{P,G_2}\subset \mathcal{Z}_{P,G_1}$, and $\mathbb I_{P,G_2}\subset\mathbb I_{P,G_1}$.
\item $\mathcal{Z}_{P,F}$ is invariant under the action of $\mathbb T^m$, and the mapping $\pi^m\colon\mathbb D^{2m}\to \mathbb I^m$ defines the homeomorphism $\mathcal{Z}_{P,F}/\mathbb T^m\simeq \mathbb I_{P,F}$. 
\end{enumerate}
\end{proposition}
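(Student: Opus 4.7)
The plan is to verify the four items essentially by unpacking the definitions; the only substantive input needed is the combinatorial characterization of faces in a simple polytope.

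For (1), I would start by noting that for a simple $n$-polytope, a face $F$ with $\dim F = d$ is the intersection of exactly $k = n - d$ facets, so the set $I_F = \{i : F \subset F_i\}$ has cardinality $k$. Reading the definition directly then gives
\[
\mathcal{Z}_{P,F} \;=\; \prod_{i \in I_F} D^2_i \;\times\; \prod_{i \notin I_F} S^1_i \;\cong\; \mathbb{D}^{2k} \times \mathbb{T}^{m-k},
\]
and analogously $\mathbb{I}_{P,F} \cong \mathbb{I}^k \times \{(1,\dots,1)\} \cong \mathbb{I}^k$. For (2), the two special cases are immediate from the convention that no facet contains the whole polytope $P$ (so every $z_i$ lies in $S^1_i$, yielding $\mathbb{T}^m$) and that the empty set is contained in every facet (so every $z_i$ lies in $D^2_i$, yielding $\mathbb{D}^{2m}$).

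For (3), I would observe that $G_1 \subset G_2$ forces the containment $I_{G_2} \subset I_{G_1}$ of the corresponding facet-index sets. Hence any point of $\mathcal{Z}_{P,G_2}$ has $z_i \in D^2_i$ for $i \in I_{G_2} \subset I_{G_1}$ (as required by $\mathcal{Z}_{P,G_1}$) and $z_i \in S^1_i$ for $i \notin I_{G_2}$; for indices $i \in I_{G_1} \setminus I_{G_2}$ we use the trivial inclusion $S^1_i \subset D^2_i$, and for indices $i \notin I_{G_1}$ we have $i \notin I_{G_2}$ so $z_i \in S^1_i$ is exactly what is required. The analogous argument for $\mathbb{I}_{P,G_2} \subset \mathbb{I}_{P,G_1}$ is identical, using $\{1\} \subset \mathbb{I}^1_i$.

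For (4), the $\mathbb{T}^m$-invariance of $\mathcal{Z}_{P,F}$ is immediate because the action is coordinate-wise and each factor circle $S^1_i$ preserves both $D^2_i$ and $S^1_i$. The map $\pi^m$ factors the $\mathbb{T}^m$-action on $\mathbb{D}^{2m}$ with orbit space $\mathbb{I}^m$, so its restriction to the invariant subspace $\mathcal{Z}_{P,F}$ induces a continuous bijection onto $\pi^m(\mathcal{Z}_{P,F})$; a direct check of cases ($z_i \in D^2_i$ projects to $[0,1]$ and $z_i \in S^1_i$ projects to $1$) identifies this image with $\mathbb{I}_{P,F}$. Compactness of $\mathcal{Z}_{P,F}$ upgrades the bijection to a homeomorphism. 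The only step with any content is (1), where simplicity of $P$ is used to pin down the size of $I_F$; everything else is bookkeeping from the definitions.
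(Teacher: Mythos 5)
Your proof is correct and follows the expected line of reasoning: the paper states this proposition without proof as an immediate consequence of the definitions, and you carry out precisely that unwinding, using simplicity of $P$ to pin down $|I_F| = n - \dim F$ in part (1) and bookkeeping with the inclusions $S^1_i \subset D^2_i$ and $\{1\} \subset \mathbb{I}^1_i$ for the rest.
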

The {\em moment-angle complex}\index{moment-angle complex} of a simple polytope $P$ is a subset in $\mathbb{D}^{2m}$ of the form
$$
\mathcal{Z}_{P}=\bigcup_{F\in L(P)\setminus\{\varnothing\}} \mathcal{Z}_{P,F}=\bigcup_{v-\text{ vertex}}\mathcal{Z}_{P,v}.
$$
The cube $\mathbb I^m$ has the canonical structure of a {\em cubical complex}\index{cubical complex}. It is a cellular complex with all cells being cubes with an appropriate boundary condition. 
The {\em cubical complex}\index{polytope!cubical complex} of a simple polytope $P$ is a cubical subcomplex  in $\mathbb {I}^m$ of the form
$$
\mathbb{I}_P=\bigcup_{F\in L(P)\setminus\{\varnothing\}} \mathbb I_{P,F}=\bigcup_{v-\text{ vertex}}\mathbb{I}_{P,v}.
$$
  
From the construction of the space $\mathcal{Z}_P$ we obtain.
\begin{proposition}
\begin{enumerate}
\item The subset $\mathcal{Z}_P \subset \mathbb{D}^{2m}$ is $\mathbb{T}^m$ -- invariant; hence there is the canonical action of $\mathbb{T}^m$ on $\mathcal{Z}_P$.
\item The mapping $\pi^m$ defines the homeomorphism $\mathcal{Z}_P/\mathbb T^m\simeq \mathbb I_P$.
\item For $P_1\times P_2$ we have $\mathcal{Z}_{P_1}\times \mathcal{Z}_{P_2}$.
\end{enumerate}
\end{proposition}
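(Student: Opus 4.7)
The plan is to deduce all three statements directly from the corresponding properties of the pieces $\mathcal{Z}_{P,F}$ established in the previous proposition, together with elementary compactness arguments.

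\textbf{Part (1).} First I would observe that each subset $\mathcal{Z}_{P,F}\subset \mathbb{D}^{2m}$ is $\mathbb{T}^m$-invariant, because it is defined coordinate-wise by conditions on $|z_i|$ (either $|z_i|\leqslant 1$ or $|z_i|=1$), which are preserved by the diagonal $\mathbb{T}^m$-action. Therefore the union $\mathcal{Z}_P=\bigcup_{F\in L(P)\setminus\{\varnothing\}}\mathcal{Z}_{P,F}$ is $\mathbb{T}^m$-invariant, and the restriction of the canonical $\mathbb{T}^m$-action on $\mathbb{D}^{2m}$ gives the desired action on $\mathcal{Z}_P$.

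\textbf{Part (2).} Since $\pi^m\colon \mathbb{D}^{2m}\to \mathbb{I}^m$ is the orbit map for the $\mathbb{T}^m$-action, I would first show set-theoretically that $\pi^m(\mathcal{Z}_P)=\mathbb{I}_P$: indeed, $\pi^m(\mathcal{Z}_{P,F})=\mathbb{I}_{P,F}$ by the preceding proposition, so taking unions over faces $F$ gives the identity. Next, to see that $\pi^m$ induces a continuous bijection $\mathcal{Z}_P/\mathbb{T}^m\to \mathbb{I}_P$, I would check that two points of $\mathcal{Z}_P$ have the same image in $\mathbb{I}^m$ iff they lie in the same $\mathbb{T}^m$-orbit; this is the global version of the orbit-homeomorphism statement for each $\mathcal{Z}_{P,F}$, and it follows from the fact that the fibres of $\pi^m$ over a point $(y_1,\dots,y_m)\in\mathbb{I}^m$ are precisely the $\mathbb{T}^m$-orbits in $\mathbb{D}^{2m}$ (the $i$-th circle factor acts freely when $y_i>0$ and trivially when $y_i=0$). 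Finally, since $\mathcal{Z}_P$ is a closed subset of the compact space $\mathbb{D}^{2m}$, hence compact, and $\mathbb{I}_P$ is Hausdorff, this continuous bijection of orbit spaces is automatically a homeomorphism.

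\textbf{Part (3).} I would set up the identification $\mathbb{D}^{2(m_1+m_2)}=\mathbb{D}^{2m_1}\times \mathbb{D}^{2m_2}$ and use the fact that the facets of $P_1\times P_2$ come in two blocks, namely $F_i\times P_2$ for facets $F_i$ of $P_1$ and $P_1\times G_j$ for facets $G_j$ of $P_2$, while the face lattice satisfies $L(P_1\times P_2)\setminus\{\varnothing\}=(L(P_1)\setminus\{\varnothing\})\times (L(P_2)\setminus\{\varnothing\})$. For a face $F\times G$, a face $F_i\times P_2$ contains it iff $F\subset F_i$, and similarly for the second block. Comparing the coordinate-wise defining conditions then gives $\mathcal{Z}_{P_1\times P_2,\,F\times G}=\mathcal{Z}_{P_1,F}\times \mathcal{Z}_{P_2,G}$. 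Taking the union over all non-empty faces $F\times G$ yields $\mathcal{Z}_{P_1\times P_2}=\mathcal{Z}_{P_1}\times \mathcal{Z}_{P_2}$.

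The only delicate step is the one in Part~(2), where one must pass from the homeomorphisms $\mathcal{Z}_{P,F}/\mathbb{T}^m\simeq \mathbb{I}_{P,F}$ on each piece to a homeomorphism on the union; but compactness of $\mathcal{Z}_P$ and Hausdorffness of $\mathbb{I}_P$ make this routine. The other two parts are essentially bookkeeping once the definition of $\mathcal{Z}_P$ is unwound face by face.
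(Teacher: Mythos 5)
Your proof is correct, and since the paper leaves this proposition to follow ``from the construction,'' your argument is exactly the intended one: invariance and the product formula follow coordinate-wise from the face-by-face decomposition $\mathcal{Z}_P=\bigcup_F\mathcal{Z}_{P,F}$ and the preceding proposition, and the homeomorphism in (2) is the standard continuous-bijection-from-compact-to-Hausdorff argument applied to the map induced by $\pi^m$. The one detail worth making explicit in part (3) is the elementary set identity $\bigcup_{F,G}(A_F\times B_G)=(\bigcup_F A_F)\times(\bigcup_G B_G)$, which is what lets you pass from the piecewise equality $\mathcal{Z}_{P_1\times P_2,\,F\times G}=\mathcal{Z}_{P_1,F}\times\mathcal{Z}_{P_2,G}$ to $\mathcal{Z}_{P_1\times P_2}=\mathcal{Z}_{P_1}\times\mathcal{Z}_{P_2}$; you invoke it implicitly and it does hold.
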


\subsection{Admissible mappings}
\begin{definition}
Let $P_1$, $P_2$ be two simple polytopes. A mapping of sets of facets $\varphi\colon\mathcal{F}_{P_1}\to \mathcal{F}_{P_2}$ we call {\em admissible}\index{admissible mapping}, if $\varphi(F_{i_1})\cap\dots\cap\varphi(F_{i_k})\ne\varnothing$ for any collection $F_{i_1},\dots,F_{i_k}\in\mathcal{F}_{P_1}$ with $F_{i_1}\cap\dots\cap F_{i_k}\ne\varnothing$.
\end{definition}
Any admissible mapping $\varphi\colon\mathcal{F}_{P_1}\to \mathcal{F}_{P_2}$ induces the mapping $\varphi\colon L(P_1)\to L(P_2)$ by the rule: $\varphi(P_1)=P_2$,  $\varphi(F_{i_1}\cap\dots\cap F_{i_k})=\varphi(F_{i_1})\cap\dots\cap \varphi(F_{i_k})$. This mapping preserves the inclusion relation.
\begin{proposition}
Any admissible mapping $\varphi\colon\mathcal{F}_{P_1}\to \mathcal{F}_{P_2}$ induces the mapping of triples $\colon(\mathbb{D}^{2m_1},\mathcal{Z}_{P_1},\mathbb T^{m_1})\to(\mathbb D^{2m_2},\mathcal{Z}_{P_2},\mathbb T^{m_2})$ and the mapping $\mathbb I_{P_1}\to\mathbb I_{P_2}$, which we will denote by the same letter $\widehat\varphi$: 
$$
\widehat{\varphi}(x_1,\dots,x_{m_1})=(y_1,\dots,y_{m_2}),\,
y_j=\begin{cases}1,&\text{ if }\varphi^{-1}(j)=\varnothing,\\
\prod\limits_{i\in\varphi^{-1}(j)}x_i,&\text{else}.
\end{cases}
$$
In particular, we have the homomorphism of tori $\mathbb T^{m_1}\to\mathbb T^{m_2}$ such that the mapping $\mathcal{Z}_{P_1}\to\mathcal{Z}_{P_2}$ is equivariant.

We have the commutative diagram 
$$
\begin{CD}
  \mathcal Z_{P_1} @>\widehat{\varphi}>>\mathcal{Z}_{P_2}\\
  @VV\pi^mV\hspace{-0.2em} @VV\pi^m V @.\\
  \mathbb {I}_{P_1} @> \widehat\varphi>> \mathbb {I}_{P_2}
\end{CD}\eqno 
$$
\end{proposition}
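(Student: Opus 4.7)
The plan is to verify each piece of the statement in turn. First I would define the candidate map $\widehat\varphi:\mathbb D^{2m_1}\to\mathbb D^{2m_2}$ coordinatewise by the formula from the proposition, so that the $j$-th coordinate of the image is $1$ whenever $\varphi^{-1}(j)=\varnothing$ and otherwise is the product $\prod_{i\in\varphi^{-1}(j)}x_i$. Since the product of finitely many elements of $D^2$ again lies in $D^2$, this is well defined on $\mathbb D^{2m_1}$; restricting to the torus $\mathbb T^{m_1}=(S^1)^{m_1}$ and noting that $S^1$ is closed under multiplication gives $\widehat\varphi(\mathbb T^{m_1})\subset\mathbb T^{m_2}$, and the product formula shows this restriction is a group homomorphism. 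Equivariance of $\widehat\varphi$ with respect to the $\mathbb T^{m_1}$-action on $\mathbb D^{2m_1}$ via this homomorphism is then immediate: $\widehat\varphi(tx)_j=\prod_{i\in\varphi^{-1}(j)}t_ix_i=\widehat\varphi(t)_j\cdot\widehat\varphi(x)_j$.

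The main step is to show $\widehat\varphi(\mathcal Z_{P_1})\subset\mathcal Z_{P_2}$. I would use the representation $\mathcal Z_{P_1}=\bigcup_v\mathcal Z_{P_1,v}$ over vertices $v$ of $P_1$. Fix a vertex $v=F_{i_1}\cap\cdots\cap F_{i_n}$ of $P_1$ and a point $x\in\mathcal Z_{P_1,v}$, so that $x_i\in D^2$ if $v\subset F_i$ and $x_i\in S^1$ otherwise. By admissibility, $\varphi(v):=\varphi(F_{i_1})\cap\cdots\cap\varphi(F_{i_n})$ is a nonempty face $G$ of $P_2$. I claim $\widehat\varphi(x)\in\mathcal Z_{P_2,G}$. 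Indeed, for each index $j$ of $P_2$, if $G\not\subset F_j^{(2)}$ then by construction $j\notin\varphi(\{i_1,\dots,i_n\})$, i.e.\ every $i\in\varphi^{-1}(j)$ satisfies $v\not\subset F_i$, hence $x_i\in S^1$; so either $\varphi^{-1}(j)=\varnothing$ and $y_j=1\in S^1$, or $y_j$ is a product of unit-modulus factors and lies in $S^1$. The remaining coordinates automatically lie in $D^2$. Hence the image lies in $\mathcal Z_{P_2,G}\subset\mathcal Z_{P_2}$, which also yields the induced map of triples. The same argument with $D^2$ replaced by $\mathbb I$ and $S^1$ replaced by $\{1\}$ gives the induced map $\mathbb I_{P_1}\to\mathbb I_{P_2}$ on cubical complexes.

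Finally, for the commutative diagram with $\pi^m(z_1,\dots,z_m)=(|z_1|^2,\dots,|z_m|^2)$, the multiplicativity of $|\cdot|^2$ yields
\[
\pi^{m_2}(\widehat\varphi(x))_j=\Bigl|\prod_{i\in\varphi^{-1}(j)}x_i\Bigr|^2=\prod_{i\in\varphi^{-1}(j)}|x_i|^2=\widehat\varphi(\pi^{m_1}(x))_j,
\]
with both sides equal to $1$ when $\varphi^{-1}(j)=\varnothing$; hence $\pi^{m_2}\circ\widehat\varphi=\widehat\varphi\circ\pi^{m_1}$.

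I expect the main obstacle to be the face-tracking step: one must carefully match the dichotomy ``$x_i\in D^2$ vs.\ $x_i\in S^1$'' on the source with the corresponding dichotomy on the target, and invoke admissibility at precisely the right moment to guarantee that the target face $\varphi(v)$ is a genuine (nonempty) face of $P_2$. Once this bookkeeping is set up, the remaining verifications (homomorphism property, equivariance, commutativity with $\pi^m$) are immediate consequences of the product structure in the definition of $\widehat\varphi$.
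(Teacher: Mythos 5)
The paper states this Proposition without giving a proof, so there is nothing to compare against directly; your argument is the natural verification and it is correct. You check well-definedness on $\mathbb D^{2m_1}$ and $\mathbb T^{m_1}$ from closure of $D^2$ and $S^1$ under multiplication, the homomorphism and equivariance properties from the product form of the formula, the inclusion $\widehat\varphi(\mathcal Z_{P_1,v})\subset\mathcal Z_{P_2,\varphi(v)}$ by tracking the $D^2$ versus $S^1$ dichotomy through the definition of $\mathcal Z_{P,F}$ and invoking admissibility at the crucial point to ensure $\varphi(v)$ is a nonempty face of $P_2$, and the commuting square from multiplicativity of $|\cdot|^2$. This is precisely the verification the paper is implicitly leaving to the reader.

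One remark worth making explicit in the write-up: your inclusion argument actually proves the slightly stronger statement that $\widehat\varphi$ respects the face decompositions, sending each piece $\mathcal Z_{P_1,v}$ into $\mathcal Z_{P_2,\varphi(v)}$ (and likewise $\mathbb I_{P_1,v}$ into $\mathbb I_{P_2,\varphi(v)}$); recording this is useful because it is what makes $\widehat\varphi$ compatible with the multigraded cellular structures used later in the paper, not just with the ambient sets.
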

\begin{example}
Let $P_1=\mathbb I^2$ and $P_2=\mathbb I$. Then any admissible mapping\linebreak $\mathcal{F}_{P_1}\to\mathcal{F}_{P_2}$ is a constant mapping. Indeed, there are two facets $G_1$ and $G_2$ in $\mathbb I$,  which do not intersect. $\mathbb I^2$ has four facets $F_1,F_2,F_3,F_4$, such that $F_1\cap F_2$, $F_2\cap F_3$, $F_3\cap F_4$, and $F_4\cap F_1$ are vertices. Let $\varphi(F_1)=G_i$. Then $\varphi(F_2)=G_i$, since $\varphi(F_1)\cap \varphi(F_2)=\varnothing$. By the same reason we have $\varphi(F_3)=\varphi(F_4)=G_i$. Without loss of generality let $i=1$ and $G_1=\{0\}$. Then the mapping of the moment-angle complexes
\begin{gather*}
\mathcal{Z}_{\mathbb I^2}=\{(z_1,z_2,z_3,z_4)\in\mathbb D^8\colon |z_1|=1\text{ or }|z_3|=1, \text{ and }
|z_2|=1 \text{ or }|z_4|=1\}=\\
=(S^1_1\times D^2_3\cup D^2_1\times S^1_3)\times(S^1_2\times D^2_4\cup  D^2_2\times S^1_4)\cong S^3\times S^3,\\
\mathcal{Z}_{I^1}=\{(w_1,w_2)\in\mathbb D^4\colon |w_1|=1\text{ or }|w_2|=1\}=\\
=(S^1_1\times D^2_2)\cup(D^2_1\times S^1_2)\cong S^3
\end{gather*}
is 
$$
\widehat\varphi\colon \mathcal{Z}_{\mathbb I^2}\to\mathcal{Z}_{\mathbb I^1},\qquad \widehat\varphi(z_1,z_2,z_3,z_4)=(z_1 z_2 z_3 z_4, 1).
$$
\end{example}
\begin{example}
Let $P_1=\mathbb I^2$, $P_2=\Delta^2$. Then any mapping $\varphi\colon\mathcal{F}_{P_1}\to\mathcal{F}_{P_2}$ is admissible. 
Let $\mathcal{F}_{P_1}=\{F_1,F_2,F_3,F_3\}$ as in previous example, and \\$\mathcal{F}_{P_2}=\{G_1,G_2,G_3\}$. The admissible mapping  
$$
\varphi(F_1)=G_1,\quad\varphi(F_2)=G_2,\quad \varphi(F_3)=\varphi(F_4)=G_3
$$ 
induces the mapping of face lattices 
\begin{gather*}
\varphi(\mathbb I^2)=\Delta^2,\quad \varphi(\varnothing)=\varnothing,\\
\varphi(F_1\cap F_2)=G_1\cap G_2,\quad \varphi(F_2\cap F_3)=G_2\cap G_3, \\
\varphi(F_3\cap F_4)=G_3,\quad\varphi(F_4\cap F_1)=G_3\cap G_1.
\end{gather*}

The mapping of the moment-angle complexes
\begin{gather*}
\mathcal{Z}_{\mathbb I^2}=\{(z_1,z_2,z_3,z_4)\in\mathbb D^8\colon |z_1|=1\text{ or }|z_3|=1, \text{ and }
|z_2|=1 \text{ or }|z_4|=1\}=\\
=(S^1_1\times D^2_3\cup D^2_1\times S^1_3)\times(S^1_2\times D^2_4\cup D^2_2\times  S^1_4)\cong S^3\times S^3,\\
\mathcal{Z}_{\Delta^2}=\{(w_1,w_2,w_3)\in\mathbb D^6\colon |w_1|=1,\text{ or }|w_2|=1,\text{ or }|w_3|=1\}=\\
=(S^1_1\times D^2_2\times D^2_3)\cup(D^2_1\times S^1_2\times D^2_3)\cup(D^2_1\times  D^2_2\times S^1_3)\cong S^5
\end{gather*}
is 
$$
\widehat\varphi\colon \mathcal{Z}_{\mathbb I^2}\to\mathcal{Z}_{\Delta^2},\qquad \varphi(z_1,z_2,z_3,z_4)=(z_1,z_2,z_3\cdot z_4).
$$
\end{example}

\subsection{Barycentric embedding and cubical subdivision of a simple polytope}
\noindent {\bf Construction (barycentric embedding of a simple polytope):} \index{barycentric embedding}\index{polytope!barycentric embedding}Let $P$ be a simple $n$-polytope with facets $F_1, \dots, F_m$.
For each face $G\subset P$ define a point $\boldsymbol{x}_G$ as a barycenter of it's vertices. We have $\boldsymbol{x}_G\in{\rm relint\,}G$. The points $\boldsymbol{x}_G$, $G\in L(P)\setminus\{\varnothing\}$, define a barycentric simplicial subdivision $\Delta(P)$ of the polytope $P$. The simplices of $\Delta(P)$ correspond to flags of faces $F^{a_1}\subset F^{a_2}\subset\dots\subset F^{a_k}$, $\dim F^i=i$:
$$
\Delta_{F^{a_1}\subset F^{a_2}\subset\dots\subset F^{a_k}}={\rm conv}\{\boldsymbol{x}_{F^{a_1}},\boldsymbol{x}_{F^{a_2}},\dots, \boldsymbol{x}_{F^{a_k}}\},
$$
The maximal simplices are $\Delta_{\boldsymbol{v}\subset F^1\subset F^2\subset\dots\subset F^{n-1}\subset P}$, where 
$\boldsymbol{v}$ is a vertex.
For any point $\boldsymbol{x}\in P$ the minimal simplex $\Delta(\boldsymbol{x})$ containing $\boldsymbol{x}$ can be found by the following rule. Let $G(\boldsymbol{x})=\bigcap\limits_{F_i\ni \boldsymbol{x}}F_i$. If $\boldsymbol{x}=\boldsymbol{x}_G$, then $\Delta(\boldsymbol{x})=\Delta_{G}$. Else take a ray starting in $\boldsymbol{x}_G$, passing through $\boldsymbol{x}$ and intersecting $\partial G$ in $\boldsymbol{x}_1$. Iterating the argument we obtain either $\boldsymbol{x}_1=\boldsymbol{x}_{G_1}$, and $\Delta(\boldsymbol{x})=\Delta_{G_1\subset G}$, or a new point $\boldsymbol{x}_2$. In the end we will stop when 
$\boldsymbol{x}_l=\boldsymbol{x}_{G_l}$, and $\Delta(\boldsymbol{x})=\Delta_{G_l\subset\dots\subset G_1\subset G}$.   

Define a piecewise-linear mapping $b_P\colon P\to \mathbb{I}^m$ by the rule 
$$
\boldsymbol{x}_G\to\widehat{\varepsilon}(G)=(\varepsilon_1,\dots,\varepsilon_m)\in\mathbb{I}^m, \text{where }\varepsilon_i=\begin{cases}0,&\text{if }G\subset F_i,\\1,&\text{if }G\not\subset F_i\end{cases},
$$
on the vertices of $\Delta(P)$, and for any simplex continue the mapping to the cube $\mathbb{I}^m$ via barycentric coordinates. In particular, $b_P(\boldsymbol{x}_P)=(1,1,\dots,1)$, and $b_P(\boldsymbol{x}_{\boldsymbol{v}})$ is a point with $n$ zero coordinates.
\begin{theorem}
The mapping $b_P$ defines a homeomorphism $P\simeq \mathbb I_P\subset \mathbb I^m$. 
\end{theorem}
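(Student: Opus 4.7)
The plan is to exhibit $b_P$ as a gluing of affine homeomorphisms between the cubical pieces of a canonical decomposition of $P$ and those of $\mathbb{I}_P$. For each vertex $v$ of $P$, let $C_v\subset P$ be the union of all maximal simplices of $\Delta(P)$ of the form $\Delta_{v\subset F^1\subset\cdots\subset F^{n-1}\subset P}$. First I would observe that $P=\bigcup_{v}C_v$: each point $x\in P$ lies in some maximal simplex of $\Delta(P)$, and every such simplex contains exactly one vertex of $P$ (namely the bottom of its defining flag). Moreover, the simplices making up $C_v$ correspond bijectively to maximal chains $v\subset F^1\subset\cdots\subset F^{n-1}\subset P$ in $L(P)$, i.e.\ to orderings of the $n$ facets $F_{i_1},\dots,F_{i_n}$ through $v$. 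A standard combinatorial identification then shows $C_v$ is (combinatorially and PL) an $n$-cube whose $2^n$ vertices are precisely the barycenters $\{x_G:G\ni v\}$, the $2^n$ faces $G$ through $v$ being indexed by subsets of $\{F_{i_1},\dots,F_{i_n}\}$.

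Next I would analyze the image. By construction $\mathbb{I}_{P,v}$ is the coordinate $n$-cube in $\mathbb{I}^m$ where coordinates outside $\{i_1,\dots,i_n\}$ are fixed at $1$ and the remaining $n$ coordinates run over $[0,1]$; its $2^n$ vertices are the $0/1$-tuples $\widehat{\varepsilon}(G)$ as $G$ ranges over faces containing $v$. The definition $x_G\mapsto\widehat{\varepsilon}(G)$ therefore sends the $2^n$ vertices of the cube $C_v$ bijectively onto the $2^n$ vertices of the cube $\mathbb{I}_{P,v}$, preserving the combinatorial cube structure (adjacent vertices in $C_v$, coming from chains differing by one face, go to vertices of $\mathbb{I}_{P,v}$ differing in one coordinate). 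Extending affinely on the simplices of $\Delta(P)\cap C_v$ — which is exactly what the barycentric-coordinate rule does — yields an affine isomorphism of $n$-cubes $b_P|_{C_v}\colon C_v\xrightarrow{\ \cong\ }\mathbb{I}_{P,v}$, hence a homeomorphism onto $\mathbb{I}_{P,v}$.

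Then I would glue. Since the rule $x_G\mapsto\widehat{\varepsilon}(G)$ depends only on $G$ and not on the vertex $v$ used to place $G$ in some chain, the local maps agree on overlaps: if $v,v'$ are both vertices of a common face $G_0$, then $C_v\cap C_{v'}$ is the sub-cube spanned by $\{x_G:G\supset G_0\text{ and }G\ni v,v'\}$, whose image is the analogous sub-cube of $\mathbb{I}_{P,v}\cap\mathbb{I}_{P,v'}$, which is precisely $\mathbb{I}_{P,G_0}$ by the construction of the cubical complex. Thus the $b_P|_{C_v}$ assemble into a continuous map $b_P\colon P\to\mathbb{I}_P\subset\mathbb{I}^m$. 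The gluing is a bijection because both source and target pieces meet along exactly the same sub-cubes indexed by the common faces. Finally, $P$ is compact and $\mathbb{I}_P$ is Hausdorff, so this continuous bijection is automatically a homeomorphism.

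The main obstacle I anticipate is the bookkeeping showing that $C_v$ really is an $n$-cube whose combinatorial structure matches that of $\mathbb{I}_{P,v}$: this requires the identification of maximal flags of faces through $v$ with orderings of the $n$ facets $F_{i_1},\dots,F_{i_n}$ containing $v$ (which uses simplicity of $P$), and then recognizing that the poset of faces containing $v$ is the Boolean lattice on $\{F_{i_1},\dots,F_{i_n}\}$, matching the face lattice of the coordinate $n$-cube $\mathbb{I}_{P,v}$. Once this local cube picture is in place, the gluing and compactness arguments are routine.
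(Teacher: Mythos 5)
Your proof is correct and takes a genuinely different route from the paper's. The paper argues pointwise: for $\boldsymbol{x}$ lying in the simplex $\Delta_{G_1\subset\dots\subset G_r}$ of the barycentric subdivision, it shows that the image vector $b_P(\boldsymbol{x})=(x_1,\dots,x_m)$ determines the chain $(G_i)$ and the barycentric coordinates $(t_i)$ --- sorting the $x_i$ in increasing order reads off $\sigma(G_1)\supset\dots\supset\sigma(G_r)$ from the level sets and the jumps give the $t_i$ --- so $b_P$ is injective; compactness plus Hausdorffness then give a homeomorphism onto the image, and a short check identifies the image as $\mathbb{I}_P$. You instead go local-to-global through the cubical decomposition $P=\bigcup_v C_v$, identify each restriction $b_P|_{C_v}$ as a PL isomorphism of subdivided $n$-cubes onto $\mathbb{I}_{P,v}$ by matching the Boolean lattice of faces through $v$ with the face lattice of the coordinate cube, and glue along the common sub-cubes $\mathbb{I}_{P,G_0}$, before invoking the same compactness argument. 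Your version is more structural --- it exhibits $b_P$ outright as an isomorphism of cubical complexes and in effect establishes the cubical-subdivision construction (which the paper states only afterwards, as a consequence of this theorem) along the way --- whereas the paper's coordinate-sorting argument is shorter and more self-contained. One small imprecision: $b_P|_{C_v}$ is affine on each simplex of $\Delta(P)\cap C_v$ but generally not affine on all of $C_v$, since $C_v$ is only a PL cube; so ``affine isomorphism of $n$-cubes'' should be read as ``PL isomorphism of subdivided $n$-cubes.'' This does not affect the argument, since a homeomorphism is all that is needed.
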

\begin{proof}
Let $\boldsymbol{x}\in P$, and $\Delta(\boldsymbol{x})=\Delta_{G_1\subset\dots\subset G_r}$. 

We have $\boldsymbol{x}=t_1\boldsymbol{x}_{G_1}+\dots+t_r\boldsymbol{x}_{G_r}$, where $t_i>0$, and $t_1+\dots+t_r=1$. The coordinates of the vector  $b_P(\boldsymbol{x})=t_1\widehat{\varepsilon}(G_1)+\dots+t_r\widehat{\varepsilon}(G_r)=(x_1,\dots,x_m)$ belong to the interval $[0,1]$. Arrange them ascending:  
\begin{multline*}
0=x_{i_1}=\dots=x_{i_{p_1}}<x_{i_{p_1+1}}=\dots=x_{i_{p_1+p_2}}<\dots<\\
<x_{i_{p_1+\dots+p_r+1}}=\dots=x_{i_m}=1.
\end{multline*}
Then 
$$
G_1=F_{i_1}\cap\dots\cap F_{i_{p_1+\dots+p_r}},G_2=F_{i_1}\cap\dots\cap F_{i_{p_1+\dots+p_{r-1}}},\dots,G_r=F_{i_1}\cap\dots\cap F_{i_{p_1}},
$$
and
$$
t_1=1-x_{i_{p_1+\dots+p_r}},t_2=x_{i_{p_1+\dots+p_r}}-x_{i_{p_1+\dots+p_{r-1}}}, \dots,t_r=x_{i_{p_1+p_2}}.
$$
Thus the mapping $b_P$ is an embedding. Since $P$ is compact and $\mathbb I^m$ is Hausdorff, we have the homeomorphism $P\simeq b_P(P)$. In the construction above we have $x_{i_j}\ne 1$ only if $F_{i_j}\supset G_1$; hence $b_P(\boldsymbol{x})\in \mathbb I_{P,G_1}$, and $b_P(P)\subset \mathbb I_P$. On the other hand, the above formulas imply that  $\mathbb I_P\subset b_P(P)$. This finishes the proof.
\end{proof}
\begin{corollary}
The homeomorphism $b_P\colon P\to \mathbb I_P\simeq \mathcal{Z}_P/\mathbb T^m$ defines a mapping $\pi_P\colon\mathcal{Z}_P\to P$ such that the following diagram is commutative:

$$
\begin{CD}
  \mathcal Z_P @>>>\mathbb{D}^{2m}\\
  @VV\pi_PV\hspace{-0.2em} @VV V @.\\
  P @> b_P>> \mathbb{I}^m
\end{CD}\eqno %(1.2)
$$
\end{corollary}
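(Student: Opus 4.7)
The plan is to observe that this corollary is essentially a repackaging of the theorem just proved, together with the basic properties of the canonical projection $\pi^m\colon\mathbb{D}^{2m}\to\mathbb{I}^m$, $(z_1,\dots,z_m)\mapsto(|z_1|^2,\dots,|z_m|^2)$, already used throughout Section~4.2. I would first note that $\pi^m$ restricts to a surjection $\pi^m|_{\mathcal{Z}_P}\colon\mathcal{Z}_P\to\mathbb{I}_P$. This is precisely the content of the earlier proposition: since $\mathcal{Z}_P=\bigcup_{F}\mathcal{Z}_{P,F}$ and $\mathbb{I}_P=\bigcup_{F}\mathbb{I}_{P,F}$ are unions indexed by the same face lattice, and since $\pi^m(\mathcal{Z}_{P,F})=\mathbb{I}_{P,F}$, the image of $\mathcal{Z}_P$ under $\pi^m$ lies exactly in $\mathbb{I}_P\subset\mathbb{I}^m$, realizing the homeomorphism $\mathcal{Z}_P/\mathbb{T}^m\simeq\mathbb{I}_P$.

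Next, by the preceding theorem the map $b_P\colon P\to\mathbb{I}_P$ is a homeomorphism (between compact Hausdorff spaces), so its inverse $b_P^{-1}\colon\mathbb{I}_P\to P$ is a well-defined continuous map. I would then simply define
$$
\pi_P := b_P^{-1}\circ\pi^m|_{\mathcal{Z}_P}\colon\mathcal{Z}_P\longrightarrow P.
$$
Continuity of $\pi_P$ is immediate as a composition of continuous maps, and commutativity of the square amounts to $b_P\circ\pi_P=\pi^m|_{\mathcal{Z}_P}$, which holds by construction.

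I do not expect any real obstacle: all the substantive work was done in proving that $b_P$ is a homeomorphism onto $\mathbb{I}_P$ and that the orbit map $\mathcal{Z}_P\to\mathbb{I}_P$ is induced by $\pi^m$. The only small verification one should make explicit is that the image of $\mathcal{Z}_P$ under $\pi^m$ does land in $\mathbb{I}_P$ (and not merely in $\mathbb{I}^m$), so that composition with $b_P^{-1}$ is legitimate; this follows face by face from the definitions of $\mathcal{Z}_{P,F}$ and $\mathbb{I}_{P,F}$. One may additionally remark, although it is not required for the statement, that $\pi_P$ is the orbit map of the canonical $\mathbb{T}^m$-action on $\mathcal{Z}_P$ composed with the identification $\mathbb{I}_P\simeq P$, which makes the corollary a natural geometric interpretation of the preceding constructions.
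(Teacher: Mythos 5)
Your proof is correct and is precisely the argument the paper leaves implicit: define $\pi_P=b_P^{-1}\circ\pi^m|_{\mathcal{Z}_P}$ using the homeomorphism $b_P\colon P\to\mathbb{I}_P$ from the preceding theorem and the already-established fact that $\pi^m$ carries $\mathcal{Z}_P$ onto $\mathbb{I}_P$, and commutativity is then immediate. No gap; this matches the paper's intent.
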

\begin{corollary}
Any admissible mapping $\varphi\colon\mathcal{F}_{P_1}\to \mathcal{F}_{P_2}$ induces the mapping of polytopes $\widehat\varphi \colon P_1 \to P_2$ such that the following diagram is commutative: 
$$
\begin{CD}
  \mathcal Z_{P_1} @>\widehat{\varphi}>>\mathcal{Z}_{P_2}\\
  @VV\pi_{P_1}V\hspace{-0.2em} @VV\pi_{P_2} V @.\\
  P_1 @> \widehat\varphi>> P_2
\end{CD}\eqno %(1.2)
$$
\end{corollary}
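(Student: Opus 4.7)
The plan is to build $\widehat{\varphi}\colon P_1\to P_2$ by transporting the already-constructed map $\widehat{\varphi}\colon\mathbb{I}_{P_1}\to\mathbb{I}_{P_2}$ through the barycentric embeddings. More precisely, the preceding theorem gives homeomorphisms $b_{P_i}\colon P_i\xrightarrow{\sim}\mathbb{I}_{P_i}$, and the proposition on admissible maps produces $\widehat{\varphi}\colon\mathbb{I}^{m_1}\to\mathbb{I}^{m_2}$ carrying $\mathbb{I}_{P_1}$ into $\mathbb{I}_{P_2}$. I would simply define
$$
\widehat{\varphi}:=b_{P_2}^{-1}\circ\widehat{\varphi}\bigr|_{\mathbb{I}_{P_1}}\circ b_{P_1}\colon P_1\longrightarrow P_2.
$$

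Commutativity then follows by stacking three commutative squares. The previous proposition gives $\pi^{m_2}\circ\widehat{\varphi}=\widehat{\varphi}\circ\pi^{m_1}$ at the level of moment-angle complexes over cubes, and the preceding corollary gives $b_{P_i}\circ\pi_{P_i}=\pi^{m_i}\bigr|_{\mathcal Z_{P_i}}$ (identifying $P_i$ with $\mathbb{I}_{P_i}$). Chasing the combined diagram yields
$$
b_{P_2}\circ\pi_{P_2}\circ\widehat{\varphi}=\pi^{m_2}\circ\widehat{\varphi}=\widehat{\varphi}\circ\pi^{m_1}=\widehat{\varphi}\circ b_{P_1}\circ\pi_{P_1}=b_{P_2}\circ\widehat{\varphi}\circ\pi_{P_1},
$$
and applying $b_{P_2}^{-1}$ gives the desired identity $\pi_{P_2}\circ\widehat{\varphi}=\widehat{\varphi}\circ\pi_{P_1}$.

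There is really no serious obstacle here; the content is essentially formal once the barycentric embedding theorem and the functoriality of $\varphi\mapsto\widehat\varphi$ are in hand. The only thing worth a sentence of verification is that $\widehat\varphi(\mathbb I_{P_1})\subset\mathbb I_{P_2}$, which is where admissibility is used: any $x\in\mathbb I_{P_1}$ lies in some $\mathbb I_{P_1,G}$ with $G\in L(P_1)\setminus\{\varnothing\}$, and by the explicit formula for $\widehat\varphi$ on $\mathbb I^{m_1}$ together with admissibility of $\varphi$ (which ensures $\varphi(G)\in L(P_2)\setminus\{\varnothing\}$), one finds $\widehat\varphi(x)\in\mathbb I_{P_2,\varphi(G)}\subset\mathbb I_{P_2}$. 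After this check, the definition of $\widehat\varphi\colon P_1\to P_2$ and the commutative diagram are immediate consequences of the preceding two corollaries.
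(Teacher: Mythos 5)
Your proof is correct and follows exactly the route the paper intends (the paper leaves this corollary without proof, treating it as an immediate consequence of the barycentric-embedding theorem, the corollary defining $\pi_P$, and the proposition on admissible maps). Defining $\widehat\varphi := b_{P_2}^{-1}\circ\widehat\varphi|_{\mathbb I_{P_1}}\circ b_{P_1}$ and stacking the three commutative squares is precisely the intended argument, and your verification that $\widehat\varphi(\mathbb I_{P_1})\subset\mathbb I_{P_2}$ via admissibility (if $\varphi(G)\not\subset G_j$ then every $i\in\varphi^{-1}(j)$ has $G\not\subset F_i$, so $x_i=1$ and $y_j=1$) is the right sanity check.
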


\noindent {\bf Construction (canonical section):}\index{moment-angle complex!canonical section} The mapping
$$
s : \mathbb I \rightarrow D^2 : s(y)=\sqrt{y}
$$
induces the section $s^m\colon\mathbb I_P\to \mathcal{Z}_P$. Together with the homeomorphism $P\simeq \mathbb I_P$ this gives the {\em canonical section} $s_P= s^m\circ b_P: P \rightarrow \mathcal{Z}_P$, such that $\pi_P\circ s_P={\rm id}$.

\noindent {\bf Construction (cubical subdivision):}\index{cubical subdivision}\index{polytope!cubical subdivision}
The space $\mathbb I_P$ has the canonical partition into cubes $\mathbb I_{P,v}$, one for each vertex $v\in P^n$. The homeomorphism\linebreak  $\mathbb I_P={\rm Im\,}b_P(P)\simeq P$ gives the {\em cubical subdivision} of the polytope $P$.
\begin{example}
For $P = I$ we have an embedding $I \subset I^2$.
\begin{figure}
\begin{center}
\includegraphics[scale=0.3]{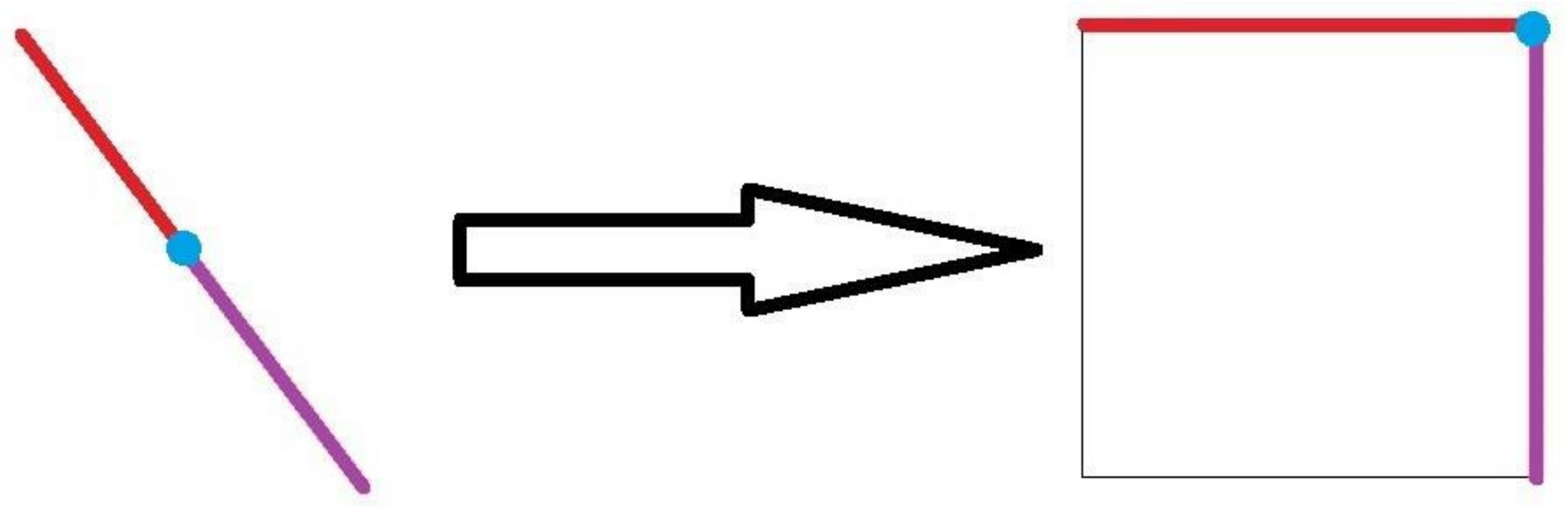}
\end{center}
\caption{Barycentric embedding and cubical subdivision of the interval}
\end{figure}
\end{example}

\begin{example}
For $P = \Delta^2$ we have an embedding $\Delta^2 \subset I^3$
\begin{figure}
\begin{center}
\includegraphics[scale=0.32]{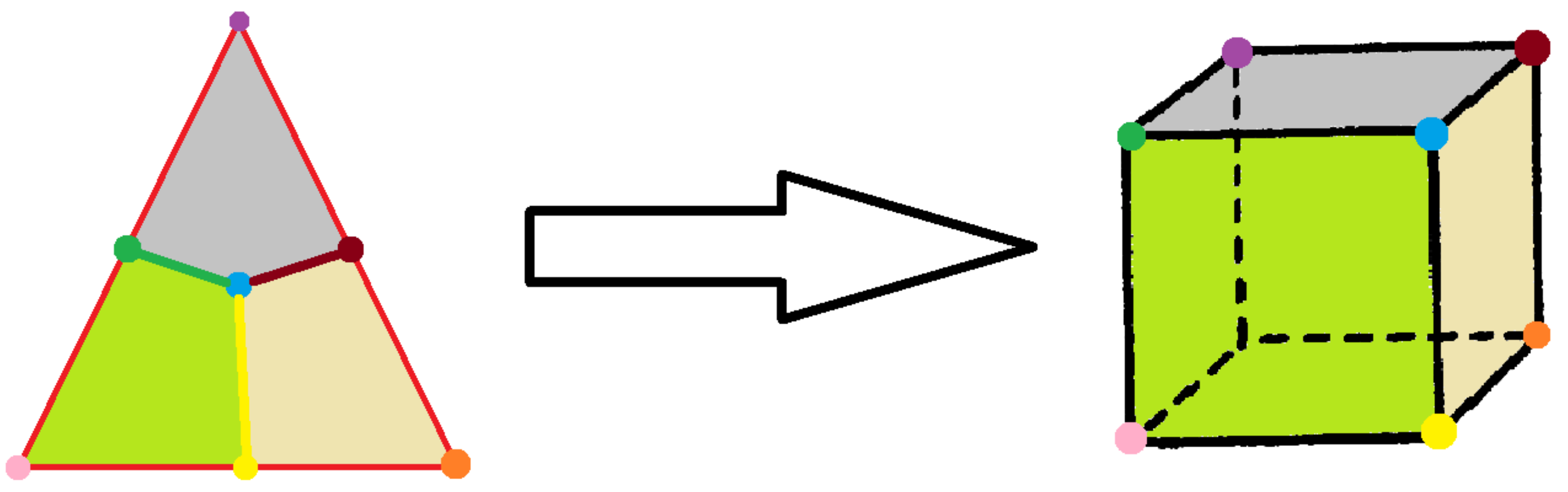}
\end{center}
\caption{Barycentric embedding and cubical  subdivision of the triangle}
\end{figure}
\end{example}

\noindent{\bf Construction (product over space):}\index{product over space}
Let $f \colon X \to Z$ and $g \colon Y \to Z$ be maps of topological spaces. The product $X\times_Z Y$ \index{product over space}over space $Z$ is described by the general pullback diagram:
\[ \begin{CD}
X\times_Z Y @ >>> X\\
@ VVV @ VVfV\\
Y @ >g>> Z
\end{CD} \]
where $X\times_Z Y = \big\{ (x,y) \in X\times Y\;:\; f(x) =g(y) \big\}$.
\begin{proposition}
We have $\mathcal{Z}_P=\mathbb {D}^{2m}\times_{\mathbb{I}^m}P$.\index{moment-angle complex!product over cube}
\end{proposition}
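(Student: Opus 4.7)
The plan is to unpack the pullback and use the previously established homeomorphism $b_P\colon P\xrightarrow{\sim}\mathbb{I}_P\subset\mathbb{I}^m$ to identify $\mathbb{D}^{2m}\times_{\mathbb{I}^m}P$ with the preimage $(\pi^m)^{-1}(\mathbb{I}_P)\subset\mathbb{D}^{2m}$, and then recognize this preimage as $\mathcal{Z}_P$ facewise. By definition, the pullback is
$$
\mathbb{D}^{2m}\times_{\mathbb{I}^m}P=\{(z,x)\in\mathbb{D}^{2m}\times P\colon\pi^m(z)=b_P(x)\}.
$$
Since $b_P$ is injective with image exactly $\mathbb{I}_P$, the condition $\pi^m(z)=b_P(x)$ forces $\pi^m(z)\in\mathbb{I}_P$ and then uniquely determines $x=b_P^{-1}(\pi^m(z))$. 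Hence the projection to the first factor is a bijection (and, by the general pullback topology, a homeomorphism) onto the subspace $(\pi^m)^{-1}(\mathbb{I}_P)\subset\mathbb{D}^{2m}$.

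The remaining task is therefore to identify $(\pi^m)^{-1}(\mathbb{I}_P)$ with $\mathcal{Z}_P$. I would do this by checking the identity facewise: for every face $F\in L(P)\setminus\{\varnothing\}$,
$$
(\pi^m)^{-1}(\mathbb{I}_{P,F})=\mathcal{Z}_{P,F}.
$$
This is a direct unpacking of definitions. The condition $\pi^m(z)\in\mathbb{I}_{P,F}$ means $|z_i|^2=1$ whenever $F\not\subset F_i$, which is exactly the defining condition for $z\in\mathcal{Z}_{P,F}$; there is no constraint on the remaining coordinates, which are free to lie in $D^2_i$, matching the definition of $\mathcal{Z}_{P,F}$. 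Taking the union over all vertices $v$ (equivalently, over all nonempty faces) and using continuity of $\pi^m$ yields
$$
(\pi^m)^{-1}(\mathbb{I}_P)=\bigcup_{v\text{ vertex}}(\pi^m)^{-1}(\mathbb{I}_{P,v})=\bigcup_{v\text{ vertex}}\mathcal{Z}_{P,v}=\mathcal{Z}_P.
$$

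Combining the two steps, the first-factor projection restricts to a homeomorphism $\mathbb{D}^{2m}\times_{\mathbb{I}^m}P\xrightarrow{\sim}\mathcal{Z}_P$, which is the claim. The step requiring the most care is the facewise identification $(\pi^m)^{-1}(\mathbb{I}_{P,F})=\mathcal{Z}_{P,F}$, but this is essentially bookkeeping from the definitions in Section~\ref{moment-angle}; no genuine obstacle is expected, and no additional hypothesis on $P$ is needed beyond simplicity (already used to define $\mathcal{Z}_P$ and $\mathbb{I}_P$).
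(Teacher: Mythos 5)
Your proof is correct; the paper states this proposition without proof, leaving it as an immediate consequence of the definitions, and your argument is exactly the expected unpacking. You correctly reduce the pullback to $(\pi^m)^{-1}(\mathbb{I}_P)$ via the homeomorphism $b_P$ and then verify facewise that $(\pi^m)^{-1}(\mathbb{I}_{P,F})=\mathcal{Z}_{P,F}$, which is the straightforward route.
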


\subsection{Pair of spaces in the power of a simple polytope}
\noindent{\bf Construction (raising to the power of a simple polytope):} \index{raising to the power of a simple polytope}\index{polytope!raising to the power}Let $P$ be a simple polytope $P$ with the face lattice $L(P)$ and the set of facets $\{F_1,\dots, F_m\}$. For $m$ pairs of topological spaces $\{(X_i,W_i),i=1,\dots,m\}$ set $(\boldsymbol{X},\boldsymbol{W})=\{(X_i,W_i),i=1,\dots,m\}$. For a face $F\in L(P)\setminus\{\varnothing\}$ define 
$$
(\boldsymbol{X},\boldsymbol{W})^P_F=\{(y_1,\dots,y_m)\in X_1\times\dots\times X_m\colon y_i\in X_i\text{ if }F\in F_i, y_i\in W_i\text{ if }F\notin F_i\}.
$$

The set of pairs $(\boldsymbol{X},\boldsymbol{W})$ in degree of a simple polytope $P$ is
$$
(\boldsymbol{X},\boldsymbol{W})^P=\bigcup_{F\in L(P)\setminus\{\varnothing\}}(\boldsymbol{X},\boldsymbol{W})^P_F
$$
\begin{example}
\begin{enumerate}
\item Let $W_i=X_i$ for all $i$. Then  $(\boldsymbol{X},\boldsymbol{W})^P=X_1\times\dots\times X_m$ for any $P$.
\item Let $W_i=*_i$ -- a fixed point in $X_i$, $i=1,2$, and $P=I$. Then  $(\boldsymbol{X},\boldsymbol{W})^I=X_1 \vee X_2$ is the wedge of  the spaces $X_1$ and $X_2$.
\end{enumerate}
\end{example}

\noindent{\bf Construction (pair of spaces in the power of a simple polytope):} \index{pair of spaces in the power of a simple polytope}
In the case $W_i=W$, $X_i=X$,  $i=1,\dots,m$, the space $(\boldsymbol{X},\boldsymbol{W})^P$ is called a pair of spaces $(X,W)$ in the power of a simple polytope $P$ and is denoted by $(X,W)^P$.

\begin{example}
The space $(D^2,S^1)^P$ is the moment-angle complex $\mathcal{Z}_P$ of the polytope $P$ (see Subsection 
\ref{moment-angle}).
\end{example}
\begin{example}
The space $(\mathbb I,1)^P$ is the image $\mathbb I_P=b_P(P)$ of the barycentric embedding of the polytope $P$ (see Subsection 
\ref{moment-angle}).
\end{example}

\noindent {\bf{Exercise:}} Describe the space $(X,W)^P$, where $P$ is a $5$-gon.

Let us formulate properties of the construction. The proof we leave as an exercise.
\begin{proposition}
\begin{enumerate}
\item Let $P_1$ and $P_2$ be simple polytopes. Then
$$
(X,W)^{{P_1 \times P_2}} = (X,W)^{P_1} \times (X,W)^{P_2}
$$
\item Let $\{v_1,\dots, v_{f_0}\}$ be the set of vertices of $P$. There is a homeomorphism
$$
(X,W)^P \cong \bigcup_{k = 1}^{f_0} (X,W)^P_{v_k}
$$
\item Any mapping $f \colon (X_1,W_1) \to (X_2,W_2)$ gives the commutative diagram
\[
\begin{CD}
  (X_1,W_1)^{P} @>{f^P}>>(X_2,W_2)^{P}\\
  @V{\cap}VV @VV{\cap }V @.\\
  (X_1,X_1)^P=X_1^m @>f^m>> X_2^{m}=(X_2,X_2)^P
\end{CD}
\]
\item We have $id^P=id$. \\
For $f_1 \colon (X_1,W_1) \to (X_2,W_2)$, $f_2 \colon (X_2,W_2) \to (X_1,W_1)$ we have 
$$(f_2\circ f_1)^P=f_2^P\circ f_1^P. $$ 
\end{enumerate}
\end{proposition}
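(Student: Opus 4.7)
The strategy is to verify each of the four items by unfolding the coordinate-wise definition of $(\boldsymbol{X},\boldsymbol{W})^P_F$ and then reorganising the union.

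For (1), the plan is to use the product structure of the face lattice $L(P_1\times P_2)$: the facets of $P_1\times P_2$ are exactly the products $F_i^{(1)}\times P_2$ (for facets $F_i^{(1)}$ of $P_1$) and $P_1\times F_j^{(2)}$ (for facets $F_j^{(2)}$ of $P_2$), and every face decomposes uniquely as $G_1\times G_2$ with $G_\alpha\in L(P_\alpha)$. The containment $G_1\times G_2\subset F_i^{(1)}\times P_2$ is equivalent to $G_1\subset F_i^{(1)}$, and symmetrically on the other side. Hence the defining conditions for $(\boldsymbol{X},\boldsymbol{W})^{P_1\times P_2}_{G_1\times G_2}$ split coordinate-wise into a Cartesian product of the conditions for $(\boldsymbol{X},\boldsymbol{W})^{P_1}_{G_1}$ and $(\boldsymbol{X},\boldsymbol{W})^{P_2}_{G_2}$. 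Taking the union over all non-empty faces $G_1\times G_2$ then yields the asserted equality.

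For (2), the key observation is a monotonicity lemma: if $v$ is a vertex contained in a face $F$, then every facet containing $F$ also contains $v$, i.e.\ $\{i:F\subset F_i\}\subset\{i:v\subset F_i\}$. Consequently, since $W_i\subset X_i$, any point of $(\boldsymbol{X},\boldsymbol{W})^P_F$ automatically satisfies the looser constraints defining $(\boldsymbol{X},\boldsymbol{W})^P_v$, giving $(\boldsymbol{X},\boldsymbol{W})^P_F\subset (\boldsymbol{X},\boldsymbol{W})^P_v$. Since every non-empty face of a polytope contains at least one vertex, the union over all non-empty faces collapses to the union over vertices only.

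For (3), one checks that because $f(W_1)\subset W_2$, the coordinate-wise product map $f^m=f\times\cdots\times f\colon X_1^m\to X_2^m$ carries each piece $(X_1,W_1)^P_F$ into $(X_2,W_2)^P_F$; taking the union over faces, it restricts to the map $f^P$. The commutativity of the square is then tautological from the description of the vertical inclusions as the natural embeddings of $(X_\alpha,W_\alpha)^P$ into $X_\alpha^m=(X_\alpha,X_\alpha)^P$. Item (4) drops out from the same coordinate-wise description: $\mathrm{id}^P$ acts as the identity on each coordinate, and the composition of two such product maps composes coordinate by coordinate.

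The only step that requires genuine care is (1): one must verify cleanly that the natural bijection between facets of $P_1\times P_2$ and the disjoint union of facets of $P_1$ and $P_2$ induces the correct block decomposition of the coordinate index set, so that the membership conditions factor as a product. Everything else is routine unpacking of definitions.
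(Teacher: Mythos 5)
Your proof is correct; the paper in fact leaves this proposition as an exercise, so there is no stated argument to compare against. Your treatment of each item is the natural one: the face-lattice product decomposition for (1), the inclusion $(\boldsymbol{X},\boldsymbol{W})^P_F\subset(\boldsymbol{X},\boldsymbol{W})^P_v$ for $v\subset F$ (from $\{i:F\subset F_i\}\subset\{i:v\subset F_i\}$ together with $W_i\subset X_i$) for (2), coordinate-wise compatibility of $f^m$ with the pair structure for (3), and the coordinate-wise behaviour of the identity and composition for (4) all unfold the definition correctly, and in (1) the interchange of unions with Cartesian products goes through because $\bigcup_a A_a\times\bigcup_b B_b=\bigcup_{a,b}A_a\times B_b$.
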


\subsection{Davis-Januszkiewicz' construction}
\noindent {\bf Davis-Januszkiewicz' construction \cite{DJ91}:}\index{moment-angle complex!Davis-Januszkiewicz' construction} \index{moment-angle manifold!Davis-Januszkiewicz' construction} For $\boldsymbol{x}\in P$ we have the face\linebreak 
 $G(x)=\bigcap\limits_{F_i\supset \boldsymbol{x}}F_i\in L(P)$.  For a face $G\in L(P)$ define the subgroup $\mathbb T^G\subset\mathbb T^m$ as 
$$
\mathbb T^G=(S^1,1)_G^P=\{(t_1,\dots, t_m)\in \mathbb T^m\colon t_j=1,\text{ if }F_j\not\ni G\}
$$
Set
$$
\widetilde{\mathcal{Z}_P}=P \times \mathbb{T}^m / \sim,
$$
where $(\boldsymbol{x}_1, \boldsymbol{t}_1) \sim (\boldsymbol{x}_2, \boldsymbol{t}_2) \Leftrightarrow \boldsymbol{x}_1=\boldsymbol{x}_2=\boldsymbol{x}$, and $\boldsymbol{t}_1\boldsymbol{t}_2^{-1}\in T^{G(\boldsymbol{x})}$.

There is a canonical action of $\mathbb T^m$ on $\widetilde{\mathcal{Z}_P}$ induced by the action of $\mathbb T^m$ on the second factor.

\begin{theorem}\label{3mameq}
The canonical section $s_P\colon P\to \mathcal{Z}_P$ induces the $\mathbb T^m$-equivariant homeomorphism 
$$
\widetilde{\mathcal{Z}_P} \longrightarrow\mathcal{Z}_P
$$
defined by the formula $(\boldsymbol{x},\boldsymbol{t})\to \boldsymbol{t}s_P(\boldsymbol{x})$.
\end{theorem}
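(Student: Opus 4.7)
The plan is to show that the formula $\Psi([\boldsymbol{x},\boldsymbol{t}]) := \boldsymbol{t}\cdot s_P(\boldsymbol{x})$ descends from $P\times \mathbb{T}^m$ to a well-defined continuous $\mathbb{T}^m$-equivariant bijection $\Psi\colon \widetilde{\mathcal{Z}_P}\to\mathcal{Z}_P$, and then to upgrade this to a homeomorphism via the compact-to-Hausdorff criterion: the domain is compact as a quotient of the compact space $P\times \mathbb{T}^m$, and the codomain is Hausdorff as a subspace of $\mathbb{D}^{2m}$. Equivariance is immediate from the formula, since $\mathbb{T}^m$ acts on $\widetilde{\mathcal{Z}_P}$ by $\boldsymbol{s}\cdot[\boldsymbol{x},\boldsymbol{t}]=[\boldsymbol{x},\boldsymbol{s}\boldsymbol{t}]$ and on $\mathcal{Z}_P\subset\mathbb{D}^{2m}$ by coordinate-wise multiplication, and $\Psi$ sends the first action to the second.

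The crux, and the main obstacle, is the identification of the $\mathbb{T}^m$-isotropy subgroup of $s_P(\boldsymbol{x})$ as exactly $\mathbb{T}^{G(\boldsymbol{x})}$. Since the torus acts coordinate-wise on $\mathbb{D}^{2m}$, the stabilizer of a point $(z_1,\dots,z_m)$ consists of those $\boldsymbol{t}$ with $t_i=1$ whenever $z_i\neq 0$, so it suffices to show that $s_P(\boldsymbol{x})_i=0$ if and only if $F_i\supset G(\boldsymbol{x})$. Because $s_P(\boldsymbol{x})_i = \sqrt{b_P(\boldsymbol{x})_i}$, this reduces to analyzing when $b_P(\boldsymbol{x})_i=0$. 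Writing $\boldsymbol{x}=\sum_{j=1}^r t_j\,\boldsymbol{x}_{G_j}$ with $G_1\subset\dots\subset G_r$ the chain of faces produced by the barycentric-subdivision algorithm and all $t_j>0$, the linearity formula $b_P(\boldsymbol{x})_i=\sum_j t_j\,\widehat{\varepsilon}(G_j)_i$ shows that the $i$-th coordinate vanishes iff $G_j\subset F_i$ for every $j$; since $G_r$ is the largest face in the chain and coincides with $G(\boldsymbol{x})$ by construction of $\Delta(\boldsymbol{x})$, this is equivalent to $F_i\supset G(\boldsymbol{x})$, as required.

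Granted the stabilizer identification, the map $P\times \mathbb{T}^m\to \mathcal{Z}_P$, $(\boldsymbol{x},\boldsymbol{t})\mapsto \boldsymbol{t}\,s_P(\boldsymbol{x})$, is constant on $\sim$-classes because any $\boldsymbol{t}_1\boldsymbol{t}_2^{-1}\in\mathbb{T}^{G(\boldsymbol{x})}$ fixes $s_P(\boldsymbol{x})$; hence $\Psi$ is well-defined and continuous. For surjectivity, given $z\in\mathcal{Z}_P$ set $\boldsymbol{x}=\pi_P(z)$; the commutative square defining $\pi_P$ gives $\pi^m(z)=b_P(\boldsymbol{x})$, so $|z_i|=s_P(\boldsymbol{x})_i$ for every $i$, and choosing $t_i=z_i/s_P(\boldsymbol{x})_i\in S^1$ where $s_P(\boldsymbol{x})_i\neq 0$ (and $t_i\in S^1$ arbitrary otherwise) yields $z=\boldsymbol{t}\cdot s_P(\boldsymbol{x})$. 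For injectivity, if $\boldsymbol{t}_1\,s_P(\boldsymbol{x}_1)=\boldsymbol{t}_2\,s_P(\boldsymbol{x}_2)$, applying the $\mathbb{T}^m$-invariant map $\pi_P$ and using $\pi_P\circ s_P=\mathrm{id}$ yields $\boldsymbol{x}_1=\boldsymbol{x}_2=:\boldsymbol{x}$, and then $\boldsymbol{t}_1\boldsymbol{t}_2^{-1}\in\mathrm{Stab}_{\mathbb{T}^m}(s_P(\boldsymbol{x}))=\mathbb{T}^{G(\boldsymbol{x})}$, so $[\boldsymbol{x}_1,\boldsymbol{t}_1]=[\boldsymbol{x}_2,\boldsymbol{t}_2]$ in $\widetilde{\mathcal{Z}_P}$. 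The compactness--Hausdorff argument then completes the proof.
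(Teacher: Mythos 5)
Your proof is complete and correct. The paper states Theorem~\ref{3mameq} without a proof, so there is no in-text argument to compare against; what you have written is the standard argument (well-definedness, bijectivity, continuity, then compact-to-Hausdorff), and the crucial ingredient — that the $\mathbb T^m$-stabilizer of $s_P(\boldsymbol{x})$ equals $\mathbb T^{G(\boldsymbol{x})}$ — is carried out correctly: you reduce it to the vanishing pattern of $b_P(\boldsymbol{x})$, use the barycentric-coordinate expansion $b_P(\boldsymbol{x})=\sum_j t_j\,\widehat{\varepsilon}(G_j)$ with all $t_j>0$, and observe that since the chain $G_1\subset\cdots\subset G_r$ has top element $G_r=G(\boldsymbol{x})$ (as the paper's construction of $\Delta(\boldsymbol{x})$ starts from $G(\boldsymbol{x})$ and descends), the $i$-th coordinate vanishes exactly when $G(\boldsymbol{x})\subset F_i$; this identification is used correctly for both the inclusion ($\supseteq$) needed for well-definedness and the inclusion ($\subseteq$) needed for injectivity. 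The surjectivity argument via $\pi^m(z)=b_P(\pi_P(z))$ and the injectivity argument via $\mathbb T^m$-invariance of $\pi_P$ together with $\pi_P\circ s_P=\mathrm{id}$ are both sound, and the final passage from continuous bijection to homeomorphism via compactness of $P\times\mathbb T^m$ and Hausdorffness of $\mathbb D^{2m}$ is exactly what is needed.
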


\subsection{Moment-angle manifold of a simple polytope}\label{facesubmzp}
\noindent{\bf Construction (moment-angle manifold of a simple polytope \cite{BPR07,Bu-Pa15}):} \index{moment-angle manifold}Take a simple polytope
$$
P=\{\boldsymbol{x}\in \mathbb R^n\colon \boldsymbol{a}_i\boldsymbol{x}+b_i\geqslant 0, i=1,\dots,m\}.
$$
We have ${\rm rank\,} A=n$, where $A$ is the $m\times n$-matrix with rows $\boldsymbol{a}_i$. 
Then there is an embedding
\[ j_P \colon P \longrightarrow \mathbb{R}^m_{\geq} \,:\,j_P(\boldsymbol{x})=(y_1,\ldots,y_m), \]
where $y_i=\boldsymbol{a}_i\boldsymbol{x} + b_i$, and we will consider $P$ as the
subset in $\mathbb{R}^m_{\geqslant}$.\\
A {\em moment-angle manifold} $\widehat{\mathcal{Z}_P}$ is the subset in $\mathbb{C}^m$ defined as $\rho^{-1}\circ j_P(P)$, where $\rho(z_1, \ldots, z_m) = \big(|z_1|^2, \ldots, |z_m|^2\big)$. The action of $\mathbb T^m$ on $\mathbb {C}^m$ induces the action of $\mathbb T^m$ on $\widehat{\mathcal{Z}_P}$.

For the embeddings $j_{\mathcal{Z}}\colon\widehat{\mathcal{Z}_P}\subset \mathbb C^m$ and $j_P\colon P\subset\mathbb R^m_{\geqslant}$ we have the commutative diagram:
\[ \begin{CD}
\widehat{\mathcal{Z}_P} @ >{j_{\mathcal{Z}}}>> \mathbb{C}^m\\
@ V{\rho_P}VV @ VV{\rho}V\\
P @>{j_P}>> \mathbb{R}_{\geqslant}^m
\end{CD} \]

\begin{proposition}\label{0zp}
We have $\widehat{\mathcal{Z}_P}\subset\mathbb C^m\setminus\{0\}$.
\end{proposition}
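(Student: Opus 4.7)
My plan is to argue that $0\in\widehat{\mathcal{Z}_P}$ would force a single point of $P$ to lie simultaneously on every facet, which is incompatible with $P$ being a simple $n$-polytope with at least $n+1$ facets.

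First I would unwind the definitions. The map $\rho\colon\mathbb{C}^m\to\mathbb{R}^m_{\geqslant}$ sends a point to its vector of squared moduli, so $\rho^{-1}(0)=\{0\}$. Since $\widehat{\mathcal{Z}_P}=\rho^{-1}(j_P(P))$, the origin lies in $\widehat{\mathcal{Z}_P}$ iff $0\in j_P(P)$. By the explicit formula $j_P(\boldsymbol{x})=(\boldsymbol{a}_1\boldsymbol{x}+b_1,\dots,\boldsymbol{a}_m\boldsymbol{x}+b_m)$, this is equivalent to the existence of some $\boldsymbol{x}\in P$ with $\boldsymbol{a}_i\boldsymbol{x}+b_i=0$ for every $i$, i.e.\ $\boldsymbol{x}\in\bigcap_{i=1}^{m}F_i$.

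Next I would rule out such an $\boldsymbol{x}$. Because $P$ is simple, each point $\boldsymbol{x}\in P$ lies in the relative interior of a unique face $G(\boldsymbol{x})=\bigcap_{F_i\ni\boldsymbol{x}}F_i$, and the codimension of $G(\boldsymbol{x})$ in $P$ equals exactly the number of facets containing $\boldsymbol{x}$; in particular at most $n$ facets pass through any given point of $P$. On the other hand, $P$ is a bounded full-dimensional convex polytope in $\mathbb{R}^n$, so the rows $\boldsymbol{a}_1,\dots,\boldsymbol{a}_m$ must positively span $\mathbb{R}^n$, which forces $m\geqslant n+1$ (equivalently, any vertex of $P$ already uses up $n$ facets, and boundedness demands at least one more facet avoiding that vertex). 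Combining $m\geqslant n+1>n$ with the preceding bound gives $\bigcap_{i=1}^{m}F_i=\varnothing$, which completes the argument.

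The only nontrivial ingredient is the inequality $m\geqslant n+1$; the rest is a direct unpacking of the definitions of $j_P$, $\rho$, and the simple polytope condition, so I do not expect a serious obstacle.
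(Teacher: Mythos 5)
Your argument is correct and matches the paper's proof almost verbatim: both reduce to the impossibility of $\boldsymbol{a}_i\boldsymbol{x}+b_i=0$ for all $i$ via $\rho^{-1}(0)=\{0\}$, and then invoke the fact that a point of a simple $n$-polytope lies in at most $n$ facets. Your explicit note that $m\geqslant n+1$ is a small welcome addition, since that inequality is what makes the facet-count bound actually produce a contradiction (the paper leaves it implicit).
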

\begin{proof}
If $0\in\widehat{\mathcal{Z}_P}$, then $0=\rho(0)\in j_P(P)$. This corresponds to a point $\boldsymbol{x}\in P$ such that $\boldsymbol{a}_i\boldsymbol{x}+b_i=0$ for all $i$. This is impossible, since any point of a simple $n$-polytope lies in at most $n$ facets. 
\end{proof}

\begin{definition}
For the set of vectors $(\boldsymbol{x}_1,\dots,\boldsymbol{x}_m)$ spanning $\mathbb R^n$, the set of vectors $(\boldsymbol{y}_1,\dots,\boldsymbol{y}_m)$ spanning $\mathbb R^{m-n}$ is called {\em Gale dual}\index{Gale duality}, if for the matrices $X$ and $Y$ with column vectors $\boldsymbol{x}_i$ and $\boldsymbol{y}_j$ we have $XY^T=0$.
\end{definition}

Take an $((m-n)\times m)$-matrix $C$ such that $CA=0$ and ${\rm rank\,}C=m-n$. Then the vectors $\boldsymbol{a}_i$ and the column vectors $\boldsymbol{c}_i$ of $C$ are Gale dual to each other. Let $\boldsymbol{c}_i=(c_{1,i},\dots,c_{m-n,i})$.
\begin{proposition}
We have 
$$
\widehat{\mathcal{Z}_P}=\{\boldsymbol{z}\in\mathbb C^m\colon c_{i,1}|z_1|^2+\dots+c_{i,m}|z_m|^2=c_i\},
$$
where $c_i=c_{i,1}b_1+\dots+c_{i,m}b_m$.
\end{proposition}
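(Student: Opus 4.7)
The plan is to first describe $j_P(P)\subset\mathbb R^m_{\geqslant}$ as the intersection of $\mathbb R^m_{\geqslant}$ with the affine subspace cut out by the equations $C\boldsymbol{y}=C\boldsymbol{b}$, and then pull this description back through $\rho$, using $\widehat{\mathcal Z_P}=\rho^{-1}(j_P(P))$.

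First I would verify the equality $j_P(P)=\{\boldsymbol{y}\in\mathbb R^m_{\geqslant}\colon C\boldsymbol{y}=C\boldsymbol{b}\}$, where $\boldsymbol{b}=(b_1,\dots,b_m)^T$. The inclusion $\subseteq$ is immediate: for $\boldsymbol{x}\in P$,
\[
Cj_P(\boldsymbol{x})=C(A\boldsymbol{x}+\boldsymbol{b})=CA\boldsymbol{x}+C\boldsymbol{b}=C\boldsymbol{b},
\]
since $CA=0$. For the reverse inclusion, given $\boldsymbol{y}\in\mathbb R^m_{\geqslant}$ with $C\boldsymbol{y}=C\boldsymbol{b}$, I need to produce $\boldsymbol{x}\in P$ with $A\boldsymbol{x}+\boldsymbol{b}=\boldsymbol{y}$, i.e.\ $A\boldsymbol{x}=\boldsymbol{y}-\boldsymbol{b}$. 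The vector $\boldsymbol{y}-\boldsymbol{b}$ lies in $\ker C$, and the inclusion $\mathrm{image}(A)\subseteq\ker C$ together with the dimension count $\dim\ker C=m-(m-n)=n=\mathrm{rank}\,A=\dim\mathrm{image}(A)$ forces $\mathrm{image}(A)=\ker C$. Hence such $\boldsymbol{x}$ exists, and the coordinates $a_i\boldsymbol{x}+b_i=y_i\geqslant 0$ confirm $\boldsymbol{x}\in P$.

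Second, I would write the system $C\boldsymbol{y}=C\boldsymbol{b}$ row-by-row. The $i$-th row of $C$ has entries $c_{i,1},\dots,c_{i,m}$ (these are the $i$-th coordinates of the Gale-dual column vectors $\boldsymbol{c}_1,\dots,\boldsymbol{c}_m$), so the $i$-th equation is $\sum_{j=1}^m c_{i,j}y_j=\sum_{j=1}^m c_{i,j}b_j=c_i$ for $i=1,\dots,m-n$.

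Finally, combining with the definition $\widehat{\mathcal Z_P}=\rho^{-1}(j_P(P))$ and noting $\rho(\boldsymbol{z})\in\mathbb R^m_{\geqslant}$ automatically, we get $\boldsymbol{z}\in\widehat{\mathcal Z_P}$ iff $C\rho(\boldsymbol{z})=C\boldsymbol{b}$, which is precisely the stated system $c_{i,1}|z_1|^2+\dots+c_{i,m}|z_m|^2=c_i$. The only nontrivial step is the identification $\ker C=\mathrm{image}(A)$, which is a one-line dimension argument from the given rank hypotheses; the rest is bookkeeping.
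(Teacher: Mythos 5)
Your proof is correct, and it is the natural argument the paper leaves implicit: the equality $j_P(P)=\{\boldsymbol{y}\in\mathbb R^m_{\geqslant}\colon C\boldsymbol{y}=C\boldsymbol{b}\}$ reduces to the identity $\ker C=\mathrm{image}(A)$, which follows from $CA=0$ plus the rank count $\operatorname{rank} C=m-n=\,m-\operatorname{rank} A$, after which the row-by-row translation and $\widehat{\mathcal Z_P}=\rho^{-1}(j_P(P))$ give the stated system.
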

Denote $\Phi_i(\boldsymbol{z})=c_{i,1}|z_1|^2+\dots+c_{i,m}|z_m|^2-c_i$.
Consider the mapping
$$
\Phi : \mathbb{C}^m \to \mathbb R^{m-n} : \; \Phi(\boldsymbol{z})=(\Phi_1(\boldsymbol{z}),\ldots,\Phi_{m-n}(\boldsymbol{z})).
$$
It is the $\mathbb T^m$-equivariant quadratic mapping with respect to the trivial
action of $\mathbb T^m$ on $\mathbb R^{m-n}$.

\begin{proposition}\label{ZPeq}
\begin{enumerate}
\item $\widehat{\mathcal{Z}_P}$ is a {\bf complete intersection} of {\bf
    real quadratic hypersurfaces} in $\mathbb R^{2m} \cong \mathbb C^m$:
$$
\mathcal{F}_k=\{ \boldsymbol{z}\in \mathbb C^m : \Phi_k(\boldsymbol{z})=0 \}, \; k=1, \ldots, m-n.
$$

\item There is a canonical {\bf trivialisation} of the normal bundle of the $\mathbb T^m$-equivariant embedding $\widehat{\mathcal{Z}_P} \subset \mathbb{C}^m$, that is $\widehat{\mathcal{Z}_P}$ has the canonical structure of a {\bf framed} manifold.
\end{enumerate}
\end{proposition}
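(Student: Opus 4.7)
Both assertions reduce to a single transversality fact: at every $\boldsymbol{z}\in\widehat{\mathcal{Z}_P}$ the differentials $d\Phi_1(\boldsymbol{z}),\dots,d\Phi_{m-n}(\boldsymbol{z})$ are linearly independent. Once this is established, part~(1) follows from the regular value theorem applied to $\Phi\colon\mathbb{C}^m\to\mathbb R^{m-n}$, and part~(2) follows because the gradients $\nabla\Phi_1,\dots,\nabla\Phi_{m-n}$ are globally defined smooth sections on $\mathbb{C}^m$ which, when restricted to $\widehat{\mathcal{Z}_P}$, form a global frame of the normal bundle of the embedding $\widehat{\mathcal{Z}_P}\hookrightarrow\mathbb{C}^m$.

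\textbf{The transversality computation.} Writing $z_j=x_j+iy_j$, one gets $\nabla\Phi_k(\boldsymbol{z})=2(c_{k,1}x_1,c_{k,1}y_1,\dots,c_{k,m}x_m,c_{k,m}y_m)$. Suppose a relation $\sum_{k=1}^{m-n}\lambda_k\,d\Phi_k(\boldsymbol{z})=0$ holds at some $\boldsymbol{z}\in\widehat{\mathcal{Z}_P}$. Componentwise this says that for every index $j$ with $z_j\neq 0$ we have $\sum_k\lambda_k c_{k,j}=0$. Set $I=I(\boldsymbol{z})=\{\,j:z_j=0\,\}$ and $\boldsymbol{v}=\boldsymbol{\lambda}^{T}C\in\mathbb R^m$; then $v_j=0$ for $j\notin I$, i.e.\ $\boldsymbol{v}$ is supported on $I$.

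\textbf{Using simplicity of $P$.} Since $\rho_P(\boldsymbol{z})=j_P(\boldsymbol{x})$ for some $\boldsymbol{x}\in P$, and $|z_j|^2=\boldsymbol{a}_j\boldsymbol{x}+b_j$, the set $I$ is exactly the set of facets containing $\boldsymbol{x}$; equivalently, $\bigcap_{j\in I}F_j$ is the minimal face $G(\boldsymbol{x})$ of $P$ containing $\boldsymbol{x}$. Because $P$ is simple, $|I|\leqslant n$ and the vectors $\{\boldsymbol{a}_j:j\in I\}$ are linearly independent. On the other hand, the Gale duality relation $CA=0$ gives $\boldsymbol{v}\cdot A=\boldsymbol{\lambda}^{T}(CA)=0$, so $\sum_{j\in I}v_j\boldsymbol{a}_j=0$. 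Linear independence of the $\boldsymbol{a}_j$, $j\in I$, forces $v_j=0$ for all $j\in I$ as well, hence $\boldsymbol{v}=\boldsymbol{\lambda}^{T}C=0$. Since $\operatorname{rank}C=m-n$, we conclude $\boldsymbol{\lambda}=0$, which proves the desired linear independence.

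\textbf{Conclusion and framing.} By the regular value theorem, $\widehat{\mathcal{Z}_P}=\Phi^{-1}(0)$ is a smooth $\mathbb T^m$-invariant submanifold of $\mathbb{C}^m$ of real dimension $2m-(m-n)=m+n$, realised as a complete intersection of the $m-n$ real quadrics $\mathcal{F}_k=\{\Phi_k=0\}$, proving~(1). Because the normal bundle $\nu$ of $\widehat{\mathcal{Z}_P}\subset\mathbb{C}^m$ at a point $\boldsymbol{z}$ is spanned by $\nabla\Phi_1(\boldsymbol{z}),\dots,\nabla\Phi_{m-n}(\boldsymbol{z})$, the globally defined, everywhere linearly independent sections $\nabla\Phi_k$ of $T\mathbb{C}^m\!\!\restriction_{\widehat{\mathcal{Z}_P}}$ yield a canonical trivialisation $\nu\simeq\widehat{\mathcal{Z}_P}\times\mathbb R^{m-n}$, which is the canonical framing asserted in~(2). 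The only nontrivial ingredient is the simplicity of $P$, used in the previous paragraph to pass from $\boldsymbol{\lambda}^{T}CA=0$ to $\boldsymbol{\lambda}=0$; this is exactly where the hypothesis that $P$ is simple enters in an essential way.
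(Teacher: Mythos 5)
Your proof is correct and follows the same route as the paper: realise $\widehat{\mathcal{Z}_P}=\Phi^{-1}(0)$, check that $d\Phi$ is an epimorphism at every point of the zero set, and read off both (1) and (2) from the regular value theorem and the gradient frame. The paper actually leaves the key surjectivity computation as an exercise; you have filled that in correctly, with the Gale duality $CA=0$ together with the linear independence of the facet normals at any face of the simple polytope $P$ being precisely the ingredients needed to conclude $\boldsymbol{\lambda}=0$.
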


\begin{proof}
We have $\widehat{\mathcal{Z}_P} = \Phi^{-1}(0)$, where $\Phi \colon \mathbb{R}^{2m} \cong \mathbb{C}^m \to \mathbb{R}^{m-n}$.
Next step is an exercise.

\noindent {\bf Exercise:} Differential
$d\Phi\large|_y \colon \mathbb{R}^{2m} \to \mathbb{R}^{m-n}$
is an epimorphism for any point of $y\in \Phi^{-1}(0)$.
\end{proof}
\begin{corollary} For an appropriate choice of $C$
$$
\widehat{\mathcal{Z}_P}=\bigcap_{k=1}^{m-n}\mathcal{F}_k
$$
where any surface $\mathcal{F}_k \subset \mathbb{R}^{2m}$ is a $(2m-1)$-dimensional smooth $\mathbb{T}^m$-manifold.
\end{corollary}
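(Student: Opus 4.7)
The identity $\widehat{\mathcal{Z}_P}=\bigcap_{k=1}^{m-n}\mathcal{F}_k$ is immediate from the preceding proposition, so the real content is twofold: each individual hypersurface $\mathcal{F}_k$ must be smooth of dimension $2m-1$, and it must be $\mathbb T^m$-invariant. The $\mathbb T^m$-invariance is a one-line observation: since $\Phi_k(\boldsymbol{z})$ depends only on the moduli $|z_j|^2$, it is constant on the orbits of the diagonal $\mathbb T^m$-action on $\mathbb C^m$, so the zero set $\mathcal F_k$ is $\mathbb T^m$-stable. The same remark shows that the diagonal torus acts on $\widehat{\mathcal Z_P}$.

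Thus the key step is smoothness, which by the implicit function theorem reduces to showing that $d\Phi_k|_{\boldsymbol z}\neq 0$ for every $\boldsymbol z\in \mathcal F_k$. Writing $z_j=x_j+iy_j$, the real gradient of $\Phi_k$ equals $2(c_{k,1}x_1,c_{k,1}y_1,\dots,c_{k,m}x_m,c_{k,m}y_m)$, which vanishes precisely at points $\boldsymbol z$ satisfying $z_j=0$ whenever $c_{k,j}\neq 0$. At any such critical point one has $\Phi_k(\boldsymbol z)=-c_k$, so the gradient can vanish on $\mathcal F_k$ if and only if $c_k=0$. The plan is therefore to choose the basis $\boldsymbol{c}_1,\dots,\boldsymbol{c}_{m-n}$ of the left null space $V=\ker A^T\subset\mathbb R^m$ so that
$$c_k=\langle \boldsymbol c_k,\boldsymbol b\rangle=c_{k,1}b_1+\dots+c_{k,m}b_m\neq 0\quad\text{for every }k.$$

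The main obstacle (though mild) is to verify that such a basis exists. The linear functional $\ell(\boldsymbol c)=\langle \boldsymbol c,\boldsymbol b\rangle$ on $V$ is not identically zero: the subspace $V^\perp\subset\mathbb R^m$ equals $\operatorname{Im}A$, so $\ell\equiv 0$ on $V$ would force $\boldsymbol b=A\boldsymbol x_0$ for some $\boldsymbol x_0\in\mathbb R^n$, i.e.\ the point $-\boldsymbol x_0$ would lie in $\mathcal H_1\cap\dots\cap\mathcal H_m$; but a simple $n$-polytope has no vertex incident to more than $n$ facets, so this intersection is empty. Hence $\ker\ell$ is a proper hyperplane of $V$, and any basis of $V$ can be perturbed (replace each offending $\boldsymbol c_k$ by $\boldsymbol c_k+\boldsymbol c_{k'}$ for some $\boldsymbol c_{k'}\notin\ker\ell$) to avoid $\ker\ell$ entirely. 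This furnishes the desired $C$.

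With this choice every $\mathcal F_k$ is a regular level set of a submersive smooth map $\Phi_k:\mathbb R^{2m}\to\mathbb R$, hence a smooth submanifold of codimension $1$, i.e.\ of dimension $2m-1$; combined with the $\mathbb T^m$-invariance this completes the proof.
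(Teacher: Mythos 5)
Your proof is correct and follows essentially the same route as the paper: make all entries of $Cb$ nonzero by a change of basis in $\ker A^T$, using the fact that $\boldsymbol b\notin\operatorname{Im}A$ (equivalently $0\notin\widehat{\mathcal Z_P}$, which is the paper's Proposition on $0\notin\widehat{\mathcal Z_P}$, reproved here via simplicity of $P$). The only differences are cosmetic — you write out the gradient computation showing $c_k\neq0$ is exactly the smoothness criterion, which the paper delegates to an exercise, and your basis perturbation $\boldsymbol c_k\mapsto\boldsymbol c_k+\boldsymbol c_{k'}$ is precisely the paper's ``elementary transformations of rows.''
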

\begin{proof}
We just need to find such $C$ that the vector $Cb$ has all coordinates nonzero. For any $C$ above $Cb$ has a nonzero coordinate since $0\notin \widehat{\mathcal{Z}_P}$ by Proposition \ref{0zp}. Then we can obtain from it the matrix we need by elementary transformations of rows.
\end{proof}
\noindent {\bf{Exercise:}} Describe the orbit space $\mathcal{F}_k/\mathbb{T}^m$.

{\noindent}{\bf Construction (canonical section):} The projection $\rho$ has the canonical section 
$$
s\colon\mathbb R^m_{\geqslant}\to \mathbb{C}^m,\qquad s(x_1,\dots,x_m)=(\sqrt{x_1},\dots,\sqrt{x_m}),
$$
which gives a {\em canonical section}\index{moment-angle manifold!canonical section} $\widehat{s_P}\colon P\to \widehat{\mathcal{Z}_P}$ by the formula  $\widehat{s_P}=s\circ j_P$.

\begin{theorem}\label{thBPR}({\bf Smooth structure on the moment-angle complex}, \index{moment-angle complex!smooth structure}\cite{BPR07})
The section $\widehat{s_P}\colon P\to \widehat{\mathcal{Z}_P}$ induces the $\mathbb T^m$-equivariant homeomorphism 
$$
\widetilde{\mathcal{Z}_P} \longrightarrow\widehat{\mathcal{Z}_P}
$$
defined by the formula $(\boldsymbol{x},\boldsymbol{t})\to \boldsymbol{t}\widehat{s_P}(\boldsymbol{x})$.

Together with the $\mathbb T^m$-equivariant homeomorphism $\widetilde{\mathcal{Z}_P}\to\mathcal{Z}_P$ this gives a smooth structure on the moment-angle complex $\mathcal{Z}_P$. 
\end{theorem}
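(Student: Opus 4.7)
The plan is to show that the formula $\phi(\boldsymbol{x},\boldsymbol{t}) = \boldsymbol{t}\widehat{s_P}(\boldsymbol{x})$ defines a continuous map $\phi\colon P\times \mathbb T^m \to \mathbb C^m$ whose image lies in $\widehat{\mathcal{Z}_P}$ (since $\widehat{s_P}(P)\subset \widehat{\mathcal{Z}_P}$ and the latter is $\mathbb T^m$-invariant), that $\phi$ descends to a continuous $\mathbb T^m$-equivariant bijection $\bar\phi\colon \widetilde{\mathcal{Z}_P} \to \widehat{\mathcal{Z}_P}$, and then upgrade the bijection to a homeomorphism using that $\widetilde{\mathcal{Z}_P}$ is compact (as a quotient of the compact space $P\times \mathbb T^m$) while $\widehat{\mathcal{Z}_P}\subset\mathbb C^m$ is Hausdorff. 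Composing the resulting homeomorphism with the inverse of the Davis--Januszkiewicz homeomorphism $\widetilde{\mathcal{Z}_P}\to\mathcal{Z}_P$ of Theorem~\ref{3mameq} will then transport the smooth structure on $\widehat{\mathcal{Z}_P}$ guaranteed by Proposition~\ref{ZPeq} onto $\mathcal{Z}_P$, and $\mathbb T^m$-equivariance of both homeomorphisms will ensure that the induced action of $\mathbb T^m$ on $\mathcal{Z}_P$ is smooth.

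The technical heart of the argument is a stabilizer computation. The $j$-th coordinate of $\widehat{s_P}(\boldsymbol{x})$ equals $\sqrt{\boldsymbol{a}_j\boldsymbol{x}+b_j}$, which vanishes precisely when $\boldsymbol{x}\in F_j$, i.e.\ when $F_j\supset G(\boldsymbol{x})$. Therefore an element $\boldsymbol{t}\in\mathbb T^m$ fixes $\widehat{s_P}(\boldsymbol{x})$ if and only if $t_j=1$ for every $F_j\not\supset G(\boldsymbol{x})$, which is exactly the defining condition of $\mathbb T^{G(\boldsymbol{x})}$. This single observation yields both well-definedness of $\bar\phi$ on the quotient (compatibility with the relation $\sim$) and its injectivity: if $\boldsymbol{t}_1\widehat{s_P}(\boldsymbol{x}_1)=\boldsymbol{t}_2\widehat{s_P}(\boldsymbol{x}_2)$, applying $\rho$ (which is $\mathbb T^m$-invariant) forces $j_P(\boldsymbol{x}_1)=j_P(\boldsymbol{x}_2)$ and hence $\boldsymbol{x}_1=\boldsymbol{x}_2=\boldsymbol{x}$, after which $\boldsymbol{t}_1\boldsymbol{t}_2^{-1}$ lies in the stabilizer $\mathbb T^{G(\boldsymbol{x})}$, matching the equivalence relation defining $\widetilde{\mathcal{Z}_P}$.

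For surjectivity, given $\boldsymbol{z}\in \widehat{\mathcal{Z}_P}$, set $\boldsymbol{x}$ to be the unique preimage of $\rho(\boldsymbol{z})$ under $j_P$; then $|z_j|^2=\boldsymbol{a}_j\boldsymbol{x}+b_j=\widehat{s_P}(\boldsymbol{x})_j^{\,2}$, so defining $t_j=z_j/\widehat{s_P}(\boldsymbol{x})_j$ when the denominator is nonzero and $t_j=1$ otherwise gives $\boldsymbol{z}=\boldsymbol{t}\,\widehat{s_P}(\boldsymbol{x})$. Equivariance is built into the formula since the $\mathbb T^m$-action on $\widetilde{\mathcal{Z}_P}$ is on the second factor and commutes with the coordinate multiplication on $\mathbb C^m$, and continuity of $\phi$ is immediate from continuity of $\widehat{s_P}$ and coordinate-wise multiplication. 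The main obstacle is the stabilizer identification of the preceding paragraph, which is the only place where the combinatorics of $P$ genuinely enters; once that is in place, well-definedness, injectivity and the compact-to-Hausdorff upgrade to a homeomorphism are routine, and the transfer of the smooth structure via the chain $\mathcal{Z}_P \xleftarrow{\sim} \widetilde{\mathcal{Z}_P} \xrightarrow{\sim} \widehat{\mathcal{Z}_P}$ is a formal consequence of Proposition~\ref{ZPeq} and Theorem~\ref{3mameq}.
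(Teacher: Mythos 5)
The paper states this result with a reference to \cite{BPR07} and gives no proof of its own, so there is no in-text argument to compare against. Your argument is correct and is the expected one: the stabilizer identification $\mathrm{Stab}_{\mathbb T^m}(\widehat{s_P}(\boldsymbol{x}))=\mathbb T^{G(\boldsymbol{x})}$ is the crux (it rests on the observation that the $j$-th coordinate $\sqrt{\boldsymbol{a}_j\boldsymbol{x}+b_j}$ of $\widehat{s_P}(\boldsymbol{x})$ vanishes exactly for $F_j\supset G(\boldsymbol{x})$), and it delivers both well-definedness on the quotient and injectivity; surjectivity via $\boldsymbol{x}=j_P^{-1}(\rho(\boldsymbol{z}))$, the compact-to-Hausdorff upgrade to a homeomorphism, and the transfer of the smooth structure through the chain $\mathcal{Z}_P\xleftarrow{\sim}\widetilde{\mathcal{Z}_P}\xrightarrow{\sim}\widehat{\mathcal{Z}_P}$ using Proposition~\ref{ZPeq} and Theorem~\ref{3mameq} complete the proof as claimed.
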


Thus in what follows we identify $\widehat{\mathcal{Z}_P}$ and $\mathcal{Z}_P$.

\noindent {\bf{Exercise:}} Describe the manifold $\mathcal{Z}_P$ for
$P = \{ \boldsymbol{x}\in \mathbb{R}^{2} : A\boldsymbol{x}+b\geqslant 0 \}$, where
\begin{align*}
1.\quad A^\top &=
\begin{pmatrix}
1&0&-1&1\\
0&1&0&-1
\end{pmatrix},
&b^\top &= (0,0,1,1)\\[10pt]
2.\quad A^\top &=
\begin{pmatrix}
1&0&-1&1&-1\\
0&1&0&-1&-1
\end{pmatrix},
&b^\top &= (0,0,1,1,2)
\end{align*}
\noindent {\bf{Exercise:}} 
Let $G\subset P$ be a face of codimension~$k$ in a simple
$n$-polytope~$P$, let $\mathcal{Z}_P$ be the corresponding moment-angle
manifold with the quotient projection $p\colon\mathcal{Z}_P\to P$. Show that
$p^{-1}(G)$ is a smooth submanifold of $\mathcal{Z}_P$ of codimension $2k$.
Furthermore, $p^{-1}(G)$ is diffeomorphic to $\mathcal Z_G\times
T^\ell$, where $\mathcal Z_G$ is the moment-angle manifold
corresponding to~$G$ and $\ell$ is the number of facets of $P$ not
intersecting~$G$.

\subsection{Mappings of the moment-angle manifold into spheres}\label{SSMS}
For any set $\omega=\{j_1,\dots, j_k\}\subset \{1,\dots,m\}$ define \index{moment-angle manifold!mappings into spheres}
\begin{gather*}
\mathbb C^{m-k}_\omega=\{(z_1,\dots,z_m)\in \mathbb C^m\colon z_j=0, j\in \omega\};\\
S^{2m-2k-1}_\omega=\{(z_1,\dots,z_m)\in \mathbb C^m\colon z_j=0, j\in \omega,\sum_{j\notin\omega}|z_j|^2=1\};\\
\mathbb R^{m-k}_\omega=\{(y_1,\dots,y_m)\in \mathbb R^m\colon y_j=0, j\in \omega\}.\\
\end{gather*}
\noindent{\bf Exercise:} For $k\geqslant 1$ the sphere $S^{2k-1}_{[m]\setminus \omega}$ is a deformation retract of\linebreak $S^{2m-1}\setminus S^{2m-2k-1}_ \omega$.
\begin{proposition}
\begin{enumerate}
\item The embedding $\mathcal{Z}_P\subset\mathbb C^m$ induces the embedding $\mathcal{Z}_P\subset S^{2m-1}$ via projection $\mathbb C^m\setminus\{0\}\to S^{2m-1}$.
\item For any set $\omega$, $|\omega|=k$, such that $\bigcap \limits_{j\in \omega}F_j=\varnothing$ the image of the embedding $\mathcal{Z}_P\subset S^{2m-1}$ lies in $S^{2m-1}\setminus S^{2m-2k-1}_{\omega}$; hence the embedding is homotopic to the mapping $\varphi_\omega\colon\mathcal{Z}_P\to S^{2k-1}_{[m]\setminus \omega}$, induced by the projection $\mathbb C^m\to\mathbb C^k_{[m]\setminus \omega}$.
\end{enumerate}
\end{proposition}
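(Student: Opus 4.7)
The plan is to treat both parts through the radial projection $\pi\colon\mathbb{C}^m\setminus\{0\}\to S^{2m-1}$, $\pi(z)=z/|z|$, which is defined on $\mathcal{Z}_P$ by the preceding Proposition \ref{0zp}. Since $\mathcal{Z}_P$ is compact and $S^{2m-1}$ Hausdorff, to prove part (1) it suffices to show $\pi|_{\mathcal{Z}_P}$ is injective. If $\pi(z)=\pi(z')$, then $z'=\mu z$ with $\mu=|z'|/|z|>0$, and the commutative square relating $\rho$ and $j_P$ gives $j_P(\boldsymbol{x}')=\mu^2 j_P(\boldsymbol{x})$, where $\boldsymbol{x}=\rho_P(z)$ and $\boldsymbol{x}'=\rho_P(z')$; equivalently, $A(\boldsymbol{x}'-\mu^2\boldsymbol{x})=(\mu^2-1)\boldsymbol{b}$.

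The crux is the auxiliary lemma $\boldsymbol{b}\notin\mathrm{col}(A)$. Otherwise $\boldsymbol{b}=A\boldsymbol{x}_0$ would give $j_P(\boldsymbol{x})=A(\boldsymbol{x}+\boldsymbol{x}_0)$, so $P=-\boldsymbol{x}_0+\{\boldsymbol{y}\colon A\boldsymbol{y}\geqslant 0\}$ would be a translate of its recession cone; boundedness of $P$ collapses this cone to $\{0\}$, making $P$ a single point, contrary to $\dim P\geqslant 1$. Hence $(\mu^2-1)\boldsymbol{b}\in\mathrm{col}(A)$ forces $\mu=1$, so $z=z'$, which completes part~(1).

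For part (2), a point $\pi(z)\in S^{2m-2k-1}_\omega$ would have $z_j=0$ for all $j\in\omega$, and then $\boldsymbol{a}_j\boldsymbol{x}+b_j=|z_j|^2=0$ for $j\in\omega$ forces $\boldsymbol{x}\in\bigcap_{j\in\omega}F_j=\varnothing$, a contradiction; so the embedding factors through $S^{2m-1}\setminus S^{2m-2k-1}_\omega$. By the preceding exercise, $S^{2k-1}_{[m]\setminus\omega}$ is a deformation retract of $S^{2m-1}\setminus S^{2m-2k-1}_\omega$, with an explicit retraction given by $w\mapsto w'/|w'|$ where $w'_j=w_j$ for $j\in\omega$ and $w'_j=0$ otherwise. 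Composing $\pi|_{\mathcal{Z}_P}$ with this retraction yields exactly the renormalized projection onto the $\omega$-coordinates, i.e.\ the map $\varphi_\omega$ induced by $\mathbb{C}^m\to\mathbb{C}^k_{[m]\setminus\omega}$; hence $\pi|_{\mathcal{Z}_P}\simeq\varphi_\omega$ as maps into $S^{2m-1}\setminus S^{2m-2k-1}_\omega$.

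The main obstacle is the algebraic lemma $\boldsymbol{b}\notin\mathrm{col}(A)$ underlying injectivity in part~(1): a priori, distinct points of $\mathcal{Z}_P$ could lie on a common ray through the origin, and only the recession-cone argument (which uses boundedness of $P$ essentially) excludes this. Once this input is secured, the containment in (2) and its homotopy refinement are routine consequences of the explicit deformation retraction.
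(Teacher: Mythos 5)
Your proof is correct and follows the same overall route as the paper: radial projection defined on $\mathcal{Z}_P$ because $0\notin\mathcal{Z}_P$ (Proposition~\ref{0zp}), the nonface condition $\bigcap_{j\in\omega}F_j=\varnothing$ to avoid the forbidden subsphere, and the deformation retraction from the preceding exercise. The genuine added value is that you make the injectivity of the radial map explicit, whereas the paper's one-line proof of part (1) simply cites Proposition~\ref{0zp}, which only establishes that the radial projection is \emph{defined} on $\mathcal{Z}_P$, not that it is injective. Your lemma $\boldsymbol{b}\notin\mathrm{col}(A)$ is exactly the needed ingredient, and it is in fact equivalent to the statement $0\notin j_P(P)$ that Proposition~\ref{0zp} already proves (via the Gale-dual condition $C\boldsymbol{b}\ne 0$: since $j_P(P)=\{y\in\mathbb{R}^m_{\geqslant 0}: Cy=C\boldsymbol{b}\}$, one has $0\in j_P(P)$ iff $C\boldsymbol{b}=0$ iff $\boldsymbol{b}\in\ker C=\mathrm{col}(A)$). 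Your proof of the lemma via the recession cone and boundedness of $P$ is a mild variant of the paper's argument (which instead counts facets through a point of a simple polytope), but both hinge on the same structural fact. One could shorten your injectivity step by appealing directly to $C\boldsymbol{b}\neq 0$: from $j_P(\boldsymbol{x}')=\mu^2 j_P(\boldsymbol{x})$, applying $C$ gives $C\boldsymbol{b}=\mu^2 C\boldsymbol{b}$, forcing $\mu=1$. Part (2) is handled exactly as in the paper.
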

\begin{proof}
(1) follows from Proposition \ref{0zp}.

(2) follows from the fact that if   $\bigcap \limits_{j\in \omega}F_j=\varnothing$, then there is no $\boldsymbol{x}\in P$ such that $\boldsymbol{a}_j\boldsymbol{x}+b_j=0$ for all $j\in \omega$.
\end{proof}

We have the commutative diagram 
$$
\begin{CD}
\mathcal{Z}_P @ >>> \mathbb{C}^m\setminus \mathbb C^{m-k}_\omega@>\xi_\omega>>S^{2k-1}_{[m]\setminus \omega}\subset \mathbb C^{k}_{[m]\setminus \omega}\\
@ VVV @ VV{\rho}V@VVV\\
P @>{A\boldsymbol{x}+b}>> \mathbb{R}_{\geqslant}^m\setminus\mathbb R^{m-k}_\omega@>\pi_\omega>>{}\Delta^{k-1}\subset\mathbb R^k_{\geqslant}
\end{CD} 
$$
where
\begin{align*}
\xi_\omega(z_1,\dots,z_m)=\frac{\boldsymbol{z}_\omega}{|\boldsymbol{z}_{\omega}|}, \quad\boldsymbol{z}_\omega=(z_{j_1},\dots,z_{j_k}),\quad |\boldsymbol{z}_\omega|=\sqrt{|z_{j_1}|^2+\dots+|z_{j_k}|^2}.\\
\pi_{\omega}(y_1,\dots,y_m)=\frac{\boldsymbol{y}_{\omega}}{d_{\omega}},\quad \boldsymbol{y}_{\omega}=(y_{j_1},\dots,y_{j_k}),  \quad d_{\omega}=|y_{j_1}|+\dots+|y_{j_k}|. 
\end{align*}
\begin{example} 
For any pair of facets $F_i, F_j$, such that $F_i\cap F_j=\varnothing$, there is a mapping 
$\mathcal{Z}_P\to S^3_{[m]\setminus \{i,j\}}$.
\end{example}
\begin{definition}
The class $a\in H^k(X,\mathbb Z)$ is called {\em cospherical}\index{cospherical class} if there is a mapping $\varphi\colon X\to S^k$ such that $\varphi^*\left(\left[S^k\right]\right)=a$. 
\end{definition}
\begin{corollary}\label{cosph} 
For each $\omega\subset[m]$, $|\omega|=k$, such that $\bigcap\limits_{i\in \omega}F_i=\varnothing$ we have the cospherical class $\varphi_\omega^*\left(\left[S^{2k-1}_{[m]\setminus\omega}\right]\right)$ in $H^{2k-1}(\mathcal{Z}_P)$. 
\end{corollary}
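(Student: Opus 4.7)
The statement is essentially a formal consequence of the preceding proposition together with the definition of a cospherical class, so my plan is to unpack both and verify the degrees match.

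First I would clarify the object $S^{2k-1}_{[m]\setminus\omega}$. By its defining formula, it is the unit sphere in the coordinate subspace $\mathbb{C}^k_{[m]\setminus\omega}\subset \mathbb{C}^m$ consisting of those $\boldsymbol z$ whose only possibly nonzero entries are indexed by $\omega$. Since $|\omega|=k$, this subspace has real dimension $2k$, so $S^{2k-1}_{[m]\setminus\omega}$ is genuinely a topological $(2k-1)$-sphere, and its top integral cohomology is the free abelian group generated by the fundamental class $[S^{2k-1}_{[m]\setminus\omega}]\in H^{2k-1}(S^{2k-1}_{[m]\setminus\omega};\mathbb Z)$.

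Next I would invoke part (2) of the proposition immediately above, which under exactly the hypothesis $\bigcap_{i\in\omega}F_i=\varnothing$ produces a continuous map $\varphi_\omega\colon \mathcal Z_P\to S^{2k-1}_{[m]\setminus\omega}$ (induced by the coordinate projection $\mathbb C^m\to \mathbb C^k_{[m]\setminus\omega}$ followed by normalization). By functoriality of singular cohomology, $\varphi_\omega$ induces a homomorphism $\varphi_\omega^*\colon H^{2k-1}(S^{2k-1}_{[m]\setminus\omega};\mathbb Z)\to H^{2k-1}(\mathcal Z_P;\mathbb Z)$, so $\varphi_\omega^*\bigl([S^{2k-1}_{[m]\setminus\omega}]\bigr)$ is a well-defined element of $H^{2k-1}(\mathcal Z_P;\mathbb Z)$ of the correct degree.

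Finally, matching this with the definition of a cospherical class (taking $\varphi=\varphi_\omega$ and $k$ replaced by $2k-1$), the class $\varphi_\omega^*\bigl([S^{2k-1}_{[m]\setminus\omega}]\bigr)$ is cospherical by definition. There is no genuine obstacle in the argument; the entire substance of the corollary lives in the existence of $\varphi_\omega$, which was already established. The only thing one should check carefully is the bookkeeping of indices, namely that $|\omega|=k$ forces the target sphere to have dimension $2k-1$, so that the resulting cohomology class lands in $H^{2k-1}(\mathcal Z_P)$ as claimed.
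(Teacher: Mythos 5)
Your proof is correct and is essentially the same argument the paper has in mind; the corollary is stated without explicit proof precisely because it follows immediately from part (2) of the preceding proposition combined with the definition of a cospherical class, and you have unpacked exactly those two ingredients. The one piece of bookkeeping you flagged as worth checking — that $S^{2k-1}_{[m]\setminus\omega}$ is a $(2k-1)$-sphere when $|\omega|=k$ because $\mathbb{C}^k_{[m]\setminus\omega}$ is the coordinate subspace whose free coordinates are those indexed by $\omega$ — is indeed the only thing to verify, and you verified it correctly.
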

\subsection{Projective moment-angle manifold}
\noindent{\bf Construction (projective moment-angle manifold):} \index{projective moment-angle manifold}\index{moment-angle manifold!projective}Let $S_\Delta^1$ be the diagonal subgroup in $\mathbb{T}^m$. We have the {\em free action} of $S_\Delta^1$ on $\mathcal{Z}_P$ and therefore the {\em smooth} manifold
$$
\mathcal{P}\mathcal{Z}_P = \mathcal{Z}_P/S_\Delta^1
$$
is the projective version of the moment-angle manifold $\mathcal{Z}_P$.
\begin{definition}
For actions of the commutative group $G$ on spaces $X$ and $Y$ define:
$$
X\times_GY=X\times Y/\left\{gx,gy)\sim (x,y)\, \forall x\in X,y\in Y,g\in G\right\}. 
$$
\end{definition}
\begin{corollary}
For any simple polytope $P$ there exists the {\bf smooth} manifold
$$
W=\mathcal{Z}_{P}\times_{S_\Delta^1}D^2
$$
such that $\partial W =\mathcal{Z}_{P}$\index{moment-angle manifold!as a boundary}.
\end{corollary}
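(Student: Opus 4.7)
The plan is to exhibit $W$ as the total space of a $D^2$-bundle over the projective moment-angle manifold $\mathcal{P}\mathcal{Z}_P$, obtained by the associated bundle construction from the principal $S^1$-bundle $\mathcal{Z}_P\to\mathcal{P}\mathcal{Z}_P$. The boundary then arises automatically from $\partial D^2=S^1$.

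First I would verify that the diagonal circle $S^1_\Delta\subset\mathbb T^m$ acts freely and smoothly on $\mathcal{Z}_P$. Freeness was already invoked to form the smooth manifold $\mathcal{P}\mathcal{Z}_P$ in the preceding construction; concretely, a fixed point of $(t,\dots,t)\in\mathbb T^m$ on $\mathcal{Z}_P\subset\mathbb C^m\setminus\{0\}$ would force every nonzero coordinate $z_i$ to satisfy $tz_i=z_i$, hence $t=1$, using Proposition~\ref{0zp}. Smoothness follows from Theorem~\ref{thBPR}.

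Next I would consider the twisted $S^1_\Delta$-action on the product $\mathcal{Z}_P\times D^2$ given by $t\cdot(z,w)=(t\cdot z,\,t^{-1}w)$, where $t$ acts on $D^2$ by rotation. Because the first factor of this action is free, the action on $\mathcal{Z}_P\times D^2$ is free, regardless of the fact that rotation on $D^2$ fixes the origin. Since $\mathcal{Z}_P\times D^2$ is a smooth manifold with boundary $\mathcal{Z}_P\times S^1$, and a free smooth compact Lie group action on such a manifold admits slices compatible with the boundary, the quotient $W=\mathcal{Z}_P\times_{S^1_\Delta}D^2$ is a smooth manifold with boundary (this is the standard associated bundle construction realizing $W$ as a $D^2$-bundle over $\mathcal{P}\mathcal{Z}_P$).

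Finally, I would identify $\partial W$. The boundary of $W$ is the image of $\mathcal{Z}_P\times\partial D^2$ in the quotient, i.e.\ $\mathcal{Z}_P\times_{S^1_\Delta}S^1$. The map
\[
\mathcal{Z}_P\times S^1\longrightarrow \mathcal{Z}_P,\qquad (z,w)\longmapsto w\cdot z,
\]
is $S^1_\Delta$-invariant (for $g\in S^1_\Delta$, $(gz,g^{-1}w)\mapsto g^{-1}w\cdot gz=w\cdot z$), surjective (take $w=1$), and injective on orbits (if $w_1z_1=w_2z_2$, set $g=w_2^{-1}w_1$ to get $(z_1,w_1)\sim(gz_1,g^{-1}w_1)=(z_2,w_2)$). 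It is a smooth bijection between compact Hausdorff manifolds, hence a diffeomorphism, giving $\partial W\cong\mathcal{Z}_P$. The only subtle point in this plan, and the one I would double-check carefully, is that the twisted action really is smooth and free near $\partial D^2$ in a way that makes the quotient a manifold with boundary rather than with corners; this is precisely where freeness of the $S^1_\Delta$-action on $\mathcal{Z}_P$ is essential.
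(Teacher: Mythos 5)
Your proposal is correct and fleshes out exactly the associated-bundle argument the paper leaves implicit: $S^1_\Delta$ acts freely on $\mathcal{Z}_P$ because $\mathcal{Z}_P\subset\mathbb C^m\setminus\{0\}$ (Proposition \ref{0zp}), so $\mathcal{Z}_P\to\mathcal{P}\mathcal{Z}_P$ is a principal $S^1$-bundle, and $W$ is the associated $D^2$-bundle with $\partial W=\mathcal{Z}_P\times_{S^1_\Delta}S^1\cong\mathcal{Z}_P$.

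One small point worth flagging: you form the quotient using the twisted action $t\cdot(z,w)=(tz,t^{-1}w)$, while the paper's Definition of $X\times_G Y$ uses the diagonal action $t\cdot(z,w)=(tz,tw)$. For an abelian group these give diffeomorphic spaces (e.g.\ conjugate the $D^2$-coordinate, $(z,w)\mapsto(z,\bar w)$, intertwines the two actions), and the boundary identification only changes by a sign, from $(z,w)\mapsto w\cdot z$ in your convention to $(z,w)\mapsto w^{-1}\cdot z$ in the paper's. So the discrepancy is harmless, but if you want your computation to match the paper's definition verbatim you should either use the diagonal action throughout or note the diffeomorphism once. Otherwise the proof is complete; in particular your observations that freeness on the first factor makes the full action on $\mathcal{Z}_P\times D^2$ free, that $\mathcal{Z}_P\times D^2$ has no corners since $\mathcal{Z}_P$ is closed, and that the boundary map is a smooth bijection of compact manifolds and hence a diffeomorphism, are all exactly right.
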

We have the fibration $W \longrightarrow \mathcal{P}\mathcal{Z}_P$ with the fibre $D^2$.\\
\noindent{\bf Exercise:} $P=\Delta^n \Longleftrightarrow \mathcal{Z}_P=S^{2n+1}
    \Longrightarrow \mathcal{P}S^{2n+1}=\mathbb{C}P^n$.

The constructions of the subsection \ref{SSMS} respect the diagonal action of $S^1$;  hence we obtain the following results.  

For $k\geqslant 1$ the set $\mathbb CP^{k-1}_{[m]\setminus \omega}$ is a deformation retract of $\mathbb CP^{m-1}\setminus \mathbb CP^{m-k-1}_ \omega$.
\begin{proposition}
\begin{enumerate}
\item The embedding $\mathcal{Z}_P\subset\mathbb C^m$ induces the embedding $\mathcal{P}\mathcal{Z}_P\subset \mathbb CP^{m-1}$.
\item For any set $\omega$, $|\omega|=k$, such that $\bigcap \limits_{j\in \omega}F_j=\varnothing$ the image of the embedding $\mathcal{P}\mathcal{Z}_P\subset\mathbb{C}P^m$ lies in $\mathbb CP^{m-1}\setminus \mathbb CP^{m-k-1}_{\omega}$; hence the embedding is homotopic to the mapping $\mathcal{P}\mathcal{Z}_P\to\mathbb{C}P^{k-1}_{[m]\setminus \omega}$, induced by the projection $\mathbb C^m\to\mathbb C^k_{[m]\setminus \omega}$.
\end{enumerate}
\end{proposition}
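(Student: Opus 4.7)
The plan is to derive this projective proposition as a straightforward quotient of the analogous sphere-level proposition proved earlier in subsection \ref{SSMS}, using $S^1_\Delta$-equivariance throughout. The key structural observation is that every map appearing in the earlier diagram respects the diagonal circle action, so passing to the $S^1_\Delta$-quotient converts each space and each arrow into its projective counterpart.

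For part (1), I would first invoke Proposition \ref{0zp} to note that $\mathcal{Z}_P \subset \mathbb{C}^m\setminus\{0\}$, on which $S^1_\Delta$ acts freely. Hence the earlier embedding $\mathcal{Z}_P\hookrightarrow S^{2m-1}$ obtained from the radial projection $\mathbb{C}^m\setminus\{0\}\to S^{2m-1}$ is $S^1_\Delta$-equivariant between free $S^1_\Delta$-spaces. Taking quotients yields the embedding
\[
\mathcal{P}\mathcal{Z}_P = \mathcal{Z}_P/S^1_\Delta \;\hookrightarrow\; S^{2m-1}/S^1_\Delta = \mathbb{C}P^{m-1},
\]
which is exactly what is claimed. (The fact that a continuous equivariant injection between compact Hausdorff free $G$-spaces descends to an injective quotient map is standard.)

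For part (2), I would observe that the subsphere $S^{2m-2k-1}_\omega$ is $S^1_\Delta$-invariant, since it is defined by the $S^1$-invariant conditions $z_j=0$ for $j\in\omega$ and $\sum_{j\notin\omega}|z_j|^2=1$, and its $S^1_\Delta$-quotient is by definition $\mathbb{C}P^{m-k-1}_\omega$. By the earlier proposition, when $\bigcap_{j\in\omega}F_j=\varnothing$ the image of $\mathcal{Z}_P$ lies in $S^{2m-1}\setminus S^{2m-2k-1}_\omega$; this complement is $S^1_\Delta$-invariant, and its quotient is $\mathbb{C}P^{m-1}\setminus\mathbb{C}P^{m-k-1}_\omega$. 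Passing to quotients gives the desired inclusion $\mathcal{P}\mathcal{Z}_P\subset\mathbb{C}P^{m-1}\setminus\mathbb{C}P^{m-k-1}_\omega$.

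For the homotopy statement, I would use the exercise stated immediately before the proposition: $\mathbb{C}P^{k-1}_{[m]\setminus\omega}$ is a deformation retract of $\mathbb{C}P^{m-1}\setminus\mathbb{C}P^{m-k-1}_\omega$. Concretely, this retraction is obtained as the $S^1_\Delta$-quotient of the $S^1_\Delta$-equivariant radial retraction $\mathbb{C}^m\setminus\mathbb{C}^{m-k}_\omega\to \mathbb{C}^k_{[m]\setminus\omega}\setminus\{0\}$, $z\mapsto z_\omega/|z_\omega|$, so its restriction to $\mathcal{P}\mathcal{Z}_P$ coincides with the map induced by the coordinate projection $\mathbb{C}^m\to\mathbb{C}^k_{[m]\setminus\omega}$. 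Composing the embedding with the deformation retraction thus yields the asserted homotopy to $\mathcal{P}\mathcal{Z}_P\to\mathbb{C}P^{k-1}_{[m]\setminus\omega}$. The main thing to be careful about is verifying that each relevant map is genuinely $S^1_\Delta$-equivariant and that the action is free on the relevant subsets; once this bookkeeping is in place, the proof reduces cleanly to the already-established non-projective version.
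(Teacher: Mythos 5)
Your proposal is correct and follows exactly the route the paper intends: the paper's entire justification is the single sentence ``the constructions of the subsection respect the diagonal action of $S^1$; hence we obtain the following results,'' and you have simply written out what that equivariance bookkeeping means in detail (equivariance and injectivity of the radial projection map on $\mathcal{Z}_P$, $S^1_\Delta$-invariance of $S^{2m-2k-1}_\omega$ and of its complement, and the quotient of the equivariant deformation retraction). This is the intended argument, spelled out more explicitly.
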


\newpage
\section{Lecture 5. Cohomology of a moment-angle manifold} 
When we deal with homology and cohomology, if it is not specified, the notation $H^*(X)$ and $H_*(X)$ means that we consider integer coefficients. 
\subsection{Cellular structure}
Define a cellular structure on $D^2$ consisting of $3$ cells: 
$$
p=\{1\},\quad  U=S^1\setminus\{1\},\quad V=D^2\setminus S^1. 
$$
Set on $D^2$ the standard orientation, with $(1,0)$ and $(0,1)$ being the positively oriented basis, and on $S^1$ the counterclockwise orientation induced from $D^2$. Then in the chain complex $C_*(D^2)$ we have  
$$
dp=0,\quad dU=0,\quad dV=U.
$$
The coboundary operator $\partial\colon C^i(X)\to C^{i+1}(X)$ is defined by the rule $\langle \partial \varphi,a\rangle=\langle \varphi,da\rangle$. For a cell $E$ let us denote by $E^*$ the cochain such that $\langle E^*, E'\rangle=\delta(E,E')$ for any cell $E'$. Denote $p^*=1$. Then the coboundary operator in $C^*(D^2)$ has the form
$$
\partial 1=0,\quad\partial U^*=V^*,\quad\partial V^*=0. 
$$
By definition the {\em multigraded polydisk} $\mathbb{D}^{2m}$ has the canonical multigraded cellular structure\index{polydisk!cellular structure} , which is a product of cellular structures of disks, with cells corresponding to pairs of sets $\sigma,\omega$, $\sigma\subset\omega\subset[m]=\{1,2,\dots,m\}$.
$$
C_{\sigma,\omega}=\tau_1\times\dots\times \tau_m,\tau_j=\begin{cases}V_j,&j\in\sigma,\\
U_j,&j\in\omega\setminus\sigma,\\
p_j,&j\in[m]\setminus\omega
\end{cases}, \; mdeg\,  C_{\sigma,\omega}=(-i, 2 \omega),
$$
where $i=|\omega\setminus\sigma|$.
Then the cellular chain complex $C_*(\mathbb D^{2m})$ is the tensor product of $m$ chain complexes $C_*(D^2_i)$, $i=1,\dots,m$. The boundary operator $d$ of the chain complex respects the multigraded structure and can be considered as a multigraded operator of $mdeg\; d=(-1,0)$. It can be calculated on the elements of the tensor product by the the Leibnitz rule
$$
d (a\times b)=(d a)\times b+ (-1)^{\dim a}a\times (db). 
$$
For cochains the $\times$-operation  $C^i(X)\times C^j(Y)\to C^{i+j}(X\times Y)$ is defined by the rule $\langle\varphi\times\psi,a\times b\rangle=\langle \varphi, a\rangle\langle \psi, b\rangle$. Then 
$$
\langle\psi_1\times\dots\times\psi_m, a_1\times\dots\times a_m\rangle=\langle\psi_1,a_1\rangle\dots\langle\psi_m,a_m\rangle.
$$
The basis in $C^*(\mathbb{D}^{2m})$ is formed by the cochains $C^*_{\sigma,\omega}=\tau_1^*\times \dots\times \tau _m^*$, where $C_{\sigma,\omega}=\tau_1\times\dots\times \tau_m$. 

The coboubdary operator $\partial$ is also multigraded. It has multidegree $mdeg\;\partial=(1,0)$. It can be calculated on the elements of the  tensor algebra $C^*(\mathbb D^{2m})$ by the rule $\partial (\varphi\times \psi)=(\partial \varphi)\times \psi+ (-1)^{\dim \varphi}\varphi\times (\partial \psi)$.

\begin{proposition}
The moment-angle complex $\mathcal{Z}_P$ has the canonical structure of a multigraded subcomplex\index{moment-angle complex!cellular structure} in the multigraded cellular structure of $\mathbb{D}^{2m}$. The projection $\mathbb \pi^m\colon \mathcal{Z}_P\to\mathbb I_P$ is cellular. 
\end{proposition}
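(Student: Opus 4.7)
The plan is to verify both claims by a direct analysis of the product cell structure. First I would identify exactly which cells $C_{\sigma,\omega}=\tau_1\times\dots\times\tau_m$ of $\mathbb{D}^{2m}$ lie inside $\mathcal{Z}_P$. For a fixed face $F\in L(P)\setminus\{\varnothing\}$, the set $\mathcal{Z}_{P,F}=\prod_j X_j$ has factor $X_j=D^2_j$ when $F\subset F_j$ and $X_j=S^1_j$ otherwise. Since $V_j$ is the only one of the three factor cells $V_j,U_j,p_j$ not contained in $S^1_j$, the inclusion $C_{\sigma,\omega}\subset\mathcal{Z}_{P,F}$ holds iff $F\subset F_j$ for every $j\in\sigma$, equivalently iff $F\subset G_\sigma:=\bigcap_{j\in\sigma}F_j$. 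Taking the union over nonempty faces $F$, one concludes that $C_{\sigma,\omega}\subset\mathcal{Z}_P$ if and only if $G_\sigma\ne\varnothing$, with $G_\varnothing=P$. The essential combinatorial input is that in a simple polytope any nonempty intersection of facets is itself a face, so $G_\sigma$ itself always serves as a witness $F$ when it is nonempty.

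Next I would verify closedness under passing to closures in $\mathbb{D}^{2m}$. Since $\overline{V_j}=D^2_j\supset U_j\supset p_j$ and $\overline{U_j}=S^1_j\supset p_j$ in each factor, every cell $C_{\sigma',\omega'}\subset\overline{C_{\sigma,\omega}}$ satisfies $\sigma'\subset\sigma$ and $\omega'\subset\omega$. In particular $\sigma'\subset\sigma$ forces $G_{\sigma'}\supset G_\sigma\ne\varnothing$, so $C_{\sigma',\omega'}$ is again a cell of $\mathcal{Z}_P$. This realises $\mathcal{Z}_P$ as a CW-subcomplex of $\mathbb{D}^{2m}$ whose cells automatically inherit the multidegrees $mdeg\,C_{\sigma,\omega}=(-|\omega\setminus\sigma|,2\omega)$ from the ambient polydisk, yielding the first assertion.

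For cellularity of the projection it suffices to recall that the factor map $\pi\colon D^2\to\mathbb{I}$, $z\mapsto|z|^2$, sends $p\mapsto\{1\}$, $U\mapsto\{1\}$, and $V\mapsto[0,1)$, hence carries the $n$-skeleton of $D^2$ into the $n$-skeleton of $\mathbb{I}$ for every $n$ and is therefore cellular. Taking products gives cellularity of $\pi^m\colon\mathbb{D}^{2m}\to\mathbb{I}^m$ for the product structures, and the same analysis as in the first paragraph, carried out on the base side, identifies $\mathbb{I}_P=\bigcup_F\mathbb{I}_{P,F}$ as a cubical subcomplex of $\mathbb{I}^m$ indexed by the same data $G_\sigma\ne\varnothing$; restricting then yields cellularity of $\pi^m\colon\mathcal{Z}_P\to\mathbb{I}_P$. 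The main obstacle is nothing deeper than careful bookkeeping of the two indices $\sigma\subset\omega$ and a consistent use of the correspondence $\sigma\leftrightarrow G_\sigma$ provided by simplicity of $P$.
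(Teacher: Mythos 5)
Your argument is correct and supplies the verification that the paper leaves implicit after introducing the product cell structure on $\mathbb{D}^{2m}$: you correctly characterise the cells $C_{\sigma,\omega}$ lying in $\mathcal{Z}_P$ by the condition $G_\sigma\neq\varnothing$, check closure under passage to cell boundaries via $\sigma'\subset\sigma\Rightarrow G_{\sigma'}\supset G_\sigma\neq\varnothing$, and verify cellularity of $\pi$ factor by factor. Two small side remarks: on the base $\mathbb{I}^m$ the index set that determines whether a cell $\eta_1\times\cdots\times\eta_m$ lies in $\mathbb{I}_P$ is $\{j:\eta_j\neq\mathbf{1}_j\}$ (so $\mathbf{0}_j$ contributes as well as $J_j$), though the criterion $G_\rho\neq\varnothing$ applies just the same; and the fact that a nonempty intersection of facets is itself a nonempty face is true for arbitrary polytopes, so simplicity of $P$ is not actually needed for this particular proposition.
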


\begin{theorem}
There is a multigraded structure in the cohomology group:\index{moment-angle complex!multigraded structure in cohomology}
$$
H^n(\mathcal{Z}_P,\mathbb Z)\simeq\bigoplus\limits_{2|\omega|=n+i}H^{-i,2\omega}(\mathcal{Z}_P,\mathbb Z),
$$ 
where for $\omega=\{j_1,\dots,j_k\}$, we have  $|\omega|=k$. 
\end{theorem}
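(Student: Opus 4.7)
The plan is to transfer the multigrading from the cellular chain/cochain complex of $\mathbb{D}^{2m}$ down to $\mathcal{Z}_P$ and observe that both the boundary $d$ and the coboundary $\partial$ preserve the $\omega$-component of the multidegree. This gives a direct sum decomposition of the cochain complex, hence of cohomology.

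First I would make explicit which cells $C_{\sigma,\omega}=\tau_1\times\cdots\times\tau_m$ of $\mathbb{D}^{2m}$ lie in $\mathcal{Z}_P$. From the construction of $\mathcal{Z}_{P,F}$ one sees that $C_{\sigma,\omega}\subset\mathcal{Z}_P$ if and only if $\bigcap_{j\in\sigma}F_j\ne\varnothing$ in $P$ (with the convention $\sigma=\varnothing$ corresponding to the whole polytope). So the multigraded cellular subcomplex $C_*(\mathcal{Z}_P)\subset C_*(\mathbb{D}^{2m})$ is spanned by such $C_{\sigma,\omega}$ with $\sigma\subset\omega\subset[m]$ and $\sigma$ a face-index set. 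The dimension of $C_{\sigma,\omega}$ is $2|\sigma|+|\omega\setminus\sigma|=|\omega|+|\sigma|$, while its multidegree is $(-i,2\omega)$ with $i=|\omega\setminus\sigma|$, so $\dim C_{\sigma,\omega}=2|\omega|-i$.

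Next I would verify that both differentials respect the $\omega$-grading. Using the Leibniz rule together with $dp=0,\, dU=0,\, dV=U$, one gets
\[
d\,C_{\sigma,\omega}=\sum_{j\in\sigma}\pm\, C_{\sigma\setminus\{j\},\,\omega},
\]
so $d$ lowers $i$ by one and leaves $\omega$ unchanged; moreover each term on the right still has $\bigcap_{j\in\sigma\setminus\{j\}}F_j\supset\bigcap_{j\in\sigma}F_j\ne\varnothing$, so it lies in $C_*(\mathcal{Z}_P)$. Dually, from $\partial 1=0,\ \partial U^*=V^*,\ \partial V^*=0$,
\[
\partial\, C^*_{\sigma,\omega}=\sum_{j\in\omega\setminus\sigma}\pm\, C^*_{\sigma\cup\{j\},\,\omega},
\]
so $\partial$ raises $i$ by one, preserves $\omega$, and restricts to the subcomplex. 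Consequently the cochain complex decomposes as a direct sum of subcomplexes indexed by subsets of $[m]$:
\[
C^*(\mathcal{Z}_P;\mathbb{Z})\;=\;\bigoplus_{\omega\subset[m]}C^{*,2\omega}(\mathcal{Z}_P;\mathbb{Z}),
\]
and this decomposition passes to cohomology, giving $H^*(\mathcal{Z}_P;\mathbb{Z})=\bigoplus_{\omega}H^{*,2\omega}(\mathcal{Z}_P;\mathbb{Z})$.

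Finally I would identify the total cohomological degree. A cochain of bidegree $(-i,2\omega)$ is dual to a cell of bidegree $(-i,2\omega)$, hence of dimension $2|\omega|-i$. So the summand $H^{-i,2\omega}$ contributes to $H^n$ exactly when $n=2|\omega|-i$, i.e.\ $2|\omega|=n+i$, which is the desired formula. The only step that needs a little care is the routine check that the subcomplex condition $\bigcap_{j\in\sigma}F_j\ne\varnothing$ is preserved by both $d$ (obvious, since we only delete elements of $\sigma$) and $\partial$ (where one must note that if $\bigcap_{j\in\sigma}F_j\ne\varnothing$ and $C^*_{\sigma\cup\{j\},\omega}$ appears with nonzero coefficient in $\partial C^*_{\sigma,\omega}$, then the corresponding cell was already in $\mathcal{Z}_P$ by construction, which holds because $\sigma\cup\{j\}\subset\omega$ and the coboundary within the ambient $\mathbb{D}^{2m}$ matches the coboundary within $\mathcal{Z}_P$ on the support of our subcomplex). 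Hence the main substantive point is the bookkeeping that the $\omega$-coordinate is literally conserved by the differentials; everything else is dimension counting.
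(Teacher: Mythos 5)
Your proof is correct and follows essentially the same route as the paper: the paper's one-line proof says the multigrading in cohomology is ``induced by the multigraded cellular structure,'' and you have simply spelled out the details—identifying the cells $C_{\sigma,\omega}$ of $\mathcal{Z}_P$, verifying that both $d$ and $\partial$ preserve the $\omega$-coordinate, and matching up the total degree $n=2|\omega|-i$. The one place where your wording is slightly loose is the parenthetical at the end: $C^*(\mathcal{Z}_P)$ is a \emph{quotient} of $C^*(\mathbb{D}^{2m})$ (dual to the subcomplex inclusion on chains), so the subcomplex coboundary is obtained by dropping the terms $C^*_{\sigma\cup\{j\},\omega}$ with $\bigcap_{i\in\sigma\cup\{j\}}F_i=\varnothing$ rather than by ``matching'' the ambient coboundary on a support set; but this does not affect your conclusion, since the kernel of the quotient is itself $\omega$-graded, so the decomposition still passes through.
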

\begin{proof}
The multigraded structure in cohomology is induced by the multigraded cellular structure described above. 
\end{proof}
\begin{example}
Let  $P=\Delta^n$, then $\mathcal{Z}_P=S^{2n+1}$. In the case $n=1$ the simplex $\Delta^1$ is an interval  $I$, and we have the decomposition $\mathcal{Z}_I=S^3=S^1\times D^2\cup D^2\times S^1$. The space $\mathcal{Z}_I$ consists of $8$ cells
\begin{gather*}
p_1\times p_2\\
p_1\times U_2,\quad U_1\times p_2\\
p_1\times V_2,\quad U_1\times U_2,\quad V_1\times p_2,\\
U_1\times V_2,\quad V_1\times U_2
\end{gather*}
We have 
\begin{gather*}
H^*(S^3)=H^{0,2\varnothing}(S^3)\oplus H^{-1,2\{1,2\}}(S^3).
\end{gather*}
\end{example}

\subsection{Multiplication}

Now\index{multiplication in cohomology} following \cite{Bu-Pa15} we will describe the cohomology ring of a moment-angle complex in terms of the cellular structure defined above. This result is non-trivial, since  the problem to define the multiplication in cohomology in terms of cellular cochains in general case is unsolvable. The reason is that the diagonal mapping used in the definition of the cohomology product is not cellular, and a cellular approximation can not be made functorial with respect  to arbitrary cellular mappings. We construct a canonical cellular diagonal approximation $\widetilde\Delta\colon \mathcal{Z}_P\to\mathcal{Z}_P\times\mathcal{Z}_P$, which is functorial with respect to mappings induced by admissible mapping of sets of facets of polytopes.

Remind, that the product in the cohomology of a cell complex $X$ is defined as follows. Consider the composite mapping of cellular cochain complexes
\begin{equation}\label{mult}
\mathcal{C}^*(X)\otimes\mathcal{C}^*(X)\stackrel{\times}{\longrightarrow}\mathcal{C}^*(X\times X)\stackrel{\widetilde{\Delta}^*}{\longrightarrow}\mathcal{C}^*(X).
\end{equation}
Here the mapping $\times$ sends a cellular cochain $c_1\otimes c_2\in C^{q_1}(X)\otimes C^{q_2}(X)$ to the cochain $c_1\times c_2\in C^{q_1+q_2}(X\times X)$, whose value on a cell $e_1\times e_2\in C_*(X\times X)$ is
$\langle c_1, e_1\rangle\langle c_2,e_2\rangle$. 
The mapping $\widetilde{\Delta}^*$ is induced by a cellular mapping $\widetilde{\Delta}$ (a cellular {\em diagonal approximation}) homotopic to the diagonal $\Delta\colon X\to X\times X$. In cohomology, the mapping (\ref{mult}) induces a multiplication $H^*(X)\otimes H^*(X)\to H^*(X)$ which does not depend on the choice of a cellular approximation and is functorial. However, the mapping (\ref{mult}) itself is not functorial because there is no choice of a cellular approximation compatible with arbitrary cellular mappings. 

Define polar coordinated in $D^2$ by $z=\rho e^{i\varphi}$.
\begin{proposition}\label{DeltaApp}
\begin{enumerate}
\item The mapping $\Delta_t \colon \mathbb I\times D^2\to D^2\times D^2$: $\rho e^{i\varphi}\to$
$$
\to\begin{cases}\left((1-\rho)t+\rho e^{i(1+t)\varphi},(1-\rho)t+\rho e^{i(1-t)\varphi}\right),&\varphi\in[0,\pi],\\
\left((1-\rho)t+\rho e^{i(1-t)\varphi+2\pi i t},(1-\rho)t+\rho e^{i(1+t)\varphi-2\pi i t}\right),&\varphi\in[\pi,2\pi]\end{cases}
$$
defines the homotopy of mappings of pairs 
$(D^2, S^1)\to(D^2\times D^2, S^1\times S^1)$. 
\item The mapping $\Delta_0$ is the diagonal mapping 
$\Delta\colon D^2\to D^2\times D^2$.
\item The mapping $\Delta_1$ is
$$
\rho e^{i\varphi}\to\begin{cases}((1-\rho)+\rho e^{2i\varphi},1),&\varphi\in[0,\pi],\\
(1,(1-\rho)+\rho e^{2i\varphi}),&\varphi\in[\pi,2\pi]\end{cases}
$$
It is cellular and sends the pair $(D^2,S^1)$ to the pair of wedges\\
$(D^2\times 1\vee 1\times D^2,S^1\times 1\vee 1\times S^1)$ in the point $(1,1)$. Hence it is  a cellular approximation of $\Delta$.  
\item We have 
$$
(\Delta_1)_*p=p\times p, \,(\Delta_1)_*U=U\times p+p\times U,\,(\Delta_1)_*V=V\times p+p\times V;
$$ 
hence 
$$
(U^*)^2=\langle U^*\times U^*,(\Delta_1)_*V\rangle V^*=\langle U^*\times U^*,V\times p+p\times V\rangle V^*=0,
$$ and the multiplication of cochains in $\mathcal{C}^*(D^2)$ induced by $\Delta_1$ is trivial:\label{DA1}
$$
1\cdot X=X=X\cdot 1,\quad (U^*)^2=U^*V^*=V^*U^*=(V^*)^2=0.
$$
\end{enumerate}
\end{proposition}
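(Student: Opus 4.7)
\medskip

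The plan is to verify each of the four parts in turn, with most of the work being a direct calculation. The main conceptual point is the cellularity of $\Delta_1$ and the pushforward computation $(\Delta_1)_*V = V \times p + p \times V$; everything else is routine checking.

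First I would check that the formula defining $\Delta_t$ is well-posed. Continuity needs to be verified where the two branches meet ($\varphi = \pi$, and $\varphi = 0 \equiv 2\pi$) and at the origin ($\rho = 0$). At $\varphi = \pi$ the upper branch gives
$((1-\rho)t + \rho e^{i(1+t)\pi}, (1-\rho)t + \rho e^{i(1-t)\pi})$
while the lower branch at $\varphi = \pi$ gives $((1-\rho)t + \rho e^{i(1-t)\pi + 2\pi i t}, (1-\rho)t + \rho e^{i(1+t)\pi - 2\pi i t})$; since $e^{i(1-t)\pi + 2\pi i t} = e^{i(1+t)\pi}$ and $e^{i(1+t)\pi - 2\pi i t} = e^{i(1-t)\pi}$, the values agree. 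Similar checks at $\varphi = 0, 2\pi$ and at $\rho = 0$ are immediate. To see the image lies in $D^2 \times D^2$, each coordinate is a convex combination $(1-\rho)t \cdot 1 + \rho \cdot w$ with $|w| = 1$ and $t, \rho \in [0,1]$, so its modulus is at most $(1-\rho)t + \rho \leqslant 1$. When $\rho = 1$ both coordinates have modulus $1$, so $S^1$ is sent into $S^1 \times S^1$, establishing (1).

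For (2), setting $t = 0$ collapses both branches to $(\rho e^{i\varphi}, \rho e^{i\varphi})$, i.e.\ the diagonal. For (3), setting $t = 1$ and using $e^{i \cdot 0 \cdot \varphi} = 1$, $e^{\pm 2\pi i} = 1$ yields exactly the claimed formula. To verify $\Delta_1$ is cellular, I would trace the images of the three cells: $\Delta_1(1) = (1,1) = p \times p$; on $U = \{\rho = 1,\ \varphi \in (0, 2\pi)\}$ the upper half sends $e^{i\varphi} \mapsto (e^{2i\varphi}, 1)$, tracing $U \times p$ once, while the lower half traces $p \times U$ once; on $V$ (interior) the upper half maps into $D^2 \times \{1\} \setminus \{(1,1)\}$ and the lower half into $\{1\} \times D^2 \setminus \{(1,1)\}$. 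Thus $\Delta_1$ takes values in the wedge $D^2 \times \{1\} \cup \{1\} \times D^2$, proving cellularity.

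For the pushforward $(\Delta_1)_*V$ in (4), the computation on $p$ and $U$ is immediate from the description above. For $V$, I would argue as follows: the restriction of $\Delta_1$ to the upper half-disk $\{\varphi \in [0,\pi]\}$ collapses the diameter $\{\varphi = 0\} \cup \{\varphi = \pi\}$ to the single point $(1,1)$, while the boundary semicircle wraps once around $S^1 \times \{1\}$ (as $2\varphi$ sweeps $[0, 2\pi]$ while $\varphi$ sweeps $[0,\pi]$). Hence the induced map from this half-disk to $D^2 \times \{1\}$ has degree $1$, contributing $V \times p$; symmetrically the lower half contributes $p \times V$. With the natural orientations the signs work out to $(\Delta_1)_*V = V \times p + p \times V$. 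The multiplicative consequence is then a one-line evaluation: for any cochains $\alpha, \beta$ with $\deg \alpha + \deg \beta = 2$ other than $\alpha = \beta = 1 \cdot V^*$-related trivial cases, we have $\langle \alpha \times \beta,\ V \times p + p \times V\rangle$ vanishes, because each summand pairs nontrivially only when one factor hits $V$ and the other hits $p$, forcing $\alpha$ or $\beta$ to equal $1 = p^*$, in which case the product is the original class, not a new relation. In particular $(U^*)^2$, $U^*V^*$, $V^*U^*$ and $(V^*)^2$ all evaluate to zero on the unique $2$-cell $V$, so the induced ring structure on $\mathcal{C}^*(D^2)$ is trivial as claimed. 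The most delicate point is the orientation bookkeeping that fixes the signs in $(\Delta_1)_*V = V \times p + p \times V$; once this is settled, the rest is essentially forced.
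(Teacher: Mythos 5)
The paper leaves this proposition as an exercise, so there is no author proof to compare against; your argument is correct and is essentially the natural computation. The continuity checks at the branch seams, the specializations at $t=0$ and $t=1$, the tracing of cell images establishing cellularity, and the evaluation of $(U^*)^2$ on $(\Delta_1)_*V = V\times p + p\times V$ are all sound. (The remaining products $U^*V^*$, $V^*U^*$, $(V^*)^2$ vanish for degree reasons since $C^k(D^2)=0$ for $k>2$, which you could have noted to shorten the last paragraph.)

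The one place where you assert rather than verify is the sign in $(\Delta_1)_*V = V\times p + p\times V$ — exactly the point you flag as ``the most delicate.'' It is worth actually doing, since getting a sign of $-1$ on either summand would ruin the statement. A quick way to see both signs are $+1$: on the upper half-disk write $w = (1-\rho) + \rho e^{2i\varphi} = u+iv$; then
$$
\frac{\partial(u,v)}{\partial(\rho,\varphi)} = \det\begin{pmatrix} -1+\cos 2\varphi & -2\rho\sin 2\varphi \\ \sin 2\varphi & 2\rho\cos 2\varphi \end{pmatrix} = 2\rho\bigl(1-\cos 2\varphi\bigr)\geqslant 0,
$$
and since the standard orientation of $D^2$ in polar coordinates is $\rho\,d\rho\wedge d\varphi$, the map from the upper half of $V$ onto $V\times\{1\}$ is orientation-preserving, hence contributes $+V\times p$; the identical computation on the lower half gives $+p\times V$. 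With that Jacobian line added the argument is complete.
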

The proof we leave as an exercise. 

Using the properties of the construction of the moment-angle complex we obtain the following result.
\begin{corollary}\label{DAC}\index{moment-angle complex!multiplication in cohomology}
\begin{enumerate}
\item For any simple polytope $P$ with $m$ facets there is a homotopy 
$$
\Delta_t^m\colon (\mathbb D^{2m},\mathcal{Z}_P)\to (\mathbb D^{2m}\times\mathbb D^{2m}, \mathcal{Z}_P\times\mathcal{Z}_P),
$$
where $\Delta_0^m$ is the diagonal mapping and $\Delta_1^m$ is a cellular mapping\index{moment-angle complex!cellular approximation of the diagonal mapping}.
\item In the cellular cochain complex of $\mathbb D^{2m}=D^2\times \dots\times D^2$ the multiplication defined by $\Delta_1^m$ is the tensor product of multiplications of the factors defined by the rule $(\varphi_1\times\varphi_2)(\psi_1\times\psi_2)=(-1)^{\dim \varphi_2\dim\psi_1}\varphi_1\psi_1\times\varphi_2\psi_2$, and 
$$
(\varphi_1\times\dots\times \varphi_m)(\psi_1\times\dots\times \psi_m)=(-1)^{\sum\limits_{i>j}\dim\varphi_i\dim\psi_j}\varphi_1\psi_1\times\dots\times\varphi_m\psi_m,
$$
and respects the multigrading.        
\item The multiplication in $\mathcal{C}^*(\mathcal{Z}_P)$ given by $\Delta^m_1$ is defined from the inclusion\\
 $\mathcal{Z}_P\subset\mathbb D^{2m}$ as a multigraded cellular subcomplex\index{moment-angle complex!multiplication in cohomology}.
\end{enumerate}
\end{corollary}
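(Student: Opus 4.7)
\medskip

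The plan is to build the $m$-fold version of the construction from Proposition~\ref{DeltaApp} factorwise and then verify that it restricts correctly to the moment-angle subcomplex. Concretely, I would define
\[
\Delta_t^m=\underbrace{\Delta_t\times\dots\times\Delta_t}_{m\text{ times}}\colon\mathbb{I}\times\mathbb{D}^{2m}\longrightarrow\mathbb{D}^{2m}\times\mathbb{D}^{2m},
\]
where each factor $\Delta_t\colon \mathbb{I}\times D^2\to D^2\times D^2$ is the homotopy from Proposition~\ref{DeltaApp}. By item~(2) of that proposition, $\Delta_0^m$ is the diagonal, and by item~(3), $\Delta_1^m$ is cellular, since a product of cellular maps between CW complexes is cellular for the product cellular structures on $\mathbb{D}^{2m}$ and $\mathbb{D}^{2m}\times\mathbb{D}^{2m}$. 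This gives (1) as soon as the pair structure is checked.

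For the pair condition, recall that $\mathcal{Z}_P=\bigcup_{v\in L(P)} \mathcal{Z}_{P,v}$ where each $\mathcal{Z}_{P,v}=\tau_1\times\dots\times\tau_m$ with $\tau_i=D^2_i$ if $v\subset F_i$ and $\tau_i=S^1_i$ otherwise. Proposition~\ref{DeltaApp}(1) yields $\Delta_t(D^2)\subset D^2\times D^2$ and $\Delta_t(S^1)\subset S^1\times S^1$, hence factor-by-factor $\Delta_t^m(\mathcal{Z}_{P,v})\subset \mathcal{Z}_{P,v}\times\mathcal{Z}_{P,v}\subset\mathcal{Z}_P\times\mathcal{Z}_P$. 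Taking the union over vertices gives $\Delta_t^m(\mathcal{Z}_P)\subset\mathcal{Z}_P\times\mathcal{Z}_P$ for every $t\in\mathbb{I}$, which establishes (1).

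For (2), the multiplication in the cellular cochain complex of the product $X\times Y$ induced by a product cellular diagonal approximation satisfies the standard Koszul sign rule $(\varphi_1\times\varphi_2)(\psi_1\times\psi_2)=(-1)^{\dim\varphi_2\,\dim\psi_1}\varphi_1\psi_1\times\varphi_2\psi_2$; the sign comes from reordering the four factors when pairing cochains against cellular chains under the shuffle $(a_1\times a_2)\times(b_1\times b_2)\mapsto(a_1\times b_1)\times(a_2\times b_2)$. Iterating $m$ times yields the displayed $m$-fold formula, and since the cellular basis $C^*_{\sigma,\omega}$ consists of elementary tensors of cells of fixed multidegree, the product manifestly respects the bigrading $(-i,2\omega)$. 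For (3), one observes that $\mathcal{Z}_P\subset\mathbb{D}^{2m}$ is a multigraded cellular subcomplex (each $\mathcal{Z}_{P,F}$ is a product of $D^2_i$'s and $S^1_i$'s, hence a union of cells $C_{\sigma,\omega}$), so by part~(1) the map $\Delta_1^m$ restricts to a cellular diagonal approximation $\mathcal{Z}_P\to\mathcal{Z}_P\times\mathcal{Z}_P$, and the induced multiplication on $\mathcal{C}^*(\mathcal{Z}_P)$ is the restriction of the multiplication on $\mathcal{C}^*(\mathbb{D}^{2m})$. The main (but purely bookkeeping) obstacle is keeping track of the Koszul signs across the $m$ factors; once one checks compatibility for the two-factor case, the $m$-factor identity follows by induction.
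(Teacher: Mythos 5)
Your argument is correct and is exactly the unpacking the paper leaves implicit as a corollary of Proposition~\ref{DeltaApp}: take the $m$-fold product of the factorwise homotopies (same $t$ in each slot, followed by the shuffle identifying $(D^2\times D^2)^m$ with $\mathbb D^{2m}\times\mathbb D^{2m}$), check factor-by-factor that the pair $(D^2,S^1)\to(D^2\times D^2,S^1\times S^1)$ forces $\Delta_t^m(\mathcal{Z}_{P,v})\subset\mathcal{Z}_{P,v}\times\mathcal{Z}_{P,v}$ for every vertex $v$, and propagate the Koszul sign of the shuffle through the induced product on cellular cochains. The only notational caveat is that $\Delta_t\times\dots\times\Delta_t$ should be read as precomposed with the diagonal $\mathbb I\to\mathbb I^m$ and postcomposed with the shuffle, which you clearly intend and which is precisely the source of the sign $(-1)^{\sum_{i>j}\dim\varphi_i\dim\psi_j}$ in part~(2).
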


\subsection{Description in terms of the Stanley-Reisner ring}
\begin{definition}
Let $\{F_1,\dots, F_m\}$ be the set of facets of a simple polytope $P$. Then
a {\em Stanley-Reisner ring}\index{Stanley-Reisner ring}\index{polytope!Stanley-Reisner ring} of $P$ over $\mathbb Z$ is defined as a monomial ring
$$
\mathbb{Z}[P]=\mathbb{Z}[v_1,\dots,v_m]/J_{SR}(P),
$$
where
$$
J_{SR}(P) = (v_{i_1}\dots v_{i_k},\text{ if }F_{i_1}\cap \dots\cap F_{i_k}=\varnothing)
$$
is the Stanley-Reisner ideal.
\end{definition}
\begin{example}
$\mathbb{Z}[\Delta^2]=\mathbb{Z}[v_1,v_2,v_3]/(v_1 v_2 v_3)$
\end{example}

\begin{theorem} (see \cite{BG09})
Two polytopes are combinatorially equivalent if and only if their
Stanley-Reisner rings are isomorphic.
\end{theorem}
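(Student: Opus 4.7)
The forward direction is a direct translation. A combinatorial equivalence $P_1\simeq P_2$ is an inclusion-preserving bijection of face lattices, which restricts to a bijection $\varphi\colon\mathcal{F}_{P_1}\to\mathcal{F}_{P_2}$ of facet sets; since faces are intersections of facets, this bijection preserves all intersection incidences, so in particular $F_{i_1}\cap\dots\cap F_{i_k}=\varnothing$ in $P_1$ exactly when $\varphi(F_{i_1})\cap\dots\cap\varphi(F_{i_k})=\varnothing$ in $P_2$. The induced isomorphism of polynomial rings $v_i\mapsto v_{\varphi(i)}$ therefore sends $J_{SR}(P_1)$ onto $J_{SR}(P_2)$ and descends to a graded $\mathbb Z$-algebra isomorphism $\mathbb Z[P_1]\cong\mathbb Z[P_2]$.

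For the converse, the plan is to reconstruct the combinatorial type of $P$ intrinsically from $\mathbb Z[P]$. Since $P$ is simple, every face is a nonempty intersection of facets (uniquely so), so the face lattice $L(P)$ is reverse-inclusion isomorphic to the dual simplicial complex $K_P$ on vertex set $\mathcal{F}_P$ whose simplices are collections $\{F_{i_1},\dots,F_{i_k}\}$ with $F_{i_1}\cap\dots\cap F_{i_k}\ne\varnothing$. By definition, $K_P$ is determined by its minimal non-faces, which are exactly the minimal monomial generators of $J_{SR}(P)$. Hence it suffices to recover $J_{SR}(P)$, as a set of monomials in a distinguished set of variables, from the abstract ring $\mathbb Z[P]$.

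For this I would proceed in two steps. First, the standard $\mathbb N$-grading of $\mathbb Z[P]$ is intrinsic: it is the unique grading with $\mathbb Z[P]_0=\mathbb Z$ in which the algebra is generated in degree one, and $\mathbb Z[P]_1$ is a free $\mathbb Z$-module of rank $m=\#\mathcal{F}_P$. Second, the minimal primes of $\mathbb Z[P]$ are precisely $\mathfrak{p}_v=(v_j : F_j\not\ni v)$ for vertices $v$ of $P$, and each is generated in degree one by a subset of the variables. Analyzing the lattice of minimal primes — their intersections, heights, and the induced polynomial-ring quotients $\mathbb Z[P]/\mathfrak p_v=\mathbb Z[v_j : F_j\ni v]$ — one recovers the set $\{\mathbb Z\cdot v_1,\dots,\mathbb Z\cdot v_m\}$ of monomial lines in $\mathbb Z[P]_1$ up to permutation and sign. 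Once the variables are fixed, $J_{SR}(P)$ is the kernel of the surjection from the free graded polynomial ring on $\mathbb Z[P]_1$ onto $\mathbb Z[P]$, and its minimal monomial generators recover $K_P$, hence $L(P)$, hence the combinatorial type of $P$. The main obstacle is the canonical identification of the monomial lines inside $\mathbb Z[P]_1$: a generic $\mathrm{GL}_m(\mathbb Z)$-change of basis in degree one destroys the squarefree monomial structure of $J_{SR}$, so this identification is non-trivial and relies on the specific interplay between minimal primes and degree-one generators characteristic of Stanley-Reisner rings (worked out in \cite{BG09}).
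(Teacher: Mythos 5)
The paper does not prove this theorem; it is stated with a reference to \cite{BG09} only, so there is no in-paper argument to compare against. I will only assess whether your proposal would succeed.

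The forward direction is correct. In the converse, however, the opening claim carries the entire weight of the argument, and you assert it without proof: that the standard $\mathbb N$-grading of $\mathbb Z[P]$ is \emph{the unique} grading with $\mathbb Z[P]_0=\mathbb Z$, degree-one generation, and $\operatorname{rank}\mathbb Z[P]_1=m$. This is not a formal consequence of those hypotheses. For $\mathbb Z[x]$ the decomposition $\mathbb Z[x]_n=\mathbb Z\,(x+1)^n$ satisfies all three conditions but is not the standard one, and the same shift works in several variables, i.e.\ for $\mathbb Z[\Delta]$ with $\Delta$ a full simplex. What is special about $\mathbb Z[P]$ for a simple polytope $P$ is that $J_{SR}(P)\ne 0$; indeed every generator $v_j$ is a zero divisor, because $F_1\cap\cdots\cap F_m=\varnothing$, and this does obstruct such shifts. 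But turning that observation into a proof that the grading can be recovered from the abstract ring $\mathbb Z[P]$ is precisely the nontrivial technical content of the Bruns--Gubeladze theorem. As written, the proposal pushes the whole difficulty of the converse into an unproved assertion and then defers the remaining identification of the monomial lines to \cite{BG09} as well, so it is an outline of what needs to be proved rather than a proof.

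One can in fact sidestep the grading entirely. The minimal primes $\mathfrak p_v$, one for each vertex $v$ of $P$, are intrinsic, and for any set $T$ of vertices the quotient $\mathbb Z[P]/\sum_{v\in T}\mathfrak p_v$ is a polynomial ring over $\mathbb Z$ in $\bigl|\bigcap_{v\in T}\sigma(v)\bigr|$ variables, where $\sigma(v)=\{j:v\in F_j\}$; its Krull dimension recovers that cardinality. M\"obius inversion over the sets $T$ then yields, for each $T$, the number of indices $j$ with $\{v:v\in F_j\}=T$. This bipartite incidence data between minimal primes and indices determines $K_P$ up to isomorphism — a subset $S\subset[m]$ lies in $K_P$ if and only if $\bigcap_{j\in S}\{v:v\in F_j\}\ne\varnothing$ — and hence the combinatorial type of $P$, with no appeal to a grading, to $\mathbb Z[P]_1$, or to monomial lines at all.
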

\begin{corollary}
Fullerenes $P_1$ and $P_2$ are combinatorially equivalent if and only if
there is an isomorphism $\mathbb Z[P_1] \cong \mathbb Z[P_2]$.
\end{corollary}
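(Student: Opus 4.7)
The statement is an immediate specialization of the preceding theorem, which asserts that for simple polytopes, combinatorial equivalence is detected by isomorphism of Stanley-Reisner rings. Since every fullerene is by definition a simple $3$-polytope, the plan is simply to reduce the corollary to that theorem and check that nothing in the specialization is lost.

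For the forward implication I would observe that if $P_1 \simeq P_2$ via a bijection $\phi\colon\mathcal{F}_{P_1}\to\mathcal{F}_{P_2}$ preserving the face lattice, then sending $v_i\mapsto v_{\phi(i)}$ extends to a graded ring isomorphism of the polynomial rings $\mathbb Z[v_1,\ldots,v_m]$ that carries the Stanley--Reisner ideal of $P_1$ onto that of $P_2$, because both ideals are generated by the monomials indexed by non-faces and $\phi$ preserves the non-face property. Hence $\mathbb Z[P_1]\cong \mathbb Z[P_2]$.

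For the reverse implication, the key point is that $P_1$ and $P_2$, being fullerenes, are in particular simple $3$-polytopes with $m_1=p_5(P_1)+p_6(P_1)$ and $m_2=p_5(P_2)+p_6(P_2)$ facets respectively. The hypothesis $\mathbb Z[P_1]\cong\mathbb Z[P_2]$ places us squarely within the class of the cited theorem, so we may invoke it directly to conclude that $P_1$ and $P_2$ are combinatorially equivalent. No additional argument peculiar to the pentagon-hexagon structure is needed, since the theorem already detects the whole face lattice.

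The main (and only) substantive obstacle is packaged into the cited theorem \cite{BG09}; the corollary itself is essentially a restatement for the subclass of fullerenes, so the proof is a one-line invocation. If one preferred to remove the dependence on \cite{BG09}, one would have to recover the combinatorics of $P$ intrinsically from $\mathbb Z[P]$, for instance by showing that the minimal primes of $\mathbb Z[P]$ correspond bijectively to the vertices of $P$ (each vertex giving a height-$n$ prime $(v_i : F_i \not\ni v)$), and that the inclusion poset of these primes determines the face lattice; but for this corollary it suffices to cite the general result.
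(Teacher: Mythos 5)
Your proposal is correct and matches the paper's intent: the corollary is stated immediately after the Bruns--Gubeladze theorem with no further argument, precisely because it is the direct specialization to the subclass of fullerenes that you describe. Your spelling out of both directions (the forward one by transporting the Stanley--Reisner ideal along the face-lattice bijection, the reverse by directly invoking the theorem) adds nothing beyond what the paper implicitly assumes, and is accurate.
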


\begin{theorem}
The Stanley-Reisner ring \index{Stanley-Reisner ring} of a flag polytope is a monomial quadratic ring:
$$
J_{SR}(P)= \{v_i v_j: F_i\cap F_j=\varnothing\}.
$$
\end{theorem}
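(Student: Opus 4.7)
The plan is to verify the set equality by containing both sides, using the flag property of $P$ as the pivotal ingredient.

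The inclusion $\{v_iv_j : F_i\cap F_j=\varnothing\}\subset J_{SR}(P)$ is immediate from the definition: each such $v_iv_j$ is a monomial corresponding to an empty intersection, so it belongs to $J_{SR}(P)$. Thus the ideal generated by these quadratic monomials is contained in $J_{SR}(P)$.

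For the reverse inclusion I would show that every monomial generator $v_{i_1}\cdots v_{i_k}$ of $J_{SR}(P)$, that is every monomial with $F_{i_1}\cap\cdots\cap F_{i_k}=\varnothing$, is divisible by some quadratic monomial $v_{i_s}v_{i_t}$ with $F_{i_s}\cap F_{i_t}=\varnothing$. The case $k=1$ cannot occur since facets are nonempty, and the case $k=2$ is trivial. For $k\geq 3$ I would argue by contradiction: suppose all pairwise intersections $F_{i_s}\cap F_{i_t}$, $1\leq s<t\leq k$, were nonempty. Then $\{F_{i_1},\dots,F_{i_k}\}$ is a collection of pairwise intersecting facets, so by the definition of a flag polytope we would have $F_{i_1}\cap\cdots\cap F_{i_k}\neq\varnothing$, contradicting the choice of the monomial. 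Hence there exist indices $s\neq t$ with $F_{i_s}\cap F_{i_t}=\varnothing$, and $v_{i_1}\cdots v_{i_k}$ is a multiple of $v_{i_s}v_{i_t}$, which lies in the quadratic ideal. This shows $J_{SR}(P)$ is contained in the ideal generated by the quadratic monomials listed.

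There is essentially no obstacle beyond unpacking the definitions; the proof is a direct contrapositive translation of the flag condition ``pairwise intersecting implies globally intersecting'' into ``non-generators of $J_{SR}(P)$ of degree $\geq 3$ are redundant modulo the quadratic part.'' The only point worth stating cleanly is that one does not need to argue with inclusion-minimal missing intersections: working with an arbitrary monomial $v_{i_1}\cdots v_{i_k}\in J_{SR}(P)$ and extracting a pair $(s,t)$ with $F_{i_s}\cap F_{i_t}=\varnothing$ suffices to exhibit it as a multiple of a generator of the proposed quadratic ideal.
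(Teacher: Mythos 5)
Your proof is correct. The paper states this theorem without proof, but your argument is exactly the expected one: the flag condition says that pairwise-intersecting facets have a common point, so its contrapositive says every minimal nonface (i.e., every set of facets with empty intersection) contains a pair of disjoint facets, whence every generator of $J_{SR}(P)$ is divisible by a quadratic monomial $v_iv_j$ with $F_i\cap F_j=\varnothing$.
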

\begin{figure}
\begin{center}
\includegraphics[scale=0.55]{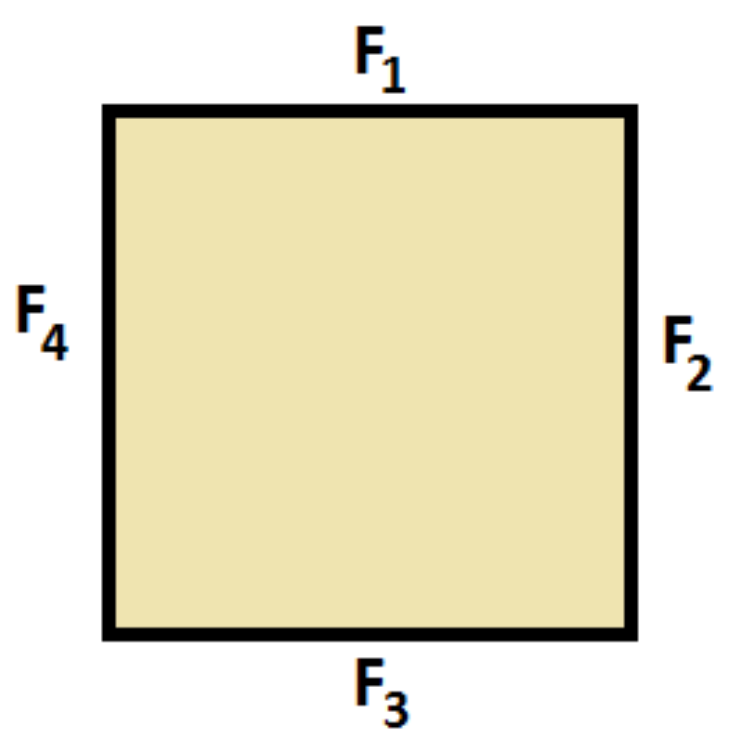}
\end{center}
\caption{Cube $I^2$. We have $J_{SR}(I^2)=\{v_1 v_3, v_2 v_4\}$}
\end{figure}
Each fullerene is a simple flag polytope (Theorem \ref{3belts-theorem}).

\begin{corollary}
The Stanley-Reisner ring of a fullerene is monomial quadratic.
\end{corollary}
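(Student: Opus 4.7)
The plan is to read the statement off as a one-line chain from the two results immediately preceding it. The combinatorial input is Corollary \ref{3belts-ful}, which says that every fullerene is a flag polytope. The algebraic input is the theorem just above it, which identifies the Stanley--Reisner ideal of a flag polytope with the quadratic monomial ideal generated by the $v_iv_j$ for which $F_i\cap F_j=\varnothing$. Applying the second result to the class of polytopes singled out by the first gives exactly the conclusion $\mathbb{Z}[P]=\mathbb{Z}[v_1,\dots,v_m]/(v_iv_j\colon F_i\cap F_j=\varnothing)$.

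For the reader's convenience I would recall why each of the two inputs applies. A fullerene $P$ satisfies $p_3=p_4=0$ (all 2-faces are pentagons or hexagons) and $p_k=0$ for every $k\geqslant 7$, so in particular the hypothesis $p_3=0$, $p_4\leqslant 2$, $p_7\leqslant 1$, $p_k=0$ for $k\geqslant 8$ of Theorem \ref{3belts-theorem} is trivially met. That theorem produces a fullerene with no $3$-belts, and then Proposition \ref{nflag} (together with $P\not\simeq\Delta^3$) yields flagness. This is the substantive input; the real analytic work sits inside the loop/belt machinery (Lemmas \ref{loop-lemma} and \ref{23lemma}) already developed in Lecture 3.

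The implication from flagness to a monomial quadratic Stanley--Reisner ring is then essentially formal. The Stanley--Reisner ideal $J_{SR}(P)$ is by definition generated by the squarefree monomials $v_{i_1}\cdots v_{i_k}$ corresponding to collections $\{F_{i_1},\dots,F_{i_k}\}$ with $F_{i_1}\cap\cdots\cap F_{i_k}=\varnothing$, and it suffices to take the minimal such collections. If every pair $F_{i_s}\cap F_{i_t}$ were nonempty, flagness would force $F_{i_1}\cap\cdots\cap F_{i_k}\ne\varnothing$, contradicting the choice. So some pair $\{F_i,F_j\}$ already has $F_i\cap F_j=\varnothing$, and by minimality the collection equals this pair. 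Hence every generator of $J_{SR}(P)$ is of the form $v_iv_j$ with $F_i\cap F_j=\varnothing$, which is the monomial quadratic presentation claimed.

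There is no genuine obstacle at this stage: the corollary is a bookkeeping statement, and the only thing to be careful about is the clean citation of Corollary \ref{3belts-ful} rather than reproving flagness.
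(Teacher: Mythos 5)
Your argument is correct and matches the paper's intent exactly: the corollary is a direct chain of Corollary \ref{3belts-ful} (every fullerene is flag, via Theorem \ref{3belts-theorem} with $p_3=p_4=0$, $p_{\geqslant 7}=0$) and the preceding theorem that the Stanley--Reisner ideal of a flag polytope is generated in degree two. The extra paragraph unwinding why flagness forces every minimal nonface to be a pair is accurate and is a fair expansion of the cited theorem, but it is not a different route.
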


\noindent{\bf Construction (multigraded complex):} For a set $\sigma\subset[m]$ define $G(\sigma)=\bigcap\limits_{i\in\sigma}F_i$. Conversely, for a face $G$ define $\sigma(G)=\{i\colon G\subset F_i\}\subset[m]$. Then $\sigma(G(\sigma))=\sigma$, and $G(\sigma(G))=G$. Let
\begin{gather*}
R^*(P)=\Lambda[u_1,\dots, u_m]\otimes \mathbb Z[P]/(u_iv_i,v_i^2),\\
{\rm mdeg}\;u_i=(-1,2\{i\}), {\rm mdeg}\;v_i=(0,2\{i\}), du_i=v_i, dv_i=0
\end{gather*} be a multigraded differential algebra. It is additively generated by monomials $v_{\sigma}u_{\omega\setminus\sigma}$, where $v_{\sigma}=\prod\limits_{i\in\sigma}v_i$, $G(\sigma)\ne\varnothing$, and $u_{\omega\setminus\sigma}=u_{j_1}\wedge\dots\wedge u_{j_l}$ for $\omega\setminus\sigma=\{j_1,\dots,j_l\}$. 
\begin{theorem}\cite{Bu-Pa15}\label{HRth}
We have a mutigraded ring isomorphism
$$
H[R^*(P),d]\simeq H^*(\mathcal{Z}_P,\mathbb Z)
$$
\end{theorem}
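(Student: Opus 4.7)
The plan is to construct an explicit multigraded isomorphism of differential graded algebras between $R^*(P)$ and the cellular cochain complex $\mathcal{C}^*(\mathcal{Z}_P)$ equipped with the product induced by the cellular diagonal approximation $\Delta_1^m$ from Corollary \ref{DAC}, and then pass to cohomology. The cellular basis of $\mathcal{C}^*(\mathcal{Z}_P)$ consists of the cochains $C^*_{\sigma,\omega} = \tau_1^*\times\cdots\times\tau_m^*$ dual to the cells $C_{\sigma,\omega}$ with $\sigma \subset \omega \subset [m]$ and $G(\sigma) = \bigcap_{i\in\sigma}F_i \ne \varnothing$ (the latter condition being exactly what distinguishes the cells of $\mathcal{Z}_P$ from those of the ambient polydisk $\mathbb{D}^{2m}$). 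On the other side, the monomials $v_\sigma u_{\omega\setminus\sigma}$ form an additive basis of $R^*(P)$ precisely under the same admissibility condition: the Stanley--Reisner relations kill $v_\sigma$ whenever $G(\sigma)=\varnothing$, while the relations $v_i^2=0$ and $u_iv_i=0$ together with the exterior relation $u_i^2=0$ guarantee that each nonzero monomial has the form $v_\sigma u_{\omega\setminus\sigma}$ with $\sigma\subset\omega$.

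First I would define a multigraded $\mathbb Z$-linear map
$$
f\colon R^*(P)\longrightarrow \mathcal{C}^*(\mathcal{Z}_P),\qquad f\bigl(v_\sigma u_{\omega\setminus\sigma}\bigr)=\varepsilon(\sigma,\omega)\,C^*_{\sigma,\omega},
$$
with a sign $\varepsilon(\sigma,\omega)=\pm1$ dictated by the conventions used in the Koszul-type product on $\mathcal{C}^*(\mathbb{D}^{2m})$ given in Corollary \ref{DAC}(2). The multidegrees match: on the $R^*$-side $\operatorname{mdeg}(v_\sigma u_{\omega\setminus\sigma})=(-|\omega\setminus\sigma|, 2\omega)$, which is exactly $\operatorname{mdeg} C^*_{\sigma,\omega}$. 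This $f$ is a bijection between the bases.

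Next I would verify that $f$ is a chain map. For a single factor $D^2$, the identifications $p\leftrightarrow 1$, $u\leftrightarrow U^*$, $v\leftrightarrow V^*$ intertwine $du=v$, $dv=0$ with $\partial U^*=V^*$, $\partial V^*=0$. Since the total differential on both sides satisfies the Leibniz rule with respect to the tensor decomposition into factors indexed by $i\in[m]$, compatibility on monomials follows by a direct calculation. To verify that $f$ is a ring map, I would use the explicit diagonal approximation $\Delta_1^m$: by Proposition \ref{DeltaApp}(4) the induced product on $\mathcal{C}^*(D^2)$ is trivial (all nontrivial products of $U^*, V^*$ vanish), and the tensor product formula of Corollary \ref{DAC}(2) yields the Koszul sign rule for the full product on $\mathcal{C}^*(\mathbb{D}^{2m})$. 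This exactly matches the multiplication in $R^*(P)$, where $v_i^2=u_iv_i=0$ kill single-factor contributions and the Stanley--Reisner relations set to zero any product whose support has empty intersection of facets, i.e.\ whose image would be a cochain dual to a cell not in $\mathcal{Z}_P$.

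Finally, since $f$ is a bijective map of multigraded $\mathbb Z$-modules and a map of DGAs, it induces a multigraded ring isomorphism $H[R^*(P),d]\cong H[\mathcal{C}^*(\mathcal{Z}_P),\partial]$; the right-hand side is $H^*(\mathcal{Z}_P,\mathbb Z)$ with its cup product because $\Delta_1^m$ is a cellular approximation of the diagonal on $\mathcal{Z}_P\subset\mathbb{D}^{2m}$ by Corollary \ref{DAC}(1),(3). The main obstacle is bookkeeping of the signs $\varepsilon(\sigma,\omega)$: one must ensure that they are chosen consistently so that both the Leibniz rule for $d$ on $R^*(P)$ and the Koszul sign rule for the product on $\mathcal{C}^*(\mathbb{D}^{2m})$ are respected simultaneously. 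This is a purely combinatorial verification modelled on the standard signs for the exterior algebra $\Lambda[u_1,\dots,u_m]$, but it is where the argument requires the most care.
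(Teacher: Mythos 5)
Your argument is the paper's proof made explicit: the map $\zeta\colon v_\sigma u_{\omega\setminus\sigma}\mapsto C^*_{\sigma,\omega}$ is a bijection of the multigraded bases (the condition $G(\sigma)\ne\varnothing$ appears on both sides), a chain map by the Leibniz rule, and a ring map by Proposition~\ref{DeltaApp}(\ref{DA1}) and Corollary~\ref{DAC}, and the fact that $\Delta_1^m$ is a cellular approximation of the diagonal on $\mathcal{Z}_P\subset\mathbb{D}^{2m}$ identifies the induced product with the cup product. The only point worth adding is that the sign $\varepsilon(\sigma,\omega)$ you leave undetermined is identically~$1$: with $u_{\omega\setminus\sigma}$ written in increasing order and $C^*_{\sigma,\omega}=\tau_1^*\times\cdots\times\tau_m^*$ in the natural order, the Leibniz sign $(-1)^{\dim\tau_1^*+\cdots+\dim\tau_{k-1}^*}$ reduces mod $2$ to $(-1)^{l(k,\omega\setminus\sigma)}$, which equals the position sign $(-1)^{k-1}$ produced by $d$ on $\Lambda[u_1,\dots,u_m]$, and the Koszul sign in Corollary~\ref{DAC}(2) reduces to $(-1)^{l(\omega_1\setminus\sigma_1,\omega_2\setminus\sigma_2)}$, which is exactly the reordering sign in the exterior product, so no correction is needed.
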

\begin{proof}
Define the mapping $\zeta\colon R^*(P)\to C^*(\mathcal{Z}_P)$ by the rule $\zeta(v_{\sigma}u_{\omega\setminus\sigma})=C^*_{\sigma,\omega}$. 
It is a graded ring isomorphism from Proposition \ref{DeltaApp}(\ref{DA1}), and Corollary \ref{DAC}. The formula $\zeta(dv_{\sigma}u_{\omega\setminus\sigma})=\partial C^*_{\sigma,\omega}$ follows from the Leibnitz rule.
\end{proof}

\noindent{\bf Exercise:} Prove that for the cospherical class $\varphi_\omega^*\left(\left[S^{2k-1}_{[m]\setminus\omega}\right]\right)$, $\omega=\{i_1,\dots,i_k\}$, (see Corollary \ref{cosph}) we have $\varphi_\omega^*\left(\left[S^{2k-1}_{[m]\setminus\omega}\right]\right)=\pm [u_{i_1}v_{i_2}\dots v_{i_k}]\in H[R^*(P),d]$.
\subsection{Description in terms of unions of facets}\index{moment-angle complex!description of cohomology in terms of unions of facets}
Let $P_\omega=\bigcup\limits_{i\in \omega}F_i$ for a subset $\omega\subset
[m]$.  By definition $P_{\varnothing}=\varnothing$, and $P_{[m]}=\partial P$.

\begin{definition}
For two sets $\sigma,\tau\subset[m]$ define $l(\sigma,\tau)$ to be the number of pairs $\{(i,j)\colon i\in\sigma,j\in\tau, i>j\}$. We write $l(i,\tau)$ and $l(\sigma,j)$ for $\sigma=\{i\}$ and $\tau=\{j\}$ respectively. 
\end{definition}
\noindent{\bf Comment:} The number $(-1)^{l(\sigma,\omega)}$ is used for definition of the multiplication of cubical chain complexes 
(see \cite{S51}). 
In the discrete mathematics the number $l(\sigma,\tau)$ is a characteristic of two subsets $\sigma,\tau$ of an ordered set. 

\begin{proposition} We have
\begin{enumerate}
\item $l(\sigma,\tau)=\sum\limits_{i\in\sigma}l(i,\tau)=\sum\limits_{j\in\tau}l(\sigma,j)=\sum\limits_{i\in\sigma,j\in\tau}l(i,j)$.
\item $l(\sigma,\tau_1\sqcup \tau_2)=l(\sigma,\tau_1)+l(\sigma,\tau_2)$, $l(\sigma_1\sqcup\sigma_2,\tau)=l(\sigma_1,\tau)+l(\sigma_2,\tau)$.
\item $l(\sigma,\tau)+l(\tau,\sigma)=|\sigma||\tau|-|\sigma\cap\tau|$. \\
In particular, if $\sigma\cap \tau=\varnothing$, then $l(\sigma,\tau)+l(\tau,\sigma)=|\tau||\sigma|$.
\end{enumerate}
\end{proposition}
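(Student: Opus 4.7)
The plan is to derive each part directly from the defining formula
$l(\sigma,\tau)=\#\{(i,j)\in\sigma\times\tau:\,i>j\}$
by partitioning the counting set in different ways; everything here reduces to elementary bookkeeping over pairs in an ordered set.

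For part (1), I would fix $i\in\sigma$ and observe that the fiber of the projection $(i,j)\mapsto i$ from $\{(i,j)\in\sigma\times\tau:i>j\}$ to $\sigma$ has cardinality exactly $l(i,\tau)=\#\{j\in\tau:j<i\}$. Summing over $i\in\sigma$ gives $l(\sigma,\tau)=\sum_{i\in\sigma}l(i,\tau)$. The symmetric argument with the projection on the second coordinate yields $l(\sigma,\tau)=\sum_{j\in\tau}l(\sigma,j)$. Expanding either expression once more (writing $l(i,\tau)=\sum_{j\in\tau}l(i,j)$, which again is the definition restricted to a singleton) gives the double sum $\sum_{i\in\sigma,j\in\tau}l(i,j)$.

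For part (2), I would use the disjointness $\tau_1\sqcup\tau_2$: the set $\{(i,j)\in\sigma\times(\tau_1\sqcup\tau_2):i>j\}$ is the disjoint union of $\{(i,j)\in\sigma\times\tau_1:i>j\}$ and $\{(i,j)\in\sigma\times\tau_2:i>j\}$, so cardinalities add. The argument for the first argument is identical. (Alternatively, these additivity statements drop straight out of (1) by splitting the sum.)

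For part (3), I would consider the full product $\sigma\times\tau$ and classify each pair $(i,j)$ by the trichotomy $i>j$, $i<j$, or $i=j$. Pairs with $i>j$ are counted by $l(\sigma,\tau)$; pairs with $i<j$ are in bijection (via swap) with pairs $(j,i)\in\tau\times\sigma$ satisfying $j>i$, hence counted by $l(\tau,\sigma)$; pairs with $i=j$ correspond to elements of $\sigma\cap\tau$, contributing $|\sigma\cap\tau|$. Since $|\sigma\times\tau|=|\sigma||\tau|$, this gives $l(\sigma,\tau)+l(\tau,\sigma)+|\sigma\cap\tau|=|\sigma||\tau|$, which is the claimed identity; the disjoint case is an immediate specialization.

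There is no real obstacle here: the proposition is a set of three elementary identities, and the only thing worth being careful about is writing the trichotomy cleanly in part (3) so that the $|\sigma\cap\tau|$ correction term is justified rather than asserted.
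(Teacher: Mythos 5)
Your proof is correct. The paper states this proposition without proof (it is treated as an elementary counting fact about the quantity $l(\sigma,\tau)=\#\{(i,j)\in\sigma\times\tau: i>j\}$), and your fiberwise-counting argument for (1), disjoint-union decomposition for (2), and trichotomy $\{i>j\}\sqcup\{i<j\}\sqcup\{i=j\}$ for (3) — with the swap bijection identifying the middle block with $l(\tau,\sigma)$ and the diagonal contributing $|\sigma\cap\tau|$ — is exactly the argument one would supply.
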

\begin{definition}
Set 
$$
\mathbb I_{P,\omega}=\bigcup\limits_{G\ne\varnothing\colon\sigma(G)\subset \omega}\mathbb{I}_{P,G}=
\{(x_1,\dots,x_m)\in \mathbb I_P\colon x_i=1,i\notin \omega\}. 
$$
\end{definition}
\begin{theorem}\cite{Bu-Pa15}\label{Pwtheorem}
For any $\omega\subset[m]$ there is an isomorphism:
$$
H^{-i,2\omega}(\mathcal{Z}_P,\mathbb Z)\cong {H}^{|\omega|-i}(P,P_{\omega},\mathbb Z),
$$
\end{theorem}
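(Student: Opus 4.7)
My plan is to pass through the Baskakov--Buchstaber--Panov type identification of the $(-i,2\omega)$-component of $H^*(\mathcal{Z}_P)$ with the reduced cohomology of a full subcomplex of the nerve of the facet cover, and then convert to relative cohomology of the pair $(P,P_\omega)$. First I would apply Theorem \ref{HRth} to transfer the computation to the multigraded DGA $R^*(P)$ and isolate the $2\omega$-summand. Thanks to the relations $v_i^2=0=u_iv_i$, a basis of this summand consists of the monomials $v_\sigma u_{\omega\setminus\sigma}$ with $\sigma\subseteq\omega$ and $G(\sigma)=\bigcap_{j\in\sigma}F_j\neq\varnothing$, of total cohomological degree $|\sigma|+|\omega|$; the differential $du_i=v_i$ extended by Leibniz sends such a monomial to a signed sum over $j\in\omega\setminus\sigma$ of $v_{\sigma\cup\{j\}}u_{\omega\setminus(\sigma\cup\{j\})}$, with the summand dropping out precisely when $G(\sigma\cup\{j\})=\varnothing$. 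Up to a uniform sign convention and a shift of degree by $|\omega|+1$, this is exactly the augmented simplicial cochain complex of the full subcomplex $K_\omega=\{\sigma\subseteq\omega:G(\sigma)\neq\varnothing\}$ of the nerve $K=\{\sigma\subseteq[m]:G(\sigma)\neq\varnothing\}$ of the facet cover of $\partial P$, yielding $H^{-i,2\omega}(\mathcal{Z}_P)\cong\widetilde{H}^{|\omega|-i-1}(K_\omega;\mathbb{Z})$.

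Next I would identify $|K_\omega|$ with $P_\omega$ up to homotopy. The cover $\{F_j:j\in\omega\}$ is a closed cover of the compact CW-complex $P_\omega$ by contractible subcomplexes, and every non-empty multiple intersection $F_{j_1}\cap\cdots\cap F_{j_k}$ is again a face of the simple polytope $P$ and is therefore contractible; so this is a good closed cover whose nerve is precisely $K_\omega$. The nerve lemma then yields a homotopy equivalence $|K_\omega|\simeq P_\omega$ and therefore $\widetilde{H}^{*}(K_\omega)\cong\widetilde{H}^{*}(P_\omega)$. Finally, contractibility of $P$ and the long exact sequence of the pair $(P,P_\omega)$ give $\widetilde{H}^{k}(P_\omega)\cong H^{k+1}(P,P_\omega)$ for all $k\geqslant -1$, with the edge case $\omega=\varnothing$ handled by the conventions $\widetilde{H}^{-1}(\varnothing)=\mathbb{Z}=H^0(P,\varnothing)$. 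Chaining the three isomorphisms produces the stated identification $H^{-i,2\omega}(\mathcal{Z}_P)\cong H^{|\omega|-i}(P,P_\omega)$.

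The step I expect to be the main obstacle is the middle one: the standard nerve lemma is most often phrased for open covers, so to apply it to the facet cover I would either thicken each $F_j$ to an open regular neighborhood in $\partial P$ whose nerve is still $K_\omega$, or, more in keeping with the cubical subdivision of $P$ coming from $\mathbb{I}_P$, directly build a cellular deformation retraction from $P_\omega$ onto a geometric realization of $K_\omega$ sitting inside $P$ via the barycenters $\boldsymbol{x}_G$ of its faces. Step one is essentially bookkeeping of signs and multidegrees, and step three is formal from the long exact sequence, so the geometric content of the theorem really sits in the nerve-theorem step.
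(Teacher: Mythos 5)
Your argument is correct, but it routes around the paper's proof rather than reproducing it. The paper works with the pair of cubical complexes directly: it observes $(P,P_\omega)\simeq(\mathbb I_P,\mathbb I_P^\omega)$, builds the explicit deformation retraction $r^\omega_t$ onto $(\mathbb I_{P,\omega},\mathbb I_{P,\omega}^0)$, and then writes down a cellular cochain isomorphism $\xi_\omega\colon R^{-i,2\omega}\to C^{|\omega|-i}(\mathbb I_{P,\omega},\mathbb I_{P,\omega}^0)$, $u_{\omega\setminus\sigma}v_\sigma\mapsto(-1)^{l(\sigma,\omega)}E^*_\sigma$, checking directly that it commutes with the coboundary. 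Your chain instead passes through the full subcomplex $K_\omega$: you identify $R^{-i,2\omega}$ with the shifted augmented cochain complex of $K_\omega$, invoke a nerve-type equivalence $|K_\omega|\simeq P_\omega$, and finish with the long exact sequence of $(P,P_\omega)$. In effect you are deriving the theorem from the paper's Corollary \ref{Pocor} plus the later theorem relating $H^{-i,2\omega}(\mathcal Z_P)$ to $\widetilde H^{|\omega|-i-1}(K_\omega)$, whereas the paper proves the present statement first and obtains those as consequences.

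Two remarks on what each route buys. The paper's explicit $\xi_\omega$, with its controlled $(-1)^{l(\sigma,\omega)}$ sign, is not gratuitous: it is reused verbatim in the proof of Theorem \ref{MPTheorem} to pin down the sign in the relative-cohomology product formula, and a nerve-lemma homotopy equivalence alone would not give you that without extra work. Your route is more conceptual and shorter if one only wants the additive isomorphism, and you correctly flag that the nerve lemma for the closed facet cover is the only delicate point; the alternative you sketch (realize $|K_\omega|$ inside $P_\omega$ via barycenters $\boldsymbol x_G$ and retract onto it) is in fact exactly what the paper does in the subsection on related simplicial complexes, via the map $c_K$ and the homeomorphism of pairs $(\mathbb I_{P,\omega},\mathbb I_{P,\omega}^0)\simeq(C|K_\omega|,|K_\omega|)$, so that gap can be closed without appealing to a general closed nerve lemma. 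One small bookkeeping check worth making explicit: the cochain degree works out because $v_\sigma u_{\omega\setminus\sigma}$ has $\operatorname{mdeg}=(-|\omega\setminus\sigma|,2\omega)$, so $i=|\omega|-|\sigma|$ and the simplex $\sigma$ has dimension $|\sigma|-1=|\omega|-i-1$, matching the shift you claim.
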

\begin{proof}
For  subsets $A\subset \mathbb I^m$ and  $\omega\subset[m]$ define 
$$
A^{\omega}=\{(y_1,\dots,y_m)\in A\colon y_i=0 \text{ for some }i\in\omega\}, \quad A^0=A^{[m]}.
$$
We have $A^{\varnothing}=A$.
There is a homeomorphism of pairs $(P,P_{\omega})\simeq (\mathbb I_P,\mathbb I_P^\omega)$. 

The homotopy $r^{\omega}_t\colon \mathbb I^m\to\mathbb I^m$:
$$
r^\omega_t(y_1,\dots,y_m)=(y_1',\dots,y_m'), y_j'=
\begin{cases}
(1-t)y_j+t,&j\notin \omega;\\
y_j,&j\in\omega,
\end{cases}
$$
gives a deformation retraction $r^\omega=r^\omega_1\colon (\mathbb I_P,\mathbb I_P^\omega)\to (\mathbb I_{P,\omega}, \mathbb I_{P,\omega}^0)$. 

There is a natural multigraded cell structure on the cube $\mathbb I^m$, induced by the cell structure on $\mathbb I$ consisting of $3$ cells: $\mathbf{0}=\{0\}$, $\mathbf{1}=\{1\}$ and $J=(0,1)$. 
All the sets $\mathbb I_P$, $\mathbb I_{P,G}$, $\mathbb I_{P}^\omega$, $\mathbb I_{P,\omega}$, $\mathbb I_{P,\omega}^0$ are cellular subcomplexes. 
There is a natural orientation in $J$ such that $\mathbf 0$ is the beginning, and $\mathbf 1$ is the end. We have 
\begin{gather*}
d \mathbf0=d\mathbf1=0,\quad dJ=\mathbf1-\mathbf0;\\
\partial \mathbf1^*=-\partial \mathbf 0^*=J^*,\quad\partial J^*=0.
\end{gather*}
The cells in $\mathbb I^m$ has the form $\eta_1\times\dots\times \eta_m$, $\eta_i\in\{\mathbf0_i,\mathbf1_i,J_i\}$. There is natural cellular approximation for the diagonal mapping $\Delta\colon \mathbb I\to \mathbb I\times\mathbb I$  by the mapping $\Delta_1$:
$$
\Delta_1(x)=\begin{cases}(2x,1),&x\in[0,\frac{1}{2}],\\
(1,2x-1),&x\in[\frac{1}{2},1],
\end{cases}
$$
connected with $\Delta$ by the homotopy $\Delta_t=(1-t)\Delta+t\Delta_1$. Then 
$$
(\Delta_1)_*\mathbf0=\mathbf0\times \mathbf0, \quad(\Delta_1)_*\mathbf1=\mathbf1\times \mathbf1,\quad(\Delta_1)_*J=J\times \mathbf0+\mathbf1\times J,
$$
and for the induced multiplication we have 
\begin{gather*}
(\mathbf0^*)^2=\mathbf0^*,\quad (\mathbf1^*)^2=\mathbf1^*,\quad\mathbf0^*\mathbf1^*=\mathbf1^*\mathbf0^*=0,\quad \mathbf0^*J^*=J^*\mathbf1^*=0,\\
J^*\mathbf0^*=\mathbf1^*J^*=J^*,\quad (J^*)^2=0.
\end{gather*}

The cells in $\mathbb I_{P,\omega}\setminus\mathbb I_{P,\omega}^0$ have the form 
$$
E_{\sigma}=\eta_1\times\dots\times \eta_m,\quad\eta_j=\begin{cases}J_j,&j\in\sigma,\\
1,&j\notin\sigma,\end{cases},
$$
where $\sigma\subset\omega$, and $G(\sigma)\ne\varnothing$.
Then $E^*_{\sigma}=\eta_1^*\times\dots\times\eta_m^*$. 

Now define the mapping $\xi_{\omega}\colon R^{-i,2\omega}\to C^{|\omega|-i}(\mathbb I_{P,\omega}, \mathbb I_{P,\omega}^0)$ by the rule
$$
\xi_{\omega}(u_{\omega\setminus\sigma}v_{\sigma})=(-1)^{l(\sigma,\omega)} E^*_{\sigma}.
$$
By construction $\xi_{\omega}$ is an additive isomorphism.  For $\sigma\subset\omega$ we have 
$$
\partial\xi_{\omega}(v_{\sigma}u_{\omega\setminus\sigma})=\partial\left((-1)^{l(\sigma,\omega)}E^*_{\sigma}\right)=(-1)^{l(\sigma,\omega)}\!\!\!\!\!\!\!\!\!\!\sum\limits_{j\in\omega\setminus\sigma,G(\sigma\sqcup\{j\})\ne\varnothing}(-1)^{l(j,\sigma)}E^*_{\sigma\sqcup\{j\}},
$$
On the other hand,
\begin{multline*}
\xi_{\omega}(dv_{\sigma}u_{\omega\setminus\sigma})=\xi_{\omega}\left(\sum\limits_{j\in\omega\setminus\sigma, G(\sigma\sqcup\{j\})\ne\varnothing}(-1)^{l(j,\omega\setminus\sigma)}v_{\sigma\sqcup\{j\}}u_{\omega\setminus(\sigma\sqcup\{j\})}\right)=\\
=\sum\limits_{j\in\omega\setminus\sigma, G(\sigma\sqcup\{j\})\ne\varnothing}(-1)^{l(\sigma\sqcup\{j\},\omega)}(-1)^{l(j,\omega\setminus\sigma)}E^*_{\sigma\sqcup\{j\}}
\end{multline*}
Now the proof follows from the formula 
$$
l(\sigma\sqcup\{j\},\omega)+l(j,\omega\setminus\sigma)=l(\sigma,\omega)+l(j,\omega)+l(j,\omega\setminus\sigma)=l(\sigma,\omega)+l(j,\sigma)+2l(j,\omega\setminus\sigma)
$$
\end{proof}

\begin{corollary}\cite{Bu-Pa15}\label{Pocor}
For any $\omega\subset[m]$ there is an isomorphism:
$$
H^{-i,2\omega}(\mathcal{Z}_P,\mathbb Z)\cong \widetilde{H}^{|\omega|-i-1}(P_{\omega},\mathbb Z),
$$
where by definition $\widetilde{H}^{-1}(\varnothing,\mathbb Z)=\mathbb Z$.
\end{corollary}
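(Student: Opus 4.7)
My plan is to reduce Corollary \ref{Pocor} to the already established Theorem \ref{Pwtheorem} via the long exact sequence of the pair $(P,P_\omega)$, using the fact that a convex polytope is contractible.

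First, I would invoke Theorem \ref{Pwtheorem} to replace the left-hand side, so the task becomes producing a natural isomorphism
$$
H^{|\omega|-i}(P,P_\omega;\mathbb Z)\;\cong\;\widetilde H^{|\omega|-i-1}(P_\omega;\mathbb Z).
$$
For this, I would write down the long exact sequence in reduced cohomology for the pair $(P,P_\omega)$:
$$
\cdots\to\widetilde H^{k-1}(P)\to\widetilde H^{k-1}(P_\omega)\to H^k(P,P_\omega)\to\widetilde H^k(P)\to\widetilde H^k(P_\omega)\to\cdots
$$
Since $P$ is a convex (hence contractible) polytope, $\widetilde H^{*}(P;\mathbb Z)=0$ in every degree, so the connecting homomorphism yields the isomorphism $H^k(P,P_\omega;\mathbb Z)\cong\widetilde H^{k-1}(P_\omega;\mathbb Z)$ for every $k$. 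Setting $k=|\omega|-i$ produces the desired identification.

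The only subtlety is the degenerate case $\omega=\varnothing$, where $P_\omega=\varnothing$. Here $H^k(P,\varnothing;\mathbb Z)=H^k(P;\mathbb Z)$, which is $\mathbb Z$ for $k=0$ and zero otherwise; this matches the right-hand side exactly when one adopts the convention $\widetilde H^{-1}(\varnothing;\mathbb Z)=\mathbb Z$ (equivalently, one augments the cochain complex of the empty space so that reduced cohomology in degree $-1$ is a copy of $\mathbb Z$). On the moment-angle side, the bidegree $(0,2\varnothing)$ component of $R^*(P)$ is generated by $1$ and contributes $H^{0,0}(\mathcal Z_P;\mathbb Z)=\mathbb Z$ via Theorem \ref{HRth}, so both sides of the claimed isomorphism are $\mathbb Z$, as required.

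No serious obstacle is expected: the argument is essentially a one-line consequence of Theorem \ref{Pwtheorem} together with the contractibility of $P$, and the naturality of the connecting homomorphism guarantees that the isomorphism is canonical. The only bookkeeping point worth noting explicitly is the convention for $\widetilde H^{-1}(\varnothing;\mathbb Z)$, which is precisely what makes the formula uniform across all $\omega\subset[m]$ including $\omega=\varnothing$.
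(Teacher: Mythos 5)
Your argument is correct and is essentially the paper's own proof: the paper likewise deduces the corollary from Theorem \ref{Pwtheorem} via the long exact sequence of the pair $(P,P_\omega)$ and the contractibility of $P$. Your extra check of the $\omega=\varnothing$ convention is a fine bookkeeping addition but not a departure in method.
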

The proof follows from the long exact sequence in the reduced cohomology of the pair $(P,P_{\omega})$, since $P$ is contractible.

\subsection{Multigraded Betti numbers and the Poincare duality}
\begin{definition} Define {\em multigraded Betti numbers} \index{multigraded Betti numbers}$\beta^{-i,2\omega}={\rm rank\,} H^{-i,2\omega}(\mathcal{Z}_P)$. We have
$$
\beta^{-i,2\omega}={\rm rank\,}H^{|\omega|-i}(P,P_{\omega})={\rm rank\,}\widetilde H^{|\omega|-i-1}(P_\omega,\mathbb Z).
$$
\end{definition}
From Proposition \ref{ZPeq} the manifold $\widehat{\mathcal{Z}_P}$ is oriented. 
\begin{proposition} We have\index{moment-angle manifold!multigraded Poincare duality}
$$
\beta^{-i,2\omega}=\beta^{-(m-n-i),2([m]\backslash \omega)}.
$$
\end{proposition}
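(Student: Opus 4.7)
My plan is to derive the equality as a consequence of the ordinary Poincaré duality on $\mathcal{Z}_P$, refined by the multigraded structure. First, by Proposition~\ref{ZPeq} the space $\mathcal{Z}_P$ is a framed (hence orientable) smooth submanifold of $\mathbb{C}^m$ of dimension $m+n$, and by Theorem~\ref{thBPR} this smooth structure is compatible with the cellular/multigraded structure described above. Compactness of $\mathcal{Z}_P$ follows from compactness of $P$. Classical Poincaré duality therefore gives
\[
H^k(\mathcal{Z}_P; \mathbb{Z}) \;\cong\; H_{m+n-k}(\mathcal{Z}_P; \mathbb{Z})
\]
via cap product with a fundamental class $[\mathcal{Z}_P] \in H_{m+n}(\mathcal{Z}_P)$.

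The next step is to promote this to a multigraded isomorphism. The top-dimensional cells of $\mathcal{Z}_P$ correspond to vertices $v$ of $P$: such a cell has $\sigma = \sigma(v)$ (so $|\sigma| = n$) and $\omega = [m]$, hence multidegree $(-(m-n), 2[m])$ and total degree $|\sigma| + |\omega| = m+n$. A fundamental class $[\mathcal{Z}_P]$ can be represented by a sum of these top cells, and so it sits in the single multidegree $(-(m-n), 2[m])$. Because the cellular diagonal approximation constructed in Corollary~\ref{DAC} and the cap product it defines are manifestly multigraded, capping with $[\mathcal{Z}_P]$ sends a cohomology class of multidegree $(-i, 2\omega)$ to a homology class of multidegree
\[
\bigl(-(m-n) + i,\; 2[m] - 2\omega\bigr) \;=\; \bigl(-(m-n-i),\; 2([m]\setminus\omega)\bigr).
\]
Thus Poincaré duality restricts to an isomorphism $H^{-i,2\omega}(\mathcal{Z}_P) \cong H_{-(m-n-i),\,2([m]\setminus\omega)}(\mathcal{Z}_P)$.

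Finally, since the universal coefficients theorem preserves the multigrading (the splittings involve only multigraded pieces), the ranks of $H^{k}$ and $H_k$ agree in each multidegree, yielding
\[
\beta^{-i,2\omega} \;=\; \mathrm{rank}\,H_{-(m-n-i),\,2([m]\setminus\omega)}(\mathcal{Z}_P) \;=\; \beta^{-(m-n-i),\,2([m]\setminus\omega)}.
\]

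The step I expect to be the main obstacle is justifying that Poincaré duality really respects the multigrading, i.e.\ that one can choose a fundamental class supported in a single multidegree and that the cap product is multigraded. The cleanest way to handle this is to observe that the multigrading is precisely the weight decomposition under the $T^m$-action on $\mathcal{Z}_P$, so that $T^m$-equivariant Poincaré duality automatically delivers the multigraded isomorphism; alternatively, one can verify the statement directly on the level of the multigraded Koszul-type complex $R^*(P)$ of Theorem~\ref{HRth}, where duality becomes a concrete involution on monomials $v_\sigma u_{\omega\setminus\sigma}$ exchanging $\omega$ with $[m]\setminus\omega$ and $\sigma$ with its vertex-complement.
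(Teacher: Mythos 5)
Your proof is correct and uses essentially the same approach as the paper: both rest on Poincar\'e duality for the closed orientable manifold $\mathcal{Z}_P$ together with the key observation that the fundamental class sits in the single multidegree $(-(m-n),2[m])$ and that the relevant product structure respects the multigrading. The only difference is packaging --- you phrase duality as the cap-product isomorphism $H^k\cong H_{m+n-k}$ combined with the universal coefficients theorem, whereas the paper phrases it as nondegeneracy (mod torsion) of the cup-product pairing $H^*(\mathcal{Z}_P)\otimes H^*(\mathcal{Z}_P)\to\mathbb{Z}$, $\langle\varphi,\psi\rangle=\langle\varphi\psi,[\mathcal{Z}_P]\rangle$, which then decomposes into square blocks indexed by complementary pairs $(\omega,[m]\setminus\omega)$.
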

\begin{proof} From the Poincare duality theorem the 
bilinear form $H^*(\mathcal{Z}_P)\otimes H^*(\mathcal{Z}_P)\to\mathbb Z$ defined by 
$$
\langle\varphi,\psi\rangle=\langle \varphi\psi,[\mathcal{Z}_P]\rangle,
$$
where $[\mathcal{Z}_P]$ is a fundamental cycle, is non-degenerate if we factor out the torsion. This means that there is a basis for which the matrix of the bilinear form has determinant $\pm1$.
For mutligraded ring this means that the matrix consists of blocks corresponding to the forms 
$$
H^{-i,2\omega}(\mathcal{Z}_P)\otimes H^{-(m-n-i),2([m]\backslash \omega)}(\mathcal{Z}_P)\to\mathbb Z.
$$
Hence all blocks are squares and have determinant $\pm1$, which finishes the proof. 
\end{proof}
Let the polytope $P$ be given in the irredundant form $\{\boldsymbol{x}\in\mathbb R^n\colon A\boldsymbol{x}+b\geqslant 0\}$. 
For the vertex $v=F_{i_1}\cap \dots\cap F_{i_n}$ define the submatrix $A_v$ in $A$ corresponding to the rows $i_1,\dots, i_n$. 
\begin{proposition}
The fundamental cycle $[\mathcal{Z}_P]$ can be represented by the following element in $C_{-(m-n),[m]}(\mathcal{Z}_P)$:
$$
Z=\sum\limits_{v\text{ -- vertex}}(-1)^{l(\sigma(v),[m])}{\rm sign} (\det A_v) C_{\sigma(v),[m]}.
$$ 
Then the form
$$
C^{-i,\omega}(\mathcal{Z}_P)\otimes C^{-(m-n-i),[m]\setminus\omega}(\mathcal{Z}_P)\to\mathbb Z
$$
is defined by the property  
$$
\langle u_{[m]\setminus\sigma(v)}v_{\sigma(v)},Z\rangle=(-1)^{l(\sigma(v),m)}{\rm sign}(\det A_v).
$$
\end{proposition}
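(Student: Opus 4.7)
The plan is to verify that the displayed chain $Z$ is a cycle representing the fundamental class $[\mathcal{Z}_P]$; the pairing formula is then an immediate consequence of Theorem \ref{HRth}.

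Since $\dim C_{\sigma,[m]}=2|\sigma|+(m-|\sigma|)=m+|\sigma|$, the top cells of $\mathcal{Z}_P$ (real dimension $m+n$) are exactly the $C_{\sigma(v),[m]}$ indexed by vertices $v$ of $P$. Applying the Leibniz rule with $dV_j=U_j$ and $dU_j=0$,
$$dC_{\sigma(v),[m]}=\sum_{j\in\sigma(v)}(-1)^{\epsilon_{v,j}}C_{\sigma(v)\setminus\{j\},[m]},$$
with Koszul signs $\epsilon_{v,j}\equiv j-p_v(j)\pmod 2$, where $p_v(j)$ denotes the position of $j$ in the sorted list $\sigma(v)$. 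A codimension-one cell $C_{\tau,[m]}$ corresponds to the edge $e=G(\tau)$ of $P$, whose two endpoints $v_1,v_2$ are the only vertices with $\sigma(v_\alpha)\supset\tau$. Writing $\sigma(v_\alpha)=\tau\sqcup\{j_\alpha\}$ with $j_\alpha$ at position $p_\alpha$, the cycle condition at $C_{\tau,[m]}$ becomes
$$\mathrm{sign}(\det A_{v_1})(-1)^{l(\sigma(v_1),[m])+\epsilon_{v_1,j_1}}+\mathrm{sign}(\det A_{v_2})(-1)^{l(\sigma(v_2),[m])+\epsilon_{v_2,j_2}}=0.$$
A direct expansion yields $l(\sigma(v_\alpha),[m])=l(\tau,[m])+(j_\alpha-1)$ and $\epsilon_{v_\alpha,j_\alpha}\equiv j_\alpha-p_\alpha$, so the difference of exponents is $\equiv p_1-p_2\pmod 2$. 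On the other hand, expanding $\det A_{v_\alpha}$ along the row indexed by $j_\alpha$ gives $\det A_{v_\alpha}=(-1)^{p_\alpha-1}\boldsymbol{a}_{j_\alpha}\cdot\boldsymbol{\eta}$ for a common null vector $\boldsymbol{\eta}$ of $A_\tau$, and the geometric fact that $F_{j_1}$ and $F_{j_2}$ lie on opposite sides of $e$ inside $P$ forces $\boldsymbol{a}_{j_1}\cdot\boldsymbol{\eta}$ and $\boldsymbol{a}_{j_2}\cdot\boldsymbol{\eta}$ to have opposite signs; hence $\mathrm{sign}(\det A_{v_1})/\mathrm{sign}(\det A_{v_2})=(-1)^{p_1-p_2+1}$. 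The combinatorial $(p_1-p_2)$ and geometric $(p_1-p_2+1)$ contributions sum to an odd integer, giving the required cancellation and establishing $dZ=0$.

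Once $Z$ is a cycle with $\pm 1$ coefficients, its class satisfies $[Z]=\pm[\mathcal{Z}_P]$: the fundamental class also takes values $\pm 1$ on each top cell (the cell's interior lies in a single smooth chart), so any cycle locally agreeing in absolute value is $\pm 1$ times the fundamental class. To pin down the overall sign, I would perform a local orientation check at a single vertex $v$: using the smooth chart of Theorem \ref{thBPR} diffeomorphic to $\prod_{i\in\sigma(v)}D^2_i\times\prod_{j\notin\sigma(v)}S^1_j$, one compares the cellular orientation of $C_{\sigma(v),[m]}=\tau_1\times\cdots\times\tau_m$ against the framed orientation of Proposition \ref{ZPeq}. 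The Jacobian of the base change $\boldsymbol{x}\mapsto(\boldsymbol{a}_i\boldsymbol{x}+b_i)_{i\in\sigma(v)}$ accounts for the factor $\mathrm{sign}(\det A_v)$, while reconciling the index order on the cell with the $V$-then-$U$ order on the chart accounts for the combinatorial factor $(-1)^{l(\sigma(v),[m])}$; this is the same sign that appears for analogous reasons in the map $\xi_\omega$ of the proof of Theorem \ref{Pwtheorem}.

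The pairing statement is then immediate: by Theorem \ref{HRth}, $\zeta(u_{[m]\setminus\sigma(v)}v_{\sigma(v)})=C^*_{\sigma(v),[m]}$ pairs to $\delta_{v,v'}$ with $C_{\sigma(v'),[m]}$, so $\langle u_{[m]\setminus\sigma(v)}v_{\sigma(v)},Z\rangle$ simply reads off the coefficient $(-1)^{l(\sigma(v),[m])}\mathrm{sign}(\det A_v)$. I expect the main obstacle to be the sign verification in the cycle step, where Leibniz-Koszul, combinatorial, and geometric signs must conspire along every edge; the local orientation identification at a single vertex is by contrast a direct calculation in the framed normal bundle.
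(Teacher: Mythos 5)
Your proof is correct and takes a genuinely different route from the paper. The paper's own proof is a one-sentence sketch: it pushes the canonical orientation of $P^n\times\mathbb{T}^m$ forward under the Davis--Januszkiewicz homeomorphism $(\boldsymbol{x},\boldsymbol{t})\mapsto\boldsymbol{t}s_P(\boldsymbol{x})$ to orient each top cell $C_{\sigma(v),[m]}$, reading the coefficient $(-1)^{l(\sigma(v),[m])}\mathrm{sign}(\det A_v)$ off directly. You instead split the problem into (i) verifying $dZ=0$ by a Leibniz-rule computation across every edge $G(\tau)$, and (ii) fixing the overall sign by a single local-chart comparison. Step (i) is a real addition not present in the paper, and your sign bookkeeping there is correct: the combinatorial exponent reduces mod $2$ to $l(\tau,[m])-1-p_\alpha$, the Laplace expansion $\det A_{v_\alpha}=(-1)^{p_\alpha-1}\boldsymbol{a}_{j_\alpha}\cdot\boldsymbol{\eta}$ holds with $\boldsymbol{\eta}$ a common null vector of $A_\tau$ spanning the edge direction, and the two endpoint facets lying on opposite sides of the edge supplies the extra minus sign, so the two contributions cancel along every codimension-one cell. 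One caveat on step (ii): the explanation that $(-1)^{l(\sigma(v),[m])}$ comes from reconciling the index order on the cell with a $V$-then-$U$ order on the chart is not quite right as stated, because permuting an even-dimensional $V_i$ past an odd-dimensional $U_j$ carries no Koszul sign. The factor actually arises from splitting each $D^2_i$ into radial and angular parts $I_i\times S^1_i$ inside the Davis--Januszkiewicz product $P\times\mathbb{T}^m$ and then collecting the $n$ one-dimensional radial factors to the front past the interleaved circle factors; this odd-by-odd reordering produces exactly $(-1)^{\sum_k(i_k-1)}=(-1)^{l(\sigma(v),[m])}$, while the change of coordinates $\boldsymbol{x}\mapsto(\boldsymbol{a}_i\boldsymbol{x}+b_i)_{i\in\sigma(v)}$ supplies $\mathrm{sign}(\det A_v)$. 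With this correction the scheme is sound; it is in fact more detailed than the paper's sketch, and the cycle check in (i) provides an independent cross-edge consistency confirmation of the sign formula which the paper leaves implicit.
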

The idea of the proof is to use the Davis--Januszkiewicz' construction. The space $P^n\times \mathbb T^m$ has the orientation defined by orientations of $P^n$ and $S^1$. Then the mapping 
$$
P^n\times \mathbb T^m\to \mathcal{Z}_P\colon (\boldsymbol{x},\boldsymbol{t})\to \boldsymbol{t}s_P(\boldsymbol{x})
$$
defines the orientation of the cells $C_{\sigma(v),[m]}$.
\subsection{Multiplication in terms of unions of facets}
For pairs of spaces define the direct product as
$$
(X,A)\times(Y,B)=(X\times Y,A\times Y\cup X\times B).
$$
There is a canonical multiplication in the cohomology of cellular pairs 
$$
H^k(X,A)\otimes H^l(X,B)\to H^{k+l}(X, A\cup B)
$$
defined in the cellular cohomology by the rule
$$
H^k(X,A)\otimes H^l(X,B)\xrightarrow{\times} H^{k+l}((X,A)\times(X,B))\xrightarrow{\widetilde\Delta^*}H^{k+l}(X,A\cup B),
$$
where $\widetilde\Delta$ is a cellular approximation of the diagonal mapping 
$$
\Delta\colon(X,A\cup B)\to(X,A)\times(X,B).
$$
Thus for any simple polytope $P$ and subsets $\omega_1,\omega_2\subset[m]$, we have the canonical multiplication
$$
H^{k}(P,P_{\omega_1})\otimes H^{l}(P, P_{\omega_2})\to
H^{k+l}(P, P_{\omega_1\cup \omega_2}).
$$

\begin{theorem}\label{MPTheorem}
There is the ring isomorphism
$$
H^*(\mathcal{Z}_P)\simeq \bigoplus\limits_{\omega\subset [m]}H^*(P, P_\omega)
$$
where the multiplication on the right hand side 
$$
H^{|\omega_1|-k}(P,P_{\omega_1})\otimes H^{|\omega_2|-l}(P, P_{\omega_2})\to
H^{|\omega_1|+|\omega_2|-k-l}(P, P_{\omega_1\cup \omega_2})
$$
is trivial if $\omega_1\cap\omega_2\ne\varnothing$, and for the case $\omega_1\cap \omega_2=\varnothing$ is given by the rule 
$$
a\otimes b\to (-1)^{l(\omega_2,\omega_1)+|\omega_1|l}ab,
$$ 
where  $a\otimes b\to ab$ is the canonical multiplication.
\end{theorem}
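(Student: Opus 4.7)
The plan is to use the two cohomology descriptions of $\mathcal Z_P$ already provided in the excerpt. Theorem~\ref{Pwtheorem}, via the cellular isomorphism $\xi_\omega$ built in its proof, gives the additive isomorphism $H^{-i,2\omega}(\mathcal Z_P)\cong H^{|\omega|-i}(P,P_\omega)$. Theorem~\ref{HRth} identifies $H^*(\mathcal Z_P)$ with the cohomology of the multigraded DGA $R^*(P)=\Lambda[u_1,\dots,u_m]\otimes\mathbb Z[P]/(u_iv_i,v_i^2)$ as a graded ring. What remains is to translate the product from $R^*(P)$ into the canonical relative cup product on the right-hand side, and to pinpoint the sign.

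For the vanishing clause, I would work directly in $R^*(P)$. A class in bidegree $(-i,2\omega)$ has a monomial representative $v_\sigma u_{\omega\setminus\sigma}$ with $\sigma\subset\omega$. If $\omega_1\cap\omega_2\ne\varnothing$, pick any $i\in\omega_1\cap\omega_2$; depending on whether $i$ belongs to $\sigma_1$ or $\omega_1\setminus\sigma_1$ (and similarly on the second factor) the product $v_{\sigma_1}u_{\omega_1\setminus\sigma_1}\cdot v_{\sigma_2}u_{\omega_2\setminus\sigma_2}$ is killed by one of the relations $v_i^2=0$, $u_iv_i=0$, or $u_i\wedge u_i=0$. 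Thus the product vanishes at the cochain level already, and hence in cohomology.

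For $\omega_1\cap\omega_2=\varnothing$, I would first compute in $R^*(P)$:
\[
(v_{\sigma_1}u_{\omega_1\setminus\sigma_1})(v_{\sigma_2}u_{\omega_2\setminus\sigma_2})=\pm\,v_{\sigma_1\sqcup\sigma_2}u_{(\omega_1\sqcup\omega_2)\setminus(\sigma_1\sqcup\sigma_2)},
\]
the sign being a shuffle sign expressed through the function $l$ of the excerpt, arising from reordering anticommuting $u$-generators. Then I would apply $\xi_{\omega_1}$, $\xi_{\omega_2}$, and $\xi_{\omega_1\cup\omega_2}$ (each contributing a factor $(-1)^{l(\sigma,\omega)}$) to pass to the cellular cochain pairs $(\mathbb I_{P,\omega_i},\mathbb I_{P,\omega_i}^0)\simeq(P,P_{\omega_i})$, and compare with the canonical relative cup product computed using a cubical cellular diagonal approximation as in the proof of Theorem~\ref{Pwtheorem}. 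The total sign discrepancy between $\xi_{\omega_1\cup\omega_2}$ applied to the $R^*(P)$-product and the canonical cup product of the $\xi_{\omega_i}$-images is the product of these contributions, which by the bilinearity relations $l(\sigma_1\sqcup\sigma_2,\omega)=l(\sigma_1,\omega)+l(\sigma_2,\omega)$, $l(\sigma,\omega_1\sqcup\omega_2)=l(\sigma,\omega_1)+l(\sigma,\omega_2)$, and $l(\sigma,\tau)+l(\tau,\sigma)=|\sigma||\tau|$ for disjoint $\sigma,\tau$, collapses to the asserted factor $(-1)^{l(\omega_2,\omega_1)+|\omega_1|\cdot l}$.

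The principal obstacle is the sign calculation. Four independent sign conventions must be reconciled: the shuffle sign inside $R^*(P)$, the $(-1)^{l(\sigma,\omega)}$ appearing in $\xi_\omega$, the Koszul sign of the cellular cochain multiplication on polydisks (Corollary~\ref{DAC}), and the Koszul sign of the cross product in the canonical relative cup product. None of these can be rescaled freely --- $\xi_\omega$ in particular is fixed by the compatibility with differentials established in the proof of Theorem~\ref{Pwtheorem} --- so the residual sign must collapse exactly to the stated expression. The vanishing and additive portions of the theorem, by contrast, are essentially immediate from the algebraic structure of $R^*(P)$.
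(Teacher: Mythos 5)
Your plan reproduces the paper's own strategy: identify $H^*(\mathcal Z_P)$ with $H[R^*(P),d]$, use the cellular isomorphisms $\xi_\omega$ of Theorem~\ref{Pwtheorem} to pass to $(\mathbb I_{P,\omega},\mathbb I_{P,\omega}^0)\simeq(P,P_\omega)$, and track the residual sign by collecting the shuffle sign in $R^*(P)$, the three $(-1)^{l(\sigma,\omega)}$ factors from $\xi_{\omega_1},\xi_{\omega_2},\xi_{\omega_1\sqcup\omega_2}$, and the Koszul sign $(-1)^{l(\sigma_1,\sigma_2)}$ from the cell cross product, then collapse them via the bilinearity of $l$ and the identity $l(\sigma,\tau)+l(\tau,\sigma)=|\sigma||\tau|$ for disjoint sets. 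What remains to be written out is exactly the paper's closing computation together with the short commutative-diagram argument (via the retractions $r^\omega$) showing that $i^*_{\omega_1,\omega_2}\circ\times$ in the cubical complex does compute the canonical relative cup product; neither presents any difficulty, so the proposal is sound and matches the source.
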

\noindent{\bf Comment:} The statement of the theorem presented in \cite{Bu-Pa15} as Exercise 3.2.14 does not contain the specialization of the sign. 
\begin{proof}
We will identify $(P,P_{\omega})$ with $(\mathbb I_P,\mathbb I_{P}^\omega)$ and $H^*(\mathcal{Z}_P)$ with $H[R^*(P),d]$.
If $\omega_1\cap \omega_2\ne\varnothing$, then the multiplication 
$$
H^{-k,2\omega_1}(\mathcal{Z}_P)\otimes H^{-l,2\omega_2}(\mathcal{Z}_P)\to H^{-(k+l),2(\omega_1\cup\omega_2)}(\mathcal{Z}_P)
$$ 
is trivial by Theorem \ref{HRth}. Let $\omega_1\cap \omega_2=\varnothing$. 
We have the commutative diagram of mappings
$$
\begin{CD}
(\mathbb I_{P,\omega_1\sqcup \omega_2},\mathbb I_{P,\omega_1\sqcup \omega_2}^0) @>i_{\omega_1,\omega_2}>>(\mathbb I_{P,\omega_1},\mathbb I_{P,\omega_1}^0)\times (\mathbb I_{P,\omega_2},\mathbb I_{P,\omega_2}^0)\\
  @Ar^{\omega_1\sqcup\omega_2}AA @AAr^{\omega_1}\times r^{\omega_2}A @.\\
 (\mathbb I_P, \mathbb I_P^{\omega_1\sqcup \omega_2}) @>\Delta>>(\mathbb I_P,\mathbb I_P^{\omega_1})\times (\mathbb I_P,\mathbb I_P^{\omega_2})
\end{CD}
$$
which gives the commutative diagram
$$
\begin{CD}
H^*\left((\mathbb I_{P,\omega_1},\mathbb I_{P,\omega_1}^0)\times (\mathbb I_{P,\omega_2},\mathbb I_{P,\omega_2}^0)\right)@>i^*_{\omega_1,\omega_2}>>H^*\left(\mathbb I_{P,\omega_1\sqcup \omega_2},\mathbb I_{P,\omega_1\sqcup \omega_2}^0\right) \\
  @V(r^{\omega_1}\times r^{\omega_2})^*VV @VV(r^{\omega_1\sqcup\omega_2})^*V @.\\
H^*\left((\mathbb I_P,\mathbb I_P^{\omega_1})\times (\mathbb I_P,\mathbb I_P^{\omega_2})\right)@>\Delta^*>> H^*\left(\mathbb I_P,\mathbb I_P^{\omega_1\sqcup \omega_2}\right)
\end{CD}
$$
where the vertical mappings are isomorphisms.
Together with the functoriality  of the $\times$-product in cohomology this proves the theorem provided the commutativity of the diagram
{\tiny
$$
\begin{CD}
C^{-k,2\omega_1}(\mathcal{Z}_P)\otimes C^{-l,2\omega_2}(\mathcal{Z}_P)@>\widetilde{\Delta}\circ\times>>C^{-(k+l),2(\omega_1\sqcup\omega_2)}(\mathcal{Z}_P)\\
@V\xi_{\omega_1}\otimes\xi_{\omega_2}VV @VV\xi_{\omega_1\sqcup\omega_2}V @.\\
C^{|\omega_1|-k}\left(\mathbb I_{P,\omega_1},\mathbb I_{P,\omega_1}^0\right)\otimes C^{|\omega_2|-l}\left(\mathbb I_{P,\omega_2},\mathbb I_{P,\omega_2}^0\right)@>i_{\omega_1,\omega_2}^*\circ\times>>C^{|\omega_1|+|\omega_2|-k-l}\left(\mathbb I_{P,\omega_1\sqcup \omega_2},\mathbb I_{P,\omega_1\sqcup \omega_2}^0\right)
\end{CD}
$$}
where the lower arrow is the composition of two mappings:{\tiny
\begin{gather*}
C^{|\omega_1|-k}\left(\mathbb I_{P,\omega_1},\mathbb I_{P,\omega_1}^0\right)\otimes C^{|\omega_2|-l}\left(\mathbb I_{P,\omega_2},\mathbb I_{P,\omega_2}^0\right)\xrightarrow{\times}C^{|\omega_1|+|\omega_2|-k-l}\left((\mathbb I_{P,\omega_1},\mathbb I_{P,\omega_1}^0)\times (\mathbb I_{P,\omega_2},\mathbb I_{P,\omega_2}^0)\right)\\
C^{|\omega_1|+|\omega_2|-k-l}\left((\mathbb I_{P,\omega_1},\mathbb I_{P,\omega_1}^0)\times (\mathbb I_{P,\omega_2},\mathbb I_{P,\omega_2}^0)\right)\xrightarrow{i_{\omega_1,\omega_2}^*} C^{|\omega_1|+|\omega_2|-k-l}\left(\mathbb I_{P,\omega_1\sqcup \omega_2},\mathbb I_{P,\omega_1\sqcup \omega_2}^0\right)
\end{gather*}}
For this we have 
\begin{multline*}
\xi_{\omega_1\sqcup\omega_2}((u_{\omega_1\setminus\sigma_1}v_{\sigma_1})(u_{\omega_2\setminus\sigma_2}v_{\sigma_2}))=\\
=(-1)^{l(\omega_1\setminus\sigma_1,\omega_2\setminus\sigma_2)}\xi_{\omega_1\sqcup\omega_2}(u_{(\omega_1\sqcup\omega_2)\setminus(\sigma_1\sqcup\sigma_2)}v_{\sigma_1\sqcup\sigma_2})=\\
=(-1)^{l(\omega_1\setminus\sigma_1,\omega_2\setminus\sigma_2)}(-1)^{l(\sigma_1\sqcup\sigma_2,\omega_1\sqcup\omega_2)}E^*_{\sigma_1\sqcup\sigma_2}.
\end{multline*}
On the other hand
\begin{multline*}
i_{\omega_1,\omega_2}^*\left(\xi_{\omega_1}(u_{\omega_1\setminus\sigma_1}v_{\sigma_1})\times\xi_{\omega_2}(u_{\omega_2\setminus\sigma_2}v_{\sigma_2})\right)=\\
=(-1)^{l(\sigma_1,\omega_1)}(-1)^{l(\sigma_2,\omega_2)}i_{\omega_1,\omega_2}^*(E^*_{\sigma_1}\times E^*_{\sigma_2})=\\
=(-1)^{l(\sigma_1,\omega_1)}(-1)^{l(\sigma_2,\omega_2)}(-1)^{l(\sigma_1,\sigma_2)}E^*_{\sigma_1\sqcup\sigma_2},
\end{multline*}
where the last equality follows from the the following calculation:
\begin{gather*}
(i_{\omega_1,\omega_2})_*(E_{\sigma_1\sqcup\sigma_2})=(-1)^{l(\sigma_1,\sigma_2)}E_{\sigma_1}\times E_{\sigma_2}.
\end{gather*}
Now let us calculate the difference of signs:
\begin{multline*}
\left(l(\omega_1\setminus\sigma_1,\omega_2\setminus\sigma_2)+l(\sigma_1\sqcup\sigma_2,\omega_1\sqcup\omega_2)\right)-\\
\left(l(\sigma_1,\omega_1)+l(\sigma_2,\omega_2)+l(\sigma_1,\sigma_2)\right)\mod 2=\\
=l(\omega_1\setminus\sigma_1,\omega_2\setminus\sigma_2) + l(\sigma_1,\omega_2) + l(\sigma_2,\omega_1) + l(\sigma_1,\sigma_2)\mod 2=\\
=l(\omega_1\setminus\sigma_1,\omega_2\setminus\sigma_2)+l(\sigma_1,\omega_2\setminus\sigma_2)+l(\sigma_2,\omega_1)\mod 2=\\
=l(\omega_1,\omega_2\setminus\sigma_2)+l(\sigma_2,\omega_1)\mod 2=\\
=l(\omega_1,\omega_2\setminus\sigma_2)+l(\omega_1,\sigma_2)+|\sigma_2||\omega_1|\mod 2=l(\omega_1,\omega_2)+|\sigma_2||\omega_1|\mod 2=\\
=l(\omega_2,\omega_1)+|\omega_1||\omega_2|+|\omega_1|(|\omega_2|-l)\mod 2=l(\omega_2,\omega_1)+|\omega_1|l\mod 2.
\end{multline*}
\end{proof}
\subsection{Description in terms of related simplicial complexes}\index{moment-angle complex!description of cohomology in terms of related simplicial complexes}
\begin{definition}
An {\em (abstract) simplicial complex}\index{abstract simplicial complex} $K$ on the vertex set $[m]=\{1,\dots,m\}$ is the set of subsets $K\subset 2^{[m]}$ such that
\begin{enumerate}
\item $\varnothing \in K$;
\item $\{i\}\in K$ for $i=1,\dots,m$;
\item If $\sigma\subset\tau$ and $\tau\in K$, then $\sigma\in K$.
\end{enumerate}
The sets $\sigma\in K$ are called {\em simplices} \index{abstract simplicial complex!simplex}. For an abstract simplicial complex $K$ there is a {\em geometric realization}\index{abstract simplicial complex!geometric realization} $|K|$ as a subcomplex in the simplex $\Delta^{m-1}$ with the vertex set $[m]$.   
\end{definition}
For a simple polytope $P$ define an abstract simplicial complex $K$ on the vertex set $[m]$ by the rule 
$$
\sigma\in K\text{ if and only if }\sigma=\sigma(G)=\{i\colon G\subset F_i\}\text{ for some }G\in L(P)\setminus\{\varnothing\}.
$$ 
We have the combinatorial equivalence $K\simeq \partial P^*$. For any subset $\omega\subset[m]$ define the full subcomplex $K_{\omega}=\{\sigma\in K\colon \sigma\subset\omega\}$. 

\begin{definition}
For two simplicial complexes $K_1$ and $K_2$ on the vertex sets ${\rm vert}(K_1)$ and ${\rm vert}(K_2)$ {\em join} \index{join} \index{abstract simplicial complex!join} $K_1*K_2$ is the simplicial complex on the vertex set ${\rm vert}(K_1)\sqcup{\rm vert}(K_2)$ with simplices $\sigma_1\sqcup\sigma_2$, $\sigma_1\in K_1$, $\sigma_2\in K_2$.
\end{definition}
A cone $CK_{\omega}$ is by definition $\{0\}* K_{\omega}$, where $\{0\}$ is the simplicial complex with one vertex $\{0\}$.
\begin{proposition}
For any $\varnothing\ne\omega\subset[m]$ we have a homeomorphism of pairs 
$$
(\mathbb I_{P,\omega},\mathbb I_{P,\omega}^0)\simeq(C|K_{\omega}|,|K_{\omega}|).
$$
\end{proposition}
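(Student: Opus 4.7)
The plan is to reduce the statement to a standard identification of a cubical cone on a simplicial complex with the ordinary simplicial cone, via a piecewise-linear radial map.

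First, I would unfold the definition of $\mathbb{I}_{P,\omega}$. For a nonempty face $G$ with $\sigma(G)\subset\omega$, the cube $\mathbb{I}_{P,G}$ consists of $(x_1,\dots,x_m)$ with $x_i\in[0,1]$ for $i\in\sigma(G)$ and $x_i=1$ for $i\notin\sigma(G)$. Since $P$ is simple, a nonempty subset $\tau\subset[m]$ has the form $\sigma(G)$ for some nonempty face $G$ if and only if $\bigcap_{i\in\tau}F_i\ne\varnothing$, i.e., iff $\tau\in K$. Projecting on the $\omega$-coordinates (the others are pinned to $1$) I get the explicit description
$$\mathbb{I}_{P,\omega}\cong\bigl\{(x_i)_{i\in\omega}\in[0,1]^{\omega}\colon\{i\in\omega\colon x_i<1\}\in K_\omega\bigr\},$$
the apex being $(1,\dots,1)$ (corresponding to $\varnothing\in K_\omega$), and $\mathbb{I}_{P,\omega}^0$ being the subset where at least one $x_i$ equals $0$.

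Second, the change of variables $\mu_i=1-x_i$ moves the apex to the origin and rewrites $\mathbb{I}_{P,\omega}$ as the \emph{cubical cone}
$$C_{\square}|K_\omega|=\bigl\{(\mu_i)_{i\in\omega}\in[0,1]^{\omega}\colon\{i\colon\mu_i>0\}\in K_\omega\bigr\},$$
with $\mathbb{I}_{P,\omega}^0$ becoming $\{\mu\colon\max_i\mu_i=1\}$. I realize the ordinary cone as
$$C|K_\omega|=\bigl\{(\nu_i)_{i\in\omega}\in[0,1]^{\omega}\colon\textstyle\sum_i\nu_i\leqslant 1,\ \{i\colon\nu_i>0\}\in K_\omega\bigr\},$$
with base $|K_\omega|=\{\nu\colon\sum_i\nu_i=1\}$.

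Third, I define a radial homeomorphism $\phi\colon C_{\square}|K_\omega|\to C|K_\omega|$ by $\phi(0)=0$ and $\phi(\mu)=\tfrac{\max_i\mu_i}{\sum_i\mu_i}\,\mu$ for $\mu\ne 0$, with candidate inverse $\psi(0)=0$, $\psi(\nu)=\tfrac{\sum_i\nu_i}{\max_i\nu_i}\,\nu$. Both formulas preserve the support $\{i\colon\mu_i>0\}$, hence are well-defined on every face of either cone and agree on intersections of faces. Continuity at $0$ follows from $|\phi(\mu)|\leqslant|\mu|$ and $|\psi(\nu)|\leqslant|\omega|\,|\nu|$. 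A short computation yields $\phi\circ\psi=\mathrm{id}$ and $\psi\circ\phi=\mathrm{id}$, so $\phi$ is a homeomorphism. It sends the apex to the cone point, and $\max_i\mu_i=1$ implies $\sum_i\phi(\mu)_i=1$, so $\phi$ restricts to a homeomorphism $\mathbb{I}_{P,\omega}^0\cong|K_\omega|$.

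The main obstacle is the first step: verifying the explicit cubical description of $\mathbb{I}_{P,\omega}$, which relies on the simple-polytope fact that the admissible supports $\sigma(G)$ are exactly the simplices of $K$. Once this is in hand, the remaining homeomorphism is the classical radial identification of a cube (with a distinguished corner) and a simplex, upgraded to respect the stratification by the simplices of $K_\omega$.
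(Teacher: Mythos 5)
Your proof is correct but proceeds by a genuinely different construction than the paper's. The paper takes the barycentric subdivision of $K_\omega$, defines the map $c_K$ by sending the barycenter $\boldsymbol{y}_\sigma$ to the vertex of $\mathbb I^m$ with $0$'s in the $\sigma$-coordinates and $1$'s elsewhere (and the cone point to $(1,\dots,1)$), and extends linearly on each simplex of the subdivision; the result is a piecewise-linear homeomorphism, deliberately engineered to be compatible with the barycentric embedding $b_P\colon P\to\mathbb I_P$ constructed earlier — indeed the paper immediately exploits this compatibility in the next subsection to obtain a simplicial isomorphism between the barycentric subdivisions of $\partial P$ and $K$. You instead bypass barycentric subdivisions entirely: you first reduce $\mathbb I_{P,\omega}$ to the explicit cubical cone description (which is the same use of simplicity that underlies the paper's argument), then produce a closed-form radial homeomorphism $\phi(\mu)=\bigl(\max_i\mu_i/\sum_i\mu_i\bigr)\mu$ between the cubical cone and the linear cone, preserving supports and hence the stratification by $K_\omega$. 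Your map is not piecewise-linear (it is a ratio of linear forms on each sector $\max_i\mu_i=\mu_{i_0}$), but since the proposition only asserts a homeomorphism of pairs, this is fine and the verification you give (support-preservation, $\phi\circ\psi=\psi\circ\phi=\mathrm{id}$, continuity at the apex via $\max\leqslant\sum\leqslant|\omega|\max$) is complete. The trade-off is that your argument is more elementary and self-contained, while the paper's choice of map sets up the specific simplicial identification of $K_\omega\subset\partial P$ it needs in later computations; if you were carrying on in the paper's development you would still want a version of the barycentric map at that point.
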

\begin{proof}
For any simplex $\sigma\in K$ consider it's barycenter $\boldsymbol{y}_{\sigma}\in |K|$. Then we have a barycentric subdivision of $K$ consisting of simplices 
$$
\Delta_{\sigma_1\subset\dots\subset\sigma_k}={\rm conv}\{\boldsymbol{y}_{\sigma_1},\dots,\boldsymbol{y}_{\sigma_k}\}, \,k\geqslant 1.
$$
Define the mapping 
$c_K\colon K\to \mathbb I^m$ as 
$$
c_K(\boldsymbol{y}_{\sigma_k})=(y_1,\dots,y_m),y_i=\begin{cases}0,&i\in\sigma,\\
1,&i\notin\sigma\end{cases}
$$
on the vertices of the barycentric subdivision, $c_K(\{0\})=(1,\dots,1)$, and on the simplices and cones on simplices by linearity.
This defines the piecewise linear homeomorphisms of pairs 
$$
(C|K|,|K|)\to (\mathbb I_P,\mathbb I_P^0), \text{ and }(C|K_{\omega}|,|K_{\omega}|)\to (\mathbb I_{P,\omega},\mathbb I_{P,\omega}^0).
$$
\end{proof}
\begin{corollary}\label{PKw}
We have the homotopy equivalence $P_{\omega}\sim |K_{\omega}|$.
\end{corollary}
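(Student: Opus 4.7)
The plan is to combine three identifications already established in the text into a single chain of homotopy equivalences between $P_\omega$ and $|K_\omega|$.

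First I would identify $P_\omega$ with $\mathbb I_P^\omega$. The barycentric embedding $b_P$ gives the homeomorphism $P\simeq \mathbb I_P$, and unwinding the definitions of $\mathbb I_P^\omega$ and $P_\omega=\bigcup_{i\in\omega}F_i$ one sees that $b_P$ restricts to a homeomorphism of pairs $(P,P_\omega)\simeq(\mathbb I_P,\mathbb I_P^\omega)$; this is essentially the same identification used in the proof of Theorem \ref{Pwtheorem}.

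Next I would invoke the deformation retraction $r^\omega_t$ constructed in the proof of Theorem \ref{Pwtheorem}, which gives a deformation retraction of pairs
$$
r^\omega\colon(\mathbb I_P,\mathbb I_P^\omega)\to(\mathbb I_{P,\omega},\mathbb I_{P,\omega}^0).
$$
Restricting to the second components yields a homotopy equivalence $\mathbb I_P^\omega\sim \mathbb I_{P,\omega}^0$.

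Finally I would apply the proposition immediately preceding the corollary, which provides the homeomorphism of pairs $(\mathbb I_{P,\omega},\mathbb I_{P,\omega}^0)\simeq(C|K_\omega|,|K_\omega|)$, and hence a homeomorphism $\mathbb I_{P,\omega}^0\simeq |K_\omega|$. Concatenating, we obtain
$$
P_\omega\;\simeq\;\mathbb I_P^\omega\;\sim\;\mathbb I_{P,\omega}^0\;\simeq\;|K_\omega|,
$$
which gives the desired homotopy equivalence. The only point requiring a moment of care is verifying that $r^\omega_t$ really preserves the subspaces in question so that its restriction lands in $\mathbb I_{P,\omega}^0$; this follows because $r^\omega_t$ only modifies coordinates with index not in $\omega$, so a coordinate equal to $0$ in $\omega$ remains $0$ throughout the homotopy.
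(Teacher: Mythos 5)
Your proof is correct and is essentially the argument the paper intends: the corollary is stated without proof right after the proposition giving $(\mathbb I_{P,\omega},\mathbb I_{P,\omega}^0)\simeq(C|K_{\omega}|,|K_{\omega}|)$, and the chain $P_\omega\simeq\mathbb I_P^\omega\sim\mathbb I_{P,\omega}^0\simeq|K_\omega|$ using the homeomorphism $b_P$ and the deformation retraction $r^\omega_t$ from the proof of Theorem~\ref{Pwtheorem} is precisely what is implicitly required. Your closing check that $r^\omega_t$ fixes coordinates indexed by $\omega$ (so it preserves $\mathbb I_P^\omega$ and lands in $\mathbb I_{P,\omega}^0$) correctly addresses the only nontrivial point.
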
 

For the simplicial complex  $K_{\omega}$ we have the simplicial chain complex with the free abelian groups of chains $C_i(K_{\omega})$, $i\geqslant-1$, generated by simplices $\sigma\in K_{\omega}$, $|\sigma|=i+1$, (including the empty simplex $\varnothing$, $|\varnothing|=0$), and the boundary homomorphism 
$$
d\colon C_i(K_{\omega})\to C_{i-1}(K_{\omega}),\quad d\sigma=\sum\limits_{i\in\sigma}(-1)^{l(i,\sigma)}(\sigma\setminus \{i\}).
$$
There is the cochain complex of groups $C^i(K_{\omega})={\rm Hom}(C_i(K_{\omega}),\mathbb Z)$. Define the cochain $\sigma^*$ by the rule $\langle \sigma^*,\tau\rangle=\delta(\sigma,\tau)$. The coboundary homomorphism $\partial =d^*$ can be calculated by the rule
$$
\partial \sigma^*=\sum\limits_{j\in\omega\setminus\sigma,\sigma\sqcup\{j\}\in K_{\omega}}(-1)^{l(j,\sigma)}(\sigma\sqcup\{j\})^*
$$
The homology groups of the chain and cochain complexes are $\widetilde{H}_{i}(K_{\omega})$ and $\widetilde{H}^{i}(K_{\omega})$  respectively.
The following result is proved similarly to Theorem \ref{Pwtheorem} and Theorem \ref{MPTheorem}. 
\begin{theorem}\index{moment-angle complex!description of cohomology in terms of related simplicial complexes}
For any $\omega\subset[m]$ the mapping 
$$
\widehat\xi_{\omega}\colon R^{-i,2\omega}\to C^{|\omega|-i-1}(K_{\omega}),\quad \widehat\xi_{\omega}(u_{\omega\setminus\sigma}v_{\sigma})=(-1)^{l(\sigma,\omega)}\sigma^*
$$ 
is the isomorphism of cochain complexes $\{C^{-i,2\omega}(\mathcal{Z}_P)\}_{i\geqslant 0}$
and $\{C^{|\omega|-i-1}(K_{\omega})\}_{i\geqslant 0}$. It induces the isomorphism  $H^{-i,2\omega}(\mathcal{Z}_P)\simeq \widetilde{H}^{|\omega|-i-1}(K_{\omega}) $ and the isomorphism of rings 
$$
H^*(\mathcal{Z}_P)\simeq \bigoplus\limits_{\omega\subset [m]}\widetilde{H}^*(K_\omega)
$$
where the multiplication on the right hand side 
$$
\widetilde{H}^p(K_{\omega_1})\otimes \widetilde H^{q}(K_{\omega_2})\to
\widetilde H^{p+q+1}(K_{\omega_1\cup \omega_2})
$$
is trivial if $\omega_1\cap\omega_2\ne\varnothing$, and for the case $\omega_1\cap \omega_2=\varnothing$ is given by the mapping of cochains defined by the rule 
$$
\sigma_1^*\otimes\sigma_2^*\to(-1)^{l(\omega_1,\omega_2)+l(\sigma_1,\sigma_2)+|\omega_1||\sigma_2|}(\sigma_1\sqcup\sigma_2)^*.
$$ 
\end{theorem}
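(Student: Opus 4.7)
The plan is to prove the theorem in three stages: (i) establish that $\widehat\xi_\omega$ is a bijection of generating sets respecting the cochain differentials; (ii) deduce the cohomology isomorphism $H^{-i,2\omega}(\mathcal Z_P)\simeq\widetilde H^{|\omega|-i-1}(K_\omega)$; (iii) translate the product structure on $H[R^*(P),d]$ from Theorem~\ref{HRth} into the simplicial cochain picture. Throughout, I will use the already-established identification $\sigma\in K$ if and only if $G(\sigma)\ne\varnothing$, which stems from $K\simeq\partial P^*$.

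\textbf{Cochain isomorphism.} Since $R^{-i,2\omega}$ is freely generated by monomials $u_{\omega\setminus\sigma}v_\sigma$ with $\sigma\subset\omega$, $|\omega\setminus\sigma|=i$, and $G(\sigma)\ne\varnothing$, the signed correspondence $u_{\omega\setminus\sigma}v_\sigma\mapsto(-1)^{l(\sigma,\omega)}\sigma^*$ sends a basis to $\pm$ a basis of $C^{|\omega|-i-1}(K_\omega)$, hence is a graded additive isomorphism. To check compatibility with differentials I would apply the simplicial coboundary to $(-1)^{l(\sigma,\omega)}\sigma^*$, obtaining $\sum_{j\in\omega\setminus\sigma,\,\sigma\sqcup\{j\}\in K_\omega}(-1)^{l(\sigma,\omega)+l(j,\sigma)}(\sigma\sqcup\{j\})^*$, and compare it with $\widehat\xi_\omega$ applied to $d(u_{\omega\setminus\sigma}v_\sigma)$, which produces $\sum_j(-1)^{l(\sigma\sqcup\{j\},\omega)+l(j,\omega\setminus\sigma)}(\sigma\sqcup\{j\})^*$. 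Agreement reduces to the congruence
\[
l(\sigma\sqcup\{j\},\omega)+l(j,\omega\setminus\sigma)\equiv l(\sigma,\omega)+l(j,\sigma)\pmod{2},
\]
which is exactly the identity already verified at the end of the proof of Theorem~\ref{Pwtheorem}. Passing to cohomology gives the additive claim; alternatively, Corollary~\ref{Pocor} together with $P_\omega\sim|K_\omega|$ of Corollary~\ref{PKw} yields the same isomorphism.

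\textbf{Ring structure.} For multiplication I would compute the product of two generators inside $R^*(P)$ directly: moving $v_{\sigma_1}$ past $u_{\omega_2\setminus\sigma_2}$ is sign-free, and sorting the two blocks of exterior generators yields
\[
(u_{\omega_1\setminus\sigma_1}v_{\sigma_1})(u_{\omega_2\setminus\sigma_2}v_{\sigma_2})=(-1)^{l(\omega_1\setminus\sigma_1,\omega_2\setminus\sigma_2)}\,u_{\omega\setminus\sigma}v_\sigma
\]
whenever $\omega_1\cap\omega_2=\varnothing$ (and hence $\sigma_1\cap\sigma_2=\varnothing$); if $\omega_1\cap\omega_2\ne\varnothing$ the relations $u_iv_i=0$, $v_i^2=0$ and $u_i\wedge u_i=0$ force the product to vanish, giving the triviality claim. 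Applying $\widehat\xi_{\omega_1\sqcup\omega_2}$ to the formula above and dividing by $\widehat\xi_{\omega_1}(\cdot)\widehat\xi_{\omega_2}(\cdot)$ reduces the desired product rule to the mod-$2$ identity
\begin{multline*}
l(\omega_1\setminus\sigma_1,\omega_2\setminus\sigma_2)+l(\sigma_1\sqcup\sigma_2,\omega_1\sqcup\omega_2)+l(\sigma_1,\omega_1)+l(\sigma_2,\omega_2)\\
\equiv l(\omega_1,\omega_2)+l(\sigma_1,\sigma_2)+|\omega_1||\sigma_2|\pmod{2},
\end{multline*}
which I would verify by expanding $l(\sigma_1\sqcup\sigma_2,\omega_1\sqcup\omega_2)$ bilinearly, using $l(\omega_1\setminus\sigma_1,\omega_2\setminus\sigma_2)\equiv l(\omega_1,\omega_2)+l(\sigma_1,\omega_2)+l(\omega_1\setminus\sigma_1,\sigma_2)\pmod 2$, and finally invoking $l(\omega_1,\sigma_2)+l(\sigma_2,\omega_1)=|\omega_1||\sigma_2|$ (valid since $\omega_1\cap\sigma_2\subset\omega_1\cap\omega_2=\varnothing$).

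\textbf{Main obstacle.} The principal difficulty is the sign calculus. Three separate sources of signs must be reconciled: the Koszul rule $u_iu_j=-u_ju_i$ governing multiplication in $R^*(P)$, the book-keeping factor $(-1)^{l(\sigma,\omega)}$ built into $\widehat\xi_\omega$, and the asymmetric ordering between the exterior and polynomial parts. Collapsing all three into the single compact exponent $l(\omega_1,\omega_2)+l(\sigma_1,\sigma_2)+|\omega_1||\sigma_2|$ is where the bulk of the work lies; once this is done, the additive and multiplicative statements follow cleanly from Theorem~\ref{HRth} and the bijection $K\leftrightarrow\{G\in L(P)\setminus\{\varnothing\}\}$.
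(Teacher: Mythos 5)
Your proposal is correct, and the approach is in essence the one the paper intends when it says the result is ``proved similarly to Theorems~\ref{Pwtheorem} and~\ref{MPTheorem}''. The basis correspondence, the reduction of the differential compatibility to the congruence $l(\sigma\sqcup\{j\},\omega)+l(j,\omega\setminus\sigma)\equiv l(\sigma,\omega)+l(j,\sigma)\pmod 2$ already used in the proof of Theorem~\ref{Pwtheorem}, and the product computation in $R^*(P)$ with sign $(-1)^{l(\omega_1\setminus\sigma_1,\omega_2\setminus\sigma_2)}$ are all the same as in the paper's implied proof. The one place where you genuinely streamline is that you skip the commutative-diagram chase of Theorem~\ref{MPTheorem} (the diagram relating $r^\omega$, $\Delta$, and $i_{\omega_1,\omega_2}$): that chase is needed there because $\bigoplus_\omega H^*(P,P_\omega)$ carries an intrinsic relative cup product that must be matched against the transferred one, whereas here the product on $\bigoplus_\omega\widetilde H^*(K_\omega)$ is introduced by the cochain formula itself, so pushing the $R^*(P)$-product through $\widehat\xi$ and collecting signs is exactly what is required and nothing more. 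The sign bookkeeping you outline (bilinearity of $l$, the identity $l(A,B)+l(B,A)=|A||B|$ for disjoint $A,B$) closes correctly, and the vanishing for $\omega_1\cap\omega_2\ne\varnothing$ from the relations $u_iv_i=v_i^2=0$, $u_i\wedge u_i=0$ is exactly right.
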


\subsection{Description in terms of unions of facets modulo boundary}

The embeddings $b_P\colon P\to \mathbb I_P$ and $c_K\colon K\to \mathbb I_P^0$ define the simplicial isomorphism of barycentric subdivisions of $\partial P$ and $K$: the vertex $\boldsymbol{y}_{\sigma}$, $\sigma\ne\varnothing$, is mapped to the vertex $\boldsymbol{x}_{G(\sigma)}$ and on simplices we have the linear isomorphism. Then $K_{\omega}$ is embedded into  $P_{\omega}$.

For the set $P_{\omega}$ considered in the space $\partial P$ the boundary $\partial P_{\omega}$ consists of all points $\boldsymbol{x}\in P_{\omega}$ such that $\boldsymbol{x}\in F_j$ for some $j\notin \omega$. Hence  $\partial P_{\omega}$ consists of all faces $G\subset P$ such that $\sigma(G)\cap\omega\ne\varnothing$ and $\sigma(G)\not\subset \omega$.

Define on $P$ the orientation induced from $\mathbb R^n$, and on $\partial P$ the orientation induced from $P$ by the rule: a basis $(\boldsymbol{e}_1,\dots,\boldsymbol{e}_{n-1})$ in $\partial P$ is positively oriented if and only if the basis $(\boldsymbol{n},\boldsymbol{e}_1,\dots,\boldsymbol{e}_{n-1})$ is positively oriented, where $\boldsymbol{n}$ is the outer normal vector.

We have the orientation of simplices in $K_{\omega}$ defined by the canonical order of the vertices of the set 
$\omega\subset [m]$. We have the cellular structure on $P_{\omega}$ defined by the faces of $P$. Fix some orientation of faces in $P$ such that for facets the orientation coincides with $\partial P$. For a cell $E$ with fixed orientation in some cellular or simplicial structure it is convenient to consider the chain  $-E$ as a cell with an opposite orientation. Then the boundary operator just sends the cell to the sum of cells on the boundary with induced orientations. 

\begin{lemma}
The orientation of the simplex $\sigma=\{i_1,\dots,i_l\}\in |K_{\omega}|$ coincides with the orientation of the simplex
$$
{\rm conv}\{\boldsymbol{y}_{\sigma}, \boldsymbol{y}_{\sigma\setminus\{i_1\}},\boldsymbol{y}_{\sigma\setminus\{i_1,i_2\}},\dots,\boldsymbol{y}_{\{i_l\}}\}
$$  
\end{lemma}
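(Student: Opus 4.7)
The plan is to identify the canonical orientation of $\sigma=\{i_1,\dots,i_l\}$ (with $i_1<\dots<i_l$ in the canonical order on $[m]$) with that of the ordered geometric simplex $[v_{i_1},\dots, v_{i_l}]$, where $v_j$ denotes the vertex of $|K_{\omega}|$ corresponding to $\{j\}$. Since the barycentric simplex appearing in the statement has vertices $q_k=\boldsymbol{y}_{\sigma\setminus\{i_1,\dots,i_{k-1}\}}$ for $k=1,\dots,l$ and lies in the same $(l-1)$-dimensional affine hull of $\sigma$, the question reduces to comparing two orderings of $l$ affinely independent points in a common $(l-1)$-plane, which amounts to a single sign computation.

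First I would write down the barycentric coordinates of each $q_k$ with respect to $v_{i_1},\dots,v_{i_l}$. Using $\boldsymbol{y}_{\tau}=\frac{1}{|\tau|}\sum_{j\in\tau}v_j$ directly, one obtains
$$
q_k=\sum_{j=k}^{l}\frac{1}{l-k+1}\,v_{i_j},
$$
so the $l\times l$ matrix $B=(b_{jk})$ of barycentric coordinates (row index $j$, column index $k$) is lower triangular with positive diagonal $\bigl(\frac{1}{l},\frac{1}{l-1},\dots,\frac{1}{1}\bigr)$, hence $\det B=\frac{1}{l!}>0$.

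Next I would verify the general linear-algebra fact that relative orientation is detected by this determinant: for any two ordered $(l-1)$-simplices $[p_1,\dots,p_l]$ and $[q_1,\dots,q_l]$ spanning a common $(l-1)$-dimensional affine space, with $q_k=\sum_j b_{jk}p_j$ and every column of $B$ summing to $1$, the orientation of $[q_1,\dots,q_l]$ relative to $[p_1,\dots,p_l]$ equals ${\rm sign}(\det B)$. This is seen by subtracting column $1$ from every other column of $B$ and then adding all the other rows to row $1$: row $1$ collapses to $(1,0,\dots,0)$ because $\sum_j b_{jk}=1$, and the complementary $(l-1)\times(l-1)$ block is precisely the matrix expressing $(q_2-q_1,\dots,q_l-q_1)$ in the basis $(p_2-p_1,\dots,p_l-p_1)$, which is the tangent-space change of basis governing the orientation.

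Combining the two steps, the orientation of $[q_1,\dots,q_l]=[\boldsymbol{y}_\sigma,\boldsymbol{y}_{\sigma\setminus\{i_1\}},\dots,\boldsymbol{y}_{\{i_l\}}]$ coincides with that of $[v_{i_1},\dots,v_{i_l}]$, which is the canonical orientation of $\sigma$, giving the claim. The only delicate point is the sign convention linking the affine-combination matrix $B$ to the tangent-space change of basis; once that identification is in place, the remaining step is the one-line evaluation of a triangular determinant.
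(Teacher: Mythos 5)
Your proof is correct. The paper leaves this lemma as an exercise, so there is no official proof to compare against, but your argument is a complete and standard way to settle it. The key steps check out: writing $q_k=\boldsymbol{y}_{\sigma\setminus\{i_1,\dots,i_{k-1}\}}=\frac{1}{l-k+1}\sum_{j\ge k}v_{i_j}$ gives a lower-triangular barycentric-coordinate matrix $B$ with positive diagonal, so $\det B=\tfrac{1}{l!}>0$; and the column/row reduction you describe (subtract column $1$, add rows $2,\dots,l$ to row $1$, using that each column of $B$ sums to $1$) correctly identifies $\det B$ with the determinant of the change of basis taking $(p_2-p_1,\dots,p_l-p_1)$ to $(q_2-q_1,\dots,q_l-q_1)$. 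The only implicit assumption worth flagging is that the canonical orientation of $\sigma$ means $i_1<\dots<i_l$; this matches the paper's convention that simplices of $K_\omega$ carry the orientation induced by the canonical order on $\omega\subset[m]$, so no gap remains.
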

The proof we leave as an exercise.

Now we establish the Poincare duality\index{Poincare duality} between the groups $\widetilde H^{i}(K_{\omega})$ and $H_{n-i-1}(P_{\omega},\partial P_{\omega})$. 

\begin{definition}
For a face $G\subset P_{\omega}$, $G\not\subset \partial P_{\omega}$, with a positively oriented basis $(\boldsymbol{e}_1,\dots,\boldsymbol{e}_k)$  and a simplex 
$\sigma\in K_{\omega}$ define the {\em intersection index}\index{intersection index} 
$$
C_*(P_{\omega},\partial P_{\omega})\otimes C_*(K_{\omega})\to\mathbb Z
$$ 
by the rule 
$$
\langle G,\sigma\rangle=
\begin{cases}0,&\text{ if }G(\sigma)\ne G;\\
1,&\text{ if }G(\sigma)=G,\text{ and the basis } (\boldsymbol{e}_1,\dots,\boldsymbol{e}_k, \boldsymbol{h}_1,\dots,\boldsymbol{h}_l)\text{ is positive};\\
-1,&\text{ if }G(\sigma)=G,\text{ and the basis } (\boldsymbol{e}_1,\dots,\boldsymbol{e}_k, \boldsymbol{h}_1,\dots,\boldsymbol{h}_l)\text{ is negative},
\end{cases}
$$
where $l=n-k-1$, and $(\boldsymbol{h}_1,\dots,\boldsymbol{h}_l)$ is any basis defining the orientation of any maximal simplex in the barycentric subdivision of $\sigma\subset P_\omega$ consistent with the orientation of $\sigma$, for example
$$
(\boldsymbol{h}_1,\dots,\boldsymbol{h}_l)=(\boldsymbol{y}_{\sigma\setminus\{i_1\}}-\boldsymbol{y}_{\sigma},\boldsymbol{y}_{\sigma\setminus\{i_1,i_2\}}-\boldsymbol{y}_{\sigma},\dots,\boldsymbol{y}_{\{i_l\}}-\boldsymbol{y}_{\sigma}) 
$$
\end{definition}
\begin{proposition} \label{pind}
We have $\langle dG,\tau\rangle=(-1)^{\dim G}\langle G,d\tau\rangle$.
\end{proposition}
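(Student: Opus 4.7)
The plan is to verify the identity pointwise: both sides are linear in $G$ and $\tau$, so I would fix a face $G\subset P_\omega$ (with $G\not\subset\partial P_\omega$) and a simplex $\tau\in K_\omega$ and check that the corresponding scalar identity holds.

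First I would observe that both sides automatically vanish unless $\tau=\sigma(G)\sqcup\{j\}$ for some $j\notin\sigma(G)$ such that $H:=G\cap F_j$ is a codimension-one face of $G$. Indeed, the boundary $dG$ is a sum of codim-one faces $H'$ of $G$, and since $P$ is simple, $\langle H',\tau\rangle\neq 0$ forces $\tau=\sigma(H')=\sigma(G)\sqcup\{j\}$ for exactly one $j$. Dually, a summand $(-1)^{l(i,\tau)}(\tau\setminus\{i\})$ of $d\tau$ pairs nontrivially with $G$ only when $\tau\setminus\{i\}=\sigma(G)$, again forcing $\tau=\sigma(G)\sqcup\{j\}$. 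With such a $\tau$ fixed, letting $\varepsilon\in\{\pm 1\}$ denote the coefficient of $H$ in $dG$, the identity reduces to showing
$$\varepsilon\,\langle H,\tau\rangle=(-1)^{\dim G+l(j,\tau)}\langle G,\sigma(G)\rangle.$$

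The core of the argument is the resulting sign comparison, which I would carry out by exhibiting compatible adapted bases. Set $k=\dim G$, choose a positive basis $(\boldsymbol{e}_1,\ldots,\boldsymbol{e}_{k-1})$ of $H$, and extend it to a positive basis $(\boldsymbol{e}_1,\ldots,\boldsymbol{e}_k)$ of $G$. Decompose the outward normal to $F_j$ in $G$ as $\boldsymbol{n}_j=\beta\boldsymbol{e}_k+\boldsymbol{w}$ with $\boldsymbol{w}$ in the span of $(\boldsymbol{e}_1,\ldots,\boldsymbol{e}_{k-1})$; by the outer-normal convention for induced orientations, $\varepsilon=\mathrm{sgn}(\beta)\,(-1)^{k-1}$. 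Next, for $\langle H,\tau\rangle$ I would use the freedom granted by the definition to reorder the basis of $\tau$: moving $j$ from its canonical position to the front produces the sign $(-1)^{l(j,\tau)}$, after which the first basis vector becomes $\boldsymbol{y}_{\sigma(G)}-\boldsymbol{y}_\tau=\boldsymbol{x}_G-\boldsymbol{x}_H$ and the remaining vectors $\boldsymbol{y}_{\sigma(G)\setminus\{i_1,\ldots,i_s\}}-\boldsymbol{x}_H$ differ from the vectors $\boldsymbol{h}_s$ defining $\langle G,\sigma(G)\rangle$ by $\boldsymbol{x}_H-\boldsymbol{x}_G$. Subtracting the first column from all subsequent ones converts the determinant to $\det(\boldsymbol{e}_1,\ldots,\boldsymbol{e}_{k-1},\boldsymbol{x}_G-\boldsymbol{x}_H,\boldsymbol{h}_1,\ldots,\boldsymbol{h}_l)$. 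Since $\boldsymbol{x}_G-\boldsymbol{x}_H$ points from $\boldsymbol{x}_H\in\partial G$ into $\mathrm{relint}\,G$, it is a positive multiple of $-\boldsymbol{n}_j$ modulo the span of $(\boldsymbol{e}_1,\ldots,\boldsymbol{e}_{k-1})$; combining with the sign of $\beta$ via multilinearity and transporting this column past the $k-1$ basis vectors of $H$ produces the factor $\mathrm{sgn}(\beta)(-1)^{k-1}=\varepsilon$, so that $\varepsilon\,\langle H,\tau\rangle=(-1)^{\dim G+l(j,\tau)}\langle G,\sigma(G)\rangle$ after using $\varepsilon^2=1$.

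The hard part will be the careful accounting of signs, since several independent conventions interact: the outer-normal orientation rule for boundaries of faces, the combinatorial sign $(-1)^{l(j,\tau)}$ arising from the reordering of the canonical basis of $\tau$, and the residual sign from expressing $\boldsymbol{x}_G-\boldsymbol{x}_H$ in terms of $\boldsymbol{e}_k$ via the decomposition of $\boldsymbol{n}_j$. To streamline the bookkeeping I would simply choose $\boldsymbol{e}_k$ collinear with $\boldsymbol{n}_j$ from the start (so $\boldsymbol{w}=0$), reducing the sign tracking to the single scalar $\beta$ and making the emergence of $(-1)^{\dim G}$ transparent.
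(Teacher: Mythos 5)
Your plan follows the paper's argument in structure: both sides vanish unless $\tau=\sigma(G)\sqcup\{j\}$, after which the identity is a pure orientation comparison built around the barycenter vector $\boldsymbol{x}_G-\boldsymbol{x}_H=\boldsymbol{y}_\tau-\boldsymbol{y}_{\sigma(G)}$, which points inward relative to the simplex $\tau$ but in the outward-normal direction relative to the face $H=G\cap F_j$ inside $G$. The only bookkeeping you carry more explicitly than the paper is the factor $(-1)^{l(j,\tau)}$ from reordering the vertices of $\tau$, which the paper absorbs by quietly switching to the induced orientation of $\sigma$.

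There is one local sign slip to fix. After the column operations, the scalar produced by reducing the middle column $\boldsymbol{x}_G-\boldsymbol{x}_H$ to a multiple of $\boldsymbol{e}_k$ modulo $\mathrm{span}(\boldsymbol{e}_1,\dots,\boldsymbol{e}_{k-1})$ is $-c\beta$ for some $c>0$ (the minus from $\boldsymbol{x}_G-\boldsymbol{x}_H$ being a positive multiple of $-\boldsymbol{n}_j$, the $\beta$ from $\boldsymbol{n}_j=\beta\boldsymbol{e}_k+\boldsymbol{w}$); no transport past $\boldsymbol{e}_1,\dots,\boldsymbol{e}_{k-1}$ is needed, since $\boldsymbol{e}_k$ already sits in the $k$th slot that the definition of $\langle G,\sigma(G)\rangle$ requires. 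The net sign is therefore $-\mathrm{sgn}(\beta)=\varepsilon\,(-1)^{\dim G}$, not $\mathrm{sgn}(\beta)(-1)^{k-1}=\varepsilon$. This gives $\langle H,\tau\rangle=(-1)^{l(j,\tau)}\,\varepsilon\,(-1)^{\dim G}\langle G,\sigma(G)\rangle$, and multiplying by $\varepsilon$ produces exactly the display you wrote, which is correct, but it is inconsistent with the sentence immediately preceding it. Once that one factor is corrected the plan is sound.
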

\begin{proof}
Both left and right sides are equal to zero, if $\tau\ne \sigma(G)\sqcup\{j\}$ for some $j\in\omega\setminus\sigma$. Let $\tau=\sigma(G)\sqcup \{j\}$. Then $\tau=\sigma(G_j)$ for $G_j=G\cap F_j$. 
Let $\sigma=\sigma(G)$. The vector corresponding to $\boldsymbol{u}_j=\boldsymbol{y}_{\sigma\sqcup \{j\}}-\boldsymbol{y}_{\sigma}$ and the outer normal vector to the facet $\sigma$ of the simplex $\sigma\sqcup\{j\}$ look to opposite sides of ${\rm aff}\sigma$ in ${\rm aff}(\sigma\sqcup \{j\})$ in the geometric realization of $K$; hence the orientation of the basis $(\boldsymbol{u}_j,\boldsymbol{h}_1,\dots,\boldsymbol{h}_l)$ is negative in $\sigma\sqcup \{j\}$. On the other hand, $\boldsymbol{u}_j=\boldsymbol{x}_{G(\sigma\sqcup \{j\})}-\boldsymbol{x}_{G(\sigma)}$; hence this vector looks to the same side of ${\rm aff}(G_j)$ in ${\rm aff}(G)$ with the outer normal vector to $G_j$, the orientation of the basis $(\boldsymbol{u}_j,\boldsymbol{g}_1,\dots, \boldsymbol{g}_{k-1})$ is positive for the basis $(\boldsymbol{g}_1,\dots,\boldsymbol{g}_{k-1})$ defining the induced orientation of $G_j$.
Hence for the induced orientations of $G_j$ and $\sigma$ we have 
\begin{itemize}
\item$\langle G\cap F_j,\sigma\sqcup \{j\}\rangle$ is opposite to the sign of the orientation of $(\boldsymbol{g}_1,\dots,\boldsymbol{g}_{k-1},\boldsymbol{u}_j,\boldsymbol{h}_1,\dots,\boldsymbol{h}_l)$;
\item$\langle G,\sigma\sqcup \{j\}\rangle$ coinsides with the sign of the orientation of $(\boldsymbol{u}_j,\boldsymbol{g}_1,\dots, \boldsymbol{g}_{k-1},\boldsymbol{h}_1,\dots,\boldsymbol{h}_l)$;
\end{itemize}
Hence these numbers differ by the sign $(-1)^{k}$. 
\end{proof}
\begin{definition}
Set 
$$
\widehat{H}_i(P_{\omega},\partial P_{\omega})=
\begin{cases}
H_i(P_{\omega},\partial P_{\omega}),&0\leqslant i\leqslant n-2;\\
H_{n-1}(P_{\omega},\partial P_{\omega})/ ([\sum\limits_{i\in\omega}F_i]),&i=n-1.
\end{cases}.
$$
\end{definition}
\begin{theorem}\label{PoiTh}
The mapping $G\to \langle G,\sigma(G)\rangle\sigma(G)^*$ induces the isomorphism 
$$
\widehat{H}_{n-i-1}(P_{\omega},\partial P_{\omega})\simeq \widetilde{H}^i(K_{\omega}), \;0\leqslant i\leqslant n-1,\;\omega\ne\varnothing. 
$$
Moreover, for $\omega_1\cap \omega_2=\varnothing$ the multiplication 
$$
\widehat{H}_{n-p-1}(P_{\omega_1},\partial P_{\omega_1})\otimes \widehat{H}_{n-q-1}(P_{\omega_2},\partial P_{\omega_2})\to \widehat{H}_{n-(p+q)-2}(P_{\omega_1\sqcup \omega_2},\partial P_{\omega_1\sqcup\omega_2})
$$
induced by the isomorphism is defined by the rule 
$$
G_1\otimes G_2\to\frac{\langle G_1,\sigma(G_1)\rangle\langle G_2,\sigma(G_2)\rangle}{\langle G_1\cap G_2,\sigma(G_1\cap G_2)\rangle}(-1)^{l(\omega_1,\omega_2)+ |\omega_1|(n-\dim G_2)+l(\sigma(G_1),\sigma(G_2))}G_1\cap G_2
$$
\end{theorem}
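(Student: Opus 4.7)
The plan is to realise $\Psi(G):=\langle G,\sigma(G)\rangle\,\sigma(G)^*$ as a chain-level isomorphism from $C_*(P_\omega,\partial P_\omega)$ onto the non-augmented part of the cochain complex of $K_\omega$, and then transport the cup product formula of the preceding theorem through $\Psi$. Since $P$ is simple, a face $G$ of $P$ lies in $P_\omega\setminus\partial P_\omega$ exactly when $\varnothing\ne\sigma(G)\subset\omega$, and then $\dim G=n-|\sigma(G)|$. Hence $G\mapsto\sigma(G)$ is a dimension-reversing bijection between such cells and the non-empty simplices of $K_\omega$, giving an isomorphism of free abelian groups $C_k(P_\omega,\partial P_\omega)\xrightarrow{\Psi}C^{n-1-k}(K_\omega)$ for $0\leqslant k\leqslant n-1$, missing only $C^{-1}(K_\omega)=\mathbb Z\langle\varnothing^*\rangle$.

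Next, I would verify the chain-map property $\Psi(dG)=(-1)^{\dim G}\delta\Psi(G)$ as a direct application of Proposition \ref{pind}. Each boundary face of $G$ in the relative complex has the form $G\cap F_j$ for some $j\in\omega\setminus\sigma(G)$; for $\tau=\sigma(G)\sqcup\{j\}$ Proposition \ref{pind} yields
\[\langle dG,\tau\rangle=(-1)^{\dim G}\langle G,d\tau\rangle=(-1)^{\dim G+l(j,\sigma(G))}\langle G,\sigma(G)\rangle,\]
while the coboundary formula expands $\delta\sigma(G)^*=\sum_{j}(-1)^{l(j,\sigma(G))}\tau^*$, and the desired identity follows term by term. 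This is the standard Poincar\'e--Lefschetz sign, so $\Psi$ descends to isomorphisms on (co)homology in every degree. In top dimension $\Psi$ sends the cycle $[\sum_{i\in\omega}F_i]$ to $\sum_{i\in\omega}\{i\}^*=\partial\varnothing^*$ (using $\sigma(F_i)=\{i\}$ and $\langle F_i,\{i\}\rangle=1$ from the orientation of facets of $\partial P$); quotienting this class on the left is therefore the exact translation of passing from $H^0$ to $\widetilde H^0$ on the right, establishing $\widehat H_{n-i-1}(P_\omega,\partial P_\omega)\simeq\widetilde H^i(K_\omega)$ for all $0\leqslant i\leqslant n-1$.

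For the multiplication, I would transport the cup product formula of the preceding theorem through $\Psi$. For disjoint $\omega_1,\omega_2$ and faces $G_a$ with $\sigma(G_a)\subset\omega_a$, simpleness of $P$ yields $\sigma(G_1\cap G_2)=\sigma(G_1)\sqcup\sigma(G_2)$ whenever $G_1\cap G_2\ne\varnothing$. Substituting
\[\sigma(G_1)^*\smile\sigma(G_2)^*=(-1)^{l(\omega_1,\omega_2)+l(\sigma(G_1),\sigma(G_2))+|\omega_1||\sigma(G_2)|}(\sigma(G_1)\sqcup\sigma(G_2))^*\]
into $\Psi(G_1)\smile\Psi(G_2)$, using $|\sigma(G_2)|=n-\dim G_2$ and $\Psi(G_1\cap G_2)=\langle G_1\cap G_2,\sigma(G_1\cap G_2)\rangle\,(\sigma(G_1)\sqcup\sigma(G_2))^*$, gives exactly the stated formula for the transported product $G_1\cdot G_2$. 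When $\omega_1\cap\omega_2\ne\varnothing$ the cup product on the $K$-side is already trivial, so the transported product vanishes.

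The main obstacle is the sign bookkeeping in the second step: one must check that Proposition \ref{pind} yields the chain-map property with the correct Poincar\'e--Lefschetz sign $(-1)^{\dim G}$ on the nose, and that the class killed in the definition of $\widehat H_{n-1}$ corresponds exactly to $\partial\varnothing^*$ on the simplicial side. Once these are in place, the multiplication statement reduces to the substitution $|\sigma(G_2)|=n-\dim G_2$ already present in the preceding cup product theorem.
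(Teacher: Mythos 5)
Your proposal is correct and follows essentially the same route as the paper: the paper's proof is the one-line remark that the statement "follows directly from Proposition~\ref{pind}," and your argument is precisely the detailed unwinding of that remark — the chain-level map $\Psi(G)=\langle G,\sigma(G)\rangle\sigma(G)^*$, the sign verification $\Psi(dG)=(-1)^{\dim G}\delta\Psi(G)$ via Proposition~\ref{pind}, the identification of the killed class $[\sum_{i\in\omega}F_i]$ with $\partial\varnothing^*$, and the transport of the product formula from the preceding theorem with the substitution $|\sigma(G_2)|=n-\dim G_2$. Your bookkeeping checks out.
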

The proof follows directly from  Proposition \ref{pind}.
%Комментарий
%%% 42
\subsection{Geometrical interpretation of the cohomological groups} Let $P$ be a simple polytope. 
From Corollary \ref{PKw} we obtain the following results
\begin{proposition}\label{Pcc}
\begin{enumerate}
\item If $\omega=\varnothing$, then $P_{\omega}=\varnothing$; hence
$$
H^{-i,2\varnothing}(\mathcal{Z}_P)=\widetilde{H}^{-i-1}(P_{\omega})=\begin{cases} \mathbb Z,&i=0,\\
0,&\text{ otherwise }.\end{cases}
$$
\item If $G(\omega)\ne\varnothing$, then $P_{\omega}$ is contractible; hence
$$
H^{-i,2\omega}(\mathcal{Z}_P)=\widetilde{H}^{|\omega|-i-1}(P_{\omega})=0\text{ for all }i.
$$
In particular, this is the case for $|\omega|=1$.

\item If  $\omega=\{p,q\}$, then either $P_{\omega}$ is contractible, if $F_p\cap F_q\ne\varnothing$,\\
 or $P_{\omega}=F_p\sqcup F_q$,  where both $F_p$ and $F_q$ are contractible, if $F_p\cap F_q=\varnothing$. Hence
$$
H^{-i,2\{p,q\}}(\mathcal{Z}_P)=\widetilde{H}^{1-i}(P_{\omega})\begin{cases}\mathbb Z,&i=1, F_p\cap F_q\ne\varnothing,\\
0,&\text{otherwise}.\end{cases}
$$

\item If $G(\omega)=\varnothing$ and $\omega\ne\varnothing$, then $\dim K_{\omega}\leqslant\min\{n-1,|\omega|-2\}$; hence
$$
H^{-i,2\omega}(\mathcal{Z}_P)=\widetilde{H}^{|\omega|-i-1}(P_{\omega})=0\text{ for } |\omega|-i-1>\min\{n-1,|\omega|-2\}.
$$
\item If $\omega=[m]$, then $P_{[m]}=\partial P\simeq S^{n-1}$; hence 
$$
H^{-i,2[m]}(\mathcal{Z}_P)=\widetilde{H}^{m-i-1}(P_{\omega})=\begin{cases}\mathbb Z,&i=m-n,\\0,&\text{ otherwise.}\end{cases}
$$
\item $P_{\omega}$ is a subcomplex in $\partial P\simeq S^{n-1}$; hence 
$$
\widetilde{H}^{n-1}(P_{\omega})=\begin{cases}\mathbb Z,&\omega=[m],\\
0,&\text{ otherwise.}\end{cases}
$$
\item $H^{0,2\omega}(\mathcal{Z}_P)=\widetilde{H}^{|\omega|-1}(K_{\omega})=\begin{cases}\mathbb Z,&\omega=\varnothing,\\
0,&\text{ otherwise.}\end{cases}$
\end{enumerate}
\end{proposition}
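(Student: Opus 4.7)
The backbone of the argument is the isomorphism $H^{-i,2\omega}(\mathcal{Z}_P)\cong\widetilde{H}^{|\omega|-i-1}(P_\omega)$ from Corollary \ref{Pocor} (together with the homotopy equivalence $P_\omega\sim|K_\omega|$ from Corollary \ref{PKw}). So the plan is to dispose of each of the seven clauses by identifying the homotopy type of $P_\omega$ (or the combinatorial type of $K_\omega$) in the case at hand and then reading off the reduced cohomology.

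For (1), $P_\varnothing=\varnothing$ by definition, and we are told $\widetilde{H}^{-1}(\varnothing)=\mathbb Z$ while all other reduced groups vanish; substitute into Corollary \ref{Pocor}. For (2), if $G(\omega)=\bigcap_{i\in\omega}F_i\neq\varnothing$, pick a point $x_0$ in this common face and contract each $F_i$, $i\in\omega$, to $x_0$ linearly inside $F_i$; the contractions agree on overlaps, giving a strong deformation retraction of $P_\omega$ to $\{x_0\}$. The case $|\omega|=1$ is a special case. For (3), if $F_p\cap F_q\neq\varnothing$ this is a sub-case of (2); otherwise $P_\omega=F_p\sqcup F_q$ is a disjoint union of two contractible polygons, so $\widetilde{H}^0(P_\omega)=\mathbb Z$ and the rest vanishes.

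For (4), work on the $K_\omega$ side: $K_\omega$ is a full subcomplex of $K\simeq\partial P^*$, hence $\dim K_\omega\le\dim K=n-1$; and since $G(\omega)=\varnothing$, the set $\omega$ itself is not a simplex of $K$, so every simplex of $K_\omega$ has cardinality at most $|\omega|-1$, giving $\dim K_\omega\le|\omega|-2$. Any simplicial complex has trivial cohomology above its dimension, and plugging into Corollary \ref{Pocor} yields the claim. Clauses (5) and (6) are geometric: $P_{[m]}=\partial P$ is the boundary of a simple $n$-polytope and hence homeomorphic to $S^{n-1}$, giving the stated groups; for a proper subcomplex $P_\omega\subsetneq\partial P$ in the cellular decomposition of the sphere, the top cell is missing, so $\widetilde{H}^{n-1}(P_\omega)=0$.

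Finally, (7) combines the previous cases: $H^{0,2\omega}(\mathcal{Z}_P)\cong\widetilde{H}^{|\omega|-1}(K_\omega)$, so for $\omega=\varnothing$ the formula $\widetilde{H}^{-1}(\varnothing)=\mathbb Z$ gives the nonzero entry, while for $\omega\neq\varnothing$ either $G(\omega)\neq\varnothing$ (use (2)) or $G(\omega)=\varnothing$ and then by (4) we have $\dim K_\omega\le|\omega|-2$, so the group in degree $|\omega|-1$ vanishes. No single step looks hard; the only place one must be attentive is (4), where one needs to distinguish the combinatorial dimension bound $|\omega|-2$ from the geometric bound $n-1$ and take the minimum, and (5)--(6), where the cellular (as opposed to simplicial) structure on $\partial P$ coming from the faces of $P$ must be used so that $P_\omega$ is literally a subcomplex of $S^{n-1}$.
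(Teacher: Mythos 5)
Your proposal matches the paper's approach exactly: the paper offers no further argument beyond citing Corollaries \ref{PKw} and \ref{Pocor}, and you fill in precisely the routine identifications of the homotopy type of $P_\omega$ in each case. One small point deserves sharpening in clause (6): ``the top cell is missing, so $\widetilde H^{n-1}(P_\omega)=0$'' is not a general principle (e.g.\ deleting one $2$-cell from $S^2\vee S^2$ leaves $S^2$, which still has $\widetilde H^2\ne 0$). What actually makes it work here is that $\partial P\simeq S^{n-1}$, so a proper subcomplex $P_\omega$ lies inside $\partial P\setminus\operatorname{int}F_j$ for some $j\notin\omega$, which is contractible; combined with the surjectivity of $H^{n-1}(\partial P)\to H^{n-1}(P_\omega)$ coming from the long exact sequence of the pair and the vanishing $H^n(\partial P,P_\omega)=0$, this yields $\widetilde H^{n-1}(P_\omega)=0$.
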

\begin{corollary}\label{zpgc}
For $k\geqslant 0$ we have 
$$
H^k(\mathcal{Z}_P)=\bigoplus_{\omega}\widetilde{H}^{k-1-|\omega|}(P_{\omega}).
$$
More precisely, 
$$
H^0(\mathcal{Z}_P)=\widetilde{H}^{-1}(\varnothing)=\mathbb Z=\widetilde{H}^{n-1}(P_{[m]})=H^{m+n}(\mathcal{Z}_P),
$$ 
and for $0<k<m+n$ we have
$$
H^k(\mathcal{Z}_P)=\bigoplus\limits_{\max\{\lceil\frac{k+1}{2}\rceil,k-n+1\}\leqslant|\omega|\leqslant \min\{k-1,m-1\}, G(\omega)=\varnothing}\widetilde{H}^{k-1-|\omega|}(P_{\omega}).
$$
In particular,
\begin{gather*}
H^1(\mathcal{Z}_P)=H^2(\mathcal{Z}_P)=0=H^{m+n-2}(\mathcal{Z}_P)=H^{m+n-1}(\mathcal{Z}_P);\\
H^3(\mathcal{Z}_P)\simeq \bigoplus\limits_{|\omega|=2}\widetilde H^0(P_\omega)=\bigoplus\limits_{F_i\cap F_j=\varnothing}\mathbb Z\simeq H^{m+n-3}(\mathcal{Z}_P);\\
H^4(\mathcal{Z}_P)\simeq \bigoplus\limits_{|\omega|=3}\widetilde H^0(P_\omega)\simeq H^{m+n-4}(\mathcal{Z}_P);\\
H^5(\mathcal{Z}_P)\simeq \bigoplus\limits_{|\omega|=3}\widetilde H^{1}(P_\omega)+\bigoplus\limits_{|\omega|=4}\widetilde H^{0}(P_\omega)\simeq H^{m+n-5}(\mathcal{Z}_P);\\
H^6(\mathcal{Z}_P)\simeq \bigoplus\limits_{|\omega|=4}\widetilde H^{1}(P_\omega)+\bigoplus\limits_{|\omega|=5}\widetilde H^{0}(P_\omega);\\
H^7(\mathcal{Z}_P)\simeq \bigoplus\limits_{|\omega|=4}\widetilde H^{2}(P_\omega)+\bigoplus\limits_{|\omega|=5}\widetilde H^{1}(P_\omega)+\bigoplus\limits_{|\omega|=6}\widetilde H^{0}(P_\omega).
\end{gather*}
\end{corollary}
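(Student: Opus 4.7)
The plan is to combine two results already established: the multigraded decomposition
$H^n(\mathcal{Z}_P)\simeq\bigoplus_{2|\omega|=n+i}H^{-i,2\omega}(\mathcal{Z}_P)$,
and the identification $H^{-i,2\omega}(\mathcal{Z}_P)\cong\widetilde{H}^{|\omega|-i-1}(P_\omega)$ from Corollary \ref{Pocor}. Setting $i=2|\omega|-k$ in the first expression and substituting yields directly the general formula
$H^k(\mathcal{Z}_P)=\bigoplus_\omega\widetilde{H}^{k-1-|\omega|}(P_\omega)$.
This takes care of the first equality of the corollary; the remainder is bookkeeping on which $\omega$ can contribute.

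Next I would extract the two boundary pieces. The summand with $\omega=\varnothing$ is $\widetilde{H}^{k-1}(\varnothing)$, which is nonzero exactly when $k=0$ and equals $\mathbb Z$. The summand with $\omega=[m]$ is $\widetilde{H}^{k-m-1}(\partial P)=\widetilde{H}^{k-m-1}(S^{n-1})$, nonzero exactly when $k=m+n$, and again equal to $\mathbb Z$. This justifies the identifications $H^0(\mathcal{Z}_P)=\mathbb Z=H^{m+n}(\mathcal{Z}_P)$ and lets us restrict to $0<|\omega|<m$ throughout the remaining range $0<k<m+n$.

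For intermediate $k$, I would prune the index set using Proposition \ref{Pcc}. Items (2) and (3) tell us that whenever $G(\omega)\neq\varnothing$ the space $P_\omega$ is contractible, so such $\omega$ contribute nothing, giving the restriction $G(\omega)=\varnothing$. For $\omega$ with $G(\omega)=\varnothing$ the associated full subcomplex $K_\omega$ is a proper subcomplex of the simplex on $\omega$, hence $\dim K_\omega\leqslant|\omega|-2$; combined with the homotopy equivalence $P_\omega\sim|K_\omega|$ from Corollary \ref{PKw} this yields vanishing of $\widetilde{H}^{k-1-|\omega|}(P_\omega)$ unless $k-1-|\omega|\leqslant|\omega|-2$, i.e.\ $|\omega|\geqslant\lceil(k+1)/2\rceil$. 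The other bound $|\omega|\geqslant k-n+1$ comes from Proposition \ref{Pcc}(6): $\widetilde{H}^{n-1}(P_\omega)=0$ for $\omega\neq[m]$, and $P_\omega\subset\partial P\simeq S^{n-1}$ forces vanishing in degrees above $n-1$. The upper bounds $|\omega|\leqslant\min\{k-1,m-1\}$ express that $\widetilde{H}^{-1}(P_\omega)=0$ for nonempty $P_\omega$ and $\omega\neq[m]$.

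Having established the refined formula, the tabulated special values $H^1=H^2=0$, $H^3\simeq\bigoplus_{|\omega|=2,G(\omega)=\varnothing}\widetilde{H}^0(P_\omega)$, etc., are direct substitutions: one lists the admissible values of $|\omega|$ in the stated range and reads off the corresponding $\widetilde{H}^*(P_\omega)$. The identification $\widetilde{H}^0(F_i\cup F_j)=\mathbb Z\iff F_i\cap F_j=\varnothing$ for $|\omega|=2$ is immediate, since $F_i$ and $F_j$ are contractible and $F_i\cup F_j$ is either connected or has exactly two components. Finally, the Poincar\'e-duality isomorphisms $H^k(\mathcal{Z}_P)\cong H^{m+n-k}(\mathcal{Z}_P)$ appearing on the right-hand sides come from the multigraded duality $\beta^{-i,2\omega}=\beta^{-(m-n-i),2([m]\setminus\omega)}$ established earlier, together with the observation that in these low cohomological degrees the groups computed are free abelian (being direct sums of $\widetilde{H}^0$'s of disjoint unions of contractibles, or $\widetilde{H}^1$'s of $1$-dimensional complexes), so the rank equality upgrades to an isomorphism. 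The main nuisance is simply the careful accounting of the two independent dimension bounds on $|\omega|$; there is no genuinely hard step, only a risk of off-by-one errors when translating between $i$, $|\omega|$, and the cohomological degree $k$.
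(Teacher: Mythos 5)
Your decomposition, the pruning of the index set by the two dimension bounds on $K_\omega$ (or, equivalently, on $P_\omega$), and the tabulation of low-degree summands all match the paper's argument; the bookkeeping on $\lceil(k+1)/2\rceil$, $k-n+1$, and $\min\{k-1,m-1\}$ is exactly right. The one step that needs repair is the final passage to the dual formulas $H^{m+n-k}\cong H^k$. You cite the multigraded Betti-number equality $\beta^{-i,2\omega}=\beta^{-(m-n-i),2([m]\setminus\omega)}$ and then claim that ``rank equality upgrades to an isomorphism'' because the low-degree groups are free. That inference is incomplete: rank equality plus freeness of $H^k$ alone does not yield an abstract isomorphism $H^k\cong H^{m+n-k}$, since it leaves open the possibility of torsion on the high-degree side. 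What the paper actually does is invoke the torsion-aware Poincar\'e duality $H^{m+n-k}(\mathcal{Z}_P)\cong H_k(\mathcal{Z}_P)$ for the oriented manifold $\mathcal{Z}_P$, together with the universal-coefficient theorem: since the explicit formulas show $H^{k+1}(\mathcal{Z}_P)$ is torsion-free, $\operatorname{Ext}(H_k(\mathcal{Z}_P),\mathbb Z)=0$, so $H_k(\mathcal{Z}_P)$ is free of the same rank as $H^k(\mathcal{Z}_P)$; hence $H^{m+n-k}\cong H_k\cong H^k$. Note that once you go this route the Betti-number duality is not needed at all for this step --- it is subsumed by $\operatorname{rank} H_k = \operatorname{rank} H^k$ for finitely generated free groups. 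This also explains why the paper only records the duals for $k\leqslant 5$: the argument uses torsion-freeness of $H^{k+1}$, and the explicit torsion-free expressions are written out through $H^7$.
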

\begin{proof}
From Proposition \ref{Pocor} we obtain 
$$
H^k(\mathcal{Z}_P)=\bigoplus\limits_{2|\omega|-i=k}H^{-i,2\omega}(\mathcal{Z}_P)\simeq\bigoplus\limits_{2|\omega|-i=k}\widetilde{H}^{|\omega|-i-1}(P_{\omega})=\bigoplus\limits_{|\omega|\leqslant k}\widetilde{H}^{k-|\omega|-1}(P_{\omega}).
$$
If $|\omega|=0$, then  $\widetilde{H}^{k-|\omega|-1}(P_{\omega})=\widetilde{H}^{k-1}(\varnothing)=\begin{cases}\mathbb Z,&k=0,\\
0,&\text{ otherwise}.\end{cases}$\\
If $|\omega|=k$, then $\widetilde{H}^{k-|\omega|-1}(P_{\omega})=\widetilde{H}^{-1}(P_{\omega})=\begin{cases}\mathbb Z,&k=0,\\
0,&\text{ otherwise}.\end{cases}$\\
Thus we have $H^0(\mathcal{Z}_P)=\widetilde{H}^{-1}(\varnothing)=\mathbb Z$, and for $k>0$ nontrivial summands appear only for $0<|\omega|<k$, and $k-1-|\omega|\leqslant \dim K_{\omega}\leqslant\min\{n-1,|\omega|-2\}$. Hence $|\omega|\geqslant\max\{k-n,\left\lceil\frac{k+1}{2}\right\rceil\}$.\\
If $|\omega|=k-n$, then $\widetilde{H}^{k-|\omega|-1}(P_{\omega})=\widetilde{H}^{n-1}(P_{\omega})=\begin{cases}\mathbb Z,
&|\omega|=m, k=m+n,\\0,&\text{ otherwise}.\end{cases}$ \\   
If $k=m+n$, then $|\omega|\geqslant m$; hence $|\omega|=m$, \,$H^{m+n}(\mathcal{Z}_P)=\widetilde{H}^{n-1}(\partial P)=\mathbb Z$. \\
If $|\omega|=m$, then $\widetilde{H}^{k-|\omega|-1}(P_{\omega})=\widetilde{H}^{k-m-1}(\partial P)=\begin{cases}\mathbb Z,&k=m+n,\\
0,&\text{ otherwise}.\end{cases}$\\

Thus, for $0<k<m+n$ nontrivial summands appear only for 
$$
\max\left\{k-n+1,\left\lceil\frac{k+1}{2}\right\rceil\right\}\leqslant |\omega|\leqslant \min\{k-1,m-1\}.
$$ 
If $|\omega|=1$, then $\widetilde{H}^{k-|\omega|-1}(P_{\omega})=0$ for all $k$.\\
If $|\omega|=2$, then $
\widetilde{H}^{k-|\omega|-1}(P_{\omega})=\begin{cases}\mathbb Z,&k=3\text{ and }G(\omega)=\varnothing,\\
0,&\text{ otherwise}.\end{cases}$\\
Thus, for $k=3,4,5,6,7$ we have the left parts of formulas above; in particular the corresponding cohomology groups have no torsion. From the universal coefficients formula the homology groups $H_k(\mathcal{Z}_P)$, $k\leqslant 5$, have no torsion. Then the right parts follow from the Poincare duality.
\end{proof}
\begin{corollary}
If the group $H^k(\mathcal{Z}_P)$ has torsion, then  $7\leqslant k\leqslant m+n-6$.
\end{corollary}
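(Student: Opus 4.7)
The plan is to combine the explicit low-degree cohomology computations from the previous corollary with Poincaré duality.

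First, I would read off from Corollary \ref{zpgc} that the groups $H^k(\mathcal{Z}_P)$ for $k\in\{0,1,2,3,4,5,6\}$ are built only from $\widetilde{H}^0(P_\omega)$ and $\widetilde{H}^1(P_\omega)$ summands (and $\mathbb Z$ in degree~$0$). Since $P_\omega$ has the homotopy type of a finite CW complex, $\widetilde H^0(P_\omega)$ is free of rank (number of components)$-1$. For $\widetilde H^1$, the universal coefficients formula gives $\widetilde H^1(P_\omega)\cong \mathrm{Hom}(\widetilde H_1(P_\omega),\mathbb Z)$ because $\widetilde H_0$ of any space is free, so the Ext term vanishes; hence $\widetilde H^1(P_\omega)$ is torsion-free as well. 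A finite direct sum of torsion-free groups is torsion-free, so $H^k(\mathcal{Z}_P)$ has no torsion for $0\leqslant k\leqslant 6$.

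Next I would transfer this to homology via the universal coefficients theorem applied to $\mathcal{Z}_P$ itself: from
$$
H^k(\mathcal Z_P)\cong \mathrm{Hom}(H_k(\mathcal Z_P),\mathbb Z)\oplus\mathrm{Ext}(H_{k-1}(\mathcal Z_P),\mathbb Z),
$$
the Ext summand equals the torsion subgroup of $H_{k-1}(\mathcal Z_P)$. Taking $k=1,\ldots,6$ and using that $H^k(\mathcal Z_P)$ is torsion-free, I conclude that $H_{k-1}(\mathcal Z_P)$ is torsion-free for $k-1=0,\ldots,5$.

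Finally, I would invoke Poincaré duality: by Proposition~\ref{ZPeq} together with Theorem~\ref{thBPR}, $\mathcal{Z}_P$ is a closed orientable manifold of dimension $m+n$, so $H^{m+n-j}(\mathcal Z_P)\cong H_j(\mathcal Z_P)$. Applying this for $j=0,1,\ldots,5$ gives that $H^k(\mathcal Z_P)$ is torsion-free for $k\in\{m+n-5,\ldots,m+n\}$, i.e.\ for $k>m+n-6$. Combining with the range $k\leqslant 6$ established above yields the desired conclusion: torsion can occur only when $7\leqslant k\leqslant m+n-6$.

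There is no real obstacle here; the only subtlety is bookkeeping the UCT/Poincaré duality off-by-one shifts correctly, and making sure the previous corollary indeed covers the full range $k=0,\ldots,6$, so that the chain of implications does not leave a gap in the middle.
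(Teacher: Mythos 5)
Your proof is correct and takes essentially the same route as the paper: the explicit low-degree formulas in Corollary \ref{zpgc} show that $H^k(\mathcal{Z}_P)$ for $k\leqslant 6$ is a direct sum of $\widetilde H^0(P_\omega)$ and $\widetilde H^1(P_\omega)$ terms (which are torsion-free), the universal coefficients theorem then forces $H_j(\mathcal{Z}_P)$ to be torsion-free for $j\leqslant 5$, and Poincar\'e duality for the closed oriented manifold $\mathcal{Z}_P$ transfers this to the top range $k\geqslant m+n-5$. This is exactly the argument given at the end of the paper's proof of Corollary \ref{zpgc}.
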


\newpage
\section{Lecture 6. Moment-angle manifolds of $3$-polytopes}
\subsection{Corollaries of general results}
From Corollary \ref{zpgc} for a $3$-polytope $P$ we have 
\begin{proposition}
\begin{gather*}
H^0(\mathcal{Z}_P)=\widetilde{H}^{-1}(\varnothing)=\mathbb Z=\widetilde{H}^{2}(P_{[m]})=H^{m+3}(\mathcal{Z}_P);\\
H^1(\mathcal{Z}_P)=H^2(\mathcal{Z}_P)=0=H^{m+1}(\mathcal{Z}_P)=H^{m+2}(\mathcal{Z}_P);\\
H^3(\mathcal{Z}_P)\simeq \bigoplus\limits_{|\omega|=2}\widetilde H^0(P_\omega)=\bigoplus\limits_{F_i\cap F_j=\varnothing}\mathbb Z\simeq H^m(\mathcal{Z}_P);\\
H^4(\mathcal{Z}_P)\simeq \bigoplus\limits_{|\omega|=3, G(\omega)=\varnothing}\widetilde{H}^0(P_\omega)\simeq H^{m-1}(\mathcal{Z}_P);\\
H^k(\mathcal{Z}_P)\simeq\oplus\bigoplus\limits_{|\omega|=k-2}\widetilde{H}^{1}(P_{\omega})\oplus \bigoplus\limits_{|\omega|=k-1}\widetilde{H}^{0}(P_{\omega}),   5\leqslant k\leqslant m-2.
\end{gather*}
In particular, $H^*(\mathcal{Z}_P)$ has no torsion, and so $H^k(\mathcal{Z}_P)\simeq H^{m+3-k}(\mathcal{Z}_P)$.
\end{proposition}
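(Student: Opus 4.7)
The proposition is essentially a bookkeeping specialization of Corollary~\ref{zpgc} to $n=3$, plus a freeness argument for Poincar\'e duality. I would organize the proof in three steps.

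First, I would recall from Corollary~\ref{zpgc} the general decomposition
$$
H^k(\mathcal{Z}_P)=\bigoplus_{|\omega|\le k}\widetilde{H}^{k-|\omega|-1}(P_\omega),
$$
and for $0<k<m+3$ restrict the range of $|\omega|$ using the constraints $G(\omega)=\varnothing$ and $\dim K_\omega\le\min\{n-1,|\omega|-2\}$. With $n=3$ the admissible $|\omega|$ in degree $k$ satisfies $\max\{\lceil\frac{k+1}{2}\rceil,k-2\}\le|\omega|\le\min\{k-1,m-1\}$. Plugging in $k=0,1,2,3,4,m+1,m+2,m+3$ gives the first four lines of the statement almost immediately (the cases $k\le 4$ already appear explicitly in Corollary~\ref{zpgc}). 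For the middle range $5\le k\le m-2$, the admissible $|\omega|$ collapses to $\{k-2,k-1\}$, so
$$
H^k(\mathcal{Z}_P)\simeq\bigoplus_{|\omega|=k-2}\widetilde{H}^{1}(P_\omega)\oplus\bigoplus_{|\omega|=k-1}\widetilde{H}^{0}(P_\omega),
$$
where the potential $|\omega|=k-2$, degree-$2$ contribution is killed by Proposition~\ref{Pcc}(6): $\widetilde{H}^{n-1}(P_\omega)=\widetilde H^2(P_\omega)=0$ for $\omega\ne[m]$, and $|\omega|=k-2\le m-4<m$ in this range.

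Second, to establish absence of torsion I would argue that each $P_\omega$ is a subcomplex of the $2$-sphere $\partial P$. For such a subcomplex, $\widetilde H^0(P_\omega)$ is obviously free, and the complement $\partial P\setminus P_\omega$ is a disjoint union of open facets (hence a disjoint union of open $2$-disks), so $\widetilde H_0(\partial P\setminus P_\omega)$ is free. By planar Alexander duality,
$$
\widetilde{H}^{1}(P_\omega)\cong \widetilde{H}_{0}(\partial P\setminus P_\omega),
$$
which is therefore free. Hence every summand in the decomposition above is a free abelian group, and $H^*(\mathcal{Z}_P)$ is torsion-free in every degree.

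Third, for the duality $H^k(\mathcal{Z}_P)\simeq H^{m+3-k}(\mathcal{Z}_P)$ I would invoke the fact that $\mathcal{Z}_P$ is a smooth closed orientable manifold of dimension $m+n=m+3$ (Proposition~\ref{ZPeq} and Theorem~\ref{thBPR}), so Poincar\'e duality gives the isomorphism between $H^k$ and $H_{m+3-k}$; combined with torsion-freeness and the universal coefficients theorem this yields $H^k(\mathcal{Z}_P)\simeq H^{m+3-k}(\mathcal{Z}_P)$. The main obstacle is really the torsion-freeness claim: once one sees that every $P_\omega$ is a subcomplex of $S^2$ and invokes Alexander duality, the rest is automatic, but without that geometric observation one could get stuck trying to control potential torsion in $\widetilde H^1(P_\omega)$ for arbitrary $\omega$.
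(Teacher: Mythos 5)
Your proof is correct and fills in the details the paper leaves implicit; the paper simply writes ``From Corollary~\ref{zpgc} for a $3$-polytope $P$ we have'' and then states the proposition, so there is no separate written proof to compare against.

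A few small inaccuracies worth flagging, none of which affect the final conclusions. First, in the middle range you write that the ``$|\omega|=k-2$, degree-$2$ contribution'' is killed by Proposition~\ref{Pcc}(6); the summand $|\omega|=k-2$ contributes $\widetilde H^{k-1-(k-2)}(P_\omega)=\widetilde H^1(P_\omega)$, which is precisely the degree-$1$ term you keep. The term that must be killed is $|\omega|=k-3$ (degree $2$), and indeed $k-3\le m-5<m$ for $k\le m-2$, so Proposition~\ref{Pcc}(6) applies; this is exactly the reasoning behind the lower bound $|\omega|\geqslant k-n+1$ already built into Corollary~\ref{zpgc}. Second, the set $\partial P\setminus P_\omega$ is \emph{not} a disjoint union of open facets: it is the open set consisting of those points lying in no $F_i$ with $i\in\omega$, which in general includes interiors of edges and vertices shared by facets indexed outside~$\omega$. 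This does not matter for your purpose, since $\widetilde H_0$ of \emph{any} space is free, so Alexander duality $\widetilde H^1(P_\omega)\cong\widetilde H_0(S^2\setminus P_\omega)$ still gives freeness. A slightly more economical route, avoiding Alexander duality altogether, is to note that for any finite CW complex $X$ the universal coefficients theorem gives $H^1(X;\mathbb Z)\cong\mathrm{Hom}(H_1(X),\mathbb Z)\oplus\mathrm{Ext}^1(H_0(X),\mathbb Z)$, and since $H_0$ is free the Ext term vanishes, so $H^1(X;\mathbb Z)$ is free. Combined with the fact that the only summands arising for $n=3$ are $\widetilde H^{-1}(\varnothing)$, $\widetilde H^0(P_\omega)$, $\widetilde H^1(P_\omega)$, and $\widetilde H^2(\partial P)$ --- all free --- the torsion-freeness is immediate, and your final appeal to Poincar\'e duality (Theorem~\ref{thBPR} and Proposition~\ref{ZPeq} providing the smooth orientable structure) is exactly right.
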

\begin{proposition}
For a $3$-polytope $P$ nonzero Betti numbers could be
\begin{gather*}
{\rm rank\,} \widetilde{H}^{-1}(\varnothing)=\beta^{0,2\varnothing}=1=\beta^{-(m-3),2[m]}={\rm rank\,}\widetilde{H}^2(\partial P);\\
={\rm rank\,}\widetilde{H}^0(P_{\omega})=\beta^{-i,2\omega}=\beta^{-(m-3-i),2([m]\setminus\omega)}={\rm rank\,}\widetilde{H}^1(P_{[m]\setminus\omega}),\\
|\omega|=i+1, i=1,\dots,m-4.
\end{gather*}
\end{proposition}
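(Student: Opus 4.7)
The plan is to derive this statement as an immediate specialization of the general formula $\beta^{-i,2\omega}=\mathrm{rank}\,\widetilde{H}^{|\omega|-i-1}(P_\omega)$ from Corollary \ref{Pocor}, combined with the fact that for a $3$-polytope $P_\omega$ sits inside $\partial P\simeq S^2$ (by Steinitz's theorem), and with the multigraded Poincar\'e duality $\beta^{-i,2\omega}=\beta^{-(m-3-i),2([m]\setminus\omega)}$.

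First I would observe that since $P_\omega\subset\partial P\simeq S^2$ is a $2$-dimensional subcomplex, $\widetilde{H}^k(P_\omega)=0$ for $k\geqslant 2$ except when $P_\omega=\partial P$, i.e.\ $\omega=[m]$. Thus $\beta^{-i,2\omega}$ can be nonzero only if $|\omega|-i-1\in\{-1,0,1,2\}$. The extreme cases each produce exactly one summand $\mathbb{Z}$: the value $j=-1$ forces $\omega=\varnothing$ (since $\widetilde{H}^{-1}(X)=\mathbb Z$ iff $X=\varnothing$), giving $\beta^{0,2\varnothing}=1$; the value $j=2$ forces $\omega=[m]$ with $\widetilde{H}^2(\partial P)=\mathbb Z$, giving $\beta^{-(m-3),2[m]}=1$. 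These two are interchanged by Poincar\'e duality, which provides the first displayed chain of equalities.

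Next I would treat the intermediate cases $j=0$ (Case A, $i=|\omega|-1$) and $j=1$ (Case B, $i=|\omega|-2$). Proposition \ref{Pcc} rules out any contribution when $G(\omega)\ne\varnothing$ (so in particular when $|\omega|=1$), and a brief topological inspection using $P_\omega\subset S^2$ shows $\widetilde{H}^0(P_\omega)=0$ whenever $|\omega|\in\{m-2,m-1\}$: if $|\omega|=m-1$ then $P_\omega$ is a closed disk; if $|\omega|=m-2$ then the complement $\partial P\setminus P_\omega$ consists of at most two disjoint open disks, whose closure leaves $P_\omega$ connected. Consequently Case A can only contribute for $|\omega|\in\{2,\dots,m-3\}$, which corresponds to $i\in\{1,\dots,m-4\}$.

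Finally, I would verify that the Poincar\'e duality pairing sends Case A to Case B with matching indices: for $(i,\omega)$ with $i=|\omega|-1$, the dual index is $m-3-i=|[m]\setminus\omega|-2$, and substituting back gives $|[m]\setminus\omega|-(m-3-i)-1=1$, so the dual Betti number equals $\mathrm{rank}\,\widetilde{H}^1(P_{[m]\setminus\omega})$. This yields the second chain of equalities and, together with Poincar\'e duality, shows that \emph{all} remaining nonzero Betti numbers are of the stated form. No step involves any real difficulty; the only minor obstacle is the bookkeeping needed to confirm that the displayed index range $i=1,\dots,m-4$ is sharp and that the cases $|\omega|\in\{1,m-2,m-1\}$ genuinely produce no extra contributions.
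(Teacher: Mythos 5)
Your proposal is correct, and it follows the approach that the paper's own framework sets up for this exercise: it combines Corollary \ref{Pocor} (the identification $\beta^{-i,2\omega}=\operatorname{rank}\widetilde{H}^{|\omega|-i-1}(P_\omega)$), the vanishing facts in Proposition \ref{Pcc}, the constraint $P_\omega\subset\partial P\simeq S^2$ (so only $\widetilde{H}^{-1},\widetilde{H}^0,\widetilde{H}^1,\widetilde{H}^2$ can appear, with the extreme degrees forcing $\omega=\varnothing$ or $\omega=[m]$), and the multigraded Poincar\'e duality $\beta^{-i,2\omega}=\beta^{-(m-3-i),2([m]\setminus\omega)}$, which indeed sends the $\widetilde{H}^0$ case with $|\omega|=i+1$ to the $\widetilde{H}^1$ case for the complementary set. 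The only thing I would tighten is the $|\omega|=m-2$ step: the complement $\partial P\setminus P_\omega$ is the union of the relative interiors of the two excluded facets together with the relative interior of their common edge if they are adjacent, which is one open disk when they meet along an edge and two disjoint open disks when they do not (and they cannot share more than one edge by Lemma \ref{3C-lemma}); in either case $P_\omega$ is a sphere with one or two holes and hence connected, so $\widetilde{H}^0(P_\omega)=0$, confirming the claimed range $i=1,\dots,m-4$.
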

The proof we leave as an exercise. 

For $|\omega|=i+1$ the number $\beta^{-i,2\omega}+1$ is equal to the number\\
of connected components of the set $P_{\omega}\subset P$.
\begin{definition}
{\em Bigraded Betti numbers}\index{bigraded Betti numbers} are defined as 
$$
\beta^{-i,2j}={\rm rank\,}H^{-i,2j}(\mathcal{Z}_P)=\sum\limits_{|\omega|=j}\beta^{-i,2\omega}.
$$
\end{definition}
\noindent{\bf Exercise:} $\beta^{-1,4}=\frac{m(m-1)}{2}-f_1=\frac{(m-3)(m-4)}{2}$.

\begin{proposition}
Let $\omega\subset[m]$ and $P_{\omega}$ be connected. Then topologically $P_{\omega}$ is a sphere with $k$ holes bounded by connected components $\eta_i$ of $\partial P_{\omega}$, which are simple edge cycles. 
\end{proposition}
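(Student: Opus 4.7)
The plan is to reduce the statement to a repeated application of the piecewise-linear Jordan curve theorem (Theorem \ref{Jtheorem}), after first checking that $P_\omega$ really is a compact $2$-manifold with boundary whose boundary consists of simple edge-cycles. The whole argument rests on the simple $3$-polytope structure: exactly three facets meet at every vertex of $P$.

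First I would analyse the local structure of $P_\omega$ at every point of $\partial P$. In the interior of a facet $F_i \in P_\omega$, and on the interior of an edge $F_i \cap F_j$ with $i,j \in \omega$, a disk neighbourhood in $\partial P$ lies entirely in $P_\omega$. On the interior of a ``mixed'' edge $F_i \cap F_j$ with exactly one of $i,j$ in $\omega$, the neighbourhood in $P_\omega$ is a half-disk, and the edge lies on $\partial P_\omega$. At a vertex $v$, let $p$ be the number of the three incident facets lying in $\omega$. If $p=0$ then $v \notin P_\omega$; if $p=3$ then the full star of $v$ is contained in $P_\omega$, so $v$ is interior; and if $p\in\{1,2\}$ the union of the sectors at $v$ coming from $\omega$-facets is a topological half-disk bounded by exactly two mixed edges (using that the three edges at $v$ are the pairwise intersections of the three incident facets). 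Hence $P_\omega$ is a topological $2$-manifold with boundary, and at every vertex of $P$ lying on $\partial P_\omega$ exactly two mixed edges meet. Consequently $\partial P_\omega$ is a disjoint union of simple closed edge-cycles $\eta_1,\dots,\eta_k$.

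It remains to show that this connected compact surface with boundary, embedded as a subcomplex of $\partial P \simeq S^2$, is a sphere with $k$ holes. I would argue by induction on $k$. If $k=0$ then $P_\omega$ is a closed connected $2$-submanifold of $\partial P$ of the same dimension, so $P_\omega=\partial P$, i.e.\ a sphere with $0$ holes. For $k\geqslant 1$, pick a boundary component $\eta_1$. By Theorem \ref{Jtheorem}, $\eta_1$ splits $\partial P$ into two open disks $\mathcal{C}_1$ and $\mathcal{C}_2$ with $\overline{\mathcal{C}_\alpha}$ each homeomorphic to a disk. The collar structure of $\partial P_\omega$ in $P_\omega$, together with the connectedness of $P_\omega\setminus\eta_1$ (removing a single boundary circle from a connected surface with boundary does not disconnect it), forces $P_\omega\setminus\eta_1$ to lie in a single component, say $\mathcal{C}_1$, so $P_\omega\subset\overline{\mathcal{C}_1}$ and the remaining $\eta_2,\dots,\eta_k$ also lie in $\overline{\mathcal{C}_1}$. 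Then $P_\omega':=P_\omega\cup\overline{\mathcal{C}_2}$ is a connected subsurface of $\partial P$ whose boundary consists of precisely the $k-1$ curves $\eta_2,\dots,\eta_k$; by induction it is a sphere with $k-1$ holes, and removing the interior of the capping disk $\overline{\mathcal{C}_2}$ recovers $P_\omega$ as a sphere with $k$ holes.

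The main obstacle is the one-sidedness claim in the inductive step: that $P_\omega$ lies entirely in the closure of a single component of $\partial P\setminus\eta_1$. This is where the local half-disk picture from Step 1, and in particular the valence-$2$ count of mixed edges at each vertex of $\partial P_\omega$, enters decisively: it guarantees that $\eta_1$ is a genuine one-sided boundary circle of $P_\omega$ rather than a ``crease'' along which $P_\omega$ passes from $\mathcal{C}_1$ to $\mathcal{C}_2$. Once this is established, the Jordan-theorem capping argument is routine, and the identification of $\partial P_\omega$ with a union of simple edge-cycles is automatic from the construction.
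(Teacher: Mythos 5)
Your proposal is correct and takes essentially the same route as the paper: the paper's proof is a single sentence asserting that $P_\omega$ is an orientable $2$-manifold with boundary and that this proves the claim, while you supply the two missing details — the local link analysis at edges and vertices of the simple $3$-polytope establishing the manifold-with-boundary structure, and the inductive Jordan-capping argument establishing that a connected compact subsurface of $S^2$ is a sphere with holes.
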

\begin{proof}
It is easy to prove that $P_{\omega}$ is an orientable $2$-manifold with boundary, which proves the statement.
\end{proof}
Let the $3$-polytope $P$ have the standard orientation induced from $\mathbb R^3$, and the boundary $\partial P$ have the orientation induced from $P$ by the rule: the basis $(\boldsymbol{e}_1,\boldsymbol{e}_2)$ in $\partial P$ is positively oriented if and only if the basis 
$(n,\boldsymbol{e}_1,\boldsymbol{e}_2)$ is positively oriented in $P$, where $n$ is the outer normal vector. Then any set $P_{\omega}$ is an oriented surface with the boundary $\partial P_{\omega}$ consisting of simple edge cycles. Describe the Poincare duality given by Theorem  \ref{PoiTh}.
We have the orientation of simplices in $K_{\omega}$ defined by the canonical order of the vertices induced from the set $\omega\subset [m]$. We have the cellular structure on $P_{\omega}$ defined by vertices, edges and facets of $P$.
Orient the faces of $P$ by the following rule: 
\begin{itemize}
\item facets $F_i$ orient similarly to $\partial P$;
\item for $i<j$ orient the edge $F_i\cap F_j$ in such a way that the pair of vectors $(F_i\cap F_j,\boldsymbol{y}_{\{j\}}-\boldsymbol{y}_{\{i,j\}})$ has positive orientation in $F_j$; 
\item for $i<j<k$ assign <<$+$>> to the vertex   $F_i\cap F_j\cap F_k$, if the pair of vectors $(\boldsymbol{y}_{\{j,k\}}-\boldsymbol{y}_{\{i,j,k\}},\boldsymbol{y}_{\{k\}}-\boldsymbol{y}_{\{i,j,k\}})$ has positive orientation in $F_k$, and <<$-$>> otherwise.
\end{itemize}
\begin{corollary}
The mapping 
$$
C^i(K_{\omega})\to C_{2-i}(P_{\omega},\partial P_{\omega}),\quad \sigma^*\to G(\sigma)
$$ 
defines an isomorphism  
$$
\widetilde{H}^i(K_{\omega})\simeq \widehat{H}_{2-i}(P_{\omega},\partial P_{\omega}).
$$ 
\end{corollary}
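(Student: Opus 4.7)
The plan is to deduce this corollary directly from Theorem \ref{PoiTh} by specializing to $n=3$ and checking that the orientation conventions listed in the corollary (facets oriented as in $\partial P$, the edge-orientation rule, and the vertex-sign rule) are precisely those that force the intersection index $\langle G(\sigma),\sigma\rangle=+1$ for every simplex $\sigma\in K_{\omega}$. Once this is done, the general isomorphism $G\mapsto \langle G,\sigma(G)\rangle\sigma(G)^{*}$ restricts on non-boundary faces to $G\mapsto\sigma(G)^{*}$; inverting yields $\sigma^{*}\mapsto G(\sigma)$, which is the asserted formula.

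First, for $i=0$, a vertex $\sigma=\{i\}$ of $K_{\omega}$ corresponds to the facet $G(\sigma)=F_{i}$, a $2$-dimensional face. Here the general definition of $\langle F_{i},\{i\}\rangle$ requires $F_{i}$ to be oriented consistently with $\partial P$, which is exactly the first bullet of the corollary; no normal-vector data enters, so $\langle F_{i},\{i\}\rangle=1$ automatically. Next, for $i=1$, a simplex $\sigma=\{i,j\}\in K_{\omega}$ with $i<j$ corresponds to the edge $G(\sigma)=F_{i}\cap F_{j}$. By the second bullet of the corollary the orientation of the edge is chosen so that $(F_{i}\cap F_{j},\,\boldsymbol{y}_{\{j\}}-\boldsymbol{y}_{\{i,j\}})$ is a positive basis of $F_{j}$; this is literally the positivity condition appearing in the definition of the intersection index (for the canonical choice of basis vector $\boldsymbol{h}_{1}=\boldsymbol{y}_{\{j\}}-\boldsymbol{y}_{\{i,j\}}$ in the barycentric model of $\sigma$), so $\langle G(\sigma),\sigma\rangle=1$. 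Finally, for $i=2$, a $2$-simplex $\sigma=\{i,j,k\}\in K_{\omega}$ with $i<j<k$ corresponds to the vertex $G(\sigma)=F_{i}\cap F_{j}\cap F_{k}$; the third bullet of the corollary assigns this vertex a ``$+$'' precisely when $(\boldsymbol{y}_{\{j,k\}}-\boldsymbol{y}_{\{i,j,k\}},\,\boldsymbol{y}_{\{k\}}-\boldsymbol{y}_{\{i,j,k\}})$ is positively oriented in $F_{k}$, which is exactly the condition for $\langle G(\sigma),\sigma\rangle=+1$ using the canonical basis vectors $\boldsymbol{h}_{1},\boldsymbol{h}_{2}$ in the barycentric model.

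With all three intersection indices shown to be $+1$, Theorem \ref{PoiTh} gives the chain-level map $G\mapsto\sigma(G)^{*}$ as an isomorphism $C_{2-i}(P_{\omega},\partial P_{\omega})\to C^{i}(K_{\omega})$, whose inverse is the asserted map $\sigma^{*}\mapsto G(\sigma)$. Passing to (co)homology and using the definition of $\widehat{H}_{n-1}$ (which kills the class $[\sum_{i\in\omega}F_{i}]$, consistent with the fact that $K_{\omega}$ has reduced cohomology) yields the isomorphism $\widetilde{H}^{i}(K_{\omega})\simeq\widehat{H}_{2-i}(P_{\omega},\partial P_{\omega})$.

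The only real obstacle is the sign bookkeeping: the ordering of vertices in $K_\omega$ is given by the linear order on $[m]$, whereas the geometric orientation of faces of $P$ is controlled by outer normals and the induced orientation of $\partial P$. The three orientation bullets in the statement have been precisely engineered so that the two conventions agree. If one had chosen a different convention, the map $\sigma^{*}\mapsto G(\sigma)$ would still be an isomorphism but would acquire global $\pm$ signs; the content of the corollary is the compatibility of the specified conventions with the Poincar\'e duality of Theorem \ref{PoiTh}.
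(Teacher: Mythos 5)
Your proof is correct and takes exactly the approach the paper intends: the corollary is stated immediately after the orientation conventions precisely because those conventions are engineered so that $\langle G(\sigma),\sigma\rangle=+1$ for every $\sigma\in K_\omega$, which collapses the general chain-level map $G\mapsto\langle G,\sigma(G)\rangle\sigma(G)^*$ of Theorem \ref{PoiTh} to the asserted $\sigma^*\mapsto G(\sigma)$. Your case-by-case check ($i=0,1,2$, i.e.\ $l=0,1,2$) that each convention reproduces the defining sign condition in the intersection index, together with the observation that $\widehat{H}_{n-1}$ killing $[\sum_{i\in\omega}F_i]$ matches the use of \emph{reduced} cohomology of $K_\omega$, is exactly what is needed.
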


We have the following computations. 
\begin{proposition}\label{B3}
For the set $\omega$ let $P_{\omega}=P_{\omega^1}\sqcup\dots\sqcup P_{\omega^s}$
be the decomposition into connected components. Then
\begin{enumerate}
\item $H_0(P_{\omega},\partial P_{\omega})=0$ for $\omega\ne [m]$, and $H_0(\partial P,\varnothing)=\mathbb Z$ for $\omega=[m]$ with the basis $[v]$, where $v\in P$ is any vertex with the orientation <<$+$>>.
\item $H_1(P_{\omega},\partial P_{\omega})=\bigoplus\limits_{i=1}^s H_1(P_{\omega^i},\partial P_{\omega^i})$, and $H_1(P_{\omega^i},\partial P_{\omega^i})\simeq \mathbb Z^{q_i-1}$, where $q_i$ is the number of cycles in $\partial P_{\omega^i}$. The basis is given by any set of edge paths in $P_{\omega^i}$ connecting one fixed boundary cycle with other boundary cycles.
\item $H_2(P_{\omega},\partial P_{\omega})/(\sum\limits_{i\in\omega}[F_i])\simeq \mathbb Z^s/(1,1,\dots,1)$, where $\mathbb Z^s$ has the basis\\ $e_{\omega^j}=[\sum\limits_{i\in\omega^j}F_i]$.
\end{enumerate}
\end{proposition}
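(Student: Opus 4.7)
The plan is to reduce everything to the connected case and then apply the long exact sequence of the pair $(S,\partial S)$ for $S$ a compact oriented surface with boundary. Since the $P_{\omega^j}$ are pairwise disjoint closed subsets of $\partial P$ whose boundaries in $\partial P$ are also disjoint, excision gives a canonical splitting
\[
H_*(P_\omega,\partial P_\omega)\;\cong\;\bigoplus_{j=1}^{s}H_*(P_{\omega^j},\partial P_{\omega^j}),
\]
so it suffices to analyse a single component. By the remark just above the proposition each $P_{\omega^j}$ is an orientable $2$-manifold with boundary, and the description as a sphere with $q_j$ holes says $P_{\omega^j}\simeq S^2\smallsetminus\bigsqcup_{r=1}^{q_j}\mathring{D}_r$; in particular $H_0(P_{\omega^j})=\mathbb Z$, $H_1(P_{\omega^j})=\mathbb Z^{q_j-1}$, $H_2(P_{\omega^j})=0$ if $q_j\geqslant 1$, while $H_0=H_2=\mathbb Z$ and $H_1=0$ if $q_j=0$ (the closed case).

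Next I would feed this into the long exact sequence
\[
\cdots\to H_k(\partial P_{\omega^j})\to H_k(P_{\omega^j})\to H_k(P_{\omega^j},\partial P_{\omega^j})\to H_{k-1}(\partial P_{\omega^j})\to\cdots,
\]
together with $H_0(\partial P_{\omega^j})=\mathbb Z^{q_j}$, $H_1(\partial P_{\omega^j})=\mathbb Z^{q_j}$. For $q_j\geqslant 1$ the map $H_0(\partial P_{\omega^j})\to H_0(P_{\omega^j})$ is surjective, which yields $H_0(P_{\omega^j},\partial P_{\omega^j})=0$; the sequence then forces $H_1(P_{\omega^j},\partial P_{\omega^j})\cong\mathbb Z^{q_j-1}$, and the connecting map $H_2(P_{\omega^j},\partial P_{\omega^j})\to H_1(\partial P_{\omega^j})$ sends the relative fundamental class $[P_{\omega^j}]=[\sum_{i\in\omega^j}F_i]$ to the sum of boundary cycles, so its kernel is $\mathbb Z$ generated by $e_{\omega^j}$. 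For $q_j=0$ one has $\omega^j=[m]$, $P_{\omega^j}=\partial P\simeq S^2$, and the statements of (1) and (3) reduce to the usual homology of $S^2$ while (2) is vacuous. Summing over $j$ gives (1) and $H_2(P_\omega,\partial P_\omega)=\bigoplus_j\mathbb Z\cdot e_{\omega^j}$, so quotienting by $[\sum_{i\in\omega}F_i]=\sum_j e_{\omega^j}$ yields (3).

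The only nonroutine point is identifying the geometric basis of $H_1(P_{\omega^j},\partial P_{\omega^j})$ claimed in (2). I would argue as follows: fix one boundary cycle $\eta_1$ of $P_{\omega^j}$ and for each other cycle $\eta_r$ ($r=2,\dots,q_j$) choose a simple edge path $\gamma_r\subset P_{\omega^j}$ from a vertex of $\eta_1$ to a vertex of $\eta_r$. Each $\gamma_r$ is a relative $1$-cycle, and under the connecting map $\partial\colon H_1(P_{\omega^j},\partial P_{\omega^j})\to\widetilde H_0(\partial P_{\omega^j})=\mathbb Z^{q_j-1}$ (reduced homology, with $[\eta_1]$ set to zero) the class $[\gamma_r]$ maps to the standard generator $[\eta_r]-[\eta_1]$. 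Since $H_1(P_{\omega^j})\to H_1(P_{\omega^j},\partial P_{\omega^j})$ is zero (it factors through $H_1(\partial P_{\omega^j})\to H_1(P_{\omega^j})$, which is surjective because each interior generating $1$-cycle of $P_{\omega^j}$ is homologous to a boundary cycle in a sphere-with-holes), this connecting map is an isomorphism onto the kernel of $\mathbb Z^{q_j}\to\mathbb Z$, and the $[\gamma_r]$ form a basis.

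The main obstacle is purely organisational: keeping track of what happens when a component happens to be closed (which is only the single case $\omega=[m]$) and verifying that the fundamental classes $e_{\omega^j}$ and their sum match the definition of $\widehat H_{n-1}$ used in Theorem~\ref{PoiTh}; all other steps are standard applications of the long exact sequence for a compact oriented surface with boundary.
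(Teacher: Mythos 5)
Your proof is correct, and it fills a genuine gap: the paper states Proposition~\ref{B3} without proof (it gives only the one-line argument for the preceding ``sphere with holes'' proposition). The route you take --- split into connected components, feed the surface-with-boundary homology into the long exact sequence of the pair, identify the image of the connecting map with the kernel of $H_0(\partial P_{\omega^j})\to H_0(P_{\omega^j})$ for the $H_1$ computation, and with the kernel of $H_1(\partial P_{\omega^j})\to H_1(P_{\omega^j})$ for the $H_2$ computation --- is the standard one and is what the paper is tacitly appealing to.

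Two small points of wording are worth tightening. First, the splitting over components needs no excision: the $P_{\omega^j}$ are literally the connected components of $P_\omega$, hence pairwise disjoint closed-and-open subsets of $P_\omega$, so the decomposition of $H_*$ is just additivity over disjoint unions. Second, the sentence ``the connecting map $\ldots$ sends $[P_{\omega^j}]$ to the sum of boundary cycles, so its kernel is $\mathbb Z$ generated by $e_{\omega^j}$'' conflates two things: the connecting map $H_2(P_{\omega^j},\partial P_{\omega^j})\to H_1(\partial P_{\omega^j})$ is injective (its domain has trivial kernel since $H_2(P_{\omega^j})=0$ for $q_j\geqslant 1$), while what has kernel $\mathbb Z\cdot\sum_r[\eta_r]$ is the next map $H_1(\partial P_{\omega^j})\to H_1(P_{\omega^j})$; the isomorphism $H_2(P_{\omega^j},\partial P_{\omega^j})\cong\mathbb Z\cdot e_{\omega^j}$ then follows because the injective connecting map hits exactly that kernel. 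The conclusion you draw is right; just phrase it as ``the connecting map is an isomorphism onto $\ker\bigl(H_1(\partial P_{\omega^j})\to H_1(P_{\omega^j})\bigr)=\mathbb Z$'' to avoid ambiguity. Similarly, ``factors through'' in the $H_1$ step should be ``precomposed with a surjection, hence the next map in the exact sequence is zero.'' With those clarifications the argument is complete and matches what the paper intends.
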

The nontrivial multiplication is defined by the following rule. Each set $P_{\omega^j}$ is a sphere with holes. If $\omega_1\cap \omega_2=\varnothing$, then $P_{\omega_1^i}\cap P_{\omega_2^j}$ is the intersection of a boundary cycle in $\partial P_{\omega_1^i}$ with a boundary cycle in $\partial P_{\omega_2^j}$, which is the union $\gamma_1\sqcup\dots\sqcup\gamma_l$  of edge-paths. 
\begin{proposition}
We have $e_{\omega_1^i} \cdot e_{\omega_2^j}=0$, if $P_{\omega_1^i}\cap P_{\omega_2^j}=\varnothing$. Else up to the sign $(-1)^{l(\omega_1,\omega_2)+|\omega_1|}$ it is the sum of the elements $[\gamma_i]$ given by the paths with the orientations such that an edge on the path and the transversal edge lying in one facet and oriented from $P_{\omega_1^i}$ to $P_{\omega_2^j}$ form positively oriented pair of vectors. 
\end{proposition}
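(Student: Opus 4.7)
The plan is to apply the explicit product formula of Theorem \ref{PoiTh} term by term to the expansion $e_{\omega_1^i} \cdot e_{\omega_2^j} = \sum_{p \in \omega_1^i,\, q \in \omega_2^j} F_p \cdot F_q$, and then identify the resulting signed $1$-chain with $\sum_s [\gamma_s]$. Here $n=3$, $\sigma(F_p) = \{p\}$, $\sigma(F_q) = \{q\}$, and $\langle F_p, \{p\}\rangle = \langle F_q, \{q\}\rangle = 1$ from the orientation conventions fixed before Theorem \ref{PoiTh}. If $F_p \cap F_q = \varnothing$ the term vanishes; otherwise $F_p \cap F_q = E_{pq}$ is an edge, $\sigma(E_{pq})=\{p,q\}$, and the rule defining edge orientations together with the formula $\boldsymbol{h}_1=\boldsymbol{y}_{\{\max(p,q)\}}-\boldsymbol{y}_{\{p,q\}}$ yields $\langle E_{pq},\{p,q\}\rangle=1$. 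Theorem \ref{PoiTh} (with $\dim F_q=2$) then gives
$$
F_p \cdot F_q = (-1)^{l(\omega_1,\omega_2) + |\omega_1| + l(\{p\},\{q\})}\,E_{pq}.
$$
If $P_{\omega_1^i} \cap P_{\omega_2^j} = \varnothing$ then no $F_p$ ($p\in\omega_1^i$) meets any $F_q$ ($q\in\omega_2^j$), for otherwise their common edge would lie in $P_{\omega_1^i}\cap P_{\omega_2^j}$; hence the entire product vanishes, proving the first assertion.

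Assume the intersection is non-empty. Each edge of $P_{\omega_1^i}\cap P_{\omega_2^j}=\partial P_{\omega_1^i}\cap\partial P_{\omega_2^j}$ is a unique $E_{pq}$ as above, and these edges assemble into the simple arcs $\gamma_1,\dots,\gamma_l$. Factoring out the global sign $(-1)^{l(\omega_1,\omega_2)+|\omega_1|}$, the claim reduces to
$$
\sum_{E_{pq}}(-1)^{l(\{p\},\{q\})}\,E_{pq}=\sum_s[\gamma_s]
$$
with the edges oriented as described in the statement. For $p<q$ the standard orientation of $E_{pq}$ places $F_q\subset P_{\omega_2^j}$ on the left; for $p>q$ it places $F_p\subset P_{\omega_1^i}$ on the left, and $(-1)^{l(\{p\},\{q\})}=-1$ reverses it. In either case the edge receives the orientation making $E_{pq}$ together with the transversal direction into $F_q$ (i.e.\ across $E_{pq}$ from $P_{\omega_1^i}$ into $P_{\omega_2^j}$) a positive basis of the containing facet---the geometric condition of the proposition.

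It remains to verify that these local orientations patch into a relative $1$-cycle representing $\sum_s[\gamma_s]$. At an interior vertex $v$ of an arc the three incident facets $F_a,F_b,F_c$ are cyclically arranged; because $\omega_1^i,\omega_2^j$ are \emph{distinct} connected components of $P_{\omega_1},P_{\omega_2}$, a short case analysis shows that exactly two of the three edges at $v$ lie in $\partial P_{\omega_1^i}\cap\partial P_{\omega_2^j}$ while the third has both its facets in a single $\omega_\alpha^*$ and is interior to $P_{\omega_\alpha^*}$. Using the counterclockwise order of facets around $v$ on $\partial P$, a direct check reveals that the two arc-edges receive opposite orientations with respect to $v$: one exits, one enters, so $v$ cancels in the boundary. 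Terminal vertices of the arcs have the third incident facet outside $\omega_1\cup\omega_2$ and therefore lie in $\partial P_{\omega_1\sqcup\omega_2}$, so the boundary of the chain stays in $\partial P_{\omega_1\sqcup\omega_2}$. The main obstacle I anticipate is this last step: matching the algebraic sign $(-1)^{l(\{p\},\{q\})}$ to the geometric ``left/right'' convention and establishing cancellation at every interior vertex requires careful orientation bookkeeping, balancing the induced boundary orientations of the two facets sharing each edge against the cyclic arrangement of facets around~$v$.
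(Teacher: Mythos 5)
Your proof is correct and follows essentially the same route as the paper: expand by bilinearity to $\sum_{p,q} F_p\cdot F_q$, apply the explicit multiplication formula of Theorem~\ref{PoiTh} with $G_1=F_p$, $G_2=F_q$, verify $\langle F_p,\{p\}\rangle=\langle F_q,\{q\}\rangle=\langle E_{pq},\{p,q\}\rangle=1$ from the fixed orientation conventions, and then unwind what the resulting sign $(-1)^{l(\{p\},\{q\})}$ means geometrically for the orientation of $E_{pq}$. The paper's proof is simply a compressed version of exactly this: it records the single identity $F_i\otimes F_j\mapsto(-1)^{l(\omega_1,\omega_2)+|\omega_1|}(-1)^{l(i,j)}F_i\cap F_j$ and observes that the pair $\bigl((-1)^{l(i,j)}F_i\cap F_j,\ \boldsymbol{y}_{\{j\}}-\boldsymbol{y}_{\{i,j\}}\bigr)$ is positive in $F_j$. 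The one place where you overestimate the difficulty is the ``cancellation at interior vertices'' step: since $e_{\omega_1^i}$ and $e_{\omega_2^j}$ are cycles in the relative chain complexes and the Theorem~\ref{PoiTh} product is a chain map, the resulting $1$-chain is automatically a relative cycle, so the vertex-by-vertex bookkeeping you flag as the ``main obstacle'' is guaranteed by the algebraic framework and needs no separate geometric verification. The genuine content is the orientation matching you carried out in the preceding paragraph, and that part is correct.
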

\begin{proof}    
For the facets $F_i\in P_{\omega_1}$ and $F_j\in P_{\omega_2}$ we have 
$$
F_i\otimes F_j\to (-1)^{l(\omega_1,\omega_2)+|\omega_1|}(-1)^{l(i,j)}F_i\cap F_j,
$$
where the pair of vectors $\left((-1)^{l(i,j)}F_i\cap F_j, \boldsymbol{y}_j-\boldsymbol{y}_{\{i,j\}}\right)$ is positively oriented in $F_j$. 
\end{proof}
\begin{proposition}
Let $\omega_1\sqcup\omega_2=[m]$, and let the element $[\gamma]$ correspond to the oriented edge path $\gamma$, connecting two boundary cycles of $P_{\omega_2^j}$. Then $e_{\omega^i_1}\cdot [\gamma]=0$, if $P_{\omega_1^i}\cap \gamma=\varnothing$,  and up to the sign $(-1)^{l(\omega_1,\omega_2)}$ it is $+1$, if $\gamma$ starts at $P_{\omega^i_1}$, and $-1$, if $\gamma$ ends at $P_{\omega^i_1}$.  
\end{proposition}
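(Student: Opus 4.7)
The plan is to compute $e_{\omega_1^i}\cdot[\gamma]$ at chain level using the intersection formula of Theorem~\ref{PoiTh}, and then identify the answer with the claimed expression in $\widehat{H}_0(\partial P)=H_0(\partial P)\cong\mathbb Z$. First I would fix representatives: $e_{\omega_1^i}=\sum_{a\in\omega_1^i}F_a$ as a relative $2$-cycle in $(P_{\omega_1},\partial P_{\omega_1})$, and $\gamma=\sum_k\eta_kE_k$ as a relative $1$-chain in $(P_{\omega_2^j},\partial P_{\omega_2^j})$ whose interior is contained in $\mathrm{int}\,P_{\omega_2^j}$, with start $v_0$ and end $v_s$ on $\partial P_{\omega_2^j}$; each $E_k$ carries its canonical orientation from the excerpt and $\eta_k\in\{\pm1\}$ records the direction in which $\gamma$ traverses it. Since $\omega_1\sqcup\omega_2=[m]$, the bilinear product lands in $\widehat{H}_0(\partial P,\varnothing)=\mathbb Z$, generated by the class $[v]$ of any ``$+$''--vertex (Proposition~\ref{B3}).

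Next I isolate the non-vanishing bilinear terms. Because $\omega_1\cap\omega_2=\varnothing$, the intersection $F_a\cap E_k$ in the formula of Theorem~\ref{PoiTh} is either empty or a single vertex $v=F_a\cap F_b\cap F_c$ with $E_k=F_b\cap F_c$ and $\sigma(v)=\{a\}\sqcup\sigma(E_k)$. For any interior vertex $v$ of $\gamma$ we have $\sigma(v)\subset\omega_2^j$, so no $a\in\omega_1^i$ can lie in $\sigma(v)$ and the term vanishes. Thus only the two endpoints $v_0,v_s$ of $\gamma$ can contribute. At each endpoint $v$, the facets of $\sigma(v)\cap\omega_1$ are pairwise adjacent through $v$ (since $P$ is simple) and so all lie in a common connected component $\omega_1^{k(v)}$; hence $v$ produces a non-zero contribution if and only if $\omega_1^{k(v)}=\omega_1^i$, i.e.\ $v\in P_{\omega_1^i}$, in which case $a\in\sigma(v)\cap\omega_1^i$ and the edge $E_k$ with $\sigma(E_k)=\sigma(v)\setminus\{a\}$ are both uniquely determined. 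In particular, if $P_{\omega_1^i}\cap\gamma=\varnothing$ the product vanishes, which is the first assertion.

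It remains to compute the two endpoint contributions. Plugging $\langle F_a,\{a\}\rangle=1$, $\langle E_k,\sigma(E_k)\rangle=1$, and $\langle v,\sigma(v)\rangle\in\{\pm1\}$ into the formula of Theorem~\ref{PoiTh}, and noting that $(-1)^{|\omega_1|(n-\dim E_k)}=1$ because $n-\dim E_k=2$, the contribution of $v$ in $H_0(\partial P)=\mathbb Z$ becomes
$$
\eta_k\,\langle v,\sigma(v)\rangle\,(-1)^{l(\omega_1,\omega_2)+l(\{a\},\sigma(E_k))}.
$$
The hard part will be the purely local identity
$$
\eta_k\,\langle v,\sigma(v)\rangle\,(-1)^{l(\{a\},\sigma(E_k))}=\begin{cases}+1,&v=v_0,\\ -1,&v=v_s,\end{cases}
$$
which is compatible with $\partial\gamma=v_s-v_0$: the sign $\eta_k$ records whether the canonical orientation of $E_k=F_b\cap F_c$ (defined by positivity of $(E_k,\,y_{\{c\}}-y_{\{b,c\}})$ in $F_c$) points outward from or into $v$ along $\gamma$, the sign $\langle v,\sigma(v)\rangle$ records, via the vertex orientation convention, the cyclic order of $F_a,F_b,F_c$ around $v$, and $(-1)^{l(\{a\},\sigma(E_k))}$ compensates for the relative position of $a$ in $\sigma(v)$. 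A case analysis on the order type of $\{a,b,c\}$ (three cases $a<b<c$, $b<a<c$, $b<c<a$), or equivalently an application of Proposition~\ref{pind} to the $1$-chain $E_k$ combined with the relation $\partial\gamma=v_s-v_0$ in $\widehat{H}_0(\partial P)$, yields the identity. Summing the endpoint contributions and pulling out the common factor $(-1)^{l(\omega_1,\omega_2)}$ produces the claimed formula.
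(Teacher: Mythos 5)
Your approach coincides with the paper's: expand $e_{\omega_1^i}$ and $[\gamma]$ at chain level, apply the multiplication rule of Theorem~\ref{PoiTh} to the facet--edge pairs $F_a\otimes E_k$, and reduce to a local sign identity at each endpoint of $\gamma$ (interior vertices contribute nothing because all three facets there lie in $\omega_2^j$, which is exactly your observation). The paper's own proof is precisely this single-line computation $F_i\otimes(F_j\cap F_k)\to(-1)^{l(\omega_1,\omega_2)}(-1)^{l(i,\{j,k\})}F_i\cap F_j\cap F_k$ followed by the bare assertion of the start/end sign interpretation, so your proposal matches it in structure and level of rigor, merely making the bookkeeping more explicit (and note that in a simple $3$-polytope the set $\sigma(v)\cap\omega_1$ at an endpoint is automatically a singleton, so the ``pairwise adjacent facets'' remark is redundant).
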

\begin{proof}
$$
F_i\otimes (F_j\cap F_k)\to(-1)^{l(\omega_1,\omega_2)}(-1)^{l(i,\{j,k\})}F_i\cap F_j\cap F_k,
$$
where $(-1)^{l(i,\{j,k\})}F_i\cap F_j\cap F_k$ is the vertex $F_i\cap F_j\cap F_k$ with the sign $+$,  if  $F_j\cap F_k$ starts at $F_i$, and $-$, if $F_j\cap F_k$ ends at  $F_i$. 
\end{proof}
\subsection{$k$-belts and Betti numbers}
\begin{definition} For any $k$-belt $\mathcal{B}_k=\{F_{i_1},\dots,F_{i_k}\}$ define $\omega(\mathcal{B}_k)=\{i_1,\dots,i_k\}$, and   
$\widetilde{\mathcal{B}_k}$ to be the generator in the group 
$$
\mathbb Z\simeq H^{-(k-2),2\omega}(\mathcal{Z}_P)\simeq H^1(P_{\omega})\simeq H^1(K_{\omega})\simeq H_1(P_{\omega},\partial P_{\omega}),
$$
where $\omega=\omega(\mathcal{B}_k)$. 
\end{definition}
\begin{remark}
It is easy to prove that $\mathcal{B}_k$ is a $k$-belt if and only if $K_{\omega(\mathcal{B}_k)}$ is combinatorially equivalent to the boundary of a $k$-gon.
\end{remark}
Let $P$ be a simple $3$-polytope with $m$ facets.
\begin{proposition}\label{3bb}
Let $\omega=\{i,j,k\}\subset[m]$. Then 
$$
H^{-1,2\omega}(\mathcal{Z}_P)=\begin{cases}\mathbb Z,&(F_i,F_j,F_k) \text{ is a $3$-belt},\\
0,&\text{ otherwise }.
\end{cases}
$$ 
In particular, $\beta^{-1,6}$ is equal to the number of $3$-belts, and the set of elements $\{\widetilde{\mathcal{B}_3}\}$ is a basis in $H^{-1,6}(\mathcal{Z}_P)$.
\end{proposition}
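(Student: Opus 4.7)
The plan is to reduce the computation to a topological statement about $P_\omega$ via Corollary \ref{Pocor}, which gives an isomorphism
$$H^{-1,2\omega}(\mathcal{Z}_P)\cong\widetilde{H}^{1}(P_\omega,\mathbb Z).$$
For $\omega=\{i,j,k\}$ we have $P_\omega=F_i\cup F_j\cup F_k$, a subcomplex of $\partial P$, and the statement follows from a short case analysis of the three facets.

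First I would dispose of the case where at least one pair among $\{F_i,F_j\},\{F_j,F_k\},\{F_i,F_k\}$ has empty intersection. Then $P_\omega$ is either a disjoint union of three disks (all three pairs disjoint), the disjoint union of one disk and the pairwise union of two disks meeting along an edge (exactly one pair intersects), or the pairwise union of all three meeting along two edges with one pair disjoint. In every subcase each connected component is contractible, so $\widetilde{H}^1(P_\omega)=0$.

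Next, assume all three pairwise intersections are nonempty; since $P$ is simple each such intersection is an edge. Then either $F_i\cap F_j\cap F_k$ is a vertex $v$, or it is empty. In the first case $P_\omega$ is the closed star of $v$ in $\partial P$, which deformation retracts onto $v$, and again $\widetilde{H}^1(P_\omega)=0$. In the second case $(F_i,F_j,F_k)$ satisfies the definition of a $3$-belt, and by Lemma \ref{belt-lemma}(1) $P_\omega=|\mathcal{B}_3|$ is homeomorphic to a cylinder; hence $\widetilde{H}^1(P_\omega)\cong\mathbb Z$. This establishes the formula for $H^{-1,2\omega}(\mathcal{Z}_P)$.

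The final assertion is then immediate from the multigraded decomposition $H^{-1,6}(\mathcal{Z}_P)=\bigoplus_{|\omega|=3}H^{-1,2\omega}(\mathcal{Z}_P)$: the only $\omega$'s contributing a nonzero summand are those of the form $\omega=\omega(\mathcal{B}_3)$ for a $3$-belt $\mathcal{B}_3$, each such summand is free of rank one, and $\widetilde{\mathcal{B}_3}$ is by definition the generator of the corresponding $\mathbb Z$. So $\beta^{-1,6}$ equals the number of $3$-belts and $\{\widetilde{\mathcal{B}_3}\}$ is a basis. The only place a subtlety might arise is in identifying $P_\omega$ with a cylinder in the $3$-belt case, but this is already handled by Lemma \ref{belt-lemma}, so no computational obstacle remains.
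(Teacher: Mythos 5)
Your proof is correct and is essentially the paper's argument: the paper runs the same case analysis, only on the homotopy-equivalent simplicial model $K_\omega$ (a subcomplex of the full $2$-simplex on three vertices is either the $2$-simplex, a graph with no cycles, or the boundary of a triangle), while you work with the geometric model $P_\omega=F_i\cup F_j\cup F_k\subset\partial P$ via Corollary~\ref{Pocor}. The underlying dichotomy — some pairwise intersection empty, triple intersection a vertex, or $3$-belt — and the passage to $\beta^{-1,6}$ and the basis $\{\widetilde{\mathcal{B}_3}\}$ are the same.
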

\begin{proof}
We have $H^{-1,2\omega}(\mathcal{Z}_P)\simeq \widetilde{H}^1(K_{\omega})$. Consider all possibilities for the simplicial complex
$K_{\omega}$ on $3$ vertices. If $\{i,j,k\}\in K_{\omega}$, then $K_{\omega}$ is a $3$-simplex, and it is contractible. Else $K_{\omega}$ is a graph. If $K_{\omega}$ has no cycles, then each connected component is a tree, else $K_{\omega}$ is a cycle with $3$ vertices. This proves the statement.
\end{proof}
\begin{proposition}\label{4bb}
Let $P$ be a simple $3$-polytope without $3$-belts,
and $\omega\subset[m]$, $|\omega|=4$. Then 
$$
H^{-2,2\omega}(\mathcal{Z}_P)=\begin{cases}\mathbb Z,&\omega=\omega(\mathcal{B}) \text{ for some $4$-belt $\mathcal{B}$},\\
0,&\text{ otherwise},
\end{cases}
$$ 
where the belt $\mathcal{B}$ is defined in a unique way (we will denote it $\mathcal{B}(\omega)$).
In particular, $\beta^{-2,8}$ is equal to the number of $4$-belts, and the set of elements $\{\widetilde{\mathcal{B}_4}\}$ is a basis in $H^{-2,8}(\mathcal{Z}_P)$.
\end{proposition}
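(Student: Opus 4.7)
The plan is to mimic the argument of Proposition \ref{3bb}. By the isomorphism $H^{-i,2\omega}(\mathcal{Z}_P)\simeq\widetilde{H}^{|\omega|-i-1}(K_\omega)$ from Corollary \ref{Pocor} (or the simplicial version stated later in the lecture), we have $H^{-2,2\omega}(\mathcal{Z}_P)\simeq\widetilde{H}^1(K_\omega)$, so the task is to compute $\widetilde{H}^1(K_\omega)$ for every simplicial complex $K_\omega$ that can occur on a vertex set of size $4$ under our hypotheses. Two general observations shape the analysis: (i) since $P$ is a simple $3$-polytope, no four facets share a vertex, so $K_\omega$ contains no $3$-simplex and $\dim K_\omega\leqslant 2$; (ii) by the hypothesis that $P$ has no $3$-belts, any three pairwise intersecting facets satisfy $F_a\cap F_b\cap F_c\neq\varnothing$, which means that every triangle appearing in the $1$-skeleton of $K_\omega$ must be filled by a $2$-simplex.

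Next I would run through the possibilities for $K_\omega=K_{\{i,j,k,l\}}$, organized by the number of edges $e$. For $e\leqslant 3$ the realization $|K_\omega|$ is a disjoint union of trees and at most one filled triangle, hence contractible on each component, so $\widetilde{H}^1=0$. For $e=4$ there are two sub-cases: either $K_\omega$ is a filled triangle with a pendant edge, which is contractible; or $K_\omega$ is a $4$-cycle with no chord, hence homotopy equivalent to $S^1$, giving $\widetilde{H}^1(K_\omega)\simeq\mathbb{Z}$. For $e=5$ the complex is $K_4$ minus one edge; the two remaining triangles are filled, and gluing them along their common edge yields a disk, so $\widetilde{H}^1=0$. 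For $e=6$ the complex is the complete graph $K_4$ with all four triangles filled, which is the boundary of the $3$-simplex, a $2$-sphere, so again $\widetilde{H}^1=0$. Thus $\widetilde{H}^1(K_\omega)\neq 0$ precisely in the $4$-cycle case.

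In the $4$-cycle case, the edges of $K_\omega$ are $\{i_s,i_{s+1}\}$, $s\in\mathbb Z/4$, and the two <<diagonal>> pairs satisfy $F_{i_1}\cap F_{i_3}=F_{i_2}\cap F_{i_4}=\varnothing$; the cyclic ordering is determined by the graph up to dihedral symmetry. The loop $(F_{i_1},F_{i_2},F_{i_3},F_{i_4})$ satisfies $F_{i_s}\cap F_{i_t}\neq\varnothing$ iff $\{s,t\}$ is a pair of consecutive indices, and $F_{i_1}\cap F_{i_2}\cap F_{i_3}\cap F_{i_4}=\varnothing$ (forced by simplicity together with the vanishing of the diagonal intersections). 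By the definition of a $k$-belt this is exactly a $4$-belt $\mathcal{B}(\omega)$, and the dihedral ambiguity in the cyclic order is precisely the standard equivalence of $k$-loops, so $\mathcal{B}(\omega)$ is uniquely determined by $\omega$. Conversely any $4$-belt $\mathcal{B}=(F_{i_1},F_{i_2},F_{i_3},F_{i_4})$ makes $K_{\omega(\mathcal{B})}$ a $4$-cycle. The rank count then yields $\beta^{-2,8}=\#\{4\text{-belts}\}$, and the elements $\widetilde{\mathcal{B}_4}$, defined as the generator of $H^{-2,2\omega(\mathcal{B}_4)}(\mathcal{Z}_P)\simeq\mathbb{Z}$, form a basis of $H^{-2,8}(\mathcal{Z}_P)$.

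The only subtle step is the case analysis for $e=4,5,6$ and checking that the no-$3$-belt hypothesis genuinely kills all triangular $1$-cycles; everything else is the standard dictionary between $K_\omega$ and intersection patterns of facets, as already used in Proposition \ref{3bb}.
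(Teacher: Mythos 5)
Your proof is correct and follows essentially the same strategy as the paper's: reduce to computing $\widetilde{H}^1(K_\omega)$ for $\omega$ of size $4$, use the no-$3$-belt hypothesis to conclude every $3$-cycle in $K^1_\omega$ bounds a $2$-simplex, and then enumerate the possible complexes $K_\omega$. The only difference is organizational: the paper branches on whether $K^1_\omega$ contains a $3$-cycle (and, if so, on how the fourth vertex attaches), whereas you enumerate by the number of edges $e\in\{0,\dots,6\}$. Both case analyses are exhaustive and yield the same conclusion, namely that $\widetilde{H}^1(K_\omega)\neq 0$ exactly when $K_\omega$ is a chordless $4$-cycle, which is the $4$-belt condition. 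Your explicit treatment of uniqueness via the dihedral symmetry of the $4$-cycle, and the remark that simplicity of $P$ rules out $3$-simplices in $K_\omega$, are both correct and fill in details the paper leaves implicit.
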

\begin{proof}
We have $H^{-2,2\omega}(\mathcal{Z}_P)\simeq \widetilde{H}^1(K_{\omega})$. Consider the $1$-skeleton $K^1_{\omega}$.
If it has no cycles, then $K_{\omega}=K^1_{\omega}$ is  a disjoint union of trees. If $K^1_{\omega}$ has a $3$-cycle on vertices $\{i,j,k\}$, then $\{i,j,k\}\in K_{\omega}$. Let $l=\omega\setminus\{i,j,k\}$. $l$ is either disconnected from $\{i,j,k\}$, or connected to it by one edge, or connected to it by two edges, say $\{i,l\}$ and $\{j,l\}$, with $\{i,j,l\}\in K_{\omega}$, or connected to it by three edges with $K_{\omega}\simeq\partial \Delta^3$. In all these cases $\widetilde{H}^1(K_{\omega})=0$. If $K^1_{\omega}$ has no $3$-cycles, but has a $4$-cycle $\{i,j\},\{j,k\},\{k,l\},\{l,i\}$, then $K_{\omega}$ coincides with this cycle and $(F_i,F_j,F_k,F_l)$ is a $4$-belt. This proves the statement.
\end{proof}

\begin{theorem}\label{5bb}
Let $P$ be a simple $3$-polytope without $3$-belts and $4$-belts,
and $\omega\subset[m]$, $|\omega|=5$. Then 
$$
H^{-3,2\omega}(\mathcal{Z}_P)=\begin{cases}\mathbb Z,&\omega=\omega(\mathcal{B}) \text{ for some $5$-belt $\mathcal{B}$},\\
0,&\text{ otherwise},
\end{cases}
$$ 
where the belt $\mathcal{B}$ is defined in a unique way (we will denote it $\mathcal{B}(\omega)$).
In particular, $\beta^{-3,10}$ is equal to the number of $5$-belts, and the set of elements $\{\widetilde{\mathcal{B}_5}\}$ is a basis in $H^{-3,10}(\mathcal{Z}_P)$.
\end{theorem}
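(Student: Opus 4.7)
By the isomorphism $H^{-3, 2\omega}(\mathcal Z_P) \cong \widetilde H^1(K_\omega)$ for $|\omega| = 5$, coming from the cohomology description of $\mathcal Z_P$ in terms of full subcomplexes of $K = \partial P^*$, the problem reduces to computing $\widetilde H^1(K_\omega)$ and showing it is $\mathbb Z$ precisely when $\omega = \omega(\mathcal B)$ for some $5$-belt $\mathcal B$, with $\mathcal B$ uniquely determined by $\omega$.

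I first establish structural properties of the $1$-skeleton $G$ of $K_\omega$. Since $P$ has no $3$-belts and $|\omega|=5$ forces $m\geqslant 5$ (so $P\neq\Delta^3$), Proposition \ref{nflag} implies $P$ is flag; consequently every $3$-cycle in $G$ is filled to a $2$-simplex in $K_\omega$. Next, in a flag simple $3$-polytope without $3$-belts no four facets can pairwise intersect: if $F_1,F_2,F_3,F_4$ did, the flag property would give a vertex $v_{ijk}=F_i\cap F_j\cap F_k$ for each triple, but since a simple $3$-polytope has exactly three facets at each vertex, the four vertices $v_{ijk}$ are distinct and, together with the edges $F_i\cap F_j$, force one of the facets (say $F_3$) to be a triangle whose three adjacent facets form a $3$-loop; by Proposition \ref{facet-belt} this loop would be a $3$-belt, a contradiction. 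Hence $G$ contains no $K_4$. Finally, the no-$4$-belts hypothesis together with Proposition \ref{4bb} yields $\widetilde H^1(K_\tau)=0$ for every $4$-subset $\tau$, which, combined with the classification in the proof of Proposition \ref{4bb}, prohibits any induced $4$-cycle of $G$ on $4$ vertices of $\omega$.

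With these properties in hand I split into two cases. \emph{Case 1: $G$ contains an induced $5$-cycle.} Since there are only $5$ vertices, the cycle must use all of $\omega$ and admit no chords, so $K_\omega$ is exactly this cycle as a simplicial complex; thus $K_\omega\simeq S^1$ and $\widetilde H^1(K_\omega)=\mathbb Z$. The cyclic sequence $(F_{i_1},\dots,F_{i_5})$ supplied by the $5$-cycle has consecutive intersections nonempty, non-consecutive intersections empty (by the inducedness), and total intersection empty (four facets already cannot pairwise intersect), so it is a $5$-belt $\mathcal B$ with $\omega(\mathcal B)=\omega$; the cyclic order, hence $\mathcal B$, is determined by the cycle up to orientation. \emph{Case 2: $G$ contains no induced cycle of length $\geqslant 4$.} Then $G$ is chordal, and $K_\omega$ equals the clique complex of $G$ (flagness and the absence of $K_4$). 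A standard induction via simplicial vertex elimination shows such a clique complex is contractible on each connected component: any chordal graph has a vertex $v$ whose neighborhood is a clique, so the link of $v$ in $\operatorname{Cl}(G)$ is a simplex, hence contractible, and $\operatorname{Cl}(G)\simeq \operatorname{Cl}(G-v)$. Therefore $\widetilde H^1(K_\omega)=0$.

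The decisive step, and the most delicate, is ruling out four pairwise intersecting facets in a flag $3$-polytope without $3$-belts; once this is in place the case analysis is essentially forced, since five vertices admit no other induced cycle of length $\geqslant 4$ besides a single $5$-cycle on all of $\omega$, and the chordal alternative is resolved by the routine simplicial-vertex collapse.
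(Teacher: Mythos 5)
Your proof is correct, and it takes a genuinely different route from the paper's. The paper argues directly on the surface of $P$: it treats $P_\omega$ as a sphere with holes, picks a boundary edge‑cycle $\gamma$ of one of the holes, reads off the $k$‑loop $\mathcal L_k$ in $\mathcal W_\omega$ that $\gamma$ traces, and then uses the absence of $3$‑belts and $4$‑belts to show that the only way to have two or more holes is for $\mathcal L_5$ to be a $5$‑belt filling out all of $\omega$. Your argument instead works in the nerve complex $K_\omega$: you observe it is the clique complex of its $1$‑skeleton $G$ (flagness of $K_P$ passes to full subcomplexes), rule out induced $4$‑cycles via Proposition \ref{4bb} and the no‑$4$‑belt hypothesis, and then split on whether $G$ contains an induced $5$‑cycle — in which case $K_\omega$ is that cycle and gives $\mathbb Z$ — or $G$ is chordal, in which case simplicial‑vertex elimination collapses each component of the clique complex. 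The chordal‑graph route is cleaner and more conceptually modular; the paper's route is closer to the geometry of the polytope and feeds directly into the belt‑enumeration theorems elsewhere in the text. One small simplification available to you: the "no four pairwise intersecting facets" claim, which you prove via triangles and $3$‑belts, follows immediately from the fact that $K_P\simeq\partial P^*$ is a $2$‑dimensional flag complex (a $K_4$ in the $1$‑skeleton would, by flagness, force a $3$‑simplex, contradicting $\dim K_P=2$).
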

\begin{proof}
We have $H^{-3,2\omega}(\mathcal{Z}_P)\simeq \widetilde{H}^1(K_{\omega})$. Since $\widetilde{H}^1(K_{\omega})=0$ for $|\omega|\leqslant 2$, from Propositions \ref{3bb} and \ref{4bb} we have $\widetilde{H}^1(K_{\omega})=0$, if $K_{\omega}$ is disconnected. Let it be connected. Consider the sphere with holes  $P_{\omega}$. If  $H^1(P_{\omega})\ne0$, then there are at least two holes. Consider a simple edge cycle $\gamma$ bounding one of the holes. Walking round $\gamma$ we obtain a $k$-loop $\mathcal{L}_k=(F_{i_1},\dots, F_{i_k})$, $k\geqslant 3$ in $P_{\omega}$. If $k=3$, then the absence of $3$-belts implies that $F_{i_1}\cap F_{i_2}\cap F_{i_3}$ is a vertex; hence $P_{\omega}=\{F_{i_1},F_{i_2}, F_{i_3}\}$, which is a contradiction. If $k=4$, then the absence of $4$-belts implies that $F_{i_1}\cap F_{i_3}\ne\varnothing$, or  $F_{i_2}\cap F_{i_4}\ne\varnothing$. Without loss of generality let $F_{i_1}\cap F_{i_3}\ne\varnothing$. Then $F_{i_1}\cap F_{i_2}\cap F_{i_3}$ and $F_{i_3}\cap F_{i_4}\cap F_{i_1}$ are vertices; hence $P_{\omega}=\{F_{i_1},F_{i_2}, F_{i_3},F_{i_4}\}$, which is a contradiction. Let $k=5$. If $\mathcal{L}_5$ is not a $5$-belt, then some two nonsuccessive facets intersect. They are adjacent to some facet of $\mathcal{L}_5$. Without loss of generality let it be $F_{i_2}$, and $F_{i_1}\cap F_{i_3}\ne\varnothing$.  Then $F_{i_1}\cap F_{i_2}\cap F_{i_3}$ is a vertex. The absence of $4$-belts implies  that $F_{i_3}\cap F_{i_5}\ne\varnothing$, or $F_{i_4}\cap F_{i_1}\ne\varnothing$. Without loss of generality let $F_{i_3}\cap F_{i_5}\ne\varnothing$. Then $F_{i_3}\cap F_{i_4}\cap F_{i_5}$ and $F_{i_1}\cap F_{i_3}\cap F_{i_5}$ are vertices, and $P_{\omega}$ is a disc bounded by $\gamma$. A contradiction.Thus $\mathcal{L}_5$ is a $5$-belt, and $H^1(P_{\omega})\simeq\mathbb Z$ generated by $\widetilde{\mathcal{L}_5}$. This proves the statement.
\end{proof}
\begin{proposition}\label{345b}
Any simple $3$-polytope $P\ne \Delta^3$ has either a $3$-belt, or a $4$-belt, or a $5$-belt.
\end{proposition}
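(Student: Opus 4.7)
The plan is to prove the contrapositive: if a simple $3$-polytope $P$ has no $3$-belt, no $4$-belt, and no $5$-belt, then $P = \Delta^3$. The key ingredients are already assembled in the excerpt: Proposition \ref{nflag} (which characterizes non-flag simple $3$-polytopes as either $\Delta^3$ or polytopes with a $3$-belt), Proposition \ref{facet-belt} (which says every facet of a flag simple $3$-polytope is surrounded by a belt of matching length), and Theorem \ref{pkth} (the $p_k$-identity $3p_3+2p_4+p_5=12+\sum_{k\geq 7}(k-6)p_k$).

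First I would dispose of the $3$-belt case. Assume $P \ne \Delta^3$ and that $P$ has no $3$-belt; we need to produce a $4$-belt or $5$-belt. By Proposition \ref{nflag}, the absence of a $3$-belt together with $P\ne\Delta^3$ forces $P$ to be a flag polytope. The corollary to Proposition \ref{facet-belt} then gives $p_3(P)=0$.

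Next I would use Proposition \ref{facet-belt} twice. If $P$ had a quadrangular facet $F$, then $F$ would be surrounded by a $4$-belt, producing the desired belt; so if $P$ has no $4$-belt, then $p_4(P)=0$. Analogously, if $P$ has no $5$-belt, then $p_5(P)=0$, because any pentagonal facet would otherwise be surrounded by a $5$-belt.

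Finally, I would feed $p_3=p_4=p_5=0$ into the identity (\ref{formulapk}) of Theorem \ref{pkth}, obtaining
$$
0 \;=\; 3p_3+2p_4+p_5 \;=\; 12+\sum_{k\geq 7}(k-6)p_k \;\geq\; 12,
$$
which is a contradiction. Hence at least one of $p_4$ or $p_5$ is positive, so $P$ has a $4$-belt or a $5$-belt. I do not anticipate any genuine obstacle here — the statement is essentially a clean repackaging of Propositions \ref{nflag}, \ref{facet-belt} and the Euler identity; the only care needed is to remember that Proposition \ref{facet-belt} requires flagness, which is why the $3$-belt case must be dealt with first.
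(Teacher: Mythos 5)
Your proof is correct and is essentially the same argument the paper gives, just written out in more detail: the paper likewise reduces to the flag case via the absence of $3$-belts, invokes Proposition~\ref{facet-belt} to turn quadrangular or pentagonal facets into belts, and uses Theorem~\ref{pkth} with $p_3=0$ to force $p_4>0$ or $p_5>0$. No gaps.
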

\begin{proof}
If $P\ne\Delta^3$ has no $3$-belts, then it is a flag polytope and any facet of $P$ is surrounded by a belt. Theorem \ref{pkth} implies that any flag simple $3$-polytope  has a quadrangular or pentagonal facet. This finishes the proof.  
\end{proof}

\begin{corollary}
For a fullerene $P$
\begin{itemlist}
\item $\beta^{-1,6}=0$ -- the number of $3$-belts;
\item $\beta^{-2,8}=0$ -- the number of $4$-belts;
\item $\beta^{-3,10}=12+k$, $k\geqslant 0$, -- the number of $5$-belts. If $k>0$, then $p_6=5k$;
\item the product mapping $H^3(\mathcal{Z}_P)\otimes H^3(\mathcal{Z}_P) \to
H^6(\mathcal{Z}_P)$ is trivial.
\end{itemlist}

\end{corollary}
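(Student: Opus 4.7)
My plan is to deduce each of the four claims by assembling results that have already been established in the paper, rather than by any new computation.

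First I would dispatch the two vanishing statements. By Corollary \ref{3belts-ful} every fullerene is flag, hence has no $3$-belts; Proposition \ref{3bb} then gives $\beta^{-1,6}=0$ and identifies this number with the count of $3$-belts. For the second vanishing, Corollary \ref{4-belt-ful} says fullerenes have no $4$-belts, and because they also have no $3$-belts the hypothesis of Proposition \ref{4bb} is satisfied, so $\beta^{-2,8}=0$ and equals the count of $4$-belts.

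For the third claim, the absence of both $3$-belts and $4$-belts in a fullerene puts us exactly in the hypothesis of Theorem \ref{5bb}, so $\beta^{-3,10}$ equals the number of $5$-belts. Theorem \ref{5belts-theorem} then classifies these: there are always the $12$ belts surrounding the pentagonal facets (one per pentagon, by part I), and any extra $5$-belt forces $P$ to be the nanotube $D_k$ for some $k\geqslant 1$, in which case the total number of $5$-belts is $12+k$ and $p_6=5k$. Writing $k$ for the number of $5$-belts not surrounding a pentagon covers both cases uniformly.

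The last claim on the product is where I would have to be a little careful, although it is still immediate from the bookkeeping. By Corollary \ref{zpgc} every element of $H^3(\mathcal{Z}_P)$ lives in the bigraded piece $H^{-1,2\omega}(\mathcal{Z}_P)$ with $|\omega|=2$, namely $F_i\cap F_j=\varnothing$. Applying Theorem \ref{MPTheorem} to two such classes supported on $\omega_1,\omega_2$: if $\omega_1\cap\omega_2\neq\varnothing$ the product is zero by that theorem; otherwise it lands in $H^{-2,2(\omega_1\cup\omega_2)}(\mathcal{Z}_P)$ with $|\omega_1\cup\omega_2|=4$, a direct summand of $H^{-2,8}(\mathcal{Z}_P)$. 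Since $\beta^{-2,8}=0$ was just established, the image vanishes, so the product map $H^3(\mathcal{Z}_P)\otimes H^3(\mathcal{Z}_P)\to H^6(\mathcal{Z}_P)$ is trivial. None of the steps requires substantial new work; the only subtlety worth highlighting is making sure the target of the product is confined to the $(-2,8)$ multidegree, which is forced by the fact that $H^3$ is concentrated in a single multidegree.
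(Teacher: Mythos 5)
Your proof is correct and assembles exactly the right pieces. For the first three items you invoke Corollary \ref{3belts-ful}, Corollary \ref{4-belt-ful}, Proposition \ref{3bb}, Proposition \ref{4bb}, Theorem \ref{5bb}, and Theorem \ref{5belts-theorem}, which is precisely the chain of results the paper has set up for this purpose; the paper does not spell out a proof of the corollary, and your deduction matches the intended one. For the fourth item you argue directly from the multidegree structure: $H^3(\mathcal{Z}_P)$ is concentrated in $H^{-1,2\omega}$ with $|\omega|=2$, so any product either vanishes because $\omega_1\cap\omega_2\neq\varnothing$ (Theorem \ref{MPTheorem}) or lands in $H^{-2,8}(\mathcal{Z}_P)$, which you have just shown to be zero. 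The paper already has Proposition \ref{4bpr}, which states that this product map is trivial if and only if $P$ has no $4$-belts, so the intended route is simply to quote that proposition after Corollary \ref{4-belt-ful}; your argument is an independent re-derivation of the relevant direction and is in fact the content of the proof of Proposition \ref{4bpr}. Either route is fine, and the extra care you take about the target multidegree is exactly what makes the step airtight (the absence of torsion in $H^*(\mathcal{Z}_P)$ for $3$-polytopes guarantees that $\beta^{-2,8}=0$ forces $H^{-2,8}(\mathcal{Z}_P)=0$, which you implicitly use).
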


\subsection{Relations between Betti numbers}\index{polytope!relations between bigraded Betti numbers of $3$-polytopes}
\begin{theorem}(Theorem 4.6.2, \cite{Bu-Pa15}) For any simple polytope $P$ with $m$ facets
$$
(1-t^2)^{m-n}(h_0+h_1t^2+\dots+h_n t^{2n})=\sum\limits_{-i,2j}(-1)^i\beta^{-i,2j}t^{2j},
$$
where $h_0+h_1t+\dots+h_nt^n=(t-1)^n+f_{n-1}(t-1)^{n-1}+\dots+f_0$.
\end{theorem}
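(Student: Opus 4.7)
My plan is to interpret the right-hand side as the bigraded Euler--Poincar\'e characteristic of the cochain complex $R^*(P)$ from Theorem~\ref{HRth}, translate it into a sum over the faces of~$P$, and recognise the result as the $h$-polynomial via a standard combinatorial identity. By Theorem~\ref{HRth}, $H^{-i,2j}(\mathcal Z_P)\cong H^{-i,2j}[R^*(P),d]$; since $d$ has multidegree $(1,0)$, in each fixed second grading the alternating sum of ranks over the $u$-grading is invariant under passing to cohomology, so
$$
\sum_{-i,2j}(-1)^i\beta^{-i,2j}t^{2j}=\sum_{-i,2j}(-1)^i{\rm rank\,}R^{-i,2j}(P)\,t^{2j}.
$$

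Using the $\mathbb Z$-basis $\{v_\sigma u_{\omega\setminus\sigma}:\sigma\subset\omega\subset[m],\,G(\sigma)\ne\varnothing\}$ of~$R^*(P)$, which carries multidegree $(-|\omega\setminus\sigma|,2|\omega|)$, I would regroup the sum by~$\sigma$ and sum the geometric series in $J=\omega\setminus\sigma\subset[m]\setminus\sigma$, producing
$$
\sum_{-i,2j}(-1)^i{\rm rank\,}R^{-i,2j}(P)\,t^{2j}=\sum_{\sigma:\,G(\sigma)\ne\varnothing}t^{2|\sigma|}(1-t^2)^{m-|\sigma|}.
$$
The map $\sigma\mapsto G(\sigma)$ is a bijection between $\{\sigma\subset[m]:G(\sigma)\ne\varnothing\}$ and the set of nonempty faces of~$P$, with $|\sigma(G)|=n-\dim G$ (and $\sigma=\varnothing$ corresponding to $G=P$, so $f_n:=1$). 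Reindexing by $d=\dim G$ and factoring out $(1-t^2)^{m-n}$ gives
$$
(1-t^2)^{m-n}\sum_{d=0}^n f_d\,t^{2(n-d)}(1-t^2)^d.
$$

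Setting $u=t^2$, the proof reduces to the polynomial identity $\sum_{d=0}^n f_d\,u^{n-d}(1-u)^d=h(u)$, where $h(u)=(u-1)^n+f_{n-1}(u-1)^{n-1}+\dots+f_0$ as in the statement. A direct computation, valid for any sequence $f_0,\dots,f_{n-1},f_n=1$, shows that the left-hand side equals $u^n h(1/u)$ (substitute $1/u-1=(1-u)/u$ into the definition of~$h$ and multiply by~$u^n$). Hence the sought identity becomes $h(u)=u^n h(1/u)$, i.e.\ the palindromic property $h_i=h_{n-i}$; this is the Dehn--Sommerville symmetry for the simple polytope~$P$, a classical consequence of Euler's formula applied to the boundary complex of the simplicial dual $\partial P^*$. (For a simple $3$-polytope it collapses to $f_0-f_1+f_2=2$ together with the simpleness relation $2f_1=3f_0$.) This Dehn--Sommerville input is the one non-formal ingredient: the topology is absorbed into Theorem~\ref{HRth}, and the rest is the Euler--Poincar\'e principle together with bookkeeping on the basis of~$R^*(P)$.
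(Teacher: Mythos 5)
Your argument is correct and is essentially the standard proof from the cited reference: apply the Euler--Poincar\'e principle to the bigraded complex $R^*(P)$ of Theorem~\ref{HRth} in each second grading, regroup the basis $\{v_\sigma u_{\omega\setminus\sigma}\}$ by $\sigma$ to obtain $\sum_{\sigma: G(\sigma)\ne\varnothing} t^{2|\sigma|}(1-t^2)^{m-|\sigma|}$, and convert to the $h$-polynomial. You also correctly isolate the one non-formal combinatorial input: with the paper's normalization $h(t)=\sum_d f_d(t-1)^d$, the regrouping naturally produces $u^n h(1/u)$ rather than $h(u)$, and identifying the two is precisely the Dehn--Sommerville symmetry $h_i=h_{n-i}$ for simple polytopes.
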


\begin{corollary}
Set $h=m-3$. For a simple $3$-polytope $P\ne\Delta^3$ with $m$ facets
\begin{multline*}
(1-t^2)^h(1+ht^2+ht^4+t^6)=\\
1-\beta^{-1,4}t^4+\sum\limits_{j=3}^{h}(-1)^{j-1}(\beta^{-(j-1),2j}-\beta^{-(j-2),2j})t^{2j}+\\
(-1)^{h-1}\beta^{-(h-1),2(h+1)}t^{2(h+1)}+(-1)^ht^{2(h+3)}.
\end{multline*}
\end{corollary}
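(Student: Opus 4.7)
The plan is to specialize the cited theorem of Buchstaber--Panov (Theorem 4.6.2 in \cite{Bu-Pa15}) to $n=3$, substitute the explicit $h$-vector, and then organize the right-hand side by powers of $t^{2}$ using the description of $H^{-i,2\omega}(\mathcal Z_P)$ in terms of unions of facets.

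\smallskip
First I would compute the $h$-vector. For a simple $3$-polytope, counting vertex-edge incidences gives $2f_1=3f_0$, which combined with Euler's relation $f_0-f_1+f_2=2$ (and $f_2=m$) yields $f_0=2m-4$, $f_1=3m-6$. Expanding
\[
h_0+h_1t+h_2t^2+h_3t^3=(t-1)^3+m(t-1)^2+(3m-6)(t-1)+(2m-4)
\]
directly gives $(h_0,h_1,h_2,h_3)=(1,m-3,m-3,1)=(1,h,h,1)$, so the master formula becomes $(1-t^2)^h(1+ht^2+ht^4+t^6)=\sum_{i,j}(-1)^i\beta^{-i,2j}t^{2j}$.

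\smallskip
Next I would determine which $\beta^{-i,2j}$ can be nonzero. By the isomorphism $H^{-i,2\omega}(\mathcal Z_P)\simeq\widetilde H^{|\omega|-i-1}(P_\omega)$ and the fact that $P_\omega$ is a subcomplex of $\partial P\simeq S^2$, Proposition~\ref{Pcc} restricts the nonzero reduced cohomology to degrees $-1$ (only for $\omega=\varnothing$), $0$, $1$ (only for $\omega\subsetneq[m]$), and $2$ (only for $\omega=[m]$). Translating this, for $\omega\subsetneq[m]$ with $|\omega|=j$ the possibly nonzero Betti numbers occur at $i=j-1$ (counting components) and $i=j-2$ (counting $H^1$); the exceptional entries are $\beta^{0,0}=1$ and $\beta^{-(m-3),2m}=1$. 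So the coefficient of $t^{2j}$ on the right-hand side is
\[
(-1)^{j-1}\beta^{-(j-1),2j}+(-1)^{j-2}\beta^{-(j-2),2j}=(-1)^{j-1}\bigl(\beta^{-(j-1),2j}-\beta^{-(j-2),2j}\bigr)
\]
in the generic range, with exceptional contributions from $\omega=\varnothing$ and $\omega=[m]$.

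\smallskip
Finally I would check that the boundary values collapse to the stated form. For $j=0$ we get the constant $1$; for $j=1$ the set $P_\omega$ is contractible (a single facet), so the coefficient vanishes; for $j=2$ the set $P_\omega$ is either a disk or two disjoint disks, so $\widetilde H^1(P_\omega)=0$, forcing $\beta^{0,4}=0$ and leaving only $-\beta^{-1,4}t^4$. For $j=m-2$ the set $P_\omega$ is $S^2$ with two open disks removed, hence always connected, so $\beta^{-h,2(h+1)}=0$ and the term reduces to $(-1)^{h-1}\beta^{-(h-1),2(h+1)}t^{2(h+1)}$; for $j=m-1$ the set $P_\omega$ is $S^2$ minus one open disk, a disk, so both Betti numbers vanish and there is no $t^{2(h+2)}$ contribution; for $j=m$ only $\widetilde H^2(S^2)=\mathbb Z$ contributes, yielding $(-1)^h t^{2(h+3)}$. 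Assembling these cases gives exactly the right-hand side displayed in the corollary. The main bookkeeping obstacle is the case analysis for $|\omega|\in\{m-2,m-1\}$, where I need to argue topologically about complements of small facet-subsets in $S^2$ rather than rely on the generic formula.
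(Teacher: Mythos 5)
The proposal is correct, and since the paper states this corollary without proof, your derivation is essentially the intended one: specialize the bigraded Poincar\'e series of Theorem~4.6.2 to $n=3$, compute $(h_0,h_1,h_2,h_3)=(1,h,h,1)$, and use $\beta^{-i,2\omega}={\rm rank}\,\widetilde H^{|\omega|-i-1}(P_\omega)$ together with the constraints of Proposition~\ref{Pcc} to see that only $i\in\{j-2,\,j-1\}$ contribute for $0<|\omega|=j<m$, with exceptional contributions from $\omega=\varnothing$ and $\omega=[m]$.

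One minor imprecision: for $j=m-2$ you describe $P_\omega$ as ``$S^2$ with two open disks removed.'' Since $P_\omega=\bigcup_{i\in\omega}F_i$ with $\omega=[m]\setminus\{p,q\}$, if $F_p$ and $F_q$ are adjacent then $\partial P\setminus P_\omega$ also contains the open edge $F_p\cap F_q$, so $P_\omega$ is actually a closed disk rather than an annulus. This does not affect the conclusion---$P_\omega$ is connected in both cases, so $\widetilde H^0(P_\omega)=0$ and $\beta^{-h,2(h+1)}=0$---but the case split is worth stating. You might also note that the hypothesis $P\ne\Delta^3$ guarantees $h\geqslant 2$, which keeps the displayed terms $t^{2(h+1)}$ and $t^{2(h+3)}$ from colliding with the low-degree terms and makes the formula cleanly stated.
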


\noindent{\bf Exercise:} For any simple $3$-polytope $P$ we have:
\begin{itemlist}
\item $\beta^{-1,4}$ -- the number of pairs $(F_i,F_j)$, $F_i\cap
    F_j=\varnothing$;
\item $\beta^{-1,6}$ -- the number of $3$-belts;
\item $\beta^{-2,6}=\sum\limits_{i<j<k}s_{i,j,k}$, where $s_{i,j,k}+1$ is
    equal to the number of connected components of the set $F_i\cup
    F_j\cup F_k$;
\item  $\beta^{-3,8}=\sum\limits_{i<j<k<r}s_{i,j,k,r}$, where
    $s_{i,j,k,r}+1$ is equal to the number of connected components of the set
    $F_i\cup F_j\cup F_k\cup F_r$.
\end{itemlist}

\begin{corollary}
For any simple $3$-polytope $P$
\begin{itemlist}
\item $\beta^{-1,4}=\frac{h(h-1)}{2}$;
\item $\beta^{-2,6}-\beta^{-1,6}=\frac{(h^2-1)(h-3)}{3}$;
\item $\beta^{-3,8}-\beta^{-2,8}=\frac{(h+1)h(h-2)(h-5)}{8}$.
\end{itemlist}
\end{corollary}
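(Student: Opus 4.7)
The plan is to read off all three identities as coefficients of the polynomial identity stated in the preceding corollary, which in turn is the specialization to dimension $n=3$ of the general relation between the $h$-vector and the bigraded Betti numbers of $\mathcal{Z}_P$. So my first step is simply to expand the left-hand side $(1-t^2)^h(1+ht^2+ht^4+t^6)$ through degree $t^8$ and match it against the corresponding coefficients on the right-hand side.

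Concretely, using $(1-t^2)^h=\sum_k(-1)^k\binom{h}{k}t^{2k}$, the coefficient of $t^{2j}$ on the LHS is
\[
\text{coeff}_{t^{2j}}=(-1)^j\binom{h}{j}+(-1)^{j-1}h\binom{h}{j-1}+(-1)^{j-2}h\binom{h}{j-2}+(-1)^{j-3}\binom{h}{j-3}.
\]
On the RHS, the coefficient of $t^4$ is $-\beta^{-1,4}$, of $t^6$ is $\beta^{-2,6}-\beta^{-1,6}$, and of $t^8$ is $-(\beta^{-3,8}-\beta^{-2,8})$, according to the sign pattern $(-1)^{j-1}$ in the sum. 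So the three formulas follow from three elementary polynomial simplifications.

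For $j=2$ the LHS coefficient equals $\binom{h}{2}-h\cdot h+h=-\tfrac{h(h-1)}{2}$, giving $\beta^{-1,4}=\tfrac{h(h-1)}{2}$. For $j=3$ one combines $-\binom{h}{3}+h\binom{h}{2}=\tfrac{h(h-1)(h+1)}{3}$ with $-h^2+1=-(h^2-1)$ and factors out $(h^2-1)$ to obtain $(h^2-1)\tfrac{h-3}{3}$, yielding the second identity. For $j=4$ one groups the four binomial terms, pulls out $\tfrac{h}{24}$, and is left with the cubic bracket $(h-1)(h-2)(h-3)-4h(h-1)(h-2)+12h(h-1)-24$; the substitution $h=-1$ shows that $(h+1)$ divides it, and polynomial division gives the factorization $-3(h+1)(h-2)(h-5)$, producing the third identity.

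The only real ``obstacle'' is to be careful with signs (both in the expansion of $(1-t^2)^h$ and in the sign pattern $(-1)^{j-1}$ on the RHS of the Betti-number identity) and to spot the correct factorization of the degree-four polynomial in $h$ coming from the $t^8$ coefficient; once one notices that $h=-1$, $h=2$, and $h=5$ are roots (the latter two being forced by the expected combinatorial vanishing for small simple $3$-polytopes, e.g. the cube $h=3$ gives $\beta^{-1,4}=3$ while the pentagonal prism $h=4$ must still fit), the factorization $(h+1)(h-2)(h-5)$ is immediate. No further geometric input is needed beyond the coefficient identity already established in the corollary.
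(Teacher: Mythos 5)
Your proposal is correct: you expand $(1-t^2)^h(1+ht^2+ht^4+t^6)$ to degree $t^8$, match the $t^4$, $t^6$, $t^8$ coefficients against $-\beta^{-1,4}$, $\beta^{-2,6}-\beta^{-1,6}$, $-(\beta^{-3,8}-\beta^{-2,8})$ read off from the preceding corollary, and the algebraic simplifications (including the factorization $-3(h+1)(h-2)(h-5)$ of the cubic bracket) all check out. This is exactly the coefficient-extraction argument the paper leaves implicit, so you and the paper take the same route.
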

\begin{corollary}
For any fullerene
\begin{itemlist}
\item $\beta^{-1,4}=\frac{(8+p_6)(9+p_6)}{2}$;
\item $\beta^{-2,6}=\frac{(6+p_6)(8+p_6)(10+p_6)}{3}$;
\item $\beta^{-3,8}=\frac{(4+p_6)(7+p_6)(9+p_6)(10+p_6)}{8}$.
\end{itemlist}
\end{corollary}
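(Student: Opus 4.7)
The plan is to derive these three identities as direct corollaries of the preceding general statement for simple $3$-polytopes, specialized to the fullerene case. Recall that for a fullerene $P$ we have $p_5=12$ and $p_k=0$ for $k\neq 5,6$, so the number of facets is
\[
m = p_5 + p_6 = 12+p_6, \qquad h := m-3 = 9+p_6.
\]
This is the only combinatorial input needed to reparametrize the previous corollary.

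First I would substitute $h = 9+p_6$ into $\beta^{-1,4} = \frac{h(h-1)}{2}$ to obtain the first formula $\frac{(9+p_6)(8+p_6)}{2}$. For the second formula, I would use the factorization $h^2-1 = (h-1)(h+1)$ in $\beta^{-2,6}-\beta^{-1,6} = \frac{(h^2-1)(h-3)}{3}$ and then apply the vanishing $\beta^{-1,6}=0$, which holds because a fullerene has no $3$-belts (Corollary \ref{3belts-ful} together with Proposition \ref{3bb}). Substituting $h=9+p_6$ gives $(h-1)(h+1)(h-3) = (8+p_6)(10+p_6)(6+p_6)$, yielding the claim. For the third formula, I would use the vanishing $\beta^{-2,8}=0$, which follows from the absence of $4$-belts in fullerenes (Corollary \ref{4-belt-ful} together with Proposition \ref{4bb}), so that $\beta^{-3,8} = \frac{(h+1)h(h-2)(h-5)}{8}$, and substituting $h=9+p_6$ gives $(10+p_6)(9+p_6)(7+p_6)(4+p_6)/8$.

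There is essentially no obstacle: all three identities are immediate once we know (i) the two vanishing statements $\beta^{-1,6}=0$ and $\beta^{-2,8}=0$ characteristic of fullerenes, and (ii) the value of $h$ as a linear function of $p_6$. The slight conceptual content lies in observing that the absence of small belts in a fullerene forces the correction terms $\beta^{-1,6}$ and $\beta^{-2,8}$ in the general formulas to vanish, after which the result is pure arithmetic. I would therefore present the proof as three one-line substitutions, each preceded by a citation to the appropriate vanishing result (Corollary \ref{3belts-ful}/Proposition \ref{3bb} for $\beta^{-1,6}=0$ and Corollary \ref{4-belt-ful}/Proposition \ref{4bb} for $\beta^{-2,8}=0$), and to the preceding corollary that supplies the general polynomial expressions in $h$.
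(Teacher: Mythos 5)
Your proof is correct and is precisely the argument the paper intends: substitute $h=m-3=9+p_6$ into the preceding general corollary, and use the fullerene-specific vanishings $\beta^{-1,6}=0$ (no $3$-belts) and $\beta^{-2,8}=0$ (no $4$-belts) to clear the correction terms. The paper omits a written proof of this corollary, but the route you describe is the direct deduction it relies on; the arithmetic is verified correct.
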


\newpage
\section{Lecture 7. Rigidity for $3$-polytopes}\label{LRid}
\subsection{Notions of cohomological  rigidity}\index{cohomological rigidity}
\begin{definition}
Let $\mathfrak{P}$ be some set of polytopes.

We call a property of simple polytopes {\em rigid}\index{rigid!property} in $\mathfrak{P}$ if for any polytope $P\in \mathfrak{P}$ with this property the isomorphism of graded rings $H^*(\mathcal{Z}_P)\simeq H^*(\mathcal{Z}_Q)$, $Q\in \mathfrak{P}$ implies that $Q$ also has this property.

We call a set $\mathfrak{S}_P\subset H^*(\mathcal{Z}_P)$ defined for any polytope $P\in\mathfrak{P}$ of polytopes {\em rigid}\index{rigid!set} in $\mathfrak{P}$ if for any isomorphism $\varphi$ of graded rings $H^*(\mathcal{Z}_P)\simeq H^*(\mathcal{Z}_Q)$, $P,Q\in \mathfrak{P}$ we have $\varphi(\mathfrak{S}_P)=\mathfrak{S}_Q$.

We call a polytope {\em rigid} \index{rigid!polytope}\index{polytope!rigid}(or {\em $B$-rigid}) \index{polytope!$B$-rigid}\index{$B$-rigidity} in $\mathfrak{P}$, if any isomorphism of graded rings $H^*(\mathcal{Z}_P)\simeq H^*(\mathcal{Z}_Q)$, $Q\in \mathfrak{P}$, implies that $Q$ is combinatorially equivalent to $P$.
\end{definition}
In this lecture we follow mainly the works \cite{FW15} and \cite{FMW15}. Some results we mention without proof with the appropriate reference, for some results we give new proofs, and some results we prove in strengthened form.

\subsection{Straightening along an edge}
For any edge $F_p\cap F_q$  of a simple $3$-polytope $P$ there is an operation of {\em straightening along the edge}\index{straightening along an edge}\index{polytope!straightening along an edge} (see Fig. \ref{Unfold}). In this subsection we discuss its properties we will need below. 
\begin{figure}[h]
\includegraphics[height=4cm]{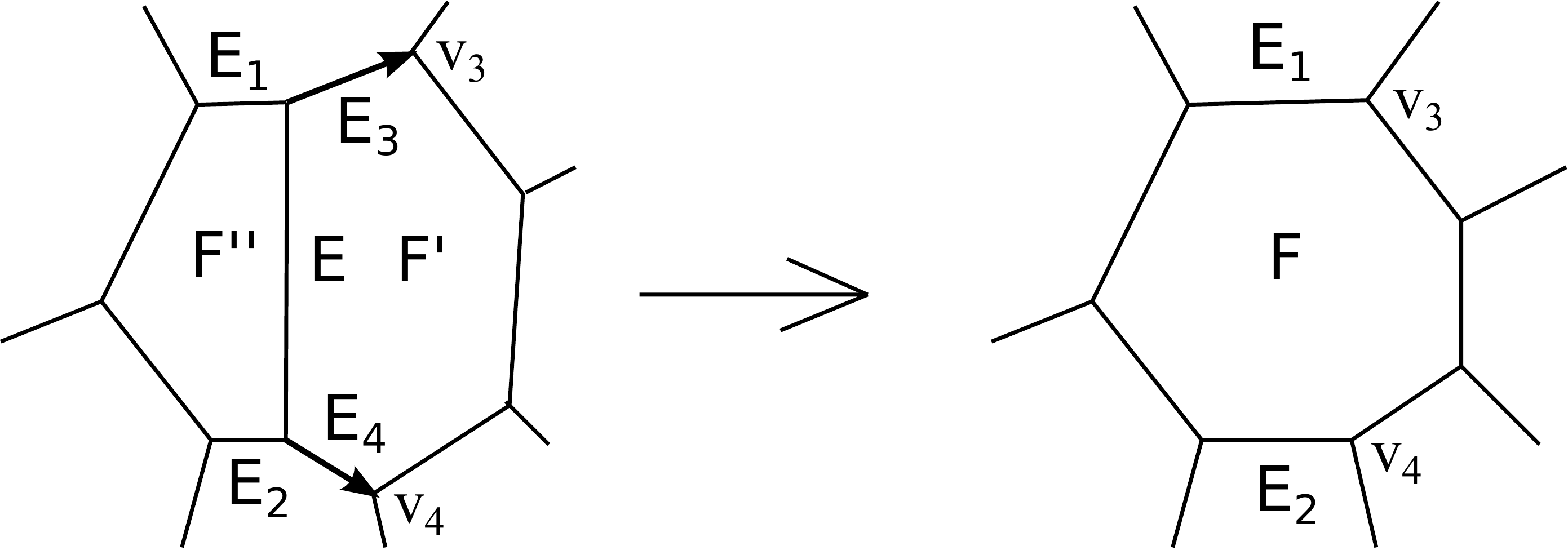}
\caption{Straightening along the edge}\label{Unfold}
\end{figure}
\begin{lemma}\cite{Bu-Er15}\label{cor-3-s}
For $P\simeq\Delta^3$ no straightening operations are defined. Let
$P\not\simeq \Delta^3$ be a simple polytope. The operation of straightening
along $E=F_i\cap F_j$ is not defined if and only if there is a $3$-belt
$(F_i,F_j,F_k)$ for some $F_k$.
\end{lemma}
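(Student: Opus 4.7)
The plan is to pin down the combinatorics of the straightening operation and then enumerate the obstructions to the result being a simple $3$-polytope. Following Fig.~\ref{Unfold}, the straightening along $E = F_i \cap F_j$ amounts to deleting the edge $E$, merging $F_i$ and $F_j$ into a single facet $F$, and smoothing the two endpoints $v_1, v_2$ of $E$ so that they become interior points of the concatenated edges. Introduce notation: let $F_a, F_b$ be the third facets at $v_1$ and $v_2$ respectively, so $v_1 = F_i \cap F_j \cap F_a$ and $v_2 = F_i \cap F_j \cap F_b$. For the output to be a simple $3$-polytope, three things are needed: (i) $F$ has at least $3$ edges; (ii) no other facet $F_k$ shares more than one edge with $F$; (iii) $F_a$ and $F_b$ retain $\geqslant 3$ edges after losing their vertex $v_1$ or $v_2$.

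Next I handle $P \simeq \Delta^3$. Every pair of facets of $\Delta^3$ meets along an edge and every facet is a triangle; thus $F$ would have only $3+3-4 = 2$ vertices, violating~(i). Hence no straightening is defined on $\Delta^3$, as claimed.

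Now assume $P \not\simeq \Delta^3$. I show that~(ii) is the only genuine obstruction, and that it matches the $3$-belt condition exactly. If (i) fails, both $F_i$ and $F_j$ are triangles; tracing the facets at the third vertices forces $F_a, F_b$ to be triangles as well, which closes up the four facets $F_i, F_j, F_a, F_b$ into the boundary of a tetrahedron, so $P \simeq \Delta^3$, a contradiction. If~(iii) fails, say $F_a$ is a triangle, then the third facet $F_c$ of $F_a$ (opposite $v_1$) is adjacent to both $F_i$ and $F_j$ through edges disjoint from $v_1, v_2$, so $F_i \cap F_j \cap F_c = \varnothing$ and $(F_i, F_j, F_c)$ is a $3$-belt. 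Thus both (i) and (iii) reduce to (ii) or to a $3$-belt.

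Finally, condition~(ii) fails precisely when some facet $F_k \neq F_a, F_b$ meets both $F_i$ and $F_j$: such an $F_k$ cannot contain $v_1$ or $v_2$ (else it would equal $F_a$ or $F_b$), so $F_i \cap F_j \cap F_k \subseteq E \setminus \{v_1, v_2\}$ must be empty, giving a $3$-belt $(F_i, F_j, F_k)$. Conversely, a $3$-belt $(F_i, F_j, F_k)$ satisfies $F_k \ne F_a, F_b$, and the two disjoint edges $F_i \cap F_k$ and $F_j \cap F_k$ merge into two distinct edges of $F \cap F_k$, violating~(ii). The main technical step is the triangle lemma in the $P \not\simeq \Delta^3$ case: verifying that a single shared edge between two triangular facets forces the whole tetrahedral closure, which is a careful but elementary vertex-by-vertex argument using simplicity (each vertex lies in exactly three facets).
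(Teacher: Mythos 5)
Your argument runs along the same lines as the paper's — both proofs characterize obstructions to the graph $G'$ (obtained from $G(P)$ by the straightening) being the graph of a simple $3$-polytope, and translate them into the $3$-belt condition. Your enumeration of the local conditions (i)–(iii) and their reduction to the $3$-belt is correct, and the analysis of each case is sound. However, there is one genuine gap and one small omission.

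The gap: you assert that (i)–(iii) are ``needed'' and then claim ``(ii) is the only genuine obstruction,'' but you never argue the converse, namely that when (i)–(iii) all hold, $G'$ actually \emph{is} the graph of a simple $3$-polytope. This is exactly the contrapositive of the implication ``straightening undefined $\Rightarrow$ $3$-belt exists,'' so without it the ``only if'' half of the lemma is not proved. Excluding multi-edges, digons, and double-shared edges does not automatically give $3$-connectedness, which is what the Steinitz theorem (Theorem~\ref{S-Theorem}) requires. The paper closes this gap by explicitly invoking Lemma~\ref{3C-lemma}, which translates $3$-connectedness of a sphere graph with at least $6$ edges into precisely the local conditions you enumerated (simple edge cycles bounding all faces, and pairwise facet intersections at most a single vertex or edge), and the Euler-formula computation $f_1 \geqslant 9$ for $P\not\simeq\Delta^3$ is there to guarantee the hypothesis of that lemma. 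You should either cite Lemma~\ref{3C-lemma} and check its hypotheses, or supply an equivalent $3$-connectedness argument directly.

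The omission: in the argument for (iii), when $F_a$ is a triangle with third facet $F_c$ opposite $v_1$, the claim that $F_c$ meets $F_i$ and $F_j$ ``through edges disjoint from $v_1, v_2$'' silently uses $F_c \ne F_q$. If $F_c = F_q$, those edges do contain $v_2$, and the claimed $3$-belt $(F_i, F_j, F_c)$ fails because $F_i \cap F_j \cap F_q = v_2 \ne \varnothing$. That degenerate case in fact forces $P \simeq \Delta^3$ by the same four-facet closure argument you give for (i), so it is excluded by hypothesis — but you should say so. With these two points addressed, the proof is complete and is genuinely the same argument as the paper's, just organized around an explicit list of obstructions rather than around the statement of Lemma~\ref{3C-lemma}.
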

\begin{proof}
For $P=\Delta^3$ straightening along any edge transforms triangle into double edge; hence it is not allowed. 

Let $P\not\simeq\Delta^3$ be a simple polytope. We have $2f_1=3f_0$; hence from the Euler formula we have $\frac{2f_1}{3}-f_1+f_2=2$, and $f_1=3(f_2-2)$. In particular $f_1\geqslant 9$ for $P\not\simeq \Delta^3$, and the graph $G'\subset S^2$ arising from the graph $G(P)$ under straightening along the edge has at least $6$ edges. We will use Lemma \ref{3C-lemma} to establish whether  $G'$ corresponds to a polytope (which will be simple by construction). Let $F_i\cap F_j$ be an edge we want to straighten along. Let $F_i\cap F_j\cap F_p$ and $F_i\cap F_j\cap F_q$ be it's vertices (see Fig. \ref{Fijpq}). 
\begin{figure}[h]
\begin{center}
\includegraphics[height=4cm]{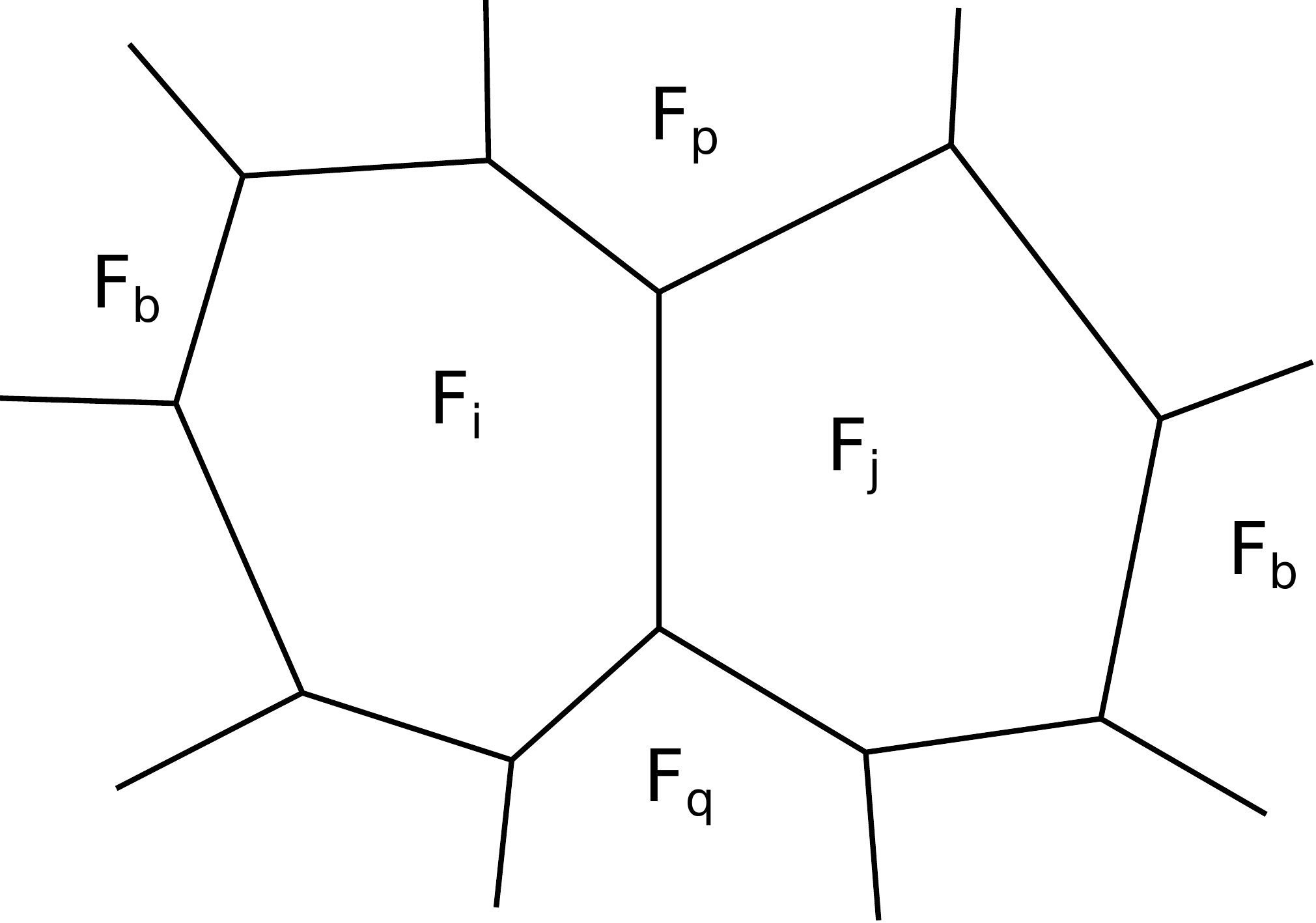}
\end{center}
\caption{Neighborhood of the edge $F_i\cap F_j$}\label{Fijpq}
\end{figure}
Then after straightening the number of edges of $F_p$ and $F_q$ decreases;  hence $F_p$ and $F_q$ should have at least $4$ edges. From construction any facet of $G'$, except for $F$, is surrounded by a simple edge-cycle. Since $F_i$ and $F_j$ intersect by a single edge, $F$ also is surrounded by a simple edge cycle. Let $F_a'$ and $F_b'$ be facets of $G'$. If $F_a',F_b'\ne F$, then  we have $F_a'\cap F_b'=F_a\cap F_b$, where $F_a$, $F_b$ are the corresponding facets of $P$. Hence this is either an empty set, or an edge. If $F_a'=F$, then we have $F'_a\cap F_b'$ consists of more than one edge if and only if the corresponding facet $F_b$ of $P$ intersects both $F_i$ and $F_j$, and $F_b\ne F_p,F_q$. This is equivalent to the fact that $(F_i,F_j,F_b)$ is a $3$-belt.  
\end{proof}
\begin{lemma}\cite{Bu-Er15}\label{Uflag}
The polytope $Q$ obtained by straightening a flag polytope $P$ along the
edge $F_i\cap F_j$ is not flag if and only if there is a $4$-belt
$(F_i,F_j,F_k,F_l)$.
\end{lemma}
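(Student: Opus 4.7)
The plan is to combine Proposition \ref{nflag} with a careful tracking of how pairwise intersections and triple intersections transfer between $P$ and $Q$. By Proposition \ref{nflag}, $Q$ fails to be flag iff $Q=\Delta^3$ or $Q$ contains a $3$-belt. Since $P$ is flag, Proposition \ref{flagm6} gives $m\geqslant 6$, so $Q$ has $m-1\geqslant 5$ facets and cannot be $\Delta^3$; moreover, flagness of $P$ rules out all $3$-belts, so by Lemma \ref{cor-3-s} the straightening is well defined. The task thus reduces to matching $3$-belts of $Q$ with $4$-belts of $P$ containing the consecutive pair $F_i,F_j$.

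For the bookkeeping let $v_1=F_i\cap F_j\cap F_p$ and $v_2=F_i\cap F_j\cap F_q$ be the endpoints of the edge $F_i\cap F_j$. The straightening deletes $v_1,v_2$ and the edge $F_i\cap F_j$, merges $F_i$ and $F_j$ into a single facet $F$ of $Q$, and collapses each of the pairs $\{F_i\cap F_p,\,F_j\cap F_p\}$ and $\{F_i\cap F_q,\,F_j\cap F_q\}$ into a single edge $F\cap F_p$ and $F\cap F_q$. All other vertices, edges and pairwise facet intersections persist, and $F$ is adjacent in $Q$ precisely to those facets that were adjacent in $P$ to $F_i$ or $F_j$.

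The reverse direction is short: given a $4$-belt $(F_i,F_j,F_k,F_l)$ of $P$, I would check directly that $(F,F_k,F_l)$ is a $3$-belt of $Q$. The three pairwise intersections are edges of $Q$ by the belt structure, and $F\cap F_k\cap F_l=\varnothing$, because any witnessing vertex would realize either $F_i\cap F_k\ne\varnothing$ or $F_j\cap F_l\ne\varnothing$, both forbidden by the belt.

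The forward direction carries the main content. Starting with a $3$-belt $(G_a,G_b,G_c)$ of $Q$, I would first observe that it must contain $F$: otherwise its pairwise edges and any would-be triple vertex (necessarily distinct from $v_1,v_2$, since these involve $F_i,F_j\notin\{G_a,G_b,G_c\}$) come from $P$, so $(G_a,G_b,G_c)$ would already be a $3$-belt of $P$, contradicting flagness. Hence the belt has the form $(F,G_b,G_c)$, and the main obstacle is pinning down the possibilities for the pair $(G_b,G_c)$. The claim I would establish is that, up to swapping, $G_b$ is adjacent in $P$ only to $F_i$, $G_c$ only to $F_j$, and neither equals $F_p$ or $F_q$. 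In every other configuration (both adjacent to the same $F_i$ or to the same $F_j$, or one of them equal to $F_p$ or $F_q$) the relevant three facets of $P$ form a $3$-loop which, by the Corollary to Proposition \ref{nflag}, surrounds a vertex of $P$; that vertex persists in $Q$ (the excluded $v_1,v_2$ both involve a facet absent from the triple), so it witnesses $F\cap G_b\cap G_c\ne\varnothing$, contradicting the $3$-belt condition. Once this opposite-side structure is secured, $(F_i,F_j,G_c,G_b)$ is visibly a $4$-belt of $P$: the four consecutive adjacencies $F_i\cap F_j$, $F_j\cap G_c$, $G_c\cap G_b$, $G_b\cap F_i$ are immediate, the two non-adjacencies $F_i\cap G_c=\varnothing$ and $F_j\cap G_b=\varnothing$ are built into the configuration, and the emptiness of the quadruple intersection follows automatically from these non-adjacencies.
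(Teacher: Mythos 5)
Your proof is correct and follows essentially the same route as the paper's: rule out $Q\simeq\Delta^3$, show a $3$-belt of $Q$ must contain the merged facet $F$ (otherwise the corresponding $3$-loop of $P$ supplies a surviving vertex), and then analyze the adjacency pattern of the other two facets with $F_i$ and $F_j$ to produce the $4$-belt. You spell out the converse direction and the "persisting vertex $\ne v_1,v_2$" bookkeeping more explicitly than the paper's compressed final sentence does, but the underlying argument is the same.
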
 
\begin{proof}
Since $P$ is flag, it has no triangles. Hence $Q\not\simeq\Delta^3$. Then $Q$ is not flag if and only if it contains a $3$-belt  $(F_a', F_b',F_c')$. If $F\notin \{F_a',F_b',F_c'\}$, then $(F_a,F_b,F_c)$ is a $3$-loop of $P$; hence $F_a\cap F_b\cap F_c$ is a vertex. Then $F_a'\cap F_b'\cap F_c'$ is also a vertex. A contradiction. Let $F_a'=F$. Then both $F_b$, $F_c$ intersect $F_i\cup F_j$, and $F_b\cap F_c\ne\varnothing$ in $P$.  If $(F_\alpha, F_b,F_c)$ is a $3$-loop in $P$ for $\alpha\in\{i,j\}$, then $F_\alpha\cap F_b\cap F_c$ is a vertex different from the ends of $F_i\cap F_j$. This vertex is also a vertex in $G'$ and is the intersection $F_a'\cap F_b'\cap F_c'$. A contradiction. Thus each of the facets $F_i$ and $F_j$ intersects only one facet among $F_b$ and $F_c$, and these facets are different. This is possible if and only if $(F_i,F_j,F_b,F_c)$ or $(F_i,F_j,F_c,F_b)$ is a $4$-belt.   
\end{proof}
\begin{remark}
Straightenings along opposite edges of the quadrangle give the same combinatorial polytope.
\end{remark}
\begin{lemma}\cite{Bu-Er15}\label{4gonF}
Let $P$ be a flag simple $3$-polytope and $F$ be its quadrangular facet. If $P\not\simeq I^3$, then  one of the $2$ combinatorial polytopes obtained by straightening along edges of $F$ is flag.
\end{lemma}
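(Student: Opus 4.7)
The plan is to set up the two candidate straightenings and, assuming both fail to be flag, derive $P\simeq I^3$ from Lemma \ref{Uflag} together with the Jordan-type Lemma \ref{belt-lemma} and flagness. By Proposition \ref{facet-belt}, $F$ is surrounded by a $4$-belt $\mathcal B=(F_1,F_2,F_3,F_4)$, with $e_i=F\cap F_i$ the edges of $F$ in cyclic order. By the preceding Remark, straightenings along opposite edges coincide, so there are only two distinct straightenings, $S_{13}$ (along $e_1=e_3$) and $S_{24}$ (along $e_2=e_4$). Assume for contradiction that both $S_{13}$ and $S_{24}$ produce non-flag polytopes. By Lemma \ref{Uflag} this yields $4$-belts $B_{13}=(F,F_1,F_k,F_3)$ and $B_{24}=(F,F_2,F_l,F_4)$ of $P$; here the choice of $F_3$ (resp.\ $F_4$) is forced because the last facet of the belt must be adjacent to $F$ (so it lies in $\mathcal B$) but not to $F_1$ (resp.\ $F_2$), which only $F_3$ (resp.\ $F_4$) from $\mathcal B$ satisfies. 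Thus $F_k$ is adjacent to $F_1,F_3$, $F_l$ to $F_2,F_4$, and both are disjoint from $F$.

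Next I will apply Lemma \ref{belt-lemma} to $B_{13}$: $|B_{13}|$ is a cylinder in $\partial P$, with two disjoint simple boundary cycles $\gamma^+,\gamma^-$ bounding components $\mathcal P^+,\mathcal P^-$ of $\partial P\setminus|B_{13}|$. Since $e_2=F\cap F_2$ and $e_4=F\cap F_4$ are the two outer edges of the quadrangle $F$ in $B_{13}$, lying on opposite sides of the inner edges $e_1,e_3$, they lie on different boundary cycles; tracing through the corner vertices $F\cap F_1\cap F_2$, $F\cap F_1\cap F_4$, etc., one checks that all three edges of $F_2$ shared with $B_{13}$ lie on $\gamma^+$ and all three edges of $F_4$ shared with $B_{13}$ lie on $\gamma^-$. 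Hence $F_2\subset\overline{\mathcal P^+}$, $F_4\subset\overline{\mathcal P^-}$, and $\overline{\mathcal P^+}\cap\overline{\mathcal P^-}=\varnothing$. Now $F_l$ shares an edge with $F_2$ (so an edge in $\overline{\mathcal P^+}$) and with $F_4$ (an edge in $\overline{\mathcal P^-}$); if $F_l\notin|B_{13}|$, then its interior lies in a single component $\mathcal P^\pm$, forcing $F_l$ together with its boundary into one $\overline{\mathcal P^\pm}$, contradiction. So $F_l\in\{F,F_1,F_k,F_3\}$, and the constraints $F_l\cap F=\varnothing$ and $F_l\ne F_1,F_3$ (both meet $F$ along an edge) give $F_l=F_k$.

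The main obstacle and final step is to show that the case $F_l=F_k$ forces $P\simeq I^3$. Here $F_k$ is adjacent to each of $F_1,F_2,F_3,F_4$ but not to $F$. Since $P$ is flag, every $3$-loop has a common vertex, so each triple $(F_k,F_i,F_{i+1})$ (indices mod $4$) yields a vertex $v_i=F_k\cap F_i\cap F_{i+1}$. Each $v_i$ is an endpoint of both $F_k\cap F_i$ and $F_k\cap F_{i+1}$, so in $\partial F_k$ these four vertices are joined cyclically by the edges $F_k\cap F_1,\ldots,F_k\cap F_4$, proving that $F_k$ is a quadrangle with boundary exactly the $4$-cycle $v_1v_2v_3v_4$. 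Moreover, $v_i$ is the outer endpoint of the belt-edge $F_i\cap F_{i+1}$, i.e.\ the corner of the outer edge-cycle $\gamma$ of $\mathcal B$ at the $F_i$-$F_{i+1}$ transition. Since the segment $\gamma_i\subset\gamma$ coming from $F_i$ has endpoints $v_{i-1},v_i$, consists of $\deg F_i-3$ edges, and contains the single edge $F_k\cap F_i$ with exactly these endpoints, one gets $\deg F_i=4$ for each $i$. Thus $\mathcal B$ surrounds the quadrangle $F$ on one side and the quadrangle $F_k$ on the other with all belt facets quadrangles; by case (1) of Lemma \ref{loop-lemma} this forces $P\simeq I^3$, contradicting the hypothesis. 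Therefore at least one of $S_{13},S_{24}$ must be flag.
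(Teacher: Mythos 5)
Your proof is correct and follows essentially the same line as the paper's: assume both straightenings are non-flag, invoke Lemma~\ref{Uflag} to produce the two $4$-belts through $F$, show the two extra facets coincide via a Jordan-curve argument (Lemma~\ref{belt-lemma}) on the components of $\partial P\setminus|B_{13}|$, and then use flagness to force all facets to be quadrangles, giving $P\simeq I^3$. You simply spell out two steps the paper leaves terse --- the disjoint-closure argument that forces $F_l=F_k$, and the $\gamma_2$ edge-count showing $\deg F_i=4$ --- so this is the same proof, written out in more detail. (One cosmetic remark: citing ``case~(1) of Lemma~\ref{loop-lemma}'' for the final implication $P\simeq I^3$ is slightly off, since that item records the structure rather than concluding the cube; a cleaner reference would be Proposition~\ref{flagm6}, or just the direct observation that $m=6$ and the face lattice is that of $I^3$.)
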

\begin{proof}
Let $F$ be surrounded by a $4$-belt $(F_{i_1},F_{i_2},F_{i_3},F_{i_4})$. If straightening along any edge of $F$ gives non-flag polytopes, then by Lemma \ref{Uflag} there are some $4$-belts $\mathcal{B}_1=(F_{i_1},F,F_{i_3},F_{i_5})$ and $\mathcal{B}_2=(F_{i_2},F,F_{i_4},F_{i_6})$. Since $\partial P\setminus\mathcal{B}_1$ consists of two connected components, one of them containing $F_{i_2}$ and the other containing $F_{i_4}$, the belt $\mathcal{B}_2$ should intersect the belt $\mathcal{B}_1$ by one more facet except for $F$. Hence $F_{i_5}=F_{i_6}$. Since $P$ is flag, we obtain vertices $F_{i_1}\cap F_{i_2}\cap F_{i_5}$, $F_{i_2}\cap F_{i_3}\cap F_{i_5}$, $F_{i_3}\cap F_{i_4}\cap F_{i_5}$, and $F_{i_4}\cap F_{i_1}\cap F_{i_5}$; hence all facets are quadrangles, and $P\simeq I^3$  
\end{proof}
\subsection{Rigidity of the property to be a flag polytope}
\begin{proposition} \label{Fab}(Lemma 5.2, \cite{FW15})
Let $P$ be a flag simple $3$-polytope. Then for any three different facets $\{F_i,F_j,F_k\}$ with $F_i\cap F_j=\varnothing$ there exist $l\geqslant 4$ and an $l$-belt $\mathcal{B}_l$ such that $F_i,F_j\in \mathcal{B}_l$ and $F_k\notin \mathcal{B}_l$.
\end{proposition}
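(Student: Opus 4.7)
The plan is to pass to the dual simplicial picture, where belts of $P$ correspond to induced cycles, and then find the required cycle by a planar minimization argument. Since $P$ is flag, $K:=\partial P^*$ is a flag simplicial $2$-sphere, and $k$-belts of $P$ for $k\geq 4$ correspond bijectively (up to rotation and reflection) to chordless cycles of length $k$ in $K^1=G(P^*)$: the belt conditions translate to ``induced cycle'', and the emptiness of $F_{i_1}\cap\dots\cap F_{i_k}$ is automatic because in a flag complex an induced cycle of length $\geq 4$ cannot span a simplex (not all pairs of its vertices are adjacent). The task thus becomes: find a chordless cycle in $K^1$ of length $\geq 4$ through the non-adjacent vertices $v_i,v_j$ (dual to $F_i,F_j$) that avoids $v_k$ (dual to $F_k$).

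First I would remove the open star of $v_k$ from $K$; since $K\simeq S^2$, the remainder is a closed triangulated disk $D$ containing $v_i$ and $v_j$, with $D^1=K^1\setminus\{v_k\}$. Any $K^1$-chord of a cycle $C\subset D^1$ has both endpoints in $C\not\ni v_k$, so chordlessness in $D^1$ and in $K^1$ coincide for such cycles. By the Steinitz theorem $K^1$ is $3$-connected, so $D^1$ is $2$-connected, and Menger's theorem produces two internally disjoint $v_i$-$v_j$ paths in $D^1$ whose union is a cycle $C_0\subset D^1$ through both $v_i$ and $v_j$.

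Next I would refine $C_0$ to be chordless by a planar minimization. Fix a planar embedding of $D$; each cycle $C\subset D^1$ through $v_i,v_j$ separates the plane into two regions, and I denote by $N(C)$ the number of triangles of $D$ lying in a consistently chosen one. Choose $C$ minimizing $N(C)$ among all cycles in $D^1$ through $v_i$ and $v_j$. I claim $C$ is chordless: any chord $(a,b)\in D^1$ splits $C$ into two arcs and, by a case analysis on whether $v_i,v_j$ lie on the same arc or on opposite arcs, produces a new cycle through both with strictly smaller $N$ by replacing a sub-arc with $(a,b)$ or by combining both arcs with the chord as a detour. Non-adjacency of $v_i,v_j$ in $K^1$ forces $|C|\geq 4$, and translating back to $P$ yields the desired $l$-belt $\mathcal{B}_l$ with $F_i,F_j\in\mathcal{B}_l$ and $F_k\notin\mathcal{B}_l$.

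The main obstacle will be the case of the minimization argument in which the chord $(a,b)$ separates $v_i$ from $v_j$ on $C$: then neither single sub-arc combined with $(a,b)$ yields a cycle through both vertices. Resolving it requires combining both arcs with the chord in a figure-eight pattern and using the $2$-connectivity and planarity of $D^1$ together with the triangulated structure of $D$ to extract a single simple cycle through $v_i,v_j$ with a strictly smaller enclosed region.
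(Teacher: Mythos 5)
Your overall strategy is genuinely different from the paper's: the paper proves Proposition~\ref{Fab} by induction on the number of facets, using straightenings along edges and loop-cuts, whereas you dualize to the flag simplicial sphere $K=\partial P^*$, pass to the flag disk $D$ obtained by deleting the open star of $v_k$, and try to produce a chordless cycle through $v_i,v_j$ in $D^1=K^1\setminus\{v_k\}$ by a planar minimization. The dualization (belts of $P$ of length $\geqslant 4$ $\leftrightarrow$ induced cycles of that length in $K^1$, emptiness of the common intersection being automatic), the fact that $D$ is a flag disk, and the $2$-connectivity of $D^1$ coming from Steinitz are all correct and give a valid reduction.

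The gap is exactly in the step you flagged as the ``main obstacle,'' and it is not just a missing detail: the claim that the cycle $C$ minimizing $N(C)$ is chordless is \emph{false}. Take $P$ the dodecahedron, so $K$ is the icosahedron. Let $u=v_i$ be the apex, $v=v_j=l_1$ a vertex of the opposite pentagonal ring (so $u\not\sim v$), and $v_k$ the antipodal vertex. The common neighbours of $u,v$ are two adjacent upper-ring vertices $a=u_1$, $b=u_2$, so $C_0=(u,a,v,b)$ is a $4$-cycle in $D^1$ through $u,v$ with $N(C_0)=2$, and it has the chord $(a,b)$ separating $u$ from $v$. Since $u\not\sim v$ no cycle through both can bound a single triangle, so $N\geqslant 2$ always and $C_0$ is $N$-minimal; yet it is not chordless, and any $N=2$ cycle through $u,v$ is a quadrilateral with $u,v$ opposite and hence again has its diagonal as a chord. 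A chordless cycle does exist (e.g.\ $(u,u_1,l_1,l_2,u_3)$), but its enclosed-triangle count is strictly larger, so your minimizer does not find it. The ``figure-eight'' repair also cannot work as described: the graph $C\cup\{(a,b)\}$ has exactly one simple cycle through both $u$ and $v$, namely $C$ itself, so no new cycle can be extracted from it. You also never rule out chords of $C$ lying \emph{outside} the chosen region, where the arc-plus-chord replacement does not shrink $N$ either. So the reduction to the dual picture is sound and worth keeping, but the existence of an induced cycle through $u,v$ in $D^1$ needs a genuinely different argument than minimizing enclosed area; this is where the real work in the proof lies, and it is the reason the paper resorts to the longer inductive surgery argument.
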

\begin{proof} We will prove this statement by induction on the number of facets of $P$. By Proposition \ref{flagm6} for flag $3$-polytopes we have $m\geqslant 6$, and $m=6$ if and only if $P\simeq I^3$. In this case $F_i\cap F_j=\varnothing$ means that $F_i$ and $F_j$ are opposite facets. Then one of the two $4$-belts passing through $F_i$ and $F_j$ does not pass $F_k$.

Now let the statement be true for all flag $3$-polytopes with less than $m$ facets and let $P$ be a flag simple polytope with $m$ facets.  If $F_k\cap F_i\ne\varnothing$ and $F_k\cap F_j\ne\varnothing$, then we can take $\mathcal{B}_l$ to be the belt surrounding $F_k$. Now let at least one of the facets $F_i$, $F_j$ do not intersect $F_k$, say $F_i$. 

Consider facets  $F_p\notin\{F_i,F_j\}$ adjacent to $F_k$. If for some $p$ the facets $(F_p,F_k)$ do not belong to a $4$-belt, then straighten along $F_p\cap F_k$ to obtain a flag polytope $Q$ with $m-1$ facets. Then there is an $l$-belt $\mathcal{B}_l$, $F_i,F_j\in \mathcal{B}_l\not\ni F_k$ in $Q$. This belt corresponds to the belt on $P$ we need. Let for any $p\notin \{i,j\}$ there is a $4$-belt containing $(F_p,F_k)$. Since $F_k\not\simeq \Delta^2$, there are at least $3$ values of $p$; hence there are at least two $4$-belts. If any of these belts surrounds a quadrangle, then each quadrangle is adjacent to $F_k$. Consider the quadrangle $F_q$ different from $F_i$ and $F_j$ (it exists, since $F_k\cap F_i=\varnothing$ or $F_k\cap F_j=\varnothing$ by assumption).  By inductive hypothesis $m>6$,  $P\not\simeq I^3$; hence by Lemma \ref{4gonF} straightening along $F_q\cap F_k$, or any of the adjacent to $F_q\cap F_k$ edges of $F_q$ gives a flag polytope. By assumption the first case is not allowed. Consider the edge $F_p\cap F_q$ adjacent to $F_q\cap F_k$ in $F_q$ with $F_p\ne F_i,F_j$. After straightening along $F_p\cap F_q$ we obtain a flag polytope $Q$, which has a $k$-belt containing $F_i$, $F_j$ and not containing $F_k$. This belt corresponds either to a $k$-belt, or to a $(k+1)$-belt on $P$ with the same properties. 

At last consider the case when there is a $4$-belt $\mathcal{B}=(F_p,F_k,F_q,F_r)$, $p\notin\{i,j\}$, not surrounding a quadrangle.  By Lemma \ref{loop-cut-flag} $\mathcal{B}$-cuts $P_1$ and $P_2$ are flag polytopes. Since $\mathcal{B}$ is not surrounding a quadrangle, they have less facets than $P$.   If $F_i$ and $F_j$ belong to one of the polytopes, say $P_1$, then by the induction hypothesis there is an $l$-belt $\mathcal{B}_l$ with $F_i,F_j\in \mathcal{B}_l\not\ni F_k$. Consider the new facet $F$ of $P_1$. If $F\notin\mathcal{B}_l$, then $\mathcal{B}_l$ is an $l$-belt on $P$ and the Lemma is proved. Else since $F_k\notin \mathcal{B}_l$, the $l$-belt $\mathcal{B}_l$ contains the fragment $(F_p,F,F_q)$ and does not contain $F_r$. By induction hypothesis there is an $l'$-belt $\mathcal{B}_{l'}'$ on $P_2$, $F_p,F_q,\in \mathcal{B}_{l'}'\not\ni F_k$. Then the segment of $\mathcal{B}_{l'}'$ with ends $F_p$ and $F_q$ does not containing the new facet $F'$ and $F_r$, and the segment $\mathcal{B}_l\setminus\{F\}$ together form a belt we need. If any of the polytopes $P_1$ and $P_2$ contains exactly one of the facets $F_i$ and $F_j$, say $F_i\in P_1$, $F_j\in P_2$, then consider an $l_1$-loop $\mathcal{B}_1\subset P_1$, $F,F_i\in\mathcal{B}_1\not\ni F_k$, and an $l_2$-loop $\mathcal{B}_2\subset P_2$, $F',F_j\in\mathcal{B}_2\not\ni F_k$. Then $F_r\notin\mathcal{B}_1,\mathcal{B}_2$; hence $\left(\mathcal{B}_1\setminus(F_p,F,F_q)\right)\cup\left(\mathcal{B}_2\setminus\{F'\}\right)$ is a belt we need. This finishes the proof.

\end{proof}
\begin{corollary}\label{epi1} (Proposition 5.4, \cite{FW15})
Let $P$ be a flag simple $3$-polytope. Then for any $\omega\subset[m]$, $\omega\ne\varnothing$, the mapping 
$$
\bigoplus\limits_{\omega_1\sqcup\omega_2=\omega}\widehat{H}_2(P_{\omega_1},\partial P_{\omega_1})\otimes \widehat{H}_2(P_{\omega_2},\partial P_{\omega_2})\to H_1(P_{\omega},\partial P_{\omega})
$$
is an epimorphism.
\end{corollary}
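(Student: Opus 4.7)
The plan is to prove surjectivity by exhibiting, for every standard generator of $H_1(P_\omega, \partial P_\omega)$, an explicit preimage under the product map, with Proposition \ref{Fab} providing the combinatorial input needed to build a suitable partition $\omega = \omega_1 \sqcup \omega_2$.

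By Proposition \ref{B3}, $H_1(P_\omega, \partial P_\omega)$ splits as a direct sum over the connected components $\omega^j$ of $\omega$, each summand being free abelian on classes $[\gamma]$ of simple edge paths $\gamma$ in $P_{\omega^j}$ joining two distinct boundary cycles $\nu_\alpha, \nu_\beta$ of $\partial P_{\omega^j}$. It therefore suffices to realize each such $[\gamma]$, up to corrections already in the image, as a product of two $\widehat H_2$ classes. First I would fix such a $\gamma$ and introduce a two-coloring of the facets of $\omega^j$ adjacent to $\gamma$: the two facets sharing the first edge of $\gamma$ receive opposite colors, and at each interior vertex of $\gamma$ the new third facet inherits the color of the facet that leaves the path. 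Because $\gamma$ separates the sphere-with-holes $P_{\omega^j}$ into two subsurfaces, this coloring extends consistently to $\omega^j = A \sqcup B$.

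Next I would extend this to a full partition $\omega = \omega_1 \sqcup \omega_2$ by allocating the facets of $\omega \setminus \omega^j$. The crucial requirement is that both $P_{\omega_1}$ and $P_{\omega_2}$ be disconnected, for otherwise the corresponding $\widehat H_2$ vanishes. To achieve this I would select $F_p \in A$ and $F_q \in B$ with $F_p \cap F_q = \varnothing$ (which exist because $\nu_\alpha \ne \nu_\beta$) and apply Proposition \ref{Fab} to produce a belt through $F_p, F_q$ avoiding a chosen obstruction facet; the belt tells me how to distribute the remaining facets so that both sides acquire at least two connected components. With $\xi_k \in \widehat H_2(P_{\omega_k}, \partial P_{\omega_k})$ taken as a difference of fundamental classes of appropriate components, the multiplication formula of Theorem \ref{PoiTh} yields $\xi_1 \cdot \xi_2$ as a signed sum, with prefactor $(-1)^{l(\omega_1,\omega_2)+|\omega_1|}$, of the edge paths forming the connected components of $\partial P_{\omega_1^i} \cap \partial P_{\omega_2^j}$. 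By construction $\gamma$ is exactly one such component; the remaining contributions are simpler path classes that can be handled by induction on $|\omega|$.

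The main obstacle will be controlling these excess paths: a careless extension of the two-coloring could spawn many spurious components of $\partial P_{\omega_1^i} \cap \partial P_{\omega_2^j}$, and the induction would not close. This is where Proposition \ref{Fab} and the flagness hypothesis are essential: the belt supplied through $F_p$ and $F_q$ constrains the topology of the partition enough that the extra paths either vanish in $H_1(P_\omega, \partial P_\omega)$ or represent strictly simpler generators already treated at an earlier stage of the induction.
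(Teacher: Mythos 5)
Your overall plan (exhibit a preimage for each generator of $H_1(P_\omega,\partial P_\omega)$, using Proposition~\ref{Fab} as the combinatorial input) is the same plan the paper follows, but the specific construction you propose has a genuine gap at its first step, and the way you deploy Proposition~\ref{Fab} is essentially the reverse of how the paper deploys it.

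The gap: you assert that ``$\gamma$ separates the sphere-with-holes $P_{\omega^j}$ into two subsurfaces,'' and use this to conclude that the two-coloring of facets adjacent to $\gamma$ extends to a consistent partition $\omega^j = A \sqcup B$. This is false. A simple edge path joining two \emph{distinct} boundary cycles of a sphere with $k\geqslant 2$ holes does not separate: cutting along it simply merges the two boundary cycles, producing a sphere with $k-1$ holes, which is connected. (For the annulus $[0,1]\times S^1$, cutting along $[0,1]\times\{\ast\}$ gives a single disc.) Consequently the ``left'' and ``right'' sides of $\gamma$ are joined through the complement, the two-coloring has a global monodromy obstruction, and the claimed partition $\omega=\omega_1\sqcup\omega_2$ with $\gamma$ lying in $\partial P_{\omega_1^i}\cap\partial P_{\omega_2^j}$ need not exist at all. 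Everything downstream (reading off $[\gamma]$ as a summand of $\xi_1\cdot\xi_2$, and the induction to absorb ``excess paths'') presupposes this partition, so the argument does not get off the ground. The induction sketch is in any case too thin to carry the weight you give it: you would need to show that the spurious components of $\partial P_{\omega_1^i}\cap\partial P_{\omega_2^j}$ are controlled and that the recursion terminates, and neither claim is made precise.

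For contrast, the paper does not try to hit a prescribed $\gamma$ and does not two-color along a path. Instead it picks, for each connected component $P_{\omega^r}$ with at least two boundary cycles $\eta_a,\eta_b$, facets $F_{i_1},F_{i_2}$ \emph{outside} $P_{\omega^r}$ meeting $\eta_a$ and $\eta_b$, and applies Proposition~\ref{Fab} to get a belt $\mathcal{B}_l$ through $F_{i_1}$ and $F_{i_2}$. The partition is then forced combinatorially by the belt itself: $\omega_1=\{j\in\omega: F_j\in\mathcal{B}_l\}$ and $\omega_2=\omega\setminus\omega_1$. The classes multiplied are not full fundamental classes but partial sums, $A=[\sum_{F_j\in P_{\omega^r}\cap\Pi_1}F_j]$ (over a half of the belt) and $B=[\sum_{F_k\in P_{\omega^r}\cap\mathcal{W}_1}F_k]$ (over one side of the belt), and their product is a signed sum of edge paths that together join $\eta_a$ to $\eta_b$ — which is enough, because by Proposition~\ref{B3} any such element can be taken as a basis vector of $H_1(P_{\omega^r},\partial P_{\omega^r})$. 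In other words, the belt \emph{defines} the partition, rather than being used afterward to repair one. If you want to keep your strategy, this is the fix: replace the path-coloring by the belt-induced partition, and aim to realize \emph{some} class connecting the two chosen boundary cycles, not the specific path $\gamma$ you started from.
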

\begin{proof}
We will use notations of Lemma \ref{belt-lemma}. Let $P_{\omega}=P_{\omega^1}\sqcup\dots\sqcup P_{\omega^s}$ be the decomposition into connected components. Consider $P_{\omega^r}$, $r\in\{1,\dots,s\}$. Let  $\partial P_{\omega^r}=\eta_1\sqcup\dots\sqcup \eta_t$ be the decomposition into boundary components. If $t=1$, then $P_{\omega^r}$ is a disk and is contractible. Let $t\geqslant 2$. Take $a\ne b$, and facets $F_{i_1}$ and $F_{i_2}$ in $\partial P\setminus P_{\omega^r}$ intersecting $\eta_a$ and $\eta_b$ respectively. By Proposition \ref{Fab} there is an $l$-belt $\mathcal{B}_l$  of the form $(F_{j_1},\dots, F_{j_l})$ with $F_{j_1}=F_{i_1}$, and $F_{j_p} = F_{i_2}$ for some $p$, $3\leqslant p\leqslant l-1$.   Set $\Pi_1=(F_{j_1},\dots, F_{j_p})$. Take 
\begin{gather*}
\omega_1=\{j\colon F_j\in \mathcal{B}_l\cap P_{\omega^r}\}, \;\omega_2=\omega\setminus\omega_1,\\
A=[\sum\limits_{F_j\in P_{\omega^r}\cap \Pi_1}F_j]\in\widehat H_2(P_{\omega_1},\partial P_{\omega_1}), \;B=[\sum\limits_{F_k\in P_{\omega^r}\cap \mathcal{W}_1}F_k]\in \widehat H_2(P_{\omega_2},\partial P_{\omega_2}).
\end{gather*} 
\begin{figure}
\begin{center}
\includegraphics[height=7cm]{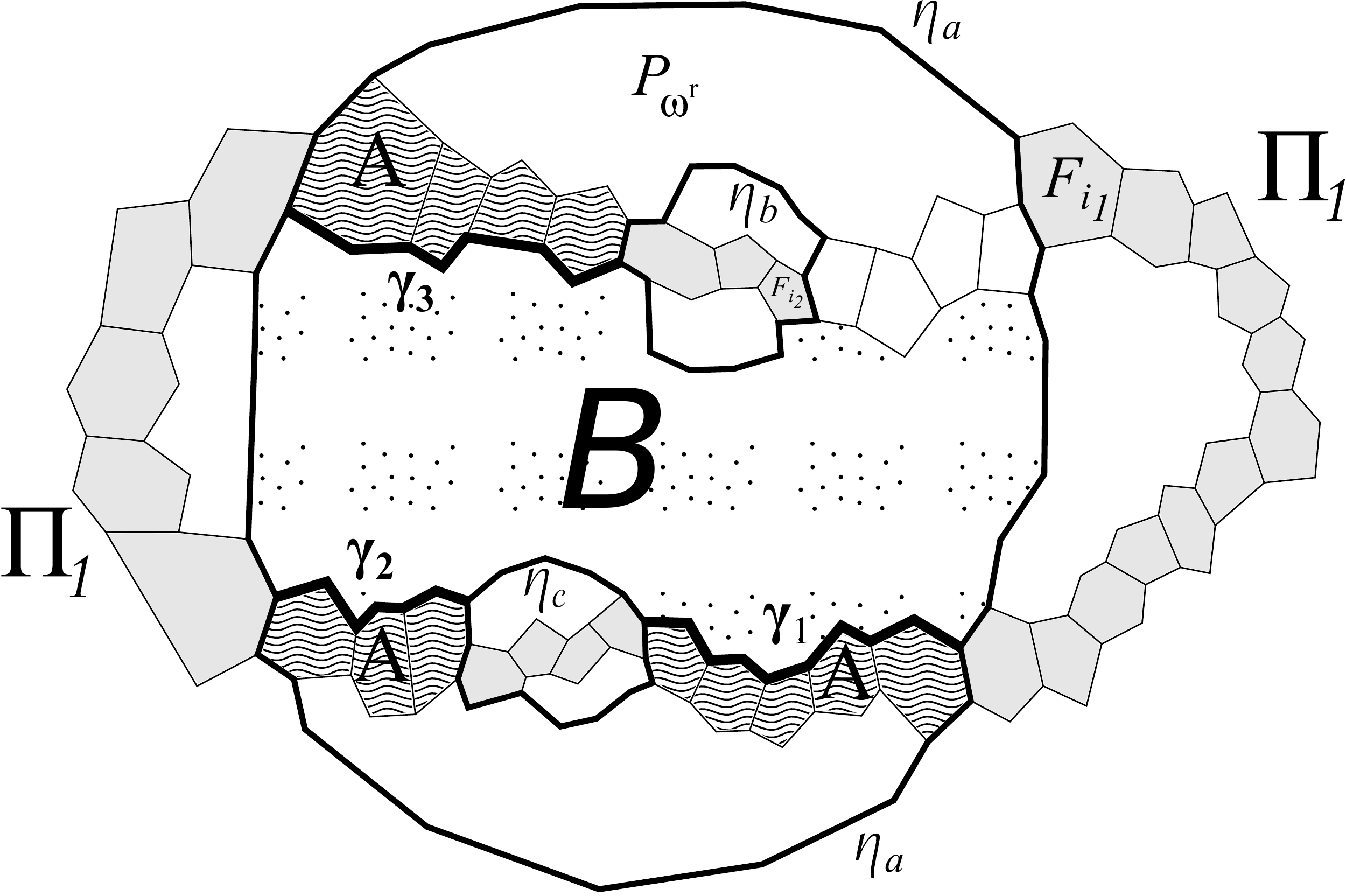}
\caption{The belt $\mathcal{B}_l$ intersecting $P_{\omega^r}$}
\end{center}
\end{figure}
Then $A\cdot  B=[\gamma_1]+\dots+[\gamma_q]$, where $\gamma_i$ is an edge path in $P_{\omega^r}$ that starts at $\eta_{\alpha_{i-1}}$ and ends at $\eta_{\alpha_i}$, $\alpha_j\in [s]$, $j=0,\dots, q$, $i=1,\dots,q$, and $\{\alpha_0,\alpha_q\}=\{a,b\}$. This element corresponds to a path connecting $\eta_a$ and $\eta_b$ in $P_{\omega^r}$. Thus we can realize any element from the basis given by Proposition \ref{B3}. 
\end{proof}
The following simple result is well-known.
\begin{lemma}\label{SR}
Simplex $\Delta^3$ is rigid in the class of all simple $3$-polytopes.
\end{lemma}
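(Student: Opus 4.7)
The plan is to extract the number of facets $m$ of $P$ from the graded cohomology of $\mathcal{Z}_P$ and then observe that $\Delta^3$ is the unique simple $3$-polytope with that many facets.

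First I would compute the cohomology of the model. For $P=\Delta^3$ we have $\mathcal{Z}_{\Delta^3}\cong S^7$ (as mentioned in the exercise after the projective moment-angle manifold construction, since $m=4$ and $m+n=7$), so $H^*(\mathcal{Z}_{\Delta^3})\cong H^*(S^7)$, which is $\mathbb Z$ in degrees $0$ and $7$ and zero elsewhere. Hence any simple $3$-polytope $Q$ with $H^*(\mathcal{Z}_Q)\cong H^*(\mathcal{Z}_{\Delta^3})$ as graded rings must satisfy $H^k(\mathcal{Z}_Q)=0$ for $0<k<7$ and $H^7(\mathcal{Z}_Q)\cong\mathbb Z$, with no higher cohomology.

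Next I would read off the number of facets. By Theorem \ref{thBPR}, $\mathcal{Z}_Q$ is a closed orientable manifold of dimension $m+3$, where $m$ is the number of facets of $Q$. Therefore its top nonvanishing cohomology sits in degree $m+3$, and the top-degree identification with $H^*(\mathcal{Z}_{\Delta^3})$ forces $m+3=7$, i.e.\ $m=4$. (Alternatively, one can invoke Corollary \ref{zpgc}: vanishing of $H^3(\mathcal{Z}_Q)$ together with $\beta^{-1,4}=\frac{(m-3)(m-4)}{2}$ gives $m\in\{3,4\}$, and $m\geqslant 4$ for any bounded $3$-polytope.)

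Finally I would identify the combinatorial type. Any simple $3$-polytope with $m=4$ facets has, by the Euler-formula consequences of Theorem \ref{pkth}, $f_0=2(m-2)=4$, $f_1=3(m-2)=6$, and $f_2=m=4$, with $p$-vector forced by $3p_3+2p_4+p_5=12+\sum_{k\geqslant 7}(k-6)p_k$ and $\sum p_k=4$ to be $p_3=4$, $p_k=0$ otherwise; this is the $f$-vector and $p$-vector of $\Delta^3$, and the Steinitz theorem (Theorem \ref{S-Theorem}) together with $3$-connectivity leaves only one combinatorial type: the tetrahedron. Hence $Q\simeq\Delta^3$. There is essentially no hard step here — the main thing to be careful about is to use that $\mathcal{Z}_Q$ is a \emph{manifold} of dimension exactly $m+n$ to read $m$ off the top cohomological degree; everything else is elementary combinatorics of simple $3$-polytopes with four facets.
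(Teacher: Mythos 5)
Your argument is correct, but it takes a genuinely different route from the paper's.\ The paper's proof is a one-liner: for a simple $3$-polytope $P$, the condition $H^3(\mathcal{Z}_P)=0$ is equivalent (via Corollary~\ref{zpgc}) to ``every two facets intersect,'' and a simple $3$-polytope in which every pair of facets meets must have $m=4$ (otherwise the $1$-skeleton of $K_P$ would contain $K_5$, contradicting planarity of $\partial P^*$) and hence is $\Delta^3$; since the condition $H^3=0$ is visible from the graded ring alone, rigidity follows. You instead read the number of facets off the \emph{top} cohomological degree: $\mathcal{Z}_{\Delta^3}\cong S^7$, the graded isomorphism forces $H^k(\mathcal{Z}_Q)=0$ for $k\neq 0,7$, and since $\mathcal{Z}_Q$ is a closed orientable $(m+3)$-manifold (Theorem~\ref{thBPR} and Proposition~\ref{ZPeq}) the nonvanishing of $H^{m+3}$ together with $H^7\cong\mathbb{Z}$ pins down $m=4$, after which combinatorics finishes the job. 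Both routes are valid. Your route is heavier---it invokes the smooth manifold structure and orientability of $\mathcal{Z}_Q$ rather than just the low-degree cohomology group interpretation---but it is also more robust: it would generalize to show $\Delta^n$ is $B$-rigid among simple $n$-polytopes for every $n$, whereas the paper's ``$H^3=0$ forces a simplex'' observation is special to $n\leqslant 3$ (for $n\geqslant 4$ there are non-simplex simple $n$-polytopes whose facets pairwise intersect, e.g.\ duals of cyclic polytopes). One small remark on the final step: once you know $m=4$, the detour through the $p$-vector, Eberhard's relation, and Steinitz's theorem is unnecessary---a simple $3$-polytope with $4$ facets dualizes to a simplicial $3$-polytope with $4$ vertices, which can only be the tetrahedron. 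Your parenthetical alternative using $\beta^{-1,4}=\tfrac{(m-3)(m-4)}{2}$ is essentially the paper's argument in disguise, since it also hinges on $H^3(\mathcal{Z}_Q)=0$.
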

\begin{proof}
This is equivalent to the fact that any two facets intersect, that is $H^3(\mathcal{Z}_P)=0$.
\end{proof}
The following result follows from Theorem 5.7 in \cite{FW15}. We will give another proof here.
\begin{theorem}\label{Flagth}
The polytope  $P\ne\Delta^3$ is flag if and only if 
$$
H^{m-2}(\mathcal{Z}_P)\subset(\widetilde{H}^*(\mathcal{Z}_P))^2.
$$
\end{theorem}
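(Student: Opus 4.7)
My plan is to compute the bigraded decomposition of $H^{m-2}(\mathcal{Z}_P)$ and handle the two implications separately. First, by Corollary \ref{zpgc} combined with $\dim P_\omega\le 2$ and Proposition \ref{Pcc} (which forces $\widetilde H^2(P_\omega)\ne 0$ only for $\omega=[m]$), the total-degree constraint $2|\omega|-i=m-2$ leaves exactly
\begin{equation*}
H^{m-2}(\mathcal{Z}_P)\;=\;\bigoplus_{|\omega|=m-4}\widetilde H^{1}(P_\omega)\;\oplus\;\bigoplus_{|\omega|=m-3}\widetilde H^{0}(P_\omega),
\end{equation*}
with the two summands sitting in bigradings $(-(m-6),2\omega)$ and $(-(m-4),2\omega)$ respectively.

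For the forward direction, suppose $P$ is flag. The second summand, in bigrading $(-(m-4),2\omega)$ with $|\omega|=m-3$, is Poincar\'e-dual in the bigraded sense to $H^{-1,2([m]\setminus\omega)}(\mathcal{Z}_P)$ with $|[m]\setminus\omega|=3$, which by Proposition \ref{3bb} counts $3$-belts and hence vanishes. Only $\bigoplus_{|\omega|=m-4}\widetilde H^{1}(P_\omega)$ remains, and I would apply Corollary \ref{epi1}: under the identifications $\widehat H_2(P_{\omega_k},\partial P_{\omega_k})\cong\widetilde H^{0}(P_{\omega_k})$ and $H_1(P_\omega,\partial P_\omega)\cong\widetilde H^{1}(P_\omega)$ from Theorem \ref{PoiTh}, together with Theorem \ref{MPTheorem}, its surjectivity statement becomes surjectivity of the cup product
\begin{equation*}
\bigoplus_{\omega_1\sqcup\omega_2=\omega}H^{|\omega_1|+1}(\mathcal{Z}_P)\otimes H^{|\omega_2|+1}(\mathcal{Z}_P)\;\longrightarrow\;\widetilde H^{1}(P_\omega)\subset H^{m-2}(\mathcal{Z}_P).
\end{equation*}
Because $\widehat H_2(P_{\omega_k},\partial P_{\omega_k})=0$ when $\omega_k=\varnothing$, only decompositions with both $\omega_k\ne\varnothing$ contribute, and then both factors have total degree $\ge 2$, so lie in $\widetilde H^*(\mathcal{Z}_P)$. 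This proves $H^{m-2}(\mathcal{Z}_P)\subset(\widetilde H^*(\mathcal{Z}_P))^{2}$.

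For the reverse direction, assume $P\ne\Delta^3$ is not flag. By Proposition \ref{nflag} there is a $3$-belt, so Proposition \ref{3bb} gives $\beta^{-1,6}\ne 0$; bigraded Poincar\'e duality then supplies a nonzero class $\alpha\in H^{m-2}(\mathcal{Z}_P)$ in bigrading $(-(m-4),2\omega')$ with $|\omega'|=m-3$. Since $(\widetilde H^*(\mathcal{Z}_P))^{2}$ is bigraded, it suffices to show its component in that bigrading is zero. A nontrivial product of reduced classes there would come from factors in bigradings $(-i_k,2\omega'_k)$ with $\omega'_1\sqcup\omega'_2=\omega'$ and $i_1+i_2=m-4$; setting $a_k:=|\omega'_k|-i_k$ gives $a_1+a_2=1$, and nontriviality of $\widetilde H^{a_k-1}(P_{\omega'_k})$ with $\dim P_{\omega'_k}\le 2$ forces $a_k\ge 0$. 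Hence $\{a_1,a_2\}=\{0,1\}$, but $a_k=0$ forces $\omega'_k=\varnothing$ (the only case with $\widetilde H^{-1}(P_{\omega'_k})\ne 0$), making that factor the unit $1\in H^0\setminus\widetilde H^*$, a contradiction. So $\alpha\notin(\widetilde H^*(\mathcal{Z}_P))^{2}$. The main obstacle is the clean translation in the forward direction: one must combine Theorems \ref{MPTheorem} and \ref{PoiTh} carefully to convert Corollary \ref{epi1} from $(P,P_\omega)$-language into cup products on $\mathcal{Z}_P$, checking that the signs introduced by Theorem \ref{MPTheorem} only produce nonzero scalars per bigraded piece and do not spoil surjectivity; the reverse direction is then a clean bookkeeping argument from the bigraded decomposition.
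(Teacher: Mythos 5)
Your argument is correct and follows essentially the same route as the paper's proof: compute the bigraded pieces of $H^{m-2}(\mathcal{Z}_P)$ via Corollary~\ref{zpgc}, kill the $|\omega|=m-3$ summand by multigraded Poincar\'e duality against $\beta^{-1,2\omega'}$ ($|\omega'|=3$) and Proposition~\ref{3bb} when $P$ is flag, cover the remaining $|\omega|=m-4$ summand by Corollary~\ref{epi1}, and for the converse locate a $3$-belt's Poincar\'e dual in $H^{m-2}$ and rule it out of $(\widetilde H^*)^2$ by a bigraded degree count. Your write-up is somewhat more explicit than the paper's at two places the paper leaves implicit — you spell out why the empty-$\omega_k$ decompositions in Corollary~\ref{epi1} contribute nothing (so the factors really lie in $\widetilde H^*$), and you give the bigraded bookkeeping ($a_1+a_2=1$ with both $a_k\ge 0$ forcing a unit factor) showing the dual of a $3$-belt is not decomposable — but these are exactly the checks the paper is appealing to, not a different method.
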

\begin{proof}
The polytope $P\ne\Delta^3$ is not flag if and only if it has a $3$-belt. This corresponds to an element of a basis in $H^{-1,2\omega}(\mathcal{Z}_P)\simeq H_1(P_{\omega},\partial P_{\omega})$, $|\omega|=3$. By the Poincare duality this element corresponds to an element of a basis in $H^{-(m-4),2([m]\setminus\omega)}(\mathcal{Z}_P)\simeq \widehat{H}_2(P_{\omega},\partial P_{\omega})$. The latter element belongs to $H^{m-2}(\mathcal{Z}_P)$ but does not belong to $(\widetilde{H}^*(\mathcal{Z}_P))^2$. 

If the polytope is flag, then it has no $3$-belts, and by Proposition \ref{3bb} 
$$
H^5(\mathcal{Z}_P)=\bigoplus\limits_{|\omega|=4}H^{-3,2\omega}(\mathcal{Z}_P)=\bigoplus\limits_{|\omega|=4}\widehat{H}_2(P_{\omega},\partial P_{\omega}).
$$ 
Hence by the Poincare duality 
$$
H^{m-2}(\mathcal{Z}_P)=\bigoplus\limits_{|\omega|=m-4}H_1(P_{\omega},\partial P_{\omega}).
$$
By Corollary \ref{epi1} we have $H^{m-2}(\mathcal{Z}_P)\subset(\widetilde{H}^*(\mathcal{Z}_P))^2$. 

By Lemma \ref{SR} the simplex is a rigid polytope. This finishes the proof.
\end{proof}
\begin{corollary}\label{Flagcor}\index{rigidity!of the property to be a flag $3$-polytope}
The property to be a flag polytope is rigid in the class of simple $3$-polytopes.
\end{corollary}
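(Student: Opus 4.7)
The plan is to deduce the corollary directly from Theorem \ref{Flagth}, after verifying that every ingredient appearing in the characterization of flagness is in fact an invariant of the graded ring $H^*(\mathcal{Z}_P)$. Assume $P$ is a flag simple $3$-polytope with $m$ facets, and let $\varphi\colon H^*(\mathcal{Z}_P)\xrightarrow{\sim} H^*(\mathcal{Z}_Q)$ be an isomorphism of graded rings for some simple $3$-polytope $Q$ with $m'$ facets. We must show that $Q$ is flag.

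First I would read the number of facets off from the graded ring. By Proposition \ref{ZPeq} together with Theorem \ref{thBPR}, $\mathcal{Z}_P$ is a closed orientable manifold of dimension $m+3$, so the top degree in which $H^*(\mathcal{Z}_P)$ is nonzero equals $m+3$. Hence $\varphi$ forces $m+3=m'+3$, i.e.\ $m=m'$. This is the only point at which I need the grading (not just the ring structure).

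Next I would dispose of the exceptional polytope in Theorem \ref{Flagth}. By Lemma \ref{SR} the simplex $\Delta^3$ is $B$-rigid in the class of all simple $3$-polytopes; since $P$ is flag it is not combinatorially equivalent to $\Delta^3$, and hence $\varphi$ implies $Q\not\simeq\Delta^3$ as well. Thus both $P$ and $Q$ fall within the hypothesis of Theorem \ref{Flagth}.

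Finally, the condition
$$
H^{m-2}(\mathcal{Z}_P)\subset\bigl(\widetilde{H}^*(\mathcal{Z}_P)\bigr)^2
$$
depends only on $m$ (already matched) and on the graded ring together with its augmentation ideal $\widetilde{H}^*(\mathcal{Z}_P)=\bigoplus_{k>0}H^k(\mathcal{Z}_P)$, both of which are preserved by $\varphi$. Since $P$ is flag, Theorem \ref{Flagth} gives this inclusion for $\mathcal{Z}_P$; transporting it across $\varphi$ yields the analogous inclusion for $\mathcal{Z}_Q$, and Theorem \ref{Flagth} applied in the opposite direction gives that $Q$ is flag. The main work of the proof has already been carried out in Theorem \ref{Flagth} (whose nontrivial direction rested on Corollary \ref{epi1}); the remaining obstacle for the corollary is merely the verification that $m$ is a cohomological invariant and that the simplex case is handled separately, both of which are immediate.
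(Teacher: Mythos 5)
Your proof is correct and is essentially the same argument the paper intends: the corollary is deduced directly from Theorem \ref{Flagth} together with Lemma \ref{SR}, which is why the paper's proof of Theorem \ref{Flagth} closes by invoking Lemma \ref{SR}. The one thing you make explicit that the paper leaves implicit — that the number of facets $m$ can be recovered from the graded ring via the top nonzero degree — is a worthwhile observation, and you could also have obtained it more elementarily from $m=m'$ following since $P$ flag forces $m\geqslant 6$ by Proposition \ref{flagm6} while $\Delta^3$ has $m=4$.
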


\subsection{Rigidity of the property to have a $4$-belt}\index{rigidity!of the property to have a $4$-belt}
Remind that for any set $\omega=\{i,j\}\subset[m]$ we have 
$$
H^{-1,2\omega}(\mathcal{Z}_P)=\widehat{H}_2(P_{\omega},\partial P_{\omega})=\begin{cases}\mathbb Z\text{ with generator } [F_i]=-[F_j],&F_i\cap F_j=\varnothing,\\
0,&F_i\cap F_j\ne\varnothing,\end{cases}
$$ 
and 
$$
H^3(\mathcal{Z}_P)=\bigoplus\limits_{\{i,j\}\colon F_i\cap F_j=\varnothing}\mathbb Z
$$
\begin{definition}
The set $\{F_{i_1},\dots, F_{i_k}\}$ with $F_{i_1}\cap\dots\cap F_{i_k}=\varnothing$ is called a {\em nonface}\index{nonface}\index{polytope!nonface} of $P$, and the corresponding set $\{i_1,\dots,i_k\}$ -- a {\em nonface}\index{simplicial complex!nonface} of $K_P$. A nonface minimal by inclusion is called a {\em minimal}\index{nonface!minimal}\index{polytope!minimal nonface} nonface\index{simplicial complex!minimal nonface}.
Define $N(K)$ to be the set of all minimal nonfaces of the simplicial complex $K$.
\end{definition}
For any nonface $\omega=\{i,j\}$  choose a generator $\widetilde\omega\in H^{-1,2\omega}(\mathcal{Z}_P)$.

\begin{proposition}\label{4bpr}
The multiplication $H^3(\mathcal{Z}_P)\otimes H^3(\mathcal{Z}_P)\to H^6(\mathcal{Z}_P)$ is trivial if and only if $P$ has no $4$-belts. 
\end{proposition}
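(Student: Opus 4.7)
The plan is to work entirely in the Stanley--Reisner description $H^*(\mathcal{Z}_P) \cong H[R^*(P), d]$ from Theorem \ref{HRth}. The key preparation is to note that each generator $\widetilde{\{i,j\}}$ of $H^3(\mathcal{Z}_P) = \bigoplus_{F_i \cap F_j = \varnothing} \mathbb{Z}$ can be represented by either of the cohomologous monomials $u_i v_j$ and $v_i u_j$, since $d(u_i u_j) = v_i u_j - u_i v_j$ while $v_i v_j = 0$ in $\mathbb{Z}[P]$. This flexibility of representatives is what will drive both implications.

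For the direction ``$\Leftarrow$'', we take a $4$-belt $\mathcal{B} = (F_{i_1}, F_{i_2}, F_{i_3}, F_{i_4})$ and consider the two diagonal nonfaces $\omega_1 = \{i_1, i_3\}$, $\omega_2 = \{i_2, i_4\}$. The product $\widetilde{\omega_1} \cdot \widetilde{\omega_2}$ is represented by $\pm[u_{i_1} u_{i_2} v_{i_3} v_{i_4}] \in H^{-2, 2\omega}(\mathcal{Z}_P)$ where $\omega = \omega(\mathcal{B})$; this monomial is a cocycle because $v_{i_1} v_{i_3} = v_{i_2} v_{i_4} = 0$ forces every term in its differential to vanish. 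To see the class is a generator of $H^{-2, 2\omega}(\mathcal{Z}_P) \simeq \mathbb{Z}$, we list the space of cocycles in $R^{-2, 2\omega}$ as spanned by the four monomials $c_{\{a,b\}} := u_{\omega \setminus \{a,b\}} v_a v_b$ with $\{a,b\}$ running over the four successive pairs of $\mathcal{B}$, and compute $d$ on the four basis elements $u_{\omega \setminus \{k\}} v_k$ of $R^{-3, 2\omega}$: each such $d$-image equals, up to sign, the sum of the two $c_{\{a,b\}}$'s indexed by the neighbors of $k$ in the belt, so the image has rank $3$ and the quotient is $\mathbb{Z}$, generated by any single $c_{\{a,b\}}$.

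For the direction ``$\Rightarrow$'', we assume the multiplication is nontrivial, so some product $\widetilde{\omega_1} \cdot \widetilde{\omega_2}$ is nonzero for $2$-element nonfaces $\omega_1, \omega_2$. Theorem \ref{MPTheorem} forces the product to vanish whenever $\omega_1 \cap \omega_2 \neq \varnothing$, so we may write $\omega_1 = \{a, b\}$, $\omega_2 = \{c, d\}$ with four distinct indices. The crucial observation is that this single cohomology class admits four chain-level representatives
\[
\pm[u_a u_c v_b v_d], \quad \pm[u_a u_d v_b v_c], \quad \pm[u_b u_c v_a v_d], \quad \pm[u_b u_d v_a v_c],
\]
obtained by independently choosing $u_x v_y$ or $v_x u_y$ for each factor. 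If any of the four cross-pairs $\{b,d\}, \{b,c\}, \{a,d\}, \{a,c\}$ were a nonface, the corresponding $v_x v_y$ would vanish and the class would be zero; so all four cross-pair intersections are edges. Together with the assumed nonfaces $\{a,b\}$ and $\{c,d\}$, this exhibits the $4$-loop $(F_a, F_c, F_b, F_d)$ with every successive intersection an edge and both non-successive pairs empty; since any triple or quadruple intersection contains one of these empty pairs and hence vanishes, this loop is a genuine $4$-belt.

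The main obstacle will be the generator verification in ``$\Leftarrow$''. A cleaner alternative route is to invoke the isomorphism $H^{-2, 2\omega}(\mathcal{Z}_P) \simeq \widetilde{H}^1(K_\omega)$ coming from the description via related simplicial complexes: $K_\omega$ for a $4$-belt is combinatorially a $4$-cycle with $\widetilde{H}^1(K_\omega) \cong \mathbb{Z}$, so the product is either zero or a generator, and the zero case is ruled out by exhibiting a single cocycle in $R^{-3,2\omega}$ that does not hit $c_{\{i_3,i_4\}}$.
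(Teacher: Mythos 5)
Your proof is correct and takes a genuinely different, more computational route than the paper. The paper argues through the isomorphism $H^{-i,2\omega}(\mathcal{Z}_P)\cong\widetilde{H}^{|\omega|-i-1}(K_\omega)$: for disjoint missing edges $\omega_1,\omega_2$, the full subcomplex $K_{\omega_1\sqcup\omega_2}$ on four vertices has no $2$-simplices and no $3$-cycles (any triangle or diagonal would use one of the two missing edges), so it is either a $4$-cycle --- in which case $(F_i,F_p,F_j,F_q)$ is a $4$-belt and $\widetilde{H}^1\cong\mathbb{Z}$ --- or a forest, in which case $\widetilde{H}^1=0$ and the product vanishes. You work instead directly in the Koszul algebra $R^*(P)$. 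Your ``multiple representatives'' trick in the forward direction is the pleasant part: writing $\widetilde\omega$ as either $[u_iv_j]$ or $[v_iu_j]$ and multiplying out the four combinations makes the appearance of all four cross-monomials $v_xv_y$ manifest, so nonvanishing of the product immediately forces all four cross-pairs to be edges; this is an efficient algebraic encoding of the same combinatorial fact (``no triangles, so a cycle must use all four cross-edges'') that the paper reads off from the graph $K_{\omega_1\sqcup\omega_2}$. In the reverse direction your explicit rank computation over the four nonzero cochains $c_{\{a,b\}}$ verifies what the paper just asserts (that the class is a generator); your identification of the coboundary matrix as the incidence matrix of a $4$-cycle confirms the cokernel is exactly $\mathbb{Z}$, not merely of rank one. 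Your remark at the end --- that the cleaner route is to pass through $K_\omega$, where a $4$-belt gives a combinatorial $4$-cycle with $\widetilde{H}^1\cong\mathbb{Z}$ --- is precisely the paper's argument. Both approaches are sound; yours is more self-contained at the cochain level, while the paper's avoids matrix arithmetic by delegating to the topology of a four-vertex graph.
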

\begin{proof}
For $\omega_1=\{i,j\},\omega_2=\{p,q\}\in N(K_P)$, $\omega_1\cap \omega_2=\varnothing$, the simplicial complex $K_{\omega_1\sqcup\omega_2}$ has no $2$-simplices; hence it is at most $1$-dimensional and can be considered as a graph. Moreover, this graph has no $3$-cycles. If it has a $4$-cycle, then $K_{\omega_1\sqcup\omega_2}$ is a boundary of a $4$-gon, $(F_i,F_p,F_j,F_q)$ is a $4$-belt, and $\widetilde{\omega_1}\cdot\widetilde{\omega_2}$ is a generator of $H_1(P_{\omega_1\sqcup\omega_2},\partial P_{\omega_1\sqcup\omega_2})$. If $K_{\omega_1\sqcup\omega_2}$ has no $4$-cycles, then it has no cycles at all,  $H_1(P_{\omega_1\sqcup\omega_2},\partial P_{\omega_1\sqcup\omega_2})\simeq\widetilde{H}^1(K_{\omega_1\sqcup\omega_2})=0$, and $\widetilde{\omega_1}\cdot\widetilde{\omega_2}=0$.  This proves the statement.
\end{proof}
\begin{corollary}\label{4beltcor}
The property to have a $4$-belt is rigid in the class of all simple $3$-polytopes.
\end{corollary}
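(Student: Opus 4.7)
The plan is to deduce this corollary directly from Proposition \ref{4bpr}, which already encodes the property of having a $4$-belt as a purely ring-theoretic condition on $H^*(\mathcal{Z}_P)$. Specifically, Proposition \ref{4bpr} asserts that a simple $3$-polytope $P$ has at least one $4$-belt if and only if the multiplication map
\[
\mu_P \colon H^3(\mathcal{Z}_P) \otimes H^3(\mathcal{Z}_P) \longrightarrow H^6(\mathcal{Z}_P)
\]
is nontrivial. Since $\mu_P$ is defined entirely from the graded ring structure of $H^*(\mathcal{Z}_P)$ (the multiplication in prescribed degrees), its triviality or nontriviality is an invariant of the graded isomorphism class of $H^*(\mathcal{Z}_P)$.

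Concretely, given two simple $3$-polytopes $P$ and $Q$ and a graded ring isomorphism $\varphi \colon H^*(\mathcal{Z}_P) \to H^*(\mathcal{Z}_Q)$, I would observe that $\varphi$ restricts to isomorphisms $\varphi_3 \colon H^3(\mathcal{Z}_P) \to H^3(\mathcal{Z}_Q)$ and $\varphi_6 \colon H^6(\mathcal{Z}_P) \to H^6(\mathcal{Z}_Q)$ intertwining $\mu_P$ with $\mu_Q$: that is, $\varphi_6 \circ \mu_P = \mu_Q \circ (\varphi_3 \otimes \varphi_3)$. Therefore $\mu_P$ is identically zero if and only if $\mu_Q$ is identically zero. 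Combining with Proposition \ref{4bpr} applied on both sides yields: $P$ has a $4$-belt iff $\mu_P \neq 0$ iff $\mu_Q \neq 0$ iff $Q$ has a $4$-belt.

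There is essentially no obstacle here beyond Proposition \ref{4bpr} itself, which is already proved. The only thing to be slightly careful about is the formal statement of rigidity of a property (as distinguished from rigidity of a set or of a polytope): by definition, we only need to verify that the property, rather than some distinguished subset of cohomology, is preserved by graded ring isomorphisms, and this is immediate from the above. Thus the proof would consist of a single short paragraph invoking Proposition \ref{4bpr} and the functoriality of the multiplication under graded isomorphisms.
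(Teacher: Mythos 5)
Your proposal is correct and follows exactly the route the paper intends: the corollary is stated immediately after Proposition \ref{4bpr} precisely because the existence of a $4$-belt is there shown to be equivalent to nontriviality of the product $H^3(\mathcal{Z}_P)\otimes H^3(\mathcal{Z}_P)\to H^6(\mathcal{Z}_P)$, which is manifestly invariant under graded ring isomorphisms. Nothing further is needed.
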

\subsection{Rigidity of flag $3$-polytopes without $4$-belts}
First we prove the following technical result, which we will need below.
\begin{proposition}  \label{Fabc}(Lemma 3.2, \cite{FMW15})
Let $P$ be a flag $3$-polytope without $4$-belts. Then for any three different facets $\{F_i,F_j,F_k\}$ with $F_i\cap F_j=\varnothing$ there exist $l\geqslant 5$ and an $l$-belt $\mathcal{B}_l$ such that $F_i,F_j\in \mathcal{B}_l$, $F_k\notin \mathcal{B}_l$, and $F_k$ does not intersect at least one of the two connected components of $\mathcal{B}_l\setminus\{F_i,F_j\}$.
\end{proposition}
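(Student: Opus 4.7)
The plan is to mimic the structure of the proof of Proposition \ref{Fab}, carrying an extra bookkeeping: I would argue by induction on $m$, the number of facets of $P$, while strengthening the conclusion to the ``one-sided'' form. Since $P$ is flag and has no $4$-belts, Proposition \ref{facet-belt} (together with Proposition \ref{34belts}) forces every facet to have at least $5$ edges, so by formula (\ref{formulapk}) $p_5 \geqslant 12$; the induction base is the dodecahedron and any other minimally small flag polytopes without $4$-belts, where the conclusion can be checked directly using Theorem \ref{5belts-theorem} and the fact that each pentagonal facet is surrounded by a $5$-belt that can serve as $\mathcal{B}_l$ after suitable relabelling.

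For the induction step, I would first invoke Proposition \ref{Fab} to produce some $l$-belt $\mathcal{B}_l$ with $F_i,F_j\in\mathcal{B}_l$, $F_k\notin\mathcal{B}_l$; since $P$ has no $3$- or $4$-belts we automatically get $l\geqslant 5$. By Lemma \ref{belt-lemma}, $\mathcal{B}_l$ separates $\partial P$ into two disks $\mathcal{P}_1,\mathcal{P}_2$, and $F_i,F_j$ split $\mathcal{B}_l$ into two arcs $A_1,A_2$. Suppose $F_k\subset \mathcal{P}_\alpha$. If $F_k$ fails to intersect one of $A_1,A_2$ we are finished, so the interesting case is when there exist $F_a\in A_1$ and $F_b\in A_2$ both adjacent to $F_k$. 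Among all such belts $\mathcal{B}_l$ I would choose one minimising the total number of common edges between $F_k$ and $\mathcal{B}_l$, and seek to decrease this quantity by ``bypassing'' $F_k$. Concretely, by the Jordan-type Theorem \ref{Jtheorem} applied to the boundary of $F_k$, the vertices $F_a\cap F_k$ and $F_b\cap F_k$ split $\partial F_k$ into two edge arcs; walking along one of them inside $\mathcal{P}_\alpha$ and reading off the facets we meet gives a sequence which, concatenated with the portions of $\mathcal{B}_l$ outside the span of $F_k$, yields a new closed loop through $F_i,F_j$ avoiding $F_k$. Appealing to Lemma \ref{loop-lemma}, Lemma \ref{4lemma} and Lemma \ref{5lemma} together with the hypotheses $p_3=p_4=0$ and the absence of $3$- and $4$-belts, I would show a subloop of this new loop is actually a belt, whose intersection with $F_k$ is strictly smaller than that of $\mathcal{B}_l$, contradicting minimality unless already the one-sided condition holds. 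The case $F_k\cap F_i\ne\varnothing$ or $F_k\cap F_j\ne\varnothing$ is handled separately using Proposition \ref{34belts} to surround the adjacent pair $(F_k,F_i)$ (or $(F_k,F_j)$) with a belt that already bears on the construction.

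The main obstacle is the modification step, where one must verify that the constructed loop (or a subloop of it) is a \emph{belt}: that is, it is simple and non-consecutive pairs are disjoint. This is where the no-$4$-belt hypothesis is essential, as it forbids certain short $2$-cycles in the dual and rules out the obstructive configurations of Lemma~\ref{4lemma}, while Lemma \ref{5lemma} controls short-cut $5$-loops through $F_k$. An alternative but more bookkeeping-heavy route is to reduce $m$ by a $5$-belt cut (Lemma \ref{loop-cut-flag}) or a straightening along a carefully chosen edge, checking in each case that both flagness and $4$-belt-freeness are preserved, and then lift the belt from the smaller polytope; the difficulty shifts to showing the reduction can be performed without destroying the absence of $4$-belts, and that $F_i,F_j,F_k$ remain a valid configuration after the reduction.
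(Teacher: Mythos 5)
Your starting point is correct and matches the paper: invoke Proposition~\ref{Fab} to get an $l$-belt $\mathcal{B}_l$ through $F_i,F_j$ avoiding $F_k$ (with $l\geqslant 5$ automatic from flagness and absence of $3$- and $4$-belts), then deform it so that $F_k$ stops meeting one of the two arcs. You also correctly identify where the real work lies — verifying the deformed loop (or a sub-thick-path of it) is a genuine belt. But the deformation you propose does not engage with the actual obstruction. Walking along one arc of $\partial F_k$ replaces the $F_k$-adjacent part of $A_1$ with facets that are \emph{still next to} $F_k$ on the near side; in the troublesome configuration where $F_k$ is adjacent to \emph{both} arcs $A_1$ and $A_2$ (via disjoint edge-segments $\gamma_1,\dots,\gamma_d$ along $\partial\mathcal P_\alpha$), such a one-sided detour readily collides with facets of $A_2$ or with other detour pieces, and nothing in your sketch prevents that. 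The paper's construction is two-sided and goes in the opposite direction: it pushes the $F_k$-adjacent segments $\mathcal{U}_a$ of the belt \emph{outward}, into $\mathcal{W}_\beta$, away from $F_k$ (the $W_a$ thick paths), while \emph{simultaneously} pushing the $F_k$-avoiding segments $\mathcal{S}_b$ of the other arc \emph{inward}, into $\mathcal{W}_\alpha$, toward $F_k$ (the $\mathcal{V}_b$ thick paths), and then verifies a chain of disjointness statements ($W_a\cap W_b=\varnothing$, $W_a\cap\mathcal{V}_b=\varnothing$, etc.) using the no-$4$-belt hypothesis. That two-sided push is precisely what ensures the pieces of the new loop stay apart; the single-arc bypass you describe cannot replicate it.

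A secondary issue is that your ``minimise the number of common edges and derive a contradiction'' bookkeeping presupposes that the modification strictly decreases this measure after extracting a shortest thick path, which is itself unsubstantiated: extracting a shortest thick path from a loop can re-route through facets you were trying to avoid unless you have already verified (as the paper does, at each stage $\mathcal{I}_2,\mathcal{I}_3,\mathcal{I}_4$) that \emph{all} candidate facets already avoid $F_k$. The induction on $m$ framing at the start is also a loose end — the subsequent minimality argument takes place entirely within one polytope, so the induction never gets used, and the alternative reduction via $5$-belt cuts or straightenings that you gesture at runs into the separate difficulty (which you flag) of preserving the absence of $4$-belts under those operations. The paper's proof avoids all of this by being a direct, non-inductive modification of the belt supplied by Proposition~\ref{Fab}.
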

\begin{remark} In \cite{FMW15} only the sketch of the proof is given. It contains several additional assumptions. We give the full prove following the same idea. 
\end{remark}
\begin{proof}
From Proposition \ref{Fab} there is an $s$-belt $\mathcal{B}_1$, with $F_i,F_j\in\mathcal{B}_1\not\ni F_k$. We have $\mathcal{B}_1=(F_i,F_{i_1},\dots,F_{i_p},F_j,F_{j_1},\dots,F_{j_q})$, $s=p+q+2$, $p,q\geqslant 1$. According to Lemma \ref{belt-lemma} the belt $\mathcal{B}_1$ divides the surface $\partial P\setminus\mathcal{B}_1$ into two connected components $\mathcal{P}_1$ and $\mathcal{P}_2$, both homeomorphic to disks. Consider the component $\mathcal{P}_{\alpha}$ containing ${\rm int}\,F_k$. Set $\beta=3-\alpha$. Then either $\partial \mathcal{P}_\alpha=\partial F_k$, or $\partial\mathcal{P}_\alpha\cap\partial F_k$ consists of finite set of disjoint edge-segments $\gamma_1$, $\dots$, $\gamma_d$. 
\begin{figure}[h]
\begin{center}
\includegraphics[height=4cm]{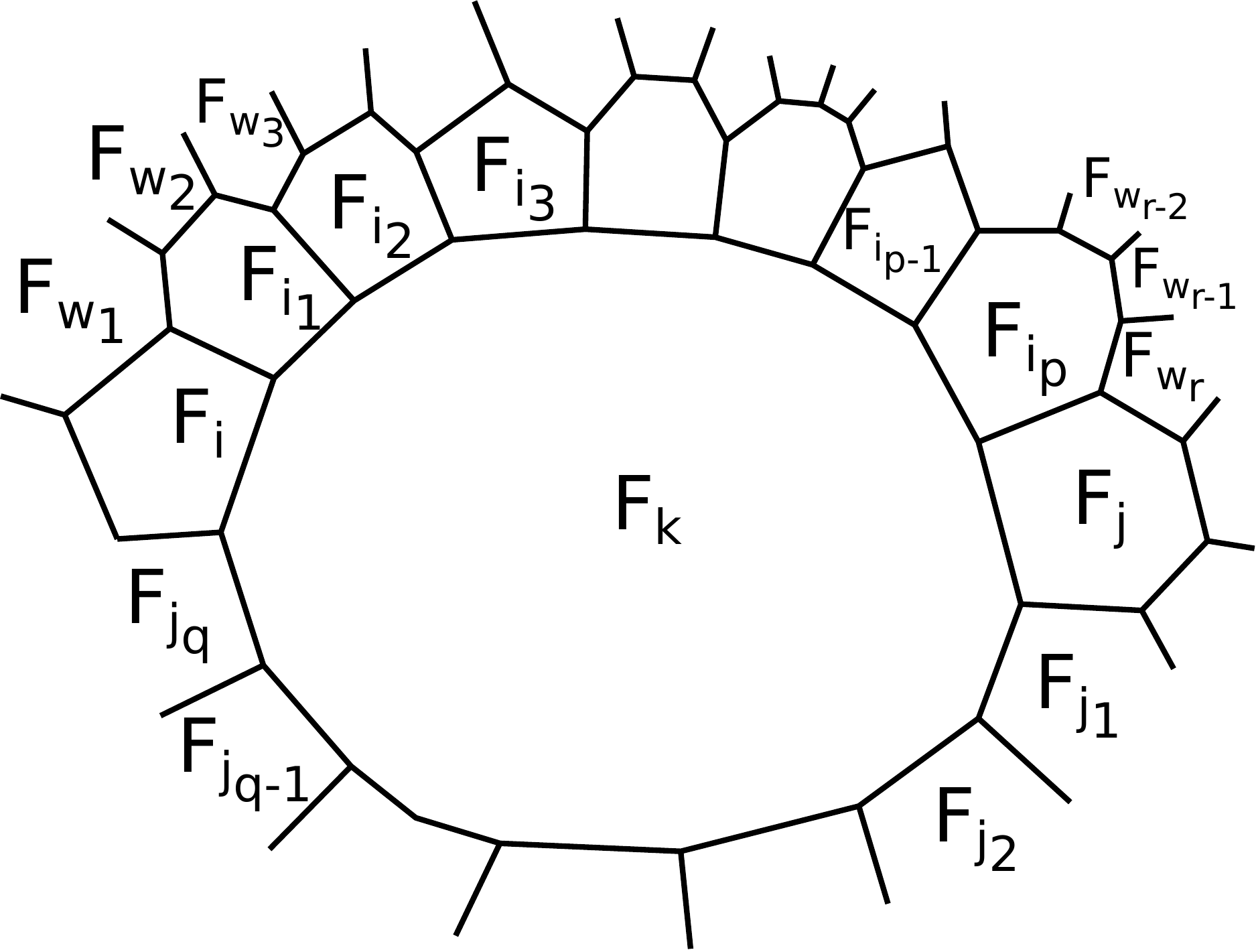}
\end{center}
\caption{Case 1}\label{bijk-1}
\end{figure}

Consider the first case. Then $\mathcal{B}_1$ surrounds $F_k$, and $F_i$ and $F_j$ are adjacent to $F_k$. Consider all facets $\{F_{w_1},\dots, F_{w_r}\}$ in $\mathcal{W}_{\beta}$ (in the notations of Lemma \ref{belt-lemma}),  adjacent to facets in $\{F_{i_1},\dots, F_{i_p}\}$ (see Fig. \ref{bijk-1}), in the order we meet them while walking round $\partial \mathcal{B}_1$ from $F_i$ to $F_j$. Then $F_{w_a}\cap F_{j_b}=\varnothing$ for any $a,b$, else $(F_k,F_{j_b},F_{w_a},F_{i_c})$ is a $4$-belt for any $i_c$ with $F_{i_c}\cap F_{w_a}\ne\varnothing$, since $F_k\cap F_{w_a}=\varnothing$ (because ${\rm int}\, F_{w_a}\subset \mathcal{P}_{\beta}$) and $F_{j_b}\cap F_{i_c}=\varnothing$. We have a thick path $(F_i, F_{w_1},\dots, F_{w_r},F_j)$. Consider the shortest thick path of the form $(F_i,F_{w_{s_1}},\dots,F_{w_{s_t}},F_j)$. If two facets of this path intersect, then they are successive, else there is a shorter thick path. Thus we have a belt $(F_i,F_{w_{s_1}},\dots, F_{w_{s_t}},F_j,F_{j_1},\dots, F_{j_q})$ containing $F_i$, $F_j$, not containing $F_k$, and the segment $(F_{w_{s_1}},\dots,F_{w_{s_t}})$ does not intersect $F_k$.

Now consider the second case. We can assume that $F_i\cap F_k=\varnothing$ or $F_j\cap F_k=\varnothing$, say $F_i\cap F_k\ne\varnothing$, else consider the belt $\mathcal{B}_1$ surrounding $F_k$ and apply the arguments of the first case. Let $\gamma_a=(F_k\cap F_{u_{a,1}},\dots,F_k\cap F_{u_{a,l_a}})$.   Set $\mathcal{U}_a=(F_{u_{a,1}},\dots, F_{u_{a,l_a}})$. The segment $(F_{s_{a,1}},\dots,F_{s_{a,t_a}})$ of $\mathcal{B}_1$ between $\mathcal{U}_a$ and $\mathcal{U}_{a+1}$ denote $\mathcal{S}_a$.  Then  $\mathcal{B}_1=(\mathcal{U}_1,\mathcal{S}_1,\mathcal{U}_2,\dots,\mathcal{U}_d,\mathcal{S}_d)$ for some $d$. 

Consider the thick path $W_a=(F_{w_{a,1}},\dots,F_{w_{a,r_a}})\subset \mathcal{W}_\beta$ (see notation in Lemma \ref{belt-lemma}) arising while walking round the facets in $\mathcal{W}_\beta$ intersecting facets in $\mathcal{U}_a$ (see Fig. \ref{bijk-2}). Then $W_a\cap W_b=\varnothing$ for $a\ne b$, else $(F_w,F_{u_{a,j_1}},F_k,F_{u_{b,j_2}})$ is a $4$-belt for any $F_w\in W_a\cap W_b$ such that $F_w\cap F_{u_{a,j_1}}\ne\varnothing$, $F_w\cap F_{u_{b,j_2}}\ne\varnothing$. Also $F_{w_{a,j_1}}\ne F_{w_{a,j_2}}$ for $j_1\ne j_2$. This is true for facets adjacent to the same facet $F_{u_{a,i}}$. Let $F_{w_{a,j_1}}= F_{w_{a,j_2}}$. If the facets are adjacent to the successive facets $F_{u_{a,i}}$ and $F_{u_{a,i+1}}$, then the flagness condition implies that $j_1=j_2$ and $F_{w_{a,j_1}}$ is the facet in $\mathcal{W}_\beta$ intersecting  $F_{u_{a,i}}\cap F_{u_{a,i+1}}$. If the facets are adjacent to non-successive facets $F_{u_{a,i}}$ and $F_{u_{a,j}}$, then $(F_{w_{a,j_1}},F_{u_{a,i}},F_k,F_{u_{a,j}})$ is a $4$-belt, which is a contradiction. 

Now consider the thick path $\mathcal{V}_b=(F_{v_{b,1}},\dots,F_{v_{b,c_b}})$ arising while walking round the facets in $\mathcal{W}_\alpha$ intersecting facets in $\mathcal{S}_b$ (see Fig. \ref{bijk-2}). Then $\mathcal{V}_a\cap\mathcal{V}_b=\varnothing$ for $a\ne b$, and  $W_a\cap \mathcal{V}_b=\varnothing$ for any $a,b$, since interiors of the corresponding facets lie in different connected components of $\partial P\setminus(\mathcal{B}_1\cup F_k)$, moreover by the same reason we have $F_{v_{a,j}}\cap F_{v_{b,j}}=\varnothing$ for any $i,j$, and $a\ne b$. 
\begin{figure}[h]
\begin{center}
\includegraphics[height=9cm]{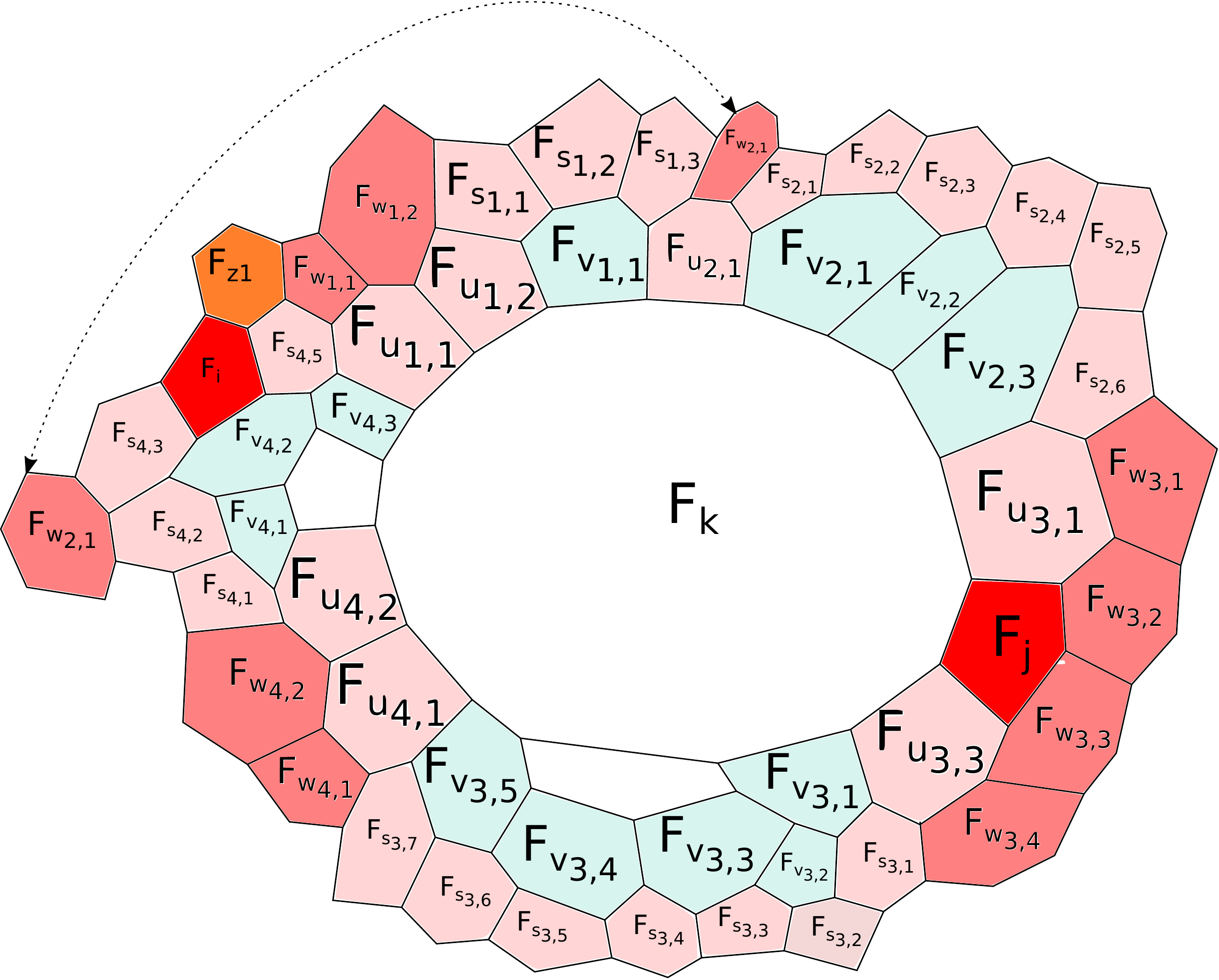}
\end{center}
\caption{Case 2}\label{bijk-2}
\end{figure}

Now we will deform the segments $\mathcal{I}=(F_{i_1},\dots,F_{i_p})$ and $\mathcal{J}=(F_{j_1},\dots,F_{j_q})$ of the belt $\mathcal{B}_1$ to obtain a new belt $(F_i,\mathcal{I}',F_j,\mathcal{J}')$ with $\mathcal{I}'$ not intersecting $F_k$. First substitute the thick path $W_a$ for each segment $\mathcal{U}_a\subset \mathcal{I}$ and the thick path $\mathcal{V}_b$ for each segment $\mathcal{S}_b\subset \mathcal{J}$. Since $F_{s_{a,t_a}}\cap F_{w_{a+1,1}}\ne\varnothing$, $F_{w_{a,r_a}}\cap F_{s_{a,1}}\ne\varnothing$, $F_{v_{a,c_a}}\cap F_{u_{a+1,1}}\ne\varnothing$, and $F_{u_{a,l_a}}\cap F_{s_{a,1}}\ne\varnothing$ for any $a$ and $a+1$ considered $\mod d$, we obtain a loop $\mathcal{L}_1=(F_i,\mathcal{I}_1,F_j,\mathcal{J}_1)$ instead of $\mathcal{B}_1$.  

Since $F_i\cap F_k=\varnothing$, we have $F_i=F_{s_{a_i,f_i}}$ for some $a_i,f_i$. If $F_j=F_{s_{a_j,f_j}}$ for some $a_j,f_j$, then we can assume that $a_i\ne a_j$, else the facets in $\mathcal{I}$ or $\mathcal{J}$ already do not intersect $F_k$, and $\mathcal{B}_1$ is the belt we need. If $F_j=F_{u_{a_j,f_j}}$ for some $a_j$ and some $f_j>1$, then substitute the thick path $(F_{w_{a_j,1}},\dots, F_{w_{a_j,g_j}})$, where $g_j$ -- the first integer with $F_{w_{a_j,g_j}}\cap F_j\ne\varnothing$ (then $F_j\cap F_{u_{a_j,f_j-1}}\cap F_{w_{a_j,g_j}}$ is a vertex), for the segment $(F_{u_{a_j,1}},\dots, F_{u_{a_j,f_j-1}})$ to obtain a loop $\mathcal{L}_2=(F_i,\mathcal{I}_2,F_j,\mathcal{J}_1)$ (else set $\mathcal{L}_2=\mathcal{L}_1$) with facets in $\mathcal{I}_2$ not intersecting $F_k$.  If $f_j<l_{a_j}$, then $F_{w_{a_j,g_j}}\cap F_{u_{a_j,f_j+1}}=\varnothing$, else $(F_k,F_{u_{a_j,f_j-1}},F_{w_{a_j,g_j}},F_{u_{a_j,f_j+1}})$ is a $4$-belt. Then $F_{w_{a,l}}\cap F_{u_{a_j,r}}=\varnothing$ for any $r\in \{f_j+1,\dots,l_{a_j}\}$ and $a,l$, such that either $a\ne a_j$, or $a=a_j$, and $l\in\{1,\dots,g_j\}$. Hence facets of the segment $(F_{u_{a_j,f_j+1}},\dots, F_{u_{a_j,l_{a_j}}})$ do not intersect facets in $\mathcal{I}_2$. 

Now a facet $F_{i_a'}$ of  $\mathcal{I}_2$ can intersect a facet $F_{j_b'}$ of $\mathcal{J}_1$ only if $F_{i_a'}=F_{w_{c,h}}$ for some $c,h$, and $F_{j_b'}=F_{s_{a_i,l}}$ for $l<f_i$, or $F_{j_b'}=F_{s_{a_j,l}}$ for $F_j=F_{s_{a_j,f_j}}$ and $l>f_j$.  
In the first case take the smallest  $l$ for all $c,h$, and the correspondent facet $F_{w_{c,h}}$. Consider the facet $F_{u_{b,g}}=F_{i_e}\in \mathcal{I}$ with $F_{u_{b,g}}\cap F_{w_{c,h}}\ne\varnothing$. Then $\mathcal{L}'=(F_{s_{a_i,l}},F_{s_{a_i,l+1}},\dots,F_i,F_{i_1},\dots,F_{i_e},F_{w_{c,h}})$ is a simple loop. If $f_i<t_{a_i}$, then consider the thick path $\mathcal{Z}_1=(F_{z_{1,1}},\dots,F_{z_{1,y_1}})$ arising while walking along the boundary of $\mathcal{B}_1$ in $\mathcal{W}_\beta$ from the facet $F_{z_{1,1}}$ intersecting $F_i\cap F_{i_1}$ by the vertex, to the facet $F_{z_{1,y_1}}$ preceding $F_{w_{a_i+1,1}}$. Consider the thick path $\mathcal{X}_1=(F_{v_{a_i,1}},\dots, F_{v_{a_i,x_1}})$ with $x_1$ being the first integer with $F_{v_{a_i,x_1}}\cap F_i\ne\varnothing$. Consider the simple curve $\eta\subset\partial P$ consisting of segments connecting the midpoints of the successive edges of intersection of the successive facets of $\mathcal{L}'$.  It divides $\partial P$ into two connected components $\mathcal{E}_1$ and $\mathcal{E}_2$ with $\mathcal{J}_1\setminus(F_{s_{a_i,l}},\dots,F_{s_{a_i,f_i-1}})$ lying in one connected component $\mathcal{E}_\alpha$, and $\mathcal{Z}_1$ -- in $\mathcal{E}_\beta\cup F_{w_{c,h}}$, $\beta=3-\alpha$.
Now substitute $\mathcal{X}_1$ for the segment $(F_{s_{a_i,1}},\dots, F_{s_{a_i,f_i-1}})$ of $\mathcal{J}_1$. If $f_i<t_{a_i}$
substitute $\mathcal{Z}_1$ for the segment $(F_{s_{a_i,f_i+1}},\dots, F_{s_{a_i,t_{a_i}}})$ of $\mathcal{I}_2$ to obtain a new loop $(F_i,\mathcal{I}_3,F_j,\mathcal{J}_2)$ with facets in $\mathcal{I}_3$ not intersecting $F_k$. A facet $F_{i_a}'' $ in $\mathcal{I}_3$ can intersect a facet $F_{j_b}''$ in $\mathcal{J}_2$ only if  $F_{i_a''}=F_{w_{c',h'}}$ for some $c',h'$, $F_j=F_{s_{a_j,f_j}}$, and $F_{j_b''}=F_{s_{a_j,l}}$ for $l>f_j$. 
The thick path $\mathcal{Z}_1$ lies in $\mathcal{E}_\beta\cup F_{w_{c,h}}$ and the segment $(F_j=F_{s_{a_j,f_j}},\dots, F_{s_{a_j,t_{a_j}}})$ lies in $\mathcal{E}_\alpha$; hence intersections of facets in $\mathcal{I}_3$ with facets in $\mathcal{J}_2$ are also intersections of the same facets in $\mathcal{I}_2$ and $\mathcal{J}_1$, and $F_{w_{c',h'}}$ is either $F_{w_{c,h}}$, or lies in $\mathcal{E}_\alpha$. We can apply the same argument for $\mathcal{S}_{a_j}$ as for $\mathcal{S}_{a_i}$ to obtain a new loop $\mathcal{L}_4=(F_i,\mathcal{I}_4,F_j,\mathcal{J}_3)$ with facets in $\mathcal{I}_4$ not intersecting $F_k$ and facets in $\mathcal{J}_3$. Then take the shortest thick path from $F_i$ to $F_j$ in $F_i\cup \mathcal{I}_4\cup F_j$ and the shortest thick path from $F_j$ to $F_i$ in $F_j\cup \mathcal{J}_3\cup F_i$ to obtain the belt we need. 

\begin{figure}[h]
\begin{center}
\includegraphics[height=9cm]{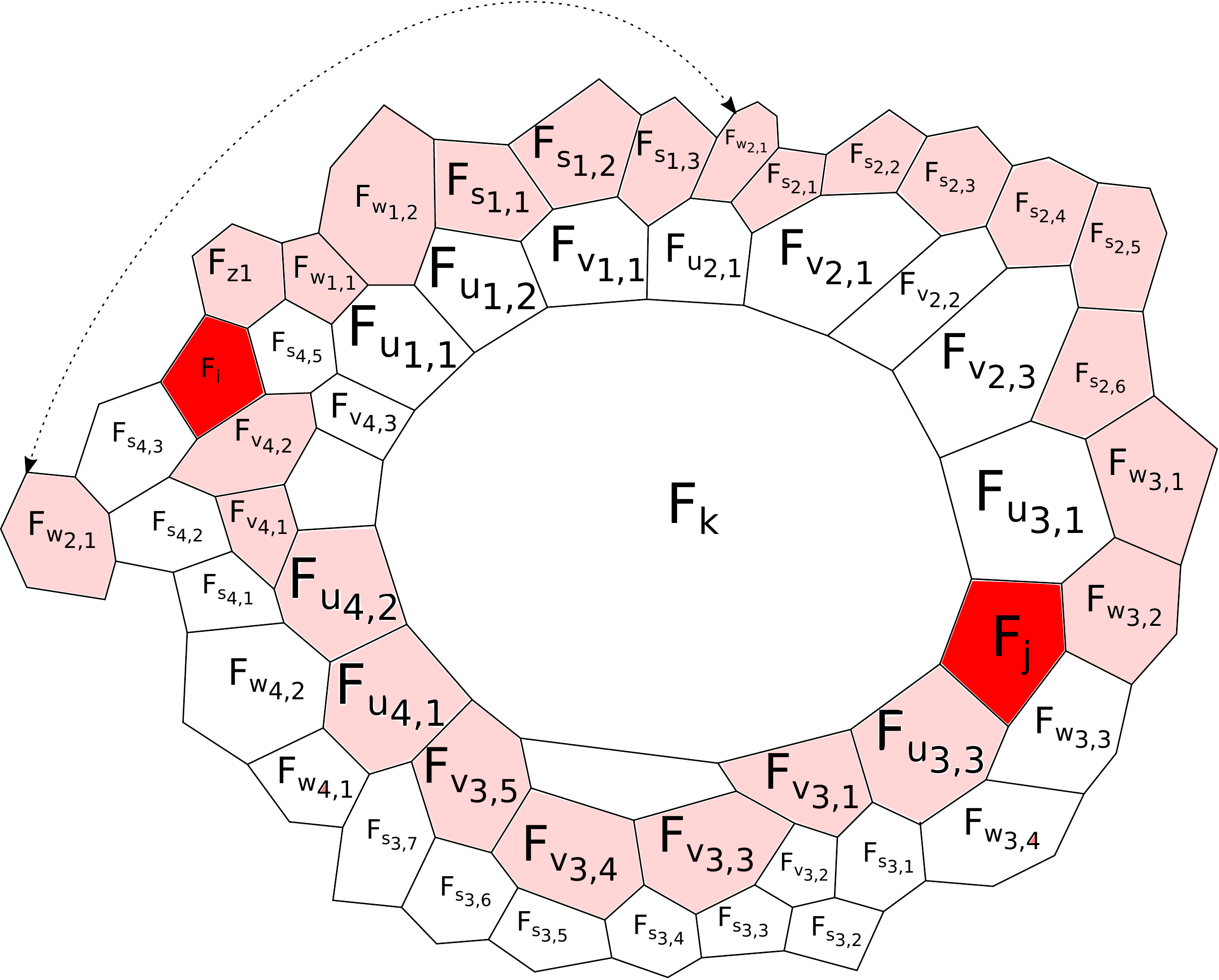}
\end{center}
\caption{Modified belt}\label{bijk-2}
\end{figure}

\end{proof}
\begin{definition}
An {\em annihilator}\index{annihilator} of an element $r$ in a ring $R$ is defined as 
$$
{\rm Ann}_R(r)=\{s\in R\colon rs=0\}
$$
\end{definition}
\begin{proposition}\label{2rigid}
The set of elements in $H^3(\mathcal{Z}_P)$ corresponding to 
$$
\bigcup\limits_{\{i,j\}\colon F_i\cap F_j=\varnothing}\{[F_i],[F_j]\in \widehat{H}_2(P_{\{i,j\}},\partial P_{\{i,j\}})\}
$$ 
is rigid in the class of all simple flag $3$-polytopes without $4$-belts.
\end{proposition}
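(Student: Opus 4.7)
Write $N=\{\{i,j\}\subset[m]:F_i\cap F_j=\varnothing\}$ and use Proposition~\ref{Pcc} to decompose $H^3(\mathcal{Z}_P)=\bigoplus_{\{i,j\}\in N}\mathbb{Z}\,\widetilde\omega_{ij}$, where $\widetilde\omega_{ij}$ is the canonical generator of $H^{-1,2\{i,j\}}\cong\widehat H_2(P_{\{i,j\}},\partial P_{\{i,j\}})$. Then $\mathcal{S}_P=\{\pm\widetilde\omega_{ij}\}$. Because any graded ring isomorphism $\varphi\colon H^*(\mathcal{Z}_P)\to H^*(\mathcal{Z}_Q)$ preserves $H^3$, primitivity, and all cup products, it suffices to describe $\mathcal{S}_P$ by purely ring-theoretic conditions on primitive classes of~$H^3$.

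The first step will be multigraded Poincar\'e duality. The pairing $H^3\otimes H^m\to H^{m+3}\cong\mathbb{Z}$ is perfect, with dual basis $\widetilde\omega_{ij}^*\in H^{-(m-4),2([m]\setminus\{i,j\})}$, and Theorem~\ref{MPTheorem} (the rule $\alpha\beta=0$ if $\omega_1\cap\omega_2\ne\varnothing$) gives $\widetilde\omega_{pq}\cdot\widetilde\omega_{ij}^*=\pm\delta_{\{p,q\},\{i,j\}}[\mathcal{Z}_P]$, since $\{p,q\}\cap([m]\setminus\{i,j\})=\varnothing$ forces $\{p,q\}=\{i,j\}$. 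Hence for $x=\sum a_{pq}\widetilde\omega_{pq}$ the map $y\mapsto x\cdot y$ on $H^m$ reads off the coefficients of $x$, and $x$ is primitive iff this map surjects onto $H^{m+3}$. This characterises primitivity ring-theoretically but does not yet distinguish $\pm\widetilde\omega_{ij}$ from a primitive mixed combination such as $\widetilde\omega_{ij}+\widetilde\omega_{i'j'}$, whose annihilator in $H^m$ is abstractly an isomorphic rank-$(|N|-1)$ direct summand.

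To separate single-summand elements from mixed ones I will invoke Proposition~\ref{Fabc}. Given an assumed splitting $x=a\widetilde\omega_{ij}+b\widetilde\omega_{i'j'}+\cdots$ with two nonzero coefficients on distinct nonfaces, I would choose $k\in\{i',j'\}\setminus\{i,j\}$ and let $\mathcal{B}_l$ ($l\ge 5$) be the belt through $F_i,F_j$ avoiding $F_k$ produced by Proposition~\ref{Fabc}. The Poincar\'e dual $\widetilde{\mathcal{B}_l}^\vee\in H^{-(m-l-1),2([m]\setminus\omega(\mathcal{B}_l))}$ lies in a multidegree containing the index~$k$. By Theorem~\ref{MPTheorem}, $\widetilde\omega_{i'j'}\cdot\widetilde{\mathcal{B}_l}^\vee=0$ (because $k\in\{i',j'\}\cap([m]\setminus\omega(\mathcal{B}_l))$), whereas $\{i,j\}\subset\omega(\mathcal{B}_l)$ ensures that the multidegree-intersection condition is satisfied for $\widetilde\omega_{ij}\cdot\widetilde{\mathcal{B}_l}^\vee$ and, via a non-degeneracy argument using the rank-one structure of Theorem~\ref{5bb} for $l=5$ and its higher-belt analogues, this product is nonzero. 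Thus the existence of such a test-class excludes mixed combinations.

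\textbf{Main obstacle.} The delicate point is to make the separation test intrinsic, i.e.\ to identify the classes $\widetilde{\mathcal{B}_l}$ and their Poincar\'e duals from the ring structure alone. For $l=5$ this is done by Theorem~\ref{5bb}, which identifies rank-one summands of $H^7$ with $5$-belt classes. For $l\ge 6$ the analogous direct characterisation is not immediate; I plan to handle these by an inductive decomposition, expressing $\widetilde{\mathcal{B}_l}$ as a combination of products of $5$-belt classes with degree-$3$ elements (using Propositions~\ref{345b} and~\ref{facet-belt} to split longer belts into shorter ones along chords of $P_{\omega}$). Verifying that the resulting intrinsic characterisation picks out exactly $\mathcal{S}_P$, and that Proposition~\ref{Fabc} supplies enough separating belts to cover every pair of distinct nonfaces through the stronger conclusion that $F_k$ avoids one arc of $\mathcal{B}_l\setminus\{F_i,F_j\}$, will be the combinatorial core of the full proof.
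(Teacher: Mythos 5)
Your plan correctly identifies that Proposition~\ref{Fabc} is the combinatorial engine, and you are right that the crux is making the separation test intrinsic to the ring, since a graded isomorphism does not preserve the multigrading (nor therefore the summands $H^{-1,2\omega}$). But the route you propose for closing that gap does not work, and it is exactly the step where the paper's proof differs in an essential way.

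Your plan asks to identify the belt class $\widetilde{\mathcal{B}_l}$ (or its Poincar\'e dual) intrinsically, for all $l\geqslant 5$, by inductively ``expressing $\widetilde{\mathcal{B}_l}$ as a combination of products of $5$-belt classes with degree-$3$ elements.'' This is already incompatible on degree grounds: $\widetilde{\mathcal{B}_k}\in H^{k+2}(\mathcal{Z}_P)$, so $\widetilde{\mathcal{B}_5}\in H^7$ and $\widetilde{\mathcal{B}_6}\in H^8$, while a product of a $5$-belt class with a degree-$3$ class lives in $H^{10}$. More importantly, even if one found some intrinsic characterization of belt classes, this would be circular for the present purpose: such a characterization is itself a hard rigidity statement (it is in fact Proposition~\ref{bfBkrigid}, proved later in Lecture~7 by a different mechanism), and using it here would invert the logical order of the section.

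The paper avoids the need to identify belt classes at all. It works with the single ring-theoretic invariant $\dim_{\mathbb{Q}}\operatorname{Ann}_H(\alpha)$ over $\mathbb{Q}$, and its key Lemma (Lemma~\ref{Ann2lemma}) shows: if $\alpha=\sum r_\omega\widetilde\omega$ has at least two nonzero coefficients, then $\dim\operatorname{Ann}_H(\alpha)<\dim\operatorname{Ann}_H(\widetilde\omega)$ for any $\omega$ with $r_\omega\neq 0$. The proof of that strict inequality is where Proposition~\ref{Fabc} is used (producing an element $\xi\in\operatorname{Ann}(\widetilde\omega)$ with $\xi\cdot\widetilde{\omega'}\neq 0$, hence $\xi\notin\operatorname{Ann}(\alpha)$); after that, the rigidity follows from a two-step contradiction: $\dim\operatorname{Ann}(\widetilde\omega)=\dim\operatorname{Ann}(\varphi\widetilde\omega)<\dim\operatorname{Ann}(\widetilde{\omega'})=\dim\operatorname{Ann}(\varphi^{-1}\widetilde{\omega'})<\dim\operatorname{Ann}(\widetilde\omega)$. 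Because $\dim\operatorname{Ann}$ is manifestly preserved by any ring isomorphism, no intrinsic characterization of the auxiliary belt classes $\widetilde{\mathcal{B}_l}$ or their duals is needed, and the difficulty you flagged as your ``main obstacle'' simply does not arise. You should replace your inductive decomposition step with this annihilator-dimension comparison; the rest of your plan (Poincar\'e duality to reduce to primitive $H^3$ classes, Proposition~\ref{Fabc} to produce a separating element) then falls into place as the technical content of the Lemma rather than as an independent separation test.
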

\begin{proof}
Since the group $H^*(\mathcal{Z}_P)$ has no torsion, we have the isomorphism $H^*(\mathcal{Z}_P,\mathbb Q)\simeq H^*(\mathcal{Z}_P)\otimes\mathbb Q$ and the embedding $H^*(\mathcal{Z}_P)\subset H^*(\mathcal{Z}_P)\otimes\mathbb Q$. For polytopes $P$ and $Q$ the isomorphism $H^*(\mathcal{Z}_P)\simeq H^*(\mathcal{Z}_Q)$ implies the isomorphism over $\mathbb Q$. For the cohomology over $\mathbb Q$ all  theorems about structure of $H^*(\mathcal{Z}_P,\mathbb Q)$ are still valid. In what follows we consider cohomology over $\mathbb Q$. Set $H=H^*(\mathcal{Z}_P,\mathbb Q)$. We will need the following result.
\begin{lemma}\label{Ann2lemma}
For an element  
$$
\alpha=\sum\limits_{\omega\in N(K_P), |\omega|=2}r_{\omega}\widetilde\omega\quad\text{with }|\{\omega\colon r_{\omega}\ne 0\}|\geqslant 2
$$ 
we have 
$$
\dim {\rm Ann}_H (\alpha)<\dim {\rm Ann}_H (\widetilde \omega),\text { if }r_{\omega}\ne 0.
$$
\end{lemma}
\begin{proof} 
Choose a complementary subspace $C_{\omega}$ to ${\rm Ann}_H (\widetilde \omega)$ in $H$ as a direct sum of complements $C_{\omega,\tau}$ to ${\rm Ann}_H (\widetilde \omega)\cap \widehat{H}_*(P_{\tau},\partial P_{\tau})$ in $\widehat{H}_*(P_{\tau},\partial P_{\tau})$ for all $\tau\subset[m]\setminus\omega$. Then for any $\beta\in C_{\omega}\setminus\{0\}$ we have $\beta \widetilde{\omega}\ne0$, which is equivalent to the fact that $\beta=\sum\limits\beta_{\tau}$, $\beta_{\tau}\in C_{\omega,\tau}$, $\tau\subset[m]\setminus\omega$, with $\beta_{\tau_\beta}\widetilde\omega\ne0$ for some $\tau_\beta\subset[m]\setminus\omega$. Moreover for any $\omega'\ne\omega$ with $r_{\omega'}\ne 0$ and $\tau\subset[m]\setminus\omega$, $\tau\ne\tau_\beta$, we have $\tau_\beta\sqcup\omega\notin\{\tau\cup \omega',\tau_\beta\cup\omega',\tau\sqcup\omega\}$; hence $(\beta\cdot\alpha)_{\tau_{\beta}\sqcup\omega}= r_{\omega}\beta_{\tau_\beta}\cdot\widetilde{\omega}\ne0$, and $\beta\alpha\ne 0$. Then $C_{\omega}$ forms a direct sum with ${\rm Ann}(\alpha)$. Now consider some $\omega'\ne\omega$, $|\omega'|=2$, $r_{\omega'}\ne 0$. Let $\omega=\{p,q\}$, $\omega'=\{s,t\}$, $q\notin\omega'$. By Proposition \ref{Fabc} there is an $l$-belt $\mathcal{B}_l$ such that $F_s,F_t\in \mathcal{B}_l$,  $F_q\notin\mathcal{B}_l$, and $F_q$ does not intersect one of the two connected components $B_1$ and $B_2$ of $\mathcal{B}_l\setminus\{F_s,F_t\}$, say $B_1$. Take $\xi=[\sum\limits_{i\colon F_i\subset B_1}F_i]\in \widehat{H}_2(P_{\tau},\partial P_{\tau}),\,\tau=\{i\colon F_i\in \mathcal{B}_l\setminus\{F_s,F_t\}\}$,\linebreak and $[F_s]\in \widehat{H}_2(P_{\omega'},\partial P_{\omega'})$. Then $\xi \cdot [F_s]$ is a generator in $H_1(\mathcal{B}_l,\partial\mathcal{B}_l)\simeq \mathbb Z$. On the other hand, take $[F_q]\in\widehat{H}_2(P_{\omega},\partial P_{\omega})$. Then either $F_p\in \mathcal{B}_l\setminus\{F_s,F_t\}$, and $\xi \cdot\widetilde\omega=0$, since $\tau\cap \omega\ne\varnothing$,  or $F_p\notin \mathcal{B}_l\setminus\{F_s,F_t\}$, and $\pm \xi\cdot\widetilde\omega=\xi\cdot [F_q]=0$, since $F_q$ does not intersect $B_1$. In both cases  $\xi\in{\rm Ann}(\widetilde\omega)$ and $\xi\cdot\widetilde{\omega'}\ne 0$. Then $\xi\cdot \alpha\ne 0$, since $\tau\sqcup\omega'\ne\tau\sqcup\omega_1$ for $\omega_1\ne\omega'$. Consider any $\beta=\sum\limits_{\tau\subset[m]\setminus\omega}\beta_\tau\in C_{\omega}\setminus\{0\}$. We have  $(\beta\cdot\alpha)_{\tau_\beta\sqcup\omega}\ne 0$. If $(\xi\cdot\alpha)_{\tau_\beta\sqcup\omega}\ne 0$, then since $\xi$ is a homogeneous element, $(\xi\cdot\alpha)_{\tau_\beta\sqcup\omega}=r_{\omega_1}\xi\cdot\widetilde{\omega_1}$ for $\omega_1=(\tau_\beta\sqcup\omega)\setminus\tau=\{q,r\}$, $r\in[m]$. We have $\xi\cdot\widetilde{\omega_1}=\pm\xi\cdot[F_q]=0$, since $F_q$ does not intersect $B_1$. A contradiction. Thus, $((\xi+\beta)\cdot\alpha)_{\tau_\beta\sqcup\omega}=(\beta\cdot\alpha)_{\tau_\beta\sqcup\omega}\ne 0$; hence $(\xi+\beta)\cdot \alpha\ne0$, and the space $\langle\xi\rangle\oplus C_{\omega}$ forms a direct sum with ${\rm Ann}_H(\alpha)$. This finishes the proof. 
\end{proof}
Now let us prove Proposition \ref{2rigid}. Let $\varphi\colon H^*(\mathcal{Z}_P,\mathbb Z)\to H^*(\mathcal{Z}_Q,\mathbb Z)$ be an isomorphism of graded rings for flag simple $3$-polytopes $P$ and $Q$ without $4$-belts. Let $\omega\in N(K_P)$, $|\omega|=2$, and 
$$
\varphi(\widetilde\omega)=\alpha=\sum\limits_{\omega'\in N(K_Q),|\omega'|=2}r_{\omega'}\widetilde{\omega'}
\text{ with }|\{\omega'\colon r_{\omega'}\ne 0\}|\geqslant 2.
$$ 
Then there is some $\omega'$ such that $r_{\omega'}\ne 0$ and 
 $\varphi^{-1}(\widetilde{\omega'})=\alpha'=\sum\limits_{\omega''\in N(K_P),|\omega''|=2}r'_{\omega''}\widetilde{\omega''}$ with $r'_{\omega}\ne 0$. Now consider all the mappings in cohomology over $\mathbb Q$. Since dimension of annihilator of an element is invariant under isomorphisms,\linebreak  Lemma \ref{Ann2lemma} gives a contradiction:
$$
\dim  {\rm Ann}(\widetilde{\omega})=\dim {\rm  Ann}(\alpha)<\dim {\rm Ann}(\widetilde{\omega'})=\dim{\rm Ann}(\alpha')<\dim {\rm Ann}(\widetilde{\omega}).
$$
Thus $\varphi(\widetilde{\omega})=r_{\omega'}\widetilde{\omega'}$ for some $\omega'$. Since the isomorphism is over $\mathbb Z$, we have $r_{\omega'}=\pm1$. This finishes the proof. 
\end{proof}
\begin{definition}
Following \cite{FW15} and \cite{FMW15} for a graded algebra $A=\bigoplus\limits_{i\geqslant 0}A^i$ over the field $k$, and a nonzero element $\alpha\in A$ define a  {\em $p$-factorspace}\index{$p$-factorspace} $V$ to be a vector subspace in $A^p$ such that for any $v\in V\setminus\{0\}$ there exists $u_v\in A$ with $vu_v=\alpha$. A {\em $p$-factorindex}\index{$p$-factorindex} ${\rm ind}_p(\alpha)$ is defined to be the maximal dimension of $p$-factorspaces of $\alpha$.
\end{definition}
\begin{definition}
Define $\mathbf{B}_k=\bigoplus\limits_{\mathcal{B}_k-\text{$k$-belt}}H_1(\mathcal{B}_k,\partial \mathcal{B}_k)$ to be the subgroup in $H^{k+2}(\mathcal{Z}_P)$ generated by all elements $\widetilde{\mathcal{B}_k}$ corresponding to $k$-belts.  
\end{definition} 
\begin{definition}
For the rest of the Section let $\{\omega_i\}_{i=1}^{N(P)}$ be the set of all {\em missing edges}\index{missing edge} of the complex $K_P$ of the polytope $P$.
\end{definition}
\begin{proposition} \label{bfBkrigid}\index{rigidity!of the group generated by $k$-belts}Let $P$ be a simple $3$-polytope. Then
\begin{enumerate}
\item for any element $\alpha\in H^{k+2}(\mathcal{Z}_P,\mathbb Q)$, $4\leqslant k\leqslant m-2$, we have ${\rm ind}_3(\alpha)\leqslant\frac{k(k-3)}{2}$, and the equality ${\rm ind}_3(\alpha)=\frac{k(k-3)}{2}$ implies  $\alpha\in(\mathbf{B}_k\otimes\mathbb Q)\setminus\{0\}$. 
\item for any $k$-belt $\mathcal{B}_k$, $4\leqslant k\leqslant m-2$, we have ${\rm ind}_3(\widetilde{\mathcal{B}_k})=\frac{k(k-3)}{2}$;
\end{enumerate}
In particular, the group $\mathbf{B}_k\subset H^{k+2}(\mathcal{Z}_P,\mathbb Z)$, $4\leqslant k\leqslant m-2$, is $B$-rigid in the class of all simple $3$-polytopes.
\end{proposition}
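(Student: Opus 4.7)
The plan is to establish the two bounds of the proposition separately and then deduce $B$-rigidity formally. For the lower bound in (2), I would exhibit an explicit $\frac{k(k-3)}{2}$-dimensional $3$-factorspace of $\widetilde{\mathcal{B}_k}$. For a $k$-belt $\mathcal{B}_k=(F_{i_1},\dots,F_{i_k})$ with facet set $\omega_k$, the $\binom{k}{2}-k=\frac{k(k-3)}{2}$ non-cyclically-adjacent pairs $\omega_{ab}=\{i_a,i_b\}$ are precisely the missing edges of $K_P$ lying inside $\omega_k$. For each such pair the complement $\tau_{ab}=\omega_k\setminus\omega_{ab}$ splits the belt cylinder into two arcs of facets, so by Proposition \ref{B3} the group $\widehat{H}_2(P_{\tau_{ab}},\partial P_{\tau_{ab}})\simeq\mathbb{Z}^2/(1,1)$ is infinite cyclic; its generator $u_{ab}$ satisfies $\widetilde{\omega_{ab}}\cdot u_{ab}=\pm\widetilde{\mathcal{B}_k}$ by the multiplication rule following Proposition \ref{B3}, since the product class is represented by an edge-path in $|\mathcal{B}_k|$ joining the two boundary circles of the belt cylinder. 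To upgrade this to a factorspace, take a nonzero $v=\sum r_{ab}\widetilde{\omega_{ab}}$ in the span of these generators, fix $(a_0,b_0)$ with $r_{a_0b_0}\neq 0$, and set $u_v=r_{a_0b_0}^{-1}u_{a_0b_0}$; for any other non-adjacent $(c,d)\neq (a_0,b_0)$ the pair $\omega_{cd}$ must meet $\tau_{a_0b_0}$, so $\widetilde{\omega_{cd}}\cdot u_v=0$ and $v\cdot u_v=\widetilde{\mathcal{B}_k}$, giving ${\rm ind}_3(\widetilde{\mathcal{B}_k})\geq\frac{k(k-3)}{2}$.

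For the upper bound in (1), I take any $3$-factorspace $V$ of a nonzero $\alpha\in H^{k+2}(\mathcal{Z}_P,\mathbb{Q})$, decompose $\alpha=\sum_\tau\alpha_\tau$ by the bigrading, and choose $\tau^*$ in the support of $\alpha$ with $|\tau^*|$ minimal. A direct check on bigradings shows that any $\tau$ with $\alpha_\tau\neq 0$ in the support of a divisible $\alpha$ must satisfy $|\tau|\leq k$ (otherwise the corresponding bigraded piece of $H^{k-1}$ is already zero), so $|\tau^*|\leq k$ and thus $|\tau^*|-i_{\tau^*}-1=k-|\tau^*|+1\geq 1$. For any $v\in V$ with $v\cdot u_v=\alpha$, only products $\widetilde{\omega}\cdot u_{\tau^*\setminus\omega}$ with $\omega\subset\tau^*$ contribute to $\alpha_{\tau^*}$, so the projection $\pi_{\tau^*}(v)$ of $v$ onto missing-edge generators inside $\tau^*$ is nonzero; this yields an injection $V\hookrightarrow\bigoplus_{\omega\subset\tau^*,\,|\omega|=2,\,\text{missing}}\mathbb{Q}\widetilde\omega$ and hence
$$
\dim V\le\binom{|\tau^*|}{2}-|K^1_{\tau^*}|.
$$
A graph-theoretic count using $\widetilde{H}^{k-|\tau^*|+1}(K_{\tau^*})\neq 0$ then gives $|K^1_{\tau^*}|\geq |\tau^*|$, with equality iff the $1$-skeleton is a disjoint union of simple cycles; maximizing over $|\tau^*|\leq k$ yields $\dim V\leq \frac{k(k-3)}{2}$. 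Equality forces $|\tau^*|=k$ and $K^1_{\tau^*}$ to be a disjoint union of simple cycles on $k$ vertices, and a further analysis of the restricted simplicial multiplication on $K_{\tau^*}$ (showing that missing-edge generators inside a single cycle component can only divide $\widetilde{H}^1$-classes supported on that component) forces $K_{\tau^*}$ to be a single $k$-cycle, i.e., $\tau^*=\omega(\mathcal{B}_k)$ for an honest $k$-belt. Then $\alpha_{\tau^*}$ is a nonzero multiple of $\widetilde{\mathcal{B}_k}$, and an inductive subtraction on the remaining bigraded components places $\alpha$ in $\mathbf{B}_k\otimes\mathbb{Q}\setminus\{0\}$.

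The $B$-rigidity statement then follows formally: any graded ring isomorphism $\varphi\colon H^*(\mathcal{Z}_P)\to H^*(\mathcal{Z}_Q)$ preserves the $3$-factorindex, so by (1) and (2) it sends each belt generator $\widetilde{\mathcal{B}_k}\in\mathbf{B}_k(P)$ to an element of $\mathbf{B}_k(Q)\otimes\mathbb{Q}$; combined with the torsion-freeness of $H^*(\mathcal{Z}_Q)$ established in Lecture 5 and the symmetric argument for $\varphi^{-1}$, this gives $\varphi(\mathbf{B}_k(P))=\mathbf{B}_k(Q)$. The principal obstacle is the equality analysis in part (1): the numerical bound $\binom{k}{2}-k=\frac{k(k-3)}{2}$ is attained also when $K_{\tau^*}$ is a disjoint union of several cycles summing to $k$ edges and $k$ vertices (for instance two disjoint $4$-cycles when $k=8$), and ruling out these configurations requires the delicate observation that the several independent $\widetilde{H}^1$-classes attached to the different cycle components cannot all be realized as products $\widetilde{\omega}\cdot u_v$ for a single partner $u_v\in H^{k-1}$, which ultimately forces the full cycle structure of $K_{\tau^*}$ to concentrate into a single $k$-belt.
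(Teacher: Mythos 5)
Your overall plan matches the paper's, and your treatment of part~(2) is essentially identical to the paper's: for each non-cyclically-adjacent pair $\omega_{ab}$ you take the fundamental class $u_{ab}$ of $P_{\omega_k\setminus\omega_{ab}}$, note the near-diagonal pairing $\widetilde{\omega_{ab}}\cdot u_{cd}$, and a triangularity argument shows every nonzero element of the span of the $\widetilde{\omega_{ab}}$ divides $\widetilde{\mathcal{B}_k}$. That part is fine.

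There is, however, a genuine gap in your proof of part~(1), and it is one you yourself flag as ``the principal obstacle.'' Your bound $\dim V\leq\binom{|\tau^*|}{2}-|K^1_{\tau^*}|$ counts \emph{all} missing edges of $K_{\tau^*}$, but many of these can never appear with nonzero coefficient in a divisor of a nonzero class in $H_1(P_{\tau^*},\partial P_{\tau^*})$. Concretely, two phenomena spoil your count. First, if a missing edge has its endpoints in two different connected components of $P_{\tau^*}$, then the corresponding generator $\widetilde\omega_i=\pm[F_p]=\mp[F_q]$ lands, upon multiplication by any piece of $\gamma$, simultaneously in $H_1$ of two disjoint components; hence the product vanishes. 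Second, if $a$ is a hanging vertex of the component graph $K^1_{\tau^*}$, then any missing edge of the form $\{a,r\}$ pairs with $\gamma$ to the class of the single edge $F_a\cap F_b$, which has both endpoints on the same boundary circle of the component and is therefore zero. Without removing these two classes of missing edges from the count, your inequality $\dim V\leq\binom{|\tau^*|}{2}-|K^1_{\tau^*}|$ can exceed $\frac{k(k-3)}{2}$: for instance $K_{\tau^*}$ equal to a $4$-cycle plus an isolated vertex (so $|\tau^*|=5=k$, $4$ edges) gives $\binom{5}{2}-4=6>5=\frac{5\cdot 2}{2}$, and $\widetilde H^1(K_{\tau^*})\neq 0$, so this configuration is not excluded by your argument. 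The disjoint-cycles case you mention at the end is a further instance of the same issue, and ``a further analysis of the restricted simplicial multiplication'' is precisely the missing content, not a remark.

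The paper's proof supplies exactly the missing reductions and in the right order. After establishing that the projection $\varphi_\omega\colon V\to H^3$ onto missing edges inside $\omega=\tau^*$ is injective, it decomposes $P_\omega=P_{\omega^1}\sqcup\cdots\sqcup P_{\omega^s}$ into connected components, shows via the product formula that inter-component missing edges contribute nothing, and concludes that the further projection $\psi_l$ onto missing edges inside a \emph{single} component $\omega^l$ with $\alpha_{\omega^l}\ne 0$ is still injective. Then, working inside a connected graph $K^1_{\omega^l}$ on $r\leq k$ vertices, it disposes of hanging vertices by a second injective projection $\xi_a$, yielding $\dim V\leq\frac{(r-1)(r-2)}{2}\leq\frac{(k-2)(k-3)}{2}<\frac{k(k-3)}{2}$; and in the no-hanging-vertex case it uses that a connected graph on $r$ vertices with no hanging vertices has at least $r$ edges, giving $\dim V\leq\frac{r(r-3)}{2}\leq\frac{k(k-3)}{2}$ with equality forcing $r=k$ and $K_\omega$ a chordless $k$-cycle, i.e.\ a $k$-belt. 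You should carry out these two injectivity reductions explicitly before counting missing edges; once they are in place, the disconnected configurations you worry about simply never enter the count, and the equality analysis is immediate. The concluding $B$-rigidity paragraph is correct modulo this.
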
   
\begin{proof}
(1) We have 
$$
\alpha=\sum\limits_{\omega}\alpha_\omega\in\bigoplus\limits_{|\omega|=k} H_1(P_{\omega},\partial P_{\omega},\mathbb Q)\oplus\bigoplus\limits_{|\omega|=k+1}\widehat{H}_2(P_{\omega},\partial P_{\omega},\mathbb Q),
$$
Let $0\ne\beta=\sum\limits_{i=1}^{N(P)}\lambda_i\widetilde\omega_i$ be the divisor of $\alpha$.  Then there exists 
$$
\gamma=\sum\limits_{\eta}\gamma_\eta\in\bigoplus\limits_{|\eta|=k-3} H_1(P_{\eta},\partial P_{\eta},\mathbb Q)\oplus\bigoplus\limits_{|\eta|=k-2}\widehat{H}_2(P_{\eta},\partial P_{\eta},\mathbb Q),
$$
with $\beta\cdot\gamma=\alpha$. Then $\alpha_\omega=0$, for all $\omega$ with $|\omega|=k+1$, $\gamma_\eta=0$ for all $\eta$ with $|\eta|=k-3$, and $\alpha_\omega=\sum\limits_{\omega_i\subset\omega}\lambda_i\widetilde{\omega_i}\cdot \gamma_{\omega\setminus\omega_i}=\left(\sum\limits_{\omega_i\subset\omega}\lambda_i\widetilde{\omega_i}\right)\cdot\left(\sum\limits_{\eta\subset\omega,|\eta|=k-2}\gamma_\eta\right)$. 
Thus for any $3$-factorspace $V$ of $\alpha$ and any $\omega$ with $\alpha_\omega\ne 0$ the linear mapping 
$$
\varphi_{\omega}\colon V\to H^3(\mathcal{Z}_P,\mathbb Q)\colon \beta\to\beta_{\omega}=\sum\limits_{\omega_i\subset\omega}\lambda_i\widetilde{\omega_i}
$$ 
is a monomorphism; hence it is a linear isomorphism of $V$ to the factorspace $\varphi_\omega(V)$ of $\alpha_\omega$. Let $P_{\omega}=P_{\omega^1}\sqcup\dots\sqcup P_{\omega^s}$ be the decomposition into the connected components. Then $H_1(P_{\omega},\partial P_{\omega})=\bigoplus\limits_{l=1}^s H_1(P_{\omega^l},\partial P_{\omega^l})$, and $\alpha_\omega=\sum\limits_{l=1}^s\alpha_{\omega^l}$. Let $\omega_i=\{p,q\}$, with $p\in \omega^a$, $q\in\omega^b$. If $a\ne b$, then $\widetilde{\omega_i}\cdot\gamma_{\omega\setminus\omega_i}=0$, since $\widetilde{\omega_i}=\pm[F_p]=\mp[F_q]$, and the cohomology class $\widetilde{\omega_i}\cdot\gamma_{\omega\setminus\omega_i}$ should lie in $H_1(P_{\omega^a},\partial P_{\omega^a})\cap H_1(P_{\omega^b},\partial P_{\omega^b})=0$.  
Consider  $\omega_i=\{p,q\}\subset\omega^a$.  Each connected component of $P_{\omega\setminus\omega_i}$ lies in some $P_{\omega^l}$. We have $\gamma_{\omega\setminus\omega_i}=\sum\limits_{l=1}^s\gamma_{\omega^l\setminus\omega_i}$, where each summand  corresponds to the connected components lying in $\omega^l\setminus\omega_i$.  Since $\widetilde{\omega_i}\cdot\gamma_{\omega^l\setminus\omega_i}=0$ for $l\ne a$, we have
\begin{multline*}
\sum\limits_{l=1}^s\alpha_{\omega^l}=\alpha_\omega=\left(\sum\limits_{l=1}^s\sum\limits_{\omega_i\subset\omega^l}\lambda_i\widetilde{\omega_i}+\sum\limits_{\omega_i\not\subset\omega^l\forall l}\lambda_i\widetilde{\omega_i}\right)\cdot\left(\sum\limits_{\omega_i\subset\omega}\gamma_{\omega\setminus\omega_i}\right)=\\
\left(\sum\limits_{l=1}^s\sum\limits_{\omega_i\subset\omega^l}\lambda_i\widetilde{\omega_i}\right)\cdot\left(\sum\limits_{l=1}^s\sum\limits_{\omega_j\subset\omega^l}\gamma_{\omega^l\setminus\omega_j}\right)=
\sum\limits_{l=1}^s\left(\sum\limits_{\omega_i\subset\omega^l}\lambda_i\widetilde{\omega_i}\right)\cdot\left(\sum\limits_{\omega_j\subset\omega^l}\gamma_{\omega^l\setminus\omega_j}\right);
\end{multline*}
hence for any $\alpha_{\omega^l}\ne 0$ the projection $\psi_l\colon\sum\limits_{\omega_i\subset\omega}\lambda_i\widetilde{\omega_i}\to\sum\limits_{\omega_i\subset\omega^l}\lambda_i\widetilde{\omega_i}$ sends the space $\varphi_\omega(V)$ isomorphically to the $3$-factorspace $\psi_l\varphi_{\omega}(V)$ of $\alpha_{\omega^l}$. 
Now consider the connected space $P_{\omega^l}$. 

Let the graph $K^1_{\omega^l}$ have a hanging vertex $a$. Then the facet $F_a$ intersects only one facet among $\{F_t\}_{t\in\omega^l\setminus\{a\}}$, say $F_b$. Then for any $\omega_i=\{a,r\}\subset\omega^l$ we have $\widetilde{\omega_i}\cdot \gamma_{\omega^l\setminus\omega_i}=\pm [F_a]\cdot \gamma_{\omega^l\setminus\omega_i}$ is equal up to a scalar to the class  in $H_1(P_{\omega^l},\partial P_{\omega^l},\mathbb Q)$ of the single edge $F_a\cap F_b$ connecting two points on the same boundary cycle of $P_{\omega^l}$. Hence $\widetilde{\omega_i}\cap \gamma_{\omega^l\setminus\omega_i}=0$. Thus we have 
\begin{multline*}
\!\!\!\!\!\!\!\!\!\alpha_{\omega^l}=\left(\sum\limits_{\omega_i\subset\omega^l}\lambda_i\widetilde{\omega_i}\right)\left(\sum\limits_{\omega_j\subset\omega^l}\gamma_{\omega^l\setminus\omega_j}\right)=
\left(\sum\limits_{\omega_i\subset\omega^l\setminus\{a\}}\lambda_i\widetilde{\omega_i}\right)\left(\sum\limits_{\omega_j\subset\omega^l\setminus\{a\}}\gamma_{\omega^l\setminus\omega_j}\right).
\end{multline*}
Hence the mapping $\xi_a\colon \sum\limits_{\omega_i\subset\omega^l}\lambda_i\widetilde{\omega_i}\to\sum\limits_{\omega_i\subset\omega^l\setminus\{a\}}\lambda_i\widetilde{\omega_i}$ sends any nonzero vector in $\psi_l\varphi_{\omega}(V)$ to a nozero vector; therefore the $3$-factorspace $\psi_l\varphi_{\omega}(V)$ of $\alpha_{\omega^l}$ is mapped isomorphically to the $3$-factorspace $\xi_a\psi_l\varphi_{\omega}(V)\subset\bigoplus\limits_{\omega_i\subset \omega^l\setminus\{a\}}\widehat{H}_2(P_{\omega_i},\partial P_{\omega_i})$ of $\alpha_{\omega^l}$. This space has the dimension at most the number of missing edges in $K^1_{\omega^l\setminus\{a\}}$. Let $r=|\omega^l\setminus\{a\}|$. Since $\alpha_{\omega^l}\ne0$, $r\geqslant 3$. Since $P_{\omega^l\setminus\{a\}}$ is connected, the graph $K^1_{\omega^l\setminus\{a\}}$ has at least $r-1$ edges. Then the number of missing edges is at most $\frac{r(r-1)}{2}-(r-1)=\frac{(r-1)(r-2)}{2}$. Thus we have $\dim V=\dim \xi_a\psi_l\varphi_{\omega}(V)\leqslant \frac{(r-1)(r-2)}{2}\leqslant\frac{(k-2)(k-3)}{2}<\frac{k(k-3)}{2}$, since $r\leqslant k-1$. 

Now let the graph $K^1_{\omega^l}$ have no hanging vertices. Set $l$ to be the number of its edges and $r=|\omega^l|$. We have $r\leqslant k$.  Then $\dim V\leqslant\frac{r(r-1)}{2}-l$. Since the graph is connected and has no hanging vertices,  $r\geqslant 3$ and $l\geqslant r$. Therefore $\dim V\leqslant\frac{r(r-1)}{2}-r=\frac{r(r-3)}{2}\leqslant \frac{k(k-3)}{2}$. If the equality holds, then $r=k=l$,  and $\varphi_\omega(V)=\mathbb Q\langle\widetilde{\omega_i}\colon\omega_i\subset \omega\rangle$. Then $K^1_{\omega}$ is connected, has no hanging vertices and $l=k=|\omega|$ edges. We have $2k$ is the sum of $k$ vertex degrees of $K^1_{\omega}$, each degree being at least $2$. Then each degree is exactly $2$; therefore $K_{\omega}$ is a chordless cycle; hence $P_{\omega}$ is a $k$-belt. This holds for any $\omega$ with $\alpha_{\omega}\ne 0$; hence $\alpha\in (\mathbf{B}_k\otimes\mathbb Q)\setminus\{0\}$.  

(2) For a $k$-belt $\mathcal{B}_j$, $k\geqslant 4$, the space $\mathbb Q\langle \widetilde{\omega_i}\colon \omega_i\subset\omega(\mathcal{B}_j)\rangle$ is a $\frac{k(k-3)}{2}$-dimensional $3$-factorspace of $\widetilde{\mathcal{B}_j}$. Indeed, for any $\omega_i\subset\omega(\mathcal{B}_j)$ take $\gamma_{i,j}$ to be the fundamental cycle in $\widehat{H}_2(P_{\omega(\mathcal{B}_j)\setminus\omega_i},\partial P_{\omega(\mathcal{B}_j)\setminus\omega_i},\mathbb Q)$. Then  $\widetilde{\omega}_p\cdot\gamma_{q,j}=\pm\delta_{p,q}\widetilde{\mathcal{B}_j}$ for any $\omega_p,\omega_q\subset\omega(\mathcal{B}_j)$, and for a combination $\tau=\sum_{\omega_i\subset\omega(\mathcal{B}_j)}\lambda_i\widetilde{\omega_i}$ with $\lambda_p\ne 0$ we have $\tau\cdot(\pm\frac{1}{\lambda_p}\gamma_{p,j})=\widetilde{\mathcal{B}_j}$. 

Now for any graded isomorphism $\varphi\colon H^*(\mathcal{Z}_P,\mathbb Z)\to H^*(\mathcal{Z}_Q,\mathbb Z)$ we have the graded isomorphism $\widehat\varphi\colon H^*(\mathcal{Z}_P,\mathbb Q)\to H^*(\mathcal{Z}_Q,\mathbb Q)$ with the embeddings $H^*(\mathcal{Z}_P,\mathbb Z)\subset H^*(\mathcal{Z}_P,\mathbb Q)$, and $H^*(\mathcal{Z}_P,\mathbb Z)\subset H^*(\mathcal{Z}_P,\mathbb Q)$. For any $\alpha\in H^{k+2}(\mathcal{Z}_P,\mathbb Q)$ the isomorphism $\widehat\varphi$ induces the bijection between the $3$-factorspaces of $\alpha$ and  $\widehat{\varphi}(\alpha)$; hence ${\rm ind}_3(\alpha)={\rm ind}_3(\widehat{\varphi}(\alpha))$. In particular, for any $k$-belt $\mathcal{B}_k$, $4\leqslant k\leqslant m-2$, we have $\frac{k(k-3)}{2}={\rm ind}_3(\widetilde{\mathcal{B}_k})={\rm ind}_3(\widehat{\varphi}(\widetilde{\mathcal{B}_k}))$; hence (1) implies that $\widehat{\varphi}(\widetilde{\mathcal{B}_k})=\sum_j\mu_j\widetilde{\mathcal{B}_{k,j}'}$ for $k$-belts $\mathcal{B}_{k,j}'$ of $Q$. Since $\widehat\varphi(\widetilde{\mathcal{B}_k})=\varphi(\widetilde{\mathcal{B}_k})$, we have $\mu_j\in\mathbb Z$, $\varphi(\widetilde{\mathcal{B}_k})\in \mathbf{B}_k(Q)$; hence $\varphi(\mathbf{B}_k(P))\subset\mathbf{B}_k(Q)$. The same argument for the inverse isomorphism implies that $\varphi(\mathbf{B}_k(P))=\mathbf{B}_k(Q)$.
\end{proof}

\begin{proposition} \index{rigidity!of the set of elements corresponding to $k$-belts}
For any $k$, $5\leqslant k\leqslant m-2$, the set 
$$
\{\pm\widetilde{\mathcal{B}_k}\colon \mathcal{B}_k\text{ is a $k$-belt }\}\subset H^{k+2}(\mathcal{Z}_P)
$$ 
is $B$-rigid in the class of flag simple $3$-polytopes without $4$-belts. 
\end{proposition}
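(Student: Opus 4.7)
The plan is to combine the rigidity of the subgroup $\mathbf{B}_k \subset H^{k+2}(\mathcal{Z}_P)$ from Proposition \ref{bfBkrigid} with the missing-edge rigidity from Proposition \ref{2rigid}, and then pin down a single belt by recovering its vertex set from its internal missing edges (for which the hypothesis $k \geq 5$ is essential).

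Let $\varphi \colon H^*(\mathcal{Z}_P) \to H^*(\mathcal{Z}_Q)$ be a graded ring isomorphism, where $P$ and $Q$ are flag simple $3$-polytopes without $4$-belts. By Proposition \ref{bfBkrigid}, $\varphi$ restricts to an isomorphism $\mathbf{B}_k(P) \xrightarrow{\sim} \mathbf{B}_k(Q)$, so for any $k$-belt $\mathcal{B}_k$ of $P$ I may write
$$
\varphi(\widetilde{\mathcal{B}_k}) \;=\; \sum_{j} \mu_j\, \widetilde{\mathcal{B}'_{k,j}}, \qquad \mu_j \in \mathbb{Z},
$$
with distinct $k$-belts $\mathcal{B}'_{k,j}$ of $Q$. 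The aim is to show that exactly one $\mu_j$ is nonzero and equals $\pm 1$.

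By Proposition \ref{2rigid}, extended to $\mathbb{Q}$-coefficients, $\varphi$ induces a bijection $\omega_i \mapsto \omega_i^*$ between the missing edges of $K_P$ and $K_Q$ with $\varphi(\widetilde{\omega_i}) = \pm \widetilde{\omega_i^*}$. The space $V = \mathbb{Q}\langle \widetilde{\omega_i} : \omega_i \subset \omega(\mathcal{B}_k)\rangle$, exhibited in the proof of Proposition \ref{bfBkrigid}(2) as a maximal $3$-factorspace of $\widetilde{\mathcal{B}_k}$, has dimension $\tfrac{k(k-3)}{2}$; hence its image $\widehat{\varphi}(V) = \mathbb{Q}\langle \widetilde{\omega_i^*} : \omega_i \subset \omega(\mathcal{B}_k)\rangle$ is a maximal $3$-factorspace of $\varphi(\widetilde{\mathcal{B}_k})$. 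Applying to $\widehat{\varphi}(V)$ the structural conclusion established inside the proof of Proposition \ref{bfBkrigid}(1), for each $j$ with $\mu_j \neq 0$ the projection onto the multigraded component indexed by $\omega(\mathcal{B}'_{k,j})$ maps $\widehat{\varphi}(V)$ isomorphically onto $\mathbb{Q}\langle \widetilde{\omega_i^*} : \omega_i^* \subset \omega(\mathcal{B}'_{k,j})\rangle$. Comparing dimensions, the set $S := \{\omega_i^* : \omega_i \subset \omega(\mathcal{B}_k)\}$ coincides with the set of all missing edges of $K_Q$ contained in $\omega(\mathcal{B}'_{k,j})$.

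Since $k \geq 5$, every facet of any $k$-belt fails to meet at least $k - 3 \geq 2$ other facets of that belt, so every index of $\omega(\mathcal{B}'_{k,j})$ appears in some missing edge of $S$. Consequently $\omega(\mathcal{B}'_{k,j})$ is recovered as $\bigcup S$ and is therefore independent of $j$; call this common vertex set $\omega^*$. Since $K_{\omega^*}$ is a chordless $k$-cycle, there is a unique $k$-belt $\mathcal{B}'_k$ of $Q$ with vertex set $\omega^*$, so only one term survives: $\varphi(\widetilde{\mathcal{B}_k}) = \mu\, \widetilde{\mathcal{B}'_k}$ with $\mu \in \mathbb{Z}$. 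The same argument applied to $\varphi^{-1}(\widetilde{\mathcal{B}'_k})$ yields an integer $\nu$ with $\mu\nu = 1$, so $\mu = \pm 1$. The main obstacle is already packaged in the structural half of Proposition \ref{bfBkrigid}(1), which classifies maximal $3$-factorspaces of belt-like elements; the remaining content is the linear-algebraic bookkeeping above together with the mild combinatorial observation that for $k \geq 5$ a $k$-belt's vertex set is the union of its internal missing edges.
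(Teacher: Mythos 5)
Your proof is correct and follows the same high-level route as the paper: reduce via Proposition \ref{bfBkrigid} to a $\mathbb{Z}$-combination of belt classes in $Q$, use Proposition \ref{2rigid} to track the images of the missing edges $\omega_i \subset \omega(\mathcal{B}_k)$, show that this set of missing edges lands inside $\omega(\mathcal{B}'_{k,j})$ for every $j$ with $\mu_j \neq 0$, and conclude from the fact that these missing edges determine the belt that only one $\mu_j$ survives. The mechanism you use for the middle step differs from the paper's: you re-extract from inside the proof of Proposition \ref{bfBkrigid}(1) the structural fact that a maximal $3$-factorspace of a nonzero belt-like sum projects isomorphically, in each nonzero multigraded component, onto the full span of missing edges in that component. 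The paper instead introduces a separate, cleaner divisibility statement (Lemma \ref{2div}: if $\widetilde{\omega}$ divides $\alpha$ then every nonzero component $\alpha_\omega$ is supported on a set containing $\omega$) and applies it directly, which is more modular and avoids re-opening the proof of Proposition \ref{bfBkrigid}. Your version buys slightly more explicitness at the end, where you spell out the mild combinatorial point that for $k\geqslant 5$ (in fact $k\geqslant 4$, since $k-3\geqslant 1$ suffices) every index of the belt occurs in some internal missing edge, so that $\omega(\mathcal{B}'_{k,j})=\bigcup S$; the paper asserts uniqueness of the belt from this data without comment. Both routes are valid; the paper's is a little leaner because Lemma \ref{2div} is a one-line consequence of the multigraded multiplication rule.
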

\begin{proof}
Let $P$ and $Q$ be flag $3$-polytopes without $4$-belts, and $\varphi\colon H^*(\mathcal{Z}_P,\mathbb Z)\to H^*(\mathcal{Z}_Q,\mathbb Z)$ be a graded isomorphism. From Proposition \ref{bfBkrigid} we have $\varphi(\widetilde{\mathcal{B}_k})=\sum\limits_{j}\mu_j\widetilde{\mathcal{B}_{k,j}'}$ for $k$-belts $\mathcal{B}_{k,j}'$ of $Q$. Then for any $\omega_i\subset \omega(\mathcal{B}_k)$ we have  $\widetilde{\omega_i}\gamma_{\omega(\mathcal{B}_k)\setminus\omega_i}=\widetilde{\mathcal{B}_k}$ for some $\gamma_{\omega(\mathcal{B}_k)\setminus\omega_i}$. Then $\varphi(\widetilde{\omega_i})\varphi(\gamma_{\omega(\mathcal{B}_k)\setminus\omega_i})=\sum\limits_{j}\mu_j\widetilde{\mathcal{B}_{k,j}'}$. 
\begin{lemma}\label{2div}
Let $\alpha\in H^{k+2}(\mathcal{Z}_P,\mathbb Z)$, $4\leqslant k\leqslant m-2$,
$$
\alpha=\sum\limits_\omega\alpha_\omega\in \bigoplus\limits_{|\omega|=k} H_1(P_{\omega},\partial P_{\omega},\mathbb Z)\oplus\bigoplus\limits_{|\omega|=k+1} \widehat{H}_2(P_{\omega},\partial P_{\omega},\mathbb Z).
$$
If $\beta\in \widehat{H}_2(P_{\tau},\partial P_{\tau},\mathbb Z)$, $\tau\ne\varnothing$, divides $\alpha$, then condition $\alpha_\omega\ne 0$ implies that $|\omega|=k$, $\tau\subset\omega$, and $\beta$ divides $\alpha_{\omega}$.
\end{lemma}
\begin{proof}
Let $\beta\gamma=\alpha$, where $\gamma=\sum\limits_{\eta}\gamma_{\eta}$. Then from the multiplication rule we have $\alpha_\omega=0$ for $|\omega|=k+1$, and  $\beta\gamma_{\omega\setminus\tau}=\alpha_{\omega}$ for each nonzero $\alpha_\omega$. 
\end{proof}
Proposition \ref{2rigid} implies that $\varphi(\widetilde{\omega_i})=\pm\widetilde{\omega_j'}$; therefore 
by Lemma \ref{2div} the element $\widetilde{\omega_j'}$ is a divisor of any $\widetilde{\mathcal{B}_{k,j}'}$ with $\mu_j\ne 0$. But for a $k$-belt $\mathcal{B}_{k,j}'$ the element $\widetilde{\omega_j'}$ is a divisor if and only if $\omega_j'\subset\omega(\mathcal{B}_{k,j}')$. We see that the isomorphism $\varphi$ maps the set $\{\pm \widetilde{\omega_i}\colon\omega_i\subset\omega(\mathcal{B}_k)\}$ bijectively to the corresponding set of any $\mathcal{B}_{k,j}'$ with $\mu_j\ne 0$.  But such a set defines uniquely the $k$-belt; hence we have only one nonzero $\mu_j$, which should be equal to $\pm 1$. This finishes the proof.
\end{proof}
\begin{proposition}\label{facebelt}\index{rigidity!of belts surrounding facets}
For any $k$, $5\leqslant k\leqslant m-2$ the set 
$$
\{\pm\widetilde{\mathcal{B}_k}\colon \mathcal{B}_k\text{ is a $k$-belt surrounding a facet}\}\subset H^{k+2}(\mathcal{Z}_P)
$$ 
is $B$-rigid in the class of flag simple $3$-polytopes without $4$-belts. 
\end{proposition}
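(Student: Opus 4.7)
The plan is to combine the rigidity of signed missing-edge classes (Proposition \ref{2rigid}) and of signed $k$-belt classes (the preceding proposition) with a ring-theoretic characterization of the surrounding-a-facet property. Let $\varphi\colon H^*(\mathcal{Z}_P,\mathbb Z)\to H^*(\mathcal{Z}_Q,\mathbb Z)$ be a graded ring isomorphism between flag simple $3$-polytopes without $4$-belts, and suppose $\mathcal{B}_k^P$ surrounds a facet $F_j$ of $P$. The preceding proposition yields a unique $k$-belt $\mathcal{B}_k^Q$ of $Q$ with $\varphi(\widetilde{\mathcal{B}_k^P})=\pm\widetilde{\mathcal{B}_k^Q}$, and Proposition \ref{2rigid} yields a bijection $\psi$ of signed missing-edge classes. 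The divisibility analysis inside the proof of Proposition \ref{bfBkrigid} shows that $\widetilde\omega$ divides $\widetilde{\mathcal{B}_k}$ iff $\omega\subset\omega(\mathcal{B}_k)$, so $\psi$ restricts to a bijection between the interior diagonals of the two belts; since every facet-label in a $k$-belt with $k\geq 4$ lies in at least two such diagonals, this descends to a canonical bijection $\omega(\mathcal{B}_k^P)\leftrightarrow\omega(\mathcal{B}_k^Q)$ of facet labels.

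Next I translate the surrounding property into ring-theoretic form. Geometrically, $\mathcal{B}_k$ surrounds a facet iff the Poincaré dual $\widetilde{\mathcal{B}_k}^*\in\widehat H_2(P_{[m]\setminus\omega(\mathcal{B}_k)},\partial)$ is represented by $\pm[F_j]$ for a single facet $F_j$; equivalently, iff there exists $j\in[m]\setminus\omega(\mathcal{B}_k)$ with $\{i,j\}\notin N(K_P)$ for every $i\in\omega(\mathcal{B}_k)$. In our setting such a $j$ is unique when it exists, because a second one would force Lemma \ref{loop-lemma}(1) to apply and every facet of $\mathcal{B}_k$ to be a quadrangle, contradicting the absence of $4$-belts together with Proposition \ref{facet-belt}. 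Thus the condition reduces to the existence of a vertex $j\notin\omega(\mathcal{B}_k)$ whose full pencil of missing edges $\{\{j,p\}:p\in[m]\setminus(\omega(\mathcal{B}_k)\cup\{j\})\}$ is exactly the set of all missing edges at $j$, i.e.\ no missing edge through $j$ meets $\omega(\mathcal{B}_k)$.

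The final and hardest step is to show that the star partition of $N(K_P)_{|=2}$ into pencils $\{\omega\ni j\}_{j\in[m]}$ is preserved by $\varphi$. For this I would establish that two missing edges $\omega_1,\omega_2$ share a common vertex iff there exists a witness belt $\mathcal{B}$ containing both as interior diagonals with $\omega_1,\omega_2$ consecutive in the cyclic ``pentagram'' order of the diagonals of $\omega(\mathcal{B})$---an order recoverable from $\widetilde{\mathcal{B}}$ via the divisibility structure used in Proposition \ref{bfBkrigid}. Existence of witness belts for every incident pair would follow by applying Proposition \ref{Fabc} to a suitable triple of facets chosen around the shared vertex. Since belt classes and missing-edge classes are rigid and divisibility is ring-theoretic, the pencil relation becomes $\varphi$-invariant; this gives a bijection of facet labels compatible with $\psi$ and the belt bijection, rendering the characterization in the previous paragraph manifestly $\varphi$-invariant. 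The main obstacle is precisely this witness-belt construction: producing, for each pair $\omega_1=\{j,p\},\omega_2=\{j,q\}$ sharing $j$, a belt whose interior diagonals include both of them in adjacent pentagram positions, and verifying that this adjacency is cohomologically distinguishable from the non-incident case.
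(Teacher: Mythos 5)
Your reduction is sensible as far as it goes: in a flag polytope the belt $\mathcal{B}_k$ surrounds a facet if and only if some facet $F_j$, $j\notin\omega(\mathcal{B}_k)$, meets every facet of the belt (flagness forces the $k$ triple points $F_j\cap F_{i_p}\cap F_{i_{p+1}}$ to be vertices, so $F_j$ is the surrounded $k$-gon), and such a $j$ is unique. But the proof has a genuine gap exactly where you place the ``hardest step'': you never establish that the relation ``two missing edges share a facet label'' is preserved by a graded ring isomorphism. Without that, the existence of a label $j$ outside $\omega(\mathcal{B}_k)$ avoided by all missing edges is not a $\varphi$-invariant condition, and even your earlier claim that the bijection of interior diagonals ``descends to a canonical bijection of facet labels'' already presupposes the same unproven incidence statement. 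Note also that the most obvious detector fails in this class: for $\omega_1,\omega_2\in N(K_P)$ with $|\omega_i|=2$ the product $\widetilde{\omega_1}\cdot\widetilde{\omega_2}$ vanishes when $\omega_1\cap\omega_2\ne\varnothing$ by the multiplication rule, but it also vanishes when $\omega_1\cap\omega_2=\varnothing$ because by Proposition \ref{4bpr} a nonzero product would produce a $4$-belt, which is excluded. So degree-$3$ products cannot distinguish incident from disjoint missing edges, and your proposed ``witness belt / pentagram order'' mechanism is left as an unverified plan rather than an argument.

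The paper avoids recovering the incidence structure altogether and uses a numerical invariant instead. For a belt $\mathcal{B}_k$ surrounding $F_j$, every facet $F_l$ with $l\notin\omega(\mathcal{B}_k)\cup\{j\}$ meets $\mathcal{B}_k$ in at most one facet or in two successive facets at a common vertex (otherwise a $4$-belt through $F_j$ appears); Lemma \ref{wBk} then shows that for each such $l$ the generator of $H_1(P_{\omega(\mathcal{B}_k)\sqcup\{l\}},\partial P_{\omega(\mathcal{B}_k)\sqcup\{l\}})$ is divisible by every $\widetilde{\omega_i}$ with $\omega_i\subset\omega(\mathcal{B}_k)$, while $s=j$ contributes nothing since $P_{\omega(\mathcal{B}_k)\sqcup\{j\}}$ is contractible. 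Hence the space of elements of $H^{k+3}(\mathcal{Z}_P)$ divisible by all interior-diagonal classes of the belt has dimension exactly $m-k-1$. If the image belt $\mathcal{B}_k'$ did not surround a facet, a short argument (transporting a shorter belt back through $\varphi^{-1}$ and producing a $4$-belt) shows every exterior facet of $Q$ still satisfies the hypotheses of Lemma \ref{wBk}, giving $m-k$ such independent elements --- a contradiction. This divisibility-dimension count is entirely ring-theoretic and sidesteps the facet-labelling problem that blocks your approach; if you want to salvage your route, you would have to prove the star-partition rigidity separately, which is essentially as hard as the $B$-rigidity theorem itself.
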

\begin{proof}
Let the $k$-belt $\mathcal{B}_k=(F_{i_1},\dots,F_{i_k})$ surround a facet $F_j$ of a flag simple $3$-polytope $P$ without $4$-belts. Consider any facet $F_l$, $l\notin\{i_1,\dots,i_k, j\}$. If $F_l\cap F_{i_p}\ne\varnothing$, and $F_l\cap F_{i_q}\ne\varnothing$, then $F_{i_p}\cap F_{i_q}\ne\varnothing$, else  $(F_j,F_{i_p},F_l,F_{i_q})$ is a $4$-belt. Then $F_{i_p}\cap F_{i_q}\cap F_l$ is a vertex, since $P$ is flag. Then $p-q=\pm1\mod k$, and $F_l\cap F_{i_r}=\varnothing$ for any $r\ne\{p,q\}$. Thus either $F_l$ does not intersect facets in $\mathcal{B}_k$, or it intersects exactly one facet in $\mathcal{B}_k$, or it intersects two successive facets in $\mathcal{B}_k$ by their common vertex.  Consider all elements  $\beta\in H^{k+3}(\mathcal{Z}_P,\mathbb Z)$ such that $\beta$ is divided by any $\widetilde{\omega_i}$ with $\omega_i\subset\omega(\mathcal{B}_k)$. By Lemma \ref{2div} we have $\beta=\sum\limits_{|\omega|=k+1}\beta_{\omega}$. Moreover, since any $\omega_i\subset\omega(\mathcal{B}_k)$ lies in $\omega$, we have $\omega(\mathcal{B}_k)\subset \omega$; hence $\omega=\omega(\mathcal{B}_k)\sqcup\{s\}$ for some $s$. Since $P_{j\sqcup\omega(\mathcal{B}_k)}$ is contractible, we have $s\notin j\sqcup\omega(\mathcal{B}_k)$.
\begin{lemma}\label{wBk}
If $F_l$ either does not intersect facets in the $k$-belt $\mathcal{B}_k$, or intersects exactly one facet in $\mathcal{B}_k$, or intersects exactly two successive facets in $\mathcal{B}_k$ by their common vertex, then the generator of the group $H_1(P_{\omega(\mathcal{B}_k)\sqcup\{l\}},\partial P_{\omega(\mathcal{B}_k)\sqcup\{l\}},\mathbb Z)\simeq\mathbb Z$ is divisible by $\widetilde{\omega_i}$ for any $\omega_i\subset \omega(\mathcal{B}_k)$.
\end{lemma}
\begin{proof}
Let $\omega_i=\{i_p,i_q\}$. Since the facets $F_{i_p}$ and $F_{i_q}$ are not successive in $\mathcal{B}_k$, one of the facets $F_{i_p}$ and $F_{i_q}$ does not intersect $F_l$, say $F_{i_p}$. The facet $F_l$ can not intersect both connected components of $P_{\omega(\mathcal{B}_k)\setminus\{i_p,i_q\}}$; hence $P_{\omega(\mathcal{B}_k)\sqcup\{l\}\setminus\{i_p,i_q\}}$ is disconnected. Let $\gamma$ be the fundamental cycle of the connected component intersecting $F_{i_p}$. Then $\widetilde{\omega_i}\cdot \gamma=\pm[F_{i_p}]\gamma$ is a single-edge path connecting two boundary components of $P_{\omega(\mathcal{B}_k)\sqcup\{l\}}$; hence it is a generator of $H_1(P_{\omega(\mathcal{B}_k)\sqcup\{l\}},\partial P_{\omega(\mathcal{B}_k)\sqcup\{l\}},\mathbb Z)$. This finishes the proof.  
\end{proof}
From Lemma \ref{wBk} we obtain that the are exactly $m-k-1$ linearly independent elements in $H^{k+3}(\mathcal{Z}_P,\mathbb Z)$ divisible by all $\widetilde{\omega_i}$, $\omega_i\subset\omega(\mathcal{B}_k)$.

Now let $\varphi\colon H^*(\mathcal{Z}_P,\mathbb Z)\to H^*(\mathcal{Z}_Q,\mathbb Z)$ be the isomorphims of graded rings for a flag $3$-polytope $Q$ without $4$-belts, and let $\varphi(\widetilde{\mathcal{B}_k})=\pm\widetilde{\mathcal{B}_k}'$ for $\mathcal{B}_k'=(F_{j_1}',\dots,F_{j_k}')$. Assume that $\mathcal{B}_k'$ does not surround any facet. If there is a facet $F_l', l\notin\omega(\mathcal{B}_k')$ such that $F_l'\cap F_{j_p}'\ne\varnothing$, $F_l'\cap F_{j_q}'\ne\varnothing$, and $F_{j_p}'\cap F_{j_q}'=\varnothing$ for some $p,q$, then without loss of generality assume that $p<q$, and $F_l'\cap F_{j_t}' =\varnothing$ for all $t\in\{p+1,\dots,q-1\}$. Then $\mathcal{B}_r'=(F_l',F_{j_p}',F_{j_{p+1}}',\dots,F_{j_q}')$ is an $r$-belt for $r=q-p+2\leqslant k$, and there are $\frac{r(r-3)}{2}-(r-3)=\frac{(r-2)(r-3)}{2}$ common divisors of $\widetilde{\mathcal{B}_r'}$ and $\widetilde{\mathcal{B}_k'}$ of the form $\widetilde{\omega_i'}$. We have $\varphi^{-1}(\widetilde{\mathcal{B}_r'})=\pm \widetilde{\mathcal{B}_r}$ for some $r$-belt $\mathcal{B}_r$ with $\widetilde{\mathcal{B}_r}$ having $\frac{(r-2)(r-3)}{2}$ common divisors of the form $\widetilde{\omega_i}$ with $\widetilde{\mathcal{B}_k}$. Since $\mathcal{B}_k\ne\mathcal{B}_r$, there is $F_u\in\mathcal{B}_r\setminus\mathcal{B}_k$; hence $\widetilde{\mathcal{B}_k}$ and $\widetilde{\mathcal{B}_r}$ have at most  $\frac{(r-2)(r-3)}{2}$ common divisors of the form $\widetilde{\omega_i}$, and the equality holds if and only if $\mathcal{B}_r\setminus\{F_u\}\subset\mathcal{B}_k$. Then $F_u\ne F_j$. Let $F_u$ follows $F_v=F_{i_s}$ and is followed by $F_w=F_{i_t}$ in $\mathcal{B}_r$. Then $F_u\cap F_{i_s}\ne\varnothing$, $F_u\cap F_{i_t}\ne\varnothing$, and $F_{i_s}\cap F_{i_t}=\varnothing$.  We have the $4$-belt $(F_j,F_{i_s},F_u,F_{i_t})$. A contradiction. Hence any facet $F_l'$, $l\in[m]\setminus\omega(\mathcal{B}_k')$ does not intersect two non-successive facets of $\mathcal{B}_k'$; hence either it does not intersect $\mathcal{B}_k'$, or intersects in exactly one facet, or intersects exactly two successive facets by their common vertex. Then by Lemma \ref{wBk} we obtain $m-k$ linearly independent elements in $H^{k+3}(\mathcal{Z}_Q,\mathbb Z)$ divisible by all $\widetilde{\omega_i'}$, $\omega_i'\subset\omega(\mathcal{B}_k')$. A contradiction. This proves that $\mathcal{B}_k'$ surrounds a facet.     
\end{proof} 
\begin{proposition}\label{adjpr}\index{rigidity!of the pair of belts surrounding adjacent facets}
Let $\varphi\colon H^*(\mathcal{Z}_P,\mathbb Z)\to H^*(\mathcal{Z}_Q,\mathbb Z)$ be an isomorphism of graded rings, where $P$ and $Q$ are flag simple $3$-polytopes without $4$-belts. If $\mathcal{B}_1$ and $\mathcal{B}_2$ are belts surrounding adjacent facets, and $\varphi(\widetilde{\mathcal{B}_i})=\pm \widetilde{\mathcal{B}_i'}$, $i=1,2$, then the belts $\mathcal{B}_1'$ and $\mathcal{B}_2'$ also surround adjacent facets.
\end{proposition}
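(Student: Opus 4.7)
The plan is to isolate a ring-theoretic invariant of the pair $(\widetilde{\mathcal{B}_1},\widetilde{\mathcal{B}_2})$ that detects adjacency of the two surrounded facets. Set
\[
n(\mathcal{B},\mathcal{B}'):=\bigl|\bigl\{\,\widetilde{\omega_i}:\omega_i\in N(K_P),\,|\omega_i|=2,\text{ and }\widetilde{\omega_i}\text{ divides both }\widetilde{\mathcal{B}}\text{ and }\widetilde{\mathcal{B}'}\,\bigr\}\bigr|.
\]
By Proposition~\ref{2rigid} the isomorphism $\varphi$ sends the set $\{\pm\widetilde{\omega_i}\}$ bijectively to its counterpart for $Q$, by Proposition~\ref{facebelt} it sends belts surrounding facets to belts surrounding facets up to sign, and divisibility in $H^*$ is preserved by any ring isomorphism. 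Hence $n(\mathcal{B}_1,\mathcal{B}_2)=n(\mathcal{B}_1',\mathcal{B}_2')$, and it suffices to show that for distinct belts surrounding facets of a flag $3$-polytope without $4$-belts, $n=1$ holds precisely when the two surrounded facets are adjacent.

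First I would observe that by the bigrading $\omega_i\subset\omega(\mathcal{B}_k)$ is necessary for $\widetilde{\omega_i}$ to divide $\widetilde{\mathcal{B}_k}$, while the explicit fundamental-cycle construction in the proof of Proposition~\ref{bfBkrigid}(2) shows this condition is also sufficient. Thus $n(\mathcal{B}_1,\mathcal{B}_2)$ equals the number of minimal $2$-element nonfaces of $K_P$ lying in $\omega(\mathcal{B}_1)\cap\omega(\mathcal{B}_2)$. Also, Proposition~\ref{facet-belt} implies that in a flag $3$-polytope without $4$-belts each facet has a unique surrounding belt, so $\mathcal{B}_1\ne\mathcal{B}_2$ iff the surrounded facets $F_i,F_j$ are distinct, and the analogous statement holds for $Q$.

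Now I would run the combinatorial dichotomy. Assume $F_i\ne F_j$. If $F_i\cap F_j$ is an edge, write its endpoints as $v_1=F_i\cap F_j\cap F_a$ and $v_2=F_i\cap F_j\cap F_b$; flagness forces any common neighbor $F_c$ of $F_i,F_j$ to satisfy $F_c\cap F_i\cap F_j\ne\varnothing$, hence $F_c\in\{F_a,F_b\}$, so $\omega(\mathcal{B}_1)\cap\omega(\mathcal{B}_2)=\{a,b\}$. Moreover $F_a$ and $F_b$ are non-successive in the belt $\mathcal{B}_1$ around $F_i$ (they flank $F_j$), whence $F_a\cap F_b=\varnothing$, producing the unique nonface $\{a,b\}$ and $n(\mathcal{B}_1,\mathcal{B}_2)=1$. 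If instead $F_i\cap F_j=\varnothing$, then any two common neighbors $F_c\ne F_d$ assemble a simple $4$-loop $(F_i,F_c,F_j,F_d)$ of pairwise distinct facets; were $F_c\cap F_d=\varnothing$, this $4$-loop would be a $4$-belt, contradicting the hypothesis on $P$. Thus any two common neighbors intersect, no $2$-element subset of $\omega(\mathcal{B}_1)\cap\omega(\mathcal{B}_2)$ is a nonface, and $n(\mathcal{B}_1,\mathcal{B}_2)=0$. Applying the same dichotomy inside $Q$ and using the preserved value $n(\mathcal{B}_1',\mathcal{B}_2')=1$, I conclude that $\mathcal{B}_1'$ and $\mathcal{B}_2'$ surround a pair of distinct adjacent facets.

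The main delicate point is the non-adjacent case: one must confirm that $(F_i,F_c,F_j,F_d)$ really is a simple $4$-loop with $F_i\cap F_j=\varnothing$ before invoking the no-$4$-belts hypothesis. Distinctness of the four facets is automatic since $F_c,F_d$ are neighbors of both $F_i$ and $F_j$, and the cyclic intersections $F_i\cap F_c$, $F_c\cap F_j$, $F_j\cap F_d$, $F_d\cap F_i$ are edges by assumption. The only non-trivial diagonal is $F_c\cap F_d$, and that is exactly the quantity whose vanishing we need to rule out; this is where the hypothesis that $P$ has no $4$-belts does its work, and the same hypothesis on $Q$ lets the invariant transport back.
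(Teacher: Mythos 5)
Your proof is correct and takes essentially the same route as the paper. The invariant $n(\mathcal{B}_1,\mathcal{B}_2)$ you construct is exactly what the paper's key lemma computes: the number of common divisors $\widetilde{\omega_i}$ equals $1$ when the surrounded facets are adjacent (since flagness forces the common neighbours to be exactly the two facets at the endpoints of $F_i\cap F_j$, and these are non-successive in the belt around $F_i$, hence disjoint) and equals $0$ otherwise (any disjoint pair of common neighbours would create a forbidden $4$-belt); both treatments then transport this count through $\varphi$ using Propositions~\ref{2rigid} and \ref{facebelt}. Your version spells out the adjacent case and the simplicity of the $4$-loop more explicitly than the paper's terse lemma, but the idea is identical.
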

\begin{proof}
The proof follows directly from the following result.
\begin{lemma}
Let $P$ be a flag simple $3$-polytope without $4$-belts. Let a belt $\mathcal{B}_1$ surround a facets $F_p$, and a belt  $\mathcal{B}_2$ surrounds a facet $F_q\ne F_p$. Then $F_p\cap F_q\ne\varnothing$ if and only if $\widetilde{\mathcal{B}_1}$ and $\widetilde{\mathcal{B}_2}$ have exactly one common divisor among $\widetilde{\omega_i}$. 
\end{lemma}
\begin{proof}
If $F_p\cap F_q\ne\varnothing$, then, since $P$ is flag, $\mathcal{B}_1\cap \mathcal{B}_2$ consists of two facets which do not intersect. On the other hand, let $F_p\cap F_q=\varnothing$, and   $\{u,v\}\subset\omega(\mathcal{B}_1)\cap\omega(\mathcal{B}_2)$ with $F_u\cap F_v=\varnothing$. Then $(F_u,F_p,F_v,F_q)$ is a $4$-belt, which is a contradiction.
\end{proof}
\end{proof}
Now let us prove the main theorem.
\begin{theorem}\index{rigidity!of flag $3$-polytopes without $4$-belts}
Let $P$ be a flag simple $3$-polytope without $4$-belts, and $Q$ be a simple $3$-polytope. Then the isomorphism of graded rings  $\varphi\colon H^*(\mathcal{Z}_P,\mathbb Z)\simeq H^*(\mathcal{Z}_Q,\mathbb Z)$ implies the combinatorial equivalence $P\simeq Q$. In  other words, any flag simple $3$-polytope without $4$-belts is $B$-rigid in the class of all simple $3$-polytopes. 
\end{theorem}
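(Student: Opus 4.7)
The plan is to assemble all the rigidity results established earlier in the section into a single bijection of facets that preserves adjacency, and then apply the Bruns--Gubeladze theorem on Stanley--Reisner rings. First I would invoke Corollary~\ref{Flagcor} and Corollary~\ref{4beltcor} to conclude that $Q$ is automatically a flag simple $3$-polytope without $4$-belts; this places $Q$ in the same class as $P$ and unlocks all the rigidity statements of Propositions~\ref{2rigid}, \ref{facebelt}, and \ref{adjpr}, which require both polytopes to be flag and $4$-belt-free.

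Next I would use Proposition~\ref{facet-belt} to attach to each facet $F\in\mathcal{F}_P$ the unique belt $\mathcal{B}(F)$ surrounding it, of length equal to the number of edges of $F$ (and hence $\geqslant 5$, since $P$ has no $3$- or $4$-belts and therefore no triangles or quadrangles by Proposition~\ref{facet-belt}). Lemma~\ref{loop-lemma} rules out that a single belt can surround two distinct facets in our class (case (1) of that lemma requires all facets of the belt to be quadrangles), so the correspondence $F\mapsto\mathcal{B}(F)$ is a bijection between $\mathcal{F}_P$ and the set of belts of $P$ that surround a facet; the same holds for $Q$.

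Proposition~\ref{facebelt} then says that $\varphi$ sends the generator $\widetilde{\mathcal{B}(F)}$ to $\pm\widetilde{\mathcal{B}'}$ for some belt $\mathcal{B}'$ of $Q$ surrounding a facet, and applying the same to $\varphi^{-1}$ gives a bijection $\psi\colon\mathcal{F}_P\to\mathcal{F}_Q$ defined by $\varphi(\widetilde{\mathcal{B}(F)})=\pm\widetilde{\mathcal{B}(\psi(F))}$. By Proposition~\ref{adjpr}, applied to both $\varphi$ and $\varphi^{-1}$, $\psi$ preserves adjacency of facets in both directions: $F_i\cap F_j\ne\varnothing$ in $P$ if and only if $\psi(F_i)\cap\psi(F_j)\ne\varnothing$ in $Q$.

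Finally, since both $P$ and $Q$ are flag, their Stanley--Reisner ideals are generated by the quadratic monomials $v_iv_j$ indexed by pairs of non-intersecting facets. The adjacency-preserving bijection $\psi$ therefore induces a graded ring isomorphism $\mathbb{Z}[P]\simeq\mathbb{Z}[Q]$, and the Bruns--Gubeladze theorem cited in the previous lecture (combinatorial equivalence of simple polytopes is detected by their Stanley--Reisner rings) yields $P\simeq Q$. The substantive work has already been done in Propositions~\ref{facebelt} and \ref{adjpr}; the main obstacle in the plan is really to check that the facet-belt correspondence is genuinely a bijection in our setting, which is where the hypothesis of having no $4$-belts is used through Lemma~\ref{loop-lemma} to exclude the ambiguous case of a belt bounded by quadrangles on both sides.
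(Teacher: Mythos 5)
Your proof is correct and follows essentially the same route as the paper: transfer the flag and no-$4$-belt properties to $Q$ via Corollaries~\ref{Flagcor} and~\ref{4beltcor}, use Proposition~\ref{facebelt} to get a facet bijection through belts surrounding facets, use Proposition~\ref{adjpr} to see that this bijection preserves adjacency, and conclude. One small caveat: Lemma~\ref{loop-lemma} carries hypotheses ($p_3=0$, $p_7\leqslant1$, $p_k=0$ for $k\geqslant8$) that a general flag $3$-polytope without $4$-belts need not satisfy, so it is cleaner to argue as the paper does that a belt $\mathcal{B}_k=(F_{i_1},\dots,F_{i_k})$ ($k\geqslant 5$) surrounding two facets $F_p$, $F_q$ would yield the $4$-belt $(F_{i_1},F_p,F_{i_3},F_q)$, a contradiction; your final appeal to the Bruns--Gubeladze theorem via the quadratic Stanley--Reisner ideal is a perfectly valid way to turn an adjacency-preserving bijection of flag polytopes into a combinatorial equivalence, though the paper treats this last step as immediate.
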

\begin{proof}
By Corollaries \ref{Flagcor} and \ref{4beltcor} the polytope $Q$ is also flag and has no $4$-belts. Since $P$ is flag, any it's facet is surrounded by a belt. By Proposition \ref{facebelt} for any belt $\mathcal{B}_k$ surrounding a facet $\varphi(\widetilde{\mathcal{B}_k})=\pm\widetilde{\mathcal{B}_k'}$ for a belt $\mathcal{B}_k'$ surrounding a facet. 
\begin{lemma}
Any belt $\mathcal{B}_k$ surrounds at most one facet of a flag simple $3$-polytope without $4$-belts.
\end{lemma}
\begin{proof}
If a belt $\mathcal{B}_k=(F_{i_1},\dots,F_{i_k})$ surrounds on both sides facets $F_p$ and $F_q$, then $(F_{i_1},F_p,F_{i_3},F_q)$ is a $4$-belt, which is a contradiction.
\end{proof}
From this lemma we obtain that the correspondence $\mathcal{B}_k\to\mathcal{B}_k'$ induces a bijection between the facets of $P$ and the facets of $Q$. Then Proposition \ref{adjpr} implies that this bijection is a combinatorial equivalence.
\end{proof}

\newpage
\section{Lecture 8. Quasitoric manifolds}

\subsection{Finely ordered polytope}
Every face of codimension $k$ may be written uniquely as
\[ G(\omega) = F_{i_1}\cap \ldots \cap F_{i_k} \]
for some subset $\omega=\{ i_1, \ldots, i_k \}\subset[m]$. Then faces $G(\omega)$ may be
ordered lexicographically for each $1 \leqslant k \leqslant n$.

By permuting the facets of $P$ if necessary, we may assume
that the intersection $F_{1}\cap \ldots \cap F_{n}$ is a vertex
$v$.
In this case we describe $P$ as \emph{finely ordered}\index{polytope!finely ordered}, and refer~to~$v$ as
the \emph{initial vertex}\index{polytope!initial vertex}, since it is the first vertex of $P$ with respect
to the lexicographic ordering.

Up to an affine transformation we can assume that $\boldsymbol{a}_1=\boldsymbol{e}_1, \ldots, \boldsymbol{a}_n=\boldsymbol{e}_n$.

\subsection{Canonical orientation}\index{canonical orientation}
We consider $\mathbb{R}^n$ as the standard real $n$-dimensional Euclidean
space with the \emph{standard basis} consisting of vectors
$\boldsymbol{e}_j=(0,\ldots,1,\ldots,0)$ with $1$ on the $j$-th place, for $1
\leqslant j \leqslant n$;
and similarly for $\mathbb{Z}^n$ and $\mathbb{C}^n$.
The standard basis gives rise to the \emph{canonical  orientation} of
$\mathbb{R}^n$.

We identify $\mathbb{C}^n$ with $\mathbb{R}^{2n}$, sending\, $\boldsymbol{e}_j$\, to \,
$\boldsymbol{e}_{2j-1}$ and\, $i\boldsymbol{e}_j$\, to\, $\boldsymbol{e}_{2j}$\, for\,
$1\leqslant j\leqslant n$.
This provides the \emph{canonical orientation} for $\mathbb{C}^n$.

Since $\mathbb{C}$-linear maps from $\mathbb{C}^n$ to $\mathbb{C}^n$ preserve
the canonical orientation, we may also regard an arbitrary complex vector
space as canonically oriented.

We consider $\mathbb T^n$ as the standard $n$-dimensional torus
$\mathbb{R}^n/\mathbb{Z}^n$ which we identify with the product of $n$ unit
circles in $\mathbb{C}^n$:
\[
  \mathbb T^n=\{(e^{2\pi i\varphi_1},\ldots,e^{2\pi i\varphi_n})\in\mathbb{C}^n\},
\]
where $(\varphi_1,\ldots,\varphi_n) \in \mathbb{R}^n$.
The torus $\mathbb T^n$ is also {\em canonically oriented}.

\subsection{Freely acting subgroups}\index{moment-angle complex!freely acting subgroup}
Let $H\subset \mathbb{T}^m$ be a subgroup of dimension $r\leqslant
m-n$. Choosing a basis, we can write it in the form
\[   H=\bigl\{ (e^{2\pi i(s_{11}\varphi_1+\dots+s_{1r}\varphi_r)},
\ldots, e^{2\pi i(s_{m1}\varphi_1 + \dots + s_{mr}\varphi_r)}) \in
\mathbb{T}^m \bigr\}, \] where $\varphi_i\in\mathbb{R}, \; i=1,\ldots,r$
and $S=(s_{ij})$ is an integer $(m\times r)$-matrix which defines a
monomorphism $\mathbb{Z}^r\to\mathbb{Z}^m$ onto a direct summand. For any subset \linebreak $\omega = \{i_1,\ldots,i_n\}\subset[m]$
denote by $S_{\widehat{\omega}}$ the ($(m-n) \times r$)-submatrix of $S$ obtained by deleting
the rows $i_1,\ldots,i_n$.

Write each vertex $v\in P^n$ as $v_\omega$ if $v=F_{i_1}\cap\ldots\cap
F_{i_n}$.

\noindent{\bf{Exercise:}} The subgroup~$H$ acts freely on $\mathcal{Z}_{P}$ if and only if for every
vertex $v_\omega$ the submatrix $S_{\widehat{\omega}}$ defines a
monomorphism $\mathbb{Z}^r\hookrightarrow\mathbb{Z}^{m-n}$ onto a direct
summand.

\begin{corollary}\label{zpbun}
The subgroup $H$ of rank $r=m-n$ acts freely on $\mathcal{Z}_{P}$
if and only if for any vertex $v_\omega\in P$ we have:
\[ \det S_{\widehat{\omega}}=\pm 1. \]
\end{corollary}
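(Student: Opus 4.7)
The plan is to derive this corollary directly from the preceding exercise, which characterizes free action of $H$ on $\mathcal{Z}_P$ in terms of the submatrices $S_{\widehat{\omega}}$. The only new ingredient needed is the elementary algebraic observation that for a square integer matrix, being a monomorphism onto a direct summand is the same as being invertible over $\mathbb{Z}$.

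First I would observe that when $r=m-n$, the matrix $S_{\widehat{\omega}}$ is square of size $(m-n)\times(m-n)$, since it is obtained from the $m\times r$ matrix $S$ by deleting $n$ rows. Thus the linear map $S_{\widehat{\omega}}\colon\mathbb{Z}^{m-n}\to\mathbb{Z}^{m-n}$ it defines is an endomorphism of $\mathbb{Z}^{m-n}$.

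Next I would recall the standard fact from linear algebra over $\mathbb{Z}$ (provable via Smith normal form or by direct inspection of the adjugate formula $M^{-1}=\frac{1}{\det M}\operatorname{adj}M$): a square integer matrix $M$ of size $k\times k$ defines an injective homomorphism $\mathbb{Z}^k\hookrightarrow \mathbb{Z}^k$ onto a direct summand if and only if the image coincides with $\mathbb{Z}^k$, equivalently $M$ is invertible over $\mathbb{Z}$, equivalently $\det M \in\mathbb{Z}^{\times}=\{\pm 1\}$. Indeed, the image is a sublattice of finite index $|\det M|$ when $\det M\ne 0$, and being a direct summand of full rank forces this index to equal $1$.

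Combining these two observations with the exercise cited just above the corollary, we get: $H$ acts freely on $\mathcal{Z}_P$ if and only if for every vertex $v_\omega$, the square integer matrix $S_{\widehat{\omega}}$ satisfies $\det S_{\widehat{\omega}}=\pm 1$. There is no genuine obstacle here; the statement is a direct specialization of the exercise to the maximal rank $r=m-n$, and the work is purely formal.
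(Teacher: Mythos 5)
Your argument is correct and is precisely the intended derivation: specialize the exercise to the case $r=m-n$, where $S_{\widehat{\omega}}$ becomes square, and apply the standard fact that a square integer matrix gives a monomorphism onto a direct summand iff it is invertible over $\mathbb{Z}$, iff its determinant is $\pm 1$. The paper states this as an immediate corollary of the exercise with no separate proof, so there is nothing to compare beyond what you wrote.
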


\subsection{Characteristic mapping}
\begin{definition} An $(n\times m)$-matrix $\varLambda$ gives a {\em characteristic mapping}\index{characteristic mapping}\index{polytope!characteristic mapping}
\[ \ell \colon \{F_1,\ldots,F_m\} \longrightarrow \mathbb{Z}^n \]
for a given simple polytope $P^n$ with facets $\{F_1, \ldots, F_m\}$
if the columns\linebreak $\ell(F_{j_1})=\lambda_{j_1},\ldots,\ell(F_{j_n})=\lambda_{j_n}$ of $\varLambda$
corresponding to any vertex $v_\omega$ form a basis for $\mathbb{Z}^n$.
\end{definition}

\noindent{\bf{Example:}} For a pentagon $P^2_5$ we have a matrix $ \varLambda\;=\;\begin{pmatrix}
  1&0&1&0&1\\
  0&1&0&1&1
\end{pmatrix}
$\\
\vspace{-4mm}
\begin{figure}
\begin{center}
\includegraphics[scale=0.25]{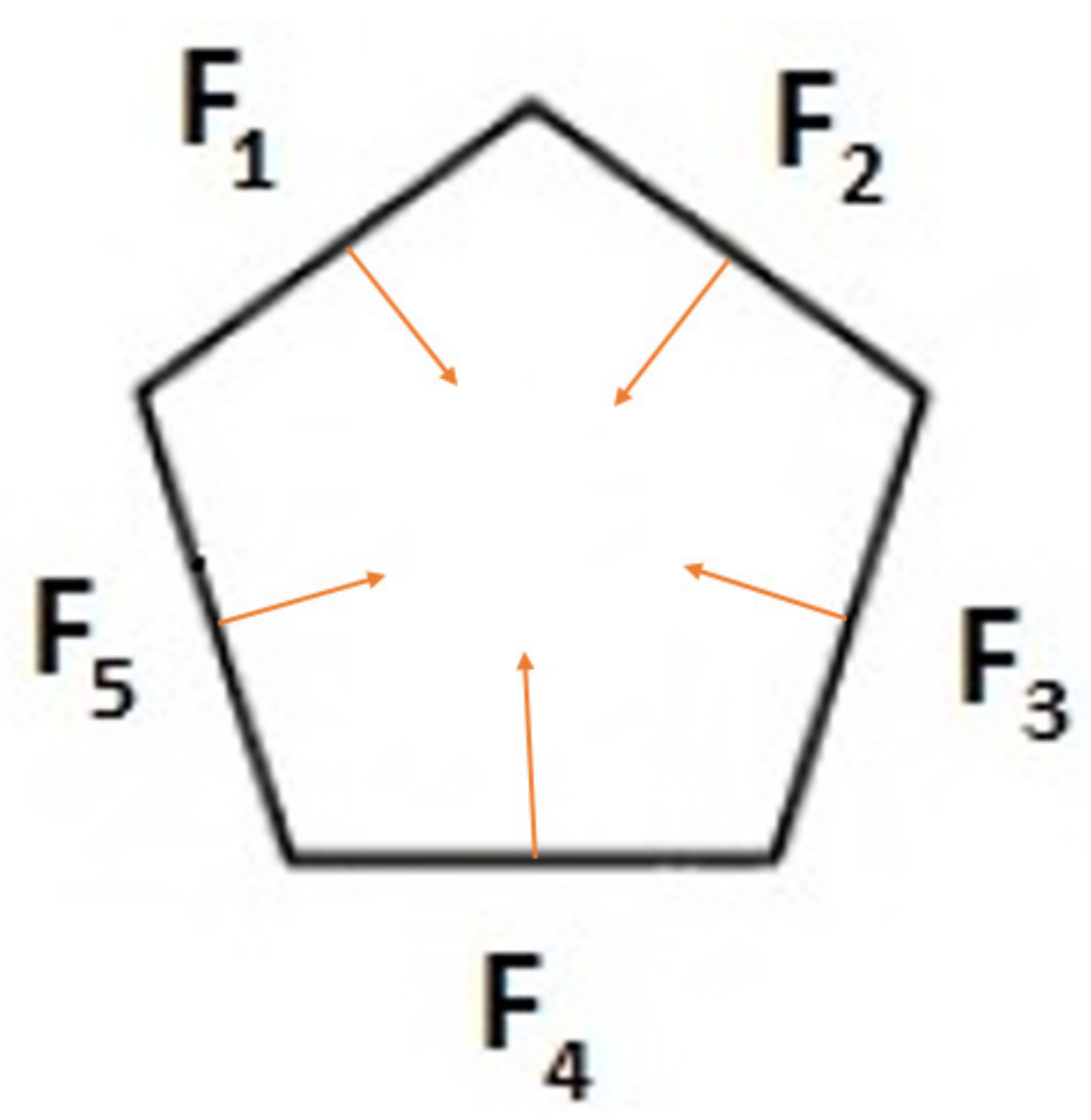}
\end{center}
\caption{Pentagon with normal vectors}
\end{figure}

\noindent {\bf Problem:} For any  simple $n$-polytope $P$ find all integral $(n\times m)$-matrices
$$
\varLambda\;=\;\begin{pmatrix}
  1&0&\ldots&0&\lambda_{1,n+1}&\ldots&\lambda_{1,m}\\
  0&1&\ldots&0&\lambda_{2,n+1}&\ldots&\lambda_{2,m}\\
  \vdots&\vdots&\ddots&\vdots&\vdots&\ddots&\vdots\\
  0&0&\ldots&1&\lambda_{n,n+1}&\ldots&\lambda_{n,m}
\end{pmatrix},
$$
in which the column $\lambda_j = (\lambda_{1,j},\ldots, \lambda_{n,j})$ corresponds
to the facet $F_j, \; j=1,\ldots,m$, and the columns $\lambda_{j_1}$, \ldots,
$\lambda_{j_n}$ corresponding to any vertex\linebreak $v_\omega = F_{j_1}\cap\dots\cap F_{j_n}$ form a basis for $\mathbb{Z}^n$.

Note that there are simple $n$-polytopes, $n\geqslant 4$, admitting no characteristic functions.  

\noindent{\bf Exercise:} Let $C^n(m)$ be a combinatorial type of a {\em cyclic polytope}\index{cyclic polytope}\index{polytope!cyclic} built as follows: take real numbers $t_1<\dots<t_m$ and $$
C^n(t_1,\dots,t_m)={\rm conv}\{(t_i,t_i^2,\dots,t_i^n),i=1,\dots,m\}.
$$ 
Prove that 
\begin{enumerate}
\item the combinatorial type of $C^n(t_1,\dots,t_m)$ does not depend on $t_1<\dots<t_m$;
\item the polytope $C^n(m)$ is simplicial;
\item for $n\geqslant 4$ any two vertices of $C^n(m)$ are connected by an edge;
\end{enumerate}
Conclude that for large $m$ the dual simple polytope $C^n(m)^*$   admits no characteristic functions.
\subsection{Combinatorial data}

\begin{definition} The {\em combinatorial quasitoric data}\index{combinatorial quasitoric data}
$(P,\varLambda)$ consists of an {\em oriented} combinatorial simple polytope $P$ and
an integer $(n\times m)$-matrix $\varLambda$ with the properties above.
\end{definition}

The matrix~$\varLambda$ defines an epimorphism
$$\ell
\colon \mathbb T^m \to \mathbb T^n.
$$
The kernel of $\ell$ (which we denote $K(\varLambda)$) is isomorphic to
$\mathbb T^{m-n}$.

\noindent{\bf Exercise:} The action of $K(\varLambda)$ on $\mathcal{Z}_P$ is {\em
free} due to the condition on the minors of~$\varLambda$.

\subsection{Quasitoric manifold with the $(A,\Lambda)$-structure}

\noindent{\bf Construction:} The quotient $M=\mathcal{Z}_P/K(\varLambda)$ is a
$2n$-dimensional {\em smooth} manifold with an action of the $n$-dimensional
torus $T^n=\mathbb
T^m/K(\varLambda)$. We denote this action by $\alpha$. It satisfies the \emph{Davis--Januszkiewicz conditions}\index{Davis--Januszkiewicz conditions}:
\begin{enumerate}
\item $\alpha$ is locally isomorphic to the standard coordinatewise
    representation of $\mathbb T^n$ in $\mathbb{C}^n$,
\item there is a projection $\pi\colon M\to P$ whose fibres are orbits
    of~$\alpha$.
\end{enumerate}
We refer to $M=M(P,\varLambda)$ as the \emph{quasitoric manifold associated
with the combinatorial data} \index{quasitoric manifold}$(P,\varLambda)$.

Let
$$ P=\{\boldsymbol{x}\in\mathbb{R}^n\colon A \boldsymbol{x} + b\geqslant 0 \}.$$
\begin{definition}
The manifold  $M=M(P,\varLambda)$ is called the quasitoric manifold
with {\em $(A,\varLambda)$-structure}\index{quasitoric manifold!
with $(A,\varLambda)$-structure}.
\end{definition}

\noindent {\bf{Exercise:}} Suppose the $(n\times m)$-matrix $\varLambda = (I_n,\, \varLambda_*)$, where $I_n$ is the unit matrix, gives a
characteristic mapping
\[ \ell \colon \{F_1,\ldots,F_m\} \longrightarrow \mathbb{Z}^n \]
Then the matrix $S=\begin{pmatrix}-\varLambda_*\\I_{m-n}\end{pmatrix}$ gives the $(m-n)$-dimensional
subgroup
\[ H=\bigl\{ (e^{2\pi i\psi_1},
\ldots, e^{2\pi i\psi_m}) \in \mathbb{T}^m \bigr\},
\] where
\[ \psi_k = - \sum_{j=n+1}^m\lambda_{k,j}\varphi_{j-n},\; k=1,\ldots,n;\;
\psi_k = \varphi_{k-n},\; k=n+1,\ldots,m, \] acting freely on  $\mathcal{Z}_P$.

\begin{example} Take  $P = \Delta^2$.
Let us describe the matrices $A$ and $\varLambda$:
\[ A= \begin{pmatrix}
1&0\\0&1\\ a_{31}&a_{32}
\end{pmatrix}, \qquad \varLambda= \begin{pmatrix}
1&0&\lambda_{13}\\0&1&\lambda_{23}
\end{pmatrix},\qquad a_{31},a_{32},\lambda_{13},\lambda_{23} \in \mathbb{Z}.
\]
Since the normal vectors are oriented inside the polytope,  $a_{31}<0,\; a_{32}<0$.\\
Thus, up to combinatorial equivalence, one can take $a_{31} = a_{32}=-1$.\\
The conditions on the characteristic mapping give
\[ \begin{vmatrix}
0&\lambda_{13}\\1&\lambda_{23}
\end{vmatrix}=\pm\,1,\quad \begin{vmatrix}
1&\lambda_{13}\\0&\lambda_{23}
\end{vmatrix}=\pm\,1,\quad \Rightarrow \quad \lambda_{13}=\pm\,1,\quad
\lambda_{23}=\pm\,1.
\]
Therefore we have 4 structures $(A,\varLambda)$.
\end{example}

\noindent{\bf Exercise:} Let $P = \Delta^2$ and $\mathbb{C} P^2$ be the complex projective space with\\
the canonical action of torus $\mathbb{T}^3$: $(t_1, t_2, t_3)(z_1 : z_2 :
z_3) = (t_1 z _1 : t_2 z_ 2 : t_3 z_3)$.

\begin{enumerate}
\item  describe $\mathbb{C} P^2$ as $(S^5 \times_{\mathbb{T}^3} \mathbb{T}^2)$;
\item describe the structure $(A, \varLambda)$ such that $M(A, \varLambda)$ is
$\mathbb{C} P^2$;
\end{enumerate}

\subsection{A partition of a quasitoric manifold}\index{quasitoric manifold!partition into disks}

We have the homeomorphism
$$
\mathcal{Z}_P \simeq \bigcup_{v_{\omega}-\text{ vertex}} {\mathcal{Z}_{P, v_\omega}},
$$
where
$$
\mathcal{Z}_{P, v_\omega}=\prod_{j \in \omega} D^2_j \times \prod_{j \in [m]\setminus\omega} S^1_j \subset \mathbb{D}^{2m}.
$$

\noindent {\bf{Exercise:}} $\mathcal{Z}_{P, v_\omega} / K(\varLambda) \simeq \mathbb{D}^{2n}_\omega$.

\begin{corollary} We have the partition:
$$
M(P, \varLambda) = \bigcup_{v_{\omega}-\text{ vertex}} \mathbb{D}^{2n}_\omega.
$$
\end{corollary}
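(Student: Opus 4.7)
The plan is to exhibit the composition
$$\mathbb{D}^{2n}_\omega = \prod_{j\in\omega} D^2_j \;\xrightarrow{\iota_\omega}\; \mathcal{Z}_{P,v_\omega} \;\xrightarrow{q}\; \mathcal{Z}_{P,v_\omega}/K(\varLambda)$$
as a homeomorphism, where $\iota_\omega(z_\omega) = (z_\omega,\mathbf{1}_{[m]\setminus\omega})$ simply fills the remaining circle coordinates with ones. The whole argument rests on one algebraic fact: with respect to the coordinate splitting $\mathbb{T}^m = \mathbb{T}^n_\omega \times \mathbb{T}^{m-n}_{[m]\setminus\omega}$ (where $\mathbb{T}^n_\omega = \prod_{j\in\omega} S^1_j$ and $\mathbb{T}^{m-n}_{[m]\setminus\omega} = \prod_{j\notin\omega} S^1_j$), the projection $\mathrm{pr}\colon K(\varLambda)\to\mathbb{T}^{m-n}_{[m]\setminus\omega}$ along $\mathbb{T}^n_\omega$ is an isomorphism of tori.

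First I would verify this key splitting. The restriction $\ell|_{\mathbb{T}^n_\omega}$ is the torus homomorphism dual to the $\mathbb{Z}$-linear map $\mathbb{Z}^n\to\mathbb{Z}^n$ whose matrix is the submatrix of $\varLambda$ formed by the columns $\lambda_{i_1},\dots,\lambda_{i_n}$. By the characteristic condition attached to the vertex $v_\omega$, these columns form a $\mathbb{Z}$-basis of $\mathbb{Z}^n$, so that submatrix lies in $GL_n(\mathbb{Z})$ and $\ell|_{\mathbb{T}^n_\omega}$ is a torus isomorphism. Hence $K(\varLambda)\cap\mathbb{T}^n_\omega=\{1\}$, so $\mathrm{pr}$ is an injective continuous homomorphism between tori of the same dimension $m-n$, and therefore an isomorphism.

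Next I would check bijectivity of $q\circ\iota_\omega$. For surjectivity, given any $(z_\omega,t_{[m]\setminus\omega})\in\mathcal{Z}_{P,v_\omega}$, let $k\in K(\varLambda)$ be the unique element with $\mathrm{pr}(k)=t_{[m]\setminus\omega}^{-1}$; then $k\cdot(z_\omega,t_{[m]\setminus\omega})$ has all coordinates indexed by $[m]\setminus\omega$ equal to $1$, and so lies in $\iota_\omega(\mathbb{D}^{2n}_\omega)$. For injectivity, if $k\cdot(z_\omega,\mathbf{1})=(z'_\omega,\mathbf{1})$ for some $k\in K(\varLambda)$, then the $[m]\setminus\omega$-coordinates of $k$ are trivial, i.e.\ $\mathrm{pr}(k)=1$, which forces $k=1$ by the previous paragraph; consequently $z_\omega=z'_\omega$.

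Finally, $q\circ\iota_\omega$ is a continuous bijection from the compact space $\mathbb{D}^{2n}_\omega$ to the Hausdorff quotient $\mathcal{Z}_{P,v_\omega}/K(\varLambda)$, hence a homeomorphism. I do not expect a serious obstacle in this exercise; the one point that must be handled carefully is the integrality of the basis condition on $\varLambda$, since it is precisely the $\mathbb{Z}$-basis property (not merely a $\mathbb{Q}$-basis) that makes $\ell|_{\mathbb{T}^n_\omega}$ an isomorphism rather than only a finite covering, and without this one would obtain a quotient by a finite group on $\mathbb{D}^{2n}_\omega$ instead of $\mathbb{D}^{2n}_\omega$ itself.
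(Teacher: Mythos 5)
Your proof is correct and follows the route the paper intends: the content of the corollary is the preceding exercise $\mathcal{Z}_{P,v_\omega}/K(\varLambda)\simeq\mathbb{D}^{2n}_\omega$, and you prove it via exactly the expected splitting $\mathbb{T}^m=\mathbb{T}^n_\omega\times\mathbb{T}^{m-n}_{[m]\setminus\omega}$, the observation that the characteristic condition makes $\ell|_{\mathbb{T}^n_\omega}$ an isomorphism so that $K(\varLambda)$ is a complement of $\mathbb{T}^n_\omega$, and then a compact-to-Hausdorff bijection argument. Two tiny points you left implicit but should state: (i) the step ``injective homomorphism between tori of the same dimension is an isomorphism'' uses that the image is a closed connected subgroup of full dimension, hence all of $\mathbb{T}^{m-n}_{[m]\setminus\omega}$; and (ii) to get the partition of $M$ itself, one observes that each $\mathcal{Z}_{P,v_\omega}$ is $K(\varLambda)$-invariant, so the decomposition $\mathcal{Z}_P=\bigcup_{v_\omega}\mathcal{Z}_{P,v_\omega}$ descends to $M=\bigcup_{v_\omega}\mathbb{D}^{2n}_\omega$.
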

\subsection{Stably complex structure and characteristic classes}
Denote by $\mathbb{C}_i$ the space of the 1-dimensional complex
representation of the torus~$\mathbb{T}^m$ induced from the standard
representation in $\mathbb{C}^m$ by the projection $\mathbb{C}^m\to\mathbb{C}_i$ onto the
$i$th coordinate. Let $\mathcal{Z}_P\times\mathbb{C}_i\to\mathcal{Z}_P$ be the trivial complex
line bundle; we view it as an equivariant $\mathbb{T}^m$-bundle with the
diagonal action of~$\mathbb{T}^m$. By taking the quotient with respect to
the diagonal action of $K=K(\varLambda)$ we obtain a
$T^n$-equivariant complex line bundle
\begin{equation}\label{rhoi}
  \rho_i\colon \mathcal{Z}_P\times_K\mathbb{C}_i\to\mathcal{Z}_P/K=M(P,\varLambda)
\end{equation}
over the quasitoric manifold $M=M(P,\varLambda)$. Here 
$$
\mathcal{Z}_P\times_K\mathbb{C}_i=\mathcal Z_{P}\times\mathbb{C}_i\left/\right.(\boldsymbol{t}\boldsymbol{z},\boldsymbol{t}w)\sim(\boldsymbol{z},w)\text{ for any }\boldsymbol{t}\in K, \boldsymbol{z}\in\mathcal{Z}_P,w\in\mathbb C_i.
$$ 

\begin{theorem}\label{taum} (Theorem 6.6, \cite{DJ91})\index{quasitoric manifold!stably-complex structure}
There is an isomorphism of real $T^n$-bundles over
$M=M(P,\varLambda)$:
\begin{equation}\label{stabsplitqt}
  {\mathcal
  T}\!M\oplus\underline{\mathbb{R}}^{2(m-n)}\cong\rho_1\oplus\cdots\oplus\rho_m;
\end{equation}
here $\underline{\mathbb{R}}^{2(m-n)}$ denotes the trivial real
$2(m-n)$-dimensional $T^n$-bundle over~$M$.
\end{theorem}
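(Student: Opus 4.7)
The strategy is to compare two natural vector bundles after pulling them back to $\mathcal{Z}_P$ along the principal $K(\varLambda)$-bundle projection $\pi\colon\mathcal{Z}_P\to M$, and then descend a $K$-equivariant isomorphism to~$M$. All constructions will be $T^m$-equivariant by design, so restricting to $K=K(\varLambda)\subset T^m$ we obtain $K$-equivariance for free.

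\textbf{Step 1 (framing of $\mathcal{Z}_P$).} Apply Proposition~\ref{ZPeq}: $\mathcal{Z}_P$ is a transverse complete intersection of the quadrics $\mathcal{F}_1,\dots,\mathcal{F}_{m-n}$ in $\mathbb{C}^m$, and the gradients $\nabla\Phi_1,\dots,\nabla\Phi_{m-n}$ give a canonical trivialization of the normal bundle of $\mathcal{Z}_P\subset\mathbb{C}^m$. Since $\Phi_k$ is $T^m$-invariant, this trivialization is $T^m$-equivariant with trivial action on the $\mathbb{R}^{m-n}$-factor. Hence
\[
  \mathcal{T}\mathcal{Z}_P\oplus\underline{\mathbb{R}}^{m-n}\;\cong\;\mathcal{Z}_P\times\mathbb{C}^m\;=\;\mathcal{Z}_P\times(\mathbb{C}_1\oplus\cdots\oplus\mathbb{C}_m)
\]
as $T^m$-equivariant (hence $K$-equivariant) real vector bundles over~$\mathcal{Z}_P$.

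\textbf{Step 2 (vertical bundle of the free $K$-action).} Since $K$ acts freely on $\mathcal{Z}_P$, we have a short exact sequence
\[
  0\to \mathcal{T}^{\mathrm{vert}}\to \mathcal{T}\mathcal{Z}_P\to \pi^*\mathcal{T}M\to 0,
\]
and the vertical bundle is trivialized by the fundamental vector fields of the $K$-action: $\mathcal{T}^{\mathrm{vert}}\cong\mathcal{Z}_P\times\mathfrak{k}$, where $\mathfrak{k}\cong\mathbb{R}^{m-n}$ is the Lie algebra of~$K$. Since $K$ is abelian, the $K$-action on $\mathfrak{k}$ is trivial, so this splitting is $K$-equivariant and produces a $K$-equivariant isomorphism $\mathcal{T}\mathcal{Z}_P\cong\pi^*\mathcal{T}M\oplus\underline{\mathbb{R}}^{m-n}$ over~$\mathcal{Z}_P$.

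\textbf{Step 3 (identifying the pulled-back sum $\bigoplus\rho_i$).} For the principal $K$-bundle $\pi\colon\mathcal{Z}_P\to M$ and the $K$-representation $\mathbb{C}_i$, the associated bundle $\rho_i=\mathcal{Z}_P\times_K\mathbb{C}_i$ satisfies $\pi^*\rho_i\cong\mathcal{Z}_P\times\mathbb{C}_i$ canonically (the trivialization $[\boldsymbol{z},w]\mapsto(\boldsymbol{z},w)$ is well-defined on the pullback to the total space). Summing over~$i$,
\[
  \pi^*(\rho_1\oplus\cdots\oplus\rho_m)\;\cong\;\mathcal{Z}_P\times\mathbb{C}^m.
\]

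\textbf{Step 4 (combining and descending).} Steps 1, 2, 3 give a chain of $K$-equivariant isomorphisms over~$\mathcal{Z}_P$:
\[
  \pi^*\mathcal{T}M\oplus\underline{\mathbb{R}}^{2(m-n)}\;\cong\;\mathcal{T}\mathcal{Z}_P\oplus\underline{\mathbb{R}}^{m-n}\;\cong\;\mathcal{Z}_P\times\mathbb{C}^m\;\cong\;\pi^*(\rho_1\oplus\cdots\oplus\rho_m).
\]
Since $K$ acts freely, the functor of taking $K$-quotients is an equivalence between $K$-equivariant vector bundles over $\mathcal{Z}_P$ and vector bundles over $M$, inverse to $\pi^*$. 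Applying it to the above composite yields the desired isomorphism over~$M$. The two trivial summands $\underline{\mathbb{R}}^{m-n}$ descend to trivial bundles because $K$ acts trivially on both $\mathfrak{k}$ (abelian) and on the normal framing (invariance of the quadrics).

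\textbf{Main obstacle.} The content is tracking equivariance: the real obstruction is checking that the normal framing of Proposition~\ref{ZPeq} and the vertical trivialization are genuinely $K$-equivariant with trivial action on the fibers, so that their $K$-quotients are trivial bundles over~$M$ rather than nontrivial line-bundle combinations. Once this equivariance is in place, the algebra of Steps 1--3 produces the result immediately.
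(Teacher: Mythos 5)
Your proof is correct and follows essentially the same route as the reference the paper cites for the proof (Theorem 7.3.15 of Buchstaber--Panov, which elaborates the argument of Buchstaber--Panov--Ray): use the $T^m$-equivariant framing of $\mathcal{Z}_P\subset\mathbb{C}^m$ from the complete intersection of quadrics, split off the trivial vertical bundle of the free $K(\varLambda)$-action, identify $\pi^*\rho_i\cong\mathcal{Z}_P\times\mathbb{C}_i$, and descend the resulting $K$-equivariant isomorphism using the equivalence between $K$-equivariant bundles over $\mathcal{Z}_P$ and bundles over $M$. The only points worth making slightly more explicit are that the splittings in Steps~1 and~2 can be chosen $K$-equivariantly by averaging over the compact group $K$ (or by fixing a $T^m$-invariant metric), and that the $K$-action on the fiber $\mathbb{C}_i$ in the diagonal trivialization of $\pi^*\rho_i$ is nontrivial (via the restriction of the $i$-th coordinate character of $T^m$ to $K$), which is exactly what is needed to match the $T^m$-action on $\mathcal{Z}_P\times\mathbb{C}^m$ from Step~1.
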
\index{quasitoric manifold!characteristic classes}
For the proof see (Theorem 7.3.15, \cite{Bu-Pa15}) .
\begin{corollary} Let $v_i=c_1(\rho_i)\in H^2(M(P,\Lambda),\mathbb Z)$. Then for the total Chern class we have
$$
C(M(P,\Lambda))=1+c_1+\dots+c_n=(1+v_1)\dots(1+v_m),
$$
and for the total Pontryagin class we have
$$
P(M(P,\Lambda))=1+p_1+\dots+p_{\left[\frac{n}{2}\right]}=(1+v_1^2)\dots(1+v_m^2).
$$
\end{corollary}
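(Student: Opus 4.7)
The plan is to apply Theorem~\ref{taum} and the multiplicativity of the total Chern and Pontryagin classes under Whitney sums, together with their stability under addition of trivial summands.

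First I would recall that the stably complex structure on $M=M(P,\varLambda)$ is defined precisely by the isomorphism of real $T^n$-bundles in \eqref{stabsplitqt}. Thus $C(M(P,\varLambda))$ is, by definition, the total Chern class of the complex bundle $\mathcal{T}M\oplus\underline{\mathbb{R}}^{2(m-n)}$ computed via this complex structure. Since the trivial real bundle $\underline{\mathbb{R}}^{2(m-n)}$ is stably a trivial complex bundle and has total Chern class equal to~$1$, the Whitney product formula gives
\[
C(M(P,\varLambda)) = C\bigl(\mathcal{T}M\oplus\underline{\mathbb{R}}^{2(m-n)}\bigr) = C(\rho_1\oplus\cdots\oplus\rho_m) = \prod_{i=1}^{m}C(\rho_i) = \prod_{i=1}^m(1+v_i),
\]
where the last equality uses the fact that $\rho_i$ is a complex line bundle with $c_1(\rho_i)=v_i$.

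For the total Pontryagin class I would use two standard facts: (i) Pontryagin classes of real vector bundles are stable, i.e.\ unchanged under addition of trivial real summands, and (ii) for a complex line bundle $\rho$ the underlying real plane bundle $\rho_{\mathbb{R}}$ satisfies $p(\rho_{\mathbb{R}}) = 1 + c_1(\rho)^2$. Fact (ii) follows from the identity $\rho_{\mathbb{R}}\otimes_{\mathbb{R}}\mathbb{C}\cong \rho\oplus\bar{\rho}$, hence $c(\rho_{\mathbb{R}}\otimes\mathbb{C})=(1+c_1(\rho))(1-c_1(\rho))=1-c_1(\rho)^2$, and $p_1(\rho_{\mathbb{R}})=-c_2(\rho_{\mathbb{R}}\otimes\mathbb{C})=c_1(\rho)^2$. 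Combining (i), (ii) and the Whitney formula,
\[
P(M(P,\varLambda)) = P(\mathcal{T}M\oplus\underline{\mathbb{R}}^{2(m-n)}) = \prod_{i=1}^{m}P(\rho_i) = \prod_{i=1}^m(1+v_i^2).
\]

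There is no real obstacle here; the statement is a formal consequence of Theorem~\ref{taum}. The only point worth being careful about is the meaning of $C(M(P,\varLambda))$: since $\mathcal{T}M$ need not carry an honest complex structure, the Chern class is defined via the \emph{stable} complex structure supplied by the isomorphism \eqref{stabsplitqt}, and the vanishing of the total Chern class of the trivial summand is what allows us to pass from $C(\mathcal{T}M\oplus\underline{\mathbb{R}}^{2(m-n)})$ to $C(M(P,\varLambda))$. Once this convention is fixed, both identities follow immediately.
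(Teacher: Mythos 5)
Your argument is correct and is exactly the intended derivation; the paper presents the Corollary as an immediate consequence of Theorem~\ref{taum} and refers to \cite{Bu-Pa15} for the proof of that theorem, without spelling out the Whitney-sum computation that you carry out. The one small point worth flagging for rigor is that the Whitney product formula for total Pontryagin classes, $p(E\oplus F)=p(E)\,p(F)$, holds in general only modulo $2$-torsion; your step is nonetheless valid here because the paper has already established that $H^*(M(P,\varLambda);\mathbb Z)$ is torsion-free, so the identity holds on the nose. Everything else — the identification of $\underline{\mathbb R}^{2(m-n)}$ with the trivial complex bundle $\underline{\mathbb C}^{m-n}$, the computation $p_1(\rho_{\mathbb R})=c_1(\rho)^2$ via $\rho_{\mathbb R}\otimes\mathbb C\cong\rho\oplus\bar\rho$, and the remark that $C(M(P,\varLambda))$ is by convention the Chern class of the stable complex structure supplied by \eqref{stabsplitqt} — is exactly right.
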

\subsection{Cohomology ring of the quasitoric manifold}

\begin{theorem}\cite{DJ91}\index{quasitoric manifold!cohomology}
We have 
$$
H^*(M(P,\Lambda))=\mathbb Z[v_1,\dots,v_m]/(J_{SR}(P)+I_{P,\Lambda}),
$$ 
where $v_i=c_1(\rho_i)$, $J_{SR}(P)$ is the Stanley-Reisner ideal generated by monomials $\{v_{i_1}\dots v_{i_k}\colon F_{i_1}\cap \dots\cap F_{i_k}=\varnothing\}$, and  
$I_{P,\Lambda}$ is the ideal generated by the linear forms $\lambda_{i,1}v_1+\dots+\lambda_{i,m}v_m$ arising from the equality
$$
\ell(F_1)v_1+\dots+\ell(F_m)v_m=0.
$$
\end{theorem}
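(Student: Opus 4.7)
The plan is to compute $H^*(M)$ by relating it to the Borel construction $E\mathbb{T}^n\times_{\mathbb{T}^n}M$, which on one hand is a Davis--Januszkiewicz space with known Stanley--Reisner cohomology, and on the other hand sits in a fibration over $B\mathbb{T}^n$ with fiber~$M$. Since $K=K(\Lambda)\subset\mathbb{T}^m$ acts freely on $\mathcal{Z}_P$ with quotient $M$, and $\mathbb{T}^n=\mathbb{T}^m/K$ acts on $M$, there is a canonical homotopy equivalence
\[
E\mathbb{T}^n\times_{\mathbb{T}^n}M \;\simeq\; E\mathbb{T}^m\times_{\mathbb{T}^m}\mathcal{Z}_P \;\simeq\; DJ(P):=(\mathbb{C}P^\infty,*)^P,
\]
the last step using the identifications $\mathcal{Z}_P=(D^2,S^1)^P$ and $ES^1\times_{S^1}D^2\simeq\mathbb{C}P^\infty$ together with functoriality of the polyhedral product in the pair. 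A Mayer--Vietoris argument over the cover of $DJ(P)$ by the contractible pieces corresponding to vertices of $P$ (or the cellular model from Lecture~4) then yields the graded ring isomorphism $H^*(DJ(P))\cong \mathbb{Z}[w_1,\dots,w_m]/J_{SR}(P)=\mathbb{Z}[P]$, where $w_i\in H^2(DJ(P))$ is pulled back from the canonical generator of $H^*(\mathbb{C}P^\infty)$ on the $i$-th factor.

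Next I would apply the Serre spectral sequence of the fibration $M\to DJ(P)\to B\mathbb{T}^n$. The decisive input is the vanishing of odd-degree integral cohomology of $M$. I would establish this by choosing a generic linear functional $\varphi$ on the ambient $\mathbb{R}^n$, restricting to $P$, and pulling back to $M$ via $\pi\colon M\to P$: the result is a perfect Morse function whose critical points correspond to the vertices of $P$, with Morse index at each vertex $v$ equal to $2\cdot\mathrm{ind}(v)$, where $\mathrm{ind}(v)$ is the number of edges of $P$ at $v$ pointing downward with respect to~$\varphi$. Equivalently, this recovers the even-dimensional cellular partition $M=\bigcup_{v_\omega}\mathbb{D}^{2n}_\omega$ already noted. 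Consequently $H^{\mathrm{odd}}(M)=0$, the spectral sequence collapses at $E_2$, and the edge homomorphism $H^*(DJ(P))\twoheadrightarrow H^*(M)$ induced by the fiber inclusion is surjective with kernel $\pi^*(H^{>0}(B\mathbb{T}^n))\cdot H^*(DJ(P))$.

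Finally I would identify this kernel with the image of $I_{P,\Lambda}$. The map $\ell\colon \mathbb{T}^m\to\mathbb{T}^n$ induces $B\ell\colon B\mathbb{T}^m\to B\mathbb{T}^n$, and on $H^2$ sends each canonical generator $u_j\in H^2(B\mathbb{T}^n)$ to $\sum_{i=1}^m\lambda_{j,i}w_i\in H^2(B\mathbb{T}^m)=\mathbb{Z}[w_1,\dots,w_m]$. The composition $DJ(P)\to B\mathbb{T}^m\to B\mathbb{T}^n$ therefore pulls $u_j$ back to $\sum_i\lambda_{j,i}w_i$, read modulo $J_{SR}(P)$; hence $\pi^*H^{>0}(B\mathbb{T}^n)\cdot H^*(DJ(P))$ is generated by the linear forms of $I_{P,\Lambda}$. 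Setting $v_i\in H^2(M)$ to be the image of $w_i$ gives the desired presentation
\[
H^*(M)\cong \mathbb{Z}[v_1,\dots,v_m]/(J_{SR}(P)+I_{P,\Lambda}).
\]
The identification $v_i=c_1(\rho_i)$ follows from Theorem~\ref{taum} and the construction of $\rho_i$ as the line bundle associated to the $i$-th coordinate character of $\mathbb{T}^m$, whose classifying map factors through the $i$-th coordinate projection $DJ(P)\to\mathbb{C}P^\infty$ representing $w_i$. The main obstacle is the vanishing $H^{\mathrm{odd}}(M)=0$, which is where the quasitoric hypothesis (local standardness and existence of the characteristic function ensuring smooth free quotients) is used essentially; the Morse-theoretic verification that every critical index is even must be done with some care to make the argument functorial enough for the spectral sequence collapse.
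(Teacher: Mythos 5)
The paper itself does not prove this theorem; it cites \cite{DJ91} and refers the reader to Theorem~7.3.28 of \cite{Bu-Pa15}. Your argument is exactly the Davis--Januszkiewicz/Buchstaber--Panov proof those sources give: identify the Borel construction $E\mathbb{T}^n\times_{\mathbb{T}^n}M$ with $DJ(P)$ via the free $K(\Lambda)$-action on $\mathcal{Z}_P$, compute $H^*(DJ(P))\cong\mathbb{Z}[P]$, use the even-dimensional cell decomposition (equivalently the Morse function $\pi^*\varphi$) to get $H^{\mathrm{odd}}(M)=0$ and hence collapse of the Serre spectral sequence of $M\to DJ(P)\to B\mathbb{T}^n$, and finally read off the kernel of the restriction to the fiber as the ideal $I_{P,\Lambda}$ via $B\ell^*$. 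The only place worth spelling out a bit more is the passage from ``the spectral sequence collapses'' to ``the kernel of the edge map is $\pi^*H^{>0}(B\mathbb{T}^n)\cdot H^*(DJ(P))$'': this uses that $E_2\cong H^*(B\mathbb{T}^n)\otimes H^*(M)$ is a \emph{free} $H^*(B\mathbb{T}^n)$-module (which you have, since $H^*(M)$ is torsion-free and concentrated in even degrees), so that $H^*(DJ(P))$ is also free over $H^*(B\mathbb{T}^n)$ and the restriction to the fiber kills exactly the positive-degree part of the base; with that remark your proof is complete and correct.
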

For the proof see (Theorem 7.3.28, \cite{Bu-Pa15}).
\begin{corollary}
If $\Lambda=(I_n,\Lambda_*)$, then 
$$
H^2(M(P,\Lambda))=\mathbb Z^{m-n}
$$
with the generators $v_{n+1},\dots,v_m$.
\end{corollary}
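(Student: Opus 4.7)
The plan is to read off the corollary directly from the preceding theorem by isolating what happens in cohomological degree $2$. Since $\deg v_i = 2$ for each $i$, the whole computation takes place in the degree $2$ part of the quotient $\mathbb Z[v_1,\dots,v_m]/(J_{SR}(P) + I_{P,\Lambda})$, and one simply has to check which relations are relevant in that degree.

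First I would note that $J_{SR}(P)$ is generated by squarefree monomials $v_{i_1}\cdots v_{i_k}$ with $k\geqslant 2$ (corresponding to nonfaces of $P$), hence consists entirely of elements of degree $\geqslant 4$. Consequently $J_{SR}(P)$ contributes nothing in degree $2$, and
$$
H^2(M(P,\Lambda)) \;=\; \bigl(\mathbb Z\langle v_1,\dots,v_m\rangle\bigr)\big/\bigl(I_{P,\Lambda}\cap\mathbb Z\langle v_1,\dots,v_m\rangle\bigr).
$$

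Next I would unpack $I_{P,\Lambda}$ under the hypothesis $\Lambda = (I_n,\Lambda_*)$. The relation $\ell(F_1)v_1+\dots+\ell(F_m)v_m = 0$ in $\mathbb Z^n$ reads row by row: for each $i\in\{1,\dots,n\}$,
$$
v_i \;+\; \lambda_{i,n+1}v_{n+1} + \cdots + \lambda_{i,m}v_m \;=\; 0,
$$
because the first $n$ columns of $\Lambda$ form the identity matrix. Thus $I_{P,\Lambda}$ (in degree $2$) is precisely the $\mathbb Z$-span of these $n$ linear forms, and each $v_i$ with $i\leqslant n$ is expressed uniquely as an integral linear combination of $v_{n+1},\dots,v_m$.

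Finally I would conclude that the quotient map sends $v_{n+1},\dots,v_m$ to a generating set of $H^2(M(P,\Lambda))$, and that these generators are $\mathbb Z$-linearly independent because the $n$ relations only involve the $v_i$ with $i\leqslant n$ via their own coefficient $1$; equivalently, the coefficient matrix of the $n$ relations in the basis $v_1,\dots,v_m$ has rank $n$ coming from the $I_n$ block. Hence $H^2(M(P,\Lambda))\cong\mathbb Z^{m-n}$ freely generated by $v_{n+1},\dots,v_m$. There is no real obstacle here; the only thing to be careful about is confirming the degree count (so that $J_{SR}(P)$ drops out) and that the $I_n$ block of $\Lambda$ forces both surjectivity onto $\mathbb Z\langle v_{n+1},\dots,v_m\rangle$ and the absence of any additional $\mathbb Z$-linear relation among $v_{n+1},\dots,v_m$.
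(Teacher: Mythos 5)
Your proof is correct and is exactly the direct verification the paper has in mind (the paper states this corollary without proof as an immediate consequence of the Davis--Januszkiewicz theorem). The two key observations — that $J_{SR}(P)$ vanishes in degree $2$ because minimal nonfaces involve at least two facets, and that the $I_n$ block lets you eliminate $v_1,\dots,v_n$ without introducing any relation among $v_{n+1},\dots,v_m$ — are the whole argument, and you state both accurately.
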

\begin{corollary}
\begin{enumerate}
\item The group $H^k(M(P,\Lambda))$ is nontrivial only for $k$ even;
\item $M(P,\Lambda)$ is even dimensional and orientable, hence the group $H_k(M(P,\Lambda))$ is nontrivial only for $k$ even;  
\item from the universal coefficients formula the abelian groups $H^*(M(P,\Lambda))$ and $H_*(M(P,\Lambda))$ have no torsion.
\end{enumerate}
\end{corollary}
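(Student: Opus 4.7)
The plan is to derive all three statements from the cohomology ring description just established and the stably complex structure from Theorem \ref{taum}.

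For part (1), I will use the presentation
$$H^*(M(P,\Lambda))=\mathbb Z[v_1,\dots,v_m]/(J_{SR}(P)+I_{P,\Lambda}).$$
Each generator $v_i$ sits in degree $2$, the Stanley--Reisner relations are squarefree monomials in the $v_i$ (hence of even total degree), and the linear forms of $I_{P,\Lambda}$ are homogeneous of degree $2$. Thus the ideal is generated by homogeneous elements of even degree, so the quotient is concentrated in even degrees, which gives $H^k(M(P,\Lambda))=0$ whenever $k$ is odd.

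For part (2), $\dim M(P,\Lambda)=2n$ is even by construction. Orientability follows from the stable tangential splitting
$$\mathcal{T}M\oplus\underline{\mathbb R}^{2(m-n)}\cong\rho_1\oplus\cdots\oplus\rho_m$$
of Theorem \ref{taum}: the right-hand side is a complex, hence canonically oriented, bundle, so $\mathcal{T}M$ is stably isomorphic to an oriented bundle and $M$ inherits an orientation (equivalently, $w_1(M)=0$). Poincaré duality then gives $H_k(M(P,\Lambda))\cong H^{2n-k}(M(P,\Lambda))$, and by (1) the right-hand side vanishes whenever $2n-k$ is odd, i.e.\ whenever $k$ is odd.

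For part (3), I will apply the universal coefficient theorem
$$0\to \operatorname{Ext}(H_{k-1}(M),\mathbb Z)\to H^k(M,\mathbb Z)\to \operatorname{Hom}(H_k(M),\mathbb Z)\to 0.$$
Torsion of $H_{k-1}(M)$ contributes to $H^k(M)$ via the Ext term. Take $k$ odd: by (1), $H^k(M)=0$, so $\operatorname{Ext}(H_{k-1}(M),\mathbb Z)=0$, forcing the even-degree group $H_{k-1}(M)$ to be torsion-free; in odd degrees $H_{k-1}(M)=0$ by (2), trivially torsion-free. Hence $H_*(M)$ is torsion-free. Then for every $k$ the Ext term vanishes and $H^k(M)\cong \operatorname{Hom}(H_k(M),\mathbb Z)$ is also free. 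None of these steps is a genuine obstacle; the only substantive input beyond elementary ring-theoretic bookkeeping is the orientability argument, which is immediate once Theorem \ref{taum} is invoked.
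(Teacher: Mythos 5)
Your proof is correct and follows exactly the route the paper's statement itself sketches: parity of degrees from the presentation of $H^*(M(P,\Lambda))$ by degree-$2$ generators and even-degree relations, orientability plus Poincaré duality for homology, and the universal coefficient theorem for torsion-freeness. The only small point worth making explicit is that $\operatorname{Ext}(A,\mathbb Z)\cong T(A)$ requires $A$ finitely generated, which holds here since $M(P,\Lambda)$ is a compact manifold.
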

\begin{corollary}\label{flagqtm} Let $P$ be a flag polytope and $\ell$ be its characteristic function. Then 
$$
H^*(M(P,\Lambda))=\mathbb Z[v_1,\dots,v_m]/(J_{SR}+I_{P,\Lambda}),
$$
where the ideal $J_{SR}$ is generated by monomials $v_iv_j$, where $F_i\cap F_j=\varnothing$, and $I_{P,\Lambda}$ is generated by  linear forms  $\lambda_{i,1}v_1+\dots+\lambda_{i,m}v_m$.
\end{corollary}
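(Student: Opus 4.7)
The plan is to obtain this statement as an immediate combination of two results already established in the excerpt. First I would invoke the Davis--Januszkiewicz theorem on the cohomology of a quasitoric manifold, which gives
\[
H^*(M(P,\Lambda))\cong \mathbb Z[v_1,\dots,v_m]/(J_{SR}(P)+I_{P,\Lambda})
\]
for an arbitrary simple polytope $P$ with characteristic function~$\ell$. This supplies the correct presentation, with $I_{P,\Lambda}$ generated by the linear forms coming from the relation $\ell(F_1)v_1+\dots+\ell(F_m)v_m=0$, and with $J_{SR}(P)$ the Stanley--Reisner ideal generated by the monomials $v_{i_1}\cdots v_{i_k}$ for which $F_{i_1}\cap\cdots\cap F_{i_k}=\varnothing$.

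Second, I would use the theorem stated in the flag-polytope subsection: when $P$ is flag, the Stanley--Reisner ideal reduces to its quadratic part, namely
\[
J_{SR}(P)=\bigl(v_iv_j:\ F_i\cap F_j=\varnothing\bigr).
\]
The content of this reduction is the defining property of flagness: any pairwise intersecting collection of facets has nonempty common intersection, so every minimal nonface of the simplicial complex $K_P$ has exactly two elements, and no higher-degree generators of $J_{SR}(P)$ are needed.

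Substituting this quadratic description of $J_{SR}(P)$ into the Davis--Januszkiewicz presentation yields exactly the desired formula. Since the flag hypothesis is used solely to reduce $J_{SR}(P)$ to quadratic relations, and the linear part $I_{P,\Lambda}$ is unaffected by the combinatorial type of $P$, there is essentially no obstacle here: the only step that requires any thought is recalling that minimal nonfaces of a flag simplicial complex are pairs, which is precisely the statement cited above. Thus the proof amounts to a one-line application of the two previous theorems.
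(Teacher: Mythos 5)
Your proposal is correct and matches the paper's intended reasoning: the statement is presented as an immediate corollary of the Davis--Januszkiewicz theorem on $H^*(M(P,\Lambda))$ combined with the earlier theorem that the Stanley--Reisner ideal of a flag polytope is quadratic. The paper supplies no separate proof, and your two-step substitution is exactly the expected argument.
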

\begin{corollary} For any $l=1,\dots,n$, the cohomology group $H^{2l}(M(P,\Lambda),\mathbb Z)$ is generated by monomials $v_{i_1}\dots v_{i_l}$, $i_1<\dots<i_l$, corresponding to $(n-l)$-faces $F_{i_1}\cap\dots\cap F_{i_l}$. 
\end{corollary}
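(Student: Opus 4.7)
The plan is to exploit the explicit presentation $H^*(M) \cong \mathbb{Z}[v_1,\ldots,v_m]/(J_{SR}(P)+I_{P,\Lambda})$ and reduce an arbitrary degree-$l$ monomial to a square-free face-monomial by induction on a suitable defect. First I would note that $H^{2l}(M)$ is additively spanned by monomials $v_{i_1}^{a_1}\cdots v_{i_k}^{a_k}$ of total degree $l$ with $i_1<\cdots<i_k$ distinct. Modulo $J_{SR}(P)$, any such monomial whose support $\{i_1,\ldots,i_k\}$ is a nonface (i.e. $F_{i_1}\cap\cdots\cap F_{i_k}=\varnothing$) vanishes, because the product $v_{i_1}\cdots v_{i_k}$ already lies in the Stanley-Reisner ideal and divides the monomial. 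So it suffices to treat monomials whose support corresponds to a genuine face $G=F_{i_1}\cap\cdots\cap F_{i_k}$ of dimension $n-k$, and to rewrite those with some $a_s\geq 2$ in terms of square-free face-monomials.

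The key input will be the characteristic condition applied at the face $G$: pick any vertex $v$ of $G$; then $\ell(F_{i_1}),\ldots,\ell(F_{i_k})$ is part of the basis at $v$, hence extends to a $\mathbb{Z}$-basis of $\mathbb{Z}^n$. A unimodular change of basis on the rows of $\Lambda$ (equivalently, taking an integer linear combination of the $n$ relations in $I_{P,\Lambda}$) yields $k$ relations of the form
\[
 v_{i_s} \;=\; -\!\!\sum_{j\notin\{i_1,\ldots,i_k\}}\!\!\mu_{s,j}\,v_j, \qquad s=1,\ldots,k,
\]
with all $\mu_{s,j}\in\mathbb{Z}$. If $a_s\geq 2$, I substitute this expression for one factor of $v_{i_s}$, obtaining an integer linear combination of monomials of the form $v_j\cdot v_{i_1}^{a_1}\cdots v_{i_s}^{a_s-1}\cdots v_{i_k}^{a_k}$ with $j\notin\{i_1,\ldots,i_k\}$. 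When $\{j,i_1,\ldots,i_k\}$ is not a face, the term vanishes modulo $J_{SR}(P)$, since $a_s-1\geq 1$ keeps every generator of $\{i_1,\ldots,i_k\}$ present and the square-free nonface $v_jv_{i_1}\cdots v_{i_k}$ divides the monomial; otherwise the new term is supported on a face indexed by $k+1$ facets.

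Now define the \emph{defect} of a face-supported monomial of degree $l$ with support of size $k$ to be $l-k\geq 0$. The substitution above turns a monomial of defect $l-k>0$ into an integer combination of monomials of defect at most $l-k-1$. By induction on defect, every element of $H^{2l}(M)$ is an integer combination of defect-zero monomials, which are exactly the square-free products $v_{i_1}\cdots v_{i_l}$ with $F_{i_1}\cap\cdots\cap F_{i_l}$ a face of dimension $n-l$. The main obstacle I expect is the bookkeeping in the substitution step: one must justify that the $\mu_{s,j}$ are integers (this uses unimodularity of the basis change) and that the zero-modulo-$J_{SR}$ argument remains valid as the support changes during the induction; both checks are routine once the defect invariant is in place.
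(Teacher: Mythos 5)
Your argument is correct and follows essentially the same route as the paper's proof: reduce to face-supported monomials via $J_{SR}$, then at a vertex of the supporting face invert the unimodular block $\Lambda_v$ to express a repeated variable $v_{i_s}$ in terms of variables outside the support, substitute, and induct. The only cosmetic difference is the inductive parameter — you induct on $l-k$ (degree minus support size), while the paper uses $\delta=\sum_{p_i>1}p_i$; both strictly decrease under the substitution and both vanish exactly on square-free face-monomials, so the proofs are interchangeable.
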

\begin{proof}
We will prove this by induction on characteristic $\delta=\sum_{p_i>1}p_i$ of a monomial $v_{i_1}^{p_1}\dots v_{i_s}^{p_s}$ with $i_1<\dots<i_s$.  Due to the relations from the ideal $J_{SR}$ nonzero monomials correspond to faces $F_{i_1}\cap\dots\cap F_{i_s}\ne\varnothing$. If $\delta=0$, then we have the monomial we need. If $\delta>0$,  then take a vertex $v$ in $F_{i_1}\cap\dots\cap F_{i_s}\ne\varnothing$. Let $\Lambda_v$ be the submatrix of $\Lambda$ corresponding to the columns $\{j\colon v\in F_j\}$. Then by definition of a characteristic function $\det \Lambda_v=\pm 1$. By integer elementary transformations of rows of the matrix $\Lambda$ (hence of linear relations in the ideal $I_{P,\Lambda}$) we can make $\Lambda_v=E$. Let $p_k>1$. The variable $v_{i_k}$ can be expressed as a linear combination  $v_{i_k}=\sum\limits_{j\notin\{i_1,\dots,i_s\}}a_jv_j$. Then 
$$
v_{i_1}^{p_1}\dots v_{i_s}^{p_s}=\sum\limits_{j\notin\{i_1,\dots,i_s\}}a_jv_{i_1}^{p_1}\dots v_{i_k}^{p_k-1}\dots v_{i_s}^{p_s}v_j,
$$ 
where on the right side we have the sum of monomials with less value of $\delta$. This finishes the proof.   
\end{proof}

For any $\xi=(i_1,\ldots,i_{n-1})\subset [m]$ set $\xi_i=(\xi,i),\; i\notin \xi$. 

\noindent{\bf Exercise:}\\
1. For any $\xi=(i_1,\ldots,i_{n-1})\subset [m]$  there are the
relations
\begin{equation} \label{1}
\sum_{j=1}^m \varepsilon(\xi_j)v_j=0
\end{equation}
where $\varepsilon(\xi_j) = \det|\ell(F_{i_1}),
\ldots,\ell(F_{i_{n-1}}),\ell(F_j)|$.\\
2. There is a graded ring isomorphism
\[
H^*(M(P,\Lambda)) = \mathbb{Z}[P]/J
\]
where $J$ is the ideal generated by the relations (\ref{1}).

\noindent{\bf Exercise:} For any vertex $v_\omega=F_{i_1}\bigcap\dots\bigcap F_{i_n},\;
\omega=(i_1,\ldots,i_n)$, there are the relations
\[
v_{i_n}=-\varepsilon(\xi_{i_n})\sum_{j}\varepsilon(\xi_j)\, v_j
\]
where $\xi=(i_1,\ldots,i_{n-1}),\; j\in[m\backslash \xi_{i_n}]$.

\noindent{\bf Exercise:} For any vertex $v_\omega=F_{i_1}\bigcap\dots\bigcap F_{i_n},\;
\omega=(i_1,\ldots,i_n)$, there are the relations
\[
v_{i_n}^2=-\varepsilon(\xi_{i_n})\sum_{j}\varepsilon(\xi_j)\,
v_{i_n} v_j
\]
where $ j\in[m\backslash \xi_{i_n}]$, but $F_{i_n}\bigcap
F_j\neq\emptyset$.

\subsection{Geometrical realization of cycles of quasitoric manifolds}\index{quasitoric manifold!geometric realization of cycles}
The fundamental notions of algebraic topology were introduced in the classical work by Poincare \cite{P1895}. Among them there were notions of cycles and homology. Quasitoric manifolds give nice examples of manifolds such that original notions by Poincare obtain explicit geometric realization.

Let $M^k$ be a smooth oriented manifold such that the groups $H_*(M^k,\mathbb Z)$ have no torsion. There is the classical Poincare duality $H_i(M^k,\mathbb Z)\simeq H^{k-i}(M^k,\mathbb Z)$. Moreover, according to the Milnor-Novikov theorem \cite{miln60,novi60,novi62}  for any cycle $a\in H_l(M^k,\mathbb Z)$ there is a smooth oriented manifold $N^l$ and a continuous mapping $f\colon N^l\to M^k$ such that $f_*[N^l]=a$. For the homology groups of any quasitoric manifold there is the following remarkable geometrical interpretation of this result.  Note that odd homology groups of any quasitoric manifold are trivial.
\begin{theorem}
\begin{enumerate}
\item The homology group $H_{2n-2}(M(P,\Lambda),\mathbb Z)$ of the quasitoric manifold $M^{2n}(P,\Lambda)$  is generated by embedded quasitoric manifolds $M^{2n-2}_i(P,\Lambda)$, $i=1,\dots,m$, of facets of $P$.  The embedding of the manifold $M^{2n-2}_i(P,\Lambda)\subset M(P,\Lambda)$ gives the geometric realization of the cycle Poincare dual to the cohomology class $v_i\in H^2(M(P,\Lambda),\mathbb Z)$ defined above.
\item  For any $i$ the homology group $H_{2i}(M(P,\Lambda),\mathbb Z)$ is generated by embedded quasitoric manifolds corresponding to all $i$-faces $F_{j_1}\cap\dots \cap F_{j_{n-i}}$ of the polytope $P$.  These manifolds can be described as complete intersections of manifolds $M^{2n-2}_{j_1}(P,\Lambda)$, $\dots$, $M^{2n-2}_{j_{n-i}}(P,\Lambda)$.
\end{enumerate}
\end{theorem}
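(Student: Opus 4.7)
The plan is to realize the submanifolds $M_i = \pi^{-1}(F_i) \subset M(P,\Lambda)$ directly from the partition of the quasitoric manifold into disks, identify them as quasitoric manifolds in their own right, and then use the ring structure of $H^*(M(P,\Lambda))$ to deduce that all homology classes are generated by transverse intersections of such submanifolds.

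First I would construct $M_i$ explicitly. Using the partition $M(P,\Lambda) = \bigcup_{v_\omega} \mathbb{D}^{2n}_\omega$ from the previous subsection, the preimage $\pi^{-1}(F_i)$ meets each chart $\mathbb{D}^{2n}_\omega$ (for $i \in \omega$) in the codimension-$2$ disk $\{z_i = 0\}$ and misses the charts with $i \notin \omega$. Hence $M_i$ is a closed smooth submanifold of real codimension $2$. To see it is itself quasitoric, I would restrict the characteristic function: the vectors $\{\ell(F_j) : j \ne i,\ F_j \cap F_i \ne \varnothing\}$ project, modulo the primitive direction $\ell(F_i)$, to a characteristic function $\bar\Lambda_i$ for the simple $(n-1)$-polytope $F_i$, and the resulting data $(F_i, \bar\Lambda_i)$ produces a $(2n-2)$-manifold canonically identified with $\pi^{-1}(F_i)$. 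This identification is straightforward from the Davis--Januszkiewicz description $\widetilde{\mathcal{Z}_P} = P \times \mathbb{T}^m / \sim$ after passing to the quotient by $K(\Lambda)$.

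Next I would compute the normal bundle of $M_i \subset M$. In each chart $\mathbb{D}^{2n}_\omega$ with $i \in \omega$, the normal direction is spanned by the complex coordinate $z_i$, on which the circle $\rho_i \subset T^n$ corresponding to the $i$-th factor acts by the standard character. Assembling the charts, the normal bundle is isomorphic to the line bundle $\rho_i$ of equation~(\ref{rhoi}) restricted to $M_i$. Therefore the Thom class of this normal bundle is $c_1(\rho_i) = v_i$, and by the Thom isomorphism the Poincar\'e dual of the fundamental class $[M_i]$ is exactly $v_i \in H^2(M(P,\Lambda),\mathbb{Z})$. This proves part~(1) once we observe that the classes $v_1,\dots,v_m$ generate $H^2(M(P,\Lambda),\mathbb{Z})$, which follows from the ring presentation of the cohomology ring recalled above.

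For part~(2), I would proceed inductively on the codimension. If $G = F_{j_1} \cap \dots \cap F_{j_{n-i}}$ is a nonempty $i$-face, then in each local chart the submanifolds $M_{j_1},\dots,M_{j_{n-i}}$ are cut out by independent complex equations $z_{j_1} = \dots = z_{j_{n-i}} = 0$, so they meet pairwise transversally and their intersection is a smooth submanifold of real dimension $2i$ equal to $\pi^{-1}(G)$. Applying the construction of the first step to $G$ in place of $F_i$ identifies $\pi^{-1}(G)$ with a quasitoric manifold $M_G$ over $G$, whose Poincar\'e dual is the product $v_{j_1} \cdots v_{j_{n-i}}$. Since $H^*(M(P,\Lambda)) = \mathbb{Z}[v_1,\dots,v_m]/(J_{SR}(P) + I_{P,\Lambda})$ is generated multiplicatively by the degree-two classes $v_j$, and since monomials $v_{j_1}\cdots v_{j_{n-i}}$ corresponding to nonfaces vanish in the Stanley--Reisner ideal, the classes $[M_G]$ over $i$-faces $G$ span $H_{2i}(M(P,\Lambda),\mathbb{Z})$ via Poincar\'e duality.

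The main obstacle I anticipate is the bookkeeping required to show coherently that the local model $\{z_i = 0\}$ in $\mathbb{D}^{2n}_\omega$ glues into the global quasitoric manifold $M_i$ with the claimed characteristic function $\bar\Lambda_i$, and in particular that the projected vectors still satisfy the unimodularity condition at every vertex of $F_i$. This follows from the fact that for any vertex $v = F_i \cap F_{i_2} \cap \dots \cap F_{i_n}$ of $P$ lying on $F_i$, the images of $\ell(F_{i_2}),\dots,\ell(F_{i_n})$ in $\mathbb{Z}^n/\langle \ell(F_i)\rangle \cong \mathbb{Z}^{n-1}$ form a basis because $\ell(F_i),\ell(F_{i_2}),\dots,\ell(F_{i_n})$ do so in $\mathbb{Z}^n$; once this verification is carried out, the rest of the argument is an assembly of Poincar\'e duality, the Thom isomorphism, and the ring presentation.
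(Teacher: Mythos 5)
Your approach is the correct one and in effect supplies the details that the paper itself only gestures at (the paper's proof is a one-line citation to ``the above results on cohomology of quasitoric manifolds'' and to Thom's paper). Constructing the characteristic submanifolds $M_i=\pi^{-1}(F_i)$ from the disk partition, identifying $M_i$ as the quasitoric manifold over $F_i$ with the quotient characteristic function (and checking unimodularity of the projected columns, which you correctly reduce to the unimodularity at vertices of $P$ lying on $F_i$), computing the normal bundle as $\rho_i\big|_{M_i}$, and concluding via the Thom class / Euler class that $[M_i]$ is Poincar\'e dual to $v_i$ --- all of this is exactly what the cited references do, so part (1) is sound.

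In part (2), however, there is a small but genuine gap in the final span argument. You conclude that the classes $[M_G]$ over $i$-faces $G$ span $H_{2i}(M(P,\Lambda),\mathbb Z)$ from two observations: the ring is multiplicatively generated in degree two, and monomials corresponding to nonfaces vanish by the Stanley--Reisner relations. Those two facts by themselves do not eliminate monomials with repeated factors such as $v_j^2$; a priori such a class need not equal a linear combination of squarefree monomials corresponding to nonempty faces. What is actually needed is the preceding corollary in the text (proved by the $\delta$-induction that uses the $\det\Lambda_v=\pm1$ condition to express any repeated variable $v_{i_k}$ as a linear combination of $v_j$'s with $j\notin\{i_1,\dots,i_s\}$ via the relations in $I_{P,\Lambda}$): for every $l$ the group $H^{2l}(M(P,\Lambda),\mathbb Z)$ is spanned by monomials $v_{i_1}\cdots v_{i_l}$ with $i_1<\dots<i_l$ and $F_{i_1}\cap\dots\cap F_{i_l}\ne\varnothing$. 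Once you invoke that corollary (rather than just ``generated in degree two''), the Poincar\'e-dual classes $[M_G]$ over $i$-faces do span $H_{2i}$, and the rest of your argument --- transversality of the $M_{j_k}$ in the local charts, $M_{j_1}\cap\dots\cap M_{j_{n-i}}=\pi^{-1}(G)$, and the product formula for Poincar\'e duals --- closes part (2) correctly.
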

The proof of the theorem follows directly from the above results on the cohomology of quasitoric manifolds and geometric interpretation of the Poincare duality in terms of Thom spaces \cite{thom54}.
\subsection{Four colors problem}\index{four colors problem}
{\bf Classical formulation}: Given any partition of a plane into {\em contiguous regions}, producing a figure called a map, two regions are called {\em adjacent} if they share a common boundary that is not a corner, where corners are the points shared by three or more regions. 

\noindent {\bf Problem:} {\em \index{$4$-colors problem}No more than four colors are required to color the regions of the map so that no two adjacent regions have the same color.} 

The problem was first proposed on October 23, 1852, when Francis Guthrie, while trying to color the map of counties of England, noticed that only four different colors were needed. 

The four colors problem became well-known in 1878 as a hard problem when Arthur Cayley suggested it for discussion during
the meeting of the London mathematical society.

The four colors problem was solved in 1976 by Kenneth Appel and Wolfgang Haken. It became the first major problem solved using a computer. For the details and the history of the problem see \cite{W14}. One of the central topics of this monograph is <<how the problem was solved>>.

\begin{example} Platonic solids.\\
The octahedron can be colored in $2$ colors.\\
The cube and the icosahedron can be colored into $3$ colors.\\
The tetrahedron and the dodecahedron can be colored into $4$ colors.
\end{example}
\begin{figure}
\begin{center}
\includegraphics[height=5cm]{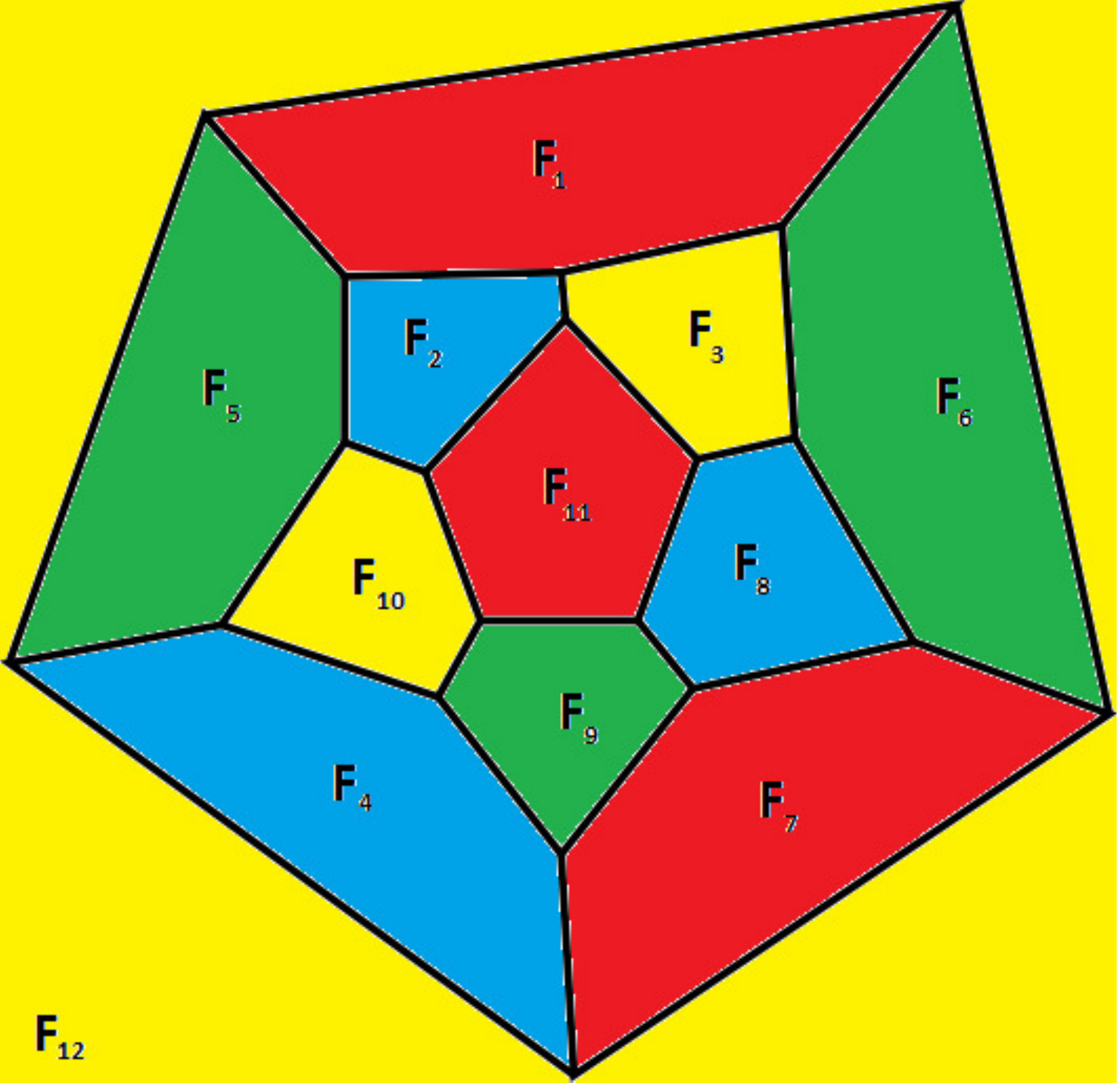}
%$$
%\varLambda\;=\;\begin{pmatrix}
%  F_1&F_2&F_3&F_4&F_5&F_6&F_7&F_8&F_9&F_{10}&F_{11}&F_{12}\\
% \hline
%  1&0&0&0&1&1&1&0&1&0&1&0\\
%  0&1&0&1&-1&-1&0&1&-1&0&0&0\\
%  0&0&1&0&1&1&0&0&1&1&0&1
%\end{pmatrix}
%$$
\end{center}
\caption{Coloring of the dodecahedron}
\end{figure}
\noindent{\bf Exercise:} Color all the Archimedean solids.

\subsection{Quasitoric manifolds of $3$-dimensional polytopes}

Let $P$ be a simple $3$-polytope. Then $\partial P$ is homeomorphic to the sphere $S^2$ partitioned into polygons 
$F_1,\dots,F_m$. By the four colors theorem there is a coloring $\varphi\colon\{F_1,\dots,F_m\}\to\{1,2,3,4\}$ such that adjacent facets have different colors.

Let $\boldsymbol{e}_1,\boldsymbol{e}_2,\boldsymbol{e}_3$ be the standard basis for $\mathbb Z^3$, and $\boldsymbol{e}_4=\boldsymbol{e}_1+\boldsymbol{e}_2+\boldsymbol{e}_3$.
\begin{proposition}
The mapping
$
\ell\colon\{F_1,\dots,F_m\}\to\mathbb Z^3\colon \ell(F_i)=\boldsymbol{e}_{\varphi(F_i)}
$ 
is a characteristic function. 
\end{proposition}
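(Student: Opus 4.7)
The plan is straightforward: verify the defining combinatorial condition of a characteristic function, namely that for every vertex $v = F_{i_1}\cap F_{i_2}\cap F_{i_3}$ of $P$ the three columns $\ell(F_{i_1}), \ell(F_{i_2}), \ell(F_{i_3})$ form a $\mathbb{Z}$-basis of $\mathbb{Z}^3$. Since $P$ is simple and $3$-dimensional, each vertex lies in exactly three facets, so there are exactly three columns to check at each vertex.

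The first step is to invoke the coloring condition. At a vertex $v$, the three facets $F_{i_1}, F_{i_2}, F_{i_3}$ are pairwise adjacent (each pair shares an edge through $v$), hence by properness of $\varphi$ the values $\varphi(F_{i_1}), \varphi(F_{i_2}), \varphi(F_{i_3})$ are three \emph{pairwise distinct} elements of $\{1,2,3,4\}$. So I only need to show that any three of the four vectors $\boldsymbol{e}_1,\boldsymbol{e}_2,\boldsymbol{e}_3,\boldsymbol{e}_4 = \boldsymbol{e}_1+\boldsymbol{e}_2+\boldsymbol{e}_3$ form a basis of $\mathbb{Z}^3$, i.e., have determinant $\pm 1$.

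The second step is the two-case verification. If the three chosen colors are exactly $\{1,2,3\}$, the three columns are $\boldsymbol{e}_1,\boldsymbol{e}_2,\boldsymbol{e}_3$, whose determinant is $1$. Otherwise one of the colors is $4$ and the other two lie in $\{1,2,3\}$; up to permutation the triple is $\{\boldsymbol{e}_i,\boldsymbol{e}_j,\boldsymbol{e}_1+\boldsymbol{e}_2+\boldsymbol{e}_3\}$ with $\{i,j\}\subset\{1,2,3\}$, and expanding along the remaining standard basis direction gives determinant $\pm 1$ (explicitly, e.g.\ $\det(\boldsymbol{e}_1,\boldsymbol{e}_2,\boldsymbol{e}_4)=1$). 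Both cases produce unimodular $3\times 3$ matrices, so the basis condition holds.

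There is essentially no obstacle: the content of the argument is entirely packed into the Four Color Theorem, which is the nontrivial input guaranteeing the existence of the coloring $\varphi$ in the first place; once $\varphi$ is given, the verification above is elementary linear algebra. I would also briefly note that this construction shows every simple $3$-polytope admits a characteristic function (in contrast with the higher-dimensional situation flagged earlier via cyclic polytopes), and hence carries a quasitoric manifold $M(P,\Lambda)$ whose cohomology, by the results of the previous subsection, is determined by $P$ together with the chosen $4$-coloring.
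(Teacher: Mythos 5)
Your proof is correct and matches the argument the paper clearly has in mind but omits: at a vertex of a simple $3$-polytope the three incident facets are pairwise adjacent, the four-coloring forces three distinct colors, and any three of $\boldsymbol{e}_1,\boldsymbol{e}_2,\boldsymbol{e}_3,\boldsymbol{e}_4=\boldsymbol{e}_1+\boldsymbol{e}_2+\boldsymbol{e}_3$ are visibly unimodular. The paper gives no proof for this proposition, treating exactly this verification as immediate once the Four Color Theorem supplies $\varphi$.
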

\begin{corollary} \index{quasitoric manifold!existence for $3$-polytopes}
\begin{itemize}
\item Any simple $3$-polytope $P$ has combinatorial data $(P,\Lambda)$ and the quasitoric manifold $M(P,\Lambda)$;
\item Any fullerene has a quasitoric manifold.   
\end{itemize}
\end{corollary}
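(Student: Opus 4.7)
The plan is to deduce the corollary immediately from the four colors theorem together with the preceding proposition, so the real content is to verify that the map $\ell(F_i) = \boldsymbol{e}_{\varphi(F_i)}$ is a characteristic function. First, I would invoke the four colors theorem applied to the planar graph $G(P) \subset S^2$ (viewed through any Schlegel diagram of $P$) to obtain a coloring $\varphi\colon\{F_1,\dots,F_m\}\to\{1,2,3,4\}$ in which adjacent facets receive distinct colors. This is exactly the setup that was displayed above the statement, so the coloring exists for any simple $3$-polytope $P$, and in particular for any fullerene since a fullerene is by definition a simple $3$-polytope.

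Next I would verify the characteristic condition. At every vertex $v = F_{i_1}\cap F_{i_2}\cap F_{i_3}$ of the simple $3$-polytope $P$, the three facets meeting at $v$ are pairwise adjacent (they share three edges of $P$ incident to $v$). The coloring condition then forces $\varphi(F_{i_1}), \varphi(F_{i_2}), \varphi(F_{i_3})$ to be three distinct elements of $\{1,2,3,4\}$, so the three columns $\ell(F_{i_j}) = \boldsymbol{e}_{\varphi(F_{i_j})}$ form a $3$-element subset of $\{\boldsymbol{e}_1,\boldsymbol{e}_2,\boldsymbol{e}_3,\boldsymbol{e}_4\}$, where $\boldsymbol{e}_4 = \boldsymbol{e}_1+\boldsymbol{e}_2+\boldsymbol{e}_3$. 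It therefore suffices to check that any three of the four vectors $\boldsymbol{e}_1,\boldsymbol{e}_2,\boldsymbol{e}_3,\boldsymbol{e}_4$ form a $\mathbb Z$-basis of $\mathbb Z^3$. The subset $\{\boldsymbol{e}_1,\boldsymbol{e}_2,\boldsymbol{e}_3\}$ is the standard basis, and the other three subsets each contain $\boldsymbol{e}_4$ together with two standard basis vectors; a direct $3\times 3$ determinant computation gives $\pm 1$ in each of the remaining three cases (for instance $\det(\boldsymbol{e}_1,\boldsymbol{e}_2,\boldsymbol{e}_4) = 1$). Hence $\ell$ satisfies the unimodularity condition at every vertex and is a characteristic function, proving the proposition.

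Finally, to extract the two bullets of the corollary, I would simply package the data. Taking $\Lambda$ to be the integer $(3\times m)$-matrix with columns $\ell(F_1),\dots,\ell(F_m)$ equipped with any orientation of $P$, the pair $(P,\Lambda)$ is combinatorial quasitoric data in the sense defined above, and by the construction of Subsection on the characteristic mapping the quotient $M(P,\Lambda) = \mathcal Z_P/K(\Lambda)$ is a $2n$-dimensional smooth quasitoric manifold; this proves the first bullet. The second bullet is then a special case, since every fullerene is by definition a simple $3$-polytope and therefore admits the above construction verbatim.

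The only potentially nontrivial step is the invocation of the four colors theorem. Everything else is a routine determinant check and a repackaging of the general $(P,\Lambda)$-construction, so I do not anticipate any serious obstacle; the proof is essentially a direct corollary of Appel--Haken.
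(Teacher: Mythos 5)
Your proposal is correct and takes exactly the same route as the paper: the paper deduces the corollary from the preceding proposition (that $\ell(F_i)=\boldsymbol{e}_{\varphi(F_i)}$ is a characteristic function), and leaves the verification of that proposition implicit. You have filled in that verification — the observation that the three facets at a vertex of a simple $3$-polytope are pairwise adjacent, hence receive three distinct colors, and the routine determinant check that any three of $\boldsymbol{e}_1,\boldsymbol{e}_2,\boldsymbol{e}_3,\boldsymbol{e}_4$ form a $\mathbb Z$-basis of $\mathbb Z^3$ — which is precisely what is needed and matches the paper's intent.
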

Since a fullerene is a flag polytope, the cohomology ring of any its quasitoric manifold is described by Corollary \ref{flagqtm}.

\noindent{\bf{Exercise:}} Find a $4$-coloring of the barrel (Fig. \ref{Barrelfig}).
\begin{figure}
\begin{center}
\includegraphics[height=5cm]{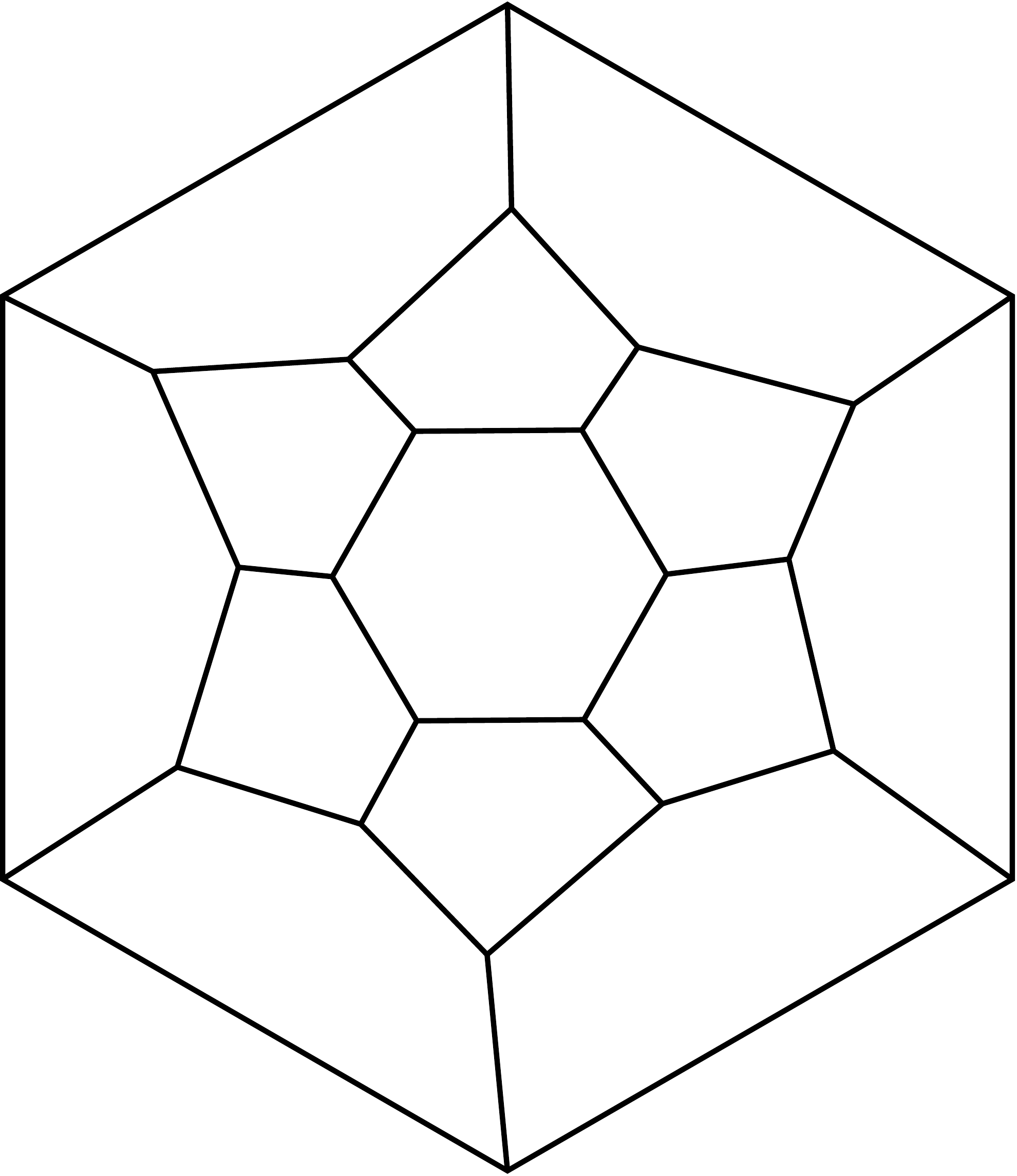}
\caption{Schlegel diagram of the barrel}\label{Barrelfig}\index{barrel}
\end{center}
\end{figure}

\newpage
\section{Lecture 9. Construction of fullerenes} \label{ConFul}
\subsection{Number of combinatorial types of fullerenes}
\begin{definition}
Two combinatorially nonequivalent fullerenes with the same number
 $p_6$ are called {\em combinatorial isomers}\index{fullerene!isomers}.
\end{definition}
Let $F(p_6)$ be the number of combinatorial isomers with given $p_6$.

From the results by W. Thurston \cite{T98} it follows that $F(p_6)$ grows like $p_6^9$.

There is an effective algorithm of combinatorial enumeration
of fullerenes using supercomputer (Brinkmann-Dress \cite{BD97}, 1997). It gives:
\begin{center}
\begin{tabular}{|c|c|c|c|c|c|c|c|c|c|c|c|}
\hline
$p_6$&0&1&2&3&4&5&6&7&8&$\dots$&75\\
\hline
$F(p_6)$&1&0&1&1&2&3&6&6&15&$\dots$&46.088.157\\
\hline
\end{tabular}
\end{center}
We see that for large value of $p_6$ the number of combinatorial isomers is very huge. Hence there is an important problem to 
study different structures on the set of fullerenes. 

\subsection{Growth operations}
The well-known problem \cite{BGJ09,HFM08} is to find a simple set of operations sufficient to construct arbitrary fullerene from the dodecahedron. 
\begin{definition}
A {\em patch}\index{patch} is a disk bounded by a simple edge-cycle on the boundary of a simple $3$-polytope. 
\end{definition}
\begin{definition}
A {\em growth operation} \index{growth operation} \index{fullerene!growth operation}is a combinatorial operation that gives a new $3$-polytope $Q$ from a simple $3$-polytope $P$ by substituting  a new patch with the same boundary and more facets for the patch on the boundary of $P$.   
\end{definition}

The Endo-Kroto operation  \cite{EK92} (Fig. \ref{EKfig})\index{Endo-Kroto operation} is the simplest example of a growth operation that changes a fullerene into a fullerene.
\begin{figure}
\begin{center}
\includegraphics[height=3cm]{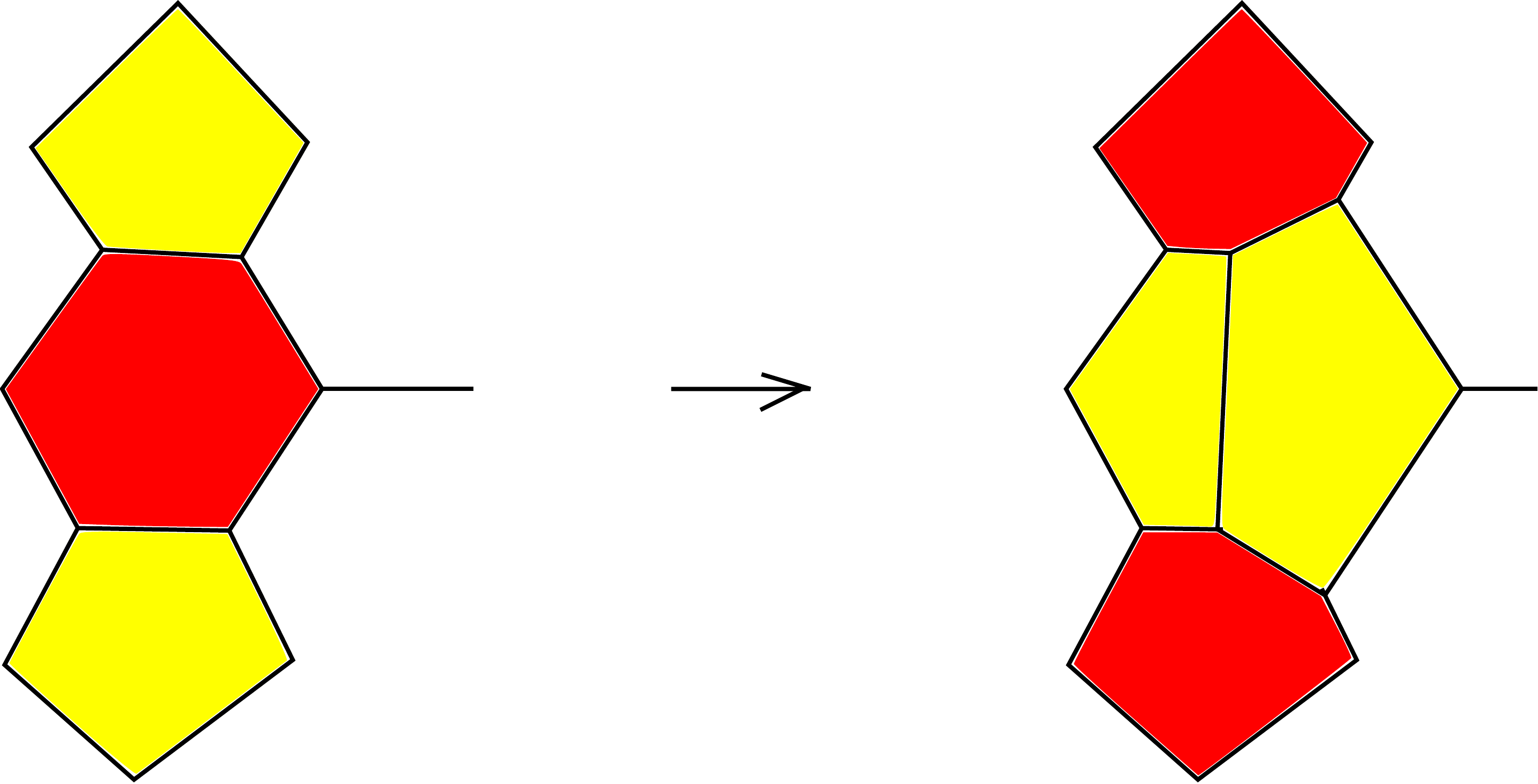}
\end{center}
\caption{Endo-Kroto operation}\label{EKfig}
\end{figure}
It was proved in \cite{BGJ09} that there is no finite set of growth operations transforming fullerenes into fullerenes sufficient to construct arbitrary fullerene from the dodecahedron. In \cite{HFM08} the example of an infinite set was found. Our main result is the following (see \cite{BE15b}): if we allow at intermediate steps polytopes with at most one singular face (a quadrangle or a heptagon), then only $9$ growth operations (induced by $7$ truncations) are sufficient.

\noindent {\bf Exercise:} Starting from the Barrel fullerene (see Fig. \ref{Barrelfig}) using the Endo-Kroto operation construct a fullerene with arbitrary $p_6\geqslant 2$.

\subsection{$(s,k)$-truncations}
First we mention a well-known result about construction of simple $3$-polytopes.
\begin{theorem} \label{3ptheorem}(Eberhard (1891), Br\"{u}ckner (1900))
A  $3$-polytope is simple if and only if it is combinatorially equivalent to a polytope 
obtained from the tetrahedron by a sequence of {\bf vertex, edge} and {\bf
$(2,k)$}-truncations.
\end{theorem}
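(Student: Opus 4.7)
The statement has two directions, which I treat separately.

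\emph{Forward direction.} My plan is to verify by induction on the number of truncations that every polytope obtainable from $\Delta^3$ by a sequence of vertex-, edge-, and $(2,k)$-truncations is simple. Since $\Delta^3$ is simple and each of these operations is realized as an intersection $P \cap \bigcap \mathcal{H}^+_{\varepsilon}$ for sufficiently small $\varepsilon > 0$ (so that the cutting hyperplanes are in general position relative to $P$), it suffices to inspect the local structure at each newly created vertex for the three operations and confirm that exactly three facets meet there. This is a direct geometric check, one operation at a time.

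\emph{Reverse direction.} I will induct on $m = f_2(P)$. The base case $m = 4$ forces $P \simeq \Delta^3$. For $m \geq 5$, the goal is to produce a simple $3$-polytope $Q$ with $f_2(Q) = m - 1$ together with a single truncation of one of the three permitted types carrying $Q$ to $P$. The decisive combinatorial input is Theorem \ref{pkth}, which gives $3p_3(P) + 2p_4(P) + p_5(P) \geq 12$, so $P$ has at least one facet of size at most $5$. My plan is to split into cases on the minimal facet size.

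If $p_3 \geq 1$, I would identify a triangular facet $F$ whose three neighbors $F_1, F_2, F_3$ are all of size $\geq 4$; collapsing $F$ (which identifies its three vertices to a single vertex $v$ and deletes its three edges) then yields a simple polytope $Q$ with $m - 1$ facets, and $P$ is recovered from $Q$ by a vertex truncation at $v$. Existence of such an $F$ follows from the exercise in Lecture~1 characterizing $\Delta^3$ as the unique simple $3$-polytope whose every $2$-face is a triangle, which together with $m \geq 5$ rules out the pathological configuration in which every triangular facet is surrounded only by other triangles. If $p_3 = 0$ and $p_4 \geq 1$, I would choose a quadrangular facet $F$ and apply an analog of Lemma~\ref{4gonF}: at least one of the two pairs of opposite edges of $F$ admits a simultaneous straightening that avoids the $3$-belt obstruction of Lemma~\ref{cor-3-s}, yielding a simple $Q$ with $m - 1$ facets from which $P$ arises by an edge truncation. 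If $p_3 = p_4 = 0$ and $p_5 \geq 1$, I would locate near a pentagonal facet $F$ a local configuration admitting an inverse $(2,k)$-truncation, i.e., a collapse of a quadrangle--vertex--edge pattern produced by truncating a pair of features of a $k$-gonal neighbor; the constraints $p_3 = p_4 = 0$ rigidify the local structure around $F$ enough to select a valid collapse and to verify, via Lemma~\ref{cor-3-s}, that the resulting $Q$ is simple.

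The main obstacle will be the last case, $p_3 = p_4 = 0$, $p_5 \geq 1$: unlike the first two cases, where a small facet is directly collapsed, the $(2,k)$-truncation is a more intricate local operation, and finding the correct configuration around a pentagon together with verifying the simplicity of the resulting $Q$ requires careful combinatorial case analysis, particularly to rule out $3$-belts through the straightening edges. In all three cases the argument rests on Theorem~\ref{pkth} for the existence of a small facet and on Lemma~\ref{cor-3-s} to control the obstructions to straightening; closing the induction then gives the reverse direction.
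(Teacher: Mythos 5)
The paper does not actually prove Theorem~\ref{3ptheorem}; it is stated without proof as a classical result of Eberhard and Br\"uckner. Your inductive strategy -- reduce the number of facets by one by inverting a truncation, locating a facet with at most $5$ sides via Theorem~\ref{pkth}, and controlling the inverse operation through Lemma~\ref{cor-3-s} -- is the natural and essentially standard route, so the high-level plan is sound. The execution, however, has concrete gaps.

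In the triangle case, ``ruling out the configuration in which every triangular facet is surrounded only by other triangles'' does not establish what you need, namely that some triangle has all three neighbors of size at least $4$. The correct (and stronger, and simpler) fact is that for $m\geqslant 5$ no two triangular facets can share an edge: if $F$ and $F'$ are triangles with common edge $E$, then the third facets at the two endpoints of $E$ are forced to be triangles as well and one recovers $\Delta^3$. Hence \emph{every} triangle is collapsible and no search is needed. In the quadrangle and pentagon cases, the decisive step is missing: by Lemma~\ref{cor-3-s} the only obstruction to straightening along $F\cap F_i$ is a $3$-belt $(F,F_i,F_j)$ with $F_j$ a non-consecutive neighbor of $F$, and by Theorem~\ref{Jtheorem} / Lemma~\ref{belt-lemma} such a belt separates $\partial P$ into two discs, so the remaining neighbors of $F$ are distributed between the two sides and cannot form a second obstructing belt using facets from opposite sides. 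This is exactly what forces at least one unobstructed edge: for a quadrangle with cyclic neighbors $F_1,\dots,F_4$ at most one of the pairs $\{F_1,F_3\}$, $\{F_2,F_4\}$ can intersect; for a pentagon with cyclic neighbors $F_1,\dots,F_5$ at most two of the five non-consecutive pairs can intersect, and those two must share a neighbor, leaving at least two straightenable edges. Finally, your description of the inverse $(2,k)$-truncation as ``a collapse of a quadrangle--vertex--edge pattern'' is wrong: a $(2,k)$-truncation creates a \emph{pentagonal} facet ($s+3=5$ for $s=2$), and its inverse is straightening along $F\cap F_j$ where $F$ is that pentagon, merging it with a $k$-gonal neighbor $F_j$ to produce a $(k+1)$-gon from which $P$ is recovered by a $(2,k+1)$-truncation. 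This misidentification needs correcting before the pentagon case -- the one you rightly flag as hardest -- can be completed.
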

\begin{figure}
\begin{center}
\includegraphics[height=2.7cm]{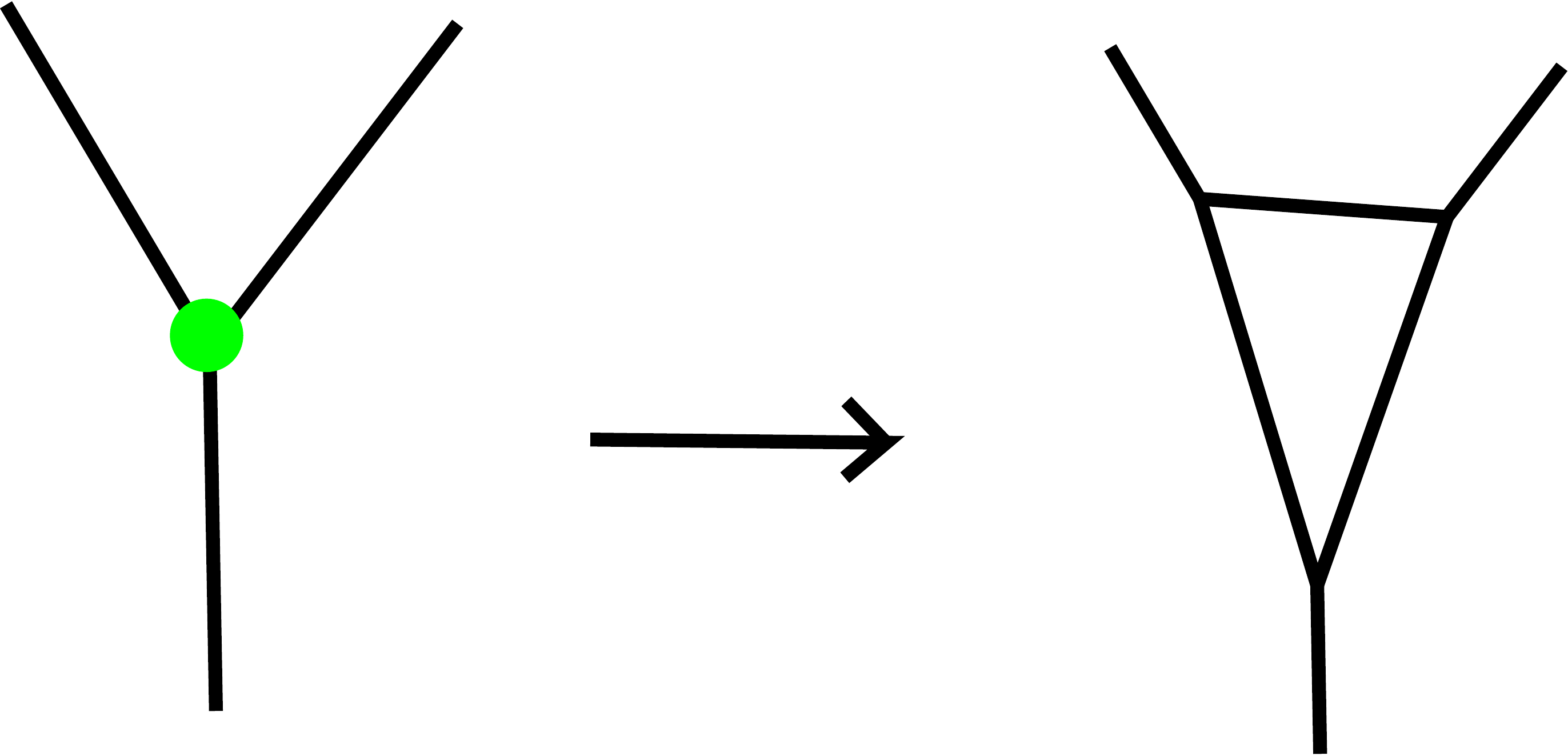}\quad\includegraphics[height=2.7cm]{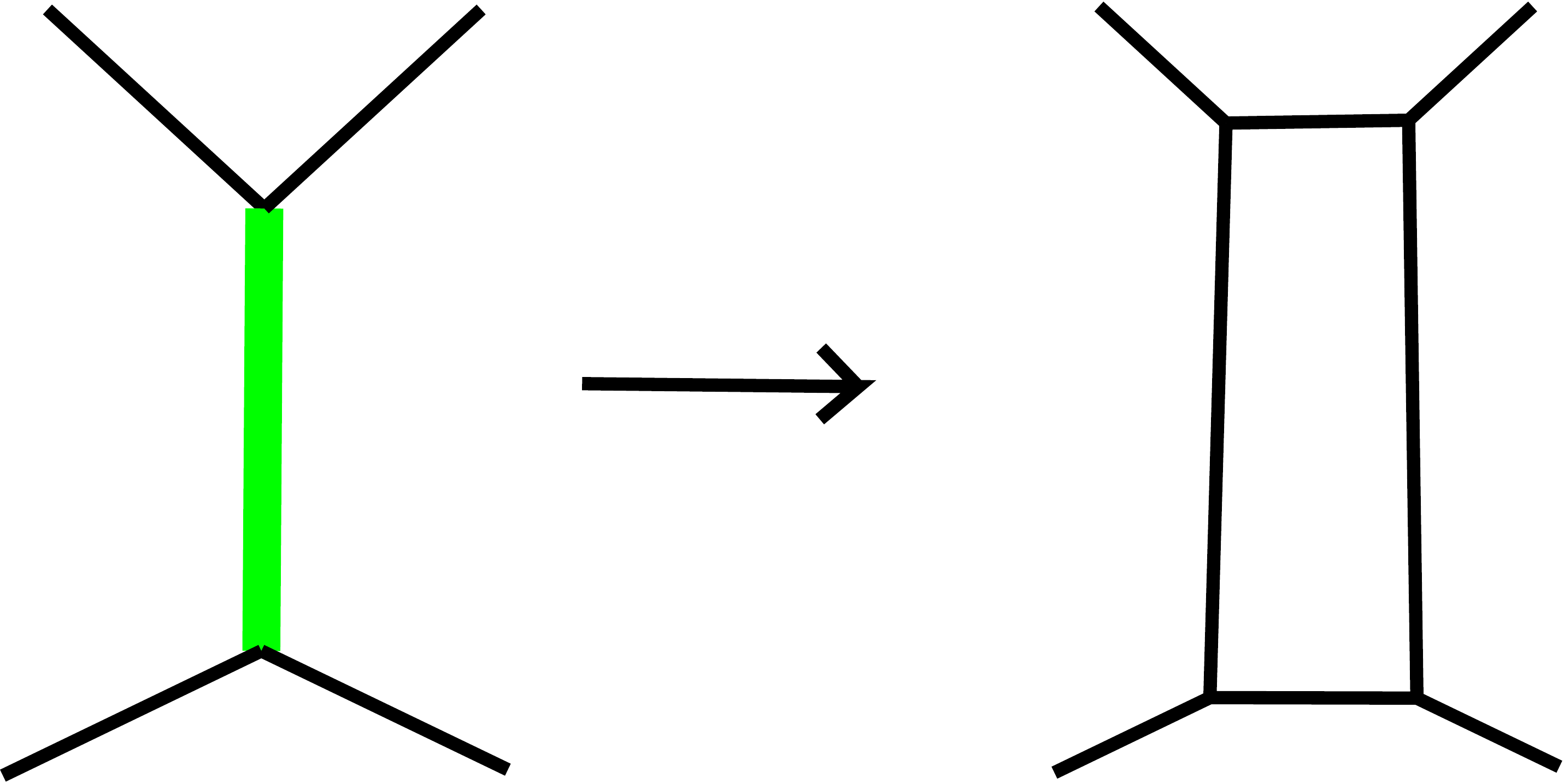}\\
\includegraphics[height=2.7cm]{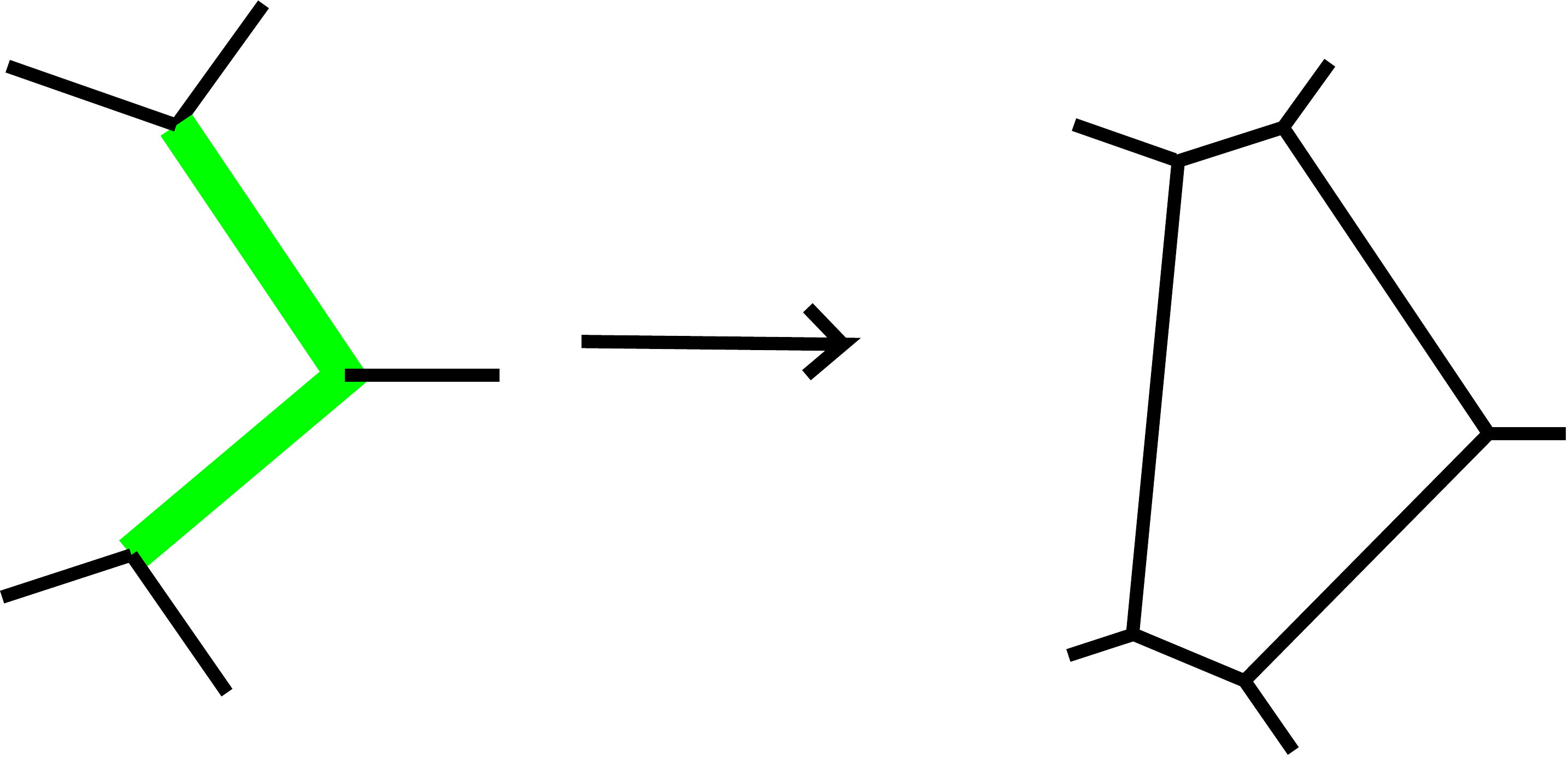}
\end{center}
\caption{Vertex-, edge- and $(2,k)$-truncations}\index{vertex-truncation}\index{edge-truncation}
\end{figure}
\begin{figure}
\begin{center}
\includegraphics[height=3cm]{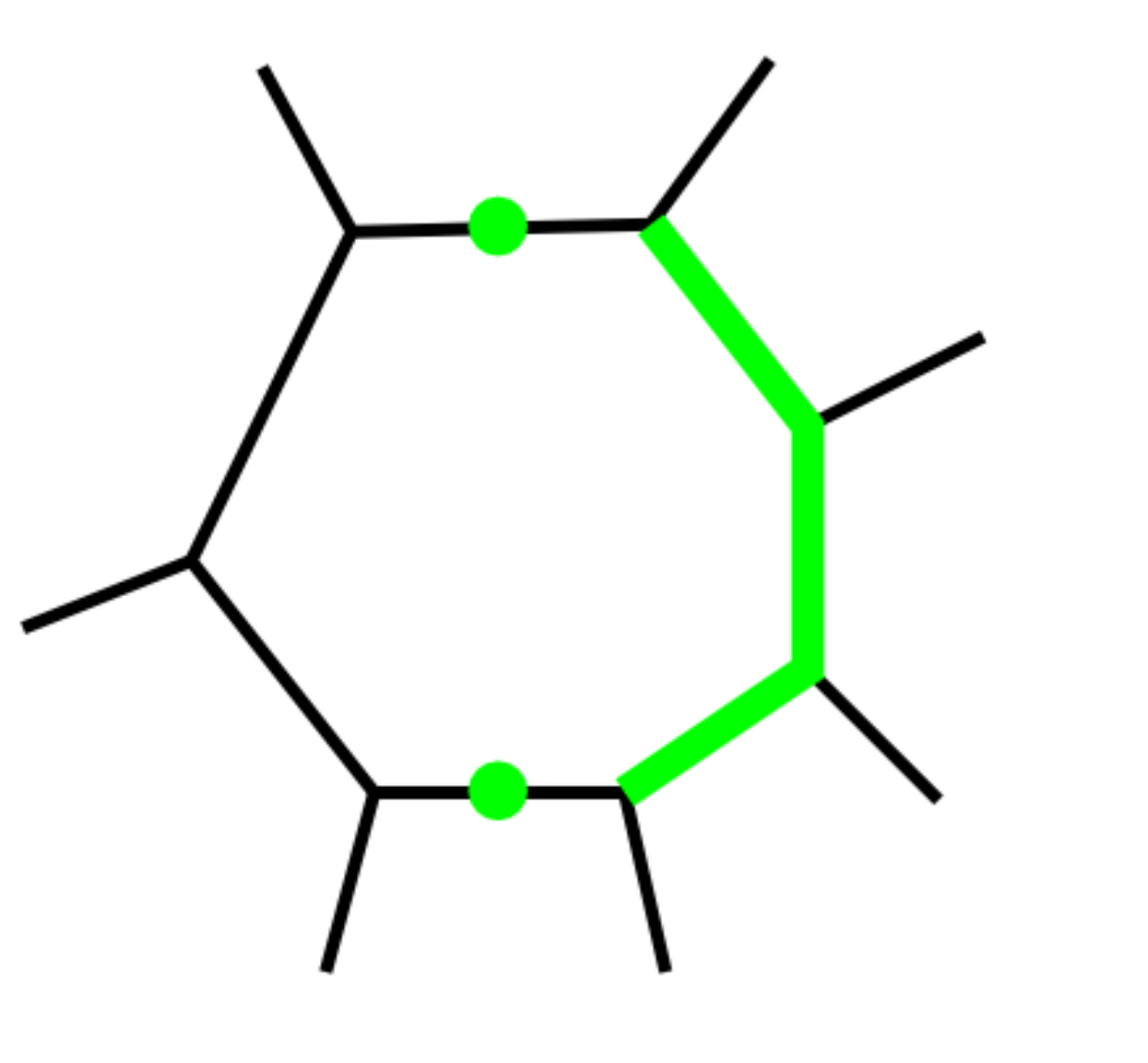}
\includegraphics[height=3cm]{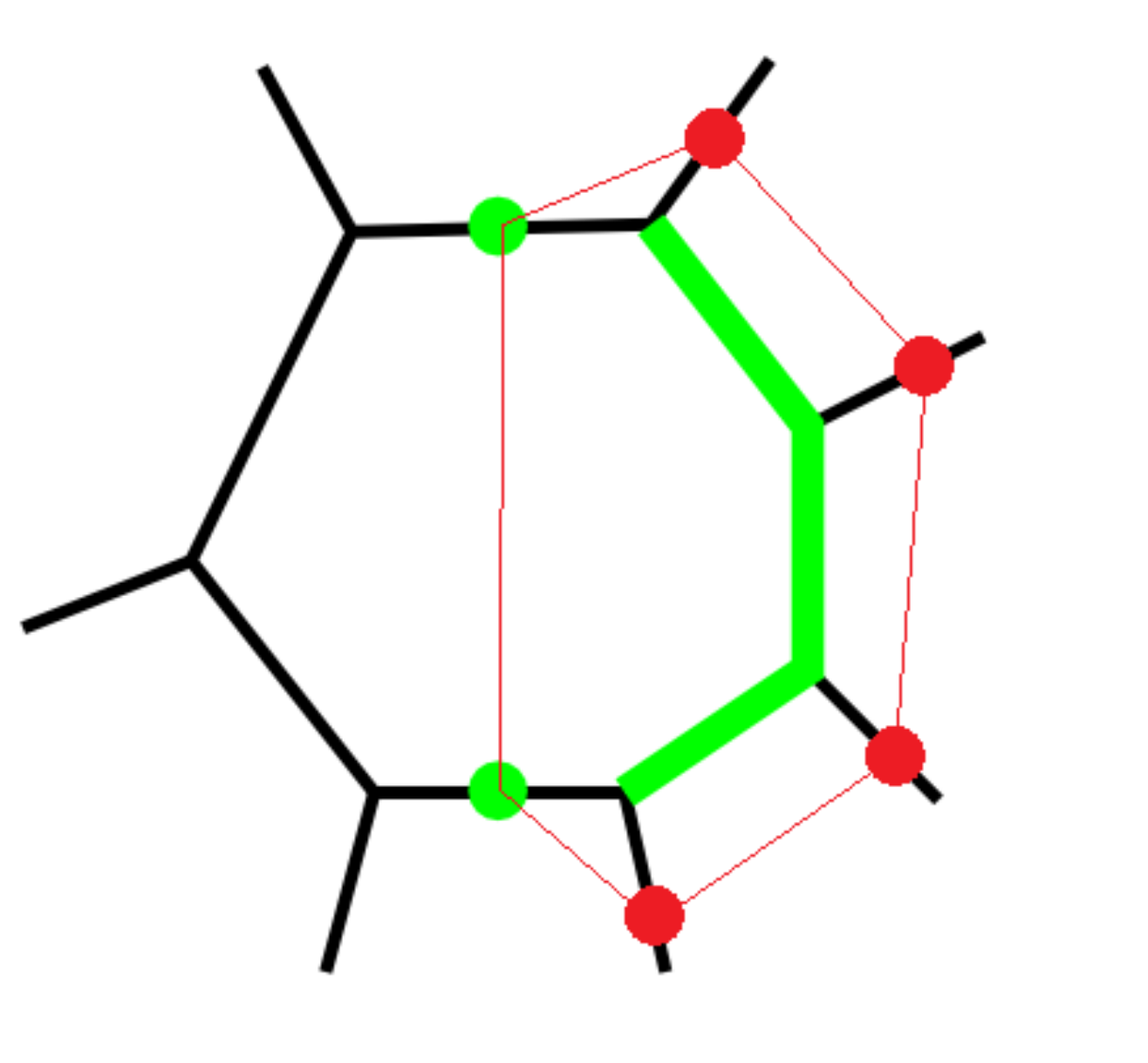}
\includegraphics[height=3cm]{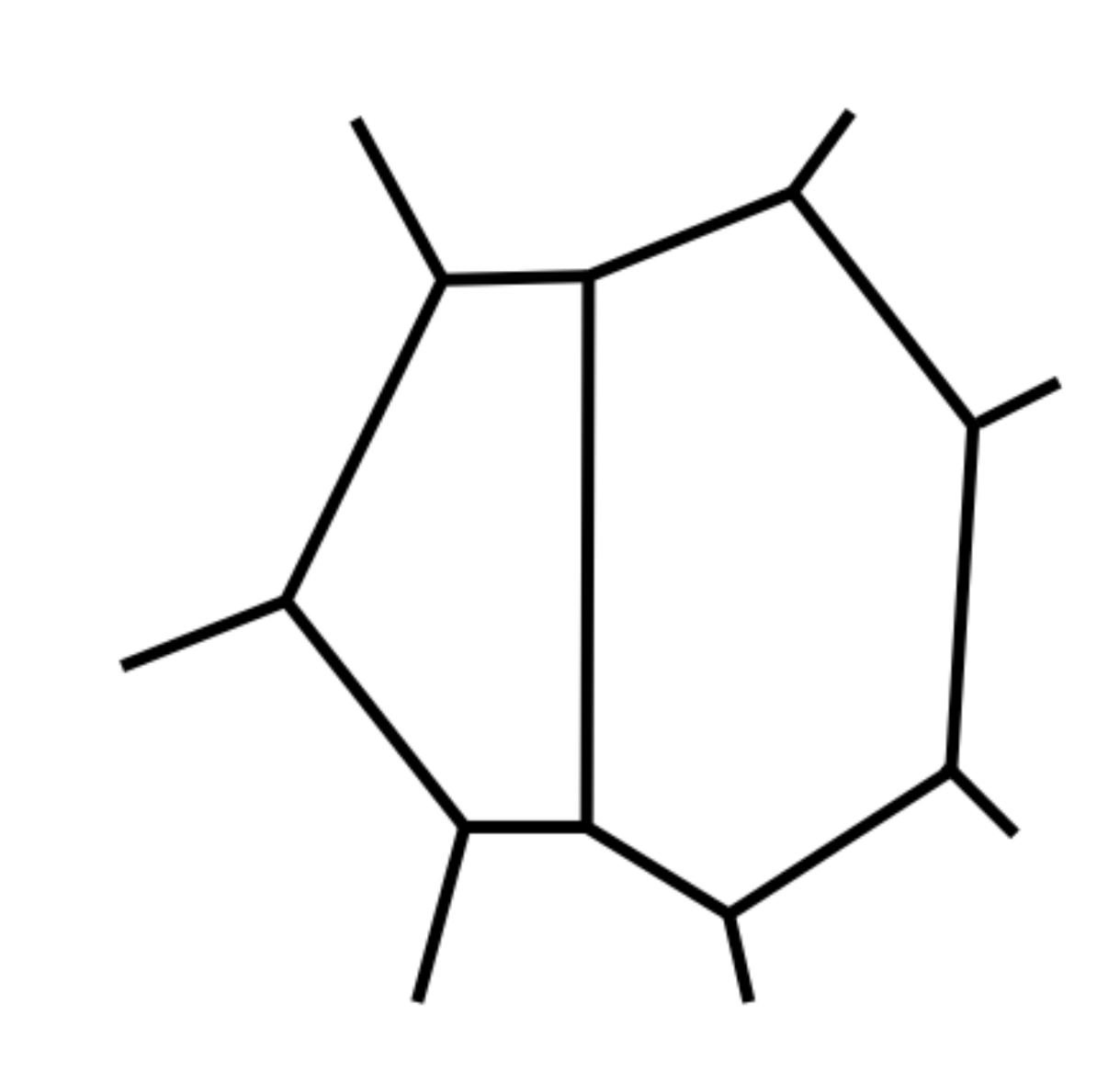}
\end{center}
\caption{$(3,7)$-truncation}
\end{figure}

\noindent {\bf Construction ($(s,k)$-truncation):}\index{polytope!$(s,k)$-truncation} Let $F_i$ be a {\em $k$-gonal face} of a simple $3$-polytope $P$.
\begin{itemlist}
\item choose {\em $s$ consequent edges} of $F_i$;
\item rotate the supporting hyperplane of $F_i$ around the axis passing
    through the midpoints of adjacent two edges (one on each side);
\item take the corresponding hyperplane truncation.
\end{itemlist}
We call it {\em $(s,k)$-truncation} \index{$(s,k)$-truncation}\index{polytope!$(s,k)$-truncation}.

\begin{example}
\begin{enumerate}
\item Vertex truncation is a $(0,k)$-truncation.
\item Edge truncation is a $(1, k)$-truncation.
\item The Endo-Kroto operation is a $(2, 6)$-truncation.
\end{enumerate}
\end{example}
\begin{figure}
\begin{center}
\includegraphics[scale=0.3]{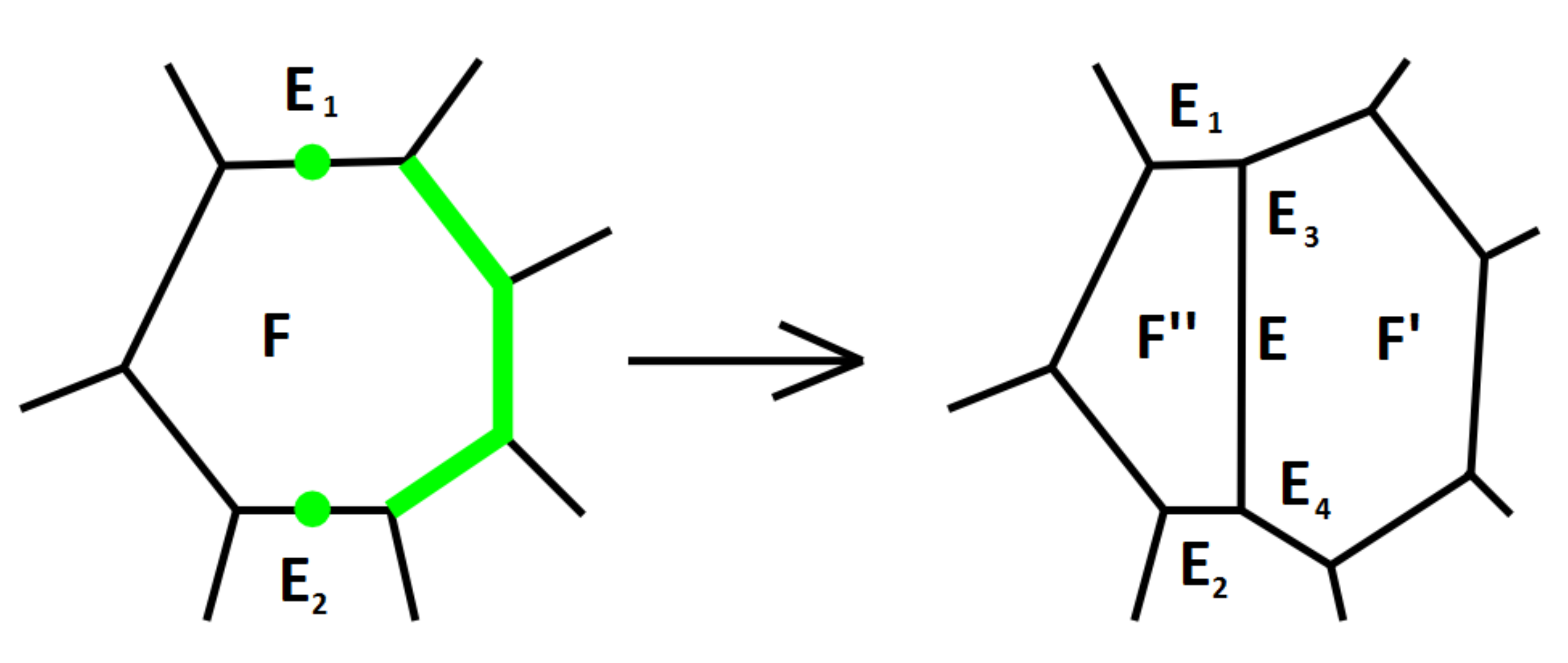}
\end{center}
\caption{$(s,k)$-truncation}\label{skfig}
\end{figure}
The next result follows from definitions.
\begin{proposition}
\begin{itemlist}
\item Under the $(s, k)$-truncation of  the polytope $P$ its facets that
do not contain the edges $E_1$ and $E_2$ (see Fig. \ref{skfig}) preserve the number of sides.
\item The facet $F$ is split into two facets: an $(s+3)$-gonal facet $F'$
    and a $(k-s+1)$-gonal facet $F''$, $F' \cap F'' = E$.
\item The number of sides of each of the two facets adjacent to $F$ along
    the edges $E_1$ and $E_2$ increases by one.
\end{itemlist}
\end{proposition}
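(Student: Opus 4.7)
The proposition is a purely local combinatorial bookkeeping statement about how a small hyperplane rotation modifies incidences of faces in a simple $3$-polytope, so I would organize the proof around a careful labeling near $F$ and then count edges facet-by-facet for each of the three classes of facets: (i) the untouched ones, (ii) the two facets containing $E_1$ and $E_2$, and (iii) the old facet $F$ together with the new facet created by the cut. The only non-trivial analytical input is that for a sufficiently small rotation angle the resulting hyperplane meets $\partial P$ exactly in the combinatorial pattern suggested by Fig.~\ref{skfig}; this follows because $P$ is simple (all vertex links are $2$-simplices) and the rotation is a $C^{0}$-small perturbation of the supporting hyperplane of~$F$.

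\textbf{Setup.} Writing $F=F_i$ and labeling its edges cyclically $e_1,\dots,e_k$, I assume without loss of generality that the $s$ chosen consecutive edges are $e_2,\dots,e_{s+1}$, so that $E_1=e_1$ and $E_2=e_{s+2}$. Let $v_j=e_j\cap e_{j+1}$, $j=1,\dots,k$ (indices mod $k$), and let $G_j$ be the facet adjacent to $F$ across $e_j$. Since $P$ is simple, at each $v_j$ precisely the three facets $F,G_j,G_{j+1}$ meet, so there is a unique third edge $e_{v_j}=G_j\cap G_{j+1}$ emanating from $v_j$. Denote by $m_1\in E_1$ and $m_2\in E_2$ the chosen midpoints, by $\mathcal{H}_\varepsilon$ the hyperplane obtained by rotating the supporting hyperplane of $F$ by a small angle $\varepsilon$ about the line $m_1m_2$ (towards the interior of~$P$), and by $P_\varepsilon$ the truncation $P\cap \mathcal{H}^+_\varepsilon$.

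\textbf{Claim 1: local behavior of $\mathcal{H}_\varepsilon$.} For all sufficiently small $\varepsilon>0$, the hyperplane $\mathcal{H}_\varepsilon$ meets $\partial P$ exactly along a simple closed curve passing through $m_1$, through $m_2$, through one interior point of each of $e_{v_1},\dots,e_{v_{s+1}}$, and through no other edge of $P$; moreover the vertices cut off by $\mathcal{H}_\varepsilon$ are exactly $v_1,\dots,v_{s+1}$. I would justify this by observing that the unrotated hyperplane contains $E_1,E_2$ and the vertices $v_0,\dots,v_{s+2}$, and that for the standard perturbation the signs of the defining linear form on the vertices of $P$ change only for $v_1,\dots,v_{s+1}$ (whose values are strictly negative under an inward tilt about $m_1m_2$), while remaining strictly positive on every other vertex of $P$ if $\varepsilon$ is small, by openness of strict inequalities.

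\textbf{Claim 2: the new facet.} The face $F''=P_\varepsilon\cap \mathcal{H}_\varepsilon$ is bounded, by Claim~1, by the segment $E=m_1m_2\subset F$ together with one fresh segment inside each of $G_1,G_2,\dots,G_{s+2}$ (these segments run from $m_1$ to a point on $e_{v_1}$, from a point on $e_{v_{j-1}}$ to a point on $e_{v_j}$ for $j=2,\dots,s+1$, and from a point on $e_{v_{s+1}}$ to $m_2$). That is $1+1+s+1=s+3$ edges, so $F''$ is an $(s+3)$-gon, and $F'\cap F''=E$ by construction.

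\textbf{Claim 3: the remnant $F'$ of $F$.} The boundary of $F'=F\cap \mathcal{H}^+_\varepsilon$ consists of the shortened halves of $E_1$ and $E_2$ (from $v_0$ to $m_1$ and from $m_2$ to $v_{s+2}$), the untouched edges $e_{s+3},\dots,e_k$, and the new edge $E$, giving $2+(k-s-2)+1=k-s+1$ edges, so $F'$ is a $(k-s+1)$-gon.

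\textbf{Claim 4: the adjacent facets.} For each $G_j$ with $j\in\{2,\dots,s+1\}$ (facets along chosen edges, not containing $E_1$ or $E_2$) the edge $e_j$ disappears entirely but is replaced by one new edge on $F''$, and the two neighboring edges $e_{v_{j-1}},e_{v_j}$ of $G_j$ are shortened without vanishing; the net change in the edge count of $G_j$ is $-1+1=0$. For $G_1$ the edge $E_1$ is shortened (still one edge, shared now with $F'$), the edge $e_{v_1}$ is shortened, and one new edge on $F''$ is added, producing a net gain of $+1$; the same holds for $G_{s+2}$. Every facet not among $F,G_1,\dots,G_{s+2}$ is disjoint from the region cut off and is unchanged. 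The three bullet points of the proposition follow by collecting these cases.

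\textbf{Main obstacle.} There is no deep difficulty; the only point that must be argued rather than merely drawn is Claim~1, which guarantees that the combinatorial picture of Fig.~\ref{skfig} actually occurs for the rotated hyperplane. This is a standard genericity/smallness argument using the simplicity of $P$, and everything else is a matter of enumerating edges in each of the four types of affected facets.
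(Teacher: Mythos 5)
The paper gives no proof of this proposition; it simply remarks that ``the next result follows from definitions.'' Your write-up is a correct and careful expansion of exactly what that shorthand hides, so it is consistent with, rather than divergent from, the paper's implicit argument.

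Two small remarks. First, your Claim~1 is the only place where something must actually be argued rather than read off the picture, and your sign argument for the defining linear form is the right one: the original supporting hyperplane of $F$ vanishes on all of $F$, the axis $m_1m_2$ separates $\{v_1,\dots,v_{s+1}\}$ from the remaining vertices of $F$, an inward tilt makes exactly the first group strictly negative while keeping $v_{s+2},\dots,v_k$ strictly positive, and all vertices of $P$ off $F$ remain strictly positive for small $\varepsilon$ by openness. This is the same smallness argument the paper already uses (as an exercise) to show that the combinatorial type of $P_{\Gamma,\varepsilon}$ is independent of small $\varepsilon$, so your appeal to it is in the spirit of the text. Second, in Claim~4 it would be worth saying explicitly that for $j\in\{2,\dots,s+1\}$ the two endpoints $v_{j-1},v_j$ of $e_j$ are the only vertices of $G_j$ that are cut off, so that besides losing $e_j$ and gaining one new edge, $G_j$ has exactly its two edges $e_{v_{j-1}},e_{v_j}$ shortened and nothing else altered; you state this, but the justification (``only $v_1,\dots,v_{s+1}$ are cut off'' from Claim~1) should be cited at that point. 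With that cross-reference made explicit the proof is complete and correct.
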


\begin{remark}
We see that $(s,k)$-truncation is a combinatorial operation and is always defined. It is easy to show that the straightening along the edge $E$ on the right side is a combinatorially inverse operation. It is not always defined.\index{polytope!straightening along the edge}\index{straightening along an edge}
\end{remark}

\begin{definition}
If the facets intersecting $F$ by $E_1$ and $E_2$ (see Fig. \ref{skfig}) are $m_1$- and $m_2$-gons respectively, then we also call the corresponding operation an \emph{$(s,k;m_1,m_2)$-truncation}.\index{$(s,k;m_1,m_2)$-truncation}\index{polytope!$(s,k;m_1,m_2)$-truncation}

For $s=1$ combinatorially $(1,k;m_1,m_2)$-truncation is the same
operation as $(1,t;m_1,m_2)$-truncation of the same edge of the other facet
containing it. We call this operation simply a \emph{$(1;m_1,m_2)$-truncation}.\index{$(1;m_1,m_2)$-truncation}\index{polytope!$(1;m_1,m_2)$-truncation}
\end{definition}
\begin{remark}\label{trrem} \index{$(s,k)$-truncation!as growth operation}Let $P$ be a flag simple polytope. Then any $(s,k)$-truncation is a growth operation. Indeed, for $s=0$ and $s=k-2$ we have the vertex truncation, which can be considered as the substitution of the corresponding fragment for the three facets containing the vertex. For $0<s<k-2$, since $P$ is flag, the facets $F_{i_1}$ and $F_{i_{s+2}}$ intersecting $F$ by edges adjacent to truncated edges do not intersect; hence the union $F_{i_1}\cup F\cup F_{i_{s+2}}$ is bounded by a simple edge-cycle (see Fig. \ref{grtr} on the left). After the $(s,k)$-truncation the union of facets $F'\cup F''\cup F_{i_1}\cup F_{i_{s+2}}$ is bounded by combinatorially the same simple edge-cycle. We describe this operation by the scheme on Fig. \ref{grtr} on the right.\\
\begin{figure}
\begin{center}
\includegraphics[scale=0.3]{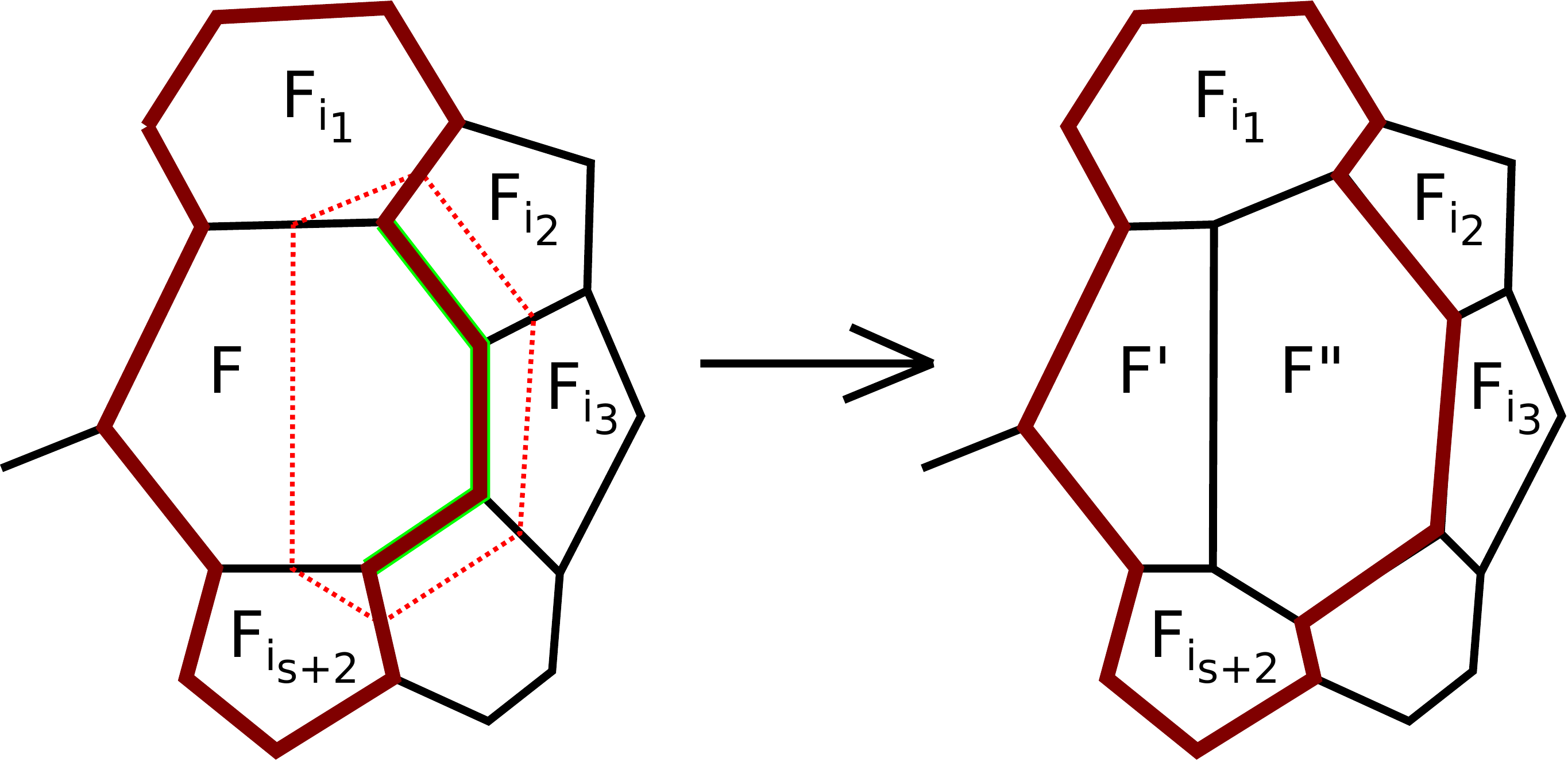}\qquad\includegraphics[scale=0.3]{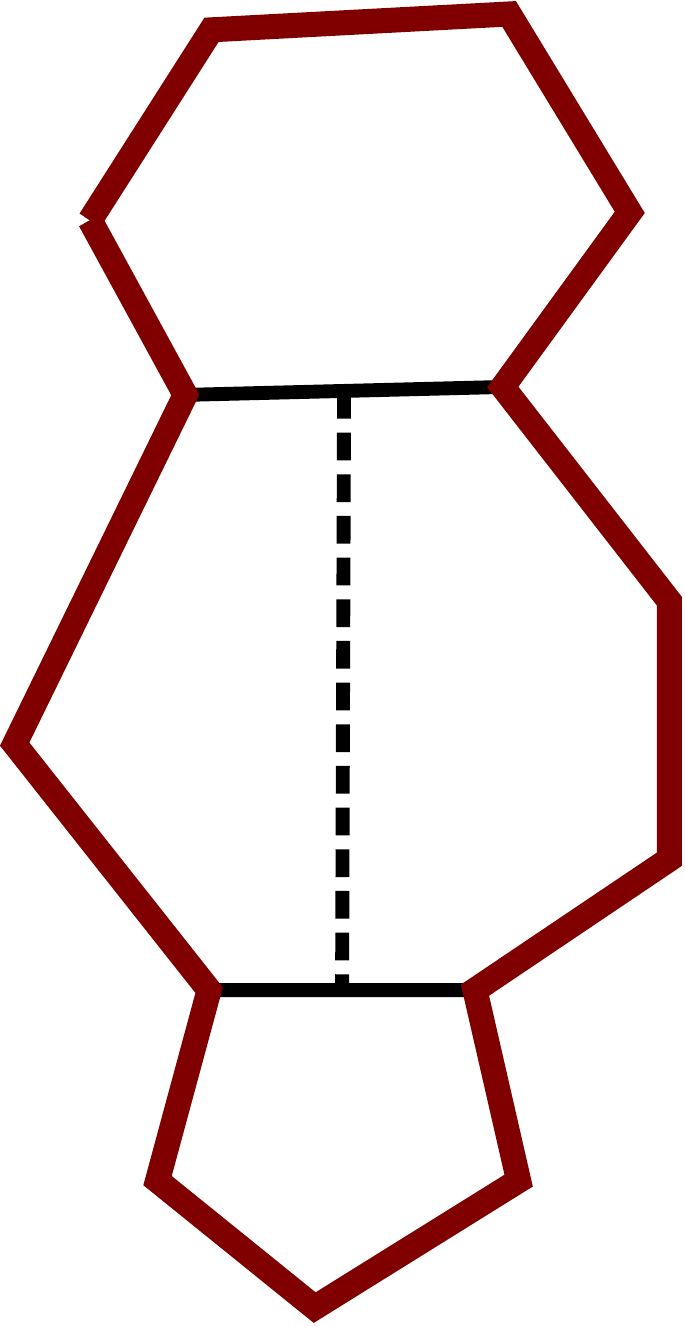}
\end{center}
\caption{$(s,k)$-truncation as a growth operation}\label{grtr}
\end{figure}
For $s=1$ as mentioned above the edge-truncation can be considered as a $(1,k)$-truncation and a $(1,t)$-truncation for two facets containing the truncated edge: an $s$-gon and a $t$-gon. This gives two different patches, which differ by one facet. 
\end{remark}

\noindent {\bf{Exercise:}} Consider the set of $k-s-2$ edges of the face $F$ that
are not adjacent to the $s$ edges defining the $(s, k)$-truncation. The
polytope $Q'$ obtained by the $(k-s-2, k)$-truncation along these edges is
combinatorially equivalent to the polytope $Q$. In particular 
\begin{itemlist}
\item The $(k-3, k)$-truncation is combinatorially equivalent
to the edge truncation;
\item The $(k-2, k)$-truncation is combinatorially equivalent
to the vertex truncation.
\end{itemlist}

\noindent{\bf Exercise:} Let $P$ be a flag $3$-polytope. Then the polytope obtained from $P$ by an $(s,k)$-truncation is flag if and only if $0<s<k-2$.

In \cite{Bu-Er15} the analog of Theorem \ref{3ptheorem} for flag polytopes was proved.
\begin{theorem}
A simple $3$-polytope is flag if and only if it is combinatorially
equivalent to a polytope obtained from the cube by a sequence of {\bf edge
truncations} and {\bf $(2,k)$-truncations, $k\geqslant 6$}.
\end{theorem}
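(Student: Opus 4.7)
The forward (``if'') implication is an induction on the length of the construction. The cube $I^3$ is flag by Proposition \ref{flagm6}. For the inductive step I use the exercise immediately preceding the theorem: an $(s,k)$-truncation of a flag simple $3$-polytope is again flag if and only if $0<s<k-2$. For edge truncations $s=1$, and since a flag polytope has no triangles (corollary to Proposition \ref{facet-belt}) every facet has $k\geqslant 4$, so $0<1<k-2$. For $(2,k)$-truncations with $k\geqslant 6$ we have $s=2$ and $k-2\geqslant 4>2$. Hence each allowed operation preserves flagness.

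For the converse I induct on the number $m$ of facets. By Proposition \ref{flagm6} every flag $3$-polytope satisfies $m\geqslant 6$, with equality only for $I^3$; this is the base case. For the inductive step, given a flag $P$ with $m\geqslant 7$, I aim to exhibit an edge $E=F_i\cap F_j$ whose straightening (defined by Lemma \ref{cor-3-s}, since flagness excludes the $3$-belt obstruction) yields a flag polytope $Q$ with fewer facets such that $P$ is recovered from $Q$ by an allowed truncation. The merged facet has $|F_i|+|F_j|-4$ edges, so the straightening inverts an edge truncation precisely when $\min(|F_i|,|F_j|)=4$ and inverts a $(2,k)$-truncation with $k=|F_i|+|F_j|-4$ when $\min(|F_i|,|F_j|)=5$. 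By Lemma \ref{Uflag} the straightening is flag iff no $4$-belt of $P$ has $F_i,F_j$ among its consecutive facets. \textbf{Case A:} $P$ has a quadrangular facet $F$. Since $P\not\simeq I^3$, Lemma \ref{4gonF} provides an edge of $F$ whose straightening is flag; the merging of a $4$-gon and an $\ell$-gon ($\ell\geqslant 4$) is the inverse of an edge truncation, and the induction proceeds.

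\textbf{Case B:} $P$ has no quadrangle. Then $p_3=p_4=0$ and Theorem \ref{pkth} forces $p_5=12+\sum_{k\geqslant 7}(k-6)p_k\geqslant 12$, so pentagons are abundant. The inductive step now reduces to the following \emph{Key Lemma}: some pentagon $F$ of $P$ is not contained in any $4$-belt. Given the Key Lemma, every edge $F\cap F'$ of $F$ lies in no $4$-belt, so straightening along it yields a flag polytope by Lemma \ref{Uflag}; and since $|F'|\geqslant 5$ the merged facet has $k=|F|+|F'|-4\geqslant 6$, giving the inverse of a $(2,k)$-truncation with $k\geqslant 6$. This also explains why the theorem's restriction $k\geqslant 6$ suffices: in Case B we never merge a pentagon with a quadrangle (which would be a $(2,5)$-truncation), while Case A is handled by edge truncations alone. \emph{The main obstacle is the Key Lemma.} I would attempt it by contradiction. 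Suppose every pentagon lies in some $4$-belt; around a pentagon $F$ with surrounding $5$-belt $(G_1,\dots,G_5)$ (Proposition \ref{facet-belt}), any $4$-belt of the form $(F,G_i,X_i,Y_i)$ must have $Y_i\in\{G_{i+2},G_{i-2}\}$ (indices mod $5$), because $G_i\cap Y_i=\varnothing$ and $G_j\cap G_i=\varnothing$ in the $5$-belt exactly when $|i-j|\in\{2,3\}$; moreover $X_i$ meets $G_i$ and $Y_i$ but is disjoint from $F$. Iterating these constraints on the pentagons adjacent to the $X_i$'s should force either an infinite descent of nested belts on $\partial P\simeq S^2$, or, via an ``innermost'' $4$-belt $\mathcal{B}$ whose disk $|\mathcal{W}_\alpha|$ (Lemma \ref{belt-lemma}) contains no further $4$-belt, a combinatorial contradiction obtained by applying Theorem \ref{pkth} to $|\mathcal{W}_\alpha|$ with the boundary contribution from $\mathcal{B}$. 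Balancing this innermost-disk count against the global lower bound $p_5\geqslant 12$ is the technical core of the proof.
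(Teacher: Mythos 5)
Your overall plan is sound, and the forward direction, the base case $m=6$, and Case~A (a quadrangular facet exists, handled by Lemma~\ref{4gonF}) are all correct, as is your bookkeeping of which straightening inverts which truncation via $|F_i|+|F_j|-4$ and the condition $\min(|F_i|,|F_j|)\in\{4,5\}$. But there is a genuine gap in Case~B, which you yourself flag: the ``Key Lemma'' (in a flag $3$-polytope with no quadrangle, some pentagon lies in no $4$-belt) is left unproven, and the converse cannot be closed without it or a suitable weakening.

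Your sketch of the Key Lemma does not yet work. Two $4$-belts on $\partial P\simeq S^2$ can cross (each can have two facets lying on the other and one facet strictly on each side of it), so a $4$-belt whose interior disk contains no $4$-belt entirely inside it can still have interior facets belonging to $4$-belts that pass into and out of the disk; the ``innermost disk'' is therefore not $4$-belt--free in the sense your descent needs. Applying Theorem~\ref{pkth} to such a disk would also require a careful account of the boundary contribution of the four belt facets, and the paper's tool for that kind of count, Lemma~\ref{loop-lemma}, assumes $p_k=0$ for $k\geqslant 8$ and $p_7\leqslant 1$, neither of which is available in your Case~B. Note also that you are aiming at a statement stronger than the induction requires: by Lemma~\ref{Uflag} it suffices to find one edge $F\cap F'$ with $F$ a pentagon and no $4$-belt of the form $(F,F',\cdot,\cdot)$, and a pentagon lying on a single $4$-belt still has three such edges. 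Targeting that weaker lemma is advisable, but it too remains unproven here; the paper only cites \cite{Bu-Er15} for this theorem, so the required combinatorial input has to be supplied in full.
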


\subsection{Construction of fullerenes by truncations}
\begin{definition}
Let $\mathfrak{F}_{-1}$ be the set of combinatorial simple polytopes with all facets pentagons and hexagons except for one singular facet quadrangle. 

Let $\mathfrak{F}$ be the set of all fullerenes.

Let $\mathfrak{F}_1$ be the set of simple polytopes with one singular facet heptagon adjacent to a pentagon such that either there are two pentagons with the common edge intersecting the heptagon and a hexagon (we will denote this fragment $F_{5567}$, see Fig. \ref{7556}), or for any two adjacent pentagons exactly one of them is adjacent to the heptagon. 
\begin{figure}
\begin{center}
\includegraphics[height=2cm]{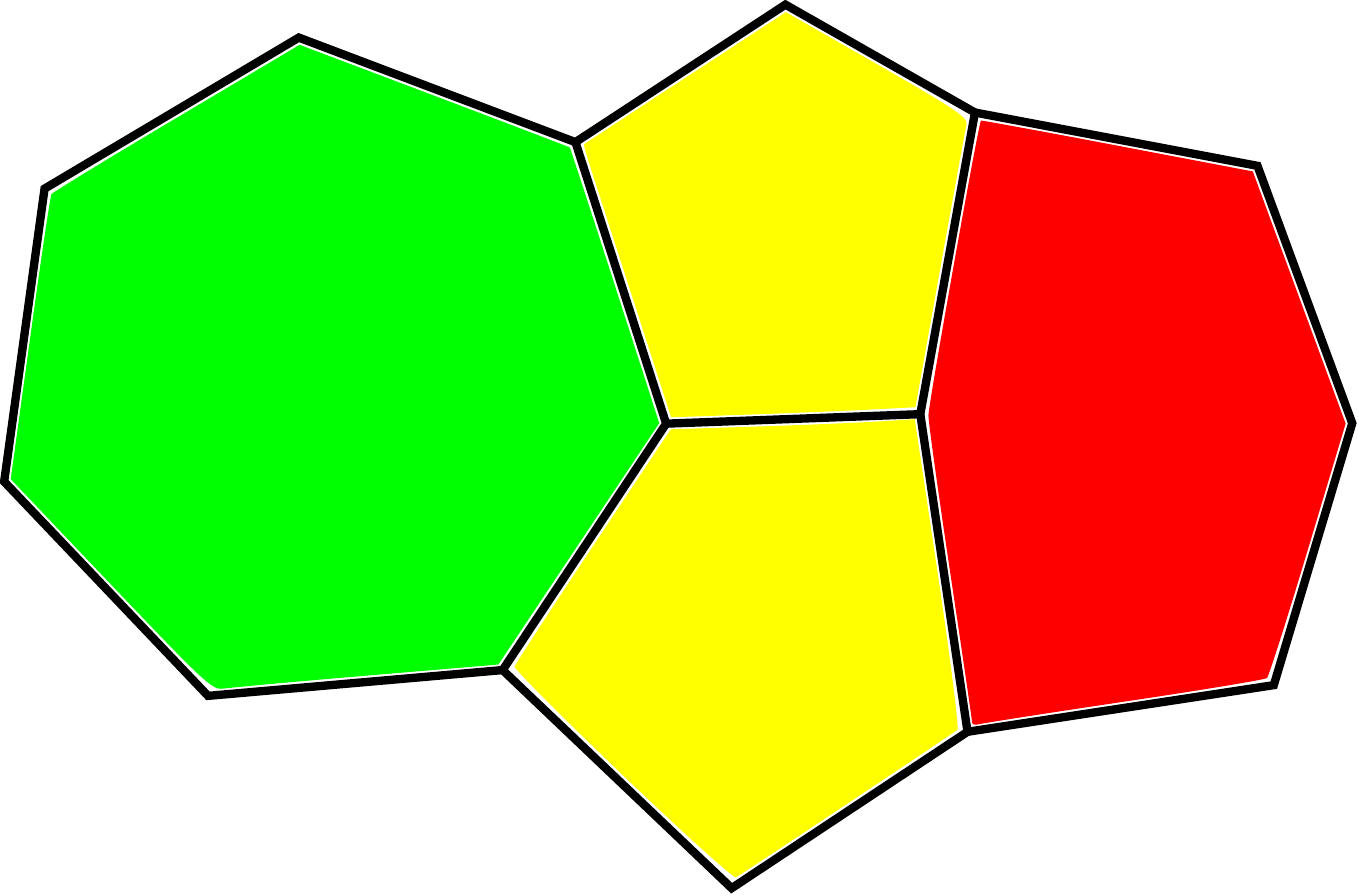}
\end{center}
\caption{Fragment $F_{5567}$}\label{7556}
\end{figure}
Set $\mathfrak{F}_s=\mathfrak{F}_{-1}\sqcup \mathfrak{F}\sqcup\mathfrak{F}_1$ to be se set of {\em singular fullerenes}\index{singular fullerene}\index{fullerene!singular fullerene}
\end{definition}

\begin{theorem} \label{trtheorem}
Any polytope in $\mathfrak{F}_s$ can be obtained from the dodecahedron by a sequence of $p_6$ truncations: $(1;4,5)$-, $(1;5,5)$-, $(2,6;4,5)$-, $(2,6;5,5)$-, $(2,6;5,6)$-, $(2,7;5,5)$-, and $(2,7;5,6)$-, in such a way that intermediate polytopes belong to $\mathfrak{F}_s$.

More precisely:
\begin{enumerate} 
\item any polytope in $\mathfrak{F}_{-1}$ can be obtained by a $(1;5,5)$- or $(1;4,5)$-truncation from a fullerene or a polytope in $\mathfrak{F}_{-1}$ respectively;
\item any polytope in $\mathfrak{F}_1$ can be obtained by a $(2,6;5,6)$- or  $(2,7;5,6)$-truncation from a fullerene or a polytope in $\mathfrak{F}_1$ respectively; 
\item any fullerene can be obtained by a  $(2,6;5,5)$-, $(2,6;4,5)$-, or $(2,7;5,5)$-truncation from a fullerene or a polytope from $\mathfrak{F}_{-1}$ or $\mathfrak{F}_1$ respectively.
\end{enumerate}
\end{theorem}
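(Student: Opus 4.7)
The plan is to induct on the face count $f_2(P)$, with base case $P = D_0$ the dodecahedron; indeed $D_0$ is the only member of $\mathfrak{F}_s$ with $f_2 = 12$, since formula (\ref{formulapk}) forces $f_2 \geq 13$ whenever $p_4 = 1$ or $p_7 = 1$ (and $f_2 = 12 + p_6 + 2$ for fullerenes). For the inductive step I would exhibit, for each $P \in \mathfrak{F}_s \setminus \{D_0\}$, an edge $E$ whose straightening inverts one of the seven listed truncations and produces a polytope $Q$ in $\mathfrak{F}_s$ with $f_2(Q) < f_2(P)$. Theorem \ref{3belts-theorem} ensures every $P \in \mathfrak{F}_s$ is flag; Theorem \ref{4belts-theorem} shows that the only possible $4$-belt surrounds the quadrangle, when one is present; and Lemmas \ref{cor-3-s} and \ref{Uflag} guarantee that a chosen straightening is defined and preserves flagness provided no forbidden $4$-belt contains $E$.

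Consider first $P \in \mathfrak{F}_{-1}$. The unique quadrangle $F$ is surrounded by the unique $4$-belt $\mathcal{B}_4 = (F_{i_1}, F_{i_2}, F_{i_3}, F_{i_4})$, and collapsing $F$ through a pair of opposite neighbors realizes the inverse of a $(1; m_1, m_2)$-truncation, where $(m_1+1, m_2+1)$ are the degrees of that pair; the cases $(6,6)$, $(5,6)$, $(5,5)$ correspond to the $(1;5,5)$-, $(1;4,5)$-, $(1;4,4)$-inverses, of which only the last is forbidden (it introduces a second quadrangle). Since there are two pairs of opposite neighbors, it suffices to show $\mathcal{B}_4$ contains at least one hexagon. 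If on the contrary all four facets of $\mathcal{B}_4$ were pentagons, Lemma \ref{loop-lemma}(2) would furnish an opposite $4$-loop $\mathcal{L}_\beta \subset \mathcal{W}_\beta$ with $l_\beta = b_5 + 2b_6 + 3b_7 = 4$; since $\mathcal{W}_\beta$ contains only pentagons and hexagons, Lemma \ref{4lemma}(b) would demand four facet-degrees summing to $18$, case (c) three summing to $12$, both impossible with minimal degree $5$, while case (a) would produce a second $4$-belt contradicting Theorem \ref{4belts-theorem}. Thus a good straightening exists: $(6,6)$ yields a fullerene via the $(1;5,5)$-inverse, and $(5,6)$ yields a smaller $\mathfrak{F}_{-1}$-polytope via the $(1;4,5)$-inverse.

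For $P \in \mathfrak{F}_1$ with unique heptagon $F^*$, the dichotomy defining $\mathfrak{F}_1$ is calibrated so that precisely one of the two allowed inverses applies: presence of the $F_{5567}$ fragment triggers the inverse of a $(2,6;5,6)$-truncation---straightening the pentagon-pentagon edge inside the fragment collapses $F^*$ to a hexagon and yields a fullerene---while the alternating-adjacency condition supplies the correct hexagonal neighbor of $F^*$ along whose straightening the $(2,7;5,6)$-inverse relocates the heptagon and produces another $\mathfrak{F}_1$-polytope. For a fullerene $P$ with $p_6(P) \geq 2$, I would search $P$ for one of three local fragments corresponding respectively to the inverses of $(2,6;5,5)$, $(2,6;4,5)$, $(2,7;5,5)$: two adjacent pentagons whose shared edge has two hexagonal endpoint-neighbors; the same configuration but with one pentagonal endpoint-neighbor and one hexagonal; and a pentagon-hexagon adjacency with two hexagonal endpoint-neighbors.

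The main obstacle, and the technical heart of the argument, is precisely this existence theorem for local fragments in every fullerene other than $D_0$, together with the verification that each inverse operation stays inside $\mathfrak{F}_s$. I expect to proceed by exhaustive case analysis on the local neighborhood of a pentagon or a hexagon, exploiting $p_5 = 12$ together with Corollaries \ref{3belts-ful} and \ref{4-belt-ful} (absence of $3$- and $4$-belts in fullerenes) and Theorem \ref{5belts-theorem} (classification of $5$-belts); the $(5,0)$-nanotube family $\{D_k\}_{k\geq 1}$ has to be treated separately, each $D_k$ containing a natural $(5,5)$-adjacency fragment of the first type whose $(2,6;5,5)$-inverse reduces $k$ by one and lands in $D_{k-1}$. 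A parallel but more delicate check is required in the $\mathfrak{F}_1$ case to certify that the resulting polytope still satisfies the precise adjacency conditions defining $\mathfrak{F}_1$, which I anticipate will require careful tracking of how the $F_{5567}$ fragment or the alternating-adjacency condition deforms under the straightening.
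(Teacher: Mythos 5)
Your overall strategy matches the paper's: induction on $f_2$, with each $P\in\mathfrak{F}_s\setminus\{D_0\}$ shown to arise from a smaller $Q\in\mathfrak{F}_s$ by inverting one of the seven truncations via a straightening. Your treatment of $\mathfrak{F}_{-1}$ is essentially complete and correct, and the sub-lemma (the quadrangle cannot be surrounded by four pentagons) is proved cleanly via Lemma \ref{4lemma}; the paper proves the same sub-lemma by a more hands-on argument (showing a facet of the opposite $4$-loop would have to be a second quadrangle), so this is a legitimate, slightly cleaner variation.

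However, there is a concrete error: you assert that each $D_k$, $k\geq 1$, contains ``a natural $(5,5)$-adjacency fragment of the first type'' (two adjacent pentagons whose shared edge has two hexagonal endpoint-neighbours) whose $(2,6;5,5)$-inverse lands in $D_{k-1}$. This is false. In the cap of $D_k$ a central pentagon is surrounded by five pentagons, and every pentagon–pentagon edge of $D_k$ has at least one endpoint incident to three pentagons; the endpoint on the boundary $10$-cycle of the cap is incident to two pentagons and one hexagon. Thus the only pentagon–pentagon fragment present is the one of Fig.~\ref{5556} (one pentagonal, one hexagonal endpoint-neighbour), so the correct inverse for $D_k$ is the $(2,6;4,5)$-truncation landing in $\mathfrak{F}_{-1}$, not the Endo–Kroto inverse landing in $D_{k-1}$. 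The paper's three-case analysis covers $D_k$ without any special treatment, so the separate handling you propose is both unnecessary and, as stated, incorrect. A smaller slip: the $p$-vector formula alone gives $f_2=11+p_6$ for $\mathfrak{F}_{-1}$-type sequences, so it does not by itself force $f_2\geq 13$ when $p_4=1$; it is the sub-lemma that rules out $p_6\in\{0,1\}$.

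The more substantial gap is that you leave unproved exactly what the paper spends most of its effort on: the existence of a usable local fragment in every fullerene not $D_0$ (the paper's three-way case split on the fragments of Figs.~\ref{5566}, \ref{5556}, \ref{75}), and — more delicately — the verification that the resulting polytope stays in $\mathfrak{F}_1$ in cases (2) and (3). The latter requires Lemma \ref{555lemma} and the adjacency-tracking arguments around Figs.~\ref{755}, \ref{75}, \ref{5666}, which rule out unwanted pentagon pairs near the new heptagon. Merely anticipating an ``exhaustive case analysis'' is not a proof; this is the technical core of the theorem.
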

\begin{figure}
\begin{center}
\includegraphics[height=7cm]{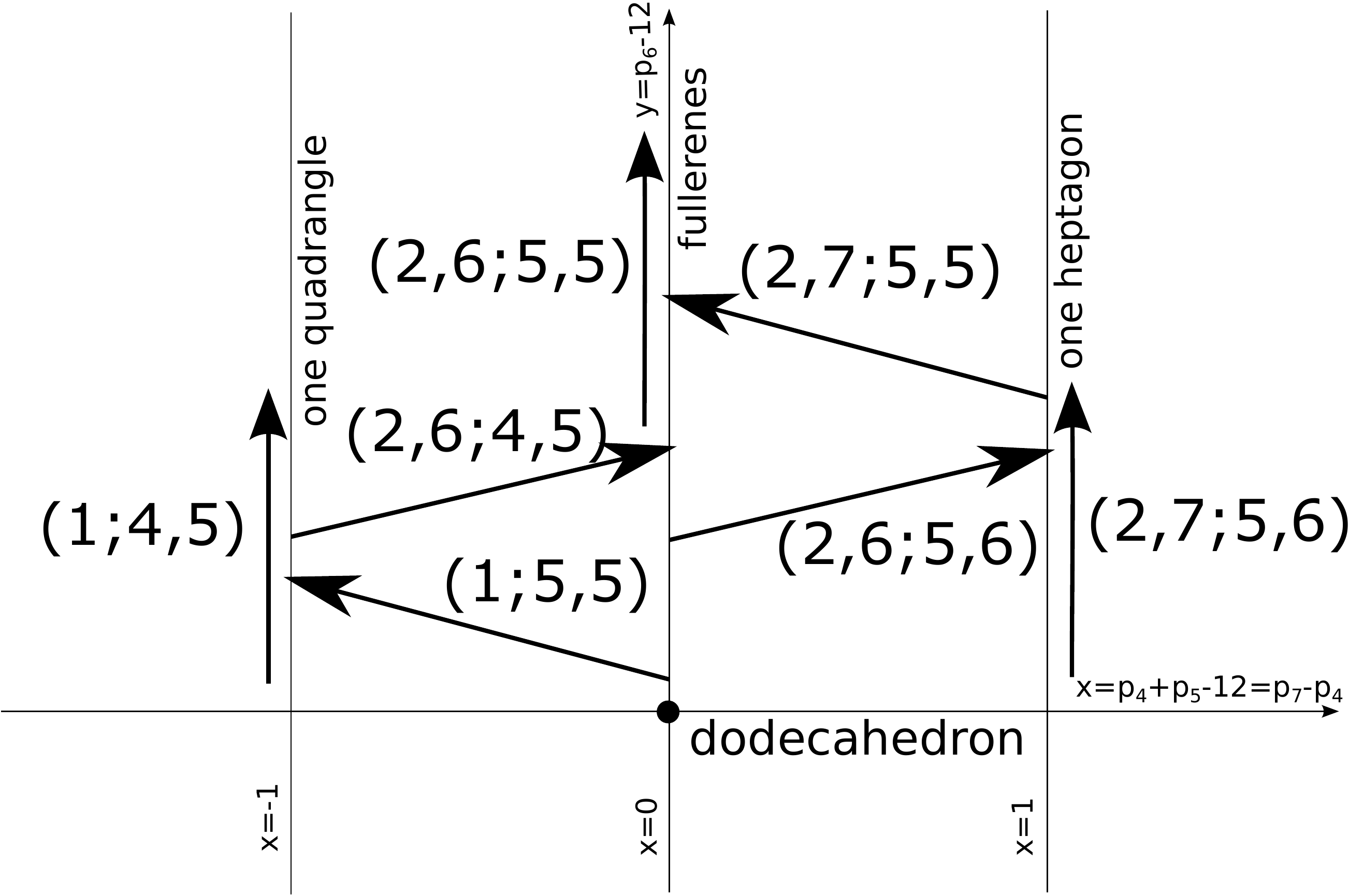}
\end{center}
\caption{Scheme of the truncation operations}\label{trfig}
\end{figure}
\begin{proof}
By Theorems \ref{3belts-theorem} and \ref{4belts-theorem} any polytope in $\mathfrak{F}_s$ has no $3$-belts and the only possible  $4$-belt surrounds a quadrangular facet. Hence for any edge the operation of straightening is well-defined. 

For (1) we need the following result.
\begin{lemma} 
There is no polytopes in $\mathfrak{F}_{-1}$ with the quadrangle surrounded by pentagons.
\end{lemma}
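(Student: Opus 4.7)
The plan is to assume for contradiction that $P \in \mathfrak{F}_{-1}$ has its unique quadrangle $F$ surrounded by four pentagons and derive an arithmetic contradiction from the $p_k$-identity together with Lemmas \ref{loop-lemma} and \ref{4lemma}.

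First I would record the global structure of $P$. Since $p_3=0$, $p_4=1$, and $p_k=0$ for $k\geqslant 7$, Theorem \ref{pkth} gives $2+p_5=12$, so $p_5=10$. The hypotheses $p_3=0$, $p_4\leqslant 2$, $p_7\leqslant 1$, $p_k=0$ for $k\geqslant 8$ are all satisfied, so Theorem \ref{3belts-theorem} says that $P$ is flag and Theorem \ref{4belts-theorem} says that the unique $4$-belt of $P$ is exactly the belt $\mathcal{B}$ surrounding $F$. By our assumption, $\mathcal{B}=(F_1,F_2,F_3,F_4)$ consists of four pentagons, so $b_5=4$ and $b_4=b_6=b_7=0$.

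Next I would apply Lemma \ref{loop-lemma}(2) to $\mathcal{B}$: on one side $\mathcal{B}$ surrounds the $4$-gon $F$, so on the other side it borders an $l$-loop $\mathcal{L}$ with $l=b_5+2b_6+3b_7=4$. All facets of $\mathcal{L}$ lie in the disc component not containing $F$, so none of them is the unique quadrangle; hence every facet of $\mathcal{L}$ is a pentagon or a hexagon, and in particular has $m_i\geqslant 5$. Since $\mathcal{B}$ is the only $4$-belt of $P$ and $\mathcal{L}\neq\mathcal{B}$, the loop $\mathcal{L}$ is not a $4$-belt.

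Finally I would invoke Lemma \ref{4lemma} for the $4$-loop $\mathcal{L}$. In case (2) the loop is simple and surrounds an edge, forcing $\sum_{j=1}^{4}m_{i_j}=l+14=18$; but four facets of size at least $5$ sum to at least $20$. In case (3) we have $F_{i_j}=F_{i_{j+2}}$ for some $j$ and $m_{i_{j-1}}+m_{i_j}+m_{i_{j+1}}=l+8=12$; but three facets of size at least $5$ sum to at least $15$. Both cases are arithmetically impossible, so no such polytope exists. I expect the only real care will be bookkeeping: verifying that the two disc components cut out by $\mathcal{B}$ are correctly identified so that $F$ lies on one side and $\mathcal{L}$ lives entirely on the other, which is what guarantees that every facet of $\mathcal{L}$ is a pentagon or a hexagon and makes the numerical contradiction immediate.
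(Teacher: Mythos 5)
Your proof is correct, and it takes a slightly different route than the paper's. The paper explicitly rules out the non-simple case for $\mathcal{L}$ by exhibiting a second $4$-belt $(F,F_{i_1},F_{j_1},F_{i_3})$, and then in the simple case argues geometrically that $F_{j_2}$ must be a quadrangle (because each facet of $\mathcal{L}$ has exactly two edges on the common boundary $\gamma$, so the two ``side'' facets of the loop surrounding the edge have only $2+2=4$ edges), contradicting uniqueness of the quadrangle. You instead apply Lemma~\ref{loop-lemma}(2) to get the bordering $4$-loop $\mathcal{L}$ with $l_\beta = b_5 = 4$, and then invoke Lemma~\ref{4lemma} as a black box: case (1) is excluded by the uniqueness of the $4$-belt from Theorem~\ref{4belts-theorem}, while cases (2) and (3) give $\sum m_{i_j}=18$ and $m_{i_{j-1}}+m_{i_j}+m_{i_{j+1}}=12$, both impossible when every facet of $\mathcal{L}$ has at least $5$ sides. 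This is cleaner in that it treats the simple and non-simple cases of $\mathcal{L}$ uniformly via the already-proved formulas of Lemma~\ref{4lemma}, at the cost of invoking slightly heavier machinery than the paper's ad hoc argument. The preliminary remark that $p_5=10$ is not needed; everything you actually use is that $p_3=p_7=0$, $p_4=1$, and $p_k=0$ for $k\geqslant 8$, which gives flagness and the constraint $m_i\geqslant 5$ for facets in $\mathcal{W}_\beta$.
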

\begin{figure}
\begin{center}
\includegraphics[height=5cm]{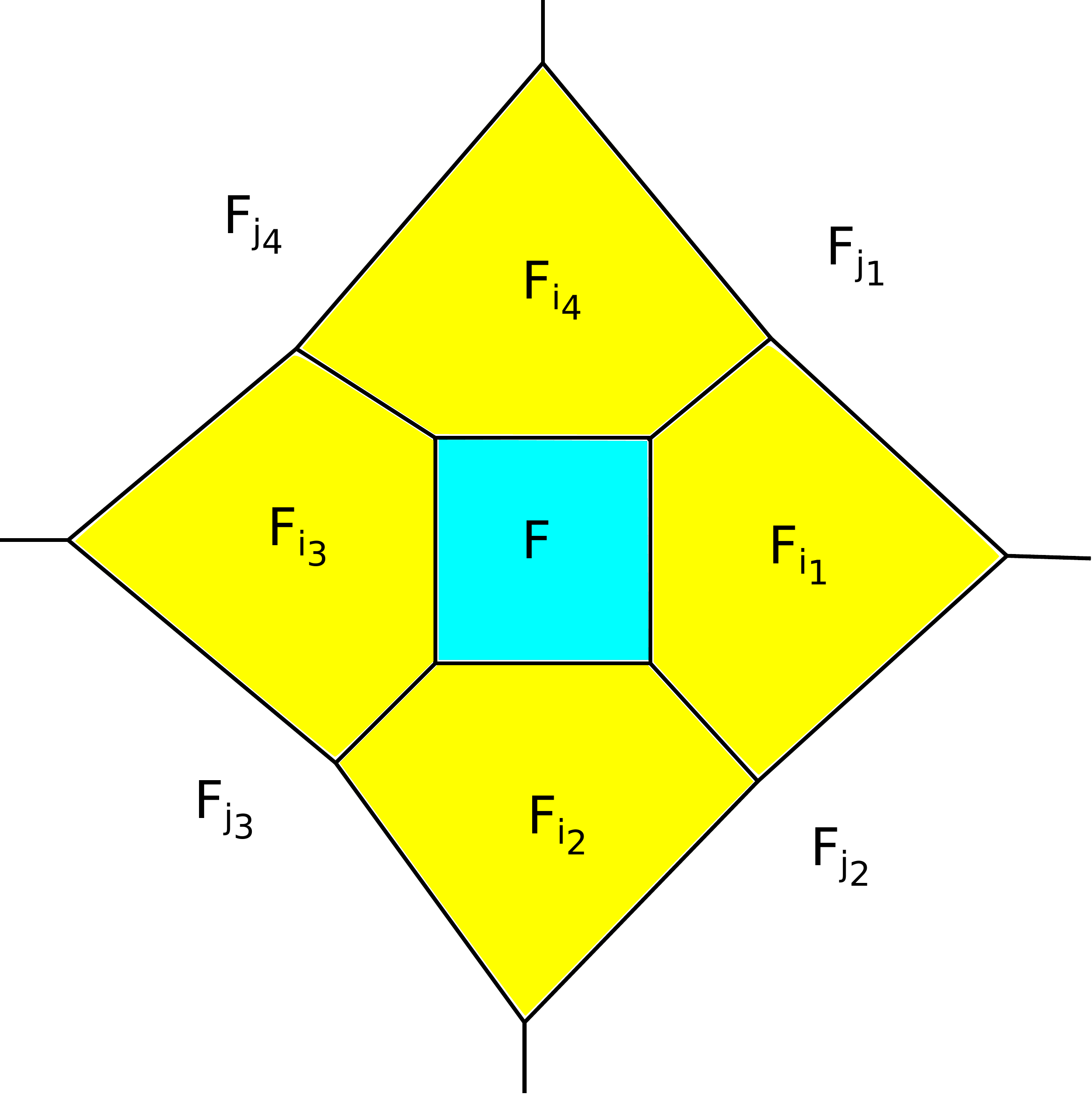}
\end{center}
\caption{Quadrangle surrounded by pentagons}\label{45555}
\end{figure}
\begin{proof}
Let the quadrangle $F$ be surrounded by pentagons $F_{i_1}$, $F_{i_2}$,$F_{i_3}$, and $F_{i_4}$ as drawn on Fig. \ref{45555}.  By Theorem \ref{4belts-theorem} we have the $4$-belt $\mathcal{B}=(F_{i_1},F_{i_2},F_{i_3},F_{i_4})$ surrounding $F$, and there are no other $4$-belts.  Let $\mathcal{L}=(F_{j_1},F_{j_2},F_{j_3},F_{j_4})$ be a $4$-loop that borders $\mathcal{B}$ along its boundary component different from $\partial F$. Its consequent facets are different.  If $F_{j_1}=F_{j_3}$, then we obtain a $4$-belt $(F,F_{i_1},F_{j_1},F_{i_3})$, which is a contradiction. Similarly $F_{j_2}\ne F_{j_4}$. Hence $\mathcal{L}$ is a simple $4$-loop. Since it is not a $4$-belt its two opposite facets intersect, say $F_{j_1}\cap F_{j_3}\ne\varnothing$. Then $F_{j_1}\cap F_{j_2}\cap F_{j_3}$ is a vertex and $F_{j_2}$ is a quadrangle. A contradiction. This proves the lemma. 
\end{proof}
Thus, for any polytope $P$ in $\mathfrak{F}_{-1}$ its quadrangle $F$ is adjacent to some hexagon $F_i$ by some edge $E$. Now straighten the polytope $P$ along the edge of $F$ adjacent to $E$ to obtain a new polytope $Q$ with a pentagon instead of $F_i$ and a pentagon or a quadrangle instead of the facet $F_j$ adjacent to $F$ by the edge of $F$ opposite to $E$. In the first case $Q$ is a fullerene and $P$ is obtained from $Q$ by a $(1;5,5)$-truncation. In the second case $Q\in\mathfrak{F}_{-1}$ and $P$ is obtained from $Q$ by a $(1;4,5)$-truncation. This proves (1).   

To prove (2) note that if $P\in\mathfrak{F}_1$ contains the fragment $F_{5567}$, then straightening along the common edge of pentagons gives a fullerene $Q$ such that $P$ is obtained from $Q$ by a $(2,6;5,6)$-truncation.

\begin{lemma}\label{555lemma}
If $P\in\mathfrak{F}_1$ does not contain the fragment $F_{5567}$, then 
\begin{enumerate}
\item $P$ does not contain fragments on Fig. \ref{555};
\item for any pair of adjacent pentagons any of them does not intersect any other pentagons.  
\end{enumerate}
\end{lemma}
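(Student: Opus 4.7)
The plan is to exploit the bipartite structure forced on pentagon-pentagon adjacencies by the hypothesis, together with flagness (Theorem \ref{3belts-theorem}) and the absence of $4$-belts (Theorem \ref{4belts-theorem}) for polytopes in $\mathfrak{F}_1$. Since $P$ does not contain the fragment $F_{5567}$, the defining condition of $\mathfrak{F}_1$ forces that for every pair of adjacent pentagons exactly one is adjacent to the singular heptagon $F_7$. Label the pentagons adjacent to $F_7$ as $A$-type and the remaining pentagons as $B$-type; then every pentagon-pentagon adjacency runs between an $A$-pentagon and a $B$-pentagon. By Theorems \ref{3belts-theorem} and \ref{4belts-theorem}, $P$ is flag with no $3$-belts and no $4$-belts; in particular any two intersecting facets meet along a common edge, and any three pairwise intersecting facets share a common vertex.

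For (1), I would rule out each fragment on Fig.~\ref{555} in turn. The core case is three pentagons $F_a, F_b, F_c$ meeting at a common vertex: flagness forces them to be pairwise adjacent, so the restriction of the $A/B$ labelling to $\{F_a, F_b, F_c\}$ must produce a proper $2$-colouring of the triangle — impossible, since $K_3$ is an odd cycle. Any remaining configuration in Fig.~\ref{555} (for instance, a pentagon adjacent to two further pentagons that are themselves adjacent, or a fan configuration near $F_7$) is reduced to this case by observing that flagness turns the stated pairwise intersections into triple intersections at vertices, after which the parity obstruction applies.

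For (2), let $F_a, F_b$ be a pair of adjacent pentagons and, by the dichotomy, assume $F_a$ is of type $A$ and $F_b$ of type $B$. Suppose, toward contradiction, that $F_a$ also intersects a pentagon $F_c \neq F_b$; then $F_c$ is of type $B$, and (1) forces $F_b \cap F_c = \varnothing$ (else $F_a, F_b, F_c$ form the triangle fragment). Consider the $5$-belt surrounding $F_a$ (Proposition \ref{facet-belt}). Each neighbour of $F_7$ in this belt is forced to be a hexagon, for if such a neighbour $G$ were a pentagon then $(F_a, G)$ would be two adjacent pentagons both adjacent to $F_7$, violating the dichotomy. The remaining two slots of the belt are cyclically adjacent and must accommodate both $F_b$ and $F_c$, so $F_b \cap F_c$ is an edge of the belt, contradicting $F_b \cap F_c = \varnothing$. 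Next suppose $F_b$ intersects a further pentagon $F_d \neq F_a$: then $F_d$ is of type $A$, and $F_a \cap F_d = \varnothing$ (otherwise the pair $(F_a, F_d)$ consists of two $A$-pentagons, or equivalently $\{F_a, F_b, F_d\}$ is the triangle fragment ruled out by (1)). Then the cyclic sequence $(F_a, F_b, F_d, F_7)$ has consecutive intersections along edges while the diagonal intersections $F_a \cap F_d$ and $F_b \cap F_7$ are empty, so it is a genuine $4$-belt, contradicting Theorem \ref{4belts-theorem}.

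The main obstacle is the belt analysis in the first subcase of (2): one must pin down exactly which facets of the $5$-belt around $F_a$ are hexagons, and it is precisely the role of the heptagon $F_7$ (via the dichotomy) that pins down the two belt-neighbours of $F_7$ as hexagons and then leaves only two adjacent slots for $F_b$ and $F_c$. The other potential difficulty lies in enumerating the fragments in Fig.~\ref{555}, but each such fragment funnels into either the parity obstruction on pentagon-pentagon adjacencies or a forbidden $3$- or $4$-belt, so no genuinely new combinatorics is required beyond the ingredients listed above.
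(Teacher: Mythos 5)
Your parity argument for the left-hand fragment of Fig.~\ref{555} (three pentagons meeting at a common vertex) is correct and matches the paper's argument. Your treatment of the right-hand fragment, however, has a genuine gap. That fragment consists of a pentagon $F_j$ meeting two further pentagons $F_i$ and $F_k$ along \emph{non-adjacent} edges. In that situation $F_i\cap F_k=\varnothing$: the edges $F_i\cap F_j$ and $F_j\cap F_k$ share no vertex, so $F_i\cap F_j\cap F_k$ is not a vertex, and the absence of $3$-belts then forces $F_i\cap F_k=\varnothing$. There is thus no triple of pairwise intersecting pentagons, and your claim that ``flagness turns the stated pairwise intersections into triple intersections at vertices'' is simply false for this fragment; the odd-cycle parity obstruction does not apply.

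The paper instead rules out this fragment by a short case analysis on how the heptagon $F$ meets $\{F_i,F_j,F_k\}$: by the dichotomy either $F$ meets $F_i$ and $F_k$ but not $F_j$, or it meets $F_j$ but neither $F_i$ nor $F_k$. In the first case $(F,F_i,F_j,F_k)$ is a $4$-belt, contradicting Theorem~\ref{4belts-theorem}; in the second case $F\cap F_j$ is one of the three remaining edges of $F_j$, each of which shares a vertex with $F_i\cap F_j$ or $F_j\cap F_k$, forcing $F$ to meet $F_i$ or $F_k$ after all. Interestingly, the belt and $4$-belt arguments you give for part~(2) contain the substance of a correct alternative route: your proof of (2) uses only the left-fragment exclusion, and the right-fragment exclusion would then follow from (2). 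But as written you derive (2) from (1), and your stated argument for (1) is incomplete, so the logical order would need to be rearranged (left fragment first, then (2), then deduce the right fragment). Note also that the paper's own derivation of (2) from (1) is one line: if a member of an adjacent pentagon pair meets a further pentagon along an adjacent edge of itself you get the left fragment, otherwise the right; so your longer belt argument for (2) is only needed if it is being used to replace the missing right-fragment case of (1).
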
 
\begin{figure}
\begin{center}
\includegraphics[height=3cm]{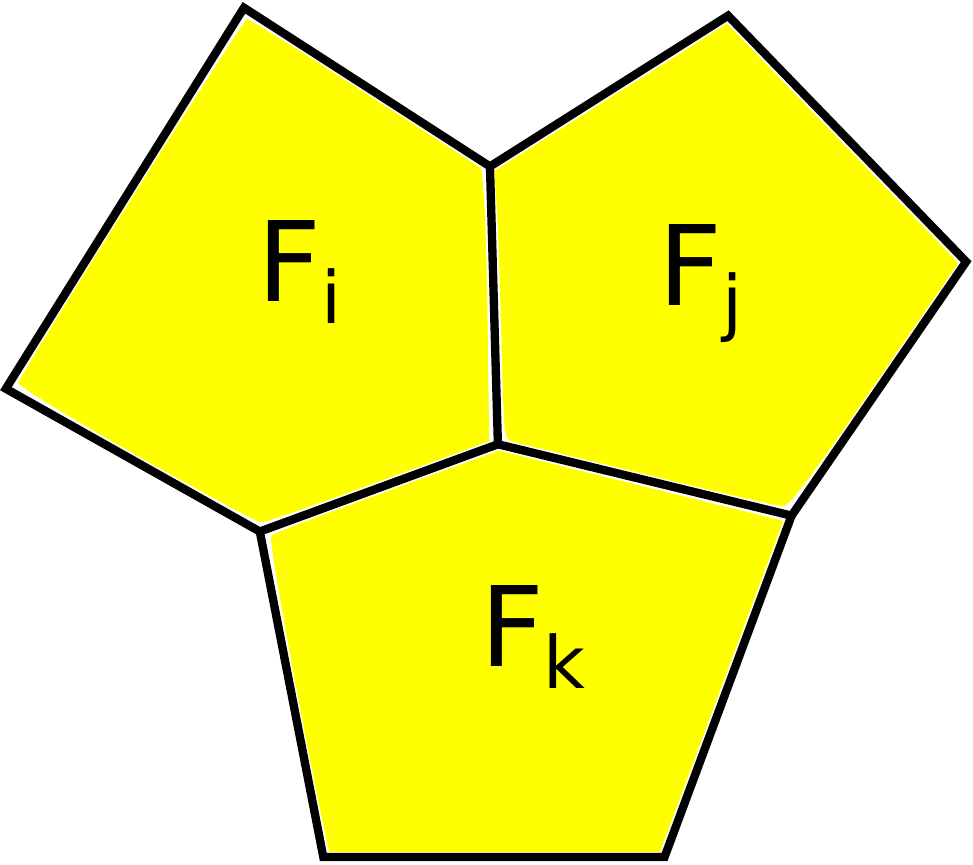}\;\includegraphics[height=3cm]{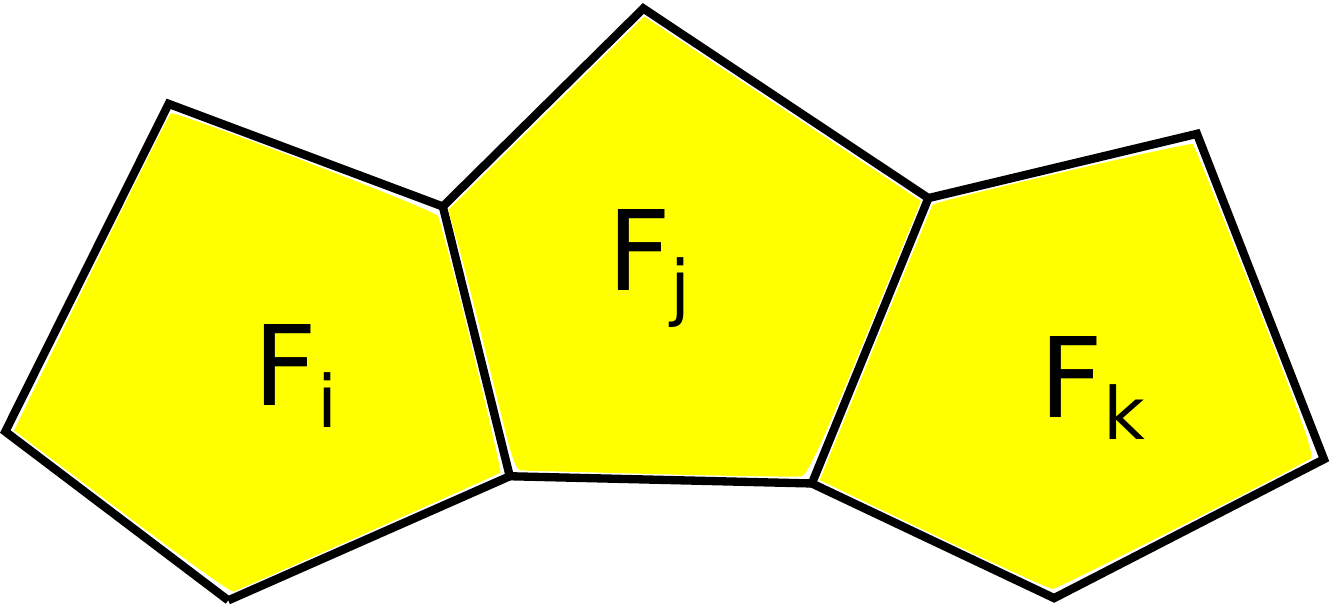}
\end{center}
\caption{Fragments that can not be present on the polytope in $\mathfrak{F}_1$ without the fragment $F_{5567}$}\label{555}
\end{figure}

\begin{proof}
Let $F_i$, $F_j$, $F_k$ be pentagons with a common vertex. Then for the pair $(F_i,F_j)$ exactly one pentagon intersects the heptagon $F$, say $F_i$. Also for the pair $(F_j,F_k)$ exactly one pentagon intersects $F$. This should be $F_k$. For the pair $(F_i,F_k)$ this is a contradiction.  

Let the pentagon $F_j$ intersects pentagons $F_i$ and $F_k$ by non-adjacent edges as shown in Fig. \ref{555} on the right. The heptagon $F$ should intersect exactly one pentagon of each pair $(F_i,F_j)$ and $(F_j,F_k)$. Then either it intersects $F_i$ and $F_k$ and does not intersect $F_j$, or it intersects $F_j$ and does not intersect $F_i$ and $F_k$. By Theorem \ref{3belts-theorem} $P$ has no $3$-belts; hence $F_i\cap F_k=\varnothing$. In the first case we obtain the $4$-belt $(F,F_i,F_j,F_k)$, which contradicts Theorem \ref{4belts-theorem}. In the second case $F$ intersects $F_j$ by one of the three edges different from $F_i\cap F_j$ and $F_j\cap F_k$. But any of these edges intersects either $F_i$, or $F_k$, which is a contradiction. 

Thus we have proved part (1) of the lemma. Let some pentagon of the pair of adjacent pentagons $(F_i,F_j)$, say $F_j$, intersects some other pentagon $F_k$. If the edges of intersection are adjacent in $F_j$, then we obtain the fragment on Fig. \ref{555} on the left. Else we obtain the fragment on Fig. \ref{555} on the right. A contradiction. This proves part (2) of the lemma. 
\end{proof}
Now assume that $P$ does not contain the fragment $F_{5567}$.

Let $(F_i,F_j)$ be a pair of two adjacent pentagons with $F_i$ intersecting the heptagon $F$. Then by Lemma \ref{555lemma} we obtain the fragment  on Fig. \ref{755} a). Since by Proposition \ref{34belts} the pair of adjacent facets is surrounded by a belt, the adjacent pentagons do not intersect other pentagons and exactly one of them intersects the heptagon. The straightening along the edge $F_i\cap F_p$ gives  a polytope $Q$ such that $P$ is obtained from $Q$ by a $(2,7;5,6)$-truncation. $Q$ has all facets pentagons and hexagons except for one heptagon adjacent to a pentagon. $Q$ contains the fragment $F_{5567}$; hence it belongs to $\mathfrak{F}_1$.
\begin{figure}
\begin{center}
\begin{tabular}{cc}
\includegraphics[height=3cm]{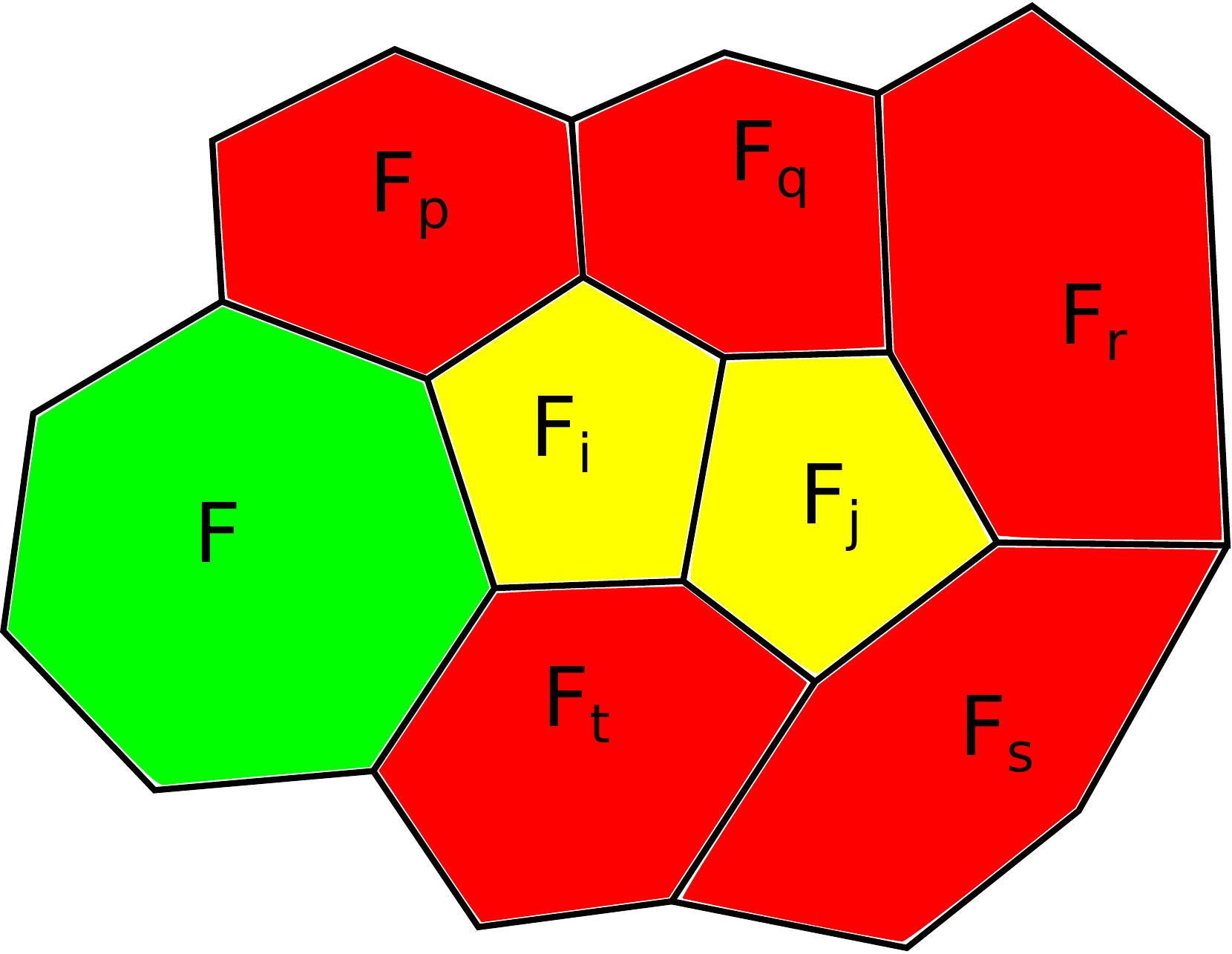}&\includegraphics[height=3cm]{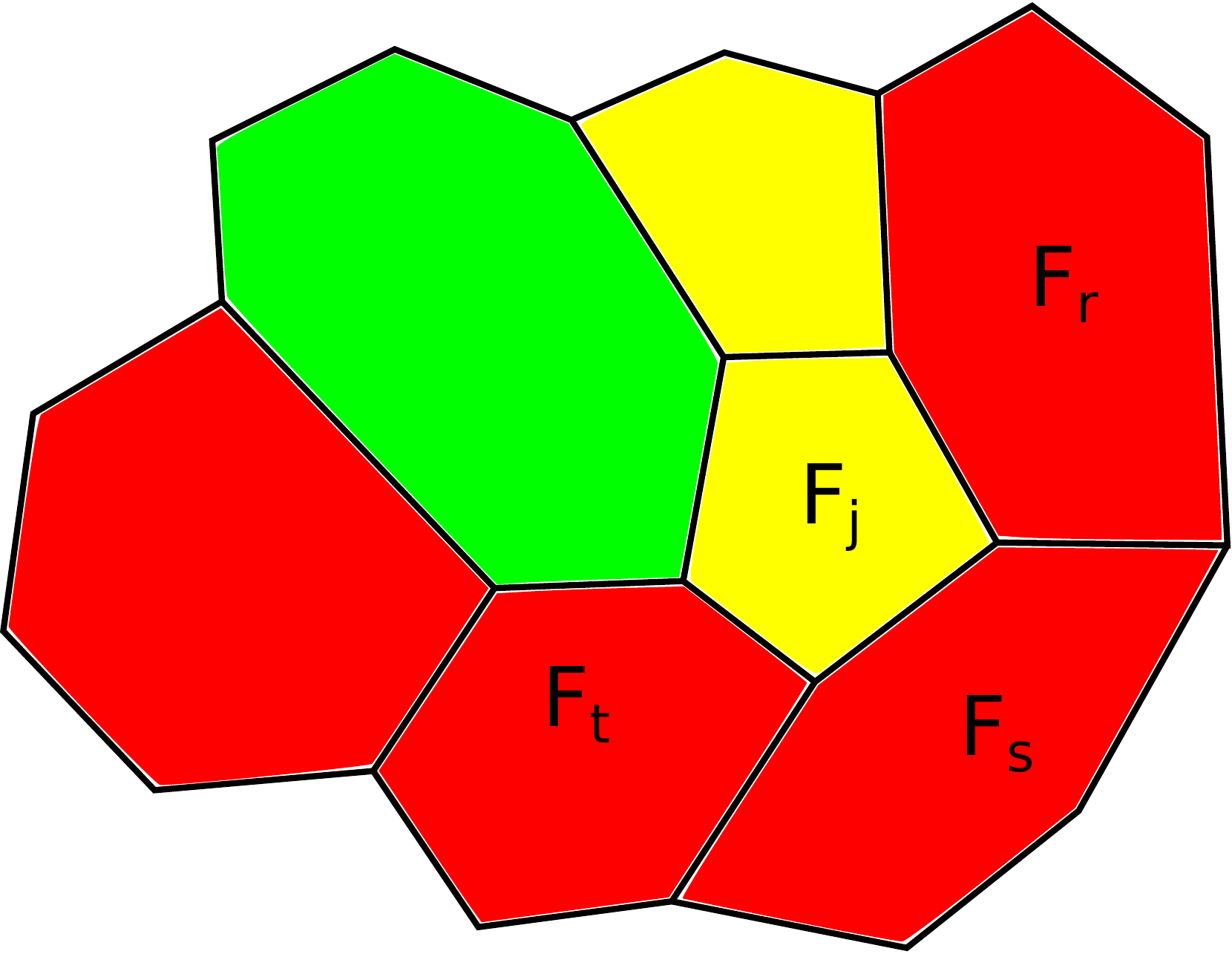}\\
a)&b)
\end{tabular}
\end{center}
\caption{a) facets surrounding the pair of adjacent pentagons; b) the same fragment after the straightening}\label{755}
\end{figure}
\begin{figure}
\begin{center}
\begin{tabular}{cc}
\includegraphics[height=4cm]{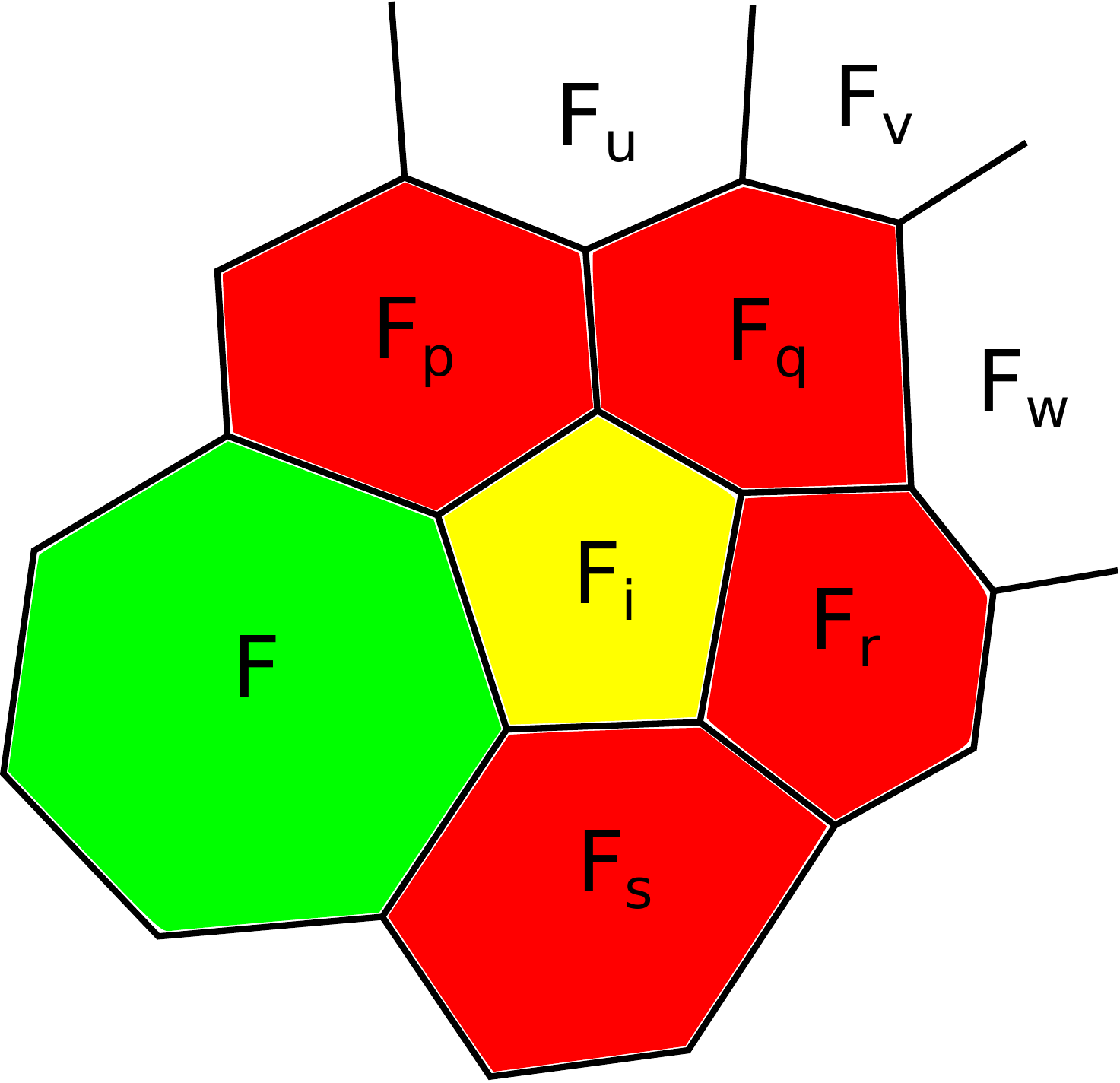}&\includegraphics[height=4cm]{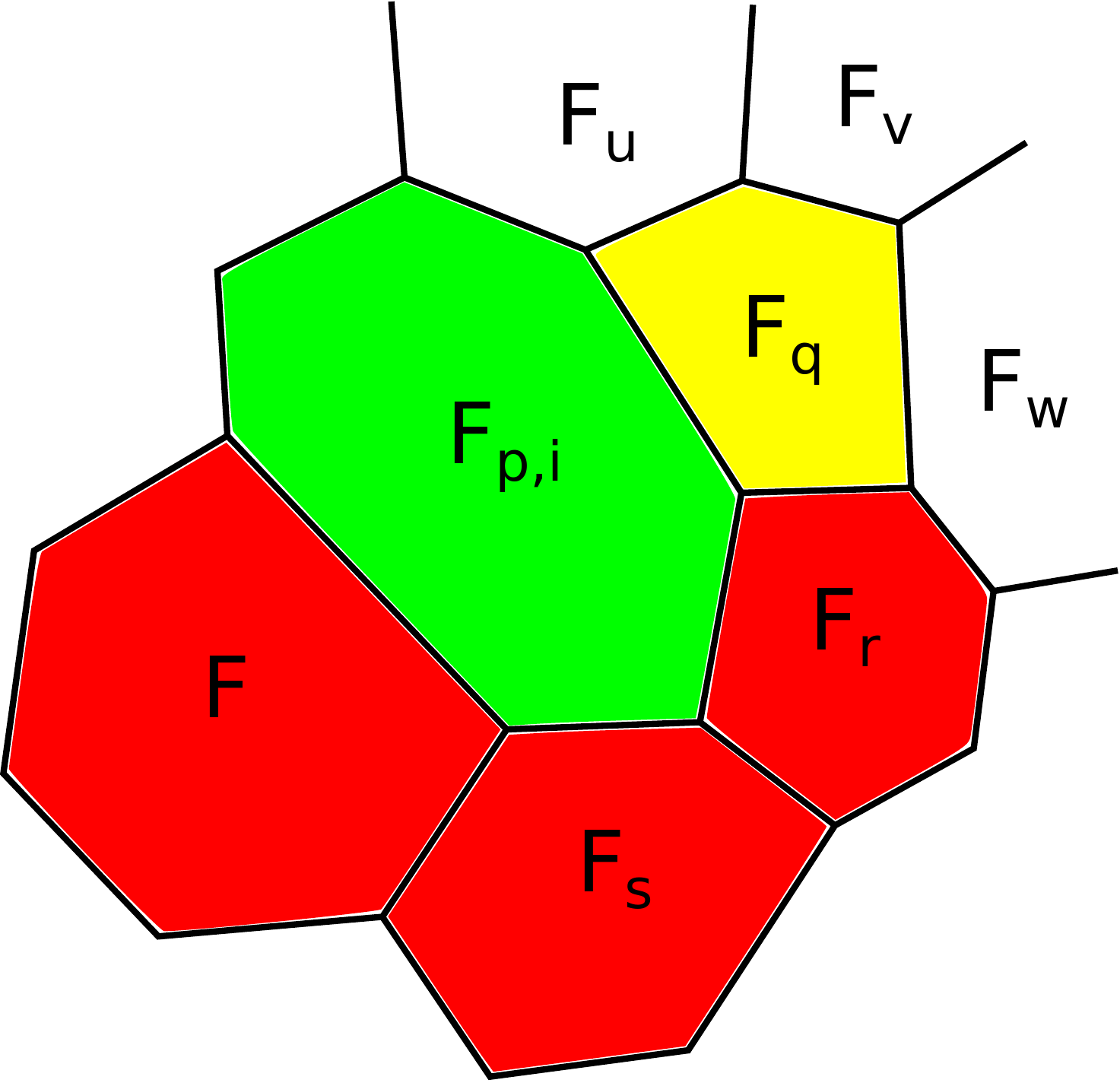}\\
a)&b)
\end{tabular}
\end{center}
\caption{a) facets surrounding a pentagon adjacent to the heptagon; b) the same fragment after the straightening}\label{75}
\end{figure}

Now let $P$ have no adjacent pentagons. Consider a pentagon adjacent to the heptagon $F$. Then it is surrounded by a $5$-belt $\mathcal{B}$ consisting of the heptagon and $4$ hexagons (Fig. \ref{75} a). The straightening along the edge $F_p\cap F_i$ gives a simple polytope $Q$ with the fragment on Fig. \ref{75} b) instead the fragment on Fig. \ref{75} a). The polytope $Q$ has all facets pentagons and hexagons except for one heptagon $F_{p,i}$ adjacent to the pentagon $F_q$.  Then $P$ is obtained from $Q$ by a $(2,7;5,6)$-truncation. We claim that $Q\in \mathfrak{F}_1$. Indeed, if $Q$ has the fragment $F_{5567}$, it is true.  If $Q$ has no such fragments consider two adjacent pentagons of $Q$. The polytopes $P$ and $Q$ have the same structure outside the fragments in consideration; hence  $Q$ has the same pentagons as $P$ except for $F_q$, which appeared instead of $F_i$. Also $P$ has all pentagons isolated; hence one of the adjacent pentagons is $F_q$. The second pentagon $F_t$ should be adjacent to the hexagon $F_q$ in $P$; hence it should be one of the facets $F_u$, $F_v$, or $F_w$ on Fig. \ref{75} a). Each of these facets is different from $F$, since they lie outside the $5$-belt $\mathcal{B}$ containing $F$. And in each case the pentagon $F_t$ is isolated in $P$ by assumption. 

If $F_t=F_u$, then $F_v$ is a hexagon, since $F_v\ne F$ and $F_v$ is not a pentagon. Then $Q$ contains the fragment $F_{5567}$, which is a contradiction. Thus $F_u$ is a hexagon.

If $F_t$ is one of the facets $F_v$ and $F_w$, then the other facet is a hexagon and there are no pairs of adjacent pentagons in $Q$ other than $(F_q,F_t)$. Each of the facets $F_v$, $F_w$ in $Q$  belongs to the $5$-belt surrounding $F_q$ together with $F_{p,i}$ and is not successive with it; hence $F_v$ and $F_w$ do not intersect $F_{p,i}$ in $Q$. Thus $F_t\cap F_{p,i}=\varnothing$ and $Q\in \mathfrak{F}_1$.
This proves (2).

To prove (3) consider a fullerene $P$. If it contains the fragment on Fig. \ref{5566} a) then the straightening along the edge $F_i\cap F_j$ gives a fullerene $Q$ such that $P$ is obtained from $Q$ by a $(2,6;5,5)$-truncation (the Endo-Kroto operation). Let $P$ contain no such fragments. 

\begin{figure}
\begin{center}
\begin{tabular}{cc}
\includegraphics[height=4cm]{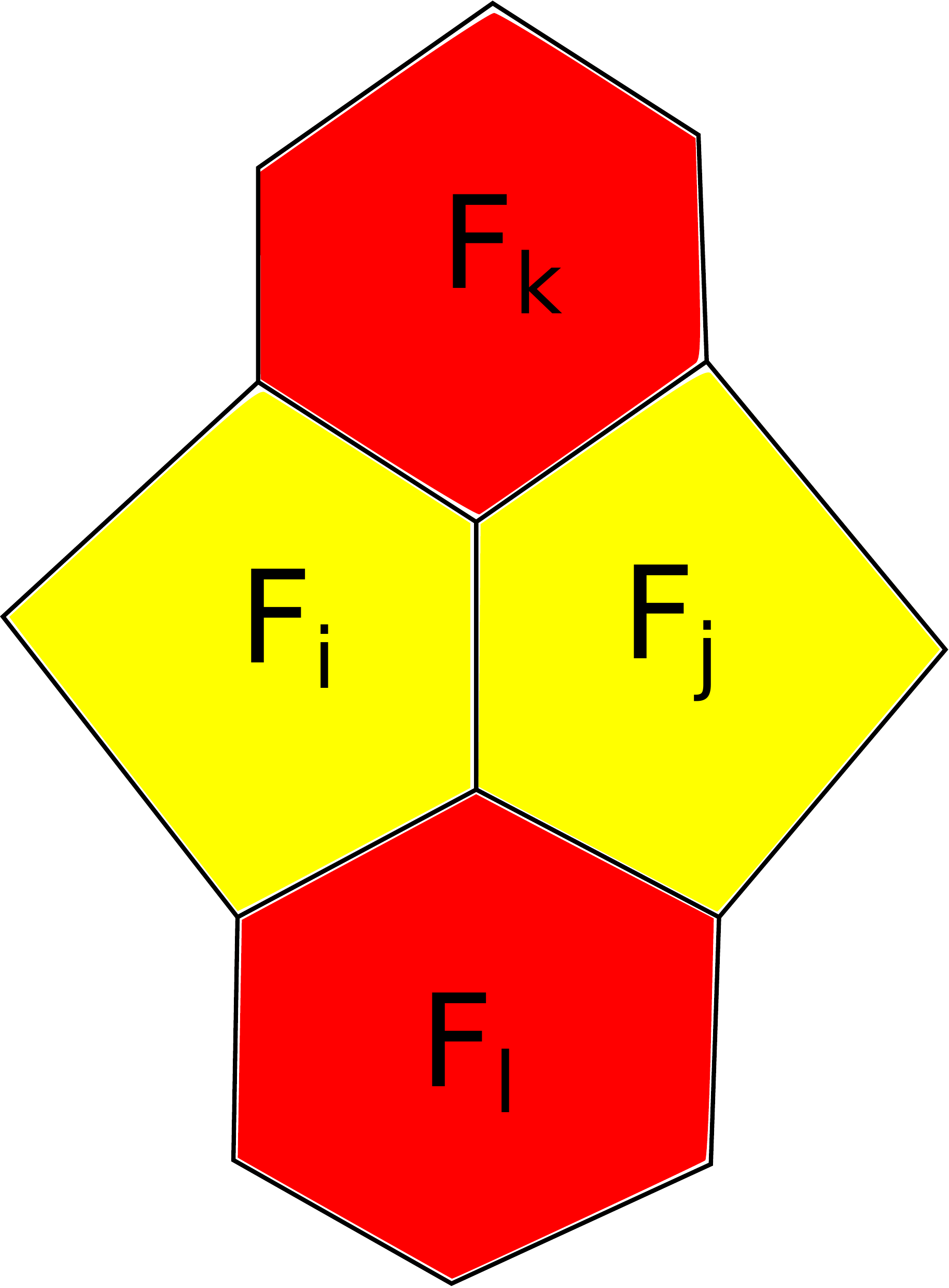}&\includegraphics[height=4cm]{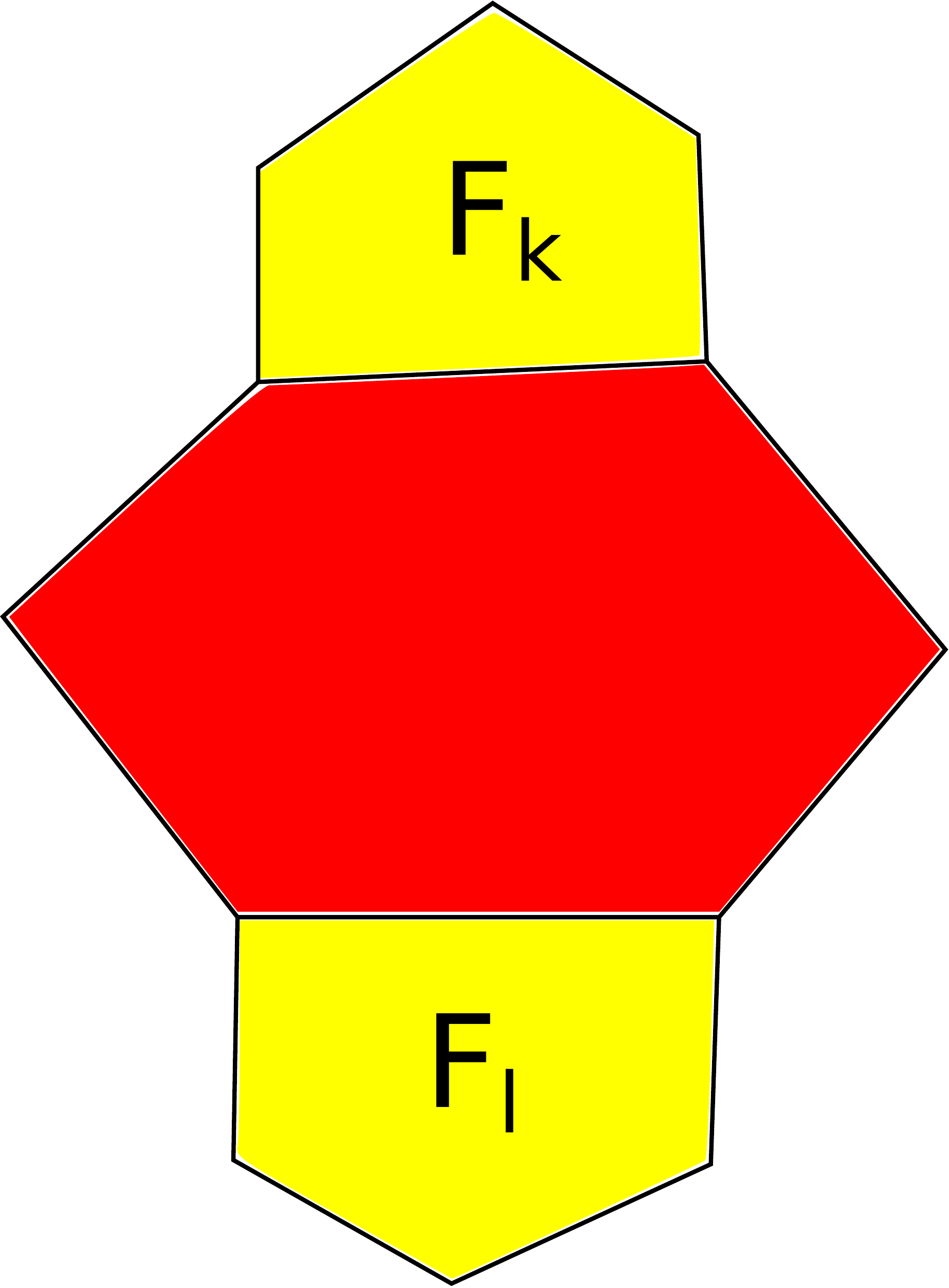}\\
a)&b)
\end{tabular}
\end{center}
\caption{a) Two adjacent pentagons with two hexagons; b) the same fragment after the straightening}\label{5566}
\end{figure}

If $P$ has two adjacent pentagons, then one of the connected components of unions of pentagons has more than two pentagons. If $P$ is not combinatorially equivalent to the dodecahedron, then each component is a sphere with holes. Consider the connected component with more than one pentagon and a vertex $v$ on its boundary lying in two pentagons $F_i$ and $F_j$. Then the third face containing  $v$ is a hexagon. Since $P$ contains no fragments on Fig. \ref{5566} a), the other facet intersecting the edge $F_i\cap F_j$ by the vertex is a pentagon and we obtain the fragment  on Fig. \ref{5556} a). Then the straightening along the edge $F_i\cap F_j$ gives the polytope $Q\in\mathfrak{F}_{-1}$ such that $P$ is obtained from $Q$ by a $(2,6;4,5)$-truncation.

\begin{figure}
\begin{center}
\begin{tabular}{cc}
\includegraphics[height=4cm]{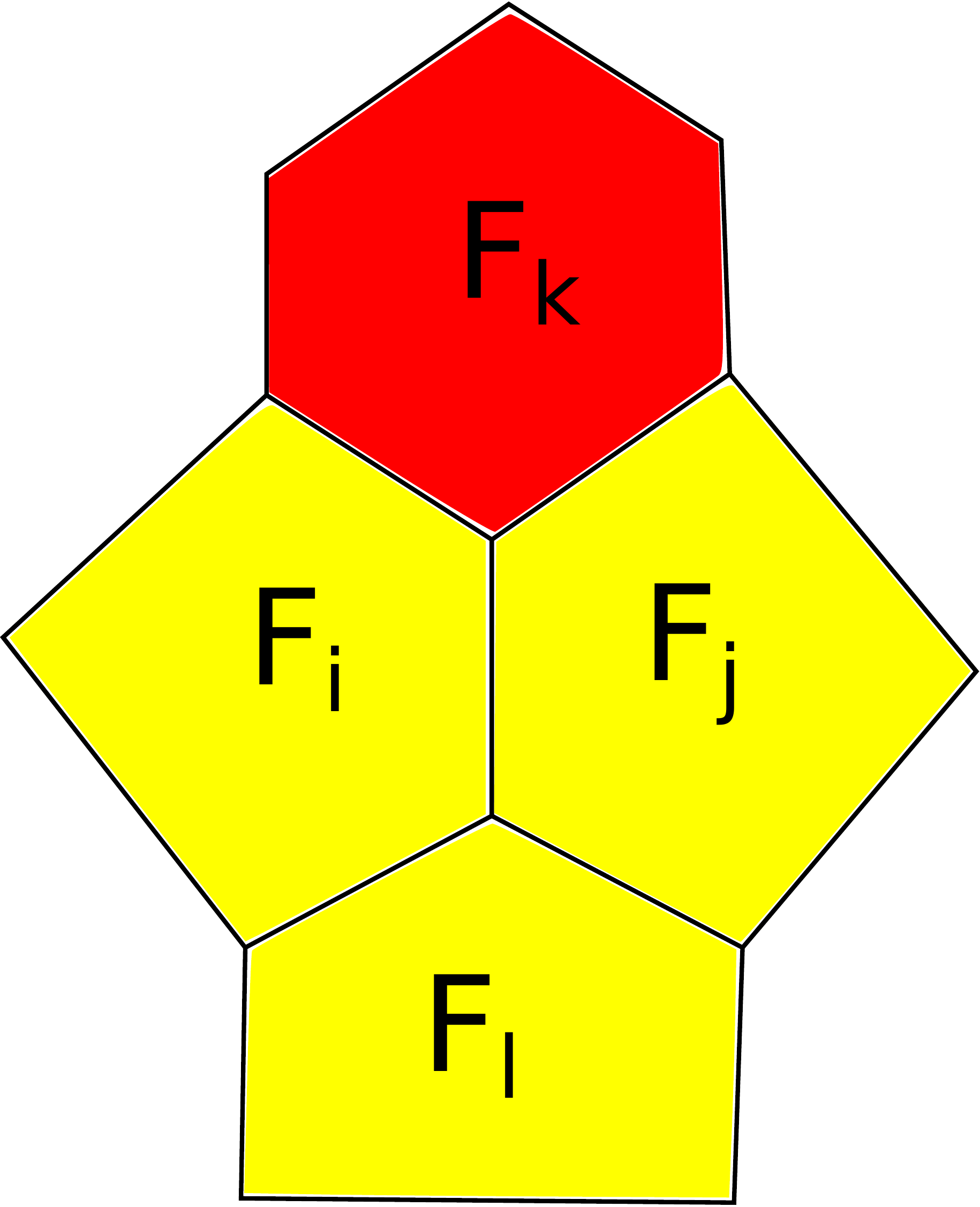}&\includegraphics[height=4cm]{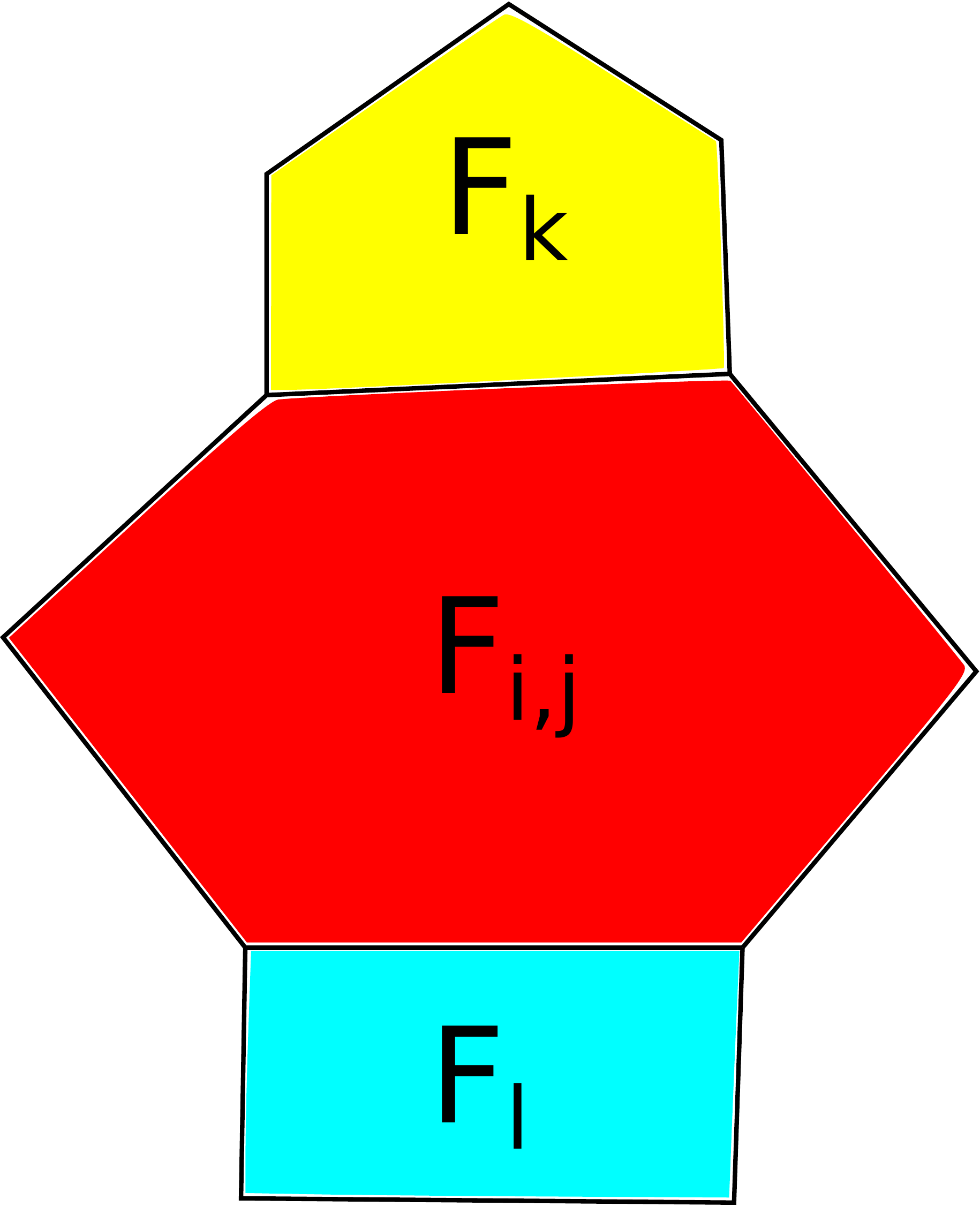}\\
a)&b)
\end{tabular}
\end{center}
\caption{a) Three adjacent pentagons and a hexagon; b) the same fragment after the straightening}\label{5556}
\end{figure}

If $P$ has no adjacent pentagons, then consider the pentagon $F_i$ adjacent to a hexagon $F_j$. The straightening along the edge $F_i\cap F_j$ gives the polytope $Q$ with all facets pentagons and hexagons except for one heptagon $F_{i,j}$ adjacent to a pentagon. $P$ is obtained from $Q$ by a $(2,7;5,5)$-truncation. We claim that $Q\in\mathfrak{F}_1$. Indeed, if $Q$ contains the fragment $F_{5567}$, then it is true. Else consider two adjacent pentagons in $Q$. The polytopes $P$ and $Q$ have the same structure outside the fragments on Fig \ref{5666}; hence  $Q$ has the same pentagons as $P$ except for pentagons $F_k$ and $F_l$, which appeared instead of $F_i$. Also $P$ has all pentagons isolated; hence one of the adjacent pentagons is $F_k$ or $F_l$.  We have $F_k\cap F_l=\varnothing$, else $(F_k,F_l,F_{i,j})$ is a $3$-belt. Hence the other adjacent pentagon $F_t$ does not belong to $\{F_k,F_l\}$. If $F_t$ is adjacent to the heptagon $F_{i,j}$, then in $P$ it is adjacent to $F_i$ or $F_j$. Since $F_i$ is an isolated pentagon, this is impossible. Hence $F_t$ should be adjacent to $F_j$. Then $F_t$ is one of the facets $F_u$, $F_v$, $F_w$ on Fig. \ref{5666}. Let $F_t=F_u$. Since $F_u$ is an isolated pentagon in $P$,  the facet $F_p$ is a hexagon on $P$ and on $Q$, since $F_p\ne F_l$ because $F_k\cap F_l=\varnothing$. Then we obtain the fragment $F_{5567}$, which is a contradiction. The same argument works for $F_w$ instead of $F_u$.  If $F_t=F_v$, then $F_v\cap F_k\ne\varnothing$, or $F_v\cap F_l\ne\varnothing$, which is impossible, since this gives the $3$-belts $(F_k,F_j,F_v)$, or $(F_l,F_j,F_v)$. Thus, $F_t$ does not intersect  the heptagon $F_{i,j}$, and $Q\in\mathfrak{F}_1$. This finishes the proof of (3) and of the theorem. 
\begin{figure}
\begin{center}
\begin{tabular}{cc}
\includegraphics[height=4cm]{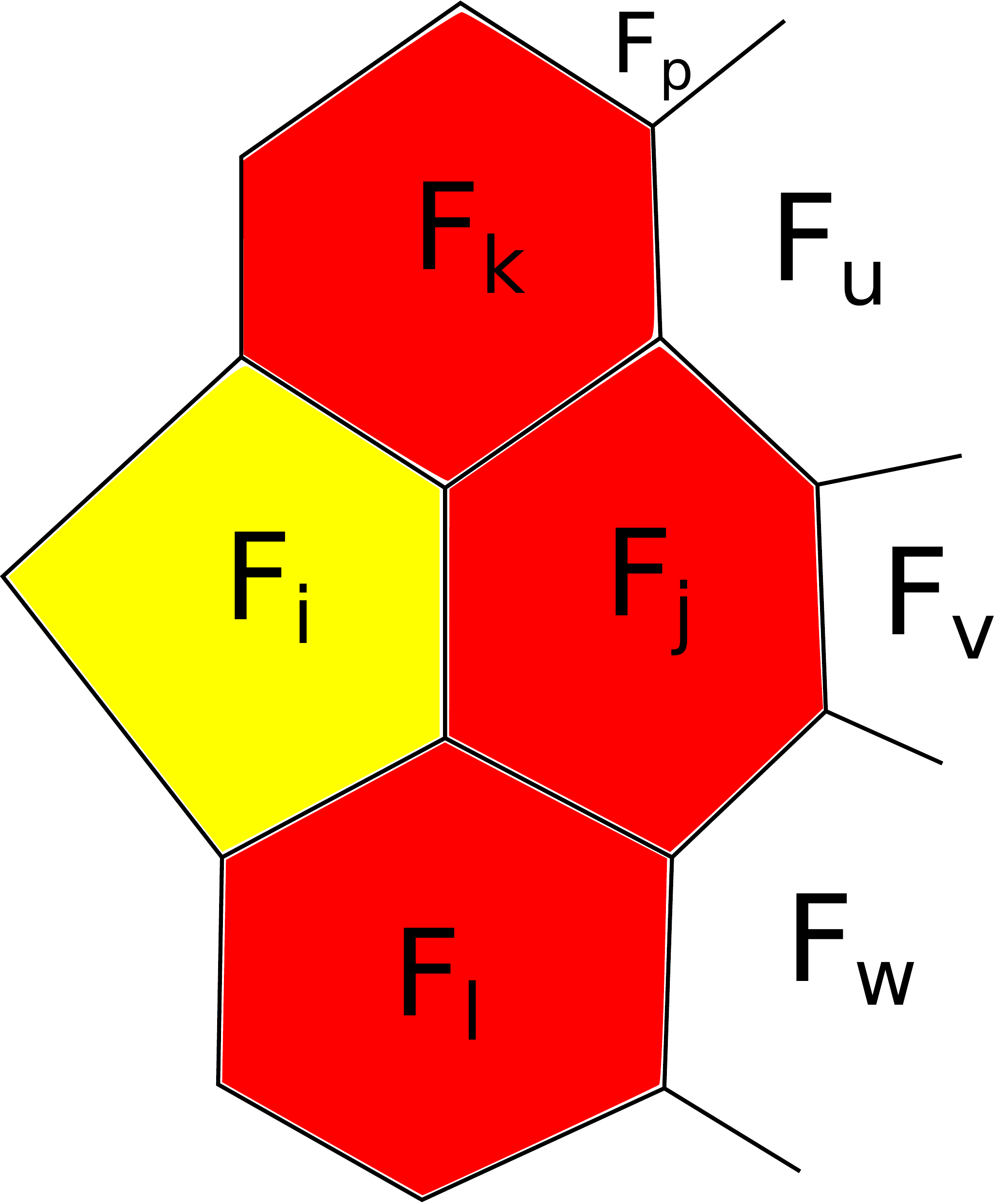}&\includegraphics[height=4cm]{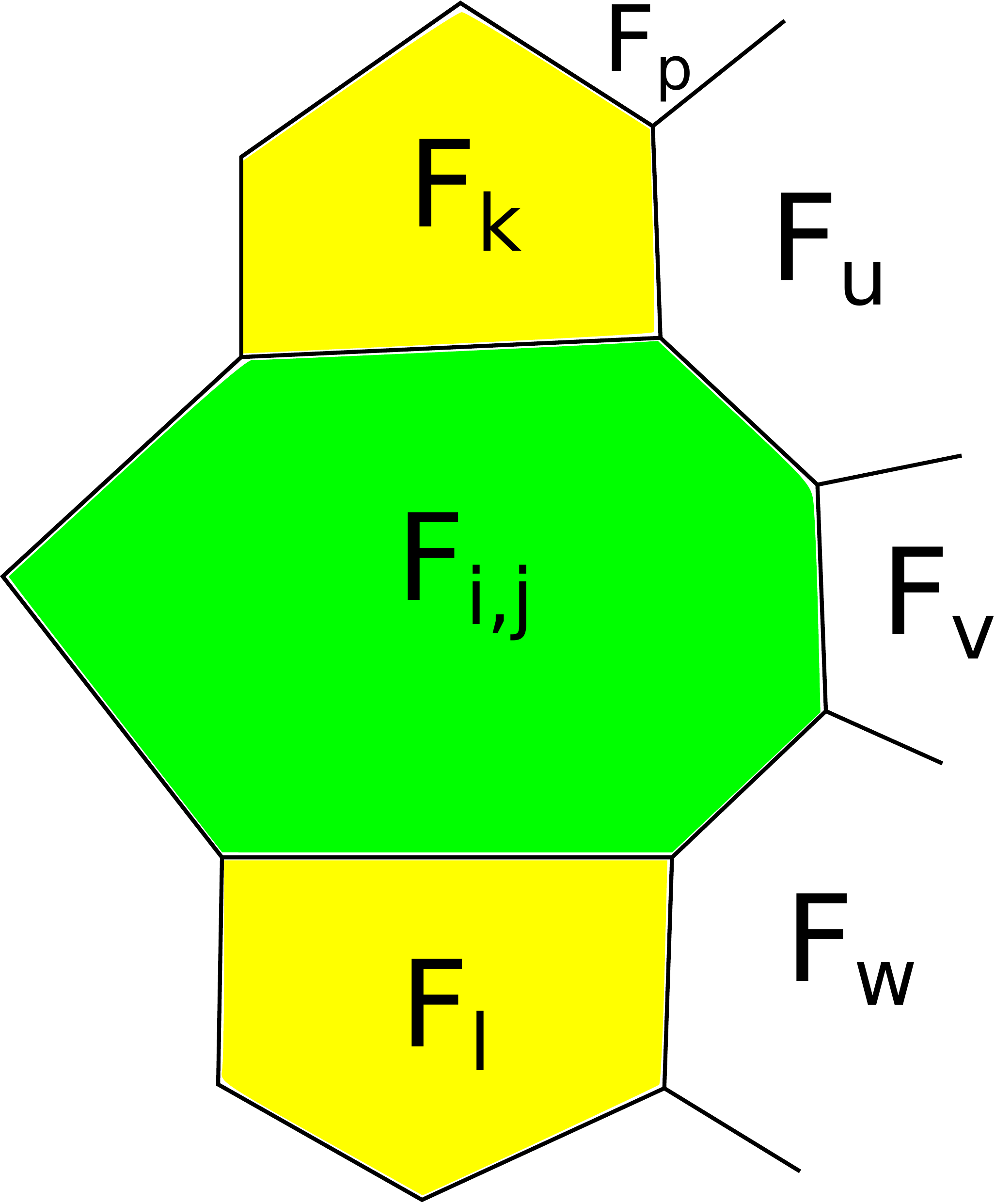}\\
a)&b)
\end{tabular}
\end{center}
\caption{a) Pentagon adjacent to three hexagons; b) the same fragment after the straightening}\label{5666}
\end{figure}
\end{proof}
\begin{remark}
According to Remark \ref{trrem} the $7$ truncations from Theorem \ref{trtheorem} give rise to $9$ different growth operations (see Fig. \ref{9op}):
\begin{itemlist}
\item Each $(1;m_1,m_2)$-truncation gives rise to $2$ growth operations:
\begin{itemlist}
\item if the truncated edge belongs to a pentagon, then we have the patch consisting of the pentagon adjacent to an $m_1$-gon and an $m_2$-gon  by non-adjacent edges;   
\item if the truncated edge belongs to two hexagons, then we have the patch consisting of the hexagon adjacent to an $m_1$-gon and an $m_2$-gon by two edges that are not adjacent and not opposite;
\end{itemlist} 
\item Each of the truncations $(2,6;4,5)$-, $(2,6;5,5)$-,  $(2,6;5,6)$-, $(2,7;5,5)$-, and $(2,7;5,6)$- gives rise to one growth operation.
\end{itemlist}
\begin{figure}
\begin{center}
\includegraphics[height=9cm]{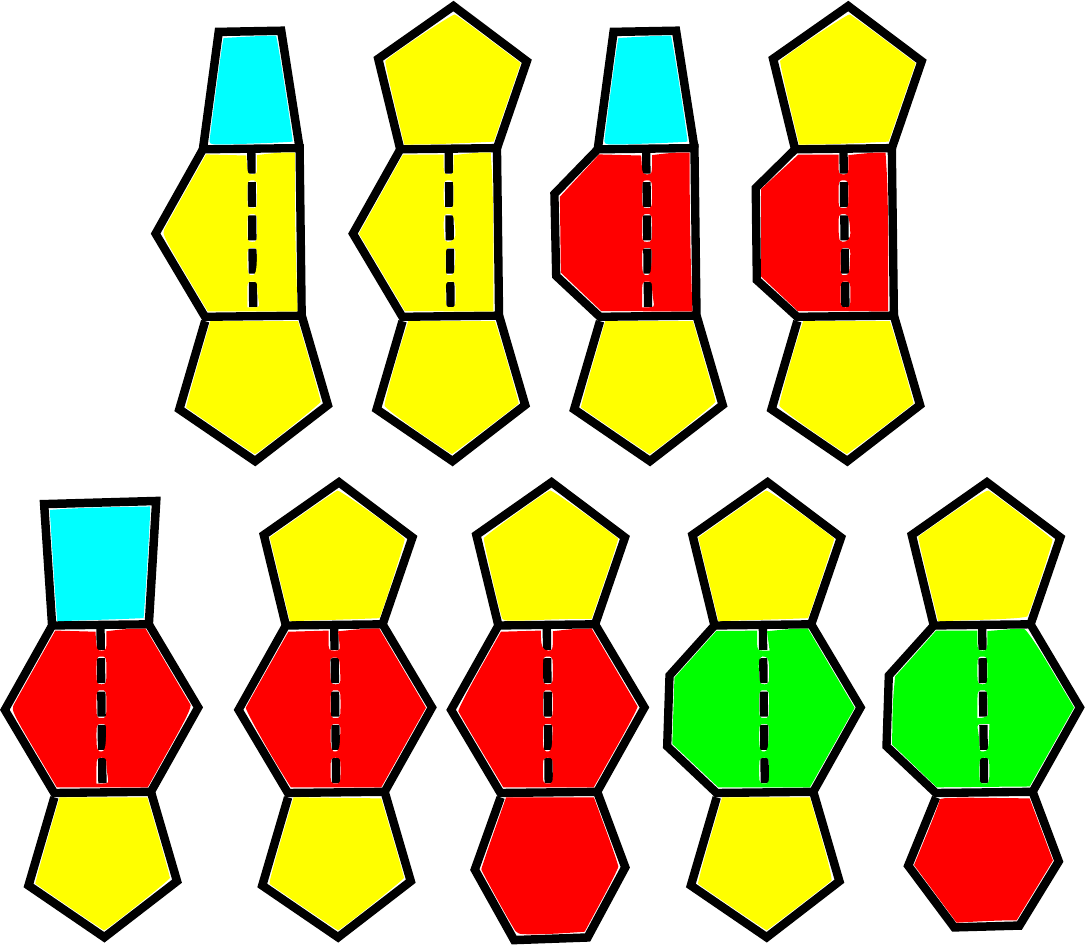}
\end{center}
\caption{$9$ growth operations induced by $7$ truncations}\label{9op}
\end{figure}
If we take care of the orientation, then  $3$ of the operations have left and right versions.
\end{remark}
\newpage
\section*{Acknowledgments}
The content of this lecture notes is based on lectures given by the first author at IMS of
National University of Singapore in August 2015 during the Program on Combinatorial and Toric Homotopy, and the work originated from the second authors participation in this Program. The authors thank Professor Jelena Grbic (University of Southampton), Professor Jie Wu (National University of Singapore), and IMS for organizing the Program and providing such a nice opportunity. 

This work was partially supported by the RFBR grants 14-01-00537 and 16-51-55017, and the Young Russian Mathematics award.
\newpage

\input{Lectures-F.ind}

\end{document}